\documentclass[11pt]{preprint}
\usepackage[full]{textcomp}
\usepackage[osf]{newtxtext} 
\usepackage{colortbl}

\usepackage{upgreek}
\usepackage{comment}
\usepackage{amssymb}
\usepackage{lmodern}
\usepackage{mathtools}

\usepackage{hyperref}
\usepackage{breakurl}
\usepackage{aligned-overset}
\usepackage{mhenvs}
\usepackage{mhequ} 
\newcommand{\be}{\begin{equation*}}
\newcommand{\ee}{\end{equation*}}
\usepackage{mhsymb}
\usepackage{booktabs}
\usepackage{tcolorbox}
\usepackage[utf8]{inputenc}
\usepackage{longtable}
\usepackage{wrapfig}
\usepackage{subcaption}
\usepackage{epsfig}
\usepackage{microtype}
\usepackage[dvipsnames]{xcolor}
\usepackage{mathrsfs}

\usepackage{centernot}
\usepackage{enumitem}
\usepackage{bm}
\usepackage{stackrel}
\usepackage{graphicx}
\usepackage{ulem}

\usepackage[nameinlink,noabbrev]{cleveref} 

\makeatletter
\newcommand{\globalcolor}[1]{%
  \color{#1}\global\let\default@color\current@color
}
\makeatother

\usetikzlibrary{calc}
\usetikzlibrary{decorations}
\usetikzlibrary{positioning}
\usetikzlibrary{shapes}
\usetikzlibrary{shapes.misc}

\tikzset{cross/.style={cross out, draw=black, fill=none, minimum size=2*(#1-\pgflinewidth), inner sep=0pt, outer sep=0pt}, cross/.default={2pt}}

\definecolor{blush}{rgb}{0.87, 0.36, 0.51}
	\definecolor{brightcerulean}{rgb}{0.11, 0.67, 0.84}
	\definecolor{greenryb}{rgb}{0.4, 0.69, 0.2}

\newif\ifdark
\darkfalse

\ifdark
\definecolor{darkred}{rgb}{0.9,0.2,0.2}
\definecolor{darkblue}{rgb}{0.7,0.3,1}
\definecolor{darkgreen}{rgb}{0.1,0.9,0.1}
\definecolor{franck}{rgb}{0,0.8,1}
\definecolor{pagebackground}{rgb}{.15,.21,.18}
\definecolor{pageforeground}{rgb}{.84,.84,.85}
\pagecolor{pagebackground}
\AtBeginDocument{\globalcolor{pageforeground}}
\definecolor{symbols}{rgb}{0,0.7,1}
\colorlet{connection}{red!80!black}
\colorlet{boxcolor}{blue!50}

\else

\definecolor{darkred}{rgb}{0.7,0.1,0.1}
\definecolor{darkblue}{rgb}{0.4,0.1,0.8}
\definecolor{darkgreen}{rgb}{0.1,0.7,0.1}
\definecolor{franck}{rgb}{0,0,1}
\definecolor{pagebackground}{rgb}{1,1,1}
\definecolor{pageforeground}{rgb}{0,0,0}
\colorlet{symbols}{blue!90!black}
\colorlet{connection}{red!30!black}
\colorlet{boxcolor}{blue!50!black}

\fi

\def\slash{\leavevmode\unskip\kern0.18em/\penalty\exhyphenpenalty\kern0.18em}
\def\dash{\leavevmode\unskip\kern0.18em--\penalty\exhyphenpenalty\kern0.18em}

\DeclareMathAlphabet{\mathbbm}{U}{bbm}{m}{n}

\DeclareFontFamily{U}{BOONDOX-calo}{\skewchar\font=45 }
\DeclareFontShape{U}{BOONDOX-calo}{m}{n}{
  <-> s*[1.05] BOONDOX-r-calo}{}
\DeclareFontShape{U}{BOONDOX-calo}{b}{n}{
  <-> s*[1.05] BOONDOX-b-calo}{}
\DeclareMathAlphabet{\mcb}{U}{BOONDOX-calo}{m}{n}
\SetMathAlphabet{\mcb}{bold}{U}{BOONDOX-calo}{b}{n}

\setlist{noitemsep,topsep=4pt,leftmargin=1.5em}

\DeclareMathAlphabet{\BF}{OT1}{ptm}{bx}{n}
\DeclareMathAlphabet{\mathbbm}{U}{bbm}{m}{n}

\DeclareMathAlphabet{\mcb}{U}{BOONDOX-calo}{m}{n}
\SetMathAlphabet{\mcb}{bold}{U}{BOONDOX-calo}{b}{n}
\DeclareFontFamily{U}{mathx}{\hyphenchar\font45}
\DeclareFontShape{U}{mathx}{m}{n}{
      <5> <6> <7> <8> <9> <10>
      <10.95> <12> <14.4> <17.28> <20.74> <24.88>
      mathx10
      }{}
\DeclareSymbolFont{mathx}{U}{mathx}{m}{n}
\DeclareMathSymbol{\bigtimes}{1}{mathx}{"91}

\def\i{\mathfrak{i}}

\newcommand{\talloblong}{
\,\begin{tikzpicture}[baseline=-0.7ex]
    \draw[line width=.5pt] (0,-0.2) rectangle (0.08,0.20);
\end{tikzpicture}\,
}

\providecommand{\figures}{false}
{ \ifthenelse{\equal{\figures}{false}} {#1}{\[ {\rm Figure \ missing !} \]} }{}

\def\Id{\mathrm{Id}}

\def\mcT{\mathcal{T}}

\tikzstyle{tinydots}=[dash pattern=on \pgflinewidth off \pgflinewidth]
\tikzstyle{superdense}=[dash pattern=on 4pt off 1pt]

\newcommand{\mcE}{\mathcal{E}}
\newcommand{\mcH}{\mathcal{H}}

\newcommand{\mcV}{\mathcal{V}}
\newcommand{\mcR}{\mathcal{R}}

\newcommand{\mcB}{\mathcal{B}}

\newcommand{\mcS}{\CS}

\newcommand{\mcL}{\mathcal{L}}

\newcommand{\mcK}{\mathcal{K}}
\newcommand{\mcD}{\mathcal{D}}

\newcommand{\mcP}{\mathcal{P}}

\newcommand{\rmd}{{\mathrm{d}}}
\newcommand{\e}{{\mathrm{e}}}

\newcommand{\beq}{\begin{equation}}
\newcommand{\eeq}{\end{equation}}

\usepackage{empheq}

\newcommand{\mfc}{\mathfrak{c}}

\newcommand{\mfC}{\mathfrak{C}}

\newcommand{\mfl}{\mathfrak{l}}

\newcommand{\mfs}{\mathfrak{s}}

\newcommand{\msB}{\mathscr{B}}

\newcommand{\msQ}{\mathscr{Q}}
\newcommand{\msP}{\mathscr{P}}
\newcommand{\msR}{\mathscr{R}}
\newcommand{\msH}{\mathscr{H}}

\newcommand{\epmu}{{\eps,\mu}}
\newcommand{\epnu}{{\eps,\nu}}
\def\i{\mathrm{i}}
\def\x{\boldsymbol{x}}

\def\z{\boldsymbol{z}}
\def\w{\boldsymbol{w}}

\def\v{\boldsymbol{v}}
\def\n{\boldsymbol{n}}

\newcommand{\p}{\boldsymbol{p}}

\newcommand{\rmD}{\mathrm{D}}

\DeclareSymbolFont{rmlargesymbols}{OMX}{mdbch}{m}{n}
\DeclareMathSymbol{\rmintop}{\mathop}{rmlargesymbols}{82}

\newcommand{\D}{\partial}
\newcommand{\dint}{\,\mathrm{d}}
\newcommand\Item[1][]{%
  \ifx\relax#1\relax  \item \else \item[#1] \fi
  \abovedisplayskip=0pt\abovedisplayshortskip=0pt~\vspace*{-\baselineskip}}

\newcounter{theorem}

\newenvironment{DIFnomarkup}{}{} 

\newtheorem{assumption}{Assumption}
 
\theorembodyfont{\rmfamily}

\newfont{\indic}{bbmss12}

\definecolor{indigo}{rgb}{0.4,0,1}

\makeatletter
\pgfdeclareshape{crosscircle}
{
  \inheritsavedanchors[from=circle] 
  \inheritanchorborder[from=circle]
  \inheritanchor[from=circle]{north}
  \inheritanchor[from=circle]{north west}
  \inheritanchor[from=circle]{north east}
  \inheritanchor[from=circle]{center}
  \inheritanchor[from=circle]{west}
  \inheritanchor[from=circle]{east}
  \inheritanchor[from=circle]{mid}
  \inheritanchor[from=circle]{mid west}
  \inheritanchor[from=circle]{mid east}
  \inheritanchor[from=circle]{base}
  \inheritanchor[from=circle]{base west}
  \inheritanchor[from=circle]{base east}
  \inheritanchor[from=circle]{south}
  \inheritanchor[from=circle]{south west}
  \inheritanchor[from=circle]{south east}
  \inheritbackgroundpath[from=circle]
  \foregroundpath{
    \centerpoint%
    \pgf@xc=\pgf@x%
    \pgf@yc=\pgf@y%
    \pgfutil@tempdima=\radius%
    \pgfmathsetlength{\pgf@xb}{\pgfkeysvalueof{/pgf/outer xsep}}%
    \pgfmathsetlength{\pgf@yb}{\pgfkeysvalueof{/pgf/outer ysep}}%
    \ifdim\pgf@xb<\pgf@yb%
      \advance\pgfutil@tempdima by-\pgf@yb%
    \else%
      \advance\pgfutil@tempdima by-\pgf@xb%
    \fi%
    \pgfpathmoveto{\pgfpointadd{\pgfqpoint{\pgf@xc}{\pgf@yc}}{\pgfqpoint{-0.707107\pgfutil@tempdima}{-0.707107\pgfutil@tempdima}}}
    \pgfpathlineto{\pgfpointadd{\pgfqpoint{\pgf@xc}{\pgf@yc}}{\pgfqpoint{0.707107\pgfutil@tempdima}{0.707107\pgfutil@tempdima}}}
    \pgfpathmoveto{\pgfpointadd{\pgfqpoint{\pgf@xc}{\pgf@yc}}{\pgfqpoint{-0.707107\pgfutil@tempdima}{0.707107\pgfutil@tempdima}}}
    \pgfpathlineto{\pgfpointadd{\pgfqpoint{\pgf@xc}{\pgf@yc}}{\pgfqpoint{0.707107\pgfutil@tempdima}{-0.707107\pgfutil@tempdima}}}
  }
}
\makeatother

\def\decorate#1#2{
        \ifnum#2>0
    		\foreach \count in {1,...,#2}{
	       	let
				\p1 = (sourcenode.center),
                \p2 = (sourcenode.east),
				\n1 = {\x2-\x1},
				\n2 = {1mm},
				\n3 = {(1.3+0.6*(\count-1))*\n1},
				\n4 = {0.7*\n1}
			in 
        		node[rectangle,fill=symbols,rotate=30,inner sep=0pt,minimum width=0.2*\n2,minimum height=\n2] at ($(sourcenode.center) + (\n3,\n4)$) {}
				}
		\fi
        \ifnum#1>0
    		\foreach \count in {1,...,#1}{
	       	let
				\p1 = (sourcenode.center),
                \p2 = (sourcenode.east),
				\n1 = {\x2-\x1},
				\n2 = {1mm},
				\n3 = {(1.3+0.6*(\count-1))*\n1},
				\n4 = {0.7*\n1}
			in 
        		node[rectangle,fill=symbols,rotate=-30,inner sep=0pt,minimum width=0.2*\n2,minimum height=\n2] at ($(sourcenode.center) + (-\n3,\n4)$) {}
				}
		\fi
}

\tikzset{
    dectriangle/.style 2 args={
        triangle,
        alias=sourcenode,
        append after command={\decorate{#1}{#2}}
    },
    dectriangle/.default={0}{0},
}

\tikzset{
	dot/.style={circle,fill=black,inner sep=0pt, minimum size=1mm},
	graydot/.style={circle,draw=gray,inner sep=0pt, minimum size=1mm},
	loopnode/.style={circle,draw=black,inner sep=0pt, minimum size=1.5mm},
	treenode/.style={circle,fill=black,inner sep=0pt, minimum size=1.5mm},
	charge/.style={circle,draw=black,inner sep=0pt, minimum size=3mm},
	loopline/.style={->,semithick,shorten >=1pt,shorten <=1pt},
	treeline/.style={semithick, densely dashed, shorten >=1pt,shorten <=1pt},
	homoge/.style={font=\scriptsize,draw=black,fill=white,inner sep=2pt},
	homo/.style={font=\scriptsize},
	KK/.style={thick,shorten >=1pt,shorten <=1pt, densely dashed, >=latex},
kerAlg/.style={thick},
kerAlg2/.style={semithick,dashed},
kerP/.style={very thick},
arrow/.style={->,thick,shorten >=3pt,shorten <=3pt},
renorm/.style={ultra thick},
	kernel/.style={semithick,shorten >=1pt,shorten <=1pt},
	}

\tikzset{
	cross/.style={path picture={ 
  		\draw[symbols]
			(path picture bounding box.south east) -- (path picture bounding box.north west) (path picture bounding box.south west) -- (path picture bounding box.north east);
		}},
root/.style={circle,fill=green!50!black,inner sep=0pt, minimum size=1.2mm},
        dot/.style={circle,fill=pageforeground,inner sep=0pt, minimum size=1mm},
        dotred/.style={circle,fill=pageforeground!50!pagebackground,inner sep=0pt, minimum size=2mm},
        var/.style={circle,fill=pageforeground!10!pagebackground,draw=pageforeground,inner sep=0pt, minimum size=3mm},
        kernel/.style={semithick,shorten >=2pt,shorten <=2pt},
        kernels/.style={snake=zigzag,shorten >=2pt,shorten <=2pt,segment amplitude=1pt,segment length=4pt,line before snake=2pt,line after snake=5pt,},
        rho/.style={densely dashed,semithick,shorten >=2pt,shorten <=2pt},
           testfcn/.style={dotted,semithick,shorten >=2pt,shorten <=2pt},
        renorm/.style={shape=circle,fill=pagebackground,inner sep=1pt},
        labl/.style={shape=rectangle,fill=pagebackground,inner sep=1pt},
        xic/.style={very thin,circle,draw=symbols,fill=symbols,inner sep=0pt,minimum size=1.2mm},
        g/.style={very thin,rectangle,draw=symbols,fill=symbols!10!pagebackground,inner sep=0pt,minimum width=2.5mm,minimum height=1.2mm},
        xi/.style={very thin,circle,draw=symbols,fill=symbols!10!pagebackground,inner sep=0pt,minimum size=1.2mm},
	xies/.style={very thin,rectangle,fill=green!50!black!25,draw=symbols,inner sep=0pt,minimum size=1.1mm},
	xiesf/.style={very thin,rectangle,fill=green!50!black,draw=symbols,inner sep=0pt,minimum size=1.1mm},
        xix/.style={very thin,crosscircle,fill=symbols!10!pagebackground,draw=symbols,inner sep=0pt,minimum size=1.2mm},
        X/.style={very thin,cross,rectangle,fill=pagebackground,draw=symbols,inner sep=0pt,minimum size=1.2mm},
	xib/.style={thin,circle,fill=symbols!10!pagebackground,draw=symbols,inner sep=0pt,minimum size=1.6mm},
	xie/.style={thin,circle,fill=green!50!black,draw=symbols,inner sep=0pt,minimum size=1.6mm},
	xid/.style={thin,circle,fill=symbols,draw=symbols,inner sep=0pt,minimum size=1.6mm},
	xibx/.style={thin,crosscircle,fill=symbols!10!pagebackground,draw=symbols,inner sep=0pt,minimum size=1.6mm},
	kernels2/.style={very thick,draw=connection,segment length=12pt},
	keps/.style={thin,draw=symbols,->},
	kepspr/.style={thick,draw=connection,->},
	krho/.style={thin,draw=symbols,superdense,->},
	krhopr/.style={thick,draw=connection,superdense},
	triangle/.style = { regular polygon, regular polygon sides=3},
	not/.style={thin,circle,draw=connection,fill=connection,inner sep=0pt,minimum size=0.5mm},
	diff/.style = {very thin,draw=symbols,triangle,fill=red!50!black,inner sep=0pt,minimum size=1.6mm},
	diff1/.style = {very thin,dectriangle={1}{0},fill=red!50!black,draw=symbols,inner sep=0pt,minimum size=1.6mm},
	diff2/.style = {very thin,dectriangle={1}{1},fill=red!50!black,draw=symbols,inner sep=0pt,minimum size=1.6mm},
		diffmini/.style = {very thin,rectangle,fill=black,draw=black,inner sep=0pt,minimum size=0.75mm},
	 kernelsmod/.style={very thick,draw=connection,segment length=12pt},
	 rec/.style = {very thin,rectangle,fill=black,draw=black,inner sep=0pt,minimum size=2mm},
	cerc/.style={very thin,circle,draw=black,fill=symbols,inner sep=0pt,minimum size=2mm},
	stars/.style={very thin,star,star points=6,star point ratio=0.5, draw=black,fill=red,inner sep=0pt,minimum size=0.7mm},
	>=stealth,
        }
        \tikzset{
root/.style={circle,fill=black!50,inner sep=0pt, minimum size=3mm},
        circ/.style={circle,fill=white,draw=black,very thin,inner sep=.5pt, minimum size=1.2mm},
        round1/.style={fill=white,outer sep = 0,inner sep=2pt,rounded corners=1mm,draw,text=black,thin,minimum size=1.2mm},
          circ1/.style={circle,fill=red!10,draw=red,very thin,inner sep=.5pt, minimum size=1.2mm},
        rect/.style={fill=white,outer sep = 0,inner sep=2pt,rectangle,draw,text=black,thin,minimum size=1.2mm},
        rect1/.style={fill=white,outer sep = 0,inner sep=2pt,rectangle,draw,text=black,thin,minimum size=1.2mm},
        round2/.style={fill=red!10,outer sep = 0,inner sep=2pt,rounded corners=1mm,draw,text=black,thin,minimum size=1.2mm},
       round3/.style={fill=blue!10,outer sep = 0,inner sep=2pt,rounded corners=1mm,draw,text=black,thin,minimum size=1.2mm}, 
        rect2/.style={fill=black!10,outer sep = 0,inner sep=2pt,rectangle,draw,text=black,thin,minimum size=1.2mm},
        dot/.style={circle,fill=black,inner sep=0pt, minimum size=1.2mm},
        dotred/.style={circle,fill=black!50,inner sep=0pt, minimum size=2mm},
        var/.style={circle,fill=black!10,draw=black,inner sep=0pt, minimum size=3mm},
        kernel/.style={semithick,shorten >=2pt,shorten <=2pt},
         diag/.style={thin,shorten >=4pt,shorten <=4pt},
        kernel1/.style={thick},
        kernels/.style={snake=zigzag,shorten >=2pt,shorten <=2pt,segment amplitude=1pt,segment length=4pt,line before snake=2pt,line after snake=5pt,},
		kernels1/.style={snake=zigzag,segment amplitude=0.5pt,segment length=2pt},
		rho1/.style={densely dotted,semithick},
        rho/.style={densely dashed,semithick,shorten >=2pt,shorten <=2pt},
           testfcn/.style={dotted,semithick,shorten >=2pt,shorten <=2pt},
           visible/.style={draw, circle, fill, inner sep=0.25ex},
        renorm/.style={shape=circle,fill=white,inner sep=1pt},
        labl/.style={shape=rectangle,fill=white,inner sep=1pt},
        xic/.style={very thin,circle,fill=symbols,draw=black,inner sep=0pt,minimum size=1.2mm},
        xi/.style={very thin,circle,fill=blue!10,draw=black,inner sep=0pt,minimum size=1.2mm},
	xib/.style={very thin,circle,fill=blue!10,draw=black,inner sep=0pt,minimum size=1.6mm},
	xie/.style={very thin,circle,fill=green!50!black,draw=black,inner sep=0pt,minimum size=1mm},
	xid/.style={very thin,circle,fill=symbols,draw=black,inner sep=0pt,minimum size=1.6mm},
	edgetype/.style={very thin,circle,draw=black,inner sep=0pt,minimum size=5mm},
	nodetype/.style={very thick,circle,draw=black,inner sep=0pt,minimum size=5mm},
	kernels2/.style={very thick,draw=connection,segment length=12pt},
clean/.style={thin,circle,fill=black,inner sep=0pt,minimum size=1mm},	not/.style={thin,circle,fill=symbols,draw=connection,fill=connection,inner sep=0pt,minimum size=0.8mm},
	>=stealth,
        }

\def\one{\BF{1}}
\def\eps{\varepsilon}
\newcommand{\snnorm}[1]{{\big\vert\kern-0.25ex\big\vert\kern-0.25ex\big\vert #1 
    \big\vert\kern-0.25ex\big\vert\kern-0.25ex\big\vert}}
\newcommand{\nnorm}[1]{{\Big\vert\kern-0.25ex\Big\vert\kern-0.25ex\Big\vert #1 
    \Big\vert\kern-0.25ex\Big\vert\kern-0.25ex\Big\vert}}
\newcommand{\bnnorm}[1]{{\Biggl\vert\kern-0.25ex\Biggl\vert\kern-0.25ex\Biggl\vert #1 
    \Biggl\vert\kern-0.25ex\Biggl\vert\kern-0.25ex\Biggl\vert}}

\DeclareMathAlphabet{\mathpzc}{OT1}{pzc}{m}{it}

\let\eps\varepsilon

\def\eqref#1{(\ref{#1})}

\makeatletter 
\newcommand*{\bigcdot}{}
\DeclareRobustCommand*{\bigcdot}{%
  \mathbin{\mathpalette\bigcdot@{}}%
}
\newcommand*{\bigcdot@scalefactor}{.5}
\newcommand*{\bigcdot@widthfactor}{1.15}
\newcommand*{\bigcdot@}[2]{%
  \sbox0{$#1\vcenter{}$}
  \sbox2{$#1\cdot\m@th$}%
  \hbox to \bigcdot@widthfactor\wd2{%
    \hfil
    \raise\ht0\hbox{%
      \scalebox{\bigcdot@scalefactor}{%
        \lower\ht0\hbox{$#1\bullet\m@th$}%
      }%
    }%
    \hfil
  }%
}
\makeatother

\tcbset
{colframe=boxcolor,colback=symbols!7!pagebackground,coltext=pageforeground,
fonttitle=\bfseries,nobeforeafter,center title,size=fbox,boxsep=1.5pt,
top=0mm,bottom=0mm,boxsep=0mm,tcbox raise base}

\def\two{{\<generic>\kern0.05em\<genericb>}}
\def\twoI{{\<Ito>\kern0.05em\<Itob>}}

\def\mail#1{\burlalt{#1}{mailto:#1}}

\usepackage{thmtools}

\usepackage[margin=6em]{geometry}

\renewcommand{\leq}{\leqslant}
\renewcommand{\geq}{\geqslant}

\begin{document}

\title{Stochastic PDE construction of some non-local or non-translation-invariant Anderson operators in the full subcritical regime}

\author{L. Ferdinand$^1$, S. Mackowiak$^{2,3}$ and A. Minguella$^3$}
\institute{ 
 Max Planck Institute for Mathematics in the Sciences, Leipzig, Germany \and Univ Brest,  CNRS UMR 6205,  Laboratoire de Mathématiques de Bretagne Atlantique,  F-29200,  Brest,  France \and Université de Lorraine, CNRS, IECL, F-54000 Nancy, France
  \\
Email:\ \begin{minipage}[t]{\linewidth}
\mail{leonard.ferdinand@mis.mpg.de}\\
\mail{samael.mackowiak@univ-lorraine.fr}\\
\mail{aurelien.minguella@univ-lorraine.fr}
\end{minipage}}

\maketitle 

\begin{abstract}
    We construct Anderson-type operators in the full subcritical regime via a SPDE argument. Our work encompasses perturbations by a Gaussian noise of the fractional Laplacian on the torus or on the full space, and of the harmonic oscillator on the full space. In every case, we are able to derive an optimal estimate on the difference between the solution and the deterministic heat flow, which allows us to get a Weyl law for the fractional case on the torus and for the harmonic oscillator case, and so in the full subcritical regime. For the non-translation-invariant operator, the main novelty is that we develop a general argument in order to prove how to renormalise the operator with some constants. The SPDE argument uses the flow approach introduced by Duch \cite{Duc21}. To construct the operators, we use the Hille-Yosida theorem for the fractional Laplacian on the torus and the harmonic oscillator, and the Klein-Landau theorem for the fractional Laplacian in the full space.
\end{abstract}
\setcounter{tocdepth}{2}
\setcounter{secnumdepth}{4}
\tableofcontents

\section{Introduction and main results}



The aim of the present work is to construct an analyse some non-translation-invariant and non-local Anderson operators on $\T^d$ or $\R^d$, in the full subcritical regime. These Anderson operators are random Schr\"odinger operators that formally read
\begin{equs}
    \mathrm{A}^{\!\alpha,m}_\xi=\mathrm{D}^{\alpha,m}-\xi+\infty\,,
\end{equs}
where $\mathrm{D}^{\alpha,m}$ denotes a suitable pseudo-differential operator of order $\alpha\in(0,2]$ and $\xi$ is a Gaussian noise. The problem is scaling subcritical if the almost sure regularity of $\xi$ is greater than $-\alpha$. In the above display, the term $\infty$ is here to recall that such operators must be constructed as limits of a sequence of approximations reading 
\begin{equs}
    \mathrm{A}^{\!\alpha,m}_{\xi,\eps}=\mathrm{D}^{\alpha,m}-\xi_\eps+\mfC^{\alpha,m}_\eps\,,
\end{equs}
where the noise $\xi_\eps$ is smooth and where the counterterm $\mfC^{\alpha,m}_\eps$ is allowed to diverge as $\eps\searrow0$.

We will construct the fractional Anderson operator by studying its semi-group $\e^{-t\mathrm A^{\!\alpha,m}_\xi}$, whose action on some function $u_0$ in obtained solving the parabolic linear stochastic PDE
\begin{equation}\label{mainSPDE}
(\D_t+\mathrm{D}^{\alpha,m})u^{\alpha,m}=(\xi-\infty) u^{\alpha,m}\,,\qquad u^{\alpha,m}\!\!\upharpoonright_{t=0}\,=u^{\alpha,m}_0\,.
\end{equation}
In fact, \eqref{mainSPDE} is a formal way of writing a singular equation and, in order to solve it, it is necessary to approximate the solution with a sequence $(u_\eps)_\eps$ of regularised solutions solving the equation
\begin{equation}\label{Eq:RenomalizedParabolicRegularized}
    (\D_t+\mathrm D^{\alpha,m})u^{\alpha,m}_\eps=(\xi_\eps-\mfC^{\alpha,m}_\eps) u^{\alpha,m}_\eps\,,\qquad u^{\alpha,m}_\eps\!\!\upharpoonright_{t=0}\,=u^{\alpha,m}_{0,\eps}\,,
\end{equation}
where $\xi_\eps$ denotes a smooth approximation of $\xi$ and $\mfC^{\alpha,m}_\eps$ is a suitably chosen deterministic constant depending on the law of $\xi_\eps$ only.

To make things concrete, let us give our assumptions on $\mathrm{D}^\alpha$ and $\xi$, starting with the noise.

\begin{assumption}\label{condalphabeta}
 Both on $\T^d$ and $\R^d$, we assume that $\xi$ is a translation-invariant Gaussian noise with covariance given by 
 \begin{equs}
\mathrm{Cov}(x)\eqdef\E[\xi(x)\xi(0)]=\langle\nabla\rangle^{2\alpha-2\beta-d}(x)\,,
 \end{equs}
 where
\begin{equation}\label{condbeta}
    0\vee (\alpha/2-d/{4})<\beta\leq\alpha/2\,.
\end{equation}
\end{assumption}

\begin{remark}
The choice of covariance kernel is for concreteness. In fact, we could handle any Gaussian noise with covariance kernel satisfying the estimate 
    \begin{equs}
        |\partial^\mfl_x\mathrm{Cov}(x)|\lesssim |x|^{-2(\alpha-\beta)-|\mfl|}\e^{-c|x|}\,.
    \end{equs}
Regarding the restrictions on $\beta$, they come from the following:
\begin{itemize}
    \item For $\beta>\alpha/2$, the equation is not singular. Indeed, $\xi$ being of almost sure regularity $-\alpha+\beta-\kappa$, the solution $u$ will be of regularity $\beta-\kappa$, so the product $\xi u$ is well-defined since the regularities sum to $\beta-2\kappa>0$.
    \item The equation is scaling subcritical for $\beta>0$. Indeed, with the above heuristic, one can expect the solution to be of regularity $\beta-2\kappa$, so multiplying the noise by the solution should improve its behaviour only for $\beta>0$ (one can hope that $\xi u$ behaves like the product of the regularities, so $-\alpha+2\beta-2\kappa>-\alpha$).
    \item If $d< 2\alpha$, then for $\beta\leqslant \alpha/2-d/4$ the equation could display a covariance blow up. Indeed, with the above heuristic, the product $\xi u$ would indeed behaves like $-\alpha+2\beta-2\kappa<-d/2$, so its covariance might not be of finite total variation.
\end{itemize}
\end{remark}

Let us now turn to our assumptions on the pseudo-differential operator $\mathrm{D}^{\alpha,m}$. In practice, we have $\mathrm{D}^{\alpha,m}=(-\Delta+m^2)^{\alpha/2}$ where either $\alpha\in(0,2]$ and $m\in[0,\infty)$ or $\alpha=2$ and $m=q$, where $q$ is the operator $\mcS'(\R^d)\rightarrow\mcS'(\R^d)$ acting by multiplication by $|x|$, so $q\cdot f(x)\eqdef |x|f(x)$. In other words, we will be interested by the cases of the possibly massive \textit{fractional Laplacian} $(-\Delta+m^2)^{\alpha/2}$, and of the \textit{harmonic oscillator} $\mathrm{H}\eqdef \mathrm{ D}^{2,q}\equiv-\Delta+q^2$. Note that we only defined $q$ on $\R^d$, since the harmonic oscillator is mostly interesting in infinite volume. On the other hand, while on $\T^d$ we can cover the case of the massless fractional Laplacian, on $\R^d$ we must restrict ourselves to the massive fractional Laplacian. For these reasons, we formulate below two different sets of assumptions, one for the case of the torus and one for the infinite volume.
\begin{assumption}\label{assum:torus}
 On the torus $\T^d$, we consider the following three situations:
    \begin{enumerate}
     \item The Laplacian: $\alpha=2$ and $m=0$, so $\rmD^{2,0}=-\Delta$.
      \item The massive fractional Laplacian: $\alpha\in(0,2)$ and $m>0$, so $\rmD^{\alpha,m}=(-\Delta+m^2)^{\alpha/2}$. 
        \item The massless fractional Laplacian: $\alpha\in(0,2)$ and $m=0$, so $\rmD^{\alpha,0}=|\nabla|^{\alpha}$.  
    \end{enumerate}
\end{assumption}
\begin{assumption}\label{assum:infinitevol}
 In infinite volume $\R^d$, we consider the following three situations:
  \begin{enumerate}
  \item The Laplacian: $\alpha=2$ and $m=0$, so   
      $\rmD^{2,0}=-\Delta$.
        \item The massive fractional Laplacian: $\alpha\in(0,2)$ and $m>0$, so $\rmD^{\alpha,m}=(-\Delta+m^2)^{\alpha/2}$. 

 \item The harmonic oscillator: $\alpha=2$ and $m=q$, so  
      $\mathrm{H}=-\Delta+q^2$. 
    \end{enumerate}
\end{assumption}

\subsection{Main results}

Our main results are organised as follows. In our main two theorems, we state a global well-posedness result for the associated parabolic Anderson models (see Theorem~\ref{maintheoremspde}), which is the main input in order to prove Theorem~\ref{Th:ExistenceAndersonOp} giving a construction and characterisation of the Anderson operator.
\begin{theorem}\label{maintheoremspde}
Suppose that Assumption~ \ref{condalphabeta} is satisfied. Let $r\in[1,\infty]$ and $\kappa>0$ a small parameter.
\begin{enumerate}
    \item Consider the equation~\eqref{Eq:RenomalizedParabolicRegularized} on $\R_{\geqslant0}\times\T^d$, in the setting of either items of Assumption~\ref{assum:torus} (\ie with the Laplacian or the possibly massless fractional Laplacian) and with initial condition $u_0\in\mcB^{-\sigma}_{r,\infty}(\T^d)$ for some $\sigma\in[0,\beta)$. Then, there exists a diverging deterministic constant $\mfC^{\alpha,m}_\eps$ unique up to shift by a real number, such that the solution $u_\eps$ of (\ref{Eq:RenomalizedParabolicRegularized}) converges in probability to some non-trivial function $u$ in $L_t^\infty\mcB_{p,\infty}^{\gamma}(\R_{\geqslant0}\times\T^d)$. This limit is independent from the choice of mollification. Moreover, the solution $u$ satisfies the estimate uniformly in $t>0$ and for $(\sigma,\gamma,p,\eta)$ such that $\eta>0$, $p\geq r$, $0<\sigma<\beta$ and $0\leq \gamma< \beta-\sigma-d(1/r-1/p)$,
    \begin{equation}\label{Eq:EstimateS-G}
        \big\|u(t)-\e^{-t\rmD^{\alpha,m}}u_0\big\|_{\mcB_{p,\infty}^\gamma(\T^d)}\leq \e^{ct}(t\wedge1)^{(\beta-\sigma-\gamma-d(1/r-1/p)-\eta)/\alpha}  \|u_0\|_{\mcB_{r,\infty}^{-\sigma}(\T^d)}\,,
    \end{equation}
    for some positive random constant $c$.
    \item Consider the equation~\eqref{Eq:RenomalizedParabolicRegularized} on $\R_{\geqslant0}\times\R^d$, in the setting of one of the first two items of Assumption~\ref{assum:infinitevol} (i.e with the heat operator or the massive fractional Laplacian). Then, there exists a diverging constant $\mfC^{\alpha,m}_\eps$ unique up to shift by a real number such that for any $u_0\in\mcB^{-\sigma}_{r,\infty}(\R^d)$ and $T>0$, the solution $u_\eps$ converges in probability to some function $u$ in the weighted space $\mcE^{\gamma,-\gamma-\sigma-d(1/r-1/p)}_{p,\ell,T}$ for $(\sigma,\gamma,p,\eta)$ such that $\eta>0$, $p\geq r$, $q\in[1,\infty]$, $0<\sigma<\beta$, and $0\leq \gamma< \beta-\sigma-d(1/r-1/p)$ (see \eqref{weightedspace} for a definition of this space), and for every $\ell>0$. This limit is independent of the choice of mollification. Moreover, the solution satisfies for $t\in(0,T]$,
    \begin{equation}\label{eq:estimateS-Ginf}
        \|\mcb e_{-\ell-t} (u(t)-\e^{-t\rmD^\alpha}u_0)\|_{\mcB_{p,q}^\gamma(\R^d)}\lesssim_T(t\wedge 1)^{(\beta-\sigma-\gamma-d(1/r-1/p)-\eta)/\alpha}  \|\mcb e_{-\ell}u_0\|_{\mcB_{r,q}^{-\sigma}(\R^d)}\,.
    \end{equation}
    \item Consider the equation~\eqref{Eq:RenomalizedParabolicRegularized} on $\R_{\geqslant0}\times\R^d$, in the setting of the third item of Assumption~\ref{assum:infinitevol} (\ie with harmonic oscillator) and with initial condition $u_0\in\msB^{-\sigma}_{r,\infty}(\R^d)$ for some $\sigma\in[0,\beta)$. Then, taking the same renormalisation constant as in the above point for the Laplacian, i.e. setting $\mfC^{2,q}_\eps=\mfC^{2,0}_\eps$, the solution $u_\eps$ of (\ref{Eq:RenomalizedParabolicRegularized}) converges in probability to some non-trivial function $u$ in $L_t^\infty \msB_{p,\infty}^{\gamma}(\R_{\geqslant0}\times\R^d)$. This limit is independent from the choice of mollification. Moreover, the solution $u$ satisfies the estimate uniformly in $t>0$ and for $(\sigma,\gamma,p,\eta)$ such that $\eta>0$, $p\geq r$, $0<\sigma<\beta$ and $0\leq \gamma< \beta-\sigma-d(1/r-1/p)$,
    \begin{equation}\label{Eq:EstimateS-GOH}
        \big\|u(t)-\e^{-t\mathrm{H}}u_0\big\|_{\msB_{p,\infty}^\gamma(\R^d)}\leq \e^{ct}(t\wedge1)^{(\beta-\sigma-\gamma-d(1/r-1/p)-\eta)/2}  \|u_0\|_{\msB_{r,\infty}^{-\sigma}(\R^d)}\,,
    \end{equation}
    for some positive random constant $c$.
\end{enumerate}
\end{theorem}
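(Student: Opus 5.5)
We follow the flow approach of Duch \cite{Duc21}. We fix a regularisation scale $\mu\in(0,1]$ and a flow of effective forces $F_\mu$, obtained by replacing the heat-type kernel $G=(\D_t+\rmD^{\alpha,m})^{-1}$ with a kernel $G_\mu$ smoothed below scale $\mu$, so that $G_\mu\to G$ as $\mu\searrow0$ and $G_1$ is smooth. Since the equation is linear in $u$, the effective force is linear in the field,
\begin{equs}
F_{\eps,\mu}[\varphi]=\sum_{k\geq0}F^{(k)}_{\eps,\mu}[\varphi]\,,
\end{equs}
where $F^{(k)}$ is $k$-linear in the noise. The Polchinski-type flow equation $\D_\mu F_\mu=-\mathrm D F_\mu\,\D_\mu G_\mu\, F_\mu$ gives a triangular hierarchy for the kernels of $F^{(k)}$.

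\textbf{Step 1: probabilistic estimates.} Because $\beta>0$, only finitely many orders $k\leq K\approx\alpha/\beta$ carry non-positive scaling dimension. We prove bounds of the form
\begin{equs}
\E\|\partial^a F^{(k)}_{\eps,\mu}\|^{n}\lesssim \mu^{n(-\alpha+k\beta-\kappa)}\,,
\end{equs}
uniformly in $\eps$, by induction on $k$. The renormalisation is needed only at the local part (the value at zero momentum, i.e.\ the coefficient of the identity) of the expectations $\E F^{(k)}$ for $k\leq K$ with nonpositive dimension. We fix these through boundary conditions at $\mu=1$, which produces the counterterm $\mfC^{\alpha,m}_\eps$. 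Its uniqueness up to a finite shift follows because any two admissible choices differ by a convergent sequence. Independence of the mollifier comes from the fact that the $\eps$-dependence enters only through the covariance, together with a Cauchy estimate in $\eps$ that gains a factor $\eps^{\theta}$.

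The covariance condition \eqref{condbeta}, namely $\beta>\alpha/2-d/4$, is what guarantees that the cumulants stay integrable. On the torus, in case (i), all kernels decay exponentially at scale $\mu^{1/\alpha}$. In the massless fractional case the kernel decays only polynomially, and the periodisation has to be checked. In case (ii) the same bounds hold locally, and we propagate them with exponential weights $\mcb e_{-\ell}$.

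\textbf{Step 2: deterministic fixed point.} Given the effective force at scale $\mu$, we write $u=\e^{-t\rmD}u_0+w$. The remainder $w$ solves a well-posed equation in which $F_\mu$ is evaluated at scales $\mu\approx t$. This yields the estimate \eqref{Eq:EstimateS-G}, with short-time gain
\begin{equs}
(t\wedge1)^{(\beta-\sigma-\gamma-d(1/r-1/p)-\eta)/\alpha}\,.
\end{equs}
The ingredients are the heat smoothing bound $\|\e^{-t\rmD}f\|_{\mcB^{\gamma}_{p}}\lesssim t^{-(\gamma+\sigma+d(1/r-1/p))/\alpha}\|f\|_{\mcB^{-\sigma}_{r}}$ and the $\mu^{k\beta-\alpha}$ bounds from Step 1. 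The condition $\sigma<\beta$ keeps the time integrals convergent. Linearity gives a global solution by iterating on unit time intervals, which costs at most a factor $\e^{ct}$ with $c$ random and controlled by the moments of the noise norms. In case (ii) the weights $\mcb e_{-\ell-t}$ absorb the polynomial growth of the noise. Convergence of $u_\eps$ in probability then follows from Step 1's convergence of the forces in $\eps$ and the continuity of the fixed-point map.

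\textbf{Step 3: the harmonic oscillator.} We expect this to be the main obstacle, since $\mathrm H$ is not translation-invariant and the expectations $\E F^{(k)}$ are no longer constant. We compare the kernel of $\e^{-t\mathrm H}$ with the heat kernel via Mehler's formula:
\begin{equs}
\e^{-t\mathrm H}(x,y)-\e^{t\Delta}(x-y)=\text{(heat kernel)}\times O\big(t(|x|^2+|y|^2)+t^{2}|x-y|^{2}\big)\,.
\end{equs}
This comparison shows that the divergent local parts for $\mathrm H$ differ from those for $-\Delta$ by a function of $x$ that is bounded, for each order, by $\mu^{\alpha}$ times a polynomial weight, and this is integrable in $\mu$. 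Hence the choice $\mfC^{2,q}_\eps=\mfC^{2,0}_\eps$ suffices, and the leftover finite, $x$-dependent terms are absorbed into the effective force. The remaining task is to run Steps 1--2 in the Hermite Besov spaces $\msB$, using the smoothing of $\e^{-t\mathrm H}$ in those spaces. The delicate point is to control the growth in $|x|$ of the difference terms: multiplication by $|x|^2$ must be compensated by the confinement of $\mathrm H$, since $\msB$-norms bound weighted norms.
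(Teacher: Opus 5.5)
\textbf{Steps 1 and 2.} These follow the paper's overall scheme: cumulant induction with counterterms fixed by a boundary condition at $\mu=1$, a short-time fixed point, restarting by linearity, and time-growing weights $\mcb e_{-\ell-t}$ on $\R^d$. Several points are glossed over, though.
\begin{itemize}
\item On $\R^d$ the full series $\sum_k F^{(k)}$ cannot be summed in the weighted norms, because the polynomial weights degrade from one order to the next. The paper instead truncates at order $N_0$ and runs the fixed point on the pair $(v_\mu,R_\mu)$, which includes a remainder.
\item Moving $\mcb e_{-\ell-t}$ through the kernels requires exponential decay, hence $m>0$.
\item The fixed point only closes on a random interval whose length $T$ is set by the size of the noise. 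The restarts are on intervals of that length, not of unit length.
\end{itemize}

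\textbf{Step 3 is where the genuine gap lies.} Your comparison, in which $\e^{-t\mathrm H}(x,y)-\e^{t\Delta}(x-y)$ is the heat kernel times $O\big(t(|x|^2+|y|^2)\big)$, is the naive estimate the paper explicitly rejects. The gain $t\sim\mu^2$ is paid for by quadratic growth in space. That growth is not integrable in infinite volume, and it compounds along the hierarchy: a product of two difference terms already grows quartically. Your claim that the leftovers are integrable in $\mu$ holds only pointwise in $x$, while the fixed point needs bounds uniform in space.

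The remedy you propose does not close the argument.
\begin{itemize}
\item Absorbing $|x|^2$ through the regularity of the solution is impossible. Since $u\in\msB^{\gamma}_{p,\infty}$ with $\gamma<\beta\leq1$, it controls at best a weight $\langle x\rangle^{\gamma}$.
\item Absorbing it through the confinement of the Mehler kernel is possible. But it costs exactly $\mu^{-2}$, which cancels the gain that made the difference irrelevant.
\end{itemize}

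The missing idea is to build the confinement at scale $\mu$ into both the ansatz and the norms, and to carry two kinds of bound at once. By \eqref{eq:estMt}, the kernel $M$ at times of order $\mu^2$ absorbs Gaussian growth $\e^{a\mu^2(|x|^2+|y|^2)/8}$ for $a\in[0,3/2]$. The paper uses this as follows.
\begin{itemize}
\item It sets $O_\mu=\mcb W_\mu S_\mu+I_\mu$, as in \eqref{eq:ansatzHO}.
\item It compares $M$ with $\e^{-t(|x|^2+|y|^2)/4}G$ through \eqref{eq:estimatediffkernels}. Unlike $M-G$, this difference decays in every direction and absorbs the weight $\mcb W_\mu$.
\item It controls $\upiota^n$ with the graded kernels $J_{\mu,n}$ of \eqref{eq:defJmu} in two ways: with growth $\langle x\rangle^{2}$ and gain $\mu^{2}$ in \eqref{eq:HR+2}, and with neither in \eqref{eq:HR+0}.
\item One passes from the first bound to the second by trading $\langle x\rangle^2$ for $\mu^{-2}$, using the extra decay coming from $a_{n+1}>a_n$.
\item The bilinear term $\upiota\,\dot M\,\upiota$ is closed by applying one bound to each factor.
\end{itemize}

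Two consequences of this route are absent from yours.
\begin{itemize}
\item The $\mathrm H$-coefficients are built pathwise from the Laplacian ones. So $\mfC^{2,q}_\eps=\mfC^{2,0}_\eps$ comes for free, with no new stochastic analysis of fields that are not stationary in space.
\item The same Gaussian weights convert the $\langle x\rangle^{\kappa}$ growth of the noise into a loss $\mu^{-\kappa}$. This is what yields the \emph{unweighted} bound \eqref{Eq:EstimateS-GOH}. Running your infinite-volume Step 2 would only give $\mcb e_{-\ell-t}$-weighted estimates.
\end{itemize}
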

\begin{proof}
    The three items of the theorem are proved in the second section, see respectively Corollary~\ref{Cor:GlobalisationSeps}, the end of
Section~\ref{sec:infinitevol}, and Corollary~\ref{Cor:GlobalisationSepsOH}
\end{proof}

\begin{remark}\label{Rem:RemarksTheorem1}
    A few remarks are in order.
    \begin{itemize}
        \item The sharp estimate \eqref{Eq:EstimateS-G} was obtained for the first time in \cite{BDM22} in the case of the Anderson operator on compact Riemannian surfaces via paracontrolled calculus. Our result generalises it to an arbitrarily long ansatz. It is, in addition, extended to the more general contexts with \eqref{eq:estimateS-Ginf} and \eqref{Eq:EstimateS-GOH}.
        \item These estimates are refined versions of the Schauder estimate that reads in the case of the torus for example
        \begin{equation}
            \|  u(t)\|_{\mcB_{p,\infty}^\gamma(\T^d)}\leq \e^{ct}(t\wedge 1)^{(-\sigma-\gamma-d(1/r-1/p))/\alpha}  \|u_0\|_{\mcB_{r,\infty}^{-\sigma}(\T^d)}\,.
        \end{equation}
        \item In the third estimate \eqref{Eq:EstimateS-GOH}, we have used Besov spaces based on the harmonic oscillator $\mathrm{H}$. Using these spaces is more demanding than their classical counterparts. Indeed, we have the inclusions $\mcB_{p,\infty}^{-\sigma}(\R^d)\subset\msB_{p,\infty}^{-\sigma}(\R^d)$, and $\msB_{p,\infty}^{\gamma}(\R^d)\subset\mcB_{p,\infty}^{\gamma}(\R^d)$. Intuitively, the operator takes as input functions that have an authorised additional growth at infinity and yields an output that has additional decay at infinity.
        \item In \eqref{eq:estimateS-Ginf}, we do not keep track of the growth as in the other items for simplicity. We believe though that it is possible with more efforts and that one should expect super-exponential growth.
    \end{itemize}
\end{remark}

\begin{theorem}\label{Th:ExistenceAndersonOp}
    Suppose that Assumption \ref{condalphabeta} is satisfied.
    \begin{enumerate}
        \item On $\T^d$, under Assumption \ref{assum:torus}, there exists an event of probability one, on which the following holds.
        \begin{enumerate}
            \item For any $p\in(1,\infty)$, there exists a quasi-sectorial operator $\mathrm{A}^{\!\alpha,m}_\xi$ on $L^p_x$ such that the solution $u$ of \eqref{mainSPDE}, constructed in Theorem \ref{maintheoremspde}, solves
            $$\partial_t u + \mathrm{A}^{\!\alpha,m}_\xi u= 0.$$
            Moreover, $\mathrm{A}^{\!\alpha,m}_\xi$ is the limit in norm-resolvent of
            $$\mathrm{A}^{\!\alpha,m}_{\xi,\eps_k} = \rmD^{\alpha,m}+\xi_{\eps_k}-\mfC_{\eps_k}\,,$$
            where $\eps_k$ is a deterministic sequence going to $0$, and for any $0<\gamma<\beta$ and $z>0$ large enough, $(\mathrm{A}^{\!\alpha,m}_\xi+z)^{-\frac{\gamma}{\alpha}}\in\mathcal{L}(L^p_x,H^{\gamma,p})$.
            \item On $L^2_x$, $\mathrm{A}^{\!\alpha,m}_\xi$ is self-adjoint and has compact resolvent. Thus, there exists a complete orthonormal basis $(\phi_n)_{n\in\N}$ and a non-decreasing sequence
            $$\lambda_0 \leqslant \lambda_1 \leqslant \cdots \leqslant \lambda_n \to \infty$$
            such that $\mathrm{A}^{\!\alpha,m}_\xi\phi_n=\lambda_n\phi_n$ and 
            \begin{equation}\label{Eq:WeylLawAndersonTheorem}
                \#\{n\in\N,\; \lambda_n\leqslant\lambda\}\sim \frac{\pi^{\frac{d}{2}}\lambda^{\frac{d}{\alpha}}}{\Gamma\left(\frac{d}{2}+1\right)}
            \end{equation}
            as $\lambda$ goes to $\infty$. Moreover, for any $p\in(1,\infty)$, the spectrum of $\mathrm{A}^{\!\alpha,m}_\xi$ on $L^p_x$ is given exactly by the previous sequence of eigenvalues, with same multiplicity and eigenfunctions.
            \item Finally, for any $\lambda>-\lambda_0$, $\mathrm{A}^{\!\alpha,m}_\xi+\lambda$ is sectorial with bounded holomorphic calculus of any angle on any $L^p_x$.
        \end{enumerate}
        \item On $\R^d$, under Assumption \ref{assum:infinitevol} 1. or 2., almost surely, there exists an unbounded self-adjoint operator $\mathrm{A}^{\!\alpha,m}_\xi$ on $L^2_x$ such that the solution $u$ of \eqref{mainSPDE}, constructed in Theorem \ref{maintheoremspde}, solves
        $$\partial_t u + \mathrm{A}^{\!\alpha,m}_\xi u= 0.$$ 
        \item On $\R^d$, under Assumption \ref{assum:infinitevol} 3. (i.e. with the harmonic oscillator), there exists an event of probability one, on which the following holds.
        \begin{enumerate}
            \item For any $p\in(1,\infty)$, there exists a quasi-sectorial operator $\mathrm A^{\!2,q}_\xi$ on $L^p_x$ such that the solution $u$ of \eqref{mainSPDE}, constructed in Theorem \ref{maintheoremspde}, solves
            $$\partial_t u + \mathrm A^{\!2,q}_\xi u= 0.$$
            Moreover, $\mathrm A^{\!2,q}_\xi$ is the limit in norm-resolvent of
            $$\mathrm A^{\!2,q}_{\xi,\eps_k} = \rmD^{2,q}+\xi_{\eps_k}-\mfC_{\eps_k}\,,$$
            where $\eps_k$ is a deterministic sequence going to $0$, and for any $0<\gamma<\beta$ and $z>0$ large enough, $(\mathrm A^{\!2,q}_\xi+z)^{-\frac{\gamma}{\alpha}}\in\mathcal{L}(L^p_x,\msH^{\gamma,p})$.
            \item On $L^2_x$, $\mathrm A^{\!2,q}_\xi$ is self-adjoint and has compact resolvent. Thus, there exists a complete orthonormal basis $(\phi_n)_{n\in\N}$ and a non-decreasing sequence
            $$\lambda_0 \leqslant \lambda_1 \leqslant \cdots \leqslant \lambda_n \to \infty$$
            such that $\mathrm A^{\!2,q}_\xi\phi_n=\lambda_n\phi_n$ and 
            \begin{equation}\label{Eq:WeylLawAndersonTheoremOH}
                \#\{n\in\N,\; \lambda_n\leqslant\lambda\}\sim \frac{\lambda^{d}}{2^d\Gamma\left(d+1\right)}
            \end{equation}
            as $\lambda$ goes to $\infty$. Moreover, for any $p\in(1,\infty)$, the spectrum of $\mathrm A^{\!2,q}_\xi$ on $L^p_x$ is given exactly by the previous sequence of eigenvalues, with same multiplicity and eigenfunctions.
            \item Finally, for any $\lambda>-\lambda_0$, $\mathrm A^{\!2,q}_\xi+\lambda$ is sectorial with bounded holomorphic calculus of any angle on any $L^p_x$.
        \end{enumerate}
    \end{enumerate}
\end{theorem}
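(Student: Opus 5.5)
The plan is to build $\mathrm{A}^{\!\alpha,m}_\xi$ from the limiting semigroup given by Theorem~\ref{maintheoremspde}, rather than constructing it directly.

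\textbf{Step 1: the semigroup.} For fixed $\xi$ in an event of full probability, set $P_t u_0 \eqdef u(t)$, the solution of \eqref{mainSPDE}. Linearity and the semigroup property follow from uniqueness of the limit and from the corresponding properties of the regularised flows $P^\eps_t=\e^{-t\mathrm{A}^{\!\alpha,m}_{\xi,\eps}}$. Taking $\sigma=0$, $\gamma=0$ and $p=r$ in \eqref{Eq:EstimateS-G} gives two facts:
\begin{itemize}
\item the bound $\|P_t\|_{L^p\to L^p}\le \e^{ct}$, using the embeddings $L^p\subset\mcB^0_{p,\infty}$ together with a small loss $\eta$ absorbed by interpolation;
\item strong continuity at $t=0$, since $\|P_tu_0-\e^{-t\rmD}u_0\|\to0$.
\end{itemize}
The Hille--Yosida theorem then yields a generator $-\mathrm{A}^{\!\alpha,m}_\xi$ on $L^p$. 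This gives quasi-sectoriality, since the estimate is uniform and the semigroup is analytic by comparison with $\e^{-t\rmD}$.

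\textbf{Step 2: norm-resolvent convergence and regularity.} Write $(\mathrm{A}+z)^{-1}=\int_0^\infty \e^{-zt}P_t\,\rmd t$. Convergence in probability of $u_\eps$ to $u$, uniformly over bounded initial data (the flow estimates are operator-norm bounds), gives $P^{\eps_k}_t\to P_t$ in operator norm almost surely along a subsequence. Integrating against $\e^{-zt}$ then gives norm-resolvent convergence. The mapping property of $(\mathrm{A}+z)^{-\gamma/\alpha}$ follows from \eqref{Eq:EstimateS-G} with $\gamma>0$, since $\|P_t\|_{L^p\to \mcB^{\gamma'}_{p,\infty}}\lesssim t^{-\gamma'/\alpha}\e^{ct}$. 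Inserting this into $(\mathrm{A}+z)^{-s}=\Gamma(s)^{-1}\int t^{s-1}\e^{-zt}P_t\,\rmd t$ and using Besov/Bessel embeddings with a slightly larger $\gamma'<\beta$ gives the claim.

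\textbf{Step 3: spectral properties and the Weyl law.} On $L^2$, self-adjointness follows because the $\mathrm{A}_{\xi,\eps}$ are symmetric and norm-resolvent limits of self-adjoint operators are self-adjoint. The resolvent is compact because it maps $L^2$ into $H^{\gamma}$, which embeds compactly into $L^2(\T^d)$. In the harmonic oscillator case one uses $\msH^\gamma$, which carries decay as well. Eigenfunctions lie in every $L^p$ by the smoothing and ultracontractivity coming from \eqref{Eq:EstimateS-G} with $r=2$ and $p=\infty$, which identifies the $L^p$ spectrum with the $L^2$ one. For the Weyl law we compare traces. By Karamata's Tauberian theorem it suffices to show $\mathrm{Tr}\,P_t\sim \mathrm{Tr}\,\e^{-t\rmD}$ as $t\to0$. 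We have
$$\mathrm{Tr}\,\e^{-t\rmD}\sim c\, t^{-d/\alpha}\quad\text{on }\T^d,\qquad \mathrm{Tr}\,\e^{-t\mathrm{H}}\sim (2t)^{-d}.$$
To control the difference, factor $P_t-\e^{-t\rmD}$ through $P_{t/2}$ and bound its trace norm by a Hilbert--Schmidt product. Estimate \eqref{Eq:EstimateS-G}, with the gain $t^{(\beta-\eta)/\alpha}$ from $L^2$ to $L^\infty$-type norms, then gives a kernel bound of order $t^{-d/\alpha+(\beta-\eta)/\alpha}$. This is $o(t^{-d/\alpha})$, and the same argument works for $\mathrm{H}$.

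\textbf{Step 4: bounded holomorphic calculus.} For $\lambda>-\lambda_0$, $\mathrm{A}+\lambda$ is positive self-adjoint on $L^2$, and its semigroup has Gaussian-type (or fractional-type) heat kernel bounds inherited from the estimates. Standard results for semigroups with such kernel bounds and $L^2$ self-adjointness give a bounded $H^\infty$ calculus of any angle on $L^p$.

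\textbf{Item 2 ($\R^d$).} Here the semigroup is only controlled in weighted spaces with growing weights, so Hille--Yosida on $L^2$ is unavailable. Instead, construct the quadratic form as a limit and use the Klein--Landau theorem on the symmetric semigroup restricted to compactly supported data.

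\textbf{Main obstacle.} I expect the hardest step to be the trace comparison in the Weyl law. It requires converting the Besov flow estimate into a trace-class bound with the sharp small-time gain, especially in the harmonic oscillator case, where the decay must be tracked through $\msB$ spaces. The second difficulty is Item 2, where the growth of the weights complicates the domain arguments.
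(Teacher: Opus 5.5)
Your Steps 1 and 3 broadly follow the paper: Hille--Yosida, compactness of the resolvent by smoothing, and a trace comparison followed by Karamata. Step 4, however, has a genuine gap. The results you invoke (Duong--Robinson, Duong--McIntosh) need pointwise \emph{off-diagonal} kernel bounds for $\e^{-t(\mathrm{A}+\lambda)}$, uniformly in $t$, for instance $|p_t(x,y)|\lesssim t^{-d/\alpha}(1+|x-y|/t^{1/\alpha})^{-d-\alpha}$. Nothing proved supplies them:
\begin{itemize}
\item \eqref{Eq:EstimateS-G} is a global Besov-norm bound with no spatial localisation;
\item it carries a factor $\e^{ct}$, so it could at best reach $\mathrm{A}+\lambda$ for large $\lambda$, not every $\lambda>-\lambda_0$.
\end{itemize}
The same issue undermines your claim in Step 1 that the $L^p$ semigroup is analytic ``by comparison with $\e^{-t\rmD}$''. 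Laplace-transforming $P_t-\e^{-t\rmD}$ only controls the resolvent in a right half-plane.

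The missing idea is the paper's rotation argument, in four steps.
\begin{itemize}
\item Redo the SPDE analysis for $\partial_t+\e^{\i\phi}\rmD$ with force $\e^{\i\phi}(\xi-\infty)$, $|\phi|<\pi/2$. The force coefficients transform by a complex rescaling of time, so the counterterm is simply $\e^{\i\phi}\mfC_\eps$. This yields $\e^{-t\e^{\i\phi}\mathrm{A}}$ on every $L^p$, with the refined estimate.
\item The smoothing estimate, used in both directions $p<q$ and $q<p$, shows the growth bound $\omega(p,\phi)$ is independent of $p$. It therefore equals its $L^2$ value $-\cos(\phi)\lambda_0$.
\item This gives sectoriality of $\lambda+\mathrm{A}$ of every angle, for all $\lambda>-\lambda_0$.
\item Bounded $H^\infty$ calculus is then perturbative. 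Along the contour, the difference of the resolvents of $\lambda+\mathrm{A}$ and $\lambda+\rmD$ is a rotated Laplace transform of $S^{\pm\phi}(t)-\e^{-\e^{\pm\i\phi}t\rmD}$. The gain in \eqref{Eq:EstimateS-G} makes this $O(|z|^{-1-\gamma})$, so $F(\lambda+\mathrm{A})-F(\lambda+\rmD)$ is bounded by $C\|F\|_{L^\infty}$, and one concludes with the Mikhlin calculus of $\rmD$.
\end{itemize}

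Three more steps fail as written.
\begin{itemize}
\item \textbf{Step 2, fractional powers.} With $s=\gamma/\alpha$ and $\gamma'>\gamma$, your integrand behaves like $t^{(\gamma-\gamma')/\alpha-1}$ near $0$, which is not integrable. The Schauder bound alone therefore cannot give $(\mathrm{A}+z)^{-\gamma/\alpha}\in\mcL(L^p,H^{\gamma,p})$. You must split $P_t=\e^{-t\rmD}+(P_t-\e^{-t\rmD})$. The free part gives $(\rmD+z)^{-\gamma/\alpha}$, which maps $L^p$ into $H^{\gamma,p}$. For the difference, \eqref{Eq:EstimateS-G} supplies an extra $t^{(\beta-\eta)/\alpha}$, which makes the integral converge with $\gamma<\gamma'<\beta$. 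For the full resolvent ($s=1$) your argument is fine, so compactness is unaffected.
\item \textbf{Item 2, Klein--Landau domains.} The domains cannot be compactly supported data: the flow has infinite propagation speed, so $S(s)D_t\subset D_{t-s}$ fails. The paper takes $D_t=L^2_{\mcb e_t}$, for which \eqref{eq:estimateS-Ginf} gives exactly this inclusion. Compactly supported functions serve only as a countable dense set on which almost-sure convergence and symmetry are checked. Weak continuity follows from $\|S(s)f\|_{L^2}^2=\langle f,S(2s)f\rangle$. No quadratic form is needed.
\item \textbf{Ultracontractivity and the trace step.} \eqref{Eq:EstimateS-G} requires $d(1/r-1/p)<\beta-\sigma$. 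So you cannot jump from $L^2$, or from $\delta_y$, to $L^\infty$ in one application. Ultracontractivity and the kernel bound in your trace argument must be obtained by iterating through the semigroup property.
\end{itemize}
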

\begin{proof}
    The three items of the theorem are proved in the fourth section, respectively at the end of Sections~\ref{sec:4-2}, \ref{sec:4-3} and \ref{sec:4-4}. 
\end{proof}
\subsection{Strategy of proof}

Our proof relies on the renormalisation group flow approach to singular SDPEs introduced by Kupiainen~\cite{Kup16}, and more precisely on its continuous version, which was set up by Duch~\cite{Duc21}. The idea of the flow approach is to obtain a scale-by-scale description of the solution $u_\eps$ by looking at the equation solved by the restriction of the solution to a certain scale $\mu>0$, $u_\epmu$. The right-hand sides of the equations for $u_{\epmu}$ and $u_{\eps,\mu'}$ are in fact related by a dynamical system. As $\mu$ increases, only a few directions are amplified under the linear evolution, the relevant directions: they must be propagated backwards in scale, thereby leading to some potentially infinite renormalisation counterterms in the original equation. The strategy of the flow approach is to show that, once the flow of the relevant directions is solved backwards, the other directions, called irrelevant, can be propagated forward, and that their growth is under control.

While the case of the Laplacian is by now classical \cite{HL15,HL18,HsL26}, non-local and non-translation-invariant pseudo-differential operators are more challenging. Indeed, for non-local operators such as the fractional Laplacian, the kernel of the associated semi-group $\e^{-t\rmD^{\alpha,m}}(x)$ only decays at large $x$ like
\begin{equs}
    (1+|x|/{t}^{1/\alpha})^{-d-\alpha} \e^{-m|x|}\,.
\end{equs}
Compared with the Gaussian decay in $|x|/{t}^{1/\alpha}$ of the heat kernel, this is a very slow decay, which causes some technical difficulties, even on the torus, where on the other hand the exponential decay due to the mass is not needed. In infinite volume, these difficulties do not worsen, but the mass term is required in order to construct the solution via a fixed-point argument. Extending a general solution theory (in finite volume) and a priori estimates on the stationary solution for $\Phi^4$ from the case of the Laplacian to that of the fractional Laplacian was one of the major achievements of \cite{Duc21} and \cite{DGR23}. In the present work, we face a new difficulty, since we need to close a fixed point argument in infinite volume. In general, solving 
\begin{equs}
    Lu = u\xi
\end{equs}
is problematic when the driver $\xi$ has a growth $\langle x\rangle^\kappa$ at infinity, which is the case of our stationary Gaussian noises. Indeed, without additional assumption on $L$, it is impossible to close a fixed point argument with $u$ bounded by $\langle x\rangle^a$ at large distance, because $u\xi$ will be bounded by $\langle x\rangle^{a+\kappa}$. It turns out that as utilised by Hairer and Labbé, when $L$ is a parabolic operator, things simplify using exponential weights in space-time, since we can convert a blow-up at large scales into a loss of local regularity (see Lemma~\ref{HaiLabestim}). The price of working with exponential weights, is that the kernels we use must have exponential decay. While this excludes the massless fractional heat kernel, one of our contributions is to show that the argument of Hairer and Labbé extends to the massive fractional Laplacian (which despite not having a fact decay at rate $t^{1/\alpha}$ does have a fact decay at rate $1/m$).

The difficulty raised by non-translation-invariant operators such as the harmonic oscillator is different. Indeed, in a non-translation-invariant setting, because random fields built from the noise are genuinely not translation invariant, it might a priori seem unclear how to correctly renormalise the operator, since the counterterm $\mfC_\eps$ could depend on the position. This appears to be an important difficulty, since letting $\mfC_\eps$ vary is not recommanded. Indeed, because $\mfC_\eps$ diverges, it is always obtained up to a finite shift which, when $\mfC_\eps$ is constant, corresponds to choosing the mass to the operator (which does not affect its properties). If however the counterterm were to be taken to be a function $\mfC_\eps^{2,q}(x)$, then its finite shift could be pretty much any function, including for instance $-|x|^2$, which in the expression $\mathrm{A}^{\!2,q}_{\xi,\eps}=-\Delta+|x|^2-\xi_\eps+\mfC^{2,q}_\eps$ would destroy the harmonic oscillator and turn it into a Laplacian, thereby leading to a major inconsistent. An important possibility offered by the flow approach is to leverage the description of the right-hand side of the equation as a dynamical system in order to clarify how to renormalise non-translation-invariant equations. Indeed, because at very short distances the harmonic oscillator is close to the Laplacian, one can expect that directions breaking translation-invariance are irrelevant, while relevant directions are the same as for the translation-invariant case. This strongly suggests that, in the case of the harmonic oscillator, one should definitely not try to define new random fields (a new model in the jargon of regularity structure), but instead use the random fields from the equation with the Laplacian as an input, and construct the needed random fields associated to the harmonic oscillator pathwise in those of the Laplacian. A major advantage of the flow approach is that the flow equations solved by the random fields allow us to relate different random fields built from different operators. More precisely, denoting by $\upxi^{\mathrm{H}}_\mu$ the random fields for the harmonic oscillator and by $\upxi^{\Delta}_\mu$ the random fields for the Laplacian, we will make the assumption that 
\begin{equs}\label{eq:ansatz}
    \upxi^{\mathrm{H}}_\mu(t,x;s,y)= \e^{\mu^2\frac{|x|^2+|y|^2}{8}}\upxi^{\Delta}_\mu(t,x;s,y)+\upiota_\mu(t,x;s,y)\,,
\end{equs}
and derive a flow equation for  $\upiota_\mu$ (see \eqref{eq:floweqiota}). It turns out that $\upiota_\mu$ is irrelevant, and can therefore be constructed deterministically as a function of $\upxi_\mu^\Delta$. Proving this, and in fact formulating the ansatz~\eqref{eq:ansatz}, relies on a local description of the difference between the heat kernels of the harmonic oscillator and of the Laplacian. However we cannot be to naive about this estimate and we must be careful about the fact that we are working in infinite volume. The first estimate on the difference we could think about is
\begin{equation}
    \big|G^{2,q}(t,x,y)-G^{2,0}(t,x-y)\big|\lesssim t\big(1+|x|^2+|y|^2\big)\,.
\end{equation}
There is a gain in $t$ but the price we need to pay for that is a quadratic growth that is obviously not integrable in large volume. The way out of this conundrum is to use a refined estimate
\begin{equation}
    \Big|G^{2,q}(t,x,y)-\e^{-t\frac{|x|^2+|y|^2}{4}}G^{2,0}(t,x-y)\Big|\lesssim t\big(1+|x|^2+|y|^2\big)\e^{-ct\frac{|x|^2+|y|^2}{4}}\,,
\end{equation}
that holds for any $0<c<1$ and for small $t$. See \eqref{eq:estimatediffkernels} for a more precise statement. We believe that such an estimate is of independent interest. This added Gaussian term transfers in the ansatz \eqref{eq:ansatz} in the form of an additional growing weight. We refer to the discussion at the beginning of Section~\ref{sec:OH} for a more detailed justification of the ansatz~\eqref{eq:ansatz}. The idea to relate the solution to a flow equation is a non-translation-invariant setting to that in a translation-invariant setting originates in some works on perturbative solutions to the Polchinski equation on manifolds and domains with boundaries \cite{Borji1,Borji2,Borji3}. Recently, the first author has used a similar technique in order to study the renormalisation of singular SPDEs on domains with boundaries \cite{BBF}.

\subsection{Literature}

Singular SPDEs have known a spectacular development in the last decade. This progress relies mainly on two major breakthroughs: regularity structures \cite{reg} and paracontrolled calculus \cite{GIP15}. These theories both allowed to solve a very broad class of parabolic equations. See \cite{BHZ,CH16,BCCH} for the complete framework on the side of regularity structures. They encompass equations for which the well-posedness has been an open problem for decades, such as the KPZ equation or the $\Phi^4_3$ stochastic quantisation equation. One of the main challenges of these theories is the implementation of complex renormalisation procedures that rely on a heavy combinatorial machinery. They are inspired by the BPHZ renormalisation in quantum field theory \cite{BP57,KH69,WZ69}. In the flavour of the present article, Hairer and Labbé used regularity structures in \cite{HL18} to give a pathwise solution theory to the PAM equation on the full space, opening the path to the construction of the Anderson operator on $\R^3$.

The construction and study of singular random operators has been an active fields in parallel of the study of singular SPDEs over the last decade. The construction of Schrödinger operators with white noise potential goes back to Fukushima and Nakano \cite{FN77} on compact intervals. The first construction of a singular Anderson operator was obtained by Allez and Chouk \cite{AK15} using paracontrolled calculus on $\T^2$. It was extended on compact surfaces by Mouzard \cite{Mo20} who obtained the Weyl law, and the properties of the operator were further investigated by Bailleul, Dang and Mouzard \cite{BDM22}. See also \cite{MZ24} where Mouzard and Zachhuber used the paracontrolled approach to obtain Strichartz' estimates on compact surfaces. The same paracontrolled approach was used by Morin and Mouzard \cite{MM22} to construct the magnetic Anderson operator on $\T^2$. On $\T^3$, the Anderson operator was constructed by Gubinelli, Ugurcan and Zachhuber \cite{GUZ20} and De Vecchi, Ji and Zachhuber \cite{DJZ25}, in which Strichartz' estimates are obtained. In parallel, the Anderson operator was constructed on $\T^d$ for $d\leqslant 3$ by Labbé in \cite{Lab19}, using regularity structures. Let us also mention the cosntruction of Mouzard and Ouhabaz \cite{MO25} using quadratic forms, which allows to recover easily the spectral gap on $\T^d$ for $d\leqslant 3$.

On $\R^d$, the construction is more complex due to the growth at infinity of the noise. In particular, the quadratic form of the operator is no more bounded from below. On the full line, the constuction and study of the Anderson operator was conducted in a series of works of Dumaz and Labbé \cite{DL20,DL23,DL24a,DL24c} using probabilistic methods and a twisted Sturm-Liouville approach. On $\R^2$, Ueki \cite{Ue25} constructed the operator by combining a localization argument with a paracontrolled approach in the fashion of Mouzard. In parallel, Hsu and Labbé \cite{HsL26} used the Klein-Landau theorem to construct the Anderson operator on $\R^d$ for $d\leqslant 3$ from the flow of the PAM equation. More recently, Mouzard and Zachhuber \cite{MZ26} constructed the solution of (nonlinear) Anderson-Schrödinger equation on $\R^d$ for $d\leqslant 3$ using paracontrolled calculus, which can be seen has a construction of the operator by Stone's theorem.

When the base operator is a confining Schrödinger operator, the associated Anderson operator has been constructed by Minami \cite{Mi15} in dimension 1, using properties of fractional Gaussian noises. In particular, the stochastic Airy operator (i.e. $-\partial_x^2+x+\xi$ on $L^2_x(\R_{\geq0})$ with Dirichlet condition at $0$) has been studied by Dumaz and Labbé \cite{DL22} for its link with eigenvalues of large random matrices \cite{RRV11}. In dimension 2, the known results are due to the second author in the case of a white noise potential. They prove constructed the solution of (nonlinear) Anderson-Schrödinger equation with harmonic potential on $\R^2$ \cite{Ma25a}. The paracontrolled analysis of the Hermite-Anderson operator was done in \cite{Ma25b}, in which Strichartz' estimates where obtained. Upper bound on the Sobolev regularity of eigenfunctions was obtained in \cite{Ma25c}, as well as the complete characterisation of the domain in dimension 1. These results were extended to more general potentials $V(x)\approx \langle x\rangle^\beta$ in their PhD thesis \cite{MaPhD}. So far all the renormalisation counterterms were non-constant. In a parallel work to the current paper \cite{Ma26+}, the second author obtained constant counterterms for a large class of perturbations of confining Schrödinger operators, including some pseudo-differential operators. In the case of the Hermite-Anderson operator, the constructions of \cite{Ma26+} and of the current paper gives the same operator up to a constant shift. The concomitant achievement of renormalisation by constants of some non-translation invariant Schrödinger operators in two different direction (pseudo-differentials perturbations in \cite{Ma26+} and the full subcritical regime in the current paper) are a good indication of a possible universality of this phenomenon. In the context of Schrödinger equations, adding a constant to the Hamiltonian can be equivalently seen as a global change in the phase of the solution. When studying quantum mean-fields limits, it is convenient to go from the Schrödinger viewpoint to the Heisenberg viewpoint, i.e. to consider density matrices rather than wave functions. As density matrices are invariant by a global change in phase of the wave function, the renormalisation constant is somehow irrelevant. This would not be the case for non-constant counterterms, thus giving another reason to seek for constant counterterms.

The flow approach to singular SPDEs has recently been introduced by Duch \cite{Duc21, Duc22}, taking its inspiration from Wilsonian renormalisation group techniques. More precisely, it adapts ideas from the work of Polchinski \cite{Pol84} that allows to implement the renormalisation of Feynman diagrams inductively. We refer to the monograph \cite{M03} for a better grasp of the physics literature, as well as the lecture notes \cite{Duc23} for a smooth introduction to the subject in an elliptic context. The main idea of the method is to define a continuous flow of scales. On the opposite, a few years before, Kupiainen and Marcozzi used techniques from discrete renormalisation group theory in \cite{Kup16,KM17} to solve the $\Phi^4_3$ and the gKPZ equations. They were however limited by the fact that their parameter was discrete, where Duch's approach is able to treat the full subcritical regime easily. The approach has been developed by Chandra and the first author in \cite{CF24a,CF24b}, where they solve the gKPZ equation in the full subcritical regime, as well as rough ODEs. Another landmark is the work \cite{DGR23}, where the authors construct the $\Phi^4$ measure in the full subcritical regime on the full space. Some of the techniques developed therein are reused in the present work. On the side of renormalisation, Bruned and the third author exhibited in \cite{BM25} that the renormalisation procedure of the flow approach matched in the BPHZ renormalisation scheme of regularity structures. Outside of the field of SPDEs, continuous renormalisation group techniques have known a gain of interest recently. Bauerschmidt and Bodineau proved a log-Sobolev inequality for the sine-Gordon model in \cite{BB21}, which led to a series of works. See the lecture notes \cite{BBD24} and the references therein. Another reference of interest is the works of Gubinelli and Meyer \cite{GM24} and Meyer \cite{Mey26} in which the authors construct the sine-Gordon measure with a particular type of stochastic quantisation relying on a forward-backward SDE. They are able to derive relevant properties on the constructed measure, such as the Osterwalder-Schrader axioms. Finally, we would like to mention a recent work of Duch \cite{Duc24} in which he constructs the Gross-Neveu model using the Polchinski flow.

\subsection{Notations}

We will denote in a general manner that $a\lesssim b$ if there is a positive constant $C$, that does not depend on the parameter at stake, depending on the context, such that $a\leq Cb$. We denote $\SS=(-\infty,1]\times\T^d$ or $\Lambda=(-\infty,1]\times\R^d$, depending on the section, which should be clear in the context. We will specify it otherwise. Similarly, we denote $\Lambda_{T}=[0,T]\times\R^d$. We choose to use fractional parabolic scaling for the space-time in $\SS$. We set $\mfs=(\alpha,1,\dots,1)$ and, for $z\in\SS$,
\begin{equation}
|z|_\mfs=|z_0|^\frac{1}{\alpha}+\sum_{i=1}^d|z_i|.
\end{equation}
In particular, the effective dimension is $|\mfs|=d+\alpha$. In addition, we set for $k\in\N^{d+1}$, $|k|_\mfs=\alpha k_0+\sum_{i=1}^dk_i$. For $x\in\R^d$, we define the Japanese bracket as $\langle x\rangle=(1+|x|^2)^\frac{1}{2}$, as well as $\langle\nabla\rangle$ the Fourier multiplier with symbol $\langle\bigcdot\rangle$. In particular, $\langle\nabla\rangle^2=1-\Delta$.\\

\textit{Weights.} We set for $\mu\in(0,1]$ two $\mu$-dependent Japanese brackets on $x\in\R^d$ and $z\in\SS$ by
\begin{equation}\label{mubracket}
\langle x\rangle_{\mu}=\left(1+\frac{|x|^2}{\mu^2}\right)^\frac{1}{2}~~\mathrm{and}~~\langle z\rangle_{\mu}=\left(1+\frac{|z|_\mfs^2}{\mu^2}\right)^\frac{1}{2}.
\end{equation}
Moreover, we define the exponential weight for $\ell\in\R$
\begin{equation}\label{expweight}
    \mcb e_\ell(x)=\exp\big(\ell\langle x\rangle^{1/2}\big)\,.
\end{equation}
We also set for conciseness $\mcb e_{\ell}(x;y)\eqdef\mcb e_{\ell}(x-y)$. We point out that this kernel is of sub-exponential growth, so any kernel with exponential decay can absorb it. This is the case of the fractional heat kernel and the massive Green's function. Finally we define some exponential weights that will be crucial in the study of the harmonic oscillator. For $x,y\in\R^d$ and $\mu\in(0,1]$, we set
\begin{equation}\label{eq:wieghtW}
    \mcb W_\mu(x,y)=\e^{\mu^2\frac{|x|^2+|y|^2}{8}}\,,~~\mathrm{and}~~\mcb W_\mu^{-1}(x,y)=\e^{-\mu^2\frac{|x|^2+|y|^2}{8}}\,.
\end{equation}

\textit{Power counting.}
We will require a notion of degree, that follows the rules of power counting. We let 
\begin{equation}
\deg(k,n)\eqdef-\alpha-\sigma+n\beta +k(\beta+\sigma)\,,
\end{equation}
and for conciseness,
\begin{equation}
\deg(n)\eqdef\deg(1,n)\equiv-\alpha+(n+1)\beta \,.
\end{equation}
We also set
\begin{equs}\label{eq:N0N1}
    N_0\eqdef\lfloor(\alpha+\sigma)/\beta\rfloor\quad\text{and}\quad N_1\eqdef\lfloor (\alpha-\beta)/\beta\rfloor\,,
\end{equs}
which are the smallest integers such that 
\begin{equs}
    n>N_k\Rightarrow \deg(k,n)>0\,.
\end{equs}

\textit{Kernels.} We define, for $(t,x)\in\SS$, the heat kernel
\begin{equation}\label{greenfunction}
G^{\alpha,m}(t,x)=\one_{(0,\infty)}(t)\e^{-t\rmD^{\alpha,m}}(x)\,,
\end{equation}
the fundamental solution of the differential operator $\D_t+\rmD^{\alpha,m}$. We also define, for a scale parameter $\mu\in[0,1]$, and $(t,x)\in \R\times\T^d$,
\begin{equation}
    Q_\mu(x)=(1-\mu^2\Delta)^{-1}(x)\,,
\end{equation}
and
\begin{equation}
    K_{1,\mu}(t,x)=\mu^{-\alpha} \e^{-\frac{t}{\mu^\alpha}}\one_{(0,\infty)}(t)(1-\mu^2\Delta)^{-1}(x)\,,
\end{equation}
which are respectively the fundamental solutions of $(1-\mu^2\Delta)$ and $\mcP_\mu=(1+\mu^\alpha\D_t)\otimes(1-\mu^2\Delta)$. We define for a function $\psi\colon\R^{d+1}\backslash\{0\}\to\R$, $\mcS_\mu\psi(t,x)=\mu^{-d-\alpha}\psi\left(\frac{t}{\mu^\alpha},\frac{x_1}{\mu},\dots,\frac{x_d}{\mu}\right)$. We have then $K_{1,\mu}=\mcS_\mu K_{1,1}$.
For a given integer $N\in\N$, we set
\begin{equation}
K_{N,\mu}=K_{1,\mu}^{*N}.
\end{equation}
In practice, it will be sufficient to take $K_\mu=K_{N,\mu}$ with $N=\lceil\alpha-\beta\rceil+2$. We also set $\mcR_\mu=\big(\mcP_\mu^{N}\big)^*$, where $*$ denotes the adjunction with respect to the $L^2(\Lambda)$ inner product. We also introduce some kernels associated with the harmonic oscillator. First, we define the Mehler kernel
\begin{equation}\label{eq:Mehlerkernel}
    M(t,x,y)=\one_{t>0}\frac{1}{\big(2\pi\sinh(2t)\big)^{d/2}}\exp\left(-\frac{|x-y|^2}{4\tanh(t)}-\tanh(t)\frac{|x+y|^2}{4}\right)\,,
\end{equation}
which is the fundamental solution of $\D_t-\Delta+|x|^2$. Moreover, we define a family of kernels that are going to serve the same role as $K_\mu$ in the solution theory for the SPDE with the harmonic oscillator. First let a family of positive numbers $a_n=1+n/(2(N_0+1))$, for $0\leq n\leq N_0+1$. $N_0$ is defined in \eqref{eq:N0N1}. In particular $1\leq a_n\leq 3/2<2$. We let
\begin{equation}\label{eq:defJmu}
    J_{\mu,n}(t,x,y)=K_{\mu}(t,x-y)\e^{-\frac{a_n\mu^2|y|^2}{8}}\,,
\end{equation}
and $J_\mu=J_{\mu,N_0+1}$. We also define $\mcT_{\mu,n}=(J_{\mu,n}^{-1})^*$. More precisely, it reads, for $f\colon\Lambda\to\R$,
\begin{equation}\label{eq:defTmu}
    (\mcT_{\mu,n}f)=\mcR_\mu\Big(\e^{\frac{a_n\mu^2|\bigcdot|^2}{8}}f\Big)\,.
\end{equation}
We set $\mcT_\mu=\mcT_{\mu,N_0+1}$. Finally, we define $\msQ_\nu$, $\msP_\mu$, and $\msR_\mu$ the same way as $Q_\mu$, $\mcP_\mu$, and $\mcR_\mu$ but with the Laplacian $-\Delta$ replaced by the harmonic oscillator $-\Delta+|x|^2$.\\

\textit{Function spaces.} For $f\colon\R^d\to\R$ we define the norm for $p\in[1,\infty]$,
\begin{equation}\label{eq:defLp}
    \|f\|_{L^p_x}=\left(\int_{\R^d}|f(x)|^p\dint x\right)^\frac{1}{p},
\end{equation}
and $L^p_x(\R^d)$ the usual Lebesgue-based space associated to it. We also denote $L_{\mcb e_\ell}^2(\R^d)$ the space of functions such that $\mcb e_\ell f\in L^2_x(\R^d)$. Similarly, for $f\colon\SS\to\R$, we define, for $p,q\in[1,\infty]$,
\begin{equation}
    \|f\|_{L_{t,x}^{p,q}}=\left(\int_\R\left(\int_{\R^d}|f(t,x)|^q\dint x\right)^\frac{p}{q}\dint t\right)^\frac{1}{p},
\end{equation}
and consequently the associated space. When $p=q$, we simply denote it $L_{t,x}^p(\SS)$ or $L_z^p(\SS)$. We will try to drop as much as possible in the notations the dependence on the underlying spaces. It should be clear in each section whether it is $\T^d$ or $\R^d$.

For $E$ and $E'$ two Banach spaces, we denote $\mcL(E,E')$ the space of bounded linear maps from $E$ to $E'$. It is classically endowed with the operator norm $\|A\|_{\mcL(E,E')}=\sup_{\|x\|_E\leq1}\|Ax\|_{E'}$. We set $\mcL(E)=\mcL(E,E)$. We make this norm explicit for kernel operators from $L_{t,x}^{\infty,r}$ to $L_{t,x}^{\infty,p}$, whose action reads
\begin{equation}
    f\in L_{t,x}^{\infty,r}\mapsto\left((t,x)\mapsto\int_0^\infty\int_{\R^d}k(t-s,x,y)f(s,y)\dint y\dint s\right)\,,
\end{equation}
for some kernel $k\colon\R\times\R^d\times\R^d$. We will make the confusion between the operator and its kernel. In this context, we will denote $\|k\|_{\mcL(L_{t,x}^{\infty,r},L_{t,x}^{\infty,p})}$ the norm of the kernel operator. We define an operator norm on functions from $\Lambda^2$ to $\R$.
\begin{equs}\label{eq:normOp}
\|\bigcdot\|_{\mathrm{Op}}=\|\bigcdot\|_{L_{w_1}^\infty L_{w_2}^1}+\|\bigcdot\|_{L_{w_2}^\infty L_{w_1}^1}\,.
\end{equs}
For $\eta<0$, $p\in[1,\infty]$, we define the Besov space $\mcB_{p,q}^{\eta}(\R^d)$ as the completion of compactly supported smooth functions on $\R^d$ with respect to the norm
\begin{equation}
\|\bigcdot\|_{\mcB_{p,q}^{\eta}}=\Big\|\mu^{-\eta}\|Q_{\mu}^{*\lceil-\eta\rceil}\bigcdot\|_{L_{x}^{p}}\Big\|_{L^q(\mathrm{d\mu/\mu})}\,.
\end{equation}
Moreover, for $\eta\in(0,1)$, we define the space Besov spaces $\mcB_{p,\infty}^\eta(\R^d)$ as the completion of compactly supported smooth functions on $\R^d$ with respect to the norm
\begin{equation}
    \|\bigcdot\|_{\mcB_{p,\infty}^{\eta}}=\|\bigcdot\|_{L^p_{x}}+\Big\|\mu^{-\eta}\|(Q_\mu-\mathrm{Id})\bigcdot\|_{L^p_{x}}\Big\|_{L^q(\mathrm{d\mu/\mu})}\,.
\end{equation}
We also define $W^{\eta,p}=\mcB_{p,p}^\eta$ the usual $L^p_x$-based Sobolev-Slobodeckij spaces and we denote by $H^{s,p}$ the Bessel-Sobolev spaces with norm
\begin{equation}
     \|\bigcdot\|_{\mathrm{H}^{\eta,p}}=\|(1-\Delta)^{\frac{\eta}{2}}\bigcdot\|_{L^p_x}\,.
\end{equation} For $\eta\in(0,1)$, $p\in[1,\infty]$, and an interval $I$, we define the Besov space $L_t^\infty\mcB_{p,\infty}^{\eta}(I\times\R^d)$ as the completion of compactly supported smooth functions on $I\times\R^d$ with respect to the norm
\begin{equation}
\|f\|_{L_t^\infty\mcB_{p,\infty}^{\eta}}=\|f\|_{L_{t,x}^{\infty,p}}+\sup_{t\in I}\sup_{\mu\in (0,1]}\mu^{-\eta}\|(Q_\mu-\mathrm{Id})f(t,\bigcdot)\|_{L_{x}^{p}}\,.
\end{equation}
In infinite volume, we will require a specific type of weighted Besov space. We set
\begin{equation}\label{weightedspace}
    \|f\|_{\mcE^{\eta,\delta}_{p,q,\ell,T}}=\sup_{t\leq T}t^{-\delta/\alpha}\|\mcb e_{-\ell-t}f(t,\bigcdot)\|_{\mcB_{p,q}^\eta(\R^d)}\,,
\end{equation}
and we set $\mcE^{\eta}_{p,\ell,T}=\mcE^{\eta,0}_{p,\ell,T}$.
We finally define Sobolev and Besov spaces based on the harmonic oscillator. For $\eta\in\R$ and $1<p<\infty$, Sobolev spaces are given by the completion of $\mcS(\R^d)$ under the norm
\begin{equation}
     \|\bigcdot\|_{\msH^{\eta,p}}=\|\mathrm H^{\frac{\eta}{2}}\bigcdot\|_{L^p_x}\,.
\end{equation}
For $\eta<0$, Besov spaces are given by the completion of $\mcS(\R^d)$ under the norm
\begin{equation}
    \|\bigcdot\|_{\msB_{p,q}^{\eta}}=\Big\|\mu^{-\eta}\|\msQ^{\lceil-\eta\rceil}_{\mu} \bigcdot\|_{L^p_x}\Big\|_{L^q(\mathrm{d\mu}/\mu)}\,,
\end{equation}
and for $0<\eta<1$, under the norm
\begin{equation}
    \|\bigcdot\|_{\msB_{p,q}^{\eta}}=\|\bigcdot\|_{L^p_x}+\Big\|\mu^{-\eta}\|(\msQ_\mu-\mathrm{Id}) \bigcdot\|_{L^p_x}\Big\|_{L^q(\mathrm{d\mu}/\mu)}\,.
\end{equation}
We define $L_t^\infty\msB_{p,\infty}^{\eta}(I\times\R^d)$ as $L_t^\infty\mcB_{p,\infty}^{\eta}(I\times\R^d)$, but with $Q_\mu$ replaced by $\msQ_\mu$.

\subsection*{Acknowledgments}

{\small
    S.M. was supported by the ANR project Smooth ANR-22-CE40-0017. A.M. gratefully acknowledges funding support from the European Research Council (ERC) through the ERC Starting Grant Low Regularity Dynamics via Decorated Trees (LoRDeT), grant agreement No.\ 101075208. Views and opinions expressed are however those of the author(s) only and do not necessarily reflect those of the European Union or the European Research Council. Neither the European Union nor the granting authority can be held responsible for them. 
}

\section{Deterministic construction of the solution}\label{nonstatio}\label{SecSPDE}

In this section, we construct the semigroup associated to the fractional Anderson operator by solving the SPDE \eqref{mainSPDE}. For that purpose, we use the flow approach recently introduced by Duch \cite{Duc21}.

\subsection{General framework of the flow approach}\label{Framework}

Since \eqref{mainSPDE} is not a good starting point, we need to consider a sequence of approximations. More precisely, let $\eps>0$ and $\xi_\eps\eqdef\mcS_\eps\rho*\xi$, where $\rho$ is a smooth function compactly supported in the unit ball and that integrates to $1$. We solve the bare equation obtained when the noise is replaced by $\xi_\eps$
\begin{equation}\label{renormSPDE}
(\D_t+\rmD^{\alpha,m})u^{\alpha,m}_\eps=\big(\xi_\eps-\mfC^{\alpha,m}_\eps\big) u^{\alpha,m}_\eps\,,
\end{equation}
where $\mfC^{\alpha,m}_\eps=\sum_{n=0}^{N_1}\mfc_\eps^{\alpha,m;n}$ and with regularised initial data $u^{\alpha,m}_{0,\eps}$. Naive upper bounds on the constants $\mfc_\eps^{\alpha,m;n}$ typically diverge as $\eps\to0$ and the divergence speed decreases as $n$ increases. 

We let the linear functional
\begin{equation}\label{eq:defS0}
    S^{\alpha,m}_\eps[u]\eqdef\big(\xi_\eps-\mfC^{\alpha,m}_\eps\big) u\,.
\end{equation}
Letting $\psi^{\alpha,m}_\eps(t,x)\eqdef\delta(t)u^{\alpha,m}_{0,\eps}(x)$, we can rewrite \eqref{renormSPDE} as 
\begin{equation}\label{mildform}
    u^{\alpha,m}_\eps=G^{\alpha,m}(\one_{(0,\infty)}S^{\alpha,m}_\eps[u^{\alpha,m}_\eps]+\psi^{\alpha,m}_\eps)\,.
\end{equation}
We recall that $G^{\alpha,m}$ has been defined in \eqref{greenfunction}. Let $v^{\alpha,m}_\eps=u^{\alpha,m}_\eps-G^{\alpha,m}\psi^{\alpha,m}_\eps$. It satisfies the equation
\begin{equation}
    v^{\alpha,m}_\eps=G^{\alpha,m}(\one_{(0,\infty)}S^{\alpha,m}_\eps[v^{\alpha,m}_\eps+G^{\alpha,m}\psi^{\alpha,m}_\eps])\,.
\end{equation}
The indicator function above is taken in the time variable. We define
\begin{equation}\label{eq:defT0}
    T^{\alpha,m}_{\eps}[u]\eqdef S^{\alpha,m}_\eps[u+G^{\alpha,m}\psi^{\alpha,m}_\eps]-\psi^{\alpha,m}_\eps\,.
\end{equation}
Letting 
\begin{equation}
    F^{\alpha,m}_\eps[v]\eqdef \one_{(0,\infty)}T^{\alpha,m}_\eps[v]=\one_{(0,\infty)}\big(S^{\alpha,m}_\eps[v+G^{\alpha,m}\psi^{\alpha,m}_\eps]-\psi^{\alpha,m}_\eps\big)\,,
\end{equation}
the SPDE has been rewritten as
\begin{equation}\label{mildformF}
    v^{\alpha,m}_\eps=G^{\alpha,m}F^{\alpha,m}_\eps[v^{\alpha,m}_\eps]\,.
\end{equation}
Above, we used that $\one_{(0,\infty)}\psi_\eps=0$, so $ F^{\alpha,m}_\eps[v]=\one_{(0,\infty)}S^{\alpha,m}_\eps[v+G^{\alpha,m}\psi^{\alpha,m}_\eps]$.

We now implement the effective scale and the associated cut-off Green function. Namely, we let, for $\mu\in[0,1]$, 
\begin{equation}
G^{\alpha,m}_\mu(t,x) \eqdef  \chi_{\mu}(t) G^{\alpha,m}(t,x)\,,
\end{equation} 
where $\chi_{\mu}(t) = \chi(t/\mu^\alpha)$ with the convention $ \chi_{0}(t) = 1 $. Here, we have supposed that the time $t$ belongs to $[0,1]$. The map $\chi : \R_+ \rightarrow \R_+$ is a fixed smooth and non-decreasing function  such that $\chi |_{[0,1]} = 0$ and $\chi |_{[2,\infty)} = 1$. In particular, $G^{\alpha,m}_0=G^{\alpha,m}$. We also denote $\dot G^{\alpha,m}_\mu=\D_\mu G^{\alpha,m}_\mu$. 
One has the following key properties on the support of $G_{\mu}$
\begin{equation}\label{cutscale}
	\mathrm{Supp}\,G^{\alpha,m}_{\mu}\subset[\mu^\alpha,1]\times\R^d,\quad\mathrm{Supp}\,\dot{G}^{\alpha,m}_{\mu} \subset [\mu^\alpha,2\mu^\alpha] \times \R^d\,.
\end{equation}

We define a regularised version of the solution supported after time $\mu^2$, the effective solution:
\begin{equation}
    v^{\alpha,m}_{\eps,\mu}\eqdef G^{\alpha,m}_\mu F^{\alpha,m}_\eps[v^{\alpha,m}_\eps]\,.
\end{equation}
The strategy of the flow approach is to look for the right-hand side of the equation solved by the effective solution. We will construct this right-hand side in terms of another functional $F_\epmu^{\alpha,m}$ called the effective force, which is an explicit function of the force $F^{\alpha,m}_\eps$. The effective force is related to the force $F^{\alpha,m}_\eps$ as follows. Denote by
\begin{equs}
    H_\epmu^{\alpha,m}[v]\eqdef   \D_\mu F^{\alpha,m}_{\eps,\mu}[v]+\mathrm{D}F^{\alpha,m}_{\eps,\mu}\dot G^{\alpha,m}_\mu F^{\alpha,m}_{\eps,\mu}[v]\,.
\end{equs}
D is a Fréchet derivative. We require that $F_\epmu^{\alpha,m}$ satisfies
\begin{equs}\label{eq:definitionofH}
   \mu   H_\epmu^{\alpha,m}[v]\longrightarrow 0 \;\text{as}\;\mu\searrow0\quad\text{and}\quad F_{\eps,0}^{\alpha,m}[v]=F_{\eps}^{\alpha,m}[v]\,.
\end{equs}
In other words, the effective force is an approximate solution to the inviscid Polchinski flow equation
\begin{equs}\label{eq:flotFam}
 \D_\mu F^{\alpha,m}_{\eps,\mu}+\mathrm{D}F^{\alpha,m}_{\eps,\mu}\dot G^{\alpha,m}_\mu F^{\alpha,m}_{\eps,\mu}=0\,,\quad
F_{\eps,0}^{\alpha,m}=F_{\eps}^{\alpha,m}\,.
\end{equs}
Assuming \eqref{eq:definitionofH} implies 
\begin{equs}\label{eq:defR}
    R^{\alpha,m}_\epmu \eqdef F^{\alpha,m}_\eps[v^{\alpha,m}_\eps]-F^{\alpha,m}_\epmu[v^{\alpha,m}_\epmu]\longrightarrow 0 \;\text{as}\;\mu\searrow0\,.
\end{equs}
Indeed, $R^{\alpha,m}_\epmu$ is related to the effective force, and therefore to the force, through the system 
\begin{subequations}
\begin{empheq}[left=\empheqlbrace]{alignat=2}\label{eq:flowsys}
  \partial_\mu  v^{\alpha,m}_\epmu&=\dot G^{\alpha,m}_\mu\big(F^{\alpha,m}_{\eps,\mu}[v^{\alpha,m}_{\eps,\mu}]+R^{\alpha,m}_{\eps,\mu}\big)\,,\\
 \partial_\mu   R^{\alpha,m}_\epmu&=- \mathrm{D}F^{\alpha,m}_\epmu \dot G^{\alpha,m}_\mu R^{\alpha,m}_\epmu-H^{\alpha,m}_{\eps,\mu}[v^{\alpha,m}_{\eps,\mu}]\,.
\end{empheq}
\end{subequations}
Note that if the effective force solves \eqref{eq:flotFam}, then $R_\epmu^{\alpha,m}=0$.

Constructing the solution now boils down to solving \eqref{eq:flotFam}, possibly modulo an error. We will solve this equation using Picard iterations, expanding the effective force over the \textit{effective force coefficients} $(\upzeta_{{\eps,\mu}}^{\alpha,m;n,k})$, writing for ${N_{\max}}\in\N\cup\{\infty\}$
\begin{equs}\label{eq:ansatzwithNmax}
F^{\alpha,m}_{\eps,\mu}[v](z)&=F^{\alpha,m;0}_{\eps,\mu}(z)+F^{\alpha,m;1}_{\eps,\mu}[v](z)=\sum_{n=0}^{N_{\max}}\upzeta_{{\eps,\mu}}^{\alpha,m;0,n}(z)+\sum_{n=0}^{N_{\max}}\upzeta_{{\eps,\mu}}^{\alpha,m;1,n}[v](z)\,.
\end{equs}
Here, 
\begin{equs}
    \upzeta_{\eps,\mu}^{{\alpha,m};1,n}[v](z)=\int_{\SS}\upzeta_{\eps,\mu}^{{\alpha,m};1,n}(z,w)v(w)\dint w\,.
\end{equs}
The inviscid Polchinski flow equation \eqref{eq:flotFam} implies that the force coefficient must solve the hierarchy of equations 
\begin{equs}\label{eq:hierarchyzeta}
\D_\mu \upzeta^{{\alpha,m};k,n}_{\eps,\mu}(z,\bigcdot)=-\sum_{i=0}^{n-1}\upzeta_{\eps,\mu}^{{\alpha,m};1,i}[\dot G^{\alpha,m}_\mu\upzeta_{\eps,\mu}^{{\alpha,m};k,n-1-i}(\bigcdot)](z)\,.
\end{equs}

While solving \eqref{eq:hierarchyzeta} is our ultimate goal, the effective force coefficients are not well-suited to study the renormalisation problem because, in view of the initial condition $ F^{\alpha,m}_\eps[v]=\one_{(0,\infty)}S^{\alpha,m}_\eps[v+G^{\alpha,m}\psi^{\alpha,m}_\eps]$, 
\begin{enumerate}
    \item The effective force coefficients are not stationary in time;
    \item The effective force coefficients depend on the initial condition;
    \item In the case of the harmonic oscillator, the effective force coefficients are not stationary in space.
\end{enumerate}
We solve these three difficulties one by one. Since item 3. is peculiar to the case of the harmonic oscillator, we defer the discussion to a paragraph below dedicated to this setting. Item 1. and 2. being general difficulties arising both on the torus and in infinite volume, for both the fractional Laplacian and the harmonic oscillator, we discuss now how to solve them. 

\begin{definition}
  We denote by $S_\epmu^{\alpha,m}$ and $T_\epmu^{\alpha,m}$ the possibly approximate solutions to the inviscid Polchinski flow equation \eqref{eq:flotFam} with initial condition respectively $S_{\eps}^{\alpha,m}$ and $T_\eps^{\alpha,m}$, these quantifies being introduced in \eqref{eq:defS0} and \eqref{eq:defT0}. There are respectively called the \textit{stationary effective force} and the \textit{translated effective force}. We expand these forces respectively over the \textit{stationary force coefficients} $(\upxi_{{\eps,\mu}}^{\alpha,m;n})$ and the \textit{translated force coefficients} $(\uptau_{{\eps,\mu}}^{\alpha,m;k,n})$, writing 
 \begin{equs}
S^{\alpha,m}_{\eps,\mu}[v](z)&=\sum_{n=0}^{N_{0}}\upxi_{{\eps,\mu}}^{\alpha,m;n}[v](z)\,,\\
T^{\alpha,m}_{\eps,\mu}[v](z)&=T^{\alpha,m;0}_{\eps,\mu}(z)+T^{\alpha,m;1}_{\eps,\mu}[v](z)=\sum_{n=0}^{N_{0}}\uptau_{{\eps,\mu}}^{\alpha,m;0,n}(z)+\sum_{n=0}^{N_{0}}\uptau_{{\eps,\mu}}^{\alpha,m;1,n}[v](z)\,.
\end{equs}
The stationary force coefficients satisfy the hierarchy
\begin{equs}\label{eq:hierarchyxi}
\D_\mu \upxi^{{\alpha,m};n}_{\eps,\mu}(z,\bigcdot)=-\sum_{i=0}^{n-1}\upxi_{\eps,\mu}^{{\alpha,m};i}[\dot G^{\alpha,m}_\mu\upxi_{\eps,\mu}^{{\alpha,m};n-1-i}(\bigcdot)](z)
\end{equs}
and the translated force coefficients satisfy the hierarchy
\begin{equs}\label{eq:hierarchytau}
\D_\mu \uptau^{{\alpha,m};k,n}_{\eps,\mu}(z,\bigcdot)=-\sum_{i=0}^{n-1}\uptau_{\eps,\mu}^{{\alpha,m};1,i}[\dot G^{\alpha,m}_\mu\uptau_{\eps,\mu}^{{\alpha,m};k,n-1-i}(\bigcdot)](z)\,.
\end{equs}

Moreover, we assume that the stationary force coefficients satisfy the following initial condition, which is compatible with $S_{\eps,0}^{\alpha,m}=S_{\eps}^{\alpha,m}$:
\begin{equation}
\upxi^{\alpha,m;0}_{\eps,0}(z_1,z_2)=\xi_\eps(z_1)\delta(z_1-z_2)\,,
\end{equation}
as well as
\begin{equation}
    \upxi_{0,\eps}^{{\alpha,m};n}(z_1,z_2)=\begin{cases}\mfc_\eps^{{\alpha,m};n}\delta(z_1-z_2)&\mathrm{if}~~1\leq n\leq N_1\,,\\
    0&\mathrm{otherwise}.
\end{cases}
\end{equation}
  \end{definition}

\begin{example}\label{exampleforce}
We would like to point out that the stochastic terms $\upxi_{\eps,\mu}^n$ can be seen, as in regularity structures, as trees. In particular, as the equation is linear, they are ladder trees. $n$ would stand for the number of edges. We give below an example for the \textnormal{unrenormalised} version of $\upxi_{\eps,\mu}^2$, for $m\neq q$.
\begin{equs}
    ~&\int_{\Lambda}\upxi_{\eps,\mu}^2(z_1,z_2)\phi(z_2)\dint z_2=\begin{tikzpicture}[scale=0.4,baseline=0.15cm]
            \draw[line width=0.25mm, color=blue] (0,2.25) -- (0,3.25);
            \draw[line width=0.25mm] (0,0.25) node[dot] {} -- (0.5,1.25) node[dot] {} -- (0,2.25) node[dot] {};
            \draw (0,-0.25) node[scale=0.7] {$z_1$};
\end{tikzpicture}\\
&=\int_{\Lambda^2}{\xi}_\eps(z_1)(G^{\alpha,m}-G^{\alpha,m}_\mu)(z_1-z){\xi}_\eps(z)(G^{\alpha,m}-G^{\alpha,m}_\mu)(z-z_2){\xi}_\eps(z_2)\phi(z_2)\dint z\dint z_2\,.
\end{equs}
The black dots represent the noise, the black edges represent convolutions with the kernel $G-G_\mu$, and the loose blue edge represents $\phi$. The hierarchy of flow equations \eqref{eq:hierarchyxi} also has a graphical representation. For $n=2$, it would read
    \begin{equs}
        \D_\mu\begin{tikzpicture}[scale=0.4,baseline=0.15cm]
            \draw[line width=0.25mm, color=blue] (0,2.25) -- (0,3.25);
            \draw[line width=0.25mm] (0,0.25) node[dot] {} -- (0.5,1.25) node[dot] {} -- (0,2.25) node[dot] {};
            \draw (0,-0.25) node[scale=0.7] {$z_1$};
\end{tikzpicture}=\begin{tikzpicture}[scale=0.4,baseline=0.15cm]
            \draw[line width=0.25mm, color=blue] (0,2.25) -- (0,3.25);
            \draw[line width=0.25mm, color=red] (0,0.25) node[dot] {} -- (0.5,1.25) node[dot] {};
            \draw[line width=0.25mm] (0.5,1.25) node[dot] {} -- (0,2.25) node[dot] {};
            \draw (0,-0.25) node[scale=0.7] {$z_1$};
\end{tikzpicture}+\begin{tikzpicture}[scale=0.4,baseline=0.15cm]
            \draw[line width=0.25mm, color=blue] (0,2.25) -- (0,3.25);
            \draw[line width=0.25mm] (0,0.25) node[dot] {} -- (0.5,1.25) node[dot] {};
            \draw[line width=0.25mm, color=red] (0.5,1.25) node[dot] {} -- (0,2.25) node[dot] {};
            \draw (0,-0.25) node[scale=0.7] {$z_1$};
\end{tikzpicture}\,,
    \end{equs}
where this time the red edges represent a convolution by the kernel $-\dot G_\mu$. The inductive part enters when splitting the trees on the red edges. For instance
\begin{equs}
    \begin{tikzpicture}[scale=0.4,baseline=0.15cm]
            \draw[line width=0.25mm, color=blue] (0,2.25) -- (0,3.25);
            \draw[line width=0.25mm, color=red] (0,0.25) node[dot] {} -- (0.5,1.25) node[dot] {};
            \draw[line width=0.25mm] (0.5,1.25) node[dot] {} -- (0,2.25) node[dot] {};
            \draw (0,-0.25) node[scale=0.7] {$z_1$};
\end{tikzpicture}\to \begin{tikzpicture}[scale=0.4,baseline=0.15cm]
            \draw[line width=0.25mm, color=blue] (0,0.25) node[dot] {} -- (0,1.25);
            \draw (0,-0.25) node[scale=0.7] {$z_1$};
\end{tikzpicture},~~\begin{tikzpicture}[scale=0.4,baseline=0.15cm]
            \draw[line width=0.25mm, color=blue] (0.5,1.25) -- (0.5,2.25);
            \draw[line width=0.25mm] (0,0.25) node[dot] {} -- (0.5,1.25) node[dot] {};
            \draw (0,-0.25) node[scale=0.7] {$v_2$};
\end{tikzpicture}\,.
\end{equs}
\end{example}

The interest of the stationary and translated effective forces is that
\begin{enumerate}
    \item   The stationary effective force is independent from the initial condition and stationary in time (and in space outside the case of the harmonic oscillator);
    \item  The translated effective force involves the stationary in time kernel $G^{\alpha,m}$.
\end{enumerate}
Therefore, apart from the case of the harmonic oscillator, we will renormalise $S^{\alpha,m}_\epmu$ only, thereby enforcing that $\mfC^{\alpha,m}_\epmu$ does not depend on time and on the initial condition, then build $T^{\alpha,m}_\epmu$ pathwise in $S^{\alpha,m}_\epmu$, thereby dealing with the initial condition, and then build $F^{\alpha,m}_\epmu$ pathwise in $T^{\alpha,m}_\epmu$ thereby dealing with the breaking of time-translation-invariance. 

While going from the translated effective force to the effective is an analytic step which requires some work, going from the stationary force to the translated force is essentially algebraic. We give below the expressions of the translated force coefficients.
\begin{lemma}\label{Lem:expressiontau}
Fix some stationary force coefficients satisfying the hierarchy \eqref{eq:hierarchyxi} and take
    \begin{equs}
        \uptau^{\alpha,m;0,0}_{\eps,\mu}=-\psi_\eps\quad \text{and}\quad\uptau^{\alpha,m;0,n}_{\eps,\mu}=\upxi^{\alpha,m;n-1}_{\eps,\mu}[G^{\alpha,m}_\mu\psi^{\alpha,m}_\eps]\;\;\text{for}\;\;n\geqslant1\,,
    \end{equs}
alongside
\begin{equs}
  \uptau^{\alpha,m;1,n}_{\eps,\mu}[v]=  \upxi_{\eps,\mu}^{\alpha,1;n}[v] \;\;\text{for}\;\;n\geqslant0\,.
\end{equs}
Then, we have that the above translated force coefficients satisfy the hierarchy \eqref{eq:hierarchytau}.
\end{lemma}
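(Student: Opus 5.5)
The plan is a direct verification: substitute the proposed coefficients into \eqref{eq:hierarchytau} and reduce each identity to \eqref{eq:hierarchyxi}. Throughout, $\upxi^{\alpha,1;n}_{\eps,\mu}$ in the statement is read as $\upxi^{\alpha,m;n}_{\eps,\mu}$. The only analytic input is that $\psi^{\alpha,m}_\eps$ does not depend on $\mu$, so that
\begin{equs}
\D_\mu\big(G^{\alpha,m}_\mu\psi^{\alpha,m}_\eps\big)=\dot G^{\alpha,m}_\mu\psi^{\alpha,m}_\eps\,.
\end{equs}
I also use the Leibniz rule for the linear functionals $\upxi^{\alpha,m;j}_{\eps,\mu}[\bigcdot]$. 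This is legitimate for smooth $\xi_\eps$ at fixed $\eps>0$, which is the setting of the hierarchy.

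\emph{Case $k=1$.} Since $\uptau^{\alpha,m;1,n}_{\eps,\mu}=\upxi^{\alpha,m;n}_{\eps,\mu}$ for every $n$, the right-hand side of \eqref{eq:hierarchytau} with $k=1$ is
\begin{equs}
-\sum_{i=0}^{n-1}\upxi^{\alpha,m;i}_{\eps,\mu}\big[\dot G^{\alpha,m}_\mu\upxi^{\alpha,m;n-1-i}_{\eps,\mu}\big]\,.
\end{equs}
This is exactly the right-hand side of \eqref{eq:hierarchyxi}, so the identity holds.

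\emph{Case $k=0$, $n=0$.} The sum in \eqref{eq:hierarchytau} is empty, so the required identity is $\D_\mu\uptau^{\alpha,m;0,0}_{\eps,\mu}=0$. This holds because $\uptau^{\alpha,m;0,0}_{\eps,\mu}=-\psi^{\alpha,m}_\eps$ is independent of $\mu$.

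\emph{Case $k=0$, $n\geq1$.} Differentiating $\uptau^{\alpha,m;0,n}_{\eps,\mu}=\upxi^{\alpha,m;n-1}_{\eps,\mu}[G^{\alpha,m}_\mu\psi^{\alpha,m}_\eps]$ by the Leibniz rule gives
\begin{equs}
\big(\D_\mu\upxi^{\alpha,m;n-1}_{\eps,\mu}\big)[G^{\alpha,m}_\mu\psi^{\alpha,m}_\eps]+\upxi^{\alpha,m;n-1}_{\eps,\mu}[\dot G^{\alpha,m}_\mu\psi^{\alpha,m}_\eps]\,.
\end{equs}
On the right-hand side of \eqref{eq:hierarchytau}, I split off the term $i=n-1$, which involves $\uptau^{\alpha,m;0,0}_{\eps,\mu}=-\psi^{\alpha,m}_\eps$. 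It equals
\begin{equs}
-\upxi^{\alpha,m;n-1}_{\eps,\mu}\big[\dot G^{\alpha,m}_\mu(-\psi^{\alpha,m}_\eps)\big]=\upxi^{\alpha,m;n-1}_{\eps,\mu}[\dot G^{\alpha,m}_\mu\psi^{\alpha,m}_\eps]\,,
\end{equs}
which matches the second term above.

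The remaining terms are those with $0\leq i\leq n-2$. For these, $\uptau^{\alpha,m;0,n-1-i}_{\eps,\mu}=\upxi^{\alpha,m;n-2-i}_{\eps,\mu}[G^{\alpha,m}_\mu\psi^{\alpha,m}_\eps]$, so they sum to
\begin{equs}
-\sum_{i=0}^{n-2}\upxi^{\alpha,m;i}_{\eps,\mu}\Big[\dot G^{\alpha,m}_\mu\,\upxi^{\alpha,m;n-2-i}_{\eps,\mu}[G^{\alpha,m}_\mu\psi^{\alpha,m}_\eps]\Big]\,.
\end{equs}
By \eqref{eq:hierarchyxi} at level $n-1$, tested against the fixed function $G^{\alpha,m}_\mu\psi^{\alpha,m}_\eps$, this equals $(\D_\mu\upxi^{\alpha,m;n-1}_{\eps,\mu})[G^{\alpha,m}_\mu\psi^{\alpha,m}_\eps]$, which is the first term. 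For $n=1$ the sum is empty. This is consistent, because \eqref{eq:hierarchyxi} at level $0$ gives $\D_\mu\upxi^{\alpha,m;0}_{\eps,\mu}=0$.

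The truncation at $N_0$ causes no difficulty, since $\uptau^{\alpha,m;0,n}_{\eps,\mu}$ only involves $\upxi^{\alpha,m;n-1}_{\eps,\mu}$ with $n-1<N_0$.

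The only delicate point is the bookkeeping in the last case. One must recognise that the boundary term $i=n-1$ yields the derivative hitting $G^{\alpha,m}_\mu$, and that the sign is fixed by the choice $\uptau^{\alpha,m;0,0}_{\eps,\mu}=-\psi^{\alpha,m}_\eps$. One must also justify the product rule, namely that $\D_\mu$ commutes with evaluation of the kernel against a $\mu$-dependent test function. At fixed $\eps>0$ all kernels are smooth in $\mu$, so this is routine.
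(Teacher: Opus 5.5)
Your proof is correct and uses the same argument as the paper, unpacked order by order. The paper notes in one line that $S^{\alpha,m}_{\eps,\mu}[\bigcdot+G^{\alpha,m}_\mu\psi^{\alpha,m}_\eps]-\psi^{\alpha,m}_\eps$ solves the flow equation with initial datum $T^{\alpha,m}_\eps$, because the term produced by $\D_\mu G^{\alpha,m}_\mu$ cancels against the shift by $-\psi^{\alpha,m}_\eps$; that cancellation is exactly your boundary term $i=n-1$, and your graded version has the small advantage of applying directly to the truncated hierarchy, where $S^{\alpha,m}_{\eps,\mu}$ is not an exact solution.
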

\begin{proof}
    This follows from the fact that, if $S_\epmu^{\alpha,m}$ is an exact solution to the inviscid Polchinski flow equation with initial condition $S_\eps^{\alpha,m}$, then $S^{\alpha,m}_\epmu[\bigcdot+G_\mu^{\alpha,m}\psi^{\alpha,m}_\eps]-\psi^{\alpha,m}_\eps$ solves this equation with initial condition $T_\eps^{\alpha,m}$.
\end{proof}
Going from the translated force to the effective force relies on the fact that, after a certain time, translated force coefficients and effective force coefficient coincide. This crucial property is stated in the following lemma.
\begin{lemma}\label{Lem:suppprop}
    In view of the support properties of the scale decomposition of the heat kernel and of the flow equation \eqref{eq:hierarchyxi}, we have that $\upxi^{\alpha,m;n}_{\eps,\mu}(z_1,z_2)$ is supported on $t_1-2n\mu^\alpha\leq t_2\leq t_1$, which implies that 
\begin{equs}
    \upzeta^{\alpha,m;k,n}_{\eps,\mu}(z,\bigcdot)=\uptau^{\alpha,m;k,n}_{\eps,\mu}(z,\bigcdot) \qquad \text{for all} \quad t\geqslant 2n\mu^\alpha\,.
\end{equs}
\end{lemma}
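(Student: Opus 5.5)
The argument has two parts. First I prove the support property of $\upxi^{\alpha,m;n}_{\eps,\mu}$ by induction on $n$ using the hierarchy \eqref{eq:hierarchyxi}. Then I show that $\upzeta$ and $\uptau$ coincide at late times, again by induction on $n$, comparing the hierarchies \eqref{eq:hierarchyzeta} and \eqref{eq:hierarchytau}.

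\emph{Support of $\upxi^{\alpha,m;n}_{\eps,\mu}$.} For $n=0$, no flow term enters the hierarchy. Hence $\upxi^{\alpha,m;0}_{\eps,\mu}=\upxi^{\alpha,m;0}_{\eps,0}=\xi_\eps(z_1)\delta(z_1-z_2)$, which is supported on $t_2=t_1$. The same holds for the initial data $\mfc^{\alpha,m;n}_\eps\delta(z_1-z_2)$ at every $n$. For the inductive step, integrate \eqref{eq:hierarchyxi} in $\mu$ from $0$. Each term $\upxi^{i}_{\eps,\nu}\dot G_\nu\upxi^{n-1-i}_{\eps,\nu}$ with $\nu\leq\mu$ has a kernel in $(z_1,z_2)$ which is a convolution over intermediate points $w,w'$:
\begin{itemize}
\item $\upxi^i_{\eps,\nu}(z_1,w)$ forces $t_1-2i\nu^\alpha\leq t_w\leq t_1$;
\item $\dot G_\nu(w-w')$ forces $t_w-2\nu^\alpha\leq t_{w'}\leq t_w-\nu^\alpha$, by \eqref{cutscale};
\item $\upxi^{n-1-i}_{\eps,\nu}(w',z_2)$ forces $t_{w'}-2(n-1-i)\nu^\alpha\leq t_2\leq t_{w'}$.
\end{itemize}
Chaining these gives $t_1-2n\nu^\alpha\leq t_2\leq t_1$. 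Since $\nu\leq\mu$, this is contained in the claimed set $t_1-2n\mu^\alpha\leq t_2\leq t_1$, and integrating over $\nu\in[0,\mu]$ preserves the support. The identity $\uptau^{1,n}=\upxi^n$ from Lemma~\ref{Lem:expressiontau} transfers the same support to the linear translated coefficients. For $\uptau^{0,n}=\upxi^{n-1}[G_\mu\psi_\eps]$, one gets in addition that it vanishes unless $t_1\geq\mu^\alpha$, because $G_\mu\psi_\eps$ is supported in $[\mu^\alpha,1]$.

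\emph{Coincidence of $\upzeta$ and $\uptau$.} The two families differ only through their initial data. By Lemma~\ref{Lem:expressiontau}, the force $F_\eps$ is $\one_{(0,\infty)}(t_1)T_\eps$, so for $k=1$ the initial coefficients satisfy $\upzeta^{1,n}_{\eps,0}(z_1,z_2)=\one_{t_1>0}\uptau^{1,n}_{\eps,0}(z_1,z_2)$, and similarly for $k=0$. I would prove by induction on $n$ the stronger statement that
$$\upzeta^{k,n}_{\eps,\mu}(z_1,\bigcdot)=\one_{t_1>0}\,\uptau^{k,n}_{\eps,\mu}(z_1,\bigcdot)\quad\text{whenever}\quad t_1\geq 2n\mu^\alpha\text{ or }t_1\leq0.$$
The real content is the case $t_1\geq 2n\mu^\alpha$, where the indicator equals one.

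For the inductive step, take $t_1\geq2n\mu^\alpha$ and $\nu\leq\mu$. In the summand with index $i$, the left factor $\upzeta^{1,i}_{\eps,\nu}(z_1,w)$ is evaluated at $t_1\geq 2i\nu^\alpha$, so the induction hypothesis replaces it by $\uptau^{1,i}$. Its support then gives $t_w\geq t_1-2i\nu^\alpha$, and $\dot G_\nu$ gives $t_{w'}\geq t_w-2\nu^\alpha\geq 2(n-1-i)\nu^\alpha$. So the right factor is evaluated at a time where the induction hypothesis again applies, and it equals the corresponding $\uptau^{k,n-1-i}$. The right-hand sides of the two hierarchies therefore agree at $(z_1,\nu)$ for every $\nu\leq\mu$. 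The condition $t_1\geq 2n\nu^\alpha$ is monotone in $\nu$, so integrating from $\nu=0$, where the initial data agree for $t_1>0$, gives equality at scale $\mu$.

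\emph{Main obstacle.} The hardest part is the bookkeeping of time supports, in particular checking that the constraint $t\geq 2(n-1-i)\nu^\alpha$ really propagates through the chain. The key point is that $\dot G_\nu$ moves time backwards by at most $2\nu^\alpha$, while $\upxi^i$ moves it backwards by at most $2i\nu^\alpha$. The indicator $\one_{(0,\infty)}$ only acts on the output variable, so it never interferes once $t_1>0$.
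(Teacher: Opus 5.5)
Your proposal is correct and follows the paper's own argument. It runs an induction on $n$: the left factor $\upzeta^{1,i}_{\eps,\nu}$ in \eqref{eq:hierarchyzeta} is replaced by $\uptau^{1,i}_{\eps,\nu}=\upxi^i_{\eps,\nu}$, and then the time supports of $\upxi^i_{\eps,\nu}$ and $\dot G_\nu$ let the induction hypothesis apply to the right factor; you also spell out the support of $\upxi^n_{\eps,\mu}$ and track the indicator $\one_{t_1>0}$ more carefully than the paper does. One step to tighten is the term with $n-1-i=0$, notably $k=0$, where $\uptau^{0,0}=-\psi_\eps$ sits exactly at time $0$. There you need $t_{w'}>0$ strictly for the indicator to equal one, which holds because $t_{w'}\geq 2n(\mu^\alpha-\nu^\alpha)>0$ for every $\nu<\mu$, i.e.\ for almost every $\nu$ in the integral.
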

\begin{proof}
    Using the induction hypothesis, we can replace $ \upzeta^{\alpha,m;1,i}_{\eps,\mu}$ by  $\uptau^{\alpha,m;1,i}_{\eps,\mu}=\upxi^{\alpha,m;i}_{\eps,\mu}$ on the right-hand side of \eqref{eq:hierarchyzeta}, and then using the support property of the stationary force coefficients and again the induction hypothesis, we can then replace $ \upzeta^{\alpha,m;k,n-1-i}_{\eps,\mu}$ by  $\uptau^{\alpha,m;k,n-1-i}_{\eps,\mu}$.
\end{proof}
\begin{remark}
For the reader's convenience, let us verify that this property is indeed satisfied by the first three effective force coefficients, whose expression is explicit:
\begin{enumerate}
    \item We have $\upzeta^{\alpha,m;0,0}_\epmu=0$ and $\uptau^{\alpha,m;0,0}_\epmu=-\psi^{\alpha,m}_\eps$ which indeed is zero after the null-time hyperplane.
    \item We have $\upzeta^{\alpha,m;1,0}_\epmu[v]=\one_{(0,\infty)}\upxi_{\eps,0}^{\alpha,m;0}[v]$ and $\uptau^{\alpha,m;1,0}_\epmu[v]=\upxi_{\eps,0}^{\alpha,m;0}[v]$, so both agree after the null-time hyperplane.
     \item We have $\upzeta^{\alpha,m;0,1}_\epmu=\upxi^{\alpha,m;0}_{\eps,0}[G^{\alpha,m}\psi^{\alpha,m}_\eps]$ and $\uptau^{\alpha,m;0,1}_{\eps,\mu}=\upxi^{\alpha,m;0}_{\eps,0}[G^{\alpha,m}_\mu\psi^{\alpha,m}_\eps]$, so indeed their difference $\upxi^{\alpha,m;0}_{\eps,0}[(G^{\alpha,m}-G^{\alpha,m}_\mu)\psi^{\alpha,m}_\eps]$ is supported on times in $[0,2\mu^\alpha]$.
\end{enumerate} 
\end{remark}

Constructing the effective force now reduces to the construction of the stationary force, of the translated force, and then finally of the effective force. This steps are similar on the torus and in infinite volume.

We now turn to discussing how to construct the solution given by the effective force. From now on, the method will slightly differ whether we are in finite volume of not. 

\textit{Finite volume.} On the torus, we partially follow the original strategy of \cite{Duc21}, assuming that the effective force is an exact solution to the inviscid Polchinski equation \eqref{eq:flotFam} so we use the ansatz \eqref{eq:ansatzwithNmax} with $N_{\max}=\infty$. This entails that $H^{\alpha,m}_\epmu[v]=0$ and therefore $R^{\alpha,m}_\epmu=0$.

\textit{Infinite volume.} In infinite volume, the presence of weights in the norms gives little hope for the summability of the series of the force coefficients. We need to adopt the strategy introduced in \cite{DGR23}, that is the introduction of a remainder. We thus assume that the effective force takes the form \eqref{eq:ansatzwithNmax} with $N_{\max}=N_0$, which amounts to truncating \eqref{eq:flotFam} at order $N_0$ in the Picard iterations. For more details see Section 2.2.1 in \cite{CF24a}. 

Given a function $\upxi_\mu$ of $2$ variables, depending on $\mu\in(0,1]$, we set the notation
\begin{equation}
    \big(K_\mu^{\otimes 2}\upxi_\mu\big)(w_1,w_2)=\int_{\SS^2} K_\mu(w_1-z_1)K_\mu(w_2-z_2)\upxi_\mu(z_1,z_2)\dint z_1\dint z_2\,.
\end{equation}
This notation will also be used with other kernels.

\begin{definition}
    We define for such a function the following weighted $L^\infty_{w_1} L^1_{w_2}$ norm,
    \begin{equs}
    \|\upxi_\mu\|_{\mu,\kappa,\ell,\varsigma}&\eqdef\sup_{w_1\in\SS}\langle w_1\rangle^{-\kappa}\int_{\SS}\big\langle w_1;w_2\rangle_\mu^{\varsigma-\kappa}\mcb e_\ell(x_1;x_2)~\big|\big(K_{\mu}^{\otimes 2}\upxi_\mu\big)(w_1,w_2)\big|\dint w_2\\
        &\quad+\sup_{w_2\in\SS}\int_{\SS}\langle w_1\rangle^{-\kappa}\big\langle w_1;w_2\rangle_\mu^{\varsigma-\kappa}\mcb e_\ell(x_1;x_2)~\big|\big(K_{\mu}^{\otimes 2}\upxi_\mu\big)(w_1,w_2)\big|\dint w_1
        \,,
    \end{equs}
    where have denoted for compactness, for $w_1,w_2\in\SS$, $\langle w_1;w_2\rangle_\mu=\langle w_1-w_2\rangle_\mu$. We recall that we defined $\langle\bigcdot\rangle_\mu$ in \eqref{mubracket}. When $\varsigma=\kappa$ or $\ell=0$ or $\kappa=\varsigma=0$, we omit them writing 
    $$\|\bigcdot\|_{\mu,\kappa}\eqdef\|\bigcdot\|_{\mu,\kappa,0,\kappa}\,,\quad\|\bigcdot\|_{\mu,\kappa,\ell}\eqdef\|\bigcdot\|_{\mu,\kappa,\ell,\kappa}\quad\text{and}\quad\|\bigcdot\|_{\mu}\eqdef\|\bigcdot\|_{\mu,0,0,0}\,.$$
\end{definition}

\begin{example}
    Keeping with the same example, we give a graphical representation of this norm.
    \begin{equs}
        \|\upxi_{\mu}^2\|_\mu=\begin{tikzpicture}[scale=0.4,baseline=0.15cm]
            \draw[line width=0.3mm, color=indigo] (0,2.25) -- (0,3.25);
            \draw[line width=0.3mm, color=indigo] (0,-0.75) -- (0,0.25);
            \draw[line width=0.25mm] (0,0.25) node[dot] {} -- (0.5,1.25) node[dot] {} -- (0,2.25) node[dot] {};
            \draw (0,-1.25) node[scale=0.7] {$L^\infty_{w_1}$};
            \draw (0,3.75) node[scale=0.7] {$L^1_{w_2}$};
        \end{tikzpicture}\,.
    \end{equs}
This time, the purple edges represent a convolution with the kernels $K_\mu$. Note that we have not included the weights in this representation.
\end{example}
Then, we keep track of the size of all the force coefficients all at once, introducing 
\begin{equation}
  \talloblong\upxi^{\alpha,m}\talloblong_{\kappa,\ell,\varsigma}\eqdef\max_{n\leq N_0}\Big(\sup_{\eps,\mu\in(0,1]}\mu^{-\deg(n)+\kappa}\|\upxi^{\alpha,m;n}_{\eps,\mu}\|_{\mu,\kappa,\ell,\varsigma}\Big)^{1/(n+1)}\,.
\end{equation}

We can now state the main result about the construction of the translation-invariant force coefficients and there associated stochastic estimates.
\begin{proposition}\label{prop:stoestimate}
In this proposition, we cover $\alpha\in(0,2)$ and $m\geqslant0$ in finite volume, $\alpha\in(0,2)$ and $m>0$ in infinite volume, and $(\alpha,m)=(2,0)$ both in finite and infinite volume. Fix $\ell\in\R$ ($\ell=0$ in finite volume), $\kappa>0$ small enough, and $\varsigma>0$ small enough for $\alpha<2$ or $\varsigma=1$ for $(\alpha,m)=(2,0)$. Let $\kappa_n\eqdef 4^{n-N_0-1}\kappa$. There is a choice of counterterms $(\mfc_\eps^{n})_{n\leq N_1}$ such that the following holds.

    For every $0\leq n\leq N_0$, uniform in $\epmu\in(0,1]$, we have
    \begin{equation}
        \|\upxi_\epmu^{\alpha,m;n}\|_{\mu,\kappa_n,\ell,\varsigma}\lesssim{\talloblong\upxi^{\alpha,m}\talloblong_{\kappa_{0},\ell,\varsigma}}\mu^{\deg(n)-\kappa_n}\,,
    \end{equation}
alongside the moment bound
    \begin{equation}
        \E\Big[\talloblong\upxi^{\alpha,m}\talloblong^k_{\kappa_0,\ell,\varsigma}\Big]\lesssim \big(k(N_0+1)\big)^{k/2}\,,
    \end{equation}
for $k\geq1$. Moreover, we have convergence of the force coefficients in the above topology
    \begin{equation}
        \|\upxi_\epmu^{\alpha,m;n}-\upxi^{\alpha,m;n}_{\eps',\mu}\|_{\mu,\kappa_n,\ell,\varsigma}\lesssim(\eps\vee\eps')^{\eta}\mu^{\deg(n)-\kappa_n-\eta}\,.
    \end{equation}
\end{proposition}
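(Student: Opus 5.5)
This is Duch's scheme for the flow approach: cumulant flow equations combined with a Kolmogorov-type argument over $\mu$. I would organise it in three steps.

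\textbf{Step 1: reduction to cumulants.} The first step separates deterministic from probabilistic content. The deterministic bound
\[
\|\upxi^{\alpha,m;n}_\epmu\|_{\mu,\kappa_n,\ell,\varsigma}\lesssim \talloblong\upxi^{\alpha,m}\talloblong^{n+1}_{\kappa_0,\ell,\varsigma}\,\mu^{\deg(n)-\kappa_n}
\]
is essentially the definition of $\talloblong\cdot\talloblong$. One only needs monotonicity of the norms in $\kappa$, since $\kappa_0\leqslant\kappa_n$ and the weight $\langle w_1\rangle^{-\kappa}$ is comparable in the relevant regime. So everything reduces to the moment bound and the convergence bound.

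For these I would not work with $\upxi^n$ directly. Instead I would introduce the joint cumulants
\[
\mathbb{E}\big[\upxi^{n_1}_\epmu(z_1,\cdot);\dots;\upxi^{n_p}_\epmu(z_p,\cdot)\big]
\]
and derive their flow equations by differentiating in $\mu$ using the hierarchy \eqref{eq:hierarchyxi}. Each cumulant of total order $\sum(n_i+1)$ satisfies a linear flow driven by products of lower-order cumulants joined by $\dot G^{\alpha,m}_\mu$. The noise is Gaussian and $\upxi^n$ is polynomial of degree $n+1$ in $\xi_\eps$. Hence cumulants of order $p>2$ in the noise vanish at $\mu=0$, and the only nonzero initial data are the covariance and the counterterms $\mfc^n_\eps$.

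\textbf{Step 2: inductive cumulant bounds and renormalisation.} I would prove by induction on $(n,p)$ the weighted $L^\infty L^1$ bounds
\[
\big\|K_\mu^{\otimes\cdot}\,\mathbb{E}[\dots]\big\|\lesssim \mu^{\sum_i(\deg(n_i)+|\mfs|)-|\mfs|}
\]
in the style of Lemma~4 of \cite{Duc21}. Integration in $\mu$ is done forward, from $0$, for irrelevant cumulants, meaning those with positive effective degree. It is done backward, from $\mu=1$, for the relevant ones.

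Among the cumulants with $p=1$, the expectations $\mathbb{E}\upxi^n$ are relevant only for $n\leqslant N_1$, since $\deg(n)\leqslant 0$ exactly there. For these I would fix $\mfc^n_\eps$ so that the local part $\int\mathbb{E}\upxi^n_{\eps,1}(z,w)\,\dint w$ vanishes at $\mu=1$. The remainder, consisting of the non-local part and the derivative terms, is then controlled through a Taylor remainder. This uses $\varsigma$ small, or $\varsigma=1$ in the Laplacian case, to absorb the gain $\mu^{\varsigma}$ from the $\langle w_1;w_2\rangle_\mu^\varsigma$ weight.

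Since the noise is stationary, the counterterms are constants. The covariance condition \eqref{condbeta} gives integrability of the $p=2$ cumulants: $\deg(0)$ is greater than $-|\mfs|/2$ in effective terms.

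The kernel estimates needed are: $\dot G^{\alpha,m}_\mu$ is supported at times of order $\mu^\alpha$, the spatial decay $(1+|x|/\mu)^{-d-\alpha}$ holds, and $\mcR_\mu K_\mu=\delta$ allows $K_\mu$ to be commuted through. The polynomial decay is \emph{insufficient to absorb} large powers of $\langle w_1;w_2\rangle_\mu$. For this reason only small $\varsigma$ is allowed when $\alpha<2$. In infinite volume the mass gives exponential decay, which absorbs $\mcb e_\ell$.

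\textbf{Step 3: from cumulants to the norm, and the main obstacle.} To pass from cumulants to the norm, I would bound $\mathbb{E}\|\upxi^n_\epmu\|^{k}$ by expressing moments through cumulants. Gaussian hypercontractivity in a finite chaos gives the constant $(k(N_0+1))^{k/2}$. The pointwise-in-$z$ supremum and the supremum in $\mu$ are handled by a Kolmogorov / Sobolev embedding in $(w_1,\mu)$, paying $\mu^{-\kappa}$. This loss is the reason for the growing sequence $\kappa_n=4^{n-N_0-1}\kappa$: each induction step may lose a fraction of the allowance.

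In infinite volume the supremum over $w_1\in\R^d$ costs the polynomial weight $\langle w_1\rangle^{-\kappa}$. Convergence in $\eps$ follows identically, by running the same cumulant flows for the differences $\upxi_\epmu-\upxi_{\eps',\mu}$, whose initial data carry a factor $(\eps\vee\eps')^\eta$ at the cost of $\mu^{-\eta}$.

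I expect the main obstacle to be the backward integration of the relevant cumulants in the fractional case. There one must show that, after subtracting the constant, the remainder gains a full factor $\mu^{\varsigma}$ despite the heavy-tailed kernel. This requires carefully interpolating $\varsigma-\kappa$ weights against the $-d-\alpha$ decay.
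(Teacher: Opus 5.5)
Your overall architecture matches the paper's: cumulant flow equations, forward integration of the irrelevant cumulants, backward integration of the relevant expectations with $\mfc^n_\eps$ chosen so that the local part vanishes at $\mu=1$, then hypercontractivity and a Kolmogorov argument. The cumulant power counting you propose, however, does not fit this problem. The exponent $\sum_i(\deg(n_i)+|\mfs|)-|\mfs|$ comes from integrating all roots but one over space-time, as for a space-time noise. Here $\xi$ is purely spatial, so the cumulants do not decay in the time separations of the roots. Already $\mcK(\upxi^0_\epmu,\upxi^0_\epmu)$ equals $\langle\nabla\rangle^{2\alpha-2\beta-d}(y_1-y_2)$ times diagonal deltas, independent of the times $s_1,s_2$. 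Your norm is therefore infinite, and even on a bounded time window each time integral contributes $O(1)$ rather than $\mu^{\alpha}$. The paper instead uses the $\mcV^m$ norm: a supremum over $w_1^1$ and over the times $t_1^{2:m}$, an $L^p$ norm only in the spatial roots $x_1^{2:m}$ with $p=1\vee d/(2\alpha-2\beta)$, and $L^1$ in the $w_2$'s. This gives $\deg(\n^m)=\sum_j\deg(n_j)+(m-1)d/p$. Taking $p>1$ when $d>2\alpha-2\beta$ is needed because with spatial $L^1$ the smoothed covariance has norm of order one (its total mass), instead of scaling like $\mu^{2\deg(0)+d}$ with a positive exponent. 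For the same reason, the lower bound in \eqref{condbeta} is the condition $\deg(1)=-\alpha+2\beta>-d/2$, with the spatial dimension $d$; it excludes a covariance blow-up of $\upxi^1$. Your condition $\deg(0)>-|\mfs|/2$ is weaker and is not the relevant one.

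Second, the backward integration does not close with the gain you describe. After subtracting the constant, the remainder must gain a factor $(\nu/\mu)^{\theta}$ with $\theta>-\deg(n)$, and $-\deg(n)$ can be close to $\alpha$ in the full subcritical regime. With $\theta=\varsigma$ small (or $\varsigma=1$ for the Laplacian), $\mu^{-\varsigma}\int_0^\mu\nu^{\deg(n)+\varsigma-1}\dint\nu$ diverges as soon as $\deg(n)+\varsigma\leq0$. The heavy tail of the fractional kernel only rules out weights of power $\geq\alpha$; Lemma~\ref{estimdotGmu} allows any exponent below $\alpha$. Accordingly, the paper runs the whole cumulant induction with the weight $\langle\cdot\rangle_\mu^{\alpha-\beta}$, whose exponent exceeds $-\deg(n)$ for every $n\geq1$. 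The far-from-diagonal part then gains $(\nu/\mu)^{\alpha-\beta}$ by \eqref{estimweight}, the second-order Taylor remainder gains $(\nu/\mu)^2$ (Lemmas~\ref{taylor} and~\ref{estimateiii}), and the error from smoothing with $K_\nu^{\otimes2}$ before localising is of relative size $(\nu/\mu)^{\alpha}$. The small $\varsigma$ of the statement enters only when transferring to $\|\cdot\|_{\mu,\kappa_n,\ell,\varsigma}$. That transfer needs an ingredient absent from your Step 3: pointwise moment bounds with so little off-diagonal decay cannot control the $L^1_{w_2}$ integral over a non-compact domain. The paper therefore localises near the diagonal with a compactly supported $\mcb k_\mu$ (Lemma~\ref{lemma:transfercompactweight}). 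It bounds the off-diagonal part through the flow equation by localised lower-order coefficients (Lemma~\ref{lemma:incrmoment}, following \cite{DGR23}). Only then does it apply Lemma~\ref{LinfLm}, hypercontractivity, Lemma~\ref{LinfV} and Lemma~\ref{supinside}.
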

\begin{proof}
This is a consequence of Proposition \ref{estimatemoment} and of a Kolmogorov-type argument developed in Section \ref{Kolmogorov}.
\end{proof}


 With the estimates we derived in the previous section on the stationary coefficients, we can construct the non-stationary force coefficients of $F_\epmu$, and with more work the solution to the mollified SPDE \eqref{renormSPDE}. For this second point, the strategy differs whether we are in finite volume or not. In the first case, we are able to construct the effective force as a series, which simplifies some of the arguments. However, in infinite volume, more care must be taken, since we need to deal with exponential weights. Our strategy is to use a remainder as it was introduced in \cite{DGR23}.

\subsection{From the stationary force to the effective force}\label{sec:fromstatiotoeff}

In this section, we show how to construct the effective force coefficients starting from the stationary force coefficients. This construction is pathwise. We perform in both in finite and infinite volume, for the massive and massless fractional Laplacian in finite volume, for the massive fractional Laplacian in infinite volume, and for the Laplacian both in finite and infinite volume. From now on, we thus fix $(\alpha,m)$ corresponding to the Laplacian, the massive fractional Laplacian, or in finite volume the massless fractional Laplacian, and leave the dependence of all the force coefficients on $(\alpha,m)$ implicit. Moreover, we assume that $\ell\in\R$ is fixed and $\ell=0$ in finite volume, and that $\kappa,\varsigma>0$ are fixed small enough (of course of the Laplacian the smallness condition on $\varsigma$ is not necessary, but we would anyway not use a large $\varsigma$). To lighten the notation, since the parameters are fixed, we write
$ \talloblong\upxi\talloblong$ instead of $\talloblong\upxi^{\alpha,m}\talloblong_{\kappa_0,\ell,\varsigma}$. Here, as usual $\kappa_0=4^{-N_0-1}\kappa$.

We start by constructing the translated force coefficients, which take the presence of the initial condition into account. This relies on the explicit expressions given in Lemma~\ref{Lem:expressiontau}.
\begin{lemma}\label{Lem:constructiontau}
Recall that $\kappa_n=4^{n-N_0-1}\kappa$. For every $0\leq n\leq N_0$, uniform in $\epmu\in(0,1]$, we have
\begin{equs}\label{eq:esti_tau}
    \|\langle x\rangle^{-\kappa_n} \mcb e_{-\ell}K_\mu\uptau^{0,n}_{\eps,\mu} \|_{L_{t,x}^{\infty,r}}\lesssim {\talloblong\upxi\talloblong^{n}\|  \mcb e_{-\ell}u_0^\eps\|_{\mcB^{-\sigma}_{r,q}} }\mu^{\deg(0,n)-\kappa_n }\,.
\end{equs}
Here, notice that we do not need to put a weight in time, since $G_\mu\psi_\eps$ and hence $\uptau^{n,0}_{\eps,\mu}$ are supported after time $\mu^\alpha$. Note also that on the torus the spacial weight is spurious.
\end{lemma}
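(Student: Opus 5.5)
Since $\uptau^{0,n}_{\eps,\mu}$ is given explicitly by Lemma~\ref{Lem:expressiontau} ($\uptau^{0,0}_{\eps,\mu}=-\psi_\eps$ and $\uptau^{0,n}_{\eps,\mu}=\upxi^{n-1}_{\eps,\mu}[G_\mu\psi_\eps]$ for $n\geq1$), I would reduce \eqref{eq:esti_tau} to an operator bound on $\upxi^{n-1}_{\eps,\mu}$ combined with a Schauder-type bound on the smoothed free evolution $G_\mu\psi_\eps$. The case $n=0$ is degenerate: $K_\mu\psi_\eps(t,\cdot)=\mu^{-\alpha}\e^{-t/\mu^\alpha}\one_{t>0}\,Q_\mu^{*N}u_0^\eps$ up to the time convolution structure. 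Its $L^{\infty,r}$ norm after the weight is then controlled by $\mu^{-\alpha}\|\mcb e_{-\ell}Q_\mu^{*N}u_0^\eps\|_{L^r}\lesssim \mu^{-\alpha-\sigma}\|\mcb e_{-\ell}u_0^\eps\|_{\mcB^{-\sigma}_{r,q}}$. This uses the definition of the Besov norm (with $N\geq\lceil\sigma\rceil$ and $L^q(\dint\mu/\mu)\subset L^\infty$ up to a harmless constant), together with the fact that the exponential weight commutes with $Q_\mu$ up to a constant because $Q_\mu$ decays exponentially. This matches $\deg(0,0)=-\alpha-\sigma$.

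For $n\geq1$, I would insert the identity $\mcR_\mu K_\mu=\Id$ between $\upxi^{n-1}$ and its argument, writing
\[
K_\mu\uptau^{0,n}_{\eps,\mu}=(K_\mu^{\otimes2}\upxi^{n-1}_{\eps,\mu})\big[\mcR_\mu G_\mu\psi_\eps\big].
\]
The first factor is controlled by the weighted $L^\infty_{w_1}L^1_{w_2}$ part of $\|\cdot\|_{\mu,\kappa_{n-1},\ell,\varsigma}$ and by Proposition~\ref{prop:stoestimate}, giving the factor $\talloblong\upxi\talloblong^{n}\mu^{\deg(n-1)-\kappa_{n-1}}$. Since the operator norm also contains the $L^\infty_{w_2}L^1_{w_1}$ part, Schur/Young interpolation yields boundedness of the kernel on $L^{\infty,r}_{t,x}$, not just on $L^\infty$. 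I would transport the spatial weights via $\mcb e_{-\ell}(x_1)\leq \mcb e_{-\ell}(x_2)\mcb e_{\ell}(x_1;x_2)$, by sub-additivity of $\langle\cdot\rangle^{1/2}$, and similarly $\langle x_1\rangle^{-\kappa_n}\lesssim \langle x_2\rangle^{-\kappa_{n-1}}\langle x_1;x_2\rangle^{\kappa}$ up to scaling losses. The polynomial growth $\langle w_1;w_2\rangle_\mu^{\varsigma-\kappa}$ absorbs these, and the extra $\mu$-powers coming from un-scaled versus scaled brackets cost at most $\mu^{-(\kappa_n-\kappa_{n-1})}$. The geometric choice $\kappa_n=4^{n-N_0-1}\kappa$ is exactly designed for this.

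It remains to bound $\|\mcb e_{-\ell}\mcR_\mu G_\mu\psi_\eps\|_{L^{\infty,r}_{t,x}}\lesssim \mu^{-\sigma}\|\mcb e_{-\ell}u_0^\eps\|_{\mcB^{-\sigma}_{r,q}}$. Then $\deg(n-1)-\sigma=-\alpha-\sigma+n\beta=\deg(0,n)$, as required. For this I would write $\mcR_\mu G_\mu\psi_\eps(t)$ as a combination of $\mu^{\alpha j}\partial_t^j(1-\mu^2\Delta)^N G_\mu(t)u_0^\eps$. Using $\partial_tG=-\rmD^{\alpha,m}G$ on the support $t\geq\mu^\alpha$, together with the derivatives of the cutoff $\chi_\mu$ which produce $\mu^{-\alpha}$ factors compensated by $\mu^{\alpha}$, each term becomes a kernel of the form $\mu^{k}\nabla^k \e^{-t\rmD^{\alpha,m}}$ with $t\gtrsim\mu^\alpha$. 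These kernels map $\mcB^{-\sigma}_{r,q}$ to $L^r$ with norm $\lesssim \mu^{k}t^{-(k+\sigma)/\alpha}\lesssim\mu^{-\sigma}$.

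The main obstacle is this last smoothing estimate in the presence of the exponential weight and of the fractional heat kernel's polynomial tails $(1+|x|/t^{1/\alpha})^{-d-\alpha}$. In infinite volume, the weight $\mcb e_{-\ell}$ must pass through $\e^{-t\rmD^{\alpha,m}}$, and this needs the mass: the kernel decays like $\e^{-m|x|}$, which dominates the sub-exponential $\mcb e_{\ell}$. On the torus, weights are absent and only the Besov characterisation through $Q_\mu$ is needed. I would prove it by factoring $\e^{-t\rmD}=\e^{-(t-\mu^\alpha/2)\rmD}\,\e^{-\mu^\alpha\rmD/2}$, and by expressing the Besov norm via $Q_\mu^{*\lceil\sigma\rceil}$ through the bounded operator $\e^{-\mu^\alpha\rmD/2}(1-\mu^2\Delta)^{\lceil\sigma\rceil}$ on weighted $L^r$.
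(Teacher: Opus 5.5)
Your proposal is correct and is essentially the paper's proof: insert $\mcR_\mu K_\mu=\Id$, control $K_\mu^{\otimes2}\upxi^{n-1}_{\eps,\mu}$ through its weighted norm after moving only the exponential weight with Lemma~\ref{ITexp}, and conclude with the smoothing bound on $\mcb e_{-\ell}\mcR_\mu G_\mu\psi_\eps$ of Lemma~\ref{heatkernelestimate}, which gives $\mu^{\deg(n-1)-\kappa_{n-1}-\sigma}\leq\mu^{\deg(0,n)-\kappa_n}$. Two small remarks: the factor $\langle x_1\rangle^{-\kappa_n}$ need not be transported to $x_2$, since it stays at $x_1$ and is exactly what the norm's $\langle w_1\rangle^{-\kappa_{n-1}}$ requires once $t_1\in[0,1]$; and your separate treatment of $n=0$ and your sketch of the smoothing lemma via $Q_\mu^{*\lceil\sigma\rceil}$ are harmless additions, slightly more careful than the appendix, which relies only on pointwise kernel bounds.
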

\begin{proof}
    The proof boils down to estimating $\mcR^2_\mu G_\mu\psi_\epsilon=\mcR^2_\mu G_\mu(\delta\otimes u_0^\eps)$. Recall that $G_\mu$ is the fractional heat kernel localised in time after $\mu^\alpha$ and that $\uptau_\epmu^{0,n}=\upxi_\epmu^{n-1}[G_\mu\psi_\eps]$. Using Lemma \ref{ITexp}, we have
\begin{equs}
    ~&\|\langle\bigcdot \rangle^{-\kappa_n}\mcb e_{-\ell}K_\mu\uptau_\epmu^{0,n}\|_{L^{\infty,r}_{t,x}}^r\\
    &=\sup_{t_1}\int_{\R^d} \left|\langle x_1\rangle^{-\kappa_n}\mcb e_{-\ell}(x_1)\int_{\Lambda} K_\mu(w_1-z_1)\upxi_\epmu^{n-1}(z_1,z_2)(G_\mu\psi_\eps)(z_2)\dint z_2\right|^r\dint x_1\\
    &\leq \sup_{t_1}\int_{\R^d} \bigg|\langle x_1\rangle^{-\kappa_n}\int K_\mu(w_1-z_1)\upxi_\epmu^{n-1}(z_1,z_2)K_\mu(z_2-w_2)\mcb e_{-\ell}(x_1)\\
    &\qquad\qquad\qquad\qquad\qquad\qquad\qquad\qquad\qquad\qquad\qquad\times\mcb e_{\ell}(x_2)(\mcb e_{-\ell}\mcR_\mu^2G_\mu\psi_\eps)(w_2)\dint z_2\dint w_2\bigg|^r\dint x_1\\
    &\leq \sup_{t_1}\int_{\R^d} \bigg|\langle x_1\rangle^{-\kappa_n}\int_{\Lambda^2} \mcb e_{\ell}(x_1;x_2)K_\mu(w_1-z_1)\upxi_\epmu^{n-1}(z_1,z_2)K_\mu(z_2-w_2)\\
    &\qquad\qquad\qquad\qquad\qquad\qquad\qquad\qquad\qquad\qquad\qquad\qquad~~\times(\mcb e_{-\ell}\mcR_\mu^2G_\mu\psi_\eps)(w_2)\dint z_2\dint w_2\bigg|^r\dint x_1\\
    &\leq \|\upxi_\epmu^{n-1}\|_{\mu,\kappa_n,\ell,\kappa_n}^r\|\mcb e_{-\ell}\mcR_\mu^2G_\mu\psi_\eps\|_{L^{\infty,r}_{t,x}}^r
\end{equs}
Using the same argument we can show that, with the heat kernel estimate in Lemma \ref{heatkernelestimate},
\begin{equs}
    \|\mcb e_{-\ell}\mcR_\mu^2G_\mu\psi_\eps\|_{L^{\infty,r}_{t,x}}\lesssim\mu^{-\sigma}\|\mcb e_{-\ell}u_0^\eps\|_{\mcB_{r,q}^{-\sigma}}\,.
\end{equs}
    
\end{proof}

We now turn from the step in which we go from the translated force coefficients to the effective force coefficients. It turns out that, as shown in \cite[Section 10.4]{Duc21}, it is possible to solve the hierarchy of flow equations \eqref{eq:hierarchyzeta} via a deterministic construction. Indeed, in view of Lemma~\ref{Lem:suppprop}, we only need to construct $\upzeta^{k,n}_{\eps,\mu}(z,\bigcdot)$ on the remaining interval $[0,2n\mu^\alpha]\times\R^d$. There, we can rely on the fact that the remaining interval being small, using an $L^{1}$ norm in time improves the power counting by two, thus making all the effective force coefficients with $(k,n)\neq(0,0)$ irrelevant. The $L^1$ norm is defined below.
\begin{equs}
\|\upzeta^1_\mu\|_{1,\mu,\kappa,\ell,\varsigma}&\eqdef\sup_{x_1\in\R^d}\langle x_1\rangle^{-\kappa}\int_{[0,1]\times\SS}\big\langle w_1;w_2\rangle_\mu^{\varsigma-\kappa}\mcb e_\ell(x_1;x_2)~\big|\big(K_{\mu}^{\otimes 2}\upxi_\mu\big)(w_1,w_2)\big|\dint t_1\dint w_2\\
    &\quad+\sup_{x_2\in\R^d}\int_{[0,1]\times\SS}\langle x_1\rangle^{-\kappa}\big\langle w_1;w_2\rangle_\mu^{\varsigma-\kappa}\mcb e_\ell(x_1;x_2)~\big|\big(K_{\mu}^{\otimes 2}\upxi_\mu\big)(w_1,w_2)\big|\dint t_2\dint w_1
    \,.
\end{equs}
On the torus, we use the same notation for the same norm with $\R^d$ being replaced by $\T^d$. We have used in a general way the notation $w=(t,x)$. We present in the following the estimates on the coefficients of the effective force.

\begin{lemma}\label{Lem:constructionzeta}
Recall that $\kappa_n=4^{n-N_0-1}\kappa$. For every $0\leq n\leq N_0$, uniform in $\epmu\in(0,1]$, we have 
\begin{equation}
    \label{eq:esti_zeta}\|\upzeta_\epmu^{1,n}\|_{\mu,\kappa_n,\ell,\varsigma}\lesssim
    {\talloblong\upxi\talloblong^{n+1}}\mu^{\deg(1,n)- \kappa_n}\,,
\end{equation}
and
\begin{equation}
    \| \langle x \rangle^{-\kappa_n}  \mcb e_{-\ell}K_\mu\upzeta^{0,n}_{\eps,\mu} \|_{L^{\infty,r}_{t,x}}\lesssim {\talloblong\upxi\talloblong^{n}\| \mcb e_{-\ell} u_0^\eps\|_{\mcB^{-\sigma}_{r,q}} }\mu^{\deg(0,n)-\kappa_n}\,.
\end{equation}
\end{lemma}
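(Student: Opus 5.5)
The plan is an induction on $n$, carried out for both $k=0$ and $k=1$, following the deterministic construction of \cite[Section~10.4]{Duc21}. The key splitting comes from Lemma~\ref{Lem:suppprop}: on times $t\geq 2n\mu^\alpha$ the coefficient $\upzeta^{k,n}_{\eps,\mu}$ coincides with $\uptau^{k,n}_{\eps,\mu}$. On that region the bounds follow directly. For $k=1$, $\uptau^{1,n}_{\eps,\mu}=\upxi^{n}_{\eps,\mu}$ is controlled by Proposition~\ref{prop:stoestimate}, i.e.\ by $\talloblong\upxi\talloblong^{n+1}\mu^{\deg(n)-\kappa_n}$. For $k=0$ we use Lemma~\ref{Lem:constructiontau}. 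A small point is that $K_\mu$ is not local in time. However, $K_{1,\mu}$ is supported in $t\geq0$ and decays like $\e^{-t/\mu^\alpha}$, so the smearing only mixes in values from earlier times. These are controlled either by the $\uptau$ bounds or by the second region, below.

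So it remains to treat the strip $[0,2n\mu^\alpha]\times\R^d$ (or $\T^d$). There we write $\upzeta^{k,n}_{\eps,\mu}-\uptau^{k,n}_{\eps,\mu}$ as an integral of the flow equation \eqref{eq:hierarchyzeta} from $\mu$ to a scale where the time strip closes up. Concretely, for $t_1<2n\mu^\alpha$ we integrate $\partial_\nu$ from $\nu=(t_1/2n)^{1/\alpha}$ to $\mu$. At $\nu=(t_1/2n)^{1/\alpha}$ the two objects agree by Lemma~\ref{Lem:suppprop}, so only the integrated right-hand side remains. That right-hand side consists of products $\upzeta^{1,i}_{\eps,\nu}\dot G_\nu\upzeta^{k,n-1-i}_{\eps,\nu}$ with lower indices, which are bounded by the induction hypothesis.

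The products are estimated in the weighted $L^\infty L^1$ norm $\|\cdot\|_{\nu,\kappa,\ell,\varsigma}$, and in the $L^{\infty,r}$ norm for $k=0$. The steps are as follows.

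1. Insert $K_\nu\mcR_\nu$ around $\dot G_\nu$ and use the kernel bound $\|\mcR_\nu\dot G_\nu\mcR_\nu\|\lesssim\nu^{\alpha-1}$ in the weighted operator norm. This is where the exponential weight $\mcb e_\ell$ is absorbed: by the mass decay $\e^{-m|x|}$ in infinite volume, and trivially on the torus.
2. Handle the polynomial weights $\langle x\rangle^{-\kappa}$ and $\langle w_1;w_2\rangle_\nu^{\varsigma-\kappa}$ by the usual triangle inequality. This costs the increase $\kappa_{i}+\kappa_{n-1-i}\leq\kappa_n/2$, which is why $\kappa_n=4^{n-N_0-1}\kappa$ is chosen geometrically.
3. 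Use scale monotonicity, $K_\mu=K_\nu*(\text{bounded operator})$ for $\nu\leq\mu$ with norm $\lesssim(\mu/\nu)^{c}$, to move the integrand from scale $\nu$ to scale $\mu$.

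Together these give a bound on the strip of the form $\int_{\cdot}^{\mu}\nu^{\deg(1,i)+\deg(k,n-1-i)+\alpha-1-\kappa_n/2}\,\mathrm d\nu$.

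The main obstacle is that this power of $\nu$ can be negative: relevant coefficients ($\deg<0$) cannot be integrated from $0$. The resolution, as in Duch, is to measure the strip in the $L^1$-in-time norm $\|\cdot\|_{1,\nu,\kappa,\ell,\varsigma}$. The time strip has length $\lesssim\nu^\alpha$, so this norm gains a factor $\nu^{\alpha}$, and every exponent becomes strictly positive. The $\nu$-integral then converges and yields $\mu^{\deg(k,n)+\alpha-\kappa_n}$ in the $L^1_t$ norm.

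To return to the $L^\infty_t$ norm claimed in the lemma, we apply $K_\mu$ in time. The time part $\mu^{-\alpha}\e^{-t/\mu^\alpha}$ maps $L^1_t$ to $L^\infty_t$ with norm $\mu^{-\alpha}$, which exactly compensates the gain. Combining the two regions gives \eqref{eq:esti_zeta} and the $k=0$ bound, with the stated powers of $\talloblong\upxi\talloblong$ and $\|\mcb e_{-\ell}u_0^\eps\|_{\mcB^{-\sigma}_{r,q}}$.
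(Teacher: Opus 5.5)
Your architecture is the paper's. You reduce via Lemma~\ref{Lem:suppprop} to a time strip of length $O(\mu^\alpha)$, integrate the hierarchy in scale there, gain $\nu^\alpha$ by measuring the strip in $L^1$ in time, and pay $\mu^{-\alpha}$ when $K_\mu$ returns you to $L^\infty_t$. (Putting the $L^1_t$ gain on the whole product, rather than on the factor $\mcb w^i_\nu\upzeta^{1,i}_{\eps,\nu}$ through an improved induction hypothesis as the paper does, is immaterial.)

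The gap is in the localisation. Every bound you invoke --- Proposition~\ref{prop:stoestimate}, Lemma~\ref{Lem:constructiontau}, the induction hypothesis --- controls $K_\mu$-smeared objects on all of $\Lambda$. Your region~1/region~2 split and the $t_1$-dependent lower limit $\nu=(t_1/2n)^{1/\alpha}$ are instead \emph{sharp} cut-offs in the output time.

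For $k=1$ this matters. $\upzeta^{1,n}_{\eps,\mu}$ vanishes for $t_1<0$, while $\uptau^{1,n}_{\eps,\mu}=\upxi^{n}_{\eps,\mu}$ is stationary. So you are in effect writing $\upzeta^{1,n}_{\eps,\mu}=\one_{\{t_1\geq0\}}\upxi^{n}_{\eps,\mu}+\one_{\{t_1\geq0\}}(\upzeta^{1,n}_{\eps,\mu}-\upxi^{n}_{\eps,\mu})$. Since $K_\mu$ is non-local in time, the first piece needs a bound on $K_\mu^{\otimes2}(\one_{\{t_1\geq0\}}\upxi^{n}_{\eps,\mu})$. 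That does not follow from the bound on $K_\mu^{\otimes2}\upxi^{n}_{\eps,\mu}$: it is a multiplication-by-an-indicator statement. Your sentence that the smeared contributions are ``controlled either by the $\uptau$ bounds or by the second region'' is exactly the unproved point.

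Nor can you avoid the truncation by working with $\upzeta^{1,n}-\upxi^n$ on all of $\Lambda$. That difference equals $-\upxi^n_{\eps,\mu}$ on the whole half-line $t_1<0$, so its $\nu$-derivative is not confined to a strip of length $\nu^\alpha$. The smeared contribution from $t_1<0$ only decays on the scale $\mu^\alpha$, so the $L^1_t$ gain is lost and the $\nu$-integral of relevant terms diverges at $0$. The same truncated object also sits inside your strip integrand, which on $\{t_1\geq0\}$ contains $\one_{\{t_1\geq0\}}\upxi^{i}_{\eps,\nu}[\dot G_\nu\upxi^{n-1-i}_{\eps,\nu}]$. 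For $k=0$ this can be repaired: $\uptau^{0,n}_{\eps,\mu}$, $n\geq1$, is supported after time $\mu^\alpha$, so $\upzeta^{0,n}-\uptau^{0,n}$ genuinely lives on a strip. But the $k=0$ step feeds on the $k=1$ bounds.

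The missing ingredient is the smooth partition in time used in the paper. One writes $\upzeta^{k,n}_{\eps,\mu}=\mcb v^n_\mu\uptau^{k,n}_{\eps,\mu}+\mcb w^n_\mu\upzeta^{k,n}_{\eps,\mu}$, with $\mcb v^n_\mu+\mcb w^n_\mu=1$ smooth at scale $\mu^\alpha$ and $\mcb v^n_\mu$ supported after $2n\mu^\alpha$. A cut-off that is smooth at the scale of the smoothing kernel is harmless in these norms, so the first term is handled by the $\uptau$ bounds. The second term $\mcb w^n_\mu\upzeta^{k,n}_{\eps,\mu}$ lives on a genuine strip and is propagated by \eqref{eq:dmuwmuzeta}. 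Besides the product term $\mcb w^n_\nu\partial_\nu\upzeta^{k,n}_{\eps,\nu}$ that you treat, that equation contains two more terms:
\begin{itemize}
\item a source $(\partial_\nu\mcb v^n_\nu)\uptau^{k,n}_{\eps,\nu}$, localised at times of order $\nu^\alpha$ with $|\partial_\nu\mcb v^n_\nu|\lesssim\nu^{-1}$, so the $L^1_t$ gain makes $\nu^{-1+\alpha+\deg(k,n)}$ integrable;
\item a commutator term $(1-K_\nu)(\cdots)$ controlled by \eqref{eq:comKmuKnu1}.
\end{itemize}
In this scheme $\upxi^n_{\eps,\mu}$ is never cut sharply. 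The only sharp cut is in the base case $\upzeta^{1,0}_{\eps,\mu}=\one_{(0,\infty)}\upxi^{0}_{\eps,0}$. The paper justifies it by the noise having regularity $-\alpha+\beta-\kappa>-\alpha$; your proposal does not address $n=0$ at all.

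Two smaller points:
\begin{itemize}
\item For $\nu\leq\mu$, passing from scale $\nu$ to scale $\mu$ costs nothing by \eqref{eq:Kmunu}, not a factor $(\mu/\nu)^c$. A genuine loss of that kind would eat into the margin of the $\nu$-integral.
\item For $(\alpha,m)=(2,0)$ on $\R^d$ the weight $\mcb e_\ell$ is absorbed by Gaussian decay, not by a mass.
\end{itemize}
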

\begin{proof}

We denote by $\mcb v_\mu^n$ and $\mcb w_\mu^n$ two smooth functions localising respectively after and before $2n\mu^\alpha$ in time. In view of Lemma~\ref{Lem:suppprop}, we thus have
\begin{equs} \label{eq:zetamu}   \upzeta^{k,n}_\epmu= \mcb v^n_\mu\uptau_\epmu^{k,n}+\mcb w^n_\mu\upzeta^{k,n}_\epmu\,.
\end{equs}
The contribution of the first term is easy to control, and stems from \eqref{eq:esti_tau}. We thus turn to the second term in \eqref{eq:zetamu}. The proof is by induction on $n$, so we start by discussing the initialisation: the base case is $\upzeta_\epmu^{1,0}[\varphi]=\one_{(0,\infty)}\upxi^0_{\eps,0}[\varphi]$. The noise being of H\"older-Besov regularity $-\alpha+\beta-\kappa$, it is more regular than minus the parabolic dimension $\alpha$ of the null-time hyperplane, so the multiplication by the indicator is well-defined via a deterministic estimate. Regarding the induction step, it relies on the following flow equation:
\begin{equs}
    \partial_\nu\big( \mcb w_\nu\upzeta^{k,n}_\epnu\big)&= 
(1-K_\nu)
\big(\partial_\nu(\mcb  w^n_\nu\upzeta^{k,n}_\epnu)\big)
    +K_\nu
   \big( (\partial_\nu  \mcb w^n_\nu)\upzeta^{k,n}_\epnu\big)+ K_\nu\big(\mcb w^n_\nu\partial_\nu\upzeta^{k,n}_\epnu\big)\\&=
   (1-K_\nu)
\big(\partial_\nu( \mcb w^n_\nu\upzeta^{k,n}_\epnu)\big)
    -K_\nu
   \big( (\partial_\nu \mcb  v^n_\nu)\uptau^{k,n}_\epnu\big)+ K_\nu\big(\mcb w^n_\nu\partial_\nu\upzeta^{k,n}_\epnu\big)
\,.\label{eq:dmuwmuzeta}
\end{equs}
For the first term, \eqref{eq:comKmuKnu1} gives $\mu^{-2}\nu^2$, which is sufficient for having an irrelevant power counting. For the second and third terms, we can sacrifice the $K_\mu$ to perform a Sobolev embedding from $L^\infty$ to $L^1$ in time, which results in a loss of $\mu^{-\alpha}$. For the second term, one can use the good support of $\partial_\nu v_\nu^n$ to turn the $L^1$ norm in time into an $L^\infty$ norm again, gaining a factor $\nu^{\alpha}$, thereby making the power counting irrelevant. For the third term, we add to the induction hypothesis the improved estimate
\begin{equs}
    \|\mcb w^n_\nu\upzeta_\epmu^{1,n}\|_{1,\mu,\kappa_n,\ell,\varsigma}\lesssim
    {\talloblong\upxi\talloblong^{n+1}} \mu^{\deg(1,n)+\alpha-\kappa_n}\,,
\end{equs}
so that, using the flow equations \eqref{eq:hierarchyzeta}, again we end up with an irrelevant power counting.

For the reader's convenience, we detail the argument for the third term, considering for instance the case $k=0$. We start from 
\begin{equs}
   ~&\bigg\|\langle x\rangle^{-\kappa_n} \mcb e_{-\ell} K_\mu\int_0^\mu K_\nu\big(\mcb w^n_\nu\partial_\nu\upzeta^{0,n}_\epnu\big)\dint\nu\bigg\|_{L^{\infty,r}_{t,x}}\\
   &\lesssim\mu^{-\alpha}\int_0^\mu \|\langle x\rangle^{-\kappa_n}  \mcb e_{-\ell} K_\nu\big(\mcb w^n_\nu\partial_\nu\upzeta^{k,n}_\epnu\big)\|_{L^{1,r}_{t,x}}\dint\nu\\&\lesssim
  \mu^{-\alpha}\int_0^\mu \sum_{i=0}^{n-1}\|\langle x\rangle^{-\kappa_n} \mcb e_{-\ell} K_\nu\big(\mcb w^n_\nu\upzeta_{\eps,\nu}^{1,i}[\dot G_\nu\upzeta_{\eps,\nu}^{0,n-1-i}(\bigcdot)]\big)\|_{L^{1,r}_{t,x}}\dint\nu\\
  &\lesssim
   \mu^{-\alpha}\int_0^\mu\nu^{-1+\alpha} \sum_{i=0}^{n-1}\|\mcb w^i_\nu\upzeta_{\eps,\nu}^{1,i}\|_{1,\nu,\kappa_i,\ell,\kappa_i+\kappa_{n-1-i}}\|\langle x\rangle^{-\kappa_{n-1-i}} \mcb e_{-\ell}{K_\nu}\upzeta_{\eps,\nu}^{0,n-1-i}\|_{L^{\infty,r}_{t,x}}\dint\nu
   \\
  &\lesssim\talloblong\upxi\talloblong^n\| \mcb e_{-\ell}u_0^\eps\|_{\mcB_{r,q}^{-\sigma}}
   \mu^{-\alpha}\int_0^\mu\nu^{-1+\alpha+\deg(0,n)-2\kappa_{n-1}}\dint\nu \\
   &\lesssim \talloblong\upxi\talloblong^{n}\| \mcb e_{-\ell} u_0^\eps\|_{\mcB^{-\sigma}_{r,q}} \mu^{\deg(0,n)-\kappa_n }\,.
\end{equs}
Here, we using the fact that putting a weight $\langle x\rangle^{-\kappa_{n-1-i}}$ in front of $\upzeta_\epnu^{0,n-1-i}$ forces $\upzeta_\epnu^{1,i}$ to absorb a weight $\langle w_1,w_2\rangle_\nu^{\kappa_{n-1-i}}$, and results in a loss of the original $\langle x\rangle^{-\kappa_{n}}$ which becomes $\langle x\rangle^{-\kappa_{n}+\kappa_{n-1-i}}$. This is safe since ${-\kappa_{n}+\kappa_{n-1-i}}\leq -\kappa_{n}+\kappa_{n}/4\leq  -\kappa_{i}$. In the last step, we relied on the fact that the gain of $\alpha$ is sufficient for the integral over $\nu$ to converge.

The proof is similar in the case where $k=1$, but now we must also distribute the weight $\langle w_1,w_2\rangle_\nu^{\varsigma-\kappa_{n}}$ on the two force coefficients. Therefore, $\upzeta_\epnu^{1,i}$ must now absorb a bigger weight $\langle w_1,w_2\rangle_\nu^{\varsigma-\kappa_n+\kappa_{n-1-i}}$. Again, this is safe since $-\kappa_n+\kappa_{n-1-i}\leq-\kappa_i$.
\end{proof}

\subsection{The fractional Laplacian in finite volume}
In this section, we carry on with the construction of the solution in finite volume. Here $(\alpha,m)$ correspond to the Laplacian and the massive and massless fractional Laplacian, $\ell\equiv0$, $\kappa,\varsigma>0$ are fixed small enough, and we just write $\talloblong\upxi\talloblong$ instead of $\talloblong\upxi^{\alpha,m}\talloblong_{\kappa_0,0,\varsigma}$.

On the torus, Lemma~\ref{Lem:constructionzeta} is sufficient to construct all effective force coefficients, the remaining force coefficients being irrelevant. 
\begin{corollary}\label{Lem:constructionzetairrelevant}
On the torus $\Lambda=[0,1]\times\T^d$, for every $n\geq 0$, uniform in $\epmu\in(0,1]$ we have the estimates
\begin{equs}
\|\upzeta_\epmu^{1,n}\|_{\mu}\lesssim
    \frac{\talloblong\upxi\talloblong^{n+1}}{(n+1)^2} \mu^{\deg(1,n)- (n+1)\kappa}\,,
\quad\mathrm{and}\quad
    \| K_\mu\upzeta^{0,n}_{\eps,\mu} \|_{L^{\infty,r}_{t,x}}\lesssim \frac{\talloblong\upxi\talloblong^{n}\| u_0^\eps\|_{\mcB^{-\sigma}_{r,\infty}} }{( n+1)^2}\mu^{\deg(0,n)-(n+1)\kappa }\,.\qquad
\end{equs}
\end{corollary}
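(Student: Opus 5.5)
We split the range of $n$ at $N_0$. For $n\leq N_0$ the two bounds follow from Lemma~\ref{Lem:constructionzeta} with $\ell=0$ on the torus. There the spatial weights are harmless, since $\langle x\rangle$ is bounded on $\T^d$. We also have $\kappa_n\leq\kappa\leq(n+1)\kappa$ and $\mu\leq1$, and the factor $(n+1)^{-2}$ is a bounded constant for finitely many $n$. For $n>N_0$ we have $\deg(1,n)>0$ and $\deg(0,n)>0$, so every coefficient is irrelevant. In that range we build $\upzeta^{k,n}_\epmu$ directly by integrating the hierarchy \eqref{eq:hierarchyzeta} forward from $\mu=0$:
\begin{equs}
\upzeta^{k,n}_\epmu=-\int_0^\mu\sum_{i=0}^{n-1}\upzeta^{1,i}_{\eps,\nu}[\dot G_\nu\upzeta^{k,n-1-i}_{\eps,\nu}]\dint\nu\,,
\end{equs}
with vanishing initial condition, since $F_\eps$ has no coefficients of order $n\geq1$ beyond the counterterms, which only appear for $n\leq N_1\leq N_0$. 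This is the finite-volume setting of \cite{Duc21}, where $N_{\max}=\infty$.

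The induction on $n>N_0$ runs as follows. We insert the kernels $K_\mu$ and commute them through using the comparison estimates for $K_\mu$ and $K_\nu$ with $\nu\leq\mu$ (the same ones used in the proof of Lemma~\ref{Lem:constructionzeta}). We then bound the product in the operator norm $\|\cdot\|_\mu$ by a Young-type inequality, using $\|\dot G_\nu\|\lesssim\nu^{-1+\alpha}$ after sandwiching with $\mcR_\nu$. This gives
\begin{equs}
\|\upzeta^{1,n}_\epmu\|_\mu\lesssim\int_0^\mu\nu^{-1+\alpha}\sum_{i=0}^{n-1}\|\upzeta^{1,i}_{\eps,\nu}\|_\nu\|\upzeta^{1,n-1-i}_{\eps,\nu}\|_\nu\dint\nu\,.
\end{equs}
Plugging in the induction hypothesis gives the power $\nu^{-1+\alpha+\deg(1,i)+\deg(1,n-1-i)-(n+1)\kappa}=\nu^{-1+\deg(1,n)-(n+1)\kappa}$, because $\deg(1,i)+\deg(1,n-1-i)+\alpha=\deg(1,n)$. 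Since $\deg(1,n)-(n+1)\kappa>0$ for $n>N_0$ (with $\kappa$ small), the integral from $0$ converges and yields $\mu^{\deg(1,n)-(n+1)\kappa}/(\deg(1,n)-(n+1)\kappa)$. The $\upzeta^{0,n}$ bound is identical, with $\|K_\nu\upzeta^{0,\cdot}\|_{L^{\infty,r}_{t,x}}$ in place of the second factor. The non-stationary part near $t=0$ is handled exactly as in Lemma~\ref{Lem:constructionzeta}, via the cutoffs $\mcb w^n_\nu$ and the $L^1$-in-time gain; for $n>N_0$ this is not even needed, since the degree is already positive.

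The main obstacle is producing the combinatorial factor $(n+1)^{-2}$ uniformly in $n$, which is needed to sum the series for $N_{\max}=\infty$. The denominator $\deg(1,n)-(n+1)\kappa\geq c\,n$ for large $n$ gives a factor $n^{-1}$, since $\beta-\kappa>0$. The convolution sum then contributes $\sum_{i}(i+1)^{-2}(n-i)^{-2}\lesssim(n+1)^{-2}$. Together these give $(n+1)^{-3}$ times a constant $C$, and $C$ can be absorbed into $\talloblong\upxi\talloblong^{n+1}$ by enlarging the norm by a fixed factor, or equivalently by checking that $C\,n^{-1}\leq1$ for $n$ beyond some threshold, with the finitely many remaining $n$ giving only a constant. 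The constant $C$ coming from the kernel commutation must be independent of $n$, and the $(n+1)\kappa$ loss on the exponent is what allows the weight-free norm to close without tracking $\kappa_n$. If this fails, the fallback is to choose $\kappa$ small relative to $\beta$ so that the denominator alone beats all constants.
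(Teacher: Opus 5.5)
Your proposal is correct and is essentially the paper's proof. The orders $n\le N_0$ come from Lemma~\ref{Lem:constructionzeta}, with the weights harmless on $[0,1]\times\T^d$. For $n>N_0$ one integrates the hierarchy \eqref{eq:hierarchyzeta} from $\nu=0$ using \eqref{eq:Kmunu}, Lemma~\ref{estimdotGmu} and $\sum_{i}(i+1)^{-2}(n-i)^{-2}\lesssim(n+1)^{-2}$, exactly as you describe.

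Your observation that the factor $1/(\deg(1,n)-(n+1)\kappa)$ from the $\nu$-integral keeps the per-step constants from compounding makes explicit what the paper hides in its last $\lesssim$. One small correction: rescaling $\talloblong\upxi\talloblong$ is not an equivalent alternative to this. The quadratic recursion is invariant under that rescaling, so rescaling only handles the finitely many orders below the threshold.
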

\begin{proof}
    This is an immediate consequence of the fact that the topology we use is compatible with the hierarchy of equations \eqref{eq:hierarchyzeta}. Indeed, on the compact space $[0,1]\times \T^d$, we can eliminate the weight $\langle w_1\rangle^{-\kappa}$. The proof then follows by induction over $n$ starting from $N_0+1$, accumulating the losses of the factors $\mu^{-\kappa}$ at each step of the induction.
    
    \begin{equs}
        \|\upzeta_\epmu^{1,n}\|_{\mu}&\lesssim\int_0^\mu \sup_{w_1\in\Lambda}\int_{[0,1]\times\T^d} \big|\big(K_\nu^{\otimes2}\D_\nu\upzeta_\epnu^{1,n}\big)(w_1,w_2)|\dint w_2\dint \nu\\
        &\lesssim \int_0^\mu \sum_{i=0}^{n-1}\|\upzeta_\epnu^{1,i}\|_{\nu}\|\dot G_\nu\|_{L^1_{t,x}}\|\upzeta_\epnu^{1,n-1-i}\|_{\nu}\dint\nu\\
        &\lesssim \int_0^\mu \sum_{i=0}^{n-1}\frac{\talloblong\upxi\talloblong^{i+1}}{(i+1)^2} \nu^{\deg(i)- (i+1)\kappa}\nu^{-1+\alpha}\frac{\talloblong\upxi\talloblong^{n-i}}{(n-i)^2} \nu^{\deg(n-1-i)- (n-i)\kappa}\dint\nu\\
        &\lesssim \talloblong\upxi\talloblong^{n+1}\int_0^\mu \nu^{\deg(n)-1-(n+1)\kappa}\dint\nu\sum_{i=0}^{n-1}\frac{1}{(i+1)^2(n-i)^2}\\
        &\lesssim\frac{\talloblong\upxi\talloblong^{n+1}}{(n+1)^2} \mu^{\deg(n)- (n+1)\kappa}\,.
    \end{equs}
    We have used \eqref{eq:Kmunu} and Lemma \ref{estimdotGmu}.

\end{proof}

{In this section, we denote by $\| \bigcdot\|_{\mu}$ the usual norm but with all the times restricted to $[0,T]$. Equivalently, we view the force coefficients $\upzeta_\epmu^{k,n}$ as supported on $[0,T]\times\T^d$ only. 
}
\begin{lemma}\label{Lem:constructionforce}
Fix a deterministic combinatorial constant $0<T_0\leq1$ (independent from the initial condition) to be taken small enough, and define the random time horizon
\begin{equs}
    T\eqdef\big(\talloblong\upxi\talloblong+T_0^{-\kappa/\alpha}\big)^{-\alpha/\kappa}\,.
\end{equs}
Then, uniform in $\eps\in(0,1]$ and $\mu\in(0,T^{1/\alpha}]$, it holds
\begin{equs}
\label{eq:esti_force}\|F^1_\epmu\|_{\mu}\lesssim\talloblong\upxi\talloblong
    \mu^{-\alpha+3\kappa}\,,
\quad\text{and}\quad
    \| K_\mu F_\epmu^0\|_{L_{t,x}^{\infty,r}(\Lambda_T)}\lesssim \talloblong\upxi\talloblong\|u_0^\eps\|_{\mcB^{-\sigma}_{r,\infty}} \mu^{-\alpha-\sigma+\beta-2\kappa}\,.
\end{equs}
\end{lemma}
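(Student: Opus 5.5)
The plan is to obtain both bounds by summing the estimates of Corollary~\ref{Lem:constructionzetairrelevant} over $n$, and to use the restriction $\mu\leq T^{1/\alpha}$ to turn the geometric growth in $\talloblong\upxi\talloblong$ into a convergent series. Since the effective force is the full series $F^1_\epmu=\sum_{n\geq0}\upzeta^{1,n}_\epmu$ on the torus, the triangle inequality gives
\begin{equs}
\|F^1_\epmu\|_\mu\leq\sum_{n\geq0}\|\upzeta^{1,n}_\epmu\|_\mu\lesssim\sum_{n\geq0}\frac{\talloblong\upxi\talloblong^{n+1}}{(n+1)^2}\mu^{-\alpha+(n+1)\beta-(n+1)\kappa}\,.
\end{equs}
Factor out the $n=0$ term $\talloblong\upxi\talloblong\mu^{-\alpha+\beta-\kappa}$; the remaining ratio is $\big(\talloblong\upxi\talloblong\mu^{\beta-\kappa}\big)^n$. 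Since $\kappa$ is small, $\beta-\kappa\geq\kappa$, so for $\mu\leq T^{1/\alpha}\leq1$ we have $\talloblong\upxi\talloblong\mu^{\beta-\kappa}\leq\talloblong\upxi\talloblong\mu^{\kappa}\leq\talloblong\upxi\talloblong T^{\kappa/\alpha}\leq\talloblong\upxi\talloblong/(\talloblong\upxi\talloblong+T_0^{-\kappa/\alpha})\leq1$. The series is therefore dominated by $\sum(n+1)^{-2}$, which yields $\|F^1_\epmu\|_\mu\lesssim\talloblong\upxi\talloblong\mu^{-\alpha+\beta-\kappa}$. This is stronger than the claimed $\mu^{-\alpha+3\kappa}$, since $\beta>4\kappa$ and $\mu\leq1$. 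The choice of $T_0$ small only ensures the constant is harmless (the ratio is even $\leq T_0^{\kappa/\alpha}\cdot(\dots)$ if one wants strict contraction); restricting times to $[0,T]$ does not affect the estimates of Corollary~\ref{Lem:constructionzetairrelevant}.

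The second bound is handled the same way: $K_\mu F^0_\epmu=\sum_n K_\mu\upzeta^{0,n}_\epmu$. The term $n=0$ is $\upzeta^{0,0}=0$ by the remark after Lemma~\ref{Lem:suppprop}; in any case it vanishes after time $0$, so the series effectively starts at $n=1$, with leading term of order $\|u_0^\eps\|_{\mcB^{-\sigma}_{r,\infty}}\talloblong\upxi\talloblong\mu^{\deg(0,1)-2\kappa}=\talloblong\upxi\talloblong\|u_0^\eps\|\mu^{-\alpha-\sigma+\beta-2\kappa}$. The subsequent terms carry ratios $(\talloblong\upxi\talloblong\mu^{\beta-\kappa})^{n-1}\leq1$ as before, so summing against $(n+1)^{-2}$ closes the argument.

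The only delicate point is checking that the $\kappa$-losses $(n+1)\kappa$ in Corollary~\ref{Lem:constructionzetairrelevant} are absorbed uniformly in $n$. This works because each additional order gains $\mu^{\beta}$ and loses only $\mu^{-\kappa}$ together with a factor $\talloblong\upxi\talloblong$, and the definition of $T$ is designed exactly so that $\talloblong\upxi\talloblong\mu^{\kappa}\leq1$. If the paper's exponent $3\kappa$ is intended to keep room for later steps, the estimate proved here contains it.
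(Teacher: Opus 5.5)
Your proposal is correct and follows the paper's own argument. Like the paper, you sum the bounds of Corollary~\ref{Lem:constructionzetairrelevant} over $n$, use $\deg(1,n)-(n+1)\kappa=-\alpha+\beta-\kappa+n(\beta-\kappa)\geq-\alpha+3\kappa+n\kappa$ (and, for $n\geq1$, $\deg(0,n)-(n+1)\kappa\geq-\alpha-\sigma+\beta-2\kappa+(n-1)\kappa$, with the $n=0$ term vanishing), and absorb $(\talloblong\upxi\talloblong\mu^{\kappa})^n\leq1$ using $\mu\leq T^{1/\alpha}$. Your chain of inequalities is also slightly cleaner than the paper's displayed one, whose intermediate bound $-\alpha+\beta+n\kappa$ is off by $\kappa$ at $n=0$.
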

\begin{proof}
    This is an immediate consequence of the estimate \eqref{eq:esti_zeta} and of 
\begin{equs}
        \deg(1,n)-(n+1)\kappa&=-\alpha+(n+1)(\beta-\kappa)\geq
        -\alpha+\beta+n\kappa  = -\alpha+\beta-\kappa+n\kappa \\
        &\geqslant
         -\alpha+3\kappa+n\kappa\,,
    \end{equs}
    and for $n\geq1$   
    \begin{equs}
        \deg(0,n)-(n+1)\kappa&=-\alpha-\sigma+n\beta -(n+1)\kappa=-\alpha-\sigma+\beta-2\kappa+ (n-1)(\beta-\kappa)\\
        &\geq-\alpha-\sigma+\beta-2\kappa+(n-1)\kappa\,.
    \end{equs}
\end{proof}

We introduce a novel way to construct the solution in finite volume. It relies on a fixed-point argument and bypasses some tedious steps that we would have to go through otherwise. This method is proper to the use of a non-truncated effective force.

\begin{definition}
For $\mu\in(0,T^{1/\alpha}]$, we define the \textit{planted effective force}
\begin{equs}
   P_\epmu[\varphi]\eqdef (G-G_{\mu})F_{\epmu}[\varphi]\,.
\end{equs}
Right as we did for the force, we split it as $ P_\epmu[\varphi]=  P_\epmu^{{ 0}}+ P^1_\epmu[\varphi]$, where $P^1_\epmu$ is a linear operator.
\end{definition}
The name stems from the fact that $P_\epmu[\varphi]$ is the sum of all the planted trees. Note that $P_\epmu$ is related to the difference between the solution and the effective solution at scale $\mu$ via
\begin{equs}
    v_\eps-v_\epmu=P_\epmu[v_\epmu]\,.
\end{equs}
In particular, using the fact that $v_{\eps,T^{1/\alpha}}=0$ on $\Lambda_T$, we have
\begin{equs}
    v_\eps=P^0_{\eps,T^{1/\alpha}}\,.
\end{equs}

\begin{lemma}
    The planted effective force satisfies the flow equation
\begin{equs}
    \D_\mu   P_\epmu[\varphi]= (-1+\mathrm D P_\epmu)\dot G_\mu F_{\epmu}[\varphi]\,,
\end{equs}
with null initial condition.
\end{lemma}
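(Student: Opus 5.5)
The plan is a direct differentiation in $\mu$ of the definition $P_\epmu[\varphi]=(G-G_\mu)F_\epmu[\varphi]$, combined with the inviscid Polchinski flow equation \eqref{eq:flotFam}. On the torus that equation is satisfied exactly, because the ansatz \eqref{eq:ansatzwithNmax} is used with $N_{\max}=\infty$. By the product rule,
\begin{equs}
\D_\mu P_\epmu[\varphi]=-\dot G_\mu F_\epmu[\varphi]+(G-G_\mu)\,\D_\mu F_\epmu[\varphi]\,.
\end{equs}
Substituting $\D_\mu F_\epmu[\varphi]=-\mathrm D F_\epmu[\varphi]\,\dot G_\mu F_\epmu[\varphi]$ turns the second term into $-(G-G_\mu)\mathrm D F_\epmu\,\dot G_\mu F_\epmu[\varphi]$.

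Next I would identify this term with the Fréchet derivative of $P_\epmu$. Since $P_\epmu=(G-G_\mu)\circ F_\epmu$ and $G-G_\mu$ is linear, we have $\mathrm D P_\epmu[\varphi]h=(G-G_\mu)\mathrm D F_\epmu[\varphi]h$. Moreover, $F_\epmu$ is affine in $\varphi$ (namely $F^0_\epmu+F^1_\epmu[\varphi]$), so $\mathrm D F_\epmu=F^1_\epmu$ does not depend on $\varphi$ and neither does $\mathrm D P_\epmu=P^1_\epmu$. Collecting terms gives $\D_\mu P_\epmu[\varphi]=(-1+\mathrm DP_\epmu)\dot G_\mu F_\epmu[\varphi]$, up to an overall sign convention in front of $\mathrm D P_\epmu$. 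I would fix that sign carefully by matching it against the sign in \eqref{eq:hierarchyzeta}. The same identity can also be checked coefficientwise, i.e.\ on each planted tree $(G-G_\mu)\upzeta^{k,n}_\epmu$, using \eqref{eq:hierarchyzeta}.

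For the initial condition, $G_0=G$ by the convention $\chi_0=1$, so $P_{\eps,0}=(G-G_0)F_\eps=0$.

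The only step that needs care is justifying the exchange of $\D_\mu$ with the series $\sum_n\upzeta^{k,n}_\epmu$ and with the convolution by $G-G_\mu$. For this I would use Corollary~\ref{Lem:constructionzetairrelevant}: the coefficient bounds there are summable in $n$ on $\Lambda_T$ for $\mu\le T^{1/\alpha}$, since the $(n+1)^{-2}$ factors together with the choice of $T$ absorb the powers of $\talloblong\upxi\talloblong$. The differentiated series, controlled via \eqref{eq:hierarchyzeta} and Lemma~\ref{estimdotGmu}, converges locally uniformly in $\mu$ by the same argument. Finally, $G-G_\mu$ is integrable uniformly in $\mu$ on $[0,T]$ and $\dot G_\mu$ is explicit, which justifies termwise differentiation. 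I expect this convergence bookkeeping to be the main obstacle, though a routine one.
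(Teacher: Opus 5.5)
The paper states this lemma without proof. Your route is the natural one: the product rule on $(G-G_\mu)F_\epmu$, the exact flow equation \eqref{eq:flotFam} on the torus, and $\mathrm D P_\epmu=(G-G_\mu)\mathrm D F_\epmu=P^1_\epmu$. The initial condition $P_{\eps,0}=0$ and your convergence bookkeeping are fine.

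The gap is the step ``collecting terms gives $(-1+\mathrm DP_\epmu)\dot G_\mu F_\epmu[\varphi]$, up to a sign convention''. Your own two substitutions give
\[
\D_\mu P_\epmu[\varphi]=-\dot G_\mu F_\epmu[\varphi]-(G-G_\mu)\,\mathrm D F_\epmu\,\dot G_\mu F_\epmu[\varphi]=-\big(1+\mathrm D P_\epmu\big)\dot G_\mu F_\epmu[\varphi]\,,
\]
which has the opposite sign in front of $\mathrm DP_\epmu$. No convention can absorb this. The paper fixes $\dot G_\mu=\D_\mu G_\mu$ and the Fr\'echet derivative. Moreover, \eqref{eq:hierarchyzeta} carries exactly the minus sign you used, so ``matching against it'' confirms $-(1+\mathrm DP_\epmu)$ rather than flipping it. An independent check gives the same identity: differentiate $v_\eps=v_\epmu+P_\epmu[v_\epmu]$ in $\mu$ and use $\D_\mu v_\epmu=\dot G_\mu F_\epmu[v_\epmu]$. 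The printed formula would only hold if the flow were $\D_\mu F_\epmu=+\mathrm DF_\epmu\,\dot G_\mu F_\epmu$, which contradicts \eqref{eq:flotFam}.

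So the statement as printed has a sign typo. What your argument actually establishes is $\D_\mu P_\epmu[\varphi]=-(1+\mathrm DP_\epmu)\dot G_\mu F_\epmu[\varphi]$ with $P_{\eps,0}=0$, and you should say this explicitly rather than hedge. The typo is not purely cosmetic. The map $\Phi_\eps$ is built from the same factor $(-1+P^1_\nu)$, and its fixed point is the planted force only once that factor is corrected to $-(1+P^1_\nu)$. Only then does the fixed point give $v_\eps=P^0_{\eps,T^{1/\alpha}}$. After the correction, the contraction estimates in Lemma~\ref{lemmafixedpointplanted} go through verbatim, since they only use the bound $1+\|P^1_\epnu\|_{P,\nu}$.
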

\begin{definition}
    Define the map $\Phi_\eps$ acting on trajectories $P=(P_\mu)_{\mu\in(0,T^{1/\alpha}]}$ of functionals of the form $P^0_\mu+P^1_\mu[\varphi]$ where $P_\mu^1$ is linear by
    \begin{equs}
        \Phi_\epmu(P)[\varphi]=\int_0^\mu (-1+\mathrm D P_\nu)\dot G_\nu F_{\epnu}[\varphi]\rmd\nu\,.
    \end{equs}
    Equivalently, projecting on the two components, we have
      \begin{equs}
        \Phi^k_\epmu(P)=\int_0^\mu (-1+ P^1_\nu)\dot G_\nu F^k_{\epnu}\rmd\nu\,.
    \end{equs}
We aim to construct the solution by showing that $\Phi$ admits a fixed point. We will prove that indeed it is a contraction under the norms designed as follows. First, introduce the operator norm
\begin{equs}
    \|P_\mu^{1}\|_{P,\mu}\eqdef\|\big(\Id\otimes K_{\mu} \big)P_\mu^{1}\|_{L_{w_1}^\infty L_{w_2}^1(\Lambda_T^2)}+\|\big(\Id\otimes K_{\mu} \big)P_\mu^{1}\|_{L_{w_2}^\infty L_{w_1}^1(\Lambda_T^2)}\,.
\end{equs} 

Then, for $\gamma\in[0,1]$ and $p\in[1,\infty]$, we define the following norm for the trajectory of planted effective force: 
\begin{multline}
    \snnorm{P_\eps}_{\gamma,p,T}\eqdef \Big(\sup_{\mu\in(0,T^{1/\alpha}]}\mu^{-\kappa}  \|P^1_\epmu\|_{P,\mu} \Big)\\\vee\Big(\|u_0^\eps\|_{\mcB_{r,\infty}^{-\sigma}}^{-1}\sup_{\mu\in(0,T^{1/\alpha}]} \sup_{t\leqslant T} t^{-(\beta-\sigma-\gamma-5\kappa-d(1/r-1/p))/\alpha}\|P^0_\epmu(t)\|_{\mcB_{p,\infty}^\gamma}\Big) \,.
\end{multline}
\end{definition}
\begin{lemma}\label{lemmafixedpointplanted}
Recall the dependence of $T$ on $T_0$. Assuming that $T_0$ is chosen large enough (independently on the initial condition), then the map $\Phi_\eps$ is a contraction for the norm $\snnorm{\bigcdot}_{\gamma,p,T}$ on the unit ball, and is continuous in the data $(F_\epmu)_{\mu\in(0,T^{1/\alpha}]}$.

Therefore, the planted effective force exists, its norm $\snnorm{\bigcdot}_{\gamma,p,T}$ is bounded by one and, in particular, the solution $v_\eps$ is constructed on the (random) interval $[0,T]$ and satisfies the following estimate: for every pair $(\gamma,p)$ such that $p\geq r$ and $0\leq \gamma\leqslant\beta-\sigma-5\kappa-d(1/r-1/p)$ and for every $t\in(0, T]$, 
\begin{equs}
 \|v_\eps(t)\|_{\mcB_{p,\infty}^\gamma}\leq t^{(\beta-\sigma-\gamma-5\kappa-d(1/r-1/p))/\alpha}  \|u_0^\eps\|_{\mcB_{r,\infty}^{-\sigma}}\,.
\end{equs}
\end{lemma}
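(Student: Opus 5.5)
We run a Banach fixed-point argument for $\Phi_\eps$ on the unit ball $\mathcal{B}_1\eqdef\{P:\snnorm{P}_{\gamma,p,T}\leq1\}$. The inputs are the bounds of Lemma~\ref{Lem:constructionforce} on $F^1_\epnu$ and $K_\nu F^0_\epnu$, valid for $\nu\leq T^{1/\alpha}$, together with the estimate on $\dot G_\nu$ from Lemma~\ref{estimdotGmu}, namely $\|\dot G_\nu\|_{L^1_{t,x}}\lesssim\nu^{-1+\alpha}$ with support in $[\nu^\alpha,2\nu^\alpha]$. The idea is that every term in $\Phi_\eps$ carries one factor $\dot G_\nu$ and one factor of $F$, so the power counting is $\nu^{-1+\alpha}\cdot\nu^{-\alpha+3\kappa}=\nu^{-1+3\kappa}$, which is integrable at $0$. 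The resulting $\mu^{3\kappa}$ gain beats the $\mu^{-\kappa}$ in the norm. A leftover small power $T^{c\kappa}$ can then be made small through the relation $T\leq T_0$, with $T_0$ taken small. (The statement's ``large enough'' should read ``small enough'', consistent with the definition of $T$.)

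\textbf{Linear component.} We estimate
\[
\Phi^1_\epmu(P)=\int_0^\mu(-1+P^1_\nu)\dot G_\nu F^1_\epnu\,\rmd\nu
\]
in $\|\bigcdot\|_{P,\mu}$. Two manipulations are needed. First, insert $K_\nu\mcR_\nu=\Id$ between the factors. Second, compare $K_\mu$ with $K_\nu$ for $\nu\leq\mu$ using \eqref{eq:Kmunu}, i.e.\ $\|\mcR_\nu K_\mu\|_{L^1}\lesssim1$. With these, Young's inequality in the $L^\infty L^1$ operator norm gives
\[
\|\dot G_\nu F^1_\epnu\|_{P,\mu}\lesssim\nu^{-1+\alpha}\,\talloblong\upxi\talloblong\,\nu^{-\alpha+3\kappa}.
\]
The term with $P^1_\nu$ is bounded the same way, with an extra factor $\|P^1_\nu\|_{P,\nu}\leq\nu^{\kappa}$. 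Integrating in $\nu$ yields $\mu^{-\kappa}\|\Phi^1_\epmu\|_{P,\mu}\lesssim\talloblong\upxi\talloblong\,\mu^{2\kappa}$. Since $\mu\leq T^{1/\alpha}$ and $T^{\kappa/\alpha}\talloblong\upxi\talloblong\leq1$ by the definition of $T$, the right-hand side is at most $\mu^{\kappa}$, which is $\leq T_0^{\kappa/\alpha}$ and hence small. Differences $\Phi(P)-\Phi(P')$ only involve the term with $P^1_\nu-P'^1_\nu$, so the same computation gives a contraction constant of order $T_0^{c\kappa}$.

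\textbf{Constant component.} We bound $\|\Phi^0_\epmu(P)(t)\|_{\mcB^\gamma_{p,\infty}}$, starting from
\[
\dot G_\nu F^0_\epnu=\big(\dot G_\nu\mcR_\nu\big)K_\nu F^0_\epnu.
\]
Because $\dot G_\nu$ is a smooth kernel at scale $\nu$, the kernel $\dot G_\nu\mcR_\nu$ maps $L^{\infty,r}$ to $L^\infty_t\mcB^\gamma_{p,\infty}$ with norm $\lesssim\nu^{-1+\alpha-\gamma-d(1/r-1/p)}$. This is Bernstein plus Young at scale $\nu$; for the slowly decaying fractional kernel one only needs $L^1$-type bounds, which Lemma~\ref{heatkernelestimate} provides.

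Combining this with Lemma~\ref{Lem:constructionforce} (where $\sigma$ is replaced by the value used in $F^0$), we obtain an integrand of size
\[
\nu^{-1+\beta-\sigma-\gamma-d(1/r-1/p)-2\kappa}\,\|u_0^\eps\|_{\mcB^{-\sigma}_{r,\infty}}.
\]
Time localisation then converts the $\nu$-gain into a $t$-gain. The factor $F^0_\epnu$ vanishes before time $\nu^\alpha$ up to the $\upzeta$ part, and $\dot G_\nu$ shifts time by at most $2\nu^\alpha$. Hence only $\nu\lesssim t^{1/\alpha}$ contributes at time $t$, and integrating gives $t^{(\beta-\sigma-\gamma-2\kappa-d(1/r-1/p))/\alpha}$. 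This exceeds the required weight by a factor $t^{3\kappa/\alpha}\leq T_0^{3\kappa/\alpha}$.

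The term $P^1_\nu\dot G_\nu F^0_\epnu$ needs a bound of $P^1_\nu$ acting on the right object. Here I would reuse the structure of the norm: bound $P^1_\nu$ through its $L^\infty L^1$ norm after inserting $K_\nu\mcR_\nu$, and recover the Besov regularity from the smoothing of $(G-G_\nu)$ already contained in $P^1$. If this is too crude, the fallback is to note $P^1_\nu=(G-G_\nu)F^1$ and apply the Schauder estimate for $G$ directly.

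\textbf{Conclusion.} With both components small and contracting, Banach's theorem gives a fixed point, which equals the planted force by uniqueness of the flow solution with null initial data. Continuity in the data $(F_\epmu)$ follows because $\Phi$ depends linearly on $F$. Since $v_\eps=P^0_{\eps,T^{1/\alpha}}$, the stated estimate on $v_\eps(t)$ is exactly the bound on the second part of the norm.

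The \textbf{main obstacle} is the constant component: extracting the $t$-dependent gain from the support properties while simultaneously absorbing the $P^1_\nu$ factor in the Besov norm without losing powers of $\nu$.
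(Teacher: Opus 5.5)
\textbf{Verdict: the proposal has a genuine gap in the constant component.}

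\textbf{What works.} Your linear component is the paper's computation. Your reading ``$T_0$ small enough'' is the right one. For the part $-\dot G_\nu F^0_\epnu$ of $\Phi^0$, the slice-wise $\mcB^\gamma_{p,\infty}$ bound is fine: the $3\kappa$ margin makes the $\nu$-integral converge, so you can skip the paper's case distinction between the Besov scale $\lambda$ and $\mu$. One correction: the restriction $\nu\le t^{1/\alpha}$ comes from $\dot G_\nu$ shifting time by \emph{at least} $\nu^\alpha$ while $F^0_\epnu$ lives on $\{t\geq0\}$. It does not come from $F^0$ vanishing before $\nu^\alpha$, which is false.

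\textbf{The gap.} The term $P^1_\nu\dot G_\nu F^0_\epnu$ is not handled, and neither is $(P^1_\nu-P'^1_\nu)\dot G_\nu F^0_\epnu$ in the contraction estimate for $\Phi^0$. The quantity $\|P^1_\nu\|_{P,\nu}$ controls only the size of $(\Id\otimes K_\nu)P^1_\nu$ in $L^\infty L^1$. It says nothing about the regularity of the output of $P^1_\nu$, so no $\mcB^\gamma_{p,\infty}$ bound on this term follows from it; your option (a) is not yet an argument. Option (b) fails as stated for two reasons. First, $P^1_\nu=(G-G_\nu)F^1_\epnu$ holds only at the fixed point; this could be arranged, since $\Phi$ does not depend on $P^0$ and one may solve for $P^1$ first. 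Second, and more seriously, $G-G_\nu$ acts only on scales below $\nu$, so a Schauder estimate for it needs information on $F^1_\epnu\dot G_\nu F^0_\epnu$ at those scales. Lemma~\ref{Lem:constructionforce} only controls $F^1_\epnu$ after smoothing by $K_\nu$. You cannot move $K_\nu$ across either, because $\mcR_\nu$ applied to the short-time kernel $G-G_\nu$ has a non-integrable singularity at $t=0$.

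\textbf{The missing idea.} You need to transport the smoothing of $\dot G_\nu$ through $P^1_\nu$, rather than asking $P^1_\nu$ to smooth. The paper does this as follows.
\begin{itemize}
\item For Besov scales $\lambda\geq\nu$, only the $L^p$ size of $P^1_\nu$ is used, together with $\lambda^{-\gamma}\leq\nu^{-\gamma}$.
\item For $\lambda<\nu$, it extracts $Q_\nu$ from $\mcR^2_\nu\dot G_\nu$, moves it to the left of $P^1_\nu$, and uses $\|(Q_\lambda-\Id)Q_\nu\|\lesssim(\lambda/\nu)^2$.
\end{itemize}
The paper justifies moving $Q_\nu$ by spatial translation invariance of $P^1_\nu$. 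Be aware that this holds only in law. Already at the fixed point, $P^1_\nu$ contains $(G-G_\nu)\one_{(0,\infty)}\xi_\eps$, which does not commute with $Q_\nu$, and a general element of the ball has no such structure at all.

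\textbf{A robust fix.} Build output regularity into the norm. Require also
\[
\|(Q_\lambda-\Id)P^1_\mu\|_{P,\mu}\leq(\lambda/\mu)^{\gamma'}\mu^{\kappa}\quad\text{for }\lambda<\mu,
\]
with $Q_\lambda-\Id$ acting on the output variable and $\gamma\leq\gamma'\leq\beta-\kappa$. In $\Phi^1$ this propagates as follows.
\begin{itemize}
\item The term with $-1$ satisfies it because $Q_\nu$ genuinely commutes with the deterministic kernel $\dot G_\nu$.
\item The term with $P^1_\nu$ satisfies it by the bound itself.
\end{itemize}
This closes provided you use the true power counting $\|F^1_\epnu\|_\nu\lesssim\talloblong\upxi\talloblong\nu^{-\alpha+\beta-\kappa}$, which follows from Corollary~\ref{Lem:constructionzetairrelevant} and $\talloblong\upxi\talloblong\nu^{\kappa}\leq1$. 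The cruder bound $\nu^{-\alpha+3\kappa}$ would only propagate $\gamma'\leq3\kappa$. With the refined norm, $\lambda^{-\gamma}\|(Q_\lambda-\Id)P^1_\nu\dot G_\nu F^0_\epnu(t)\|_{L^p_x}$ is bounded by $\nu^{-\gamma}$ times the $L^p$ size. Your slice-wise integration then goes through for this term too.
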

\begin{proof}
We prove that $\Phi$ maps the unit ball into itself, the proof of the difference bound being similar. Assume $\snnorm{P_\eps}_{\gamma,p,T}\leqslant1$. We have 
\begin{equs}
    \|\Phi^1_\epmu (P)\|_{P,\mu}&\lesssim\int_0^\mu
   \big(1+\|P^1_\epnu \|_{P,\mu}\big)\| \mcR_\nu^2\dot G_\nu\|_{L_{t,x}^1}\|F^1_\epnu\|_{\nu}\dint\nu\\&\lesssim\talloblong\upxi\talloblong\int_0^\mu
   (1+\nu^{\kappa}) \nu^{-1+3\kappa}\dint\nu\lesssim T_0^{\kappa/\alpha} T^{\kappa/\alpha}
   \talloblong\upxi\talloblong \mu^{\kappa}\leqslant \mu^\beta\,.
\end{equs}
Here, we have absorbed $\talloblong\upxi\talloblong$ with $T^{\kappa/\alpha}$ and taken $T_0$ small enough to absorb the combinatorial constant. Next, we have
\begin{equs}
   \lambda^{-\gamma}   \|(Q_\lambda-1)\Phi^0_\epmu(P)(t) \|_{L^p_x}\lesssim 
    \lambda^{-\gamma} \int_0^\mu  \|(Q_\lambda-1)(-1+ P^1_\epnu)\dot G_\nu F^0_{\epnu}(t) \|_{L^p_x}\dint\nu\,.
\end{equs}
Here, we need to consider several cases. 

Firstly, if $\lambda\geq\mu$, then we do not need to rely on the kernel $(Q_\lambda-1)$. Bounding the integral up to $\mu$ by an integral up to $\lambda$, we have for $\lambda\geq\mu$
\begin{equs}
   \lambda^{-\gamma}   \|(Q_\lambda-1)\Phi^0_\epmu(P)(t) \|_{L^p_x}\lesssim 
    \lambda^{-\gamma} \int_0^\lambda \big(1+\|P^1_\epnu \|_{P,\nu}\big)  \|\mcR_\nu\dot G_\nu F^0_{\epnu}(t) \|_{L^p_x}\dint\nu\,.
\end{equs}
Here, we can use the fact that $\mcR^2_\nu\dot G_\nu$ is localised in time at scale $\nu^\alpha$, which implies that $t\geqslant\nu^\alpha$ and that the time component of the argument of $\mcR_\nu^2 \dot G_\nu$ is bounded by $\nu^\alpha\leqslant\lambda^\alpha$, we obtain
\begin{equs}
   \lambda^{-\gamma}   \|(Q_\lambda-1)\Phi^0_\epmu(P)(t) \|_{L^p_x}&\lesssim 
     \int_0^\lambda   \|\big(\BF{t}^{-\gamma/\alpha}\mcR^2_\nu\dot G_\nu\big) K_\nu F^0_{\epnu}(t) \|_{L^p_x}\dint\nu\\
     &\lesssim
     \int_0^{\lambda\wedge t^{1/\alpha}}   \|\BF{t}^{-\gamma/\alpha}\mcR^2_\nu\dot G_\nu\|_{\mcL(L^{\infty,r}_{t,x},L^{\infty,p}_{t,x})}\|K_\nu F^0_{\epnu} \|_{L^{\infty,r}_{t,x}}\dint\nu\\
     &\lesssim\talloblong\upxi\talloblong\| u_0^\eps\|_{\mcB_{r,\infty}^{-\sigma}}\int_0^
{ t^{1/\alpha}}   \|\mcR^2_\nu\dot G_\nu\|_{\mcL(L^{\alpha/(\gamma+\kappa),r}_{t,x},L^{\infty,p}_{t,x})}\nu^{-\alpha-\sigma+\beta-2\kappa}\dint\nu\\
      &\lesssim\talloblong\upxi\talloblong\| u_0^\eps\|_{\mcB_{r,\infty}^{-\sigma}}
     \int_0^{t^{1/\alpha}}  \nu^{-1-\sigma+\beta-\gamma-3\kappa-d(1/r-1/p)}\dint\nu
\\
      &\lesssim T_0^{\kappa/\alpha}T^{\kappa/\alpha}\talloblong\upxi\talloblong\| u_0^\eps\|_{\mcB_{r,\infty}^{-\sigma}}
     t^{(\beta-\sigma-\gamma-5\kappa-d(1/r-1/p))/\alpha}
     \\
      &\leq \| u_0^\eps\|_{\mcB_{r,\infty}^{-\sigma}}
     t^{(\beta-\sigma-\gamma-5\kappa-d(1/r-1/p))/\alpha}
      \,.
\end{equs}

In the case where $\lambda\leqslant\mu$, then we split the integral over $\nu$ as an integral below $\lambda$ plus an integral above $\lambda$. The integral below $\lambda$ being already treated in the previous case, we are left with handling the integral above $\lambda$:
\begin{equs}
     \lambda^{-\gamma} &\int_\lambda^\mu  \|(Q_\lambda-1)(-1+ P^1_\epnu)\dot G_\nu F^0_{\epnu}(t) \|_{L^p_x}\dint\nu\\&\lesssim
       \lambda^{-\gamma} \int_\lambda^\mu  \|Q_\nu(Q_\lambda-1)(-1+ P^1_\epnu)\mcR_\nu^2\dot G_\nu K_\nu F^0_{\epnu}(t) \|_{L^p_x}\dint\nu
       \\&\lesssim
       \lambda^{\alpha-\gamma} \int_\lambda^\mu\nu^{-\alpha}  \|(-1+ P^1_\epnu)\mcR_\nu^2\dot G_\nu K_\nu F^0_{\epnu}(t) \|_{L^p_x}\dint\nu
  \\&\lesssim\one_{\{\lambda^\alpha\leq t\}}
       \lambda^{\alpha-\gamma} \int_\lambda^1\nu^{-\alpha} 
       \|\mcR^2_\nu\dot G_\nu\|_{\mcL(L^{\infty,r}_{t,x},L^{\infty,p}_{t,x})}\|K_\nu F^0_{\epnu} \|_{L^{\infty,r}_{t,x}}\dint\nu\\
       &\lesssim\one_{\{\lambda^\alpha\leq t\}}\talloblong\upxi\talloblong\| u_0^\eps\|_{\mcB_{r,\infty}^{-\sigma}}
       \lambda^{\alpha-\gamma} \int_\lambda^1\nu^{-1-\alpha+\beta-\sigma-2\kappa-d(1/r-1/p)} 
      \dint\nu
      \\
       &\lesssim\one_{\{\lambda^\alpha\leq t\}}T_0^{\kappa/\alpha}T^{\kappa/\alpha}\talloblong\upxi\talloblong\| u_0^\eps\|_{\mcB_{r,\infty}^{-\sigma}}
       \lambda^{\beta-\sigma-\gamma-5\kappa-d(1/r-1/p)} 
     \\
      &\leqslant\tfrac{1}{2}\| u_0^\eps\|_{\mcB_r^{-\sigma}}
       t^{(\beta-\sigma-\gamma-5\kappa-d(1/r-1/p))/\alpha} \,.
       \end{equs}
       Here, note that in the first inequality we used the fact that $P^1_\epnu$ is still translation-invariant in space (translation invariance is broken in time only) so we commuted its action with $Q_\nu$. To conclude the argument, we also need to treat the inhomogeneous term. We have
       \begin{equs}
        \|\Phi^0_\epmu(P)(t)\|_{L^p_x}&\lesssim\int_0^\mu\|\mcR^2_\nu\dot G_\nu K_\nu F^0_{\epnu}(t) \|_{L^p_x}\dint\nu\\
        &\lesssim\int_0^{\mu}\|\mcR^2_\nu\dot G_\nu\|_{\mcL(L^{\infty,r}_{t,x},L^{\infty,p}_{t,x})}\|K_\nu F^0_{\epnu} \|_{L^{\infty,r}_{t,x}}\dint\nu\\
        &\lesssim\talloblong\upxi\talloblong\| u_0^\eps\|_{\mcB_{r,\infty}^{-\sigma}}\int_0^{ \mu}   \|\mcR^2_\nu\dot G_\nu\|_{\mcL(L^{\alpha/\kappa,r}_{t,x},L^{\infty,p}_{t,x})}\nu^{-\alpha-\sigma+\beta-2\kappa}\dint\nu\\
        &\lesssim\talloblong\upxi\talloblong\| u_0^\eps\|_{\mcB_{r,\infty}^{-\sigma}}
        \int_0^{T^{1/\alpha}}  \nu^{-1-\sigma+\beta-3\kappa-d(1/r-1/p)}\dint\nu\\
&\lesssim\talloblong\upxi\talloblong\| u_0^\eps\|_{\mcB_{r,\infty}^{-\sigma}}
        \int_0^{T^{1/\alpha}}  \nu^{-1+2\kappa} \dint\nu t^{\beta-\sigma-5\kappa-d(1/r-1/p)}
        \\
        &\lesssim T_0^{\kappa/\alpha}T^{\kappa/\alpha}\talloblong\upxi\talloblong\| u_0^\eps\|_{\mcB_{r,\infty}^{-\sigma}}t^{(\beta-\sigma-5\kappa-d(1/r-1/p))/\alpha}\\
        &\leq \| u_0^\eps\|_{\mcB_{r,\infty}^{-\sigma}}t^{(\beta-\sigma-5\kappa-d(1/r-1/p))/\alpha}\,,
        \end{equs}
        which actually behaves better than the previous terms. Again, we rely on the fact that the argument $t-s$ of $\mcR^2_\mu\dot G_\mu$ satisfies $t-s\geq \nu^{\alpha}$ which combined with $s\geq0$ implies that $\nu\leqslant t^{1/\alpha}$.
\end{proof}

To conclude this section, we need to use a globalisation argument to eventually obtain the bound \eqref{Eq:EstimateS-Geps} which will lead to \eqref{Eq:EstimateS-G} in Theorem \ref{maintheoremspde}, point 1.

\begin{corollary}\label{Cor:GlobalisationSeps}
    For $\eps,\eta>0$ and $(\sigma,\gamma,p)$ such that $p\geq r$, $0<\sigma<\beta$ and $0\leq \gamma< \beta-\sigma-d(1/r-1/p)$, $u_\eps$ satisfies
    \begin{equation}\label{Eq:EstimateS-Geps}
        \big\|u_\eps(t)-\e^{-t\rmD^\alpha}u_0^\eps\big\|_{\mcB_{p,\infty}^\gamma(\T^d)}\leq \e^{ct}t^{(\beta-\sigma-\gamma-d(1/r-1/p)-\eta)/\alpha}  \|u_0^\eps\|_{\mcB_{r,\infty}^{-\sigma}(\T^d)}\,,
    \end{equation}
    for some positive random constant $c$.
\end{corollary}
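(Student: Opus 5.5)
We will derive \eqref{Eq:EstimateS-Geps} from Lemma~\ref{lemmafixedpointplanted}: it gives the sharp bound on the random interval $[0,T]$, and we then iterate it using the semigroup property of the linear equation.

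\textbf{Step 1 (short times).} Choose $\kappa$ with $5\kappa\leq\eta$. Since $v_\eps=u_\eps-\e^{-t\rmD^{\alpha,m}}u_0^\eps$, Lemma~\ref{lemmafixedpointplanted} gives \eqref{Eq:EstimateS-Geps} for $t\in(0,T]$ with constant one. We only need the range $\gamma<\beta-\sigma-d(1/r-1/p)$ where $5\kappa$ fits, and this is guaranteed by shrinking $\kappa$.

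The random time $T$ depends only on $\talloblong\upxi\talloblong$ and $T_0$, not on the initial condition. Moreover, the noise is stationary in time and $S_\epmu$ is time-translation invariant. Hence the same construction, restarted at any time $t_0$ with initial datum $u_\eps(t_0)$, solves the equation on $[t_0,t_0+T]$ with the same $T$. The bound obtained there is in terms of $\|u_\eps(t_0)\|_{\mcB^{-\sigma}_{r,\infty}}$.

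\textbf{Step 2 (propagation of a crude norm).} We now control $a_k\eqdef\|u_\eps(kT/2)\|_{\mcB^{-\sigma}_{r,\infty}}$.

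Apply Step 1 with $(\gamma,p)=(0,r)$ at time $t_0=kT/2$, evaluated at elapsed time $T/2$. The embedding $L^r\subset\mcB^{-\sigma}_{r,\infty}$ and the boundedness of the semigroup on $\mcB^{-\sigma}_{r,\infty}$ (by $\e^{Ct}$, or by $1$ on the torus) give
\[
a_{k+1}\leq (C+T^{(\beta-\sigma-\eta)/\alpha})\,a_k\leq C' a_k .
\]
Therefore $a_k\leq C'^k\|u_0^\eps\|=\e^{ckT}\|u_0^\eps\|$ with $c=2\log C'/T$, which is random through $T$.

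\textbf{Step 3 (long times).} For $t>T$, write $t=t_0+s$ with $t_0=kT/2$ and $s\in[T/2,T]$. Decompose
\[
u_\eps(t)-\e^{-t\rmD}u_0^\eps=\big(u_\eps(t)-\e^{-s\rmD}u_\eps(t_0)\big)+\e^{-s\rmD}\big(u_\eps(t_0)-\e^{-t_0\rmD}u_0^\eps\big).
\]

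For the first bracket, Step 1 gives a bound by $s^{(\dots)/\alpha}a_k\lesssim \e^{ct}\|u_0^\eps\|$.

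For the second bracket, $s\geq T/2$, so the Schauder smoothing of $\e^{-s\rmD}$ from $\mcB^{-\sigma}_{r,\infty}$ to $\mcB^\gamma_{p,\infty}$ costs a factor $T^{-(\sigma+\gamma+d(1/r-1/p))/\alpha}$, which is absorbed into $\e^{ct}$. It then remains to bound $\|u_\eps(t_0)\|_{\mcB^{-\sigma}_{r,\infty}}+\|\e^{-t_0\rmD}u_0^\eps\|_{\mcB^{-\sigma}_{r,\infty}}\lesssim \e^{ct_0}\|u_0^\eps\|$, which Step 2 provides.

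Since $t\geq T$ here, the factor $t^{(\dots)/\alpha}$ in \eqref{Eq:EstimateS-Geps} is bounded below by a power of $T\wedge1$, also random. This is absorbed by enlarging $c$, using $t\geq T$.

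\textbf{Main obstacle.} The delicate point is justifying the restart in Step 1. One must check that the stationary force coefficients built from the noise on the shifted time window $[t_0,t_0+1]$ obey the same norm $\talloblong\upxi\talloblong$, so that $T$ is uniform in $t_0$.

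I would try taking the supremum in the definition of $\talloblong\upxi\talloblong$ over all space-time, which Proposition~\ref{prop:stoestimate} permits with the $\langle w_1\rangle^{-\kappa}$ weight. This costs a polynomial factor $\langle t_0\rangle^{\kappa}$, which is harmless inside $\e^{ct}$. The cost is that $T$ then degrades like $\langle t_0\rangle^{-\alpha}$ and the recursion of Step 2 needs summability. Failing that, I would simply accept a random $c$ defined via the supremum over unit windows of the local norms, which is finite by a Borel–Cantelli argument on the moment bounds.
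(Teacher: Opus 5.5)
Your argument is essentially the paper's. You take the short-time bound from Lemma~\ref{lemmafixedpointplanted} with $5\kappa\le\eta$, restart on successive intervals with the \emph{same} random $T$, and iterate crudely to get $c\sim\log C'/T$. Your Steps~2--3 are a more careful version of the paper's triangle inequality $\|v_\eps(t)\|\le\|u_\eps(t)\|+\|\e^{-t\rmD^{\alpha,m}}u_0^\eps\|$: you propagate the $\mcB^{-\sigma}_{r,\infty}$-norm over half-steps and use Schauder smoothing of the free flow over a time $s\geq T/2$. This correctly handles the $t\to0$ singularity of the free term and the absorption of $t^{(\cdots)/\alpha}$ for $t\geq T$, both of which the paper leaves implicit.

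The one thing to fix is your ``main obstacle'' paragraph, because there is no obstacle. The noise $\xi$ is purely spatial: its covariance in Assumption~\ref{condalphabeta} depends only on $x$. So $\xi_\eps$, and hence the stationary force coefficients, are \emph{pathwise} invariant under time shifts, not merely in law. Consequently $\talloblong\upxi\talloblong$ is literally the same random variable at every restart and $T$ does not change; the paper's proof makes exactly this remark.

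The fallbacks you propose would also not give the stated rate. If $T$ degraded like $\langle t_0\rangle^{-\alpha}$, you would need about $t^{1+\alpha}$ restarts to reach time $t$, hence growth $\e^{Ct^{1+\alpha}}$ rather than $\e^{ct}$. For a genuinely time-dependent noise, the supremum of the local norms over infinitely many unit windows would not be almost surely finite. Borel--Cantelli only gives logarithmic growth of those norms, which again destroys the $\e^{ct}$ rate. You should simply drop that paragraph and justify the restart by time-independence of $\xi$.

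A minor arithmetic point: with $t_0=kT/2$ and $c=2\log C'/T$, one has $C'^k=\e^{ct_0}$, not $\e^{ckT}$.
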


\begin{proof}
    From Lemma \ref{lemmafixedpointplanted}, we know that, on the interval $[0,T]$, we have the desired estimate, by choosing $\kappa=\eta/5$. Since the equation is linear, we can restart the dynamics on successive intervals $[T,2T]$, $[2T,3T]$, and so on. Note that it is crucial that the noise does not depend on time to use the same stopping time every time we restart the dynamics. To get a bound after the time $T$, we perform the crude operation
    \begin{equs}
        \|v_\eps(t)\|_{\mcB_p^\gamma(\T^d)}\leq\|u_\eps(t)\|_{\mcB_p^\gamma(\T^d)}+\|\e^{t\rmD^{\alpha,m}}u_0^\eps\|_{\mcB_p^\gamma(\T^d)}\,.
    \end{equs}
    On $[0,T]$, the above is bounded by $2\|u_0^\eps\|_{\mcB_r^{-\sigma}(\T^d)}$. Iterating this on the following intervals yields the result with $c=\log(2)/T$. Remark that $c$ blows up as $\kappa$ (equivalently $\eta$) goes to $0$.
\end{proof}

\subsection{The fractional Laplacian in infinite volume}\label{sec:infinitevol}

We turn to the construction of the solution to the SPDE \eqref{renormSPDE} on the full space $\R^d$. The main difference with the finite volume is the presence of weights. Indeed, the lift of the noise can only be controlled with an arbitrarily small polynomial weight. Its presence forces us to control the solution with exponential weights defined in \eqref{expweight}. Because of this, we cannot construct the solution as a series as we did in the previous section, and therefore we are not able to derive bounds as subtle as the one in Lemma \ref{lemmafixedpointplanted}. Instead, we use a fixed point with a remainder. We assume that $(\alpha,m)$ correspond to the Laplacian or the massive fractional Laplacian, $\ell\in\R$ is fixed, $\kappa,\varsigma>0$ are fixed small enough, and we just write $\talloblong\upxi\talloblong$ instead of $\talloblong\upxi^{\alpha,m}\talloblong_{\kappa_0,\ell,\varsigma}$.

Recall the definition of the truncated force
\begin{equs}
F_{\eps,\mu}[v](z)&=
F_{\eps,\mu}^0(z)+F_{\eps,\mu}^1[v](z)=
\sum_{n=0}^{N_0}\upzeta_{\eps,\mu}^{0,n}(z)+\sum_{n=0}^{N_0}\upzeta_{\eps,\mu}^{1,n}[v](z)
\,,
\end{equs}
and of the cross-term
\begin{equs}
H_{\eps,\mu}[v](z)&=
H_{\eps,\mu}^0(z)+H_{\eps,\mu}^1[v](z)\\&=
\sum_{n=N_0+1}^{2N_0+1}\sum_{i=0}^{n-1}\upzeta_{\eps,\mu}^{1,i}[\dot G_\mu \upzeta_{\eps,\mu}^{0,n-1-i}(\bigcdot)](z)+\sum_{n=N_0+1}^{2N_0+1}\sum_{i=0}^{n-1}\upzeta_{\eps,\mu}^{1,i}[\dot G_\mu \upzeta_{\eps,\mu}^{1,n-1-i}[v](\bigcdot)](z)
\,.
\end{equs}
{Finally, in this section, we denote by $\| \bigcdot\|_{\mu,\kappa,\ell}$ the usual norm but with all the times restricted to $[0,T]$. Equivalently, we view the force coefficients $\upzeta_\epmu^{k,n}$ as supported on $[0,T]\times\R^d$ only. 
}


\begin{lemma}
Fix a deterministic combinatorial constant $0<T_0\leq1$ (independent from the initial condition) to be taken small enough, and define the random time horizon
\begin{equs}
    T\eqdef\big(\talloblong\upxi\talloblong+T_0^{-\kappa/\alpha}\big)^{-\alpha/\kappa}\,.
\end{equs}
Then, uniformly in $\eps\in(0,1]$ and $\mu\in(0,T^{1/\alpha}]$, it holds
\begin{equs}
\label{eq:esti_force_infvol}\|F^1_\epmu\|_{\mu,\kappa,\ell}\lesssim\talloblong\upxi\talloblong
    \mu^{-\alpha+3\kappa}
\;\text{and}\;
    \| \big\langle x\rangle^{-\kappa}\mcb e_{-\ell}K_\mu F_\epmu^0\|_{L^{\infty,r}_{t,x}(\Lambda_T)}\lesssim \talloblong\upxi\talloblong\|\mcb e_{-\ell} u_0^\eps\|_{\mcB^{-\sigma}_{r,q}} \mu^{-\alpha-\sigma+\beta-2\kappa}\,,\qquad
\end{equs}
alongside
\begin{equs}
\label{eq:H1}\|H^1_\epmu\|_{\mu,\kappa,\ell}\lesssim\talloblong\upxi\talloblong^{(N_0+1)}
    \mu^{-1+(N_0+2)\kappa}\,,
\end{equs}
and
    \begin{equs}\label{eq:H0}
          \| \big\langle {x}\rangle^{-\kappa}\mcb e_{-\ell}K_\mu H_\epmu^0\|_{L^{\infty,r}_{t,x}(\Lambda_T)}\lesssim \talloblong\upxi\talloblong^{N_0}\|\mcb e_{-\ell}u_0^\eps\|_{\mcB^{-\sigma}_{r,q}} \mu^{-1+(N_0+2)\kappa }\,.
\end{equs}
\end{lemma}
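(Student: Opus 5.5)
The first two bounds \eqref{eq:esti_force_infvol} follow by summing the estimates of Lemma~\ref{Lem:constructionzeta} over $0\le n\le N_0$, exactly as in Lemma~\ref{Lem:constructionforce}. The infinite-volume difference is only that we keep the weights $\langle x\rangle^{-\kappa_n}$ and $\mcb e_{\pm\ell}$. Since $\kappa_n\le\kappa$ and $\langle w_1\rangle^{-\kappa}\le\langle w_1\rangle^{-\kappa_n}$ on $\Lambda_T$, each $\|\upzeta^{1,n}_\epmu\|_{\mu,\kappa,\ell}$ is controlled by $\|\upzeta^{1,n}_\epmu\|_{\mu,\kappa_n,\ell,\varsigma}$. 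This may require $\varsigma\ge\kappa$, or else a harmless adjustment of the off-diagonal weight $\langle w_1;w_2\rangle_\mu^{\varsigma-\kappa}$, which only helps when $\varsigma-\kappa\le\varsigma-\kappa_n$. Hence $\|\upzeta^{1,n}_\epmu\|_{\mu,\kappa,\ell}\lesssim\talloblong\upxi\talloblong^{n+1}\mu^{-\alpha+(n+1)\beta-\kappa_n}$. For $\mu\le T^{1/\alpha}$ we have $\mu^{\kappa}\talloblong\upxi\talloblong\le T^{\kappa/\alpha}\talloblong\upxi\talloblong\le1$. We use this to absorb the extra powers $\talloblong\upxi\talloblong^{n}$ into $\mu^{-n\kappa}$, and since $\beta\ge 4\kappa$ we get $\mu^{-\alpha+(n+1)\beta-\kappa_n-n\kappa}\lesssim\mu^{-\alpha+3\kappa}$. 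The same bookkeeping applies to $\upzeta^{0,n}$. For $n=0$ the term $\upzeta^{0,0}=0$ vanishes on $(0,T]$ (it is $-\one_{(0,\infty)}\psi_\eps$), and for $n\ge1$ we have $\deg(0,n)\ge-\alpha-\sigma+\beta$.

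For the cross term I write $H$ as a finite double sum over $N_0+1\le n\le 2N_0+1$ and $0\le i\le n-1$, and I bound each summand $\upzeta^{1,i}_\epmu[\dot G_\mu\upzeta^{k,n-1-i}_\epmu]$ by the product estimate already used in Corollary~\ref{Lem:constructionzetairrelevant}. Insert $K_\mu\mcR_\mu$ around $\dot G_\mu$ and use $\|\mcR_\mu^2\dot G_\mu\|_{L^1}\lesssim\mu^{-1+\alpha}$ from Lemma~\ref{estimdotGmu} (with exponential-weight absorption, which needs $m>0$ or $\alpha=2$). Distribute $\mcb e_\ell(x_1;x_3)\le\mcb e_\ell(x_1;x_2)\mcb e_\ell(x_2;x_3)$ and split the polynomial weights as in the proof of Lemma~\ref{Lem:constructionzeta}. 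The scale factor is then $\mu^{\deg(i)-\kappa_i}\mu^{-1+\alpha}\mu^{\deg(k,n-1-i)-\kappa_{n-1-i}}$, which gives $\mu^{-1+\deg(k,n)-2\kappa}$. Since $n\ge N_0+1$, we have $\deg(k,n)>0$ by the definition \eqref{eq:N0N1}. In particular $\deg(1,n)=-\alpha+(n+1)\beta\ge -\alpha+(N_0+2)\beta$, and this exceeds $(N_0+2)\kappa$ plus the losses once $\kappa$ is small relative to the positive gap $\deg(1,N_0+1)$, and similarly for $k=0$.

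Finally, the prefactor powers of $\talloblong\upxi\talloblong$ beyond $N_0+1$ (respectively beyond $N_0$ for $H^0$) are again traded against $\mu^{-\kappa}$ via $\mu\le T^{1/\alpha}$, so only the displayed powers remain. The main obstacle is the weight bookkeeping in the product for $H$. The $\langle w_1\rangle^{-\kappa}$ factor has to be passed through the intermediate point, using $\langle w_1\rangle^{-\kappa}\langle w_2\rangle^{\kappa}\lesssim\langle w_1;w_2\rangle_\mu^{\kappa}\mu^{-\kappa}$ or the nested choice $\kappa_n=4^{n-N_0-1}\kappa$. Since the indices of $H$ exceed $N_0$, I would bound both factors with the weaker exponent $\kappa$ and accept a loss of $\mu^{-2\kappa}$, which is covered by the strictly positive degree gap.
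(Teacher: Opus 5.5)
Your proposal is correct and follows the same route as the paper. You sum the bounds of Lemma~\ref{Lem:constructionzeta} for the truncated force, and you bound each product in the cross-term with $\|\mcR_\mu^2\dot G_\mu\|_{L^1_{t,x}}\lesssim\mu^{-1+\alpha}$ from Lemma~\ref{estimdotGmu}, using $\deg(k,n)>0$ for $n\geq N_0+1$ and absorbing surplus powers via $\mu^{\kappa}\talloblong\upxi\talloblong\leq T^{\kappa/\alpha}\talloblong\upxi\talloblong\leq1$.

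One point in your last paragraph needs tightening. The spatial weight in the products for $H$ must be split with the nested exponents, as your second paragraph does, so that $\kappa_i+\kappa_{n-1-i}\leq\kappa/2$ fits under the single factor $\langle w_1\rangle^{-\kappa}$. Measuring both factors with the full exponent $\kappa$ would require $\langle w_1\rangle^{-2\kappa}$, a spatial loss that no power $\mu^{-2\kappa}$ can compensate. Also, the factor $\mu^{-\kappa}$ in your Peetre-type inequality is unnecessary, since $\langle\bigcdot\rangle\leq\langle\bigcdot\rangle_\mu$ for $\mu\leq1$.
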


\begin{proof}
The estimates on the truncated force and the cross-term are direct from the estimates on $\upzeta_\epmu^{k,n}$ and Lemma \ref{estimdotGmu}. It remains to justify the power counting. For the truncated force, as in \eqref{eq:esti_force}, $F^1_\epmu$ and $F^0_\epmu$ respectively have power counting $-\alpha+\beta-\kappa$ and $-\alpha-\sigma+\beta-2\kappa$ and taking $\kappa$ small enough both quantities can be made greater than $-\alpha+2\kappa$ (recall that $\sigma<\beta$ by assumption). Regarding the cross-term, by definition, $H^k_\epmu$ has power counting
    \begin{equs}
        -1+\deg(k,N_0+1)-\kappa
    \end{equs}
    and, in view of our choice of $N_0$, for both $k=0,1$ it holds $\deg(k,N_0+1)>0$ so again taking $\kappa$ small enough one has 
      \begin{equs}
        -1+\deg(k,N_0+1)-\kappa\geq 1+(N_0+2)\kappa\,.
    \end{equs}
    The statement follows observing that for both the truncated force and the cross-term, higher order terms come with higher powers of $\talloblong\upxi\talloblong$ but, by subcriticality, they also come with the same higher power of $\mu^\kappa\leqslant T^{\kappa/\alpha}$, which is sufficient to absorb the powers of $\talloblong\upxi\talloblong$.
\end{proof}

We recall that we introduced in \eqref{eq:defR} a trajectory of \textit{remainders} $(R_\epmu)_{\mu\in(0,T^{1/\alpha}]}$ probing the error resulting from the truncation of the effective force:
\begin{equs}
    R_\epmu=F_{\eps}[v_\eps]-F_\epmu[v_\epmu]\,.
\end{equs}
We rewrite the dynamical system \eqref{eq:flowsys} with the convenient notations that we introduced in this section. Further simplifications also appear from the fact that our SPDE of interest is linear. Since $\D_\mu v_\epmu=\dot G_\mu F_{\eps,0}[v_\eps]=\dot G_\mu \big(F_{\eps,\mu}[v_\epmu]+R_\epmu\big)$, the remainder solves the flow equation 
    \begin{equs}
      \D_\mu  R_\epmu=-H^0_\epmu 
      -H^1_\epmu [v_\epmu]
     -F_\epmu^1[\dot G_\mu R_\epmu]\,,
    \end{equs}
with null initial condition. This motivates the introduction of the map $\Psi_\eps$ acting on pairs $(v,R)=(v_\mu,R_\mu)_{\mu\in(0,T^{1/\alpha}]}$ and given by   
\begin{subequations}
\begin{empheq}[left=\empheqlbrace]{alignat=2}\label{sysflow1}
  \Psi^v_\epmu(v,R)&=-\int_\mu^{T^{1/\alpha}}\dot G_\nu\big(
  F^0_\epnu 
      +F^1_\epnu [v_\nu]
     + R_\nu
  \big)\dint\nu\,,
    \\ \Psi^R_\epmu(v,R)&=-\int_0^\mu\big(H^0_\epnu 
      +H^1_\epnu [v_\nu]
     +F_\epnu^1[\dot G_\nu R_\nu]\big)\dint\nu\,.
\end{empheq}
\end{subequations}
We show that we can close a fixed point argument for this map using the following norm
\begin{equs}
    \snnorm{v,R}\eqdef \|\mcb e_{-\ell}u_0^\eps\|_{\mcB_{r,q}^{-\sigma}}^{-1}\sup_{\mu\in(0,T^{1/\alpha}]} \sup_{t\leq T}\Big( \|\mcb  e_{-\ell-t}\mcR_\mu v_\mu(t)\|_{{L^r_x}}\vee\mu^{-\kappa}  \|\mcb  e_{-\ell-t} \big\langle\bigcdot \rangle^{-\kappa}K_\mu R_\mu(t)\|_{L^r_x} \Big) \,.
\end{equs}

\begin{lemma}\label{Lemmafixedpoint}
    Choosing $T=\big(\talloblong\upxi\talloblong+T_0^{-\kappa/\alpha}\big)^{-\alpha/\kappa}$, the map $\Psi_\eps$ is a contraction for the norm defined above, and thus admits a fixed point $(v,R)$ lying in the unit ball. The fixed point is continuous in the data, namely the random force coefficients and the initial data.
\end{lemma}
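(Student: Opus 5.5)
We will show that $\Psi_\eps$ maps the unit ball of $\snnorm{\bigcdot}$ into itself. The difference bound $\snnorm{\Psi_\eps(v,R)-\Psi_\eps(v',R')}\leq\frac12\snnorm{v-v',R-R'}$ then follows by the same computation, since $\Psi_\eps$ is affine in $(v,R)$ and only the terms linear in $(v,R)$ survive. Continuity in the data follows because $\Psi_\eps$ is affine in the force coefficients $(\upzeta^{k,n}_\epmu)$ and in $u_0^\eps$. Throughout, we fix $(v,R)$ with $\snnorm{v,R}\leq1$ and normalise $\|\mcb e_{-\ell}u_0^\eps\|_{\mcB^{-\sigma}_{r,q}}=1$.

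The key analytic input is the Hairer--Labbé mechanism of Lemma~\ref{HaiLabestim}. The time-dependent weight $\mcb e_{-\ell-t}$ lets one absorb the polynomial growth $\langle x\rangle^{\kappa}$ coming from the force norms. Indeed, for a kernel localised in time on $[s,t]$ with $t-s\approx\nu^\alpha$, one has $\mcb e_{-\ell-t}(x)\langle x\rangle^{\kappa}\lesssim (t-s)^{-\kappa'}\mcb e_{-\ell-s}(x)$, at the price of a small negative power of $t-s\approx\nu^\alpha$. The exponential decay of the massive kernel (decay at scale $1/m$, or Gaussian for $m=0,\alpha=2$) absorbs $\mcb e_\ell(x_1;x_2)$ and the sub-exponential weight shifts.

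\textbf{$v$-component.} We estimate $\mcb e_{-\ell-t}\mcR_\mu\Psi^v_\epmu(t)$ in $L^r_x$. For each $\nu\in[\mu,T^{1/\alpha}]$ we write $\mcR_\mu\dot G_\nu=(\mcR_\mu K_\nu)(\mcR_\nu\dot G_\nu)$; since $\mu\leq\nu$, the operator $\mcR_\mu K_\nu$ is bounded uniformly. The three terms are handled as follows.
\begin{itemize}
\item The inhomogeneous term $\dot G_\nu F^0_\epnu$ uses \eqref{eq:esti_force_infvol}. This gives $\nu^{-1+\alpha}\nu^{-\alpha-\sigma+\beta-2\kappa}$ times a loss $\nu^{-\kappa}$ from the weight trick, which is integrable since $\beta-\sigma>3\kappa$.
\item The linear term $\dot G_\nu F^1_\epnu[v_\nu]$ gives $\nu^{-1+\alpha}\cdot\talloblong\upxi\talloblong\nu^{-\alpha+3\kappa}$ times the weight loss. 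Here we insert $v_\nu=K_\nu\mcR_\nu v_\nu$ and use the $L^\infty_{w_1}L^1_{w_2}$ structure of $\|\bigcdot\|_{\nu,\kappa,\ell}$.
\item The remainder term $\dot G_\nu R_\nu$ similarly gives $\nu^{-1+\alpha}\nu^{\kappa}$, which is harmless.
\end{itemize}
Each integral is bounded by $T^{c\kappa/\alpha}\talloblong\upxi\talloblong$ or by $T^{\kappa/\alpha}$. Choosing $T_0$ small and using $T^{\kappa/\alpha}\talloblong\upxi\talloblong\leq1$, this is at most $1/2$.

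\textbf{$R$-component.} We bound $\mu^{-\kappa}\|\mcb e_{-\ell-t}\langle\bigcdot\rangle^{-\kappa}K_\mu\Psi^R_\epmu(t)\|_{L^r_x}$. For $\nu\leq\mu$ we write $K_\mu=K_\mu\mcR_\nu K_\nu$, with $\|K_\mu\mcR_\nu\|\lesssim1$ (this uses \eqref{eq:comKmuKnu1}). The three terms are handled as follows.
\begin{itemize}
\item The cross-terms use \eqref{eq:H0} and \eqref{eq:H1}. Each gives $\nu^{-1+(N_0+2)\kappa}$, and for $H^1$ we insert $v_\nu=K_\nu\mcR_\nu v_\nu$. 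Integrating over $\nu\in(0,\mu)$ yields $\mu^{(N_0+2)\kappa}$, which beats $\mu^{\kappa}$, with spare powers of $T^{\kappa/\alpha}$ to absorb $\talloblong\upxi\talloblong^{N_0+1}$.
\item The term $F^1_\epnu[\dot G_\nu R_\nu]$ gives $\talloblong\upxi\talloblong\nu^{-\alpha+3\kappa}\nu^{-1+\alpha}\nu^{\kappa}$, up to the small weight loss. This integrates to $\mu^{4\kappa-\kappa'}$, which is again enough.
\end{itemize}
The polynomial weight $\langle x\rangle^{-\kappa}$ in the definition of the norm is exactly what is consumed by the $\langle w_1\rangle^{-\kappa}$ in the force norms.

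\textbf{Main obstacle.} The main difficulty is the weight bookkeeping in the linear terms $F^1_\epnu[v_\nu]$ and $F^1_\epnu[\dot G_\nu R_\nu]$. There one must move $\mcb e_{-\ell-t}$ across a kernel connecting times $t$ and $s<t$, produce a factor $\mcb e_{-(t-s)\langle x\rangle^{1/2}}$ that kills the $\langle x\rangle^{\kappa}$ growth, and keep the resulting singular factor $(t-s)^{-2\kappa}\approx\nu^{-2\alpha\kappa}$ smaller than the gain $\nu^{3\kappa}$. We would first try choosing the exponent in Lemma~\ref{HaiLabestim} as $2\kappa/\alpha$ times a constant and shrinking $\kappa_0$ relative to $\kappa$ to make room. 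If that fails, we would replace $3\kappa$ by a margin using $\beta-\sigma$ in the $F^0$ term and $\beta$ in the $F^1$ term.
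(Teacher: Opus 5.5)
Your proposal is correct and is essentially the paper's proof. It has the same three-term split of each component of $\Psi_\eps$, the same Hairer--Labb\'e trade of the growth $\langle x\rangle^{\kappa}$ against the time-increasing weight $\mcb e_{-\ell-t}$ across the time-localised kernel $\mcR_\nu^2\dot G_\nu$, the same bounds \eqref{eq:esti_force_infvol}, \eqref{eq:H1}, \eqref{eq:H0}, and the same absorption of powers of $\talloblong\upxi\talloblong$ by $T^{\kappa/\alpha}$. One small slip: through $H$, $\Psi^R_\eps$ is quadratic, not affine, in the $\upzeta^{k,n}_{\eps,\nu}$, but continuity of the fixed point still follows from the uniform contraction.

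Your worry in the last paragraph is justified. With $\mcb e_\ell=\exp(\ell\langle x\rangle^{1/2})$ the trade really costs $(t-s)^{-2\kappa}\approx\nu^{-2\alpha\kappa}$, not the $(t-s)^{-\kappa/2}$ stated in Lemma~\ref{HaiLabestim}. Your fallback fixes this: use the true margins $\beta-O(\kappa)$ for $F^1$ and $\beta-\sigma-O(\kappa)$ for $F^0$, which Lemma~\ref{Lem:constructionzeta} provides before the crude lowering to $3\kappa$.
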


\begin{proof}
    Consider a couple $(v,R)$ such that $\snnorm{v,R}\leq1$. Let $\mu\in(0,T^{1/\alpha}]$ and $t\leq T$. Let us start with the first component. We get
    \begin{equs}
        \|\mcb  e_{-\ell-t}\mcR_\mu v_\mu(t)\|_{{L^r_x}}&\leq \int_\mu^{T^{1/\alpha}}\|\mcb e_{-\ell-t}\mcR_\nu^2\dot G_\nu\big( K_\nu F_{\eps,\nu}^0+K_\nu F_{\eps,\nu}^1[K_\nu\mcR_\nu v_\nu]+K_\nu R_\nu\big)(t)\|_{{L^r_x}}\dint\nu\,.
    \end{equs}
    We have three terms to estimate, all of them requiring to use the estimates of Lemmas \ref{HaiLabestim} and \ref{estimdotGmu}. For the first one we have,
    \begin{equs}
        ~&(\mcb e_{-\ell-t}\mcR^2_\nu\dot G_\nu K_\nu F_{\eps,\nu}^0)(t,x)\\
        &=\int_{\R^d}\int_0^t\mcb e_{-\ell-t}(x)\big(\mcR^2_\nu\dot G_\nu\big)(t-s,x-y)\big(K_\nu F_{\eps,\nu}^0\big)(s,y)\dint s\dint y\\
        &\lesssim\int_{\R^d}\int_0^t\mcb e_{-\ell-t}(x)\big(\mcR^2_\nu\dot G_\nu\big)(t-s,x-y)\langle y\rangle^{\kappa}\mcb e_{\ell+s}(y)\langle y\rangle^{-\kappa}\mcb e_{-\ell}(y)\big(K_\nu F_{\eps,\nu}^0\big)(s,y)\dint s\dint y\\
        &\lesssim\int_{\R^d}\int_0^t\mcb e_{-\ell-t}(x)(t-s)^{-\kappa}\big(\mcR^2_\nu\dot G_\nu\big)(t-s,x-y)\mcb e_{\ell+t}(y)\langle y\rangle^{-\kappa}\mcb e_{-\ell}(y)\big(K_\nu F_{\eps,\nu}^0\big)(s,y)\dint s\dint y\\
        &\lesssim\int_{\R^d}\int_0^t(t-s)^{-\kappa}\big(\mcR^2_\nu\dot G_\nu\big)(t-s,x-y)\mcb e_{\ell+t}(x;y)\langle y\rangle^{-\kappa}\mcb e_{-\ell}(y)\big(K_\nu F_{\eps,\nu}^0\big)(s,y)\dint s\dint y\,.
    \end{equs}
    It gives the bound
    \begin{equs}
        \big\|\mcb e_{-\ell-t}(\mcR_\nu^2\dot G_\nu K_\nu F_{\eps,\nu}^0)(t)\big\|_{L_x^r}\lesssim\nu^{-1+\alpha-\kappa}\big\|\langle \bigcdot\rangle^{-\kappa}\mcb e_{-\ell}(K_\nu F_\epnu^0)(t)\big\|_{L_x^{r}}\,.
    \end{equs}
    We now turn to the second one. We write
    \begin{equs}
        ~&\big(\mcb e_{-\ell-t}\mcR_\nu^2\dot G_\nu K_\nu F_{\eps,\nu}^1[K_\nu\mcR_\nu v_\nu]\big)(t,x)\\
        &=\int_{\R^d}\int_0^t\mcb e_{-\ell-t}(x)\big(\mcR^2_\nu\dot G_\nu\big)(t-s,x-y)\int_{\R^d}\int_0^s\big(K_\nu^{\otimes2}F_{\eps,\nu}^1\big)(s,y;r,v)\big(\mcR_\nu v_\nu\big)(r,v)\dint r\dint v\dint s\dint y\\
        &\lesssim\int_{\R^d}\int_0^t\mcb e_{-\ell-t}(x)\big(\mcR^2_\nu\dot G_\nu\big)(t-s,x-y)\\
        &\qquad\qquad\qquad\times\int_{\R^d}\int_0^s\langle y\rangle^{\kappa}\langle y\rangle^{-\kappa}\big(K_\nu^{\otimes2}F_{\eps,\nu}^1\big)(s,y;r,v)\mcb e_{\ell+s}(v)\mcb e_{-\ell-r}(v)\big(\mcR_\nu v_\nu\big)(r,v)\dint r\dint v\dint s\dint y\\
        &\lesssim\int_{\R^d}\int_0^t\mcb e_{-\ell-t}(x)\big(\mcR^2_\nu\dot G_\nu\big)(t-s,x-y)\int_{\R^d}\int_0^s e_{\ell+s}(y)\langle y\rangle^{\kappa}\\
        &\qquad\qquad\qquad\qquad\qquad\times\langle y\rangle^{-\kappa}\mcb e_{\ell+s}(y;v)\big(K_\nu^{\otimes2}F_{\eps,\nu}^1\big)(s,y;r,v)\mcb \mcb e_{-\ell-r}(v)\big(\mcR_\nu v_\nu\big)(r,v)\dint r\dint v\dint s\dint y\\
        &\lesssim\int_{\R^d}\int_0^t\mcb e_{\ell+t}(x;y)(t-s)^{-\kappa}\big(\mcR^2_\nu\dot G_\nu\big)(t-s,x-y) \\
        &\qquad\qquad\qquad\quad\times\int_{\R^d}\int_0^s\mcb e_{\ell+t}(y;v)\langle y\rangle^{-\kappa}\big(K_\nu^{\otimes2}F_{\eps,\nu}^1\big)(s,y;r,v)\mcb \mcb e_{-\ell-r}(v)\big(\mcR_\nu v_\nu\big)(r,v)\dint r\dint v\dint s\dint y\\
    \end{equs}
    This is enough to get
    \begin{equs}
        \big\|\mcb e_{-\ell-t}\mcR_\nu^2\dot G_\nu K_\nu F_{\eps,\nu}^1[K_\nu\mcR_\nu v_\nu](t)\big\|_{L^r_x}\lesssim\nu^{-1+\alpha}\|F_{\eps,\nu}^1\|_{\nu,\kappa,\ell+t}\|\mcb e_{-\ell}u_0^\eps\|_{\mcB_{r,q}^{-\sigma}}\,.
    \end{equs}
    We treat the third term.
    \begin{equs}
        ~&(\mcb e_{-\ell-t}\mcR_\nu^2\dot G_\nu K_\nu R_\nu)(t,x)\\
        &=\int_{\R^d}\int_0^t\mcb e_{-\ell-t}(x)\mcR_\nu^2\dot G_\nu(t-s,x-y)(K_\nu R_\nu)(s,y)\dint s\dint y\\
        &=\int_{\R^d}\int_0^t\mcb e_{-\ell-t}(x)\mcR_\nu^2\dot G_\nu(t-s,x-y)\mcb e_{\ell+s}(y)\langle y\rangle^{\kappa}\mcb e_{-\ell-s}(y)\langle y\rangle^{-\kappa}(K_\nu R_\nu)(s,y)\dint s\dint y\\
        &\lesssim\int_{\R^d}\int_0^t(t-s)^{-\kappa}\mcR_\nu^2\dot G_\nu(t-s,x-y)\mcb e_{\ell+t}(x-y)\mcb e_{-\ell-s}(y)\langle y\rangle^{-\kappa}(K_\nu R_\nu)(s,y)\dint s\dint y\,.
    \end{equs}
    Using Young's convolution inequality as well as the definition of $\snnorm{\bigcdot,\bigcdot}$, we get
    \begin{equs}
        \big\|\mcb e_{-\ell-t}\mcR_\nu^2\dot G_\nu K_\nu R_\nu(t)\big\|_{{L^r_x}}&\leq t^{-\kappa}\big\|\mcb e_{\ell+t}\mcR_\nu^2 \dot G_\nu\big\|_{L^1_x}\sup_{t\leq T}\|\mcb  e_{-\ell-t} \big\langle\bigcdot \rangle^{-\kappa}K_\nu R_\nu(t)\|_{L^r_x}\\
        &\leq \nu^{-1+\alpha-\kappa}\nu^\kappa\|\mcb e_{-\ell}u_0^\eps\|_{\mcB_{r,q}^{-\sigma}}\\
        &=\nu^{-1+\alpha}\|\mcb e_{-\ell}u_0^\eps\|_{\mcB_{r,q}^{-\sigma}}\,.
    \end{equs}
    Using the estimates \eqref{eq:esti_force_infvol}, we finally get
    \begin{equs}
        \|\mcb  e_{-\ell-t}\mcR_\mu v_\mu(t)\|_{L^r_x}&\lesssim\int_\mu^{T^{1/\alpha}}\big(\nu^{-1+2\kappa}+\nu^{-1+\alpha}\big)\dint\nu\talloblong\upxi\talloblong\snnorm{v,R} \|\mcb e_{-\ell}u_0^\eps\|_{\mcB_{r,q}^{-\sigma}}\\
        &\lesssim\int_0^{T^{1/\alpha}} \nu^{-1+2\kappa}\dint\nu\talloblong\upxi\talloblong\|\mcb e_{-\ell}u_0^\eps\|_{\mcB_{r,q}^{-\sigma}}\\
        &\lesssim T^{2\kappa/\alpha}\talloblong\upxi\talloblong \|\mcb e_{-\ell}u_0^\eps\|_{\mcB_{r,q}^{-\sigma}}\leqslant \|\mcb e_{-\ell}u_0^\eps\|_{\mcB_{r,q}^{-\sigma}}\,.
    \end{equs}
    In the last equality, we used one power of $T^{\kappa/\alpha}$ to absorb the growth of the enhancement of the noise and one to absorb the implicit constant.

    We now turn to the second component of the map $\Psi_\eps$. We have
    \begin{multline*}
        \big\|\mcb e_{-\ell-t}\langle\bigcdot\rangle^{-\kappa}K_\mu R_\mu(t)\big\|_{L_x^r}\\\leq\int_0^\mu\big\|\mcb e_{-\ell-t}\langle\bigcdot\rangle^{-\kappa}K_\nu\big( H^0_{\eps,\nu}+ H_{\eps,\nu}^1[K_\nu\mcR_\nu v_\nu]+ F_{\eps,\nu}^1[K_\nu\mcR^2_\nu \dot G_\nu K_\nu R_\nu]\big)(t)\big\|_{L_x^r}\dint\nu\,. 
    \end{multline*}
    We again have three terms to control. The first comes directly from the estimate \eqref{eq:H0}. However, the other two are quite subtle. For the second one, we have,
    \begin{equs}
        ~&\mcb e_{-\ell-t}(x)\langle x\rangle^{-\kappa} K_\nu H_{\eps,\nu}^1[K_\nu\mcR_\nu v_\nu](t,x)\\
        &=\int_{\R^d}\int_0^t\mcb e_{-\ell-t}(x)\langle x\rangle^{-\kappa}(K_\nu^{\otimes2}H_{\eps,\nu}^1)(t,x;s,y)\mcb e_{\ell+s}(y)\mcb e_{-\ell-s}(y)(\mcR_\nu v_\nu)(s,y)\dint s\dint y\\
        &\lesssim\int_{\R^d}\int_0^t\mcb \langle x\rangle^{-\kappa}\mcb e_{\ell+t}(x;y)(K_\nu^{\otimes2}H_{\eps,\nu}^1)(t,x;s,y)\mcb e_{-\ell-s}(y)(\mcR_\nu v_\nu)(s,y)\dint s\dint y\\
        &\lesssim\int_{\R^d}\int_0^t\mcb \langle x\rangle^{-\kappa}\mcb e_{\ell+T}(x;y)(K_\nu^{\otimes2}H_{\eps,\nu}^1)(t,x;s,y)\mcb e_{-\ell-s}(y)(\mcR_\nu v_\nu)(s,y)\dint s\dint y\,.
    \end{equs}
    It allows to get
    \begin{equs}
        \big\|\mcb e_{-\ell-t}\langle\bigcdot\rangle^{-\kappa} K_\nu H_{\eps,\nu}^1[K_\nu\mcR_\nu v_\nu](t)\big\|_{L_x^r}\lesssim\nu^\kappa\|H_{\eps,\nu}^1\|_{\nu,\kappa,\ell+T}\|\mcb e_{-\ell}u_0^\eps\|_{\mcB_{r,q}^{-\sigma}}\,.
    \end{equs}
    Finally, for the last term, we crucially use again Lemma \ref{HaiLabestim}. It yields
    \begin{equs}
        ~&\big(\mcb e_{-\ell-t}(x)\langle\bigcdot\rangle^{-\kappa}K_\nu F_{\eps,\nu}^1[K_\nu\mcR^2_\nu \dot G_\nu K_\nu R_\nu]\big)(t,x)\\
        &=\int_{\R^d}\int_0^t \mcb e_{-\ell-t}(x)\langle x\rangle^{-\kappa}\big(K_\nu^{\otimes2}F_{\eps,\nu}^1\big)(t,x;s,y)\\
        &\qquad\qquad\qquad\qquad\qquad\qquad\qquad\qquad\times\int_{\R^d}\int_0^s\big(\mcR_\nu^2\dot G_\nu\big)(s-r,y-v)\big(K_\nu R_\nu\big)(r,v)\dint r\dint v\dint s\dint y\\
        &\lesssim\int_{\R^d}\int_0^t \mcb e_{-\ell-t}(x)\langle x\rangle^{-\kappa}\big(K_\nu^{\otimes2}F_{\eps,\nu}^1\big)(t,x;s,y)\\
        &\qquad\quad\times\int_{\R^d}\int_0^s(s-r)^{-\kappa}\big(\mcR_\nu^2\dot G_\nu\big)(s-r,y-v)\mcb e_{\ell+s}(v)\mcb e_{-\ell-r}(v)\langle v\rangle^{-\kappa}\big(K_\nu R_\nu\big)(r,v)\dint r\dint v\dint s\dint y\\
        &\lesssim\int_{\R^d}\int_0^t \mcb e_{-\ell-t}(x)\mcb e_{\ell+s}(y)\langle x\rangle^{-\kappa}\big(K_\nu^{\otimes2}F_{\eps,\nu}^1\big)(t,x;s,y)\\
        &\qquad\times\int_{\R^d}\int_0^s(s-r)^{-\kappa}\big(\mcR_\nu^2\dot G_\nu\big)(s-r,y-v)\mcb e_{\ell+s}(y;v)\mcb e_{-\ell-r}(v)\langle v\rangle^{-\kappa}\big(K_\nu R_\nu\big)(r,v)\dint r\dint v\dint s\dint y\\
        &\lesssim\int_{\R^d}\int_0^t \langle x\rangle^{-\kappa}\mcb e_{\ell+T}(x;y)\big(K_\nu^{\otimes2}F_{\eps,\nu}^1\big)(t,x;s,y)\\
        &\qquad\times\int_{\R^d}\int_0^s(s-r)^{-\kappa}\big(\mcR_\nu^2\dot G_\nu\big)(s-r,y-v)\mcb e_{\ell+s}(y;v)\mcb e_{-\ell-r}(v)\langle v\rangle^{-\kappa}\big(K_\nu R_\nu\big)(r,v)\dint r\dint v\dint s\dint y\,.
    \end{equs}
    It allows to write
    \begin{equs}
        \big\|\mcb e_{-\ell-t}\langle\bigcdot\rangle^{-\kappa}K_\nu F_{\eps,\nu}^1[K_\nu\mcR^2_\nu \dot G_\nu K_\nu R_\nu](t)\big\|_{L^r_x}\lesssim\nu^{-1+\alpha-\kappa}\nu^\kappa\|F_{\eps,\nu}^1\|_{\nu,\kappa,\ell+T}\|\mcb e_{-\ell}u_0^\eps\|_{\mcB_{r,q}^{-\sigma}}\,.
    \end{equs}
    Putting everything together, we get, using the estimates \eqref{eq:esti_force_infvol}, \eqref{eq:H1}, and \eqref{eq:H0}
    \begin{equs}
        ~&\big\|\mcb e_{-\ell-t}\langle\bigcdot\rangle^{-\kappa}(K_\mu R_\mu)(t)\big\|_{L_x^r}\\&\lesssim\int_0^\mu\big(\nu^{-1+(N_0+2)\kappa}\talloblong\upxi\talloblong^{N_0}+\nu^{-1+(N_0+3)\kappa}\talloblong\upxi\talloblong^{N_0+1}+\nu^{-1+3\kappa}\talloblong\upxi\talloblong\big)\dint\nu\|\mcb e_{-\ell}u_0^\eps\|_{\mcB_{r,q}^{-\sigma}}\,.
        \end{equs}
        Absorbing all the powers of $\talloblong\upxi\talloblong$ with $\nu^\kappa\leqslant T^{\kappa/\alpha}$ thus gives
        \begin{equs}
             ~&\big\|\mcb e_{-\ell-t}\langle\bigcdot\rangle^{-\kappa}(K_\mu R_\mu)(t)\big\|_{L_x^r}\lesssim\mu^{2\kappa}\|\mcb e_{-\ell}u_0^\eps\|_{\mcB_{r,q}^{-\sigma}}
\leqslant\mu^{\kappa}\|\mcb e_{-\ell}u_0^\eps\|_{\mcB_{r,q}^{-\sigma}}\,,
    \end{equs}
    where we used a last power of $\mu^\kappa\leqslant T^{\kappa/\alpha}$ to absorb the implicit constant.
\end{proof}

We now turn to the actual construction of the solution. For this, we need first the crucial lemma.
\begin{lemma}
    Let $\eps>0$ and $T\in(0,1]$. If $u_\eps$ solves \eqref{mildform} on $\Lambda_{0;T}$, then we have
    \begin{equation}
        u_\eps=G\psi_\eps+(G-G_{T^{1/\alpha}})(F_{\eps,T^{1/\alpha}}[0]+R_{\eps,T^{1/\alpha}})\,.
    \end{equation}
\end{lemma}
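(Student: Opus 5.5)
We argue from the definitions, using only the mild form \eqref{mildform}, the definition of the effective solution $v_\epmu$, the support property \eqref{cutscale} of $G_\mu$, and the definition \eqref{eq:defR} of the remainder. The identity is algebraic; no estimate is needed.

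First, write $u_\eps=G\psi_\eps+v_\eps$ with $v_\eps=GF_\eps[v_\eps]$, which is \eqref{mildformF}. Split $G=(G-G_\mu)+G_\mu$ to get
\begin{equs}
v_\eps=(G-G_\mu)F_\eps[v_\eps]+G_\mu F_\eps[v_\eps]=(G-G_\mu)F_\eps[v_\eps]+v_\epmu\,,
\end{equs}
by the definition $v_\epmu=G_\mu F_\eps[v_\eps]$. Next, use \eqref{eq:defR} in the form $F_\eps[v_\eps]=F_\epmu[v_\epmu]+R_\epmu$. This is just the definition of $R_\epmu$; it is consistent with the flow system \eqref{eq:flowsys}, which is the one solved by the fixed point of Lemma~\ref{Lemmafixedpoint}. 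Substituting gives
\begin{equs}
v_\eps=v_\epmu+(G-G_\mu)\big(F_\epmu[v_\epmu]+R_\epmu\big)\,.
\end{equs}

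Now choose $\mu=T^{1/\alpha}$. By \eqref{cutscale}, $G_{T^{1/\alpha}}$ is supported on times $t\geq T$. Since $F_\eps[v_\eps]$ is supported on $t\geq0$, the function $v_{\eps,T^{1/\alpha}}=G_{T^{1/\alpha}}F_\eps[v_\eps]$ vanishes on $\Lambda_{0;T}$, at least for times strictly less than $T$. The endpoint $t=T$ is harmless because $\chi$ vanishes on $[0,1]$ and the kernel is continuous there. So restricted to $\Lambda_{0;T}$ we get $v_{\eps,T^{1/\alpha}}=0$.

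The remaining step is to replace $F_{\eps,T^{1/\alpha}}[v_{\eps,T^{1/\alpha}}]$ by $F_{\eps,T^{1/\alpha}}[0]$ inside $(G-G_{T^{1/\alpha}})(\cdot)$ when evaluating on $\Lambda_{0;T}$. We expect this causality point to be the only real obstacle. Both $G-G_\mu$ and the kernels $\upzeta^{1,n}_\epmu(z_1,z_2)$ are causal, supported on $t_2\leq t_1$ as in Lemma~\ref{Lem:suppprop}. Hence the value of $(G-G_\mu)F^1_\epmu[v_\epmu]$ at a time $t\leq T$ only involves $v_\epmu$ at times $\leq t$, where $v_\epmu$ vanishes. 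Therefore $(G-G_\mu)F^1_\epmu[v_\epmu]=0$ on $\Lambda_{0;T}$, and $F_\epmu[v_\epmu]$ may be replaced by $F^0_\epmu=F_\epmu[0]$ there. We would verify this causality inductively from the hierarchy \eqref{eq:hierarchyzeta}, since $\dot G_\mu$ is forward in time, as in the proof of Lemma~\ref{Lem:suppprop}.

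Combining the previous steps yields $u_\eps=G\psi_\eps+(G-G_{T^{1/\alpha}})(F_{\eps,T^{1/\alpha}}[0]+R_{\eps,T^{1/\alpha}})$ on $\Lambda_{0;T}$, which is the claim.
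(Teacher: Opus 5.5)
Your argument is correct and is the same as the paper's. You split $v_\eps=v_{\eps,\mu}+(G-G_\mu)\big(F_{\eps,\mu}[v_{\eps,\mu}]+R_{\eps,\mu}\big)$, take $\mu=T^{1/\alpha}$, and use \eqref{cutscale} to see that $v_{\eps,T^{1/\alpha}}$ vanishes on $\Lambda_{0;T}$. The one addition is your explicit verification that $\upzeta^{1,n}_{\eps,\mu}$ and $G-G_\mu$ are causal, which the paper uses tacitly when it replaces $F_{\eps,T^{1/\alpha}}[v_{\eps,T^{1/\alpha}}]$ by $F_{\eps,T^{1/\alpha}}[0]$ ("whence the result").
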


\begin{proof}
    Writing \eqref{mildformF} at $\mu=T^{1/\alpha}$, we get
    \begin{equs}
        u_\eps=G\psi_\eps+(G-G_{T^{1/\alpha}})(F_{\eps,T^{1/\alpha}}[v_{\eps,T^{1/\alpha}}]+R_{\eps,T^{1/\alpha}})\,.
    \end{equs}
    Using the support properties of $G_{T^{1/\alpha}}$ given in \eqref{cutscale}, we get that $v_{\eps,T^{1/\alpha}}$ is supported outside of $\Lambda_{0;T}$ and whence the result.
\end{proof}

\begin{lemma}\label{lemma:constructsolinf}
    There is a random $T\in(0,1]$ such that, for $\eta\in\big(0,\beta-\sigma-d(1/r-1/p)\big)$,
    \begin{equation}
        \|u_\eps-G\psi_\eps\|_{\mcE_{p,q,\ell,T}^{\gamma,\beta-\sigma-\gamma-d(1/r-1/p)-\eta}}\leq\|\mcb e_{-\ell}u_0^\eps\|_{\mcB_{r,q}^{-\sigma}}\,,
    \end{equation}
    for some small $\eta$.
\end{lemma}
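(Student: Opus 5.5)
The plan is to start from the representation of the preceding lemma: on $\Lambda_T$, with $T$ the random horizon of Lemma~\ref{Lemmafixedpoint},
\begin{equs}
u_\eps-G\psi_\eps=(G-G_{T^{1/\alpha}})\big(F_{\eps,T^{1/\alpha}}[0]+R_{\eps,T^{1/\alpha}}\big)=\int_0^{T^{1/\alpha}}\dot G_\nu\big(F^0_\epnu+F^1_\epnu[v_\nu]+R_\nu\big)\dint\nu\,,
\end{equs}
where $(v,R)$ is the fixed point of $\Psi_\eps$. The second expression is the first component of $\Psi_\eps$ taken at $\mu=0$, up to sign. We therefore have to estimate $t^{-\delta/\alpha}\|\mcb e_{-\ell-t}(\bigcdot)(t)\|_{\mcB^\gamma_{p,q}}$ with $\delta=\beta-\sigma-\gamma-d(1/r-1/p)-\eta$. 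The plan follows the scheme of the finite-volume Lemma~\ref{lemmafixedpointplanted}, but now every kernel carries exponential weights.

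First, the three integrands get the same pointwise-in-scale bounds as in the proof of Lemma~\ref{Lemmafixedpoint}. Combining Lemma~\ref{HaiLabestim} with the weight-transfer $\mcb e_{-\ell-t}(x)\mcb e_{\ell+s}(y)\lesssim (t-s)^{-\kappa}\mcb e_{\ell+t}(x;y)$, and using the bounds \eqref{eq:esti_force_infvol} together with $\snnorm{v,R}\le1$, the weighted input $\langle\bigcdot\rangle^{-\kappa}\mcb e_{-\ell}K_\nu(\cdots)$ is controlled in $L^{\infty,r}_{t,x}$ by $\talloblong\upxi\talloblong\|\mcb e_{-\ell}u_0^\eps\|_{\mcB^{-\sigma}_{r,q}}\nu^{-\alpha-\sigma+\beta-2\kappa}$. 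For the $R$ term the bound is even $\nu^{\kappa}$. For $F^1[v]$ the relevant size is $\nu^{-\alpha+3\kappa}$; it carries no $\sigma$-loss, so it is better.

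Next, the $L^r\to L^p$ gain and the regularity are put into $\mcR_\nu^2\dot G_\nu$. Lemma~\ref{estimdotGmu}, used with the exponential weight $\mcb e_{\ell+T}$ (which the massive or Gaussian kernel absorbs), gives an operator norm from $L^{\infty,r}$ to $L^{\infty,p}$ of order $\nu^{-1+\alpha-d(1/r-1/p)}$. The support property \eqref{cutscale} restricts the scale integral to $\nu\le t^{1/\alpha}$. For the $\mcB^\gamma_{p,q}$ seminorm I would copy the case split of Lemma~\ref{lemmafixedpointplanted}. For $\lambda\ge\nu$, $Q_\lambda-1$ is simply dropped. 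For $\lambda\le\nu$, the factor $Q_\nu(Q_\lambda-1)$ gives $(\lambda/\nu)^{\alpha}$; there I would use $(\lambda/\nu)^{1}$ or $(\lambda/\nu)^{\gamma+\eta}$, which suffices since $\gamma<1$. The weights commute with $Q_\lambda$ only up to a factor $\mcb e_{|\ell|+1}(x;y)$, which the exponential decay of $Q_\lambda$ absorbs. Integrating $\nu^{-1+\beta-\sigma-\gamma-d(1/r-1/p)-3\kappa}$ over $\nu\le t^{1/\alpha}$ gives $t^{(\delta+\eta-3\kappa)/\alpha}$. Choosing $\kappa=\eta/5$ and shrinking $T_0$ then absorbs $\talloblong\upxi\talloblong$ and the constants, and yields the claimed bound with constant $1$. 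The $q$-summation over $\lambda$ is harmless because of the strict margin $\eta$.

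The main obstacle is the $\lambda\le\nu$ regime together with the weights. In finite volume one commuted $Q_\nu$ through $P^1$ using spatial translation invariance. Here the $F^1[v]$ term is handled through $\mcR_\nu v_\nu$, and $v_\nu$ is controlled only in weighted $L^r$ without regularity. So I would commute $Q_\nu$ only with $\dot G_\nu$ (a genuine convolution) and never through the force. Weight mismatches are then paid with the factor $(t-s)^{-\kappa}$ coming from Lemma~\ref{HaiLabestim}, and this factor is integrable because $s$ ranges over an interval of length $\lesssim\nu^\alpha$.
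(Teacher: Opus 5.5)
Your route is essentially the paper's, with one simplification. The paper keeps the top-scale input $X=F_{\eps,T^{1/\alpha}}[0]+R_{\eps,T^{1/\alpha}}$ fixed and transports the bound on $K_\mu X$ down to all scales via the geometric induction of Lemma~\ref{Lemmarecurrence}. You instead rewrite $X=F_{\epnu}[v_\nu]+R_\nu$ at each scale and read everything off $\snnorm{v,R}\leq1$. That is legitimate, because the fixed point already controls $(v_\nu,R_\nu)$ for every $\nu\in(0,T^{1/\alpha}]$.

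The gap is in your treatment of the $F^1[v]$ term. You assert that its size $\nu^{-\alpha+3\kappa}$ is better than the $F^0$ size $\nu^{-\alpha-\sigma+\beta-2\kappa}$ because it carries no $\sigma$-loss. It is the other way round: $(-\alpha-\sigma+\beta-2\kappa)-(-\alpha+3\kappa)=\beta-\sigma-5\kappa>0$, so for $\nu<1$ the $F^1$ bound is the larger of the two. The reason is that \eqref{eq:esti_force_infvol} has already replaced the true degree $-\alpha+\beta-\kappa$ of $F^1$ by the cruder $-\alpha+3\kappa$. Since $\snnorm{\bigcdot}$ controls $\mcR_\nu v_\nu$ only by $O(\|\mcb e_{-\ell}u_0^\eps\|_{\mcB^{-\sigma}_{r,q}})$, with no smallness or decay in $t$, all the decay must come from the force. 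With $\nu^{-\alpha+3\kappa}$, the scale integrand after applying $\dot G_\nu$ is of order $\nu^{-1-\gamma-d(1/r-1/p)+O(\kappa)}$. Its integral over $(0,t^{1/\alpha}]$ diverges once $\gamma+d(1/r-1/p)$ exceeds a multiple of $\kappa$. Even for $\gamma=0$, $p=r$ it yields only $t^{O(\kappa)/\alpha}$ instead of $t^{(\beta-\sigma-\gamma-d(1/r-1/p)-\eta)/\alpha}$. Shrinking $T$ cannot help, since the defect is in the exponent, not the constant.

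To repair this, go back to Lemma~\ref{Lem:constructionzeta}. It gives $\|\upzeta^{1,n}_{\epnu}\|_{\nu,\kappa_n,\ell,\varsigma}\lesssim\talloblong\upxi\talloblong^{n+1}\nu^{\deg(1,n)-\kappa_n}$ with $\deg(1,n)=-\alpha+(n+1)\beta$. The extra factors satisfy $\talloblong\upxi\talloblong^{n}\nu^{n\beta}\leq(\talloblong\upxi\talloblong T^{\beta/\alpha})^{n}\leq1$ by the choice of $T$, so $\|F^1_{\epnu}\|_{\nu,\kappa,\ell+T}\lesssim\talloblong\upxi\talloblong\,\nu^{-\alpha+\beta-\kappa}$. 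Combine this with $\sup_{s\leq T}\|\mcb e_{-\ell-s}\mcR_\nu v_\nu(s)\|_{L^r_x}\leq\|\mcb e_{-\ell}u_0^\eps\|_{\mcB^{-\sigma}_{r,q}}$. The $F^1[v]$ input is then $O(\nu^{-\alpha+\beta-\kappa})$, which genuinely is dominated by the $F^0$ input since $\sigma\geq0$. With that change, your integration over $\nu\leq t^{1/\alpha}$ and your $\lambda$-case split close as described. This sharper bound is also what the last step of the paper's Lemma~\ref{Lemmarecurrence} tacitly relies on.
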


\begin{proof}
    We follow quite closely the proof of Lemma 3.10 in \cite{CF24a}. Let us write
    \begin{equs}
        u_\eps-G\psi_\eps=-\int_0^{T^{1/\alpha}}\dot G_\nu (F_{\eps,T^{1/\alpha}}[0]+R_{\eps,T^{1/\alpha}})\dint\nu\,.
    \end{equs}
    Restating the definition of the considered norm
    \begin{equs}
        \|v_\eps\|_{\mcE^{\gamma,\delta}_{p,q,\ell,T}}=\sup_{t\leq T}t^{-\delta/\alpha}\left(\big\|\mcb e_{-\ell-t}v_\eps(t)\big\|_{L_x^p}+\left(\int_0^1\Big(\mu^{-\gamma}\big\| \big(\big(Q_\mu-\mathrm{Id}\big)\mcb e_{-\ell-t} v_\eps\big)(t)\big\|_{L^p_x}\Big)^q\frac{\dint\mu}{\mu}\right)^\frac{1}{q}\right)
    \end{equs}
    we get, in the case $\mu\leq T^{1/\alpha}$,
    \begin{equs}
        ~&\big\|\big(\big(Q_\mu-\mathrm{Id}\big)\mcb e_{-\ell-t} v_\eps\big)(t)\big\|_{L^p_{x}}\\
        &\leq\int_0^{T^{1/\alpha}}\big\|(Q_\mu-\mathrm{Id})\mcb e_{-\ell-t} \dot G_\nu (F_{\eps,T^{1/\alpha}}[0]+R_{\eps,T^{1/\alpha}})(t)\big\|_{L^{p}_{x}}\dint\nu\\
        &\leq\int_0^\mu \big\|(Q_\mu-\mathrm{Id})\mcb e_{-\ell-t}\dot G_\nu (F_{\eps,T^{1/\alpha}}[0]+R_{\eps,T^{1/\alpha}})(t)\big\|_{L^{p}_{x}}\dint\nu\\
        &\quad+\int_\mu^{T^{1/\alpha}}\big\|(Q_\mu-\mathrm{Id})\mcb e_{-\ell-t}Q_\nu \mcP_\nu\dot G_\nu (F_{\eps,T^{1/\alpha}}[0]+R_{\eps,T^{1/\alpha}})(t)\big\|_{L^{p}_{x}}\dint\nu\,.
    \end{equs}
    Once here, we can distribute the exponential weights as in proof of Lemma \ref{Lemmafixedpoint}. We perform this operation in details for the first term and leave the second one to the reader. We have
    \begin{equs}
        ~&((Q_\mu-\mathrm{Id})\mcb e_{-\ell-t}\dot G_\nu (F_{\eps,T^{1/\alpha}}[0]+R_{\eps,T^{1/\alpha}}))(t,x)\\
        &=\int_{\R^d} \mcb (Q_\mu-\delta)(x-y)e_{-\ell-t}(y)\int_0^t(\mcR_\nu^2\dot G_\nu)(t-s,y-v)(K_\nu(F_{\eps,T^{1/\alpha}}[0]+R_{\eps,T^{1/\alpha}}))(s,v)\dint s\dint v\dint y\\
        &=\int_{\R^d} (Q_\mu-\delta)(x-y)\mcb e_{-\ell-t}(y)\times\\
        &\qquad\int_0^t(\mcR_\nu^2\dot G_\nu)(t-s,y-v)\langle v\rangle^{\kappa}\mcb e_{\ell+s}(v)\langle v\rangle^{-\kappa}\mcb e_{-\ell-s}(v)(K_\nu(F_{\eps,T^{1/\alpha}}[0]+R_{\eps,T^{1/\alpha}}))(s,v)\dint s\dint v\dint y\\
        &\lesssim\int_{\R^d} (Q_\mu-\delta)(x-y)\mcb e_{-\ell-t}(y)\int_0^t(t-s)^{-\kappa}(\mcR_\nu^2\dot G_\nu)(t-s,y-v)\times\\
        &\qquad\mcb e_{\ell+s}(v)\langle v\rangle^{-\kappa}\mcb e_{-\ell-s}(v)(K_\nu(F_{\eps,T^{1/\alpha}}[0]+R_{\eps,T^{1/\alpha}}))(s,v)\dint s\dint v\dint y\\
        &\lesssim\int_{\R^d}(Q_\mu-\delta)(x-y)\int_0^t(t-s)^{-\kappa}(\mcR_\nu^2\dot G_\nu)(t-s,y-v)\mcb e_{\ell+t}(y;v)\times\\
        &\qquad\langle v\rangle^{-\kappa}\mcb e_{-\ell-s}(v)(K_\nu(F_{\eps,T^{1/\alpha}}[0]+R_{\eps,T^{1/\alpha}}))(s,v)\dint s\dint v\dint y\,.
    \end{equs}
    Using \eqref{Kmuexp}, \eqref{GmuL1weight}, as well as Lemma \ref{Lemmarecurrence} and \ref{HaiLabestim}, it yields
    \begin{equs}
        \big\|(Q_\mu-\mathrm{Id})\mcb e_{-\ell-t}\dot G_\nu (F_{\eps,T^{1/\alpha}}[0]+R_{\eps,T^{1/\alpha}})(t)\big\|_{L^p_x}\lesssim\mathbf{1}_{\nu\leq t^{1/\alpha}}\nu^{-1-\sigma-\gamma-d(1/r-1/p)-2\kappa}\|\mcb e_{-\ell}u_0^\eps\|_{\mcB_{r,q}^{-\sigma}(\R^d)}\,.
    \end{equs}
    We then have
    \begin{equs}
        ~&\mu^{-\gamma}\int_0^\mu \big\|(Q_\mu-\mathrm{Id})\mcb e_{-\ell-t}\dot G_\nu (F_{\eps,T^{1/\alpha}}[0]+R_{\eps,T^{1/\alpha}})(t)\big\|_{L^{p}_{x}}\dint\nu\\
        &\lesssim\int_0^\mu \nu^{-\gamma}\big\|(Q_\mu-\mathrm{Id})\mcb e_{-\ell-t}\dot G_\nu (F_{\eps,T^{1/\alpha}}[0]+R_{\eps,T^{1/\alpha}})(t)\big\|_{L^{p}_{x}}\dint\nu\\
        &\lesssim\int_0^{\mu\wedge t^{1/\alpha}} \nu^{-1+\beta-\sigma-\gamma-d(1/r-1/p)-2\kappa}\dint\nu\|\mcb e_{-\ell}u_0^\eps\|_{\mcB_{r,q}^{-\sigma}}\\
        &\lesssim t^{(\beta-\sigma-\gamma-d(1/r-1/p)-3\kappa)/\alpha}\int_0^{\mu\wedge t^{1/\alpha}}\nu^{-1+\kappa}\dint \nu\|\mcb e_{-\ell}u_0^\eps\|_{\mcB_{r,q}^{-\sigma}}\\
        &\lesssim t^{(\beta-\sigma-\gamma-d(1/r-1/p)-3\kappa)/\alpha}\mu^\kappa\|\mcb e_{-\ell}u_0^\eps\|_{\mcB_{r,q}^{-\sigma}}
    \end{equs}
    We have used the support properties of $\dot G_\nu$ in the convolution to assert that $\nu^\alpha\leq t$. The inhomogeneous term is easier to treat. The proof can be carried out as in the end of the proof of Lemma \ref{lemmafixedpointplanted}, with the usual integration of the exponential weights. The treatment of the other term requires to use \eqref{eq:comKmuKnu}, and the rest of the argument follows as in \cite[Lemma 3.10]{CF24a}. The case $\mu\geq T^{1/\alpha}$ is easier and dealt with similarly.
    \end{proof}

\begin{lemma}\label{Lemmarecurrence}
    Let $\tau\in(0,1)$ such that
    \begin{equation}\label{2/tau2}
        (2/\tau^\alpha-1)^{2M}\leq2\,.    
    \end{equation}
    Then, there is a random $T\in(0,1]$ such that, for every $p\in\N$,
    \begin{equation}\label{HR}
        \sup_{t\leq T}\big\|\langle\bigcdot\rangle^{-\kappa}\mcb e_{-\ell-t}K_{\tau^pT^{1/\alpha}} (F_{\eps,T^{1/\alpha}}[0]+R_{\eps,T^{1/\alpha}})(t)\big\|_{L^{r}_{x}}\leq C(\tau^pT^{1/\alpha})^{-\alpha-\sigma+\beta-\kappa}\|\mcb e_{-\ell}u_0^\eps\|_{\mcB_{r,q}^{-\sigma}}\,.
    \end{equation}
    Consequently, we get
    \begin{equation}
        \sup_{t\leq T}\big\|\langle\bigcdot\rangle^{-\kappa}\mcb e_{-\ell-t}K_\mu (F_{\eps,T^{1/\alpha}}[0]+R_{\eps,T^{1/\alpha}})(t)\big\|_{L^{r}_{x}}\lesssim\mu^{-\alpha-\sigma+\beta-\kappa}\|\mcb e_{-\ell}u_0^\eps\|_{\mcB_{r,q}^{-\sigma}}\,.
    \end{equation}
\end{lemma}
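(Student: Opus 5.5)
The plan is to argue by induction on $p$, downward in scale from $\mu=T^{1/\alpha}$, with the recursion driven by the commutation identity between kernels at different scales. Set $f\eqdef F_{\eps,T^{1/\alpha}}[0]+R_{\eps,T^{1/\alpha}}$. Note that $f$ is a fixed object: it does not depend on the scale at which it is tested.

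\textbf{Base case $p=0$.} This follows directly from the fixed point of Lemma~\ref{Lemmafixedpoint} together with the force estimate \eqref{eq:esti_force_infvol} at $\mu=T^{1/\alpha}$. Since $F^1_{\eps,T^{1/\alpha}}[0]=0$, only $F^0$ enters. The estimate \eqref{eq:esti_force_infvol} gives the power $(T^{1/\alpha})^{-\alpha-\sigma+\beta-2\kappa}$. The remainder bound $\snnorm{v,R}\le1$ gives $T^{\kappa/\alpha}\|\mcb e_{-\ell}u_0^\eps\|_{\mcB^{-\sigma}_{r,q}}$, which is even better. The factor $\talloblong\upxi\talloblong$ is absorbed into a power of $T^{\kappa/\alpha}$, exactly as in the definition of the random time $T$, so that a fixed constant $C$ remains.

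\textbf{Induction step, from $\mu=\tau^pT^{1/\alpha}$ to $\nu=\tau^{p+1}T^{1/\alpha}$.} Because $K_\mu=K_\nu * (K_\nu^{-1}K_\mu)$, I write $K_\nu f$ in terms of $K_\mu f$ by inverting the smoothing at the intermediate scale:
$$K_\nu=\big(\mcP_\mu\mcP_\nu^{-1}\big)^{N}K_\mu,\qquad \mcP_\mu\mcP_\nu^{-1}=\big(1+\mu^\alpha\D_t\big)\big(1+\nu^\alpha\D_t\big)^{-1}\otimes\big(1-\mu^2\Delta\big)\big(1-\nu^2\Delta\big)^{-1}.$$
Each factor can be written as $\tfrac{\mu^\alpha}{\nu^\alpha}\,\mathrm{Id}+\big(1-\tfrac{\mu^\alpha}{\nu^\alpha}\big)\big(1+\nu^\alpha\D_t\big)^{-1}$, and similarly in space. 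This makes the operator $\mcP_\mu\mcP_\nu^{-1}$ bounded on the weighted $L^\infty_tL^r_x$ space. Its norm is at most $(\mu^\alpha/\nu^\alpha+|1-\mu^\alpha/\nu^\alpha|)$ in time, which is $2/\tau^\alpha-1$, times the analogous spatial factor, which is smaller.

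To move the weights $\langle\bigcdot\rangle^{-\kappa}\mcb e_{-\ell-t}$ through the kernels, I use that the kernels decay exponentially, via \eqref{Kmuexp} and Lemma~\ref{HaiLabestim}. The factors $(1+\nu^\alpha\D_t)^{-1}$ are causal, so the time-dependent weight $\mcb e_{-\ell-t}$ only improves: $s\le t$ gives $\mcb e_{-\ell-t}\le\mcb e_{-\ell-s}$. The spatial weight changes by at most a factor $(1+O(\nu^{1/2}))$, which I absorb. Since $M\ge N$, this yields
$$\|\cdots K_\nu f\|\le (2/\tau^\alpha-1)^{2M}\,C\mu^{-\alpha-\sigma+\beta-\kappa}\|\mcb e_{-\ell}u_0^\eps\|_{\mcB^{-\sigma}_{r,q}}.$$

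\textbf{Main obstacle.} The naive recursion above loses a constant factor at every step. By \eqref{2/tau2} this factor is at most $2$, but the target only grows by $\tau^{-(\alpha+\sigma-\beta+\kappa)}$ per step, so the bare argument does not close. The fix is to split $f=F_{\eps,\nu}[0]+R_{\eps,\nu}+\big(f-F_{\eps,\nu}[0]-R_{\eps,\nu}\big)$ at each scale instead.

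For the first two pieces I use the direct bounds at scale $\nu$: \eqref{eq:esti_force_infvol} for the force and the fixed point bound for the remainder. These give $\nu^{-\alpha-\sigma+\beta-2\kappa}$ with an extra small factor $T^{\kappa/\alpha}$ beyond the target $\nu^{-\alpha-\sigma+\beta-\kappa}$. That small factor is what leaves room to absorb the factor $2$ from the recursion.

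The difference $f-F_{\eps,\nu}[0]-R_{\eps,\nu}$ equals $F_{\eps,\nu}[v_\nu]-F_{\eps,\nu}[0]=F^1_{\eps,\nu}[v_\nu]$ with $v_\nu=\int_\nu^{T^{1/\alpha}}\dot G_\eta(\cdots)\,\dint\eta$. It is controlled through $\|F^1_{\eps,\nu}\|_{\nu,\kappa,\ell}$ and the bound on $\mcR_\nu v_\nu$ from Lemma~\ref{Lemmafixedpoint}. It costs $\nu^{-\alpha+3\kappa}\talloblong\upxi\talloblong\|\mcb e_{-\ell}u_0^\eps\|_{\mcB^{-\sigma}_{r,q}}$, which is again below the target.

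This splitting is where I expect the real work to lie. The induction hypothesis is then used only to bound $v_\nu$ in terms of $K_\eta f$ for $\eta\ge\mu$ via \eqref{eq:comKmuKnu}, and the factor $(2/\tau^\alpha-1)^{2M}\le2$ is compensated by choosing $T$ small. Finally, for general $\mu\in[\tau^{p+1}T^{1/\alpha},\tau^pT^{1/\alpha}]$, I apply one more comparison step with constant at most $2$ to get the stated consequence.
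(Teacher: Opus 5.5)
Your proposal is correct and follows the paper's own route. The steps are: induction downward from $\mu=T^{1/\alpha}$; a one-step kernel comparison costing at most the factor $2$ allowed by \eqref{2/tau2}; rewriting $F_{\eps,T^{1/\alpha}}[0]+R_{\eps,T^{1/\alpha}}=F_{\eps,\nu}[v_{\eps,\nu}]+R_{\eps,\nu}$; bounding $v_{\eps,\nu}=-\int_\nu^{T^{1/\alpha}}\dot G_\eta\big(F_{\eps,T^{1/\alpha}}[0]+R_{\eps,T^{1/\alpha}}\big)\dint\eta$ with the crude constant $2C$ absorbed by shrinking $T$; and closing with the direct force and remainder bounds, so that $C$ does not degrade from step to step.

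One small slip: in your rederivation of the comparison constant, the spatial factor $2/\tau^2-1$ is larger, not smaller, than $2/\tau^\alpha-1$ when $\alpha<2$. Either invoke \eqref{eq:Kmunu} directly as the paper does, or take $\tau$ closer to $1$; this does not affect the argument.
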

\begin{proof}
    The proof is very similar to the proof of Lemma 3.12 in \cite{CF24a}, with the difference that we need to integrate the weights. We proceed by induction on $p$. The base case follows directly from \eqref{eq:esti_force_infvol} and Lemma \ref{Lemmafixedpoint}, and reads
    \begin{equs}
        \sup_{t\leq T}\big\|\langle\bigcdot\rangle^{-\kappa}\mcb e_{-\ell-t}K_{T^{1/\alpha}} (F_{\eps,T^{1/\alpha}}[0]+R_{\eps,T^{1/\alpha}})(t)\big\|_{L^{r}_{x}}\lesssim T^{(-\alpha+\beta-\kappa)/\alpha}\|\mcb e_{-\ell}u_0^\eps\|_{\mcB_{r,q}^{-\sigma}}\,.
    \end{equs}
    Now suppose that \eqref{HR} holds for some $p\in\N$. To go from $p$ to $p+1$, some care has to be taken with the weights. Recall that we have, $K_{\tau^{p+1}T^{1/\alpha}}=\mcR_{\tau^{p}T^{1/\alpha}}K_{\tau^{p+1}T^{1/\alpha}}K_{\tau^{p}T^{1/\alpha}}$. Thus we get
    \begin{equs}
        ~&\langle x\rangle^{-\kappa}\mcb e_{-\ell-t}(x)K_\mu (F_{\eps,T^{1/\alpha}}[0]+R_{\eps,T^{1/\alpha}})(t,x)\\
        &=\int_{\R^d}\int_0^t\langle x\rangle^{-\kappa}\mcb e_{-\ell-t}(x)(\mcR_{\tau^{p}T^{1/\alpha}}K_{\tau^{p+1}T^{1/\alpha}})(t-s,x-y)K_{\tau^{p}T^{1/\alpha}}(F_{\eps,T^{1/\alpha}}[0]+R_{\eps,T^{1/\alpha}})(s,y)\dint s\dint y\\
        &\leq\int_{\R^d}\int_0^t\langle x\rangle^{-\kappa}\mcb e_{\ell+t}(x-y)(\mcR_{\tau^{p}T^{1/\alpha}}K_{\tau^{p+1}T^{1/\alpha}})(t-s,x-y)\\
        &\qquad\qquad\qquad\qquad\qquad\qquad\qquad\quad\times\langle y\rangle^{\kappa}\langle y\rangle^{-\kappa}\mcb e_{-\ell-s}(y)K_{\tau^{p}T^{1/\alpha}}(F_{\eps,T^{1/\alpha}}[0]+R_{\eps,T^{1/\alpha}})(s,y)\dint s\dint y\\
        &\leq\int_{\R^d}\int_0^t\langle x;y\rangle^{\kappa}\mcb e_{\ell+t}(x-y)(\mcR_{\tau^{p}T^{1/\alpha}}K_{\tau^{p+1}T^{1/\alpha}})(t-s,x-y)\\
        &\qquad\qquad\qquad\qquad\qquad\qquad\qquad\qquad~~~\times\langle y\rangle^{-\kappa}\mcb e_{-\ell-s}(y)K_{\tau^{p}T^{1/\alpha}}(F_{\eps,T^{1/\alpha}}[0]+R_{\eps,T^{1/\alpha}})(s,y)\dint s\dint y\,.
    \end{equs}
    We have used that $\langle y\rangle^\kappa\lesssim\langle y;x\rangle^\kappa\langle x\rangle^\kappa$. Then, using the condition \eqref{2/tau2} and \eqref{eq:Kmunu}, we get
    \begin{equs}
        \sup_{t\leq T}\big\|\langle\bigcdot\rangle^{-\kappa}\mcb e_{-\ell-t}K_{\tau^{p+1}T^{1/\alpha}} (F_{\eps,T^{1/\alpha}}[0]+R_{\eps,T^{1/\alpha}})(t)\big\|_{L^{r}_{x}}&\leq2C(\tau^pT^{1/\alpha})^{-\alpha+\beta-\kappa}\|\mcb e_{-\ell}u_0^\eps\|_{\mcB_{r,q}^{-\sigma}}\\
        &\leq2C(\tau^{p+1}T^{1/\alpha})^{-\alpha+\beta-\kappa}\|\mcb e_{-\ell}u_0^\eps\|_{\mcB_{r,q}^{-\sigma}}\,.
    \end{equs}
    We then write
    \begin{equs}\label{from0toT}
        F_{\eps,T^{1/\alpha}}[0]+R_{\eps,T^{1/\alpha}}=F_{\eps,\tau^{p+1}T^{1/\alpha}}[v_{\eps,\tau^{p+1}T^{1/\alpha}}]+R_{\eps,\tau^{p+1}T^{1/\alpha}}\,.
    \end{equs}
    Using \eqref{sysflow1}, we have, proceeding as in the proof of Lemma \ref{Lemmafixedpoint},
    \begin{equs}
        \sup_{t\leq T}\|\mcb e_{-\ell-t} v_{\eps,\tau^{p+1}T^{1/\alpha}}(t)\|_{L_x^r}&\leq \int_{\tau^{p+1}T^{1/\alpha}}^{T^{1/\alpha}}\sup_{t\leq T}\big\|\mcb e_{-\ell-t}\dot G_\nu(F_{\eps,T^{1/\alpha}}[0]+R_{\eps,T^{1/\alpha}})(t)\big\|_{L_x^r}\dint\nu\\
        &\leq 2CC_G\int_{\tau^{p+1}T^{1/\alpha}}^{T^{1/\alpha}}\nu^{-1+\beta-2\kappa}\dint\nu\\
        &\leq 2CC_G(\beta-\kappa)^{-1}T^{(\beta-2\kappa)/\alpha}\,,
    \end{equs}
    where $C_G$ is the implicit constant in Lemma \ref{estimdotGmu}. By taking $T$ small enough, we can impose that this latter quantity is bounded by $1$. Taking \eqref{from0toT}, we have, using Lemma \ref{Lemmafixedpoint},
    \begin{equs}
        ~&\sup_{t\leq T}\big\|\langle\bigcdot\rangle^{-\kappa}\mcb e_{-\ell-t}K_{\tau^{p+1}T^{1/\alpha}}(F_{\eps,T^{1/\alpha}}[0]+R_{\eps,T^{1/\alpha}})(t)\big\|_{L_x^r}\\
        &\leq \sup_{t\leq T}\big\|\langle\bigcdot\rangle^{-\kappa}\mcb e_{-\ell-t}K_{\tau^{p+1}T^{1/\alpha}}F_{\eps,\tau^{p+1}T^{1/\alpha}}[v_{\eps,\tau^{p+1}T^{1/\alpha}}](t)\big\|_{L_x^r}\\
        &\quad+\sup_{t\leq T}\big\|\langle\bigcdot\rangle^{-\kappa}\mcb e_{-\ell-t}K_{\tau^{p+1}T^{1/\alpha}}R_{\eps,\tau^{p+1}T^{1/\alpha}}(t)\big\|_{L_x^r}\\
        &\leq C_1 (\tau^{p+1}T^{1/\alpha})^{-1+\beta-\kappa}+C_2(\tau^{p+1}T^{1/\alpha})^{\kappa}\,,
    \end{equs}
    where $C_1$ and $C_2$ are implicit constants given by Lemma \ref{Lemmafixedpoint}, and the optimal constant in \eqref{eq:esti_force_infvol}. Finally, we set $C=C_1+C_2$ to conclude the proof.
\end{proof}

We can now conclude the proof of the second item of Theorem \ref{maintheoremspde}.
\begin{proof}[of Theorem \ref{maintheoremspde}, 2]
    The proof follows as the one of Corollary \ref{Cor:GlobalisationSeps} with the integration of the exponential weights. The short time estimate yields, the following uniform bound on $[0,T]$ ($T$ is here the random stopping time)
    \begin{equs}
        \|\mcb e_{-t}(u(t)-\e^{-t\mathrm{D}^{\alpha,m}}u_0\|_{\mcB_{p,\infty}^\gamma(\R^d)}\leq t^{(\beta-\sigma-\gamma-d(1/r-1/p))/\alpha}\|u_0\|_{\mcB_{r,\infty}^{-\sigma}(\R^d)}\,.
    \end{equs}
    The restarted dynamics on $[NT,(N+1)T]$ has initial condition $\mcb e_{-NT} u(T)$. Contrary to the finite volume case, we are not able to track precisely the growth ($\e^{ct}$ in this latter case) as we might get additional losses when we absorb the exponential weights with de decay of heat kernel of the massive fractional Laplacian. Nonetheless, we believe that this would be possible with more work, and that one should expect super-exponential growth.
\end{proof}

\subsection{The harmonic oscillator}\label{sec:OH}

In this section, we deal with the case of the harmonic oscillator $\mathrm H$, $(\alpha,m)=(2,q)$. We denote by $M\equiv G^{2,q}$ the kernel of the semi-group $e^{-t\mathrm{H}}$ of the harmonic oscillator on $\R^d$, also known in the literature as \textit{Mehler kernel}. This kernel has an explicit expression that we recalled in \eqref{eq:Mehlerkernel}. As usual, we define
\begin{equation} M_\mu(t,x,y)=\chi_\mu(t)M(t,x,y)\,.
\end{equation}
The purpose of the present section is to construct the force coefficient $\upxi^{2,q;n}_\epmu$ of the stationary force $S^{2,q}_\epmu$. Note that while $S^{2,q}_\epmu$ is still stationary in time, working with the Mehler kernel $M$ which is not translation-invariant in space breaks stationarity in space. However, we claim that we will be able to construct $S^{2,q}_\epmu$ with the same initial condition as in the space-translation-invariant case of the Laplacian, namely with $S^{2,q}_{\eps,0}=S^{2,0}_{\eps,0}$, and using a purely deterministic argument taking the space-translation-invariant effective force $S^{2,0}_\epmu$ as an input. In other words, we will construct the force coefficients $\upxi^{2,q;n}_\epmu$ pathwise, as continuous functions of the force coefficients $\upxi^{2,0;n}_\epmu$ which we will obtain in Section~\ref{sec:stocest} in the case of the Laplacian. 

To avoid any clash and to clarify the notation, we denote by $O_\epmu\equiv S^{2,q}_\epmu$ the effective force satisfying the truncation at order $N_0$ of the flow equation
\begin{equation}\label{eq:flowmehler}
    \D_\mu O_\epmu+\mathrm{D}O_\epmu \dot M_\mu O_\epmu=0\,,
\end{equation}
started with initial condition $O_{\eps,0}=S^{2,0}_\eps$, and by $\omega_\epmu^n\equiv\upxi^{2,q;n}_\epmu$ is force coefficients, so
\begin{equation}
    O_\epmu[\phi](z)=\sum_{n=0}^{N_0}\int_\Lambda \upomega_\epmu^n(z,w)\phi(w)\dint w\,.
\end{equation}
Now that the notation is clarified, throughout the present section, we will write $S_\epmu$ and $\upxi^n_\epmu$ to denote $S^{2,0}_\epmu$ and $\upxi^{2,0;n}_\epmu$ the effective force and effective force coefficients obtained in the case of the Laplacian, namely solving the truncation at order $N_0$ of 
\begin{equation}\label{eq:flowsrecalled}
    \D_\mu S_\epmu+\mathrm{D}S_\epmu \dot G_\mu S_\epmu=0\,,
\end{equation}
where $G\equiv G^{2,0}$ denotes the heat kernel of the Laplacian on $\R^d$. Recall the definition \eqref{eq:wieghtW} of the weight $\mcb W_\mu$. We make the assumption that $O_\epmu$ and $S_\epmu$ are related by the following ansatz
\begin{equs}\label{eq:ansatzHO}
    O_{\epmu}=\mcb W_\mu S_{\epmu}+I_\epmu\,,
\end{equs}
where $I_\epmu$ is a remainder term which we call the \textit{irrelevant effective force} with force coefficients denoted $\upiota^n_\epmu$, so 
\begin{equation}
    I_\epmu[\phi](z)=\sum_{n=0}^{N_0}\int_\Lambda\upiota_\epmu^n(z,w)\phi(w)\dint w\,.
\end{equation}
The idea behind this ansatz is that, since in view of \eqref{eq:Mehlerkernel} the Mehler kernel is close to the heat kernel at short time, $O_\epmu$ should also look like $S_\epmu$ at short scales. This sole observation justifies looking for $O_\epmu$ near $S_\epmu$, but why look for it near $\mcb W_\mu S_{\epmu}$ or, in other words, why deviate from $S_\epmu$ to blow up at large scales? This is natural, considering the Mehler kernel at time $\sim\mu^2$ has such a strong decay that it can absorb a Gaussian growth at rate $\sim\mu^2$ and return the same Gaussian decay in space. Therefore, in order to construct the effective force which will feed inside the Mehler kernel, it is natural to take the effective force which feeds inside the heat kernel and to amplify it in order, eventually, to obtain two quantities that are comparable. Indeed, trying to set $O_\epmu=S_\epmu+\tilde{I}_\epmu$ would replace the kernel $\dot M_\nu-\mcb W_\nu^{-1}\dot G_\nu$ in the flow equation \eqref{eq:floweqiota} by $\dot M_\nu-\dot G_\nu$, which would considerably weaken the estimate on the difference, since the heat kernel only has Gaussian decay off the diagonal and not on the full space, as mentioned above, while the first different has rapid decay in all directions (see Lemma~\ref{lemma:diffMG}). Moreover, observe that, further splitting $ O_{\epmu}=S_{\epmu}+\tilde{I}_\epmu=\mcb W_\mu S_{\epmu}+(1-\mcb W_\mu )S_{\epmu}+\tilde{I}_\epmu$, we pull inside the irrelevant part a term proportional to $\big|1-\e^{\mu^2|y_1|^2/4}\big|\lesssim\mu^2|y_1|^2\e^{\mu^2|y_1|^2/4}$. Note that modulo a growth like $|y_1|^2\e^{\mu^2|y_1|^2/4}$ at large spatial scales, it is indeed reasonable to expect this term to be irrelevant, since its power counting in $\mu$ is increased by two. We will thus control $I_\epmu$ in a topology allowing a growth like $|y_1|^2\e^{\mu^2|y_1|^2/4}$, see \eqref{eq:HR+2} and the definition \eqref{eq:defJmu} of $J_{\mu,n}$.

At the level of the force coefficients, the ansatz \eqref{eq:ansatzHO} reads
\begin{equation}\label{eq:ansatzomega}
    \upomega^{n}_{\epmu}(z_1,z_2)=\mcb W_\mu \upxi^{n}_{\epmu}(z_1,z_2)+\upiota^n_\epmu(z_1,z_2)=\e^{\mu^2 \frac{|y_1|^2}{8}}\upxi^n_\mu(z_1,z_2)\e^{\mu^2 \frac{|y_2|^2}{8}}+\upiota^n_\mu(z_1,z_2)\,.
\end{equation}
We have denoted $y_i$ the spatial coordinate of $z_i$. Therefore, the flow equations \eqref{eq:flowsrecalled} and \eqref{eq:flowmehler} for the force $S_\epmu$ and $O_\epmu$ imply that the irrelevant force coefficients satisfy the hierarchy of flow equations
\begin{equs}\begin{aligned}\label{eq:floweqiota}
  \upiota_\epmu^n&=-\int_0^\mu\bigg(\frac{\nu}{4}(|y_1|^2+|y_2|^2)\mcb W_\nu\upxi_\nu^n+\sum_{i=0}^{n-1} \Big(\mcb W_\nu\upxi_\epnu^i[(\dot M_\nu-\mcb W_\nu^{-1}\dot G_\nu)\mcb W_\nu\upxi_\epnu^{n-1-i}]\\
    &\qquad\qquad\qquad+\mcb W_\nu\upxi_\epnu^i[\dot M_\mu\upiota_\epnu^{n-1-i}]+\upiota_\epnu^i[\dot M_\nu\mcb W_\nu\upxi_\epnu^{n-1-i}]+\upiota_\epnu^i[\dot M_\nu\upiota_\epnu^{n-1-i}]\Big)\bigg)\dint\nu\,.
    \end{aligned}
\end{equs}
The induction is started from $\upomega_\epmu^0(z_1,z_2)=\xi_\eps(z_1)\delta(z_1-z_2)$, which implies
\begin{equation}
  \upiota_\epmu^0(z_1,z_2)=\Big(1-\e^{\mu^2\frac{|y_1|^2}{4}}\Big)\xi_\eps(z_1)\delta(z_1-z_2)\,.
\end{equation}

\subsubsection{Construction of the non-translation-invariant coefficients}

For the rest of Section \ref{sec:OH}, we assume that we have the following control on the stationary force coefficients
\begin{equation}\label{eq:estimstatio}
    \sup_{w_1\in\Lambda}\int_\Lambda \langle w_1\rangle^{-\kappa_0}\langle w_1;w_2\rangle_\mu^\varsigma\big|\big(K_\mu^{\otimes2}\upxi_\epmu^n\big)(w_1,w_2)\big|\dint w_2\lesssim\talloblong\upxi\talloblong^{n+1}\mu^{\deg(n)-\kappa_0}\,,
\end{equation}
for some $\varsigma>0$ large enough and uniformly in $\epmu$. Above and until the end of the section, $\talloblong\upxi\talloblong$ is a short-hand notation for $\talloblong\upxi^{2,0}\talloblong_{\kappa_0,0,\varsigma}$. We can allow ourselves to absorb any polynomial growth of parameter $\varsigma$ thanks to the Gaussian decay of heat kernel of the Laplacian (see \eqref{eq:estimate_heatkernel_alphais2}). More rigorously, one would need to put some decreasing power of $\kappa_0$ on the diagonal weight, but since we could start from a higher power and go back to $\varsigma$ we do not develop the matter any further. We may now state the main object of this section, namely the estimate on the force coefficients of $O_\epmu$. We would like to emphasize the fact that we manage, thanks to the decay properties of the Mehler kernel to transform the logarithmic growth in space of the noise into some minor loss in $\mu$. This will allow to construct the solution \textit{without} exponential weights.

\begin{proposition}\label{prop:boundomega}
Recall that the kernel $J_{\mu,n}$ was defined in \eqref{eq:defJmu} and that $J_\mu=J_{\mu,N_0+1}$. For every $\kappa>0$ and $0\leq n\leq N_0$,
    \begin{equation}
        \label{eq:boundomega}\sup_{w_1\in\Lambda}\int_{\Lambda} \langle t_1\rangle^{-\kappa}\langle w_{1};w_2\rangle^{1-\kappa}_\mu\big|\big(J_{\mu}^{\otimes2}\upomega^n_\epmu\big)(w_1,w_2)\big|\dint w_2\lesssim
        \talloblong\upxi\talloblong^{n+1}
        \mu^{\deg(n)-\kappa}\,,
    \end{equation}
    uniformly in $\eps$ and $\mu$.
\end{proposition}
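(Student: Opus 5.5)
By the ansatz \eqref{eq:ansatzomega}, it suffices to bound separately the two pieces $\mcb W_\mu\upxi^n_\epmu$ and $\upiota^n_\epmu$ in the norm of \eqref{eq:boundomega}. To make the induction close, we use the intermediate kernels $J_{\mu,n}$ with increasing exponents $a_n$. The plan is to prove by induction on $n$ the stronger statement that the left-hand side of \eqref{eq:boundomega} with $J_\mu$ replaced by $J_{\mu,n}$ (weaker Gaussian damping) is bounded by $\talloblong\upxi\talloblong^{n+1}\mu^{\deg(n)-\kappa}$. For $\upiota^n_\epmu$ alone we aim for the improved bound $\talloblong\upxi\talloblong^{n+1}\mu^{\deg(n)+2-\kappa}$, with an extra factor allowing polynomial growth $\langle y\rangle^2$, as suggested by the heuristic around \eqref{eq:HR+2}. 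Since $a_n\le a_{N_0+1}$, the bounds for $J_{\mu,n}$ imply those for $J_\mu$.

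\textbf{The relevant piece.} For $\mcb W_\mu\upxi^n_\epmu$, we write
\[
J_{\mu,n}^{\otimes 2}(\mcb W_\mu\upxi^n)=\big(\mcb W_\mu\text{-conjugated }K_\mu^{\otimes2}\big)\upxi^n .
\]
The point is that $K_\mu(t,x-y)\e^{-a_n\mu^2|y|^2/8}\e^{\mu^2|y|^2/8}$ is controlled by $K_\mu(t,x-y)\e^{-(a_n-1)\mu^2|y|^2/8}$, a bounded factor since $a_n\ge1$. One then rewrites $K_\mu^{\otimes2}\upxi^n$ through \eqref{eq:estimstatio}. The polynomial loss $\langle w_1\rangle^{\kappa_0}$ is absorbed by the residual Gaussian in $\mu|y_1|$ together with the time weight $\langle t_1\rangle^{-\kappa}$: indeed $\langle y\rangle^{\kappa_0}\e^{-c\mu^2|y|^2}\lesssim\mu^{-\kappa_0}$. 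This converts spatial growth into the loss $\mu^{-\kappa}$. The off-diagonal weight $\langle w_1;w_2\rangle_\mu^{1-\kappa}$ is absorbed by the $\varsigma$ in \eqref{eq:estimstatio}. The spatial shift of the Gaussian between $y$ and $x$ costs $\e^{c\mu^2|x-y|^2}$, which is bounded by the exponential decay of $K_\mu$ at scale $\mu$ (since $\mu^2|x-y|^2\lesssim(|x-y|/\mu)^2\mu^4$). This step is routine.

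\textbf{The irrelevant piece.} For $\upiota^n_\epmu$ we integrate the flow equation \eqref{eq:floweqiota} in $\nu\in(0,\mu]$, using \eqref{eq:comKmuKnu} and \eqref{eq:Kmunu} to pass from $J_{\nu,\cdot}$ to $J_{\mu,n}$. The gap $a_n-a_{n-1}>0$ pays for the mismatch of Gaussian weights between scales $\nu$ and $\mu$ and for the polynomial factors $|y|^2$. The base case $n=0$ follows from $|1-\e^{\mu^2|y_1|^2/4}|\lesssim\mu^2|y_1|^2\e^{\mu^2|y_1|^2/4}$ and the regularity of $\xi_\eps$. The five source terms are handled as follows.
\begin{itemize}
\item The term $\tfrac{\nu}{4}(|y_1|^2+|y_2|^2)\mcb W_\nu\upxi^n_\nu$ gains $\nu^2$ against $\nu^{-1}$ at the price of the polynomial weight.
\item The terms $\mcb W\upxi^i[\dot M_\nu\upiota^{n-1-i}]$, $\upiota^i[\dot M_\nu\mcb W\upxi^{n-1-i}]$ and $\upiota\dot M\upiota$ follow from the induction hypothesis and an $\mathrm{Op}$-norm bound $\|\mcT_\nu\dot M_\nu\,\cdot\,\|\lesssim\nu^{-1+2}$. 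This bound holds because the Mehler kernel at time $\sim\nu^2$ dominates $\e^{-\nu^2|y|^2\cdot(\text{const})}$-growth.
\item The term involving $\dot M_\nu-\mcb W_\nu^{-1}\dot G_\nu$ relies on Lemma~\ref{lemma:diffMG}/\eqref{eq:estimatediffkernels}. This gives a gain $\nu^2(1+|x|^2+|y|^2)\e^{-c\nu^2(|x|^2+|y|^2)}$ relative to $\dot G_\nu$, i.e.\ an extra factor $\nu^2$ modulo polynomial weights.
\end{itemize}
In every case the power counting becomes $\deg(n)+2-1$, which is integrable at $0$, giving $\mu^{\deg(n)+2-\kappa}$.

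\textbf{Main obstacle.} The hardest step is the bookkeeping of the Gaussian weights across scales. When composing kernels at scale $\nu<\mu$, the factor $\mcb W_\nu^{\pm1}$ must be compared with $\e^{-a_n\mu^2|y|^2/8}$, and the product $\mcb W_\nu\upxi^i\,\dot M_\nu\,\mcb W_\nu\upxi^{n-1-i}$ carries $\e^{\nu^2|y|^2/4}$ at the contraction point. That growth must be beaten by the Mehler decay $\e^{-\tanh(\nu^2)|x+y|^2/4}$ with some margin, so I would try to verify precisely that $\dot M_\nu\mcb W_\nu$ retains a residual Gaussian $\e^{-c\nu^2|y|^2}$ with $c>0$. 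The strict increase of $a_n$, with all $a_n<2$, is what should leave exactly this margin at each induction step.
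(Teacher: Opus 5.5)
Your architecture matches the paper's. You split $\upomega^n_\epmu=\mcb W_\mu\upxi^n_\epmu+\upiota^n_\epmu$, absorb the polynomial weight of the relevant part with a residual Gaussian, and run an induction on \eqref{eq:floweqiota} for $\upiota^n_\epmu$ with a $\langle x_1\rangle^{-2}$-weighted bound of order $\mu^{\deg(n)+2}$, via Lemmas~\ref{lemma:diffMG} and~\ref{lemma:estimMmu}. As set up, however, the induction does not close.

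\textbf{Indices and the weighted-to-unweighted step.} Your indices are off by one. The unweighted claim with $J_{\mu,n}$ is false at $n=0$. Since $a_0=1$, we get $J_{\mu,0}^{\otimes2}\mcb W_\mu\upxi^0_\epmu=K_\mu^{\otimes2}\upxi^0_\epmu$, with no residual Gaussian at all. This stationary object cannot be bounded uniformly in $x_1\in\R^d$, which is exactly why \eqref{eq:estimstatio} carries $\langle w_1\rangle^{-\kappa_0}$.

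More importantly, nothing in your plan turns the weighted bound $\mu^{\deg(n)+2-\kappa}$ for $\upiota^n_\epmu$, which holds only against $\langle x_1\rangle^{-2-\kappa}$, into the unweighted norm of \eqref{eq:boundomega}. The paper does this by working with the strictly stronger damping $J_{\mu,n+1}$. After commuting the Gaussian with $K_\mu$ through $\mcR_\mu^{\otimes2}$ and \eqref{eq:estimreloue}, the extra factor $\e^{-(a_{n+1}-a_n)\mu^2|y_1|^2/8}$ sits at the same point as the weight. It then absorbs $\langle y_1\rangle^{2+\kappa_n}$ at the price $\mu^{-2-\kappa_n}$; this is \eqref{eq:HR+0}, and for the relevant piece $a_{n+1}-1>0$ plays the same role.

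This is where the strict increase of the $a_n$ is used. Passing from scale $\nu$ to $\mu$ at fixed index needs no gap, by \eqref{eq:lessJmu}. The Mehler domination you single out as the main obstacle only needs $a_n\le 3/2<2$, by \eqref{eq:estMt}.

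The conversion is also needed inside the induction. In $\upiota^i[\dot M_\nu\upiota^j]$, applying the weighted hypothesis to both factors produces quartic growth that $\langle x_1\rangle^{-2-\kappa_n}$ cannot absorb. One factor must therefore be taken in the weighted norm with $J_{\nu,i}$ and the other in the unweighted norm with $J_{\nu,j+1}$. Saying this ``follows from the induction hypothesis'' skips exactly this step.

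\textbf{The exponent $\kappa$.} A single $\kappa$ cannot be propagated through the induction. The conversion above costs another $\kappa$, so the $\upiota\dot M_\nu\upiota$ term yields $\nu^{\deg(n)+1-3\kappa}$ instead of $\nu^{\deg(n)+1-\kappa}$. The time weight $\langle s_2\rangle^{\kappa}$ of the second factor must also be moved onto $\langle t_1\rangle$. So must the $\langle z\rangle^{\kappa_0}$ coming from \eqref{eq:estimstatio} in the other terms, onto $\langle t_1\rangle,\langle x_1\rangle$. Only $\langle t_1\rangle^{-\kappa}\langle x_1\rangle^{-2-\kappa}$ is available to absorb them. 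The paper uses increasing exponents $\kappa_n=4^{n-N_0-1}\kappa$ for this reason, relying on $4^i+2\cdot4^j\le4^n$ for $i+j=n-1$.

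\textbf{The Gaussian shift.} Your justification of the ``spatial shift of the Gaussian'' is incorrect. $K_\mu$ is built from the Bessel kernel $(1-\mu^2\Delta)^{-N}$, which decays only like $\e^{-|x-y|/\mu}$. Since $\mu^2|x-y|^2=\mu^4(|x-y|/\mu)^2$ eventually dominates $|x-y|/\mu$, the function $\e^{c\mu^2|x-y|^2}K_\mu(x-y)$ is not even integrable. No shift is needed if the Gaussian is commuted with $K_\mu$ via $\mcR_\mu$ as above, since the Gaussian then already sits at the point carrying the weight.
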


\begin{remark}
    Before turning to the proof of this proposition, we would like to point out two facts about the force coefficients $\upomega^n_\epmu$. Firstly, we control them in a topology where the weight in space is absent. In other words, the complications due to the growth of the stochastic terms in large volume that we had to tackle for the fractional Laplacian are not present here. Secondly, the right-hand side of \eqref{eq:boundomega} shows that these coefficients are built from the stationary force coefficients $\upxi^{2,0;n}_\epmu$ pathwise, which is an interesting feature of our construction. In particular, with a little more effort, we can prove that they are continuous in the stationary force so that convergence of the stationary force implies convergence of these force coefficients.
\end{remark}

\begin{proof}
    We first prove that
    \begin{equs}
        \sup_{w_1\in\Lambda}\int_{\Lambda} \langle t_1\rangle^{-\kappa}\langle w_{1};w_2\rangle^{1-\kappa}_\mu\big|\big(J_{\mu,n+1}^{\otimes2}\upomega^n_\epmu\big)(w_1,w_2)\big|\dint w_2\lesssim\talloblong\upxi\talloblong^{n+1}\mu^{\deg(n)-\kappa}\,,
    \end{equs}
    and conclude using \eqref{eq:lessJn}. For that purpose, we decompose
    \begin{equs}
        ~&\sup_{w_1\in\Lambda}\int_{\Lambda} \langle t_1\rangle^{-\kappa}\langle w_{1};w_2\rangle^{1-\kappa}_\mu\big|\big(J_{\mu,n+1}^{\otimes2}\upomega^n_\epmu\big)(w_1,w_2)\big|\dint w_2\\
        &\leq\sup_{w_1\in\Lambda}\int_{\Lambda}\langle t_1\rangle^{-\kappa} \langle w_{1};w_2\rangle^{1-\kappa}_\mu\big|\big(J_{\mu,n+1}^{\otimes2}\mcb W_\nu\upxi_\epmu^n\big)(w_1,w_2)\big|\dint w_2\\
        &\quad+\sup_{w_1\in\Lambda}\int_{\Lambda} \langle t_1\rangle^{-\kappa}\langle w_{1};w_2\rangle^{1-\kappa}_\mu\big|\big(J_{\mu,n+1}^{\otimes2}\upiota^n_\epmu\big)(w_1,w_2)\big|\dint w_2\,.
    \end{equs}
    The control of the second term is the object of Lemma \ref{lemma:induction+2}, which is the heart of the construction of the coefficients $\upiota_\epmu^n$ (and the difficult part). We are left with the first term. We have, denoting in a general way $w=(t,x)$ and $z=(s,y)$,
    \begin{equs}
        ~&\sup_{w_1\in\Lambda}\int_{\Lambda} \langle t_1\rangle^{-\kappa}\langle w_{1};w_2\rangle^{1-\kappa}_\mu\big|\big(J_{\mu,n+1}^{\otimes2}\mcb W_\nu\upxi_\epmu^n\big)(w_1,w_2)\big|\dint w_2\\
        &=\sup_{w_1\in\Lambda}\int_{\Lambda} \langle t_1\rangle^{-\kappa}\langle w_{1};w_2\rangle^{1-\kappa}_\mu\\
        &\qquad\times\bigg|\int_{\Lambda^2}K_\mu(w_1-z_1)K_\mu(w_2-z_2)\e^{-(a_{n+1}-1)\mu^2\frac{|y_1|^2+|y_2|^2}{8}}\upxi_\epmu^n(z_1,z_2)\dint z_1\dint z_2\bigg|\dint w_2\\
        &\lesssim\sup_{w_1\in\Lambda}\int_{\Lambda} \langle t_1\rangle^{-\kappa}\langle w_{1};w_2\rangle^{1-\kappa}_\mu\\
        &\qquad\times\bigg|\int_{\Lambda^2}\mcR_\mu^{\otimes2}\bigg(K_\mu(w_1-z_1)K_\mu(w_2-z_2)\e^{-(a_{n+1}-1)\mu^2\frac{|y_1|^2+|y_2|^2}{8}}\bigg)\big(K_\mu^{\otimes2}\upxi_\epmu^n\big)(z_1,z_2)\dint z_1\dint z_2\bigg|\dint w_2\,.
    \end{equs}
    In the above, the operators $\mcR_\mu$ apply on $z_1$ and $z_2$ respectively. Using \eqref{triangleineq}, we have
    \begin{equs}
        \langle w_1;w_2\rangle^{1-\kappa}_\nu\lesssim\langle w_1;z_1\rangle^{1-\kappa}_\nu\langle z_1;z_2\rangle^{1-\kappa}_\nu\langle z_2;w_2\rangle^{1-\kappa}_\nu\,.
    \end{equs}
    Using Hölder's inequality, we get that the above is bounded by
    \begin{equs}
        ~&\sup_{w_1\in\Lambda}\int_{\Lambda}\langle t_1\rangle^{-\kappa}\bigg|\int_{\Lambda^2}\langle w_{1};z_1\rangle^{1-\kappa}_\mu\langle w_2;z_2\rangle^{1-\kappa}_\mu\mcR_\mu^{\otimes2}\bigg(K_\mu(w_1-z_1)K_\mu(w_2-z_2)\e^{-(a_{n+1}-1)\mu^2\frac{|y_1|^2+|y_2|^2}{8}}\bigg) \\
        &\qquad\qquad\qquad\qquad\qquad\qquad\qquad\qquad\times\langle z_1\rangle^{\kappa_0}\langle z_1\rangle^{-\kappa_0}\langle z_1;z_2\rangle^{1-\kappa}_\mu\big(K_\mu^{\otimes2}\upxi_\epmu^n\big)(z_1,z_2)\dint z_1\dint z_2\bigg|\dint w_2\\
        &\leq\sup_{w_1,z_2\in\Lambda}\int_{\Lambda}\langle t_1\rangle^{-\kappa}\bigg|\int_{\Lambda}\langle w_{1};z_1\rangle^{1-\kappa}_\mu\langle w_2;z_2\rangle^{1-\kappa}_\mu\\
        &\qquad\qquad\times\mcR_\mu^{\otimes2}\bigg(K_\mu(w_1-z_1)K_\mu(w_2-z_2)\e^{-(a_{n+1}-1)\mu^2\frac{|y_1|^2+|y_2|^2}{8}}\bigg)\langle z_1\rangle^{\kappa_0}\dint z_1\bigg|\dint w_2\times\|\upxi_\epmu^n\|_{\mu,\kappa_0,0,1}\,.
    \end{equs}
    $\kappa_0<\kappa$ is defined in Lemma \ref{lemma:induction+2}. Thanks to \eqref{triangleineq}, we decompose 
    \begin{equs}
        \langle z_1\rangle^{\kappa_0}\lesssim\langle s_1\rangle^{\kappa_0}\langle y_1\rangle^{\kappa_0}\lesssim\langle s_1;t_1\rangle^{\kappa_0}\langle t_1\rangle^{\kappa_0}\langle y_1\rangle^{\kappa_0}\lesssim\langle z_1;w_1\rangle_\mu\langle t_1\rangle^{\kappa_0}\langle y_1\rangle^{\kappa_0}\,.
    \end{equs}
    We can now use \eqref{eq:estimreloue} to get
    \begin{equs}
        ~&\sup_{w_1,z_2\in\Lambda}\int_{\Lambda^2}\langle w_{1};z_1\rangle^{2}_\mu\langle w_2;z_2\rangle_\mu\\
        &\qquad\qquad\qquad\qquad\qquad~~\times\bigg|\mcR_\mu^{\otimes2}\bigg(K_\mu(w_1-z_1)K_\mu(w_2-z_2)\e^{-(a_{n+1}-1)\mu^2\frac{|y_1|^2+|y_2|^2}{8}}\bigg)\bigg|\langle y_1\rangle^{\kappa_0}\dint z_1\dint w_2\\
        &\lesssim\mu^{-\kappa_0}\sup_{w_1,z_2\in\Lambda}\int_{\Lambda^2}\langle w_{1};z_1\rangle^{2}_\mu\langle w_2;z_2\rangle_\mu\big(\mcS_\mu\hat K\big)(w_1-z_1)\\
        &\qquad\qquad\qquad\qquad\times\big(\mcS_\mu\hat K\big)(w_2-z_2)\big(1+\mu^2|y_1|^2+\mu^2|y_2|^2\big)\e^{-(a_{n+1}-1)\mu^2\frac{|y_1|^2+|y_2|^2}{8}}\langle \mu y_1\rangle^{\kappa_0}\dint z_1\dint w_2\\
        &\lesssim\mu^{-\kappa_0}\sup_{w_1,z_2\in\Lambda}\int_{\Lambda^2}\langle w_{1};z_1\rangle^{2}_\mu\langle w_2;z_2\rangle_\mu\big(\mcS_\mu\hat K\big)(w_1-z_1)\big(\mcS_\mu\hat K\big)(w_2-z_2)\dint z_1\dint w_2\\
        &\lesssim\mu^{-\kappa_0}\|\langle\bigcdot\rangle^2\hat K\|_{L^1_{t,x}}^2\lesssim\mu^{-\kappa_0}\,.
    \end{equs}
    We recall that the kernels $\hat K$ are integrable with exponential decay. Eventually, we have
    \begin{equs}
        \sup_{w_1\in\Lambda}\int_{\Lambda} \langle t_1\rangle^{-\kappa}\langle w_{1};w_2\rangle^{1-\kappa}_\mu\big|\big(J_{\mu,n+1}^{\otimes2}\mcb W_\nu\upxi_\epmu^n\big)(w_1,w_2)\big|\dint w_2\lesssim\talloblong\upxi\talloblong^{n+1}\mu^{\deg(n)-2\kappa_0}\leq\talloblong\upxi\talloblong^{n+1}\mu^{\deg(n)-\kappa}\,,
    \end{equs}
    which in turn yields the result.
\end{proof}

\subsubsection{Construction of the non-stationary coefficients}

We recall from Section \ref{sec:fromstatiotoeff} that, once the stationary coefficients were constructed, one had to perform two subsequent constructions $\upxi_\epmu^n\to\uptau_\epmu^n\to\upzeta^n_\epmu$. The first one handles the initial condition and the second one tackles non-stationarity. In the present case, the scheme is somewhat similar. We have the steps
\begin{equs}
    \upxi_\epmu^n\to\upomega_\epmu^n\to\uptau_\epmu^n\to\upzeta^n_\epmu\,.
\end{equs}
Step 1 was performed in the previous section, step 2 is the object of the present section, and we let the reader convince him/herself that the last step is identical to what is done in section \ref{sec:fromstatiotoeff}.

\begin{lemma}\label{lemma:zetaOH}
    The following holds uniformly in $\mu\in(0,1]$ and $0\leq n\leq N_1$
    \begin{equation}
        \|J_\mu\upzeta_\epmu^{0,n}\|_{L_{t,x}^{\infty,r}}\lesssim  \talloblong\upxi\talloblong^{n}\|u_0^\eps\|_{\msB_{r,\infty}^{-\sigma}}\mu^{\deg(0,n)-\kappa}\,,
    \end{equation}
    and
    \begin{equation}
        \big\|J_\mu^{\otimes2}\upzeta_{\epmu}^{1,n}\big\|_{\mathrm{Op}}\lesssim  \talloblong\upxi\talloblong^{n+1}\mu^{\deg(1,n)-\kappa}\,.
    \end{equation}
    We recall that the norm $\|\bigcdot\|_{\mathrm{Op}}$ was defined in \eqref{eq:normOp}.
\end{lemma}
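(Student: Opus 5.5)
We follow the two-step scheme of Section~\ref{sec:fromstatiotoeff}, now starting from the non-translation-invariant stationary coefficients $\upomega^n_\epmu$ controlled by Proposition~\ref{prop:boundomega}.

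\emph{Translated coefficients.} We set $\uptau^{1,n}_\epmu=\upomega^n_\epmu$, $\uptau^{0,0}_\epmu=-\psi_\eps$ and $\uptau^{0,n}_\epmu=\upomega^{n-1}_\epmu[M_\mu\psi_\eps]$ for $n\geq1$. The argument of Lemma~\ref{Lem:expressiontau} is purely algebraic, since $O_\epmu[\bigcdot+M_\mu\psi_\eps]-\psi_\eps$ solves the truncated Mehler flow \eqref{eq:flowmehler}. Hence these coefficients satisfy the hierarchy with $\dot G_\mu$ replaced by $\dot M_\mu$.

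\emph{Bounds on the translated coefficients.} For $\uptau^{1,n}$, the $L^\infty_{w_1}L^1_{w_2}$ half of the $\mathrm{Op}$ norm is exactly \eqref{eq:boundomega}, after dropping the weight $\langle w_1;w_2\rangle_\mu^{1-\kappa}\geq1$ and absorbing $\langle t_1\rangle^{-\kappa}$ on $[0,1]$. The $L^\infty_{w_2}L^1_{w_1}$ half follows by running the same proof with the roles of $w_1,w_2$ exchanged, since \eqref{eq:floweqiota} is symmetric enough for this.

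For $\uptau^{0,n}$, we insert $J_\mu^{-1}$ (namely $\mcT_\mu$) between $\upomega^{n-1}_\epmu$ and $M_\mu\psi_\eps$, exactly as in the proof of Lemma~\ref{Lem:constructiontau}. This gives
\[
\|J_\mu\uptau^{0,n}_\epmu\|_{L^{\infty,r}_{t,x}}\leq\|J_\mu^{\otimes2}\upomega^{n-1}_\epmu\|_{\mathrm{Op}}\,\|\mcT_\mu M_\mu\psi_\eps\|_{L^{\infty,r}_{t,x}}.
\]
So it remains to prove
\[
\|\mcT_\mu M_\mu\psi_\eps\|_{L^{\infty,r}_{t,x}}\lesssim\mu^{-\sigma}\|u_0^\eps\|_{\msB^{-\sigma}_{r,\infty}}.
\]
This is the \textbf{main obstacle}, because $\mcT_\mu$ contains the Gaussian growth $\e^{a\mu^2|y|^2/8}$ with $a\leq 3/2$.

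The plan for this step is as follows. Since $M_\mu$ is supported at times $t\geq\mu^2$, the Mehler factor $\e^{-\tanh(t)|x+y|^2/4}$ together with the off-diagonal Gaussian provides decay of order $\e^{-c\,t|x|^2}$ with $c$ close to $1/4>a/8$. This decay absorbs the weight $\e^{a\mu^2|x|^2/8}$. The time derivatives and the Laplacian inside $\mcR_\mu$ produce only polynomial factors $(1+\mu^2|x|^2)^N$, which are harmless.

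We then write $M(t)=M(t-\mu^2/2)\,\msQ_{\mu}^{k}\,\msQ_\mu^{-k}M(\mu^2/2)$, or more simply use the semigroup property to factor out $\msQ_\mu^{\lceil\sigma\rceil}$ applied to $u_0^\eps$. The remaining operator is bounded on $L^r_x$ uniformly in $\mu$, with a factor $\mu^{-\sigma}$ coming from the definition of $\msB^{-\sigma}_{r,\infty}$. Its boundedness follows from the explicit formula \eqref{eq:Mehlerkernel} and Schur's test.

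\emph{Effective coefficients.} We pass from $\uptau$ to $\upzeta$ exactly as in Lemma~\ref{Lem:constructionzeta}. Lemma~\ref{Lem:suppprop} still holds, since $M_\mu$ has the same time support as $G_\mu$. We split $\upzeta=\mcb v^n_\mu\uptau+\mcb w^n_\mu\upzeta$ and control the short-time part through the flow \eqref{eq:dmuwmuzeta}. The $L^1$-in-time norm gains $\nu^2$, and the bounds
\[
\|J_\nu\dot M_\nu\mcT_\nu\|_{\mathrm{Op}}\lesssim\nu^{-1+2}
\]
(again via the Gaussian absorption above) close the induction. The commutator $K_\nu\to K_\mu$ is replaced by the corresponding statement for $J$, using \eqref{eq:lessJn} and the monotonicity in $n$ of $a_n$ to move between $J_{\mu,n}$ and $J_\mu$. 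All $\kappa$-losses are collected into a single $\mu^{-\kappa}$.
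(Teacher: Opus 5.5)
Your proposal is correct and follows the paper's route. You obtain the translated coefficients algebraically with $M_\mu$ in place of $G_\mu$. You read off the bound on $\uptau^{1,n}$ from Proposition~\ref{prop:boundomega}. You bound $\uptau^{0,n}$ by inserting $\mcT_\mu$ as in Lemma~\ref{lemma:boundtauOH}. Finally you pass from $\uptau$ to $\upzeta$ as in Lemma~\ref{Lem:constructionzeta}. The paper is no more explicit than you are about the second half of the $\mathrm{Op}$ norm, or about the $J$-analogue of the commutator step in \eqref{eq:dmuwmuzeta}.

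The one genuine deviation is the Schauder-type bound $\|\mcT_\mu M_\mu\psi_\eps\|_{L^{\infty,r}_{t,x}}\lesssim\mu^{-\sigma}\|u_0^\eps\|_{\msB^{-\sigma}_{r,\infty}}$.
\begin{itemize}
\item The paper (Lemma~\ref{lemma:heatflowmehler}) factors $M(t)=M((1-\eps)t)M(\eps t)$. It applies Schur's test to $\mcT_\mu M_\mu((1-\eps)t)$ via \eqref{eq:estMt}, and uses the analytic smoothing of $M(\eps t)$ from $\msH^{-\sigma,r}$ to $L^r_x$. It then passes to Besov spaces by real interpolation (Proposition~\ref{Prop:HermiteSpacesInterpolation}).
\item You pull $\msQ_\mu^{\lceil\sigma\rceil}=\msQ_\mu$ (recall $\sigma<\beta\leq1$) directly out of the definition of $\msB^{-\sigma}_{r,\infty}$. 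You then control $\mcT_\mu M_\mu(t)(1+\mu^2\mathrm{H})$ by Schur's test. This works because $\mu^2\mathrm{H}M(t)=-\mu^2\partial_tM(t)$ is bounded by \eqref{eq:estMt} with one time derivative, and $t\geq\mu^2$ on the support of $M_\mu$.
\end{itemize}

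Your variant is more elementary and works with $q=\infty$ as it stands. By contrast, Proposition~\ref{Prop:HermiteSpacesInterpolation} is only stated for $q<\infty$. The paper's route, in exchange, gives the bound from Hermite--Sobolev norms and for all $q$ at once.
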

\begin{proof}
    This is a consequence of Lemma \ref{lemma:boundtauOH} below and the rest of the proof follows as in Lemma \ref{Lem:constructionzeta}.
\end{proof}

\begin{lemma}\label{lemma:boundtauOH}
    The following holds uniformly in $\mu\in(0,1]$ and $0\leq n\leq N_1$
    \begin{equation}
        \|J_\mu\uptau_\epmu^{0,n}\|_{L_{t,x}^{\infty,r}}\lesssim  \talloblong\upxi\talloblong^{n}\|u_0^\eps\|_{\msB_{r,\infty}^{-\sigma}}\mu^{\deg(0,n)-\kappa}\,.
    \end{equation}
\end{lemma}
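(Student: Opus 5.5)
We will follow the proof of Lemma~\ref{Lem:constructiontau}, now with the Mehler kernel and the kernels $J_\mu$. Lemma~\ref{Lem:expressiontau} applies verbatim with $G$ replaced by $M$. It gives $\uptau^{0,0}_\epmu=-\psi_\eps$ and, for $n\geq1$,
\begin{equation*}
\uptau^{0,n}_\epmu=\upomega^{n-1}_\epmu[M_\mu\psi_\eps].
\end{equation*}
The case $n=0$ is harmless: after time $0$ the term vanishes, and otherwise it is handled by the definition of $\msB^{-\sigma}_{r,\infty}$. For $n\geq1$ we insert the identity $J_\mu\mcT_\mu=\Id$ between $\upomega^{n-1}_\epmu$ and $M_\mu\psi_\eps$, which gives
\begin{equation*}
J_\mu\uptau^{0,n}_\epmu(w_1)=\int_\Lambda\big(J_\mu^{\otimes 2}\upomega^{n-1}_\epmu\big)(w_1,w_2)\,\big(\mcT_\mu M_\mu\psi_\eps\big)(w_2)\dint w_2 .
\end{equation*}
Here $\mcT_\mu$ is the adjoint inverse of $J_\mu$, so the kernel $J_\mu^{\otimes2}$ acts on $\upomega$ in the sense of Proposition~\ref{prop:boundomega}.

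We then take the $L^{\infty,r}_{t,x}$ norm and apply a Schur/Young-type bound. This requires controlling both the $L^\infty_{w_1}L^1_{w_2}$ norm and the $L^\infty_{w_2}L^1_{w_1}$ norm of $J_\mu^{\otimes2}\upomega^{n-1}_\epmu$. The first is provided by Proposition~\ref{prop:boundomega}, with the bound $\talloblong\upxi\talloblong^{n}\mu^{\deg(n-1)-\kappa}$, where the harmless weights $\langle t_1\rangle^{-\kappa}$ are bounded on $[0,1]$. The transposed norm should follow by the same argument, by symmetry of the ansatz \eqref{eq:ansatzomega} and of \eqref{eq:estimstatio}, whose norm contains both suprema. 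If that symmetry is not directly available, we would instead interpolate using the full $\mathrm{Op}$ control obtained from Lemma~\ref{lemma:induction+2}.

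We are left with proving
\begin{equation*}
\|\mcT_\mu M_\mu\psi_\eps\|_{L^{\infty,r}_{t,x}}\lesssim \mu^{-\sigma}\|u_0^\eps\|_{\msB^{-\sigma}_{r,\infty}} .
\end{equation*}
Since $\deg(n-1)-\sigma=\deg(0,n)$ (indeed $-2+n\beta-\sigma$), this yields the claim. This estimate is the main obstacle. The operator $\mcT_\mu$ contains the growing factor $\e^{a\mu^2|y|^2/8}$ followed by $\mcR_\mu$, which is built from the Laplacian, whereas $M_\mu$ and $\msB$ are built from $\mathrm{H}$.

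The plan for this estimate is as follows. First use that $M_\mu(t)$ is supported in $t\geq\mu^2$. Then use the Mehler decay $\e^{-\tanh(t)|x+y|^2/4}$, with $\tanh t\gtrsim\mu^2$, to absorb the Gaussian growth at rate $a\mu^2/8<\mu^2/4$ uniformly in $t\in[\mu^2,1]$; this is exactly why $a_n\leq3/2<2$.

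Next, factor $M_\mu\psi_\eps(t)=\e^{-(t-c\mu^2)\mathrm{H}}\e^{-c\mu^2\mathrm{H}}u_0^\eps$. Compare $\e^{-c\mu^2\mathrm{H}}$ with $\msQ_\mu^{\lceil\sigma\rceil}$ through the functional calculus for $\mathrm{H}$, namely the boundedness of $(1+\mu^2\mathrm{H})^{k}\e^{-c\mu^2\mathrm{H}}$ on $L^r$. This gives $\|\e^{-c\mu^2\mathrm{H}}u_0^\eps\|_{L^r}\lesssim\mu^{-\sigma}\|u_0^\eps\|_{\msB^{-\sigma}_{r,\infty}}$, while the remaining Mehler kernel is uniformly bounded on $L^r$.

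Finally, the derivatives in $\mcR_\mu$ (of orders $\mu^2\Delta$ and $\mu^2\D_t$) hitting the weighted Mehler kernel cost only $O(1)$ factors. For the time derivative we use $\D_t M=-\mathrm{H}M$ and the Hermite semigroup bound $\|\mu^2\mathrm{H}\e^{-t\mathrm{H}}\|\lesssim\mu^2/t\lesssim1$. For the spatial derivative we use the explicit Gaussian derivatives together with the estimate on the difference of kernels \eqref{eq:estimatediffkernels}, which produce polynomial factors $\mu^2|y|^2$ absorbed by the leftover Gaussian margin, in the spirit of \eqref{eq:estimreloue}. If the sum is merely bounded rather than decaying, a factor $\mu^{-\kappa}$ suffices, which the statement allows.
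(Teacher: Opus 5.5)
Your argument is the paper's. It writes $\uptau^{0,n}_\epmu=\upomega^{n-1}_\epmu[M_\mu\psi_\eps]$, applies Proposition~\ref{prop:boundomega} to $J_\mu^{\otimes2}\upomega^{n-1}_\epmu$, and reduces to $\|\mcT_\mu M_\mu u_0^\eps\|_{L^r_x}\lesssim\mu^{-\sigma}\|u_0^\eps\|_{\msB^{-\sigma}_{r,\infty}}$, which is exactly Lemma~\ref{lemma:heatflowmehler}. The paper proves that lemma much as you sketch: split the semigroup, absorb the Gaussian weight and the derivatives of $\mcR_\mu$ via the weighted Mehler bound \eqref{eq:estMt} (rather than the difference estimate you mention), and apply a Schur test. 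The one difference is that the paper bounds the short-time factor in $\msH^{-\sigma,r}$ and then interpolates, whereas your direct use of the uniformly bounded $(1+\mu^2\mathrm{H})^{k}\e^{-c\mu^2\mathrm{H}}$ against the $\msQ_\mu$-definition of $\msB^{-\sigma}_{r,\infty}$ avoids that interpolation step.

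Your remark that the Schur step also needs the transposed $L^\infty_{w_2}L^1_{w_1}$ bound is fair; the paper's proof passes over it. However, Lemma~\ref{lemma:induction+2} is itself one-sided, so your stated fallback does not supply that bound; only a symmetric rerun of that induction would.
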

\begin{proof}
    We first use Proposition \ref{prop:boundomega} to write
    \begin{equs}
        \|\langle t\rangle^{-\kappa}J_\mu\uptau_\epmu^{0,n}\|_{L_{t,x}^{\infty,r}}&\leq \sup_{w_1\in\Lambda}\int_{\Lambda} \langle t_1\rangle^{-\kappa}\langle w_{1};w_2\rangle^{1-\kappa}_\mu\big|\big(J_{\mu}^{\otimes2}\upomega^{n-1}_\epmu\big)(w_1,w_2)\big|\dint w_2 \times \|\mcT_\mu M_\mu\psi_\eps\|_{L_{t,x}^{\infty,r}}\\
        &\lesssim\mu^{\deg(1,n-1)-\kappa}\|\mcT_\mu M_\mu\psi_\eps\|_{L_{t,x}^{\infty,r}}\,.
    \end{equs}
    We then have $\|\mcT_\mu M_\mu\psi_\eps\|_{L_{t,x}^{\infty,r}}=\|\mcT_\mu M_\mu u_0^\eps\|_{L_{x}^{r}}$. Lemma \ref{lemma:heatflowmehler} in the appendix allows to conclude.
\end{proof}

\subsubsection{Fixed-point argument and construction of the solution}

We finish in the present section the construction of the solution of the SPDE associated to the harmonic oscillator. As we explained at the beginning of the section \ref{sec:OH}, we cannot construct the solution as a series. This is due to the inductive argument we used to construct the coefficients $\upomega_\epmu^n$ in Lemma \ref{lemma:induction+2}, and more precisely because we loose some $\kappa_0$ at every step if the induction. The solution is to use a fixed-point argument, as in infinite volume. We recall that
\begin{equs}
    F_\epmu[\phi](z)=F^0_\epnu(z)+F_\epmu^1[\phi](z)=\sum_{n=0}^{N_0}\upzeta_{\eps,\mu}^{0,n}(z)+\sum_{n=0}^{N_0}\upzeta_{\eps,\mu}^{1,n}[\varphi](z)\,,
\end{equs}
and
\begin{equs}
H_{\eps,\mu}[\phi](z)&=H_{\eps,\mu}^0(z)+H_{\eps,\mu}^1[\varphi](z)\\
&=\sum_{n=N_0+1}^{2N_0+1}\sum_{i=0}^{n-1}\upzeta_{\eps,\mu}^{1,i}[\dot M_\mu \upzeta_{\eps,\mu}^{0,n-1-i}(\bigcdot)](z)+\sum_{n=N_0+1}^{2N_0+1}\sum_{i=0}^{n-1}\upzeta_{\eps,\mu}^{1,i}[\dot M_\mu \upzeta_{\eps,\mu}^{1,n-1-i}[\varphi](\bigcdot)](z)\,.
\end{equs}
Note that, compared to the previous section, the kernel here is the Mehler kernel. Our estimates on $F_\epmu$ and $H_\epmu$ are analogous to \eqref{eq:esti_force_infvol}, \eqref{eq:H1} and \eqref{eq:H0} in the previous section, namely 
 \begin{equs}\label{eq:esti_force_OH}
    \|J_\mu^{\otimes2}F^1_\epmu\|_{\mathrm{Op}}\lesssim\talloblong\upxi\talloblong\mu^{-2+3\kappa}\,,
    \qquad
        \|J_\mu F_\epmu^0\|_{L_{t,x}^{\infty,r}(\Lambda_T)}\lesssim \talloblong\upxi\talloblong\|u_0^\eps\|_{\msB^{-\sigma}_{r,\infty}} \mu^{-2-\sigma+\beta-2\kappa}\,,
    \end{equs}
    and
\begin{equs}
\|J_\mu^{\otimes2}H^1_\epmu\|_{\mathrm{Op}}\lesssim\talloblong\upxi\talloblong^{(N_0+1)}
    \mu^{-1+(N_0+2)\kappa}
\,,\qquad
          \| J_\mu H_\epmu^0\|_{L_{t,x}^{\infty,r}(\Lambda_T)}\lesssim \talloblong\upxi\talloblong^{N_0}\|u_0^\eps\|_{\msB^{-\sigma}_{r,\infty}} \mu^{-1+(N_0+2)\kappa }\,.
\end{equs}
As in Section \ref{sec:infinitevol}, we introduce a map $\Psi_\eps$ on trajectories $(v_\mu,R_\mu)_{\mu\in(0,T^{1/2}]}$

\begin{subequations}
\begin{empheq}[left=\empheqlbrace]{alignat=2}\label{sysflowOH}
  \Psi^v_\epmu(v,R)&=-\int_\mu^{T^{1/2}}\dot M_\nu\big(F^0_\epnu+F^1_\epnu [v_\nu]+R_\nu\big)\dint\nu\,,\\
  \Psi^R_\epmu(v,R)&=-\int_0^\mu\big(H^0_\epnu +H^1_\epnu [v_\nu]+F_\epnu^1[\dot M_\nu R_\nu]\big)\dint\nu\,.
\end{empheq}
\end{subequations}
We show that we can close a fixed point argument for this map using the following norm
\begin{equation}
    \snnorm{v,R}\eqdef \|u_0^\eps\|_{\msB_{r,\infty}^{-\sigma}}^{-1}\sup_{\mu\in(0,T^{1/2}]} \sup_{t\leq T}\Big( \|\mcb \mcT_\mu v_\mu(t)\|_{{L^r_x}}\vee\|J_\mu R_\mu(t)\|_{L^r_x} \Big) \,.
\end{equation}

\begin{lemma}\label{lemma:fixedpointOH}
    Choosing $T=\big(\talloblong\upxi\talloblong+T_0^{-\kappa/2}\big)^{-2/\kappa}$, the map $\Psi_\eps$ is a contraction for the norm defined above. 
   
\end{lemma}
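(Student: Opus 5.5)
We follow the scheme of Lemma~\ref{Lemmafixedpoint}, with the exponential weights $\mcb e_{-\ell-t}$ replaced by the Gaussian-type weights carried by $J_\mu$ and $\mcT_\mu$. Take $(v,R)$ with $\snnorm{v,R}\leq1$, fix $\mu\in(0,T^{1/2}]$ and $t\leq T$. We show that $\snnorm{\Psi_\eps(v,R)}\leq1$. The difference bound $\snnorm{\Psi_\eps(v,R)-\Psi_\eps(v',R')}\leq\tfrac12\snnorm{v-v',R-R'}$ then follows the same way, since $\Psi_\eps$ is affine.

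\textbf{First component.} Insert $\mcT_\nu J_\nu=\Id$ in front of each input, and write the integrand of $\mcT_\mu\Psi^v_\epmu$ as
\begin{equs}
\mcT_\mu\dot M_\nu\mcT_\nu\big(J_\nu F^0_\epnu+(J_\nu^{\otimes2}F^1_\epnu)[\mcT_\nu v_\nu]+J_\nu R_\nu\big)\,.
\end{equs}
For $\nu\geq\mu$ we need an operator estimate
\begin{equs}
\|\mcT_\mu\dot M_\nu\mcT_\nu\|_{\mcL(L^{\infty,r}_{t,x},L^{\infty,r}_{t,x})}\lesssim\nu^{-1+2-\kappa}\,.
\end{equs}
This is the Mehler analogue of Lemma~\ref{estimdotGmu}, and should follow from the appendix estimates on the Mehler kernel. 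Two mechanisms are at work. The factor $\e^{a\nu^2|y|^2/8}$ in $\mcT_\nu$ is absorbed by the Gaussian decay $\e^{-\tanh(s)|x+y|^2/4}$ of $M$ at times $s\in[\nu^2,2\nu^2]$, with room to spare since $a_n\leq3/2<2$. The operators $\mcR_\mu,\mcR_\nu$ act as differential operators of bounded order falling on a smooth kernel at scale $\nu\geq\mu$.

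With this estimate, the three terms are bounded using \eqref{eq:esti_force_OH} and the norm of $(v,R)$. They give respectively
\begin{equs}
\nu^{-1-\sigma+\beta-3\kappa}\|u_0^\eps\|_{\msB^{-\sigma}_{r,\infty}}\,,\qquad \nu^{-1+2\kappa}\,,\qquad \nu^{-1+2-\kappa}\,,
\end{equs}
each term carrying the factor $\talloblong\upxi\talloblong$ except the last. The $\nu$-integral converges at $0$ because $\sigma<\beta$. For the inhomogeneous term we use the support of $\dot M_\nu$, as in Lemma~\ref{lemmafixedpointplanted}, to convert the power of $\nu$ into a power of $t$ (or simply of $T$). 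As in Lemma~\ref{Lemmafixedpoint}, the factor $T^{\kappa/2}$ then absorbs $\talloblong\upxi\talloblong$ together with the implicit constant, once $T_0$ is chosen appropriately.

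\textbf{Second component.} Bound $J_\mu\Psi^R_\epmu$ by $\int_0^\mu\|J_\mu(\cdots)\|\,\dint\nu$, and use $J_\mu=J_\mu\mcR_\nu J_\nu$-type commutations as in \eqref{eq:Kmunu}. This replaces $J_\mu$ by $J_\nu$ at the cost of a bounded factor, and the weight ordering of $a_n$ is used here. The three terms are then handled as follows.
\begin{itemize}
\item $H^0$: use directly the $\mu^{-1+(N_0+2)\kappa}$ bound.
\item $H^1[v]$: combine the $\mathrm{Op}$ bound on $J_\nu^{\otimes2}H^1$ with $\|\mcT_\nu v_\nu\|\leq1$.
\item $F^1[\dot M_\nu R_\nu]$: insert $\mcT_\nu J_\nu$ twice. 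This requires the estimate $\|J_\nu\dot M_\nu\mcT_\nu\|_{\mathrm{Op}}\lesssim\nu^{-1+2}$, which gives $\nu^{-1+2-2+3\kappa}$.
\end{itemize}
All three integrands are $\lesssim\talloblong\upxi\talloblong^{k}\nu^{-1+c\kappa}$ for some $c>0$, so the integral is $\lesssim\mu^{c\kappa}$. Powers of $T^{\kappa/2}$ absorb the random constants, giving a bound $\leq1$.

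\textbf{Main obstacle.} The key step is the weighted kernel bound for $\mcT_\mu\dot M_\nu\mcT_\nu$ and $J_\nu\dot M_\nu\mcT_\nu$. Unlike in the translation-invariant case, the weights $\e^{\pm a\nu^2|y|^2/8}$ do not commute with convolution. We must show that the Mehler decay in $|x+y|$ at time $\sim\nu^2$ beats the growing weights uniformly in space, and that the $\mu$-versus-$\nu$ mismatch ($\mu\leq\nu$) only costs a bounded factor, since $\e^{a\mu^2|y|^2/8}\leq\e^{a\nu^2|y|^2/8}$. We would first attempt this through the explicit formula \eqref{eq:Mehlerkernel}, together with the difference estimate between Mehler and heat kernels and the appendix Lemma~\ref{lemma:heatflowmehler}.
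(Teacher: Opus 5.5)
Your proposal is correct and follows the paper's route. The paper's proof likewise inserts $\mcT_\nu J_\nu$ in each slot and closes with \eqref{eq:lessJmu}, Lemma~\ref{lemma:zetaOH}, and the weighted Mehler kernel bound that you single out as the main obstacle. That bound is already available as Lemma~\ref{lemma:estimMmu}, proved from \eqref{eq:estMt} by exactly the Gaussian-absorption mechanism you describe; the Mehler--heat difference estimate and Lemma~\ref{lemma:heatflowmehler} are not needed for this step. One small slip: in the $F^1[\dot M_\nu R_\nu]$ term, the kernel you need to bound is $(\mcT_\nu\otimes\mcT_\nu)\dot M_\nu$, not $J_\nu\dot M_\nu\mcT_\nu$.
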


\begin{proof}
    We let the reader convince him/herself that the proof is the same as the proof of Lemma \ref{Lemmafixedpoint}, but that it is much easier since we do not require to use exponential weights. More precisely, one rewrites
    \begin{empheq}[left=\empheqlbrace]{align*}
      \mcT_\mu\Psi^v_\eps(v,R)&=-\int_\mu^{T^{1/2}}(\mcT_\mu\otimes\mcT_\nu)\dot M_\nu\big(J_\nu F^0_\epnu+(J_\mu\otimes J_\nu) F^1_\epnu [\mcT_\nu v_\nu]+J_\mu R_\nu\big)\dint\nu\,,\\
      J_\mu \Psi^R_\eps(v,R)&=-\int_0^\mu\Big(J_\mu H^0_\epnu+(J_\mu\otimes J_\nu)\big(H^1_\epnu [\mcT_\nu v_\nu]+ F_\epnu^1[\mcT_\nu^{\otimes2}\dot M_\nu J_\nu R_\nu]\big)\Big)\dint\nu\,.
    \end{empheq}
    One can then conclude the fixed point thanks to \eqref{eq:lessJmu}, Lemma \ref{lemma:estimMmu}, and Lemma \ref{lemma:zetaOH}.
\end{proof}

\begin{lemma}\label{lemma:constructsolOH}
    There is a random $T\in(0,1]$ such that, for $t\leq T$ and $\gamma\in\big(0,\beta-\sigma-d(1/r-1/p)\big)$,
    \begin{equation}
        \|u_\eps(t)-M\psi_\eps(t)\|_{\msB_{p,\infty}^\gamma}\leq t^{(\beta-\sigma-\gamma-d(1/r-1/p)-\kappa)/2}\|u_0^\eps\|_{\msB_{r,\infty}^{-\sigma}}\,.
    \end{equation}
\end{lemma}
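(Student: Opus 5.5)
The plan is to follow the proof of Lemma~\ref{lemma:constructsolinf}, and behind it \cite[Lemma 3.10]{CF24a}, in the harmonic oscillator setting. There are no exponential weights here, and the operators $J_\mu,\mcT_\mu$ replace $K_\mu,\mcR_\mu$. Take $T$ as in Lemma~\ref{lemma:fixedpointOH} and let $(v,R)$ be the fixed point of $\Psi_\eps$. Using the support property of $M_{T^{1/2}}$, exactly as in the lemma preceding Lemma~\ref{lemma:constructsolinf}, write on $\Lambda_T$
\begin{equation*}
u_\eps-M\psi_\eps=-\int_0^{T^{1/2}}\dot M_\nu\big(F_{\eps,T^{1/2}}[0]+R_{\eps,T^{1/2}}\big)\dint\nu\,.
\end{equation*}

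The first step is the analogue of Lemma~\ref{Lemmarecurrence}. Set $\Xi\eqdef F_{\eps,T^{1/2}}[0]+R_{\eps,T^{1/2}}$. We aim for
\begin{equation*}
\sup_{t\le T}\|J_\mu \Xi(t)\|_{L^r_x}\lesssim \mu^{-2-\sigma+\beta-\kappa}\|u_0^\eps\|_{\msB^{-\sigma}_{r,\infty}}
\end{equation*}
for all $\mu\le T^{1/2}$. We prove it by induction along the scales $\tau^pT^{1/2}$, using the identity $\Xi=F_{\eps,\mu}[v_{\eps,\mu}]+R_{\eps,\mu}$, the bounds \eqref{eq:esti_force_OH}, and the fixed point norm. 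The step $\tau^p\to\tau^{p+1}$ needs $J_{\tau^{p+1}}$ to factor through $J_{\tau^p}$ with bounded norm. The constraint $a_n\le 3/2$ on the Gaussian weights suggests this works, but each passage from scale $\tau^p$ to $\tau^{p+1}$ changes the Gaussian factor $\e^{-a\mu^2|y|^2/8}$. Equivalently, we need $\mcT_{\tau^pT^{1/2}}J_{\tau^{p+1}T^{1/2}}$ to be bounded on $L^r$. Since $\mu$ decreases, the weight $\e^{a\mu^2|y|^2/8}$ from $\mcT$ at the larger scale dominates the decay $\e^{-a\nu^2|y|^2/8}$ of $J$ at the smaller scale. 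So this is \textbf{not} automatic. I would first try to replace $J_\nu$ by $\mcb W$-type weights evaluated at a fixed scale. Failing that, I would run the induction with the $\msP_\mu$-based kernels (built from the harmonic oscillator, the analogues of $\msQ_\mu$) and use \eqref{eq:lessJmu}. I expect this step to be the main obstacle.

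The second step is the Besov bound. For $\lambda\in(0,1]$ we estimate $\lambda^{-\gamma}\|(\msQ_\lambda-\Id)(u_\eps-M\psi_\eps)(t)\|_{L^p_x}$, splitting the $\nu$-integral into $\nu\le\lambda$ and $\nu>\lambda$. For $\nu\le\lambda$ we drop $\msQ_\lambda-\Id$. The support of $\dot M_\nu$ gives $\nu^2\le t$, hence $\lambda^{-\gamma}\le\nu^{-\gamma}$ on the relevant range. We then use the Mehler kernel bounds (Lemma~\ref{lemma:estimMmu}), which give $\|\mcT_\nu\dot M_\nu\|_{L^r\to L^p}\lesssim \nu^{-1+2-d(1/r-1/p)}$ after the $L^r\to L^p$ smoothing. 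This yields an integrand $\nu^{-1+\beta-\sigma-\gamma-d(1/r-1/p)-\kappa}$, and integrating up to $t^{1/2}$ gives the claimed power of $t$. For $\nu>\lambda$ we write $\dot M_\nu=\msQ_\nu\msP_\nu\dot M_\nu$, which is exact since $\mathrm H$-functions commute. The bound $\|(\msQ_\lambda-\Id)\msQ_\nu\|\lesssim\lambda^2\nu^{-2}$ then gains $\lambda^{2-\gamma}\nu^{-2}$, and the resulting integral over $\nu\in(\lambda,t^{1/2})$ is again $\lesssim t^{(\beta-\sigma-\gamma-d(1/r-1/p)-\kappa)/2}$. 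This uses the indicator $\lambda^2\le t$ as in Lemma~\ref{lemmafixedpointplanted}. The $L^p$ part is handled the same way.

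Finally, we absorb the powers of $\talloblong\upxi\talloblong$ into $T^{\kappa/2}$ by the choice of $T$, exactly as in Lemma~\ref{Lemmafixedpoint}, and rename $\kappa$ to obtain the stated bound.
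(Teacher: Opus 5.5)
Your architecture is the paper's, step for step:
\begin{itemize}
\item the representation $u_\eps-M\psi_\eps=-\int_0^{T^{1/2}}\dot M_\nu\Xi\,\dint\nu$ on $[0,T]$;
\item the split of the $\nu$-integral at the Besov scale;
\item Lemma~\ref{lemma:estimMmu} with the support of $\dot M_\nu$ for $\nu\le\lambda$;
\item the factorisation through $\msQ_\nu$ with \eqref{eq:msQmumsQnu}, \eqref{eq:PmuTmuMmu} and the indicator $\lambda^2\le t$ for $\nu>\lambda$.
\end{itemize}
The only input on $\Xi$ is the bound $\sup_{t\le T}\|J_\nu\Xi(t)\|_{L^r_x}\lesssim\nu^{-2-\sigma+\beta-\kappa}\|u_0^\eps\|_{\msB^{-\sigma}_{r,\infty}}$. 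The paper simply quotes it from Lemma~\ref{LemmarecurrenceOH}. You instead leave this step open. You set it up as an induction over the scales $\tau^pT^{1/2}$. You correctly observe that the step $\tau^p\to\tau^{p+1}$ would require controlling $J$ at the smaller scale by $J$ at the larger one. This fails because it produces the unbounded factor $\e^{a(\tau^{2p}-\tau^{2p+2})T|y|^2/8}$. You then offer only two untested fallbacks. The second cannot help: \eqref{eq:lessJmu} bounds the larger scale by the smaller one, which is the direction you already have, not the one the step needs. As written, the proof is incomplete at its first step.

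The missing observation is that no induction over scales is needed. Along the fixed point of $\Psi_\eps$ one has $\frac{\rmd}{\rmd\mu}\big(F_{\eps,\mu}[v_{\eps,\mu}]+R_{\eps,\mu}\big)=0$. The $H_{\eps,\mu}$ terms and the $F^1_{\eps,\mu}[\dot M_\mu R_{\eps,\mu}]$ terms cancel by the truncated flow equation and the definition of $\Psi^R_\eps$. Together with $v_{\eps,T^{1/2}}=0$, this is exactly the identity $\Xi=F_{\eps,\mu}[v_{\eps,\mu}]+R_{\eps,\mu}$ you already quote, and it holds for every $\mu\le T^{1/2}$. Since $\mcT_\mu=(J_\mu^{-1})^*$, you can evaluate it at the single scale $\mu$:
\[
J_\mu\Xi=J_\mu F^0_{\eps,\mu}+\big(J_\mu^{\otimes2}F^1_{\eps,\mu}\big)[\mcT_\mu v_{\eps,\mu}]+J_\mu R_{\eps,\mu}\,.
\]
The fixed-point norm bounds both $\sup_{t\le T}\|\mcT_\mu v_{\eps,\mu}(t)\|_{L^r_x}$ and $\sup_{t\le T}\|J_\mu R_{\eps,\mu}(t)\|_{L^r_x}$ by $\|u_0^\eps\|_{\msB^{-\sigma}_{r,\infty}}$ at every $\mu\le T^{1/2}$. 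Combined with Lemma~\ref{lemma:zetaOH}, absorbing $\talloblong\upxi\talloblong\mu^{\kappa}\le1$, this gives the required bound on $J_\mu\Xi$ directly, up to renaming $\kappa$.

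For the middle term you must use the actual power counting $\|J_\mu^{\otimes2}F^1_{\eps,\mu}\|_{\mathrm{Op}}\lesssim\mu^{-2+\beta-2\kappa}$ from Lemma~\ref{lemma:zetaOH}. The weakened $\mu^{-2+3\kappa}$ of \eqref{eq:esti_force_OH} is more singular than the target $\mu^{-2-\sigma+\beta-O(\kappa)}$ once $\kappa$ is small compared with $\beta-\sigma$.

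With this fix, the rest of your argument goes through as in the paper. One small correction: the inequality $\lambda^{-\gamma}\le\nu^{-\gamma}$ comes from $\nu\le\lambda$, not from the support of $\dot M_\nu$.
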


\begin{proof}
    The proof follows the same lines as Lemma \ref{lemma:constructsolinf}, although the absence of exponential weights makes it substantially simpler. We write
    \begin{equs}
        u_\eps-M\psi_\eps=-\int_0^{T^{1/2}}\dot M_\nu (F_{\eps,T^{1/2}}[0]+R_{\eps,T^{1/2}})\dint\nu\,.
    \end{equs}
    We only treat the case $\mu\leq T^{1/2}$, the other one being easier. We have
    \begin{equs}
        \|((\msQ_\mu-\mathrm{Id})v_\eps)(t)\|_{L^p_x}&\leq \int_0^{T^{1/2}}\|(\msQ_\mu-\mathrm{Id})\dot M_\nu(F_{\eps,T^{1/2}}[0]+R_{\eps,T^{1/2}})(t)\|_{L_x^p}\dint\nu\\
        &\leq\int_0^{\mu}\|\dot M_\nu(F_{\eps,T^{1/2}}[0]+R_{\eps,T^{1/2}})(t)\|_{L_x^p}\dint\nu\\
        &\quad+\int_\mu^{T^{1/2}}\|(\msQ_\mu-\mathrm{Id})\msQ_\nu(\msP_\nu\otimes\mcT_\nu)\dot M_\nu J_\nu(F_{\eps,T^{1/2}}[0]+R_{\eps,T^{1/2}})(t)\|_{L_x^p}\dint\nu\,.
    \end{equs}
    We have used that $\|\msQ_\mu\|_{\mcL(L_{x}^\infty)}\leq1$ (see \eqref{eq:msQleq1}). The first term is handled using the support properties in time of the kernel $\dot M_\nu$.
    \begin{equs}
        ~&\int_0^{\mu}\|\dot M_\nu(F_{\eps,T^{1/2}}[0]+R_{\eps,T^{1/2}})(t)\|_{L_x^p}\dint\nu\\
        &\lesssim\int_0^{\mu\wedge t^{1/2}} \|(\mathrm{Id}\otimes \mcT_\nu)\dot M_\nu\|_{\mcL(L^p_{t,x},L^r_{t,x})}\|J_\nu (F_{\eps,T^{1/2}}[0]+R_{\eps,T^{1/2}})(t)\|_{L_x^r}\dint\nu\,.
    \end{equs}
    By Lemma \ref{lemma:estimMmu}, $\|(\mathrm{Id}\otimes \mcT_\mu)\dot M_\nu\|_{\mcL(L^p_{t,x},L^r_{t,x})}\lesssim\nu^{1-d(1/r-1/p)}$. Then,
    \begin{equs}
        \mu^{-\gamma}\int_0^{\mu}\|\dot M_\nu(F_{\eps,T^{1/2}}[0]+R_{\eps,T^{1/2}})(t)\|_{L_x^p}\dint\nu &\lesssim\int_0^{\mu\wedge t^{1/2}}\nu^{-1+\beta-\sigma-\gamma-d(1/r-1/p)-2\kappa}\dint\nu\\
        &\lesssim t^{(\beta-\sigma-\gamma-d(1/r-1/p)-2\kappa)/2}\int_0^{t^{1/2}}\nu^{-1+\kappa}\dint\nu \|u_0^\eps\|_{\msB_{r,\infty}^{-\sigma}}\\
        &\lesssim t^{(\beta-\sigma-\gamma-d(1/r-1/p)-\kappa)/2}\|u_0^\eps\|_{\msB_{r,\infty}^{-\sigma}}\,.
    \end{equs}
    We need to control the second term. We have
    \begin{equs}
        ~&\mu^{-\gamma}\int_\mu^{T^{1/2}}\|(\msQ_\mu-\mathrm{Id})\msQ_\nu(\msP_\nu\otimes\mcT_\nu)\dot M_\nu J_\nu(F_{\eps,T^{1/2}}[0]+R_{\eps,T^{1/2}})(t)\|_{L_x^p}\dint\nu\\
        &\lesssim\one_{\mu^2\leq t} \mu^{-\gamma}\int_\mu^{T^{1/2}} \|(\msQ_\mu-\mathrm{Id})\msQ_\nu\|_{\mcL(L_{x}^{p})}\|(\msP_\nu\otimes\mcT_\nu)\dot M_\nu\|_{\mcL(L_{t,x}^{\infty,r},L_{t,x}^{\infty,p})}\\
        &\qquad\qquad\qquad\qquad\qquad\qquad\qquad\qquad\qquad\qquad\qquad\times\|J_\nu(F_{\eps,T^{1/2}}[0]+R_{\eps,T^{1/2}})(t)\|_{L_x^r}\dint\nu
    \end{equs}
    Using \eqref{eq:msQmumsQnu}, \eqref{eq:PmuTmuMmu}, and Lemma \ref{LemmarecurrenceOH}, we get that the above is then bounded by
    \begin{equs}
        \one_{\mu^2\leq t}\mu^{2-\gamma}\int_\mu^1\nu^{-2}\nu^{1-d(1/r-1/p)}\nu^{\beta-\sigma-\kappa}\dint\nu\|u_0^\eps\|_{\msB_{r,\infty}^{-\sigma}}&\lesssim \one_{\mu^2\leq t}\mu^{\beta-\sigma-\gamma-d(1/r-1/p)-\kappa}\|u_0^\eps\|_{\msB_{r,\infty}^{-\sigma}}\\
        &\lesssim t^{(\beta-\sigma-\gamma-d(1/r-1/p)-\kappa)/2}\|u_0^\eps\|_{\msB_{r,\infty}^{-\sigma}}\,.
    \end{equs}
\end{proof}

We can now conclude the section with a globalisation argument to prove point 3 of Theorem \ref{maintheoremspde}.

\begin{corollary}\label{Cor:GlobalisationSepsOH}
    For $\eps,\eta>0$ and $(\sigma,\gamma,p)$ such that $p\geq r$, $0<\sigma<\beta$ and $0\leq \gamma< \beta-\sigma-d(1/r-1/p)$, $u_\eps$ satisfies
    \begin{equation}\label{Eq:EstimateS-GepsOH}
        \big\|u_\eps(t)-\e^{-t\mathrm{H}}u_0^\eps\big\|_{\msB_{p,\infty}^\gamma(\R^d)}\leq \e^{ct}t^{(\beta-\sigma-\gamma-d(1/r-1/p)-\eta)/2}  \|u_0^\eps\|_{\msB_{r,\infty}^{-\sigma}(\R^d)}\,,
    \end{equation}
    for some positive random constant $c$.
\end{corollary}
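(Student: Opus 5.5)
The plan is to copy the globalisation argument of Corollary~\ref{Cor:GlobalisationSeps}, now using Lemma~\ref{lemma:constructsolOH} as the short-time input. Fix $\eta>0$ and apply Lemma~\ref{lemma:fixedpointOH} and Lemma~\ref{lemma:constructsolOH} with $\kappa=\eta$. This gives a random time $T=\big(\talloblong\upxi\talloblong+T_0^{-\kappa/2}\big)^{-2/\kappa}\in(0,1]$ on which
\[
\|u_\eps(t)-\e^{-t\mathrm H}u_0^\eps\|_{\msB^\gamma_{p,\infty}}\leq t^{(\beta-\sigma-\gamma-d(1/r-1/p)-\eta)/2}\|u_0^\eps\|_{\msB^{-\sigma}_{r,\infty}}\,,\qquad t\leq T\,.
\]
Here we use that $M\psi_\eps(t)=\e^{-t\mathrm H}u_0^\eps$. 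The crucial point is that $T$ depends only on $\talloblong\upxi\talloblong$ and on deterministic constants. It does not depend on the initial datum, and the noise is time-independent.

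For $t>T$, I use the linearity of the equation and its time-homogeneity, which holds because the counterterm $\mfC^{2,q}_\eps=\mfC^{2,0}_\eps$ is constant. Together these give the semigroup property: $u_\eps(t)$ is the solution at time $t-kT$ started from $u_\eps(kT)$. Hence the same random $T$ serves on every interval $[kT,(k+1)T]$.

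On $[0,T]$ I bound $u_\eps(t)$ itself in $\msB^{-\sigma}_{r,\infty}$ by $C\|u_0^\eps\|_{\msB^{-\sigma}_{r,\infty}}$. This combines the short-time estimate with $\sigma$ in place of $-\gamma$ (taking $p=r$, $\gamma=0$ and the embedding $L^r\subset \msB^{-\sigma}_{r,\infty}$) with the contractivity of $\e^{-t\mathrm H}$ on $\msB^{-\sigma}_{r,\infty}$. The contractivity follows since $\mathrm H$ commutes with $\msQ_\mu$ and $\e^{-t\mathrm H}$ is a positive $L^r$-contraction (Mehler kernel bounded by the heat kernel).

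Iterating gives $\|u_\eps(kT)\|_{\msB^{-\sigma}_{r,\infty}}\leq C^k\|u_0^\eps\|$. Now write $t=kT+s$ with $s\in[0,T]$ and split
\[
u_\eps(t)-\e^{-t\mathrm H}u_0^\eps=\big(u_\eps(t)-\e^{-s\mathrm H}u_\eps(kT)\big)+\e^{-s\mathrm H}\big(u_\eps(kT)-\e^{-kT\mathrm H}u_0^\eps\big)\,.
\]
The first term is bounded by the short-time estimate times $C^k\|u_0^\eps\|$. For $t\geq T$ the factor $t^{(\cdots)/2}$ is at least $T^{(\cdots)/2}$, which is only a random constant, so the mismatch of the time power is harmless. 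The second term is handled by the smoothing of $\e^{-s\mathrm H}$ from $\msB^{-\sigma}_{r,\infty}$ to $\msB^\gamma_{p,\infty}$; this costs $s^{-(\sigma+\gamma+d(1/r-1/p))/2}$, which blows up as $s\to0$. To avoid this singularity I choose the splitting point as $kT$ or $(k-1)T$ so that $s\geq T/2$, replacing $T$ by $T/2$ if necessary. The smoothing cost is then a random constant depending on $T$, and the accumulated bound $C^{k}$ is absorbed into $\e^{ct}$ with $c=\log(C')/T$.

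The main obstacle is justifying the needed mapping properties of $\e^{-t\mathrm H}$ on the harmonic-oscillator Besov spaces: contractivity on $\msB^{-\sigma}_{r,\infty}$ and the smoothing bound $\msB^{-\sigma}_{r,\infty}\to\msB^\gamma_{p,\infty}$ for times of order $T$. I would derive both from Lemma~\ref{lemma:heatflowmehler} in the appendix together with the commutation of $\msQ_\mu$ with the semigroup. The rest is bookkeeping identical to Corollary~\ref{Cor:GlobalisationSeps}.
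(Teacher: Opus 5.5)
Your proposal is correct and is essentially the paper's argument, which simply refers back to Corollary~\ref{Cor:GlobalisationSeps}: short-time bound on $[0,T]$ with $T$ independent of the initial datum, restart by linearity and time-independence of the noise, and absorb the geometric growth into $\e^{ct}$ with $c\sim\log C/T$. Your bookkeeping, which propagates the bound in the weak norm $\msB^{-\sigma}_{r,\infty}$ instead of the paper's crude triangle inequality, is more careful than the paper's sketch. You can even dispense with the smoothing estimate for $\e^{-s\mathrm H}$ by telescoping $u_\eps(t)-\e^{-t\mathrm H}u_0^\eps=\sum_{j=0}^{k-1}\e^{-(t-(j+1)T)\mathrm H}\big(u_\eps((j+1)T)-\e^{-T\mathrm H}u_\eps(jT)\big)+\big(u_\eps(t)-\e^{-(t-kT)\mathrm H}u_\eps(kT)\big)$; this only uses that $\e^{-s\mathrm H}$ commutes with $\msQ_\mu$ and is an $L^p$-contraction, hence a contraction on $\msB^{\gamma}_{p,\infty}$.
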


\begin{proof}
    The proof follows like the one of Corollary \ref{Cor:GlobalisationSeps}.
\end{proof}

\begin{lemma}\label{LemmarecurrenceOH}
    Let $\tau\in(0,1)$ such that
    \begin{equation}
        (2/\tau^2-1)^{2M}\leq2\,.    
    \end{equation}
    Then, there is a random $T\in(0,1]$ such that, for every $p\in\N$,
    \begin{equation}
        \sup_{t\leq T}\big\|J_{\tau^pT^{1/2}} (F_{\eps,T^{1/2}}[0]+R_{\eps,T^{1/2}})(t)\big\|_{L^{r}_{x}}\leq C(\tau^pT^{1/2})^{-2-\sigma+\beta-\kappa}\|u_0^\eps\|_{\msB_{r,\infty}^{-\sigma}}\,.
    \end{equation}
    Consequently, we get
    \begin{equation}
        \sup_{t\leq T}\big\|J_\mu (F_{\eps,T^{1/2}}[0]+R_{\eps,T^{1/2}})(t)\big\|_{L^{r}_{x}}\lesssim\mu^{-2-\sigma+\beta-\kappa}\|u_0^\eps\|_{\msB_{r,\infty}^{-\sigma}}\,.
    \end{equation}
\end{lemma}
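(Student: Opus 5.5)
We proceed by induction on $p$, exactly along the lines of Lemma~\ref{Lemmarecurrence}, with the simplification that no exponential weights need to be carried: the spatial growth is handled by the Gaussian factors built into $J_\mu$ and $\mcT_\mu$. Write $\mu_p\eqdef\tau^pT^{1/2}$ and $X\eqdef F_{\eps,T^{1/2}}[0]+R_{\eps,T^{1/2}}$.

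\emph{Base case.} For $p=0$ we have $\mu_0=T^{1/2}$. The bound follows from \eqref{eq:esti_force_OH} for $J_{T^{1/2}}F^0_{\eps,T^{1/2}}$ and from the fixed point of Lemma~\ref{lemma:fixedpointOH}, which gives $\|J_{T^{1/2}}R_{\eps,T^{1/2}}(t)\|_{L^r_x}\leq\|u_0^\eps\|_{\msB^{-\sigma}_{r,\infty}}$. Since $T^{1/2}\leq1$ and the exponent $-2-\sigma+\beta-\kappa$ is negative, both contributions are bounded by $C\mu_0^{-2-\sigma+\beta-\kappa}\|u_0^\eps\|_{\msB^{-\sigma}_{r,\infty}}$.

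\emph{Induction step.} The step $p\to p+1$ uses two bounds and takes their minimum.

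\textbf{(a) Scale transfer.} We would write $J_{\mu_{p+1}}=(J_{\mu_{p+1}}\circ J_{\mu_p}^{-1})\,J_{\mu_p}$, where $J_{\mu_p}^{-1}$ acts as $\e^{a\mu_p^2|\bigcdot|^2/8}\mcR_{\mu_p}$. The composed operator $K_{\mu_{p+1}}\e^{-a\mu_{p+1}^2|y|^2/8}\e^{a\mu_p^2|y|^2/8}\mcR_{\mu_p}$ contains a growing Gaussian factor, because $\mu_{p+1}<\mu_p$.

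This is the main obstacle, and there are two ways to handle it.
\begin{itemize}
\item The first option is to exploit the freedom in the parameters $a_n$ (through $J_{\mu,n}$ and the comparison \eqref{eq:lessJn}), choosing $\tau$ close to $1$ so that $a\mu_p^2$ is dominated by a slightly larger $a'\mu_{p+1}^2$.
\item The cleaner option is to reorganise the transfer so that the operator $\mcR_{\mu_p}K_{\mu_{p+1}}$ appears sandwiched between matching Gaussian weights, using that $\mcR_{\mu_p}$ is a differential operator. Commuting the weight through it then produces only polynomial factors $(1+\mu_p|y|)^{2N}$, and these are absorbed by the residual decay.
\end{itemize}
With \eqref{eq:Kmunu} and the condition $(2/\tau^2-1)^{2M}\leq2$, the operator norm of this transfer is at most $2$. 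This gives
\[
\sup_{t\leq T}\|J_{\mu_{p+1}}X(t)\|_{L^r_x}\leq 2C\mu_p^{-2-\sigma+\beta-\kappa}\|u_0^\eps\|_{\msB^{-\sigma}_{r,\infty}}.
\]
This crude bound feeds the next step.

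\textbf{(b) Flow decomposition.} By \eqref{eq:defR} we have $X=F_{\eps,\mu_{p+1}}[v_{\eps,\mu_{p+1}}]+R_{\eps,\mu_{p+1}}$. Using \eqref{sysflowOH} together with Lemma~\ref{lemma:estimMmu} and the crude bound from (a), we get
\[
\|\mcT_{\mu_{p+1}}v_{\eps,\mu_{p+1}}\|\lesssim 2C\int_{\mu_{p+1}}^{T^{1/2}}\nu^{-1+\beta-\sigma-2\kappa}\,\rmd\nu\lesssim CT^{(\beta-\sigma-2\kappa)/2}.
\]
Taking $T$ small makes this at most $1$. We then apply \eqref{eq:esti_force_OH} and Lemma~\ref{lemma:zetaOH} to $J_{\mu_{p+1}}F_{\eps,\mu_{p+1}}$ in the $\mathrm{Op}$ norm, combined with $\mcT_{\mu_{p+1}}v$, and use the fixed-point bound for $J_{\mu_{p+1}}R_{\eps,\mu_{p+1}}$. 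This yields
\[
C_1\mu_{p+1}^{-2-\sigma+\beta-\kappa}+C_2\mu_{p+1}^{\kappa}\leq C\mu_{p+1}^{-2-\sigma+\beta-\kappa}
\]
with $C=C_1+C_2$, which closes the induction.

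\emph{Conclusion.} For general $\mu\in(0,T^{1/2}]$, pick $p$ with $\mu_{p+1}<\mu\leq\mu_p$ and apply the transfer estimate of (a) once more between $\mu_p$ and $\mu$, at the cost of a $\tau$-dependent constant. For $\mu>T^{1/2}$, the bound $\mu^{-2-\sigma+\beta-\kappa}\gtrsim1$ reduces the claim to the base case.
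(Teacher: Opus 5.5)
You have located exactly the point where the harmonic-oscillator case departs from Lemma~\ref{Lemmarecurrence}; the paper's one-line reduction to that lemma does not address it either. However, neither of your remedies works, and the bound by $2$ claimed for the transfer in (a) is false.

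Since $\e^{-a\mu_{p+1}^2|y|^2/8}X=\e^{a(1-\tau^2)\mu_p^2|y|^2/8}\,\e^{-a\mu_p^2|y|^2/8}X$, the transfer operator is $K_{\mu_{p+1}}\circ\e^{a(1-\tau^2)\mu_p^2|\bigcdot|^2/8}\circ\mcR_{\mu_p}$. The mismatch is therefore a multiplicative Gaussian \emph{growth} in the absolute variable $y$, not a commutator.
\begin{itemize}
\item Your second remedy fails because moving the weights through the differential operator $\mcR_{\mu_p}$ only adds polynomial factors on top of this growth. Moreover, $K_{\mu_{p+1}}$ is translation invariant and decays only in $x-y$, so there is no residual decay available. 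Concretely, $X(t,y)=\one_{[0,T]}(t)\,\e^{a\mu_p^2|y|^2/8}\langle y\rangle^{-d-1}$ has $J_{\mu_p}X\in L^\infty_tL^r_x$, while $J_{\mu_{p+1}}X(t,\bigcdot)\equiv+\infty$ for $t>0$. Hence no estimate $\|J_{\mu_{p+1}}X\|\lesssim\|J_{\mu_p}X\|$ can hold, for any $\tau<1$.
\item Your first remedy fails for another reason. Each step multiplies the required $a$ by $\tau^{-2}$, so reaching $\mu_p$ needs $a\gtrsim\tau^{-2p}$. The $a_n$ lie in $[1,3/2]$, and \eqref{eq:estMt} and Lemma~\ref{lemma:estimMmu} only allow $a<2$. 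In addition, \eqref{eq:lessJn} only lets you increase $a$, so you could never come back to $J_\mu=J_{\mu,N_0+1}$.
\end{itemize}
Step (b) feeds the crude bound of (a) into the $v$-integral on $[\mu_{p+1},\mu_p]$, and your conclusion again transfers downward from $\mu_p$ to $\mu$. So the induction does not close as written.

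The repair is never to transfer downward, and in fact (a) is unnecessary. Along the fixed point of Lemma~\ref{lemma:fixedpointOH} you have $\D_\mu v_\mu=\dot M_\mu(F_{\eps,\mu}[v_\mu]+R_\mu)$ and $\D_\mu R_\mu=-H_{\eps,\mu}[v_\mu]-F^1_{\eps,\mu}[\dot M_\mu R_\mu]$. These give $\D_\mu\big(F_{\eps,\mu}[v_\mu]+R_\mu\big)=\D_\mu F_{\eps,\mu}[v_\mu]+F^1_{\eps,\mu}[\dot M_\mu F_{\eps,\mu}[v_\mu]]-H_{\eps,\mu}[v_\mu]=0$ by the definition of $H_{\eps,\mu}$. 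Hence $X=F_{\eps,\mu}[v_\mu]+R_\mu$ for \emph{every} $\mu\in(0,T^{1/2}]$.

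The fixed-point norm already bounds $\sup_{\mu}\sup_{t\le T}\big(\|\mcT_\mu v_\mu(t)\|_{L^r_x}\vee\|J_\mu R_\mu(t)\|_{L^r_x}\big)$ by $\|u_0^\eps\|_{\msB^{-\sigma}_{r,\infty}}$. Run your step (b) directly at scale $\mu$ with these bounds, using the degrees of Lemma~\ref{lemma:zetaOH}; the exponent $-2+3\kappa$ in \eqref{eq:esti_force_OH} is too lossy for the $F^1$ term. This gives the claim for all $\mu\le T^{1/2}$ with no induction. Above $T^{1/2}$ you only need the upward comparison \eqref{eq:lessJmu}, whose weight decays.

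If you prefer to keep the inductive structure and derive the bound on $v_{\mu_{p+1}}$ from $X$, then in $\int_{\mu_{p+1}}^{T^{1/2}}\mcT_{\mu_{p+1}}\dot M_\nu X\,\rmd\nu$ pair each $\nu\in[\mu_{q+1},\mu_q]$, $q\le p$, with the already controlled $J_{\mu_q}X$. Since $\dot M_\nu$ lives at times $t\ge\nu^2\ge\tau^2\mu_q^2$, the input weight $\e^{a\mu_q^2|y|^2/8}$ is absorbed by the Gaussian decay of the Mehler kernel, of order $\e^{-t(|x|^2+|y|^2)/2}$. This needs a variant of \eqref{eq:estMt} with $s\le t/\tau^2$, which holds since $a\le 3/2$ and $\tau$ is close to $1$. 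The Gaussian mismatch can be paid by $\dot M_\nu$, never by $K_\mu$.
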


\begin{proof}
    The proof is the same as the one of Lemma \ref{Lemmarecurrence}, but without the exponential weights. See also Lemma 3.12 in \cite{CF24a}. The main ingredient of the proof is \eqref{eq:esti_force_OH}.
\end{proof}

\subsubsection{Inductive construction of the irrelevant coefficients}

We carry out in this section the construction of the irrelevant coefficients $\upiota_\epmu^n$. The proof relies on an inductive argument and make use of the essential estimates of Lemmas \ref{lemma:estimMmu} and \ref{lemma:diffMG}.

\begin{lemma}\label{lemma:induction+2}
   Recall that $\kappa_n=4^{n-N_0-1} \kappa$. The following two statements hold
    \begin{equation}\label{eq:HR+2}
        \sup_{w_1\in\Lambda}\int_{\Lambda}\langle t_1\rangle^{-\kappa_n}\langle x_{1}\rangle^{-2-\kappa_n}\langle w_{1};w_2\rangle^{1-\kappa_n}_\mu\big|\big(J_{\mu,n}^{\otimes2}\upiota^n_\epmu\big)(w_1,w_2)\big|\dint w_2\lesssim  \talloblong\upxi\talloblong^{n+1}\mu^{\deg(n)+2-\kappa_n}\,,
    \end{equation}
    and
    \begin{equation}\label{eq:HR+0}
        \sup_{w_1\in\Lambda}\int_{\Lambda} \langle t_1\rangle^{-\kappa_n}\langle w_{1};w_2\rangle^{1-\kappa_n}_\mu\big|\big(J_{\mu,n+1}^{\otimes2}\upiota^n_\epmu\big)(w_1,w_2)\big|\dint w_2\lesssim  \talloblong\upxi\talloblong^{n+1}\mu^{\deg(n)-2\kappa_n}\,.
    \end{equation}
    We have used the notation $w=(t,x)$.
\end{lemma}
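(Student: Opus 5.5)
Both estimates will be proved together by induction on $n$, using the integrated flow equation \eqref{eq:floweqiota} for $\upiota^n_\epmu$. Throughout, $\talloblong\upxi\talloblong$ controls the stationary coefficients through \eqref{eq:estimstatio}. The irrelevant coefficients are assumed to satisfy \eqref{eq:HR+2} at all lower orders $i<n$; the bound \eqref{eq:HR+0} at lower orders then follows from \eqref{eq:HR+2}, as explained below.

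\textbf{Reducing \eqref{eq:HR+0} to \eqref{eq:HR+2}.} Passing from $J_{\mu,n}$ to $J_{\mu,n+1}$ gains a Gaussian factor $\e^{-(a_{n+1}-a_n)\mu^2|y|^2/8}$ at each end. This factor absorbs the polynomial weight $\langle x_1\rangle^{2+\kappa_n}$ at the price of $\mu^{-2-\kappa_n}$. Indeed, $|x|^2\e^{-c\mu^2|x|^2}\lesssim\mu^{-2}$, with $c\sim 1/N_0$ fixed. The kernel $\mcR_\mu$ has to be commuted through the Gaussian, and this is done with \eqref{eq:estimreloue}. So \eqref{eq:HR+0} follows from \eqref{eq:HR+2} at the same $n$, up to an extra $\mu^{-\kappa_n}$ loss, which is allowed.

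\textbf{Base case $n=0$.} We have $\upiota^0=(1-\e^{\mu^2|y_1|^2/4})\xi_\eps\delta$. Use $|1-\e^{\mu^2|y|^2/4}|\lesssim\mu^2|y|^2\e^{\mu^2|y|^2/4}$. The growth $\e^{\mu^2|y|^2/4}$ is beaten by the two factors $\e^{-a_0\mu^2|y|^2/8}$ coming from $J_{\mu,0}^{\otimes2}$, since $a_0=1$ and the diagonal forces $y_1=y_2$. The $|y|^2$ is paid for by $\langle x_1\rangle^{-2}$, after moving $y$ to $x_1$ with the weight $\langle w_1;w_2\rangle_\mu$. The resulting gain of $\mu^2$ gives \eqref{eq:HR+2}.

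\textbf{Inductive step.} Each of the five terms in \eqref{eq:floweqiota} is bounded at scale $\nu$, using $J_{\nu,n}$ and the kernel estimates, and then integrated over $\nu\in(0,\mu)$.

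1. The first term, $\tfrac{\nu}{4}(|y_1|^2+|y_2|^2)\mcb W_\nu\upxi^n_\nu$, is handled like the base case. It gives $\nu^{1+\deg(n)-\kappa_0}$, which integrates to $\mu^{\deg(n)+2}$.
2. The mixed term with kernel $\dot M_\nu-\mcb W_\nu^{-1}\dot G_\nu$ uses Lemma~\ref{lemma:diffMG}. This lemma should give an extra factor $\nu^{2}(1+|x|^2+|y|^2)$ with Gaussian decay $\e^{-c\nu^2(|x|^2+|y|^2)}$, $c<1/4$. Conjugating by $\mcb W_\nu$ on both sides then leaves a net Gaussian that is integrable against $J_{\nu,n}$. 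The polynomial part is absorbed as in step 1.
3. The three terms containing $\upiota^{j}$ with $j<n$ use Lemma~\ref{lemma:estimMmu} for $\dot M_\nu$ between $\mcT_{\nu,\cdot}$ factors. For the inner coefficient we use the inductive \eqref{eq:HR+0}, which has no $\langle x\rangle^{-2}$ loss. The outer factor, either $\mcb W_\nu\upxi$ or $\upiota^{i}$, carries the $\nu^2\langle x_1\rangle^{2}$ gain through \eqref{eq:HR+2}.

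In every term the power counting is $\deg(i)+\deg(n-1-i)+\alpha-1+2=\deg(n)+1$ per unit $\nu$, so integrating $\nu^{\deg(n)+1-\kappa}$ gives $\mu^{\deg(n)+2-\kappa_n}$. This works because $\deg(n)+2>0$. The polynomial weights $\langle w_1;w_2\rangle^{1-\kappa_n}$ and $\langle t_1\rangle^{-\kappa_n}$ are redistributed exactly as in the proof of Lemma~\ref{Lem:constructionzeta}, using $\kappa_{n-1-i}\le\kappa_n/4$.

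\textbf{Main obstacle.} The hardest step is the bookkeeping of the Gaussian weights in the middle of the convolution $\upiota^i[\dot M_\nu\upiota^{n-1-i}]$. The inner $J_{\nu,n}$-type kernels must be inverted via $\mcT_{\nu,n}$ without the growth $\e^{a\nu^2|y|^2/8}$ outrunning the Mehler decay. I expect the Mehler kernel at time $\sim\nu^2$ to supply a decay $\e^{-c'\nu^2|y|^2}$ with $c'$ close to $1/4$. Since $a_n\le 3/2$, this should leave room: $2\cdot\tfrac{3}{2}\cdot\tfrac18<\tfrac14\cdot\tfrac{\tanh(\nu^2)}{\nu^2}$ as $\nu\to0$, which is presumably the content of Lemma~\ref{lemma:estimMmu}. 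I would first check that inequality carefully, and shrink the increments of $a_n$ if it fails.
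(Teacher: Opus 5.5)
\textbf{There is a genuine gap in your step 3, in the cross-term $\mcb W_\nu\upxi^i_\epnu[\dot M_\nu\upiota^j_\epnu]$.} The rest of your plan matches the paper: reducing \eqref{eq:HR+0} to \eqref{eq:HR+2} at the same $n$, the base case via the linear term, the use of Lemma~\ref{lemma:diffMG}, and the $\upiota\dot M\upiota$ term with \eqref{eq:HR+2} on the outer coefficient and \eqref{eq:HR+0} on the inner one.

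In the cross-term, the outer factor is the stationary coefficient. But \eqref{eq:HR+2} is a statement about $\upiota$, not about $\upxi$, so $\mcb W_\nu\upxi^i$ carries no $\nu^2$ gain. If you put \eqref{eq:HR+0} on the inner $\upiota^j$, as your rule prescribes, this term only contributes $\nu^{\deg(i)}\cdot\nu\cdot\nu^{\deg(j)}=\nu^{\deg(n)-1}$. The required order is $\nu^{\deg(n)+1}$, so you miss by $\nu^{-2}$. Worse, $\nu^{\deg(n)-1}$ is not integrable at $\nu=0$ when $\deg(n)\leq0$.

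In this term the gain must come from the inner coefficient. You apply \eqref{eq:HR+2} to $\upiota^j$ and transport its weight back to the external variable using
$\langle y_2\rangle^{2+\kappa_j}\lesssim\langle y_2;y_1\rangle_\nu^3\langle y_1;x_1\rangle_\nu^3\langle x_1\rangle^{2+\kappa_j}$.
The factor $\langle y_2;y_1\rangle_\nu^3$ is absorbed by the kernel bound of Lemma~\ref{lemma:estimMmu}. The factor $\langle y_1;x_1\rangle_\nu^3$ is absorbed by the translation-invariant $\upxi^i$. That last absorption is affordable only because the stationary coefficients are controlled with large off-diagonal polynomial weights ($\varsigma$ large in \eqref{eq:estimstatio}). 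The leftover $\langle x_1\rangle^{2+\kappa_j}$ is then paid by $\langle x_1\rangle^{-2-\kappa_n}$. So the asymmetric allocation, \eqref{eq:HR+2} outside and \eqref{eq:HR+0} inside, is specific to the $\upiota\dot M\upiota$ term, where using \eqref{eq:HR+2} twice would produce quartic growth. It is not a uniform rule for all three terms containing $\upiota$.

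\textbf{A smaller point on your last paragraph.} The inequality $\tfrac38<\tfrac14\tanh(\nu^2)/\nu^2$ is false, since the right-hand side is at most $\tfrac14$. Shrinking the increments of $a_n$ cannot repair it, because $a_0=1$ already gives $\tfrac14$ on the left. You undercounted the Mehler decay: near the diagonal $|x+y|^2\approx4|y|^2$, so $\e^{-\tanh(t)|x+y|^2/4}\approx\e^{-\tanh(t)|y|^2}$ with $t\geq\nu^2$. The relevant comparison is therefore $\tfrac38<1$, which holds. Off the diagonal, the factor $\e^{-|x-y|^2/(4\tanh t)}$ takes over. This is exactly what \eqref{eq:estMt} and Lemma~\ref{lemma:estimMmu} provide, so that step is fine once computed correctly.
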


\begin{remark}
    Before turning to the proof, we would like to mention that the decreasing $\kappa_0$ exponents come from the inductive structure of the proof. Because of them, we do not have the choice but to set a truncated (up to $N_0$) ansatz. It forces us to use a fixed point argument and not a construction via a series as it is done in \cite{Duc21}.
\end{remark}

\begin{proof}
    We prove the statement by induction. The base case follows from term 1 below so we postpone the proof thereafter. We first prove the second statement from the first one, and then we can carry out the induction with the induction hypothesis on the second statement at hand. Let us start with the first task. The idea is to proceed in the same lines as in the proof of Proposition \ref{prop:boundomega}. We have, writing $a_{n+1}=a_n+b$,
    \begin{equs}
        ~&\sup_{w_1\in\Lambda}\int_{\Lambda} \langle t_1\rangle^{-\kappa_n}\langle w_{1};w_2\rangle^{1-\kappa_n}_\mu\big|\big(J_{\mu,n+1}^{\otimes2}\upiota^n_\epmu\big)(w_1,w_2)\big|\dint w_2\\
        &=\sup_{w_1\in\Lambda}\int_{\Lambda} \langle t_1\rangle^{-\kappa_n}\langle w_{1};w_2\rangle^{1-\kappa_n}_\mu\\
        &\qquad\times\left|\int_{\Lambda^2}K_{\mu}(w_1-z_1)K_{\mu}(w_2-z_2)\e^{-a_{n+1}\mu^2\frac{|y_1|^2+|y_2|^2}{8}}\upiota^n_\epmu(z_1,z_2)\dint z_1\dint z_2\right|\dint w_2\\
        &=\sup_{w_1\in\Lambda}\int_{\Lambda}\langle t_1\rangle^{-\kappa_n} \langle w_{1};w_2\rangle^{1-\kappa_n}_\mu\\
        &\qquad\qquad~\times\left|\int_{\Lambda^2}\mcR_\mu^{\otimes2}\bigg(K_{\mu}(w_1-z_1)K_{\mu}(w_2-z_2)\e^{-b\mu^2\frac{|y_1|^2+|y_2|^2}{8}}\bigg)\big(J_{\mu,n}^{\otimes2}\upiota^n_\epmu\big)(z_1,z_2)\dint z_1\dint z_2\right|\dint w_2\\
        &\lesssim\sup_{w_1\in\Lambda}\int_{\Lambda}\langle t_1\rangle^{-\kappa_n} \bigg|\int_{\Lambda^2}\langle w_{1};z_1\rangle_\mu\langle w_2;z_2\rangle_\mu\mcR_\mu^{\otimes2}\bigg(K_{\mu}(w_1-z_1)K_{\mu}(w_2-z_2)\e^{-b\mu^2\frac{|y_1|^2+|y_2|^2}{8}}\bigg)\\
        &\qquad\qquad\qquad\quad~\times\langle s_1\rangle^{\kappa_n}\langle y_1\rangle^{2+\kappa_n}\langle s_1\rangle^{-\kappa_n}\langle y_1\rangle^{-2-\kappa_n}\langle z_{1};z_2\rangle^{1-\kappa_n}_\mu\big(J_{\mu,n}^{\otimes2}\upiota^n_\epmu\big)(z_1,z_2)\dint z_1\dint z_2\bigg|\dint w_2\\
        &\leq \sup_{w_1,z_2\in\Lambda}\int_{\Lambda} \bigg|\int_{\Lambda}\langle w_{1};z_1\rangle^{2}_\mu\langle w_2;z_2\rangle_\mu\langle y_1\rangle^{2+\kappa_n}\\
        &\qquad\qquad\qquad\qquad\qquad\qquad\qquad~~\times\mcR_\mu^{\otimes2}\bigg(K_{\mu}(w_1-z_1)K_{\mu}(w_2-z_2)\e^{-b\mu^2\frac{|y_1|^2+|y_2|^2}{8}}\bigg)\dint z_1\bigg|\dint w_2\\
        &\quad\times \sup_{z_1\in\Lambda}\int_\Lambda\langle s_1\rangle^{-\kappa_n} \langle y_1\rangle^{-2-\kappa_n}\langle z_{1};z_2\rangle^{1-\kappa_n}_\mu\big|\big(J_{\mu,n}^{\otimes2}\upiota^n_\epmu\big)(z_1,z_2)\big|\dint z_2\,.
    \end{equs}
    We have used that $\langle s_1\rangle\lesssim\langle s_1;t_1\rangle\langle t_1\rangle\lesssim\langle s_1;t_1\rangle_\nu\langle t_1\rangle$. Controlling the first term as in the proof of Proposition \ref{prop:boundomega}, and using \eqref{eq:HR+2}, we eventually get
    \begin{equs}
        ~&\sup_{w_1\in\Lambda}\int_{\Lambda} \langle w_{1};w_2\rangle^{1-\kappa_n}_\mu\big|\big(J_{\mu,n+1}^{\otimes2}\upiota^n_\epmu\big)(w_1,w_2)\big|\dint w_2\\
        &\lesssim\mu^{-2-\kappa_n}\sup_{w_1\in\Lambda}\int_{\Lambda}\langle t_1\rangle^{-\kappa_n}\langle x_{1}\rangle^{-2-\kappa_n}\langle w_{1};w_2\rangle^{1-\kappa_n}_\mu\big|\big(J_{\mu,n}^{\otimes2}\upiota^n_\epmu\big)(w_1,w_2)\big|\dint w_2\\
        &\lesssim\talloblong\upxi\talloblong^{n+1}\mu^{-2-\kappa_n}\mu^{\deg(n)+2-\kappa_n}=\talloblong\upxi\talloblong^{n+1}\mu^{\deg(n)-2\kappa_n}\,.
    \end{equs}
    We move to the induction part. We have, using the flow equation on $\upiota_\epmu^n$ \eqref{eq:floweqiota},
    \begin{equs}\begin{aligned}
        \label{eq:fiveterms}
J_{\nu,n}^{\otimes2}\D_\nu\upiota_\epnu^n&=-\frac{\nu}{4} J_{\nu,n}^{\otimes2}\big((|x_1|^2+|x_2|^2)\mcb W_\nu\upxi_\nu^n\big)\\
    &\quad-\sum_{i+j=n-1} \Big((J_{\nu,n}\otimes J_{\nu,0})\mcb W_\nu\upxi_\epnu^i[\mcT_{\nu,0}^{\otimes2}(\dot M_\nu-\mcb W_\nu^{-1}\dot G_\nu)(J_{\mu,n}\otimes J_{\nu,0})\mcb W_\nu\upxi_\epnu^{j}]\\
    &\quad\qquad\qquad+(J_{\nu,n}\otimes J_{\nu,0})\mcb W_\nu\upxi_\epnu^i[(\mcT_{\nu,0}\otimes\mcT_{\nu,j})\dot M_\nu(J_{\nu,j}\otimes J_{\nu,n})\upiota_\epnu^{j}]\\
    &\quad\qquad\qquad+(J_{\nu,n}\otimes J_{\nu,i})\upiota_\epnu^i[(\mcT_{\nu,i}\otimes\mcT_{\nu,n})\dot M_\nu (J_{\nu,0}\otimes J_{\nu,n})\mcb W_\nu\upxi_\epnu^{j}]\\
    &\quad\qquad\qquad+(J_{\nu,n}\otimes J_{\nu,i+1})\upiota_\epnu^i[(\mcT_{\nu,i+1}\otimes\mcT_{\nu,j})\dot M_\nu(J_{\nu,j}\otimes J_{\nu,n})\upiota_\epnu^{j}]\Big)\,.
     \end{aligned}
\end{equs}
In what follows we will denote $J_{\mu,n,m}^{\otimes2}\eqdef J_{\mu,n}\otimes J_{\mu,m}$ for compactness of the notations. We will first show that
\begin{equs}\label{eq:boundderivativeiota}
    \sup_{w_1\in\Lambda}\int_{\Lambda}\langle t_1\rangle^{-\kappa_n}\langle x_{1}\rangle^{-2-\kappa_n}\langle w_{1};w_2\rangle^{1-\kappa_n}_\nu\big|\big(J_{\nu,n}^{\otimes2}\D_\nu\upiota^n_\epnu\big)(w_1,w_2)\big|\dint w_2\lesssim\talloblong\upxi\talloblong^{n+1}\nu^{\deg(n)+1-\kappa_n}\,.
\end{equs}
With this bound at hand, we can conclude the proof as follows
\begin{equs}
    ~&\sup_{w_1\in\Lambda}\int_{\Lambda}\langle t_1\rangle^{-\kappa_n}\langle x_{1}\rangle^{-2-\kappa_n}\langle w_{1};w_2\rangle^{1-\kappa_n}_\mu\big|\big(J_{\mu,n}^{\otimes2}\upiota^n_\epmu\big)(w_1,w_2)\big|\dint w_2\\
    &=\sup_{w_1\in\Lambda}\int_{\Lambda}\langle t_1\rangle^{-\kappa_n}\langle x_{1}\rangle^{-2-\kappa_n}\langle w_{1};w_2\rangle^{1-\kappa_n}_\mu\bigg|\bigg(J_{\mu,n}^{\otimes2}\int_0^\mu\D_\nu\upiota^n_\epnu\dint\nu\bigg)(w_1,w_2)\bigg|\dint w_2\\
    &\lesssim\int_0^\mu\sup_{w_1\in\Lambda}\int_{\Lambda}\langle t_1\rangle^{-\kappa_n}\langle x_{1}\rangle^{-2-\kappa_n}\langle w_{1};w_2\rangle^{1-\kappa_n}_\nu\big|\big(J_{\nu,n}^{\otimes2}\D_\nu\upiota^n_\epnu\big)(w_1,w_2)\big|\dint w_2\dint\nu\\
    &\lesssim\talloblong\upxi\talloblong^{n+1}\int_0^\mu \nu^{\deg(n)+1-\kappa_n}\dint\nu\lesssim\talloblong\upxi\talloblong^{n+1}\mu^{\deg(n)+2-\kappa_n}\,.
\end{equs}
We have used \eqref{eq:lessJmu} and the fact that, for $\nu\leq\mu$, $\langle\bigcdot\rangle_\mu\leq\langle\bigcdot\rangle_\nu$. It remains to show \eqref{eq:boundderivativeiota}. We are left with the five terms of the right-hand side of \eqref{eq:fiveterms} to control. We recall that we will denote in a general way $w=(t,x)$, $z=(s,y)$.\\

\noindent\textit{Term 1: linear in $\upxi_\nu$.} We proceed as in the proof of Proposition \ref{prop:boundomega}. We have, using \eqref{eq:estimrelouex2},
\begin{equs}
    ~&\nu\sup_{w_1\in\Lambda}\int_{\Lambda}\langle t_1\rangle^{-\kappa_n}\langle x_{1}\rangle^{-2-\kappa_n}\langle w_1;w_2\rangle^{1-\kappa_n}_\nu \bigg|\int_{\Lambda^2}K_{\nu}(w_1-z_1)K_{\nu}(w_2-z_2)\\
    &\qquad\qquad\qquad\qquad\qquad\qquad\qquad\quad~~\times\e^{-a_n\nu^2\frac{|y_1|^2+|y_2|^2}{8}}\big(|y_1|^2+|y_2|^2\big)\mcb W_\nu\upxi_\epnu^n(z_1,z_2)\dint z_1\dint z_2\bigg|\dint w_2\\
    &\lesssim\nu\sup_{w_1\in\Lambda}\int_{\Lambda}\langle t_1\rangle^{-\kappa_n}\langle x_{1}\rangle^{-2-\kappa_n}\langle w_1;w_2\rangle_\nu \bigg|\int_{\Lambda^2}\mcR_\nu^{\otimes2}\bigg(K_{\nu}(w_1-z_1)K_{\nu}(w_2-z_2)\\
    &\qquad\qquad\qquad\qquad\qquad\quad\times\e^{-(a_n-1)\nu^2\frac{|y_1|^2+|y_2|^2}{8}}\big(|y_1|^2+|y_2|^2\big)\bigg)\big(J_{\nu,0}^{\otimes2}\mcb W_\nu\upxi_\epnu^n\big)(z_1,z_2)\dint z_1\dint z_2\bigg|\dint w_2\\
    &\lesssim\nu\sup_{w_1\in\Lambda}\int_{\Lambda}\langle t_1\rangle^{-\kappa_n}\langle x_{1}\rangle^{-2-\kappa_n}\int_{\Lambda^2}\langle w_1;z_1\rangle_\nu\langle w_2;z_2\rangle_\nu\bigg|\mcR_\nu^{\otimes2}\bigg(K_{\nu}(w_1-z_1)K_{\nu}(w_2-z_2)\\
    &\quad\quad~~\times\e^{-(a_n-1)\nu^2\frac{|y_1|^2+|y_2|^2}{8}}\big(|y_1|^2+|y_2|^2\big)\bigg)\bigg|\langle z_2\rangle^{\kappa_0}\langle z_2\rangle^{-\kappa_0}\langle z_1;z_2\rangle_\nu\big|\big(K_{\nu}^{\otimes2}\upxi_\epnu^n\big)(z_1,z_2)\big|\dint z_1\dint z_2\dint w_2\\
    &\lesssim\nu\sup_{w_1\in\Lambda}\int_{\Lambda}\langle t_1\rangle^{-\kappa_n+\kappa_0}\langle x_{1}\rangle^{-2-\kappa_n+\kappa_0}\int_{\Lambda^2}\langle w_1;z_1\rangle_\nu^2\langle w_2;z_2\rangle_\nu\big(\mcS_\nu \hat K\big)(w_1-z_1)\big(\mcS_\nu \hat K\big)(w_2-z_2)\\
    &\qquad\qquad\qquad\times\e^{-(a_n-1)\nu^2\frac{|y_1|^2+|y_2|^2}{8}}\big(1+|y_1|^2+|y_2|^2\big)\big(1+\mu^2|y_1|^2+\mu^2|y_2|^2\big)\\
    &\qquad\qquad\qquad\qquad\qquad\qquad\qquad\qquad\qquad\qquad\times\langle z_2\rangle^{-\kappa_0}\langle z_1;z_2\rangle^{2}_\nu\big|\big(K_{\nu}^{\otimes2}\upxi_\epnu^n\big)(z_1,z_2)\big|\dint z_1\dint z_2\dint w_2\\
    &\lesssim\nu\sup_{w_1\in\Lambda}\int_{\Lambda^3}\langle w_1;z_1\rangle^{4}_\nu\langle w_2;z_2\rangle_\nu\big(\mcS_\nu \hat K\big)(w_1-z_1)\big(\mcS_\nu \hat K\big)(w_2-z_2)\e^{-(a_n-1)\nu^2\frac{|y_1|^2+|y_2|^2}{8}}\\
    &\qquad\qquad\qquad\qquad\qquad~~\times\big(1+\mu^2|y_1|^2+\mu^2|y_2|^2\big)\langle z_2\rangle^{-\kappa_0}\langle z_1;z_2\rangle^{4}_\nu\big|\big(K_{\nu}^{\otimes2}\upxi_\epnu^n\big)(z_1,z_2)\big|\dint z_1\dint z_2\dint w_2\\
    &\lesssim\nu\sup_{w_1,z_2\in\Lambda}\int_{\Lambda^2}\langle w_1;z_1\rangle^{4}_\nu\langle w_2;z_2\rangle_\nu\big(\mcS_\nu \hat K\big)(w_1-z_1)\big(\mcS_\nu \hat K\big)(w_2-z_2)\\
    &\qquad\qquad\qquad\qquad\qquad\qquad\qquad\qquad\qquad~~\times\e^{-(a_n-1)\nu^2\frac{|y_1|^2+|y_2|^2}{8}}\big(1+\mu^2|y_1|^2+\mu^2|y_2|^2\big)\dint z_1 \dint w_2\\
    &\quad\times\sup_{z_1\in\Lambda}\int_\Lambda\langle z_2\rangle^{-\kappa_0}\langle z_1;z_2\rangle^{4}_\nu\big|\big(K_{\nu}^{\otimes2}\upxi_\epnu^n\big)(z_1,z_2)\big|\dint z_2\,.
\end{equs}
We have used Hölder's inequality again. Precisely, we have used \eqref{triangleineq} to write
\begin{equs}
    \langle z_2\rangle^{\kappa_0}\lesssim\langle z_2;z_1\rangle_\nu\langle z_1;w_1\rangle_\nu \langle t_1\rangle^{\kappa_0}\langle x_1\rangle^{\kappa_0}\,,
\end{equs}
and
\begin{equs}
    \big(1+|y_1|^2+|y_2|^2\big)\lesssim\langle y_2;y_1\rangle_\nu^2\langle y_1;x_1\rangle_\nu^2\langle x_1\rangle^2\,.
\end{equs}
The first term in the product can be controlled as follows
\begin{equs}
    ~&\sup_{w_1,z_2\in\Lambda}\int_{\Lambda^2}\langle w_1;z_1\rangle^{4}_\nu\langle w_2;z_2\rangle_\nu\big(\mcS_\nu \hat K\big)(w_1-z_1)\big(\mcS_\nu \hat K\big)(w_2-z_2)\\
    &\qquad\qquad\qquad\qquad\qquad\qquad\qquad\qquad\qquad\times\e^{-(a_n-1)\nu^2\frac{|y_1|^2+|y_2|^2}{8}}\big(1+\mu^2|y_1|^2+\mu^2|y_2|^2\big)\dint z_1 \dint w_2\\
    &\lesssim\sup_{w_1,z_2\in\Lambda}\int_{\Lambda^2}\langle w_1;z_1\rangle^{4}_\nu\langle w_2;z_2\rangle_\nu\big(\mcS_\nu \hat K\big)(w_1-z_1)\big(\mcS_\nu \hat K\big)(w_2-z_2)\dint z_1\dint w_2\\
    &\lesssim\|\langle\bigcdot\rangle_\nu^4\mcS_\nu\hat K\|_{L^1_{t,x}}^2\lesssim1\,.
\end{equs}
In the end, we get that our original quantity is bounded by
\begin{equs}
    \nu\|\upxi_\epnu^n\|_{\nu,\kappa_0,0,5}\lesssim\talloblong\upxi\talloblong^{n+1}\nu^{\deg(n)-\kappa_n+1}\,.
\end{equs}
\noindent\textit{Term 2: bilinear in $\upxi_\nu$.} For the second term, the cornerstone of the proof is to use Lemma \ref{lemma:diffMG}. We have
\begin{equs}
    ~&\sup_{w_1\in\Lambda}\int_{\Lambda^3}\langle t_1\rangle^{-\kappa_n}\langle x_{1}\rangle^{-2-\kappa_n}\langle w_1;w_2\rangle^{1-\kappa_n}_\nu\big|(J_{\nu,n,0}^{\otimes2}\mcb W_\nu\upxi_\epnu^i)(w_1,z_1)\big|\\
    &\qquad\qquad\qquad\qquad\qquad\qquad\times\big|\mcT_{\nu,0}^{\otimes2}(\dot M_\nu-\mcb W_\nu^{-1}\dot G_\nu)(z_1,z_2)\big|\times\big|(J_{\nu,0,n}^{\otimes2}\mcb W_\nu\upxi_\epnu^{j})(z_2,w_2)\big|\dint z_1\dint z_2\dint w_2\\
    &\lesssim\nu^2\sup_{w_1\in\Lambda}\int_{\Lambda^3}\langle t_1\rangle^{-\kappa_n}\langle x_{1}\rangle^{-2-\kappa_n}\langle w_1;w_2\rangle_\nu\big|(J_{\nu,n,0}^{\otimes2}\mcb W_\nu\upxi_\epnu^i)(w_1,z_1)\big|\\
    &\qquad\qquad\qquad\qquad\qquad\quad\times\big(1+|y_1|^2+|y_2|^2\big)\big|\dot G_\nu(z_1-z_2)\big|\times\big|(J_{\nu,0,n}^{\otimes2}\mcb W_\nu\upxi_\epnu^{j})(z_2,w_2)\big|\dint z_1\dint z_2\dint w_2\\
    &\lesssim\nu^2\sup_{w_1\in\Lambda}\int_{\Lambda^3}\langle t_1\rangle^{-\kappa_n}\langle x_{1}\rangle^{-2-\kappa_n}\langle w_1\rangle^{\kappa_0}\langle w_1\rangle^{-\kappa_0}\langle w_1;w_2\rangle_\nu\big|(J_{\nu,n,0}^{\otimes2}\mcb W_\nu\upxi_\epnu^i)(w_1,z_1)\big|\\
    &\qquad\qquad\quad~~\times\big(1+|y_1|^2+|y_2|^2\big)\big|\dot G_\nu(z_1-z_2)\big|\times\langle z_2\rangle^{\kappa_0}\langle z_2\rangle^{-\kappa_0}\big|(J_{\nu,0,n}^{\otimes2}\mcb W_\nu\upxi_\epnu^{j})(z_2,w_2)\big|\dint z_1\dint z_2\dint w_2\,.
\end{equs}
Using \eqref{triangleineq}, we have
\begin{equs}
    \langle w_1;w_2\rangle_\nu\lesssim\langle w_1;z_1\rangle_\nu\langle z_1;z_2\rangle_\nu\langle z_2;w_2\rangle_\nu\,.
\end{equs}
Also note that bounding the power $1-\kappa_n$ above by $1$ was crude, but sufficient. We will repeatedly use this kind of crude bound in the rest of the proof, when allowed. Moreover,
\begin{equs}
    \big(1+|y_1|^2+|y_2|^2\big)\lesssim\langle y_2;y_1\rangle^2\langle y_1\rangle^2\lesssim\langle y_2;y_1\rangle^2_\nu \langle y_1;x_1\rangle_\nu^2\langle x_1\rangle^2\,.
\end{equs}
In the last inequality, we used the crude bound $\langle \bigcdot\rangle^2\leq \langle \bigcdot\rangle^2_\nu$. Similarly,
\begin{equs}\label{eq:boundy_2}
    \langle z_2\rangle^{\kappa_0}\lesssim\langle z_2;z_1\rangle^{\kappa_0}_\nu\langle z_1;w_1\rangle_\nu^{\kappa_0}\langle w_1\rangle^{\kappa_0}\lesssim\langle z_2;z_1\rangle_\nu\langle z_1;w_1\rangle_\nu\langle t_1\rangle^{\kappa_0}\langle x_1\rangle^{\kappa_0}\,.
\end{equs}
Then our quantity of interest is bounded by
\begin{equs}
    ~&\nu^2\sup_{w_1\in\Lambda}\int_{\Lambda^3}\langle w_1\rangle^{-\kappa_0}\langle w_1;z_1\rangle^{4}_\nu\big|(J_{\nu,n,0}^{\otimes2}\mcb W_\nu\upxi_\epnu^i)(w_1,z_1)\big|\\
    &\qquad\qquad\qquad\qquad\times\langle z_1;z_2\rangle^{4}_\nu\big|\dot G_\nu(z_1-z_2)\big|\times\langle w_2\rangle^{-\kappa_0}\langle z_2;w_2\rangle_\nu\big|(J_{\nu,n,0}^{\otimes2}\mcb W_\nu\upxi_\epnu^{j})(z_2,w_2)\big|\dint z_1\dint z_2\dint w_2\\
    &\lesssim\nu^2\|\upxi_\epnu^i\|_{\nu,\kappa_0,0,5}\|\langle\bigcdot\rangle^{4}_\nu\dot G_\nu\|_{L^1_{t,x}}\|\upxi_\epnu^{j}\|_{\nu,\kappa_0,0,2}\lesssim\talloblong\upxi\talloblong^{i+1}\talloblong\upxi\talloblong^{j+1} \nu^{\deg(i)-\kappa_0+\deg(j)-\kappa_0 +2+1}\\
    &\leq\talloblong\upxi\talloblong^{n+1}\nu^{\deg(n)+1-\kappa_n}\,.
\end{equs}
We have used \eqref{eq:lessJn} to lower the exponent in the Gaussian weights and the fact that $J_{\mu,0}^{\otimes2}\mcb W_\mu \upxi_\epmu^n=K_\mu^{\otimes2}\upxi_\epmu^n$.\\

\noindent\textit{Term 3: cross-term $\upxi_\nu\dot M_\nu\upiota_\nu$.} To control this term, we crucially need Lemma \ref{lemma:estimMmu}. We need to bound
\begin{equs}
    ~&\sup_{w_1\in\Lambda}\int_{\Lambda^3}\langle t_1\rangle^{-\kappa_n}\langle x_{1}\rangle^{-2-\kappa_n}\langle w_1;w_2\rangle^{1-\kappa_n}_\nu\big|\big(J_{\nu,n,0}^{\otimes2}\mcb W_\nu\upxi_\epnu^i\big)(w_1,z_1)\big|\\
    &\qquad\qquad\qquad\qquad\qquad\qquad\quad~~~\times\big|\big((\mcT_{\nu,0}\otimes\mcT_{\nu,j})\dot M_\nu\big)(z_1,z_2)\big|\times\big|\big(J_{\nu,j,n}^{\otimes2}\upiota_\epnu^{j}\big)(z_2,w_2)\big|\dint z_1\dint z_2\dint w_2\\
    &\lesssim\sup_{w_1\in\Lambda}\int_{\Lambda^3}\langle t_1\rangle^{-\kappa_n}\langle x_{1}\rangle^{-2-\kappa_n}\langle w_{1}\rangle^{\kappa_0}\langle w_{1}\rangle^{-\kappa_0}\langle w_1;w_2\rangle^{1-\kappa_n}_\nu\big|\big(K_{\nu}^{\otimes2}\upxi_\epnu^i\big)(w_1,z_1)\big|\\
    &\qquad\qquad\qquad\times\big|\big((\mcT_{\nu,0}\otimes\mcT_{\nu,n-1-i})\dot M_\nu\big)(z_1,z_2)\big|\\
    &\qquad\qquad\qquad\qquad\qquad\quad\times\langle s_2\rangle^{\kappa_j}\langle y_{2}\rangle^{2+\kappa_j}\langle s_2\rangle^{-\kappa_j}\langle y_2\rangle^{-2-\kappa_j}\big|\big(J_{\nu,j,n}^{\otimes2}\upiota_\epnu^{n-1-i}\big)(z_2,w_2)\big|\dint z_1\dint z_2\dint w_2\,.
\end{equs}
As usual, we decompose
\begin{equs}
    \langle w_1;w_2\rangle^{1-\kappa_n}_\nu\lesssim\langle w_1;z_1\rangle_\nu\langle z_1;z_2\rangle_\nu\langle z_2;w_2\rangle^{1-\kappa_j}_\nu\,.
\end{equs}
Furthermore, we have, proceeding as in \eqref{eq:boundy_2},
\begin{equs}
    \langle y_2\rangle^{2+\kappa_j}\lesssim\langle y_2;y_1\rangle_\nu^3\langle y_1;x_1\rangle_\nu^3\langle x_{1}\rangle^{2+\kappa_j}\,.
\end{equs}
We bound the term in $s_2$ with the same argument. Note that this bound is a bit suboptimal but this is not a problem since we can absorb any polynomial growth on the translation-invariant terms. Our quantity of interest is thus bounded by
\begin{equs}
    ~&\sup_{w_1\in\Lambda}\int_{\Lambda^3}\langle w_{1}\rangle^{-\kappa_0}\big|\langle \bigcdot\rangle^5_\nu\big(J_{\nu,n,0}^{\otimes2}\mcb W_\nu\upxi_\epnu^i\big)(w_1,z_1)\big|\\
    &\qquad\quad\times\big|\langle\bigcdot\rangle^5_\nu\big((\mcT_{\nu,0}\otimes\mcT_{\nu,j})\dot M_\nu\big)(z_1,z_2)\big|\times\langle s_2\rangle^{-\kappa_j}\langle y_2\rangle^{-2-\kappa_j}\big|\langle \bigcdot\rangle^{1-\kappa_j}_\nu\big(J_{\nu,j,n}^{\otimes2}\upiota_\epnu^{j}\big)(z_2,w_2)\big|\dint z_1\dint z_2\dint w_2\\
    &\lesssim\nu\|\upxi^i_\epnu\|_{\nu,\kappa_0,0,6}\times\sup_{z_2}\int_{\Lambda}\langle s_2\rangle^{-\kappa_j}\langle y_2\rangle^{-2-\kappa_j}\langle z_2;w_2\rangle^{1-\kappa_j}_\nu\big|\big(J_{\nu,j,n}^{\otimes2}\upiota^{j}_\epnu\big)(z_2,w_2)\big|\dint w_2\\
    &\lesssim\talloblong\upxi\talloblong^{i+1}\talloblong\upxi\talloblong^{j+1}\nu^{1+\deg(i)-\kappa_i+\deg(j)+2-\kappa_j+2}\leq\talloblong\upxi\talloblong^{n+1}\nu^{\deg(n)+1-\kappa_n}\,.
\end{equs}

\noindent\textit{Term 4: cross-term $\upiota_\nu\dot M_\nu\upxi_\nu$.} We want to bound
\begin{equs}
    ~&\sup_{w_1\in\Lambda}\int_{\Lambda^3}\langle t_1\rangle^{-\kappa_n}\langle x_{1}\rangle^{-2-\kappa_n}\langle w_1;w_2\rangle^{1-\kappa_n}_\nu\big|\big(J_{\nu,n,i}^{\otimes2}\upiota_\epnu^i\big)(w_1,z_1)\big|\\
    &\qquad\qquad\qquad\qquad\qquad\quad\times\big|\big((\mcT_{\nu,i}\otimes\mcT_{\nu,0})\dot M_\nu\big)(z_1,z_2)\big|\times\big|\big(J_{\nu,0,n}^{\otimes2}\mcb W_\nu\upxi_\epnu^j\big)(z_2,w_2)\big|\dint z_1\dint z_2\dint w_2\\
    &\lesssim\sup_{w_1\in\Lambda}\int_{\Lambda^3}\langle t_1\rangle^{-\kappa_0}\langle x_{1}\rangle^{-\kappa_0}\langle t_1\rangle^{-\kappa_i}\langle x_{1}\rangle^{-2-\kappa_i}\langle w_1;w_2\rangle^{1-\kappa_n}_\nu\big|\big(J_{\nu,n,i}^{\otimes2}\upiota_\epnu^i\big)(w_1,z_1)\big|\\
    &\qquad\qquad\quad\times\big|\big((\mcT_{\nu,i}\otimes\mcT_{\nu,0})\dot M_\nu\big)(z_1,z_2)\big|\times\langle z_{2}\rangle^{\kappa_0}\langle z_2\rangle^{-\kappa_0}\big|\big(J_{\nu,0,n}^{\otimes2}\mcb W_\nu\upxi_\epnu^j\big)(z_2,w_2)\big|\dint z_1\dint z_2\dint w_2\,.
\end{equs}
We have
\begin{equs}
    \langle z_{2}\rangle^{\kappa_0}\lesssim\langle z_2;z_1\rangle_\nu \langle z_1;w_1\rangle^{\kappa_0}_\nu\langle t_1\rangle^{\kappa_0}\langle x_1\rangle^{\kappa_0}\,,
\end{equs}
and
\begin{equs}
    \langle w_1;w_2\rangle^{1-\kappa_n}_\nu\lesssim \langle w_1;z_1\rangle^{1-\kappa_{i+1}}_\nu\langle z_1;z_2\rangle_\nu\langle z_2;w_2\rangle_\nu\,,
\end{equs}
since $i\leq n-1$. Using that $\kappa_0-\kappa_{i+1}\leq-\kappa_i$, the above is bounded by
\begin{equs}
    ~&\sup_{w_1\in\Lambda}\int_{\Lambda^3}\langle t_1\rangle^{-\kappa_i}\langle x_{1}\rangle^{-2-\kappa_i}\langle w_1;z_1\rangle^{1-\kappa_i}_\nu\big|\big(J_{\nu,n,i}^{\otimes2}\upiota_\epnu^i\big)(w_1,z_1)\big|\\
    &\qquad\qquad\times\big|\big(\langle\bigcdot\rangle_\nu^2(\mcT_{\nu,i}\otimes\mcT_{\nu,0})\dot M_\nu\big)(z_1,z_2)\big|\times\langle z_2\rangle^{-\kappa_0}\big|\langle \bigcdot\rangle_\nu\big(J_{\nu,0,n}^{\otimes2}\mcb W_\nu\upxi_\epnu^j\big)(z_2,w_2)\big|\dint z_1\dint z_2\dint w_2\,.
\end{equs}
One obtains, using \eqref{lemma:estimMmu}, that we can control the above by
\begin{equs}
    ~&\nu \sup_{w_1\in\Lambda}\int_{\Lambda^3}\langle t_1\rangle^{-\kappa_i}\langle x_{1}\rangle^{-2-\kappa_i}\big|\langle \bigcdot\rangle^{1-\kappa_i}_\nu\big(J_{\nu,n,i}^{\otimes2}\upiota_\epnu^i\big)(w_1,z_1)\big|\dint z_1\\
    &\quad\times\sup_{z_2\in\lambda}\int_\Lambda\langle z_2\rangle^{-\kappa_0}\big|\langle \bigcdot\rangle_\nu\big(J_{\nu,n,0}^{\otimes2}\mcb W_\nu\upxi_\epnu^j\big)(z_2,w_2)\big|\dint w_2\\
    &\lesssim\talloblong\upxi\talloblong^{i+1}\talloblong\upxi\talloblong^{j+1}\nu^{1+\deg(i)+2-\kappa_i+\deg(j)-\kappa_j+2}\leq\talloblong\upxi\talloblong^{n+1}\nu^{\deg(n)+1-\kappa_n}\,.
\end{equs}
Note that the extra $\kappa_0$ that appears here is the reason why we need an increasing number of $\kappa_0$'s on the weights in the induction.\\

\noindent\textit{Term 5: bilinear in $\upiota_\nu$.} For the last term, we need to be a bit careful. Indeed, one cannot naively use the induction hypothesis \eqref{eq:HR+2}, else we would get a $+4$ extra power counting in $\nu$, but we would pay a quartic polynomial growth, which we cannot control properly. The solution out of this conundrum is to use \eqref{eq:HR+0} on one side and \eqref{eq:HR+2} on the other. We need to control the following quantity
\begin{equs}
    ~&\sup_{w_1\in\Lambda}\int_{\Lambda^3}\langle t_1\rangle^{-\kappa_n}\langle x_{1}\rangle^{-2-\kappa_n}\langle w_1;w_2\rangle^{1-\kappa_n}_\nu\big|\big(J_{\mu,n,i}^{\otimes2}\upiota^i_{\epmu}\big)(w_1,z_1)\big|\\
    &\qquad\qquad\qquad\qquad\qquad\qquad\quad~\times\big|\big((\mcT_{\nu,i}\otimes\mcT_{\nu,j+1})\dot M_\nu\big)(z_1,z_2)\big|\times\big|\big(J_{\mu,j+1,n}^{\otimes2}\upiota^{j}_{\epmu}\big)(w_1,z_1)\big|\dint z_1\dint z_2\dint w_2\\
    &\lesssim\sup_{w_1\in\Lambda}\int_{\Lambda^3}\langle t_1\rangle^{-\kappa_n+\kappa_i}\langle t_1\rangle^{-\kappa_i}\langle x_{1}\rangle^{-2-\kappa_i}\langle w_1;z_1\rangle^{1-\kappa_{i+1}}_\nu\big|\big(J_{\mu,n,i}^{\otimes2}\upiota^i_{\epmu}\big)(w_1,z_1)\big|\\
    &\quad\times\big|\big(\langle\bigcdot\rangle_\nu(\mcT_{\nu,i}\otimes\mcT_{\nu,j+1})\dot M_\nu\big)(z_1,z_2)\big|\times\langle s_2\rangle^{\kappa_j}\langle s_2\rangle^{-\kappa_j}\langle z_2;w_2\rangle^{1-\kappa_j}_\nu\big|\big(J_{\mu,j+1,n}^{\otimes2}\upiota^{j}_{\epmu}\big)(z_2,w_2)\big|\dint z_1\dint z_2\dint w_2\\
    &\lesssim\sup_{w_1\in\Lambda}\int_{\Lambda^3}\langle t_1\rangle^{-\kappa_n+\kappa_i+\kappa_j}\langle x_{1}\rangle^{-2-\kappa_i}\langle w_1;z_1\rangle^{1-\kappa_i}_\nu\big|\big(J_{\mu,n,i}^{\otimes2}\upiota^i_{\epmu}\big)(w_1,z_1)\big|\\
    &\qquad\quad~~\times\big|\big(\langle\bigcdot\rangle_\nu^2(\mcT_{\nu,i}\otimes\mcT_{\nu,j+1})\dot M_\nu\big)(z_1,z_2)\big|\times\langle s_2\rangle^{-\kappa_j}\langle z_2;w_2\rangle^{1-\kappa_j}_\nu\big|\big(J_{\mu,j+1,n}^{\otimes2}\upiota^{j}_{\epmu}\big)(z_2,w_2)\big|\dint z_1\dint z_2\dint w_2\\
    &\lesssim\sup_{w_1\in\Lambda}\int_{\Lambda^3}\langle t_1\rangle^{-\kappa_i}\langle x_{1}\rangle^{-2-\kappa_i}\langle w_1;z_1\rangle^{1-\kappa_i}_\nu\big|\big(J_{\mu,n,i}^{\otimes2}\upiota^i_{\epmu}\big)(w_1,z_1)\big|\\
    &\qquad\quad~~\times\big|\big(\langle\bigcdot\rangle_\nu^2(\mcT_{\nu,i}\otimes\mcT_{\nu,j+1})\dot M_\nu\big)(z_1,z_2)\big|\times\langle s_2\rangle^{-\kappa_j}\langle z_2;w_2\rangle^{1-\kappa_j}_\nu\big|\big(J_{\mu,j+1,n}^{\otimes2}\upiota^{j}_{\epmu}\big)(z_2,w_2)\big|\dint z_1\dint z_2\dint w_2\\
    &\lesssim\nu\sup_{w_1\in\Lambda}\int_{\Lambda}\langle t_1\rangle^{-\kappa_i}\langle x_{1}\rangle^{-2-\kappa_i}\langle w_1;z_1\rangle^{1-\kappa_i}_\nu\big|\big(J_{\mu,i}^{\otimes2}\upiota^i_{\epmu}\big)(w_1,z_1)\big|\dint z_1\\
    &\qquad\times\sup_{z_2\in\Lambda}\int_\Lambda\langle s_2\rangle^{-\kappa_j}\langle z_2;w_2\rangle^{1-\kappa_j}_\nu\big|\big(J_{\mu,j+1}^{\otimes2}\upiota^{j}_{\epmu}\big)(z_2,w_2)\big|\dint w_2\\
    &\lesssim\talloblong\upxi\talloblong^{i+1}\talloblong\upxi\talloblong^{j+1}\nu^{1+\deg(i)+2-\kappa_i+\deg(j)-2\kappa_j}\lesssim\talloblong\upxi\talloblong^{n+1}\nu^{\deg(n)+1-\kappa_n}\,.
\end{equs}
Here pay attention to the fact that we relied on $4^i+2\times 4^j\leqslant 3\times 4^{n-1}\leqslant 4^n$. We have also used the inequalities
\begin{equs}
    \langle w_1;w_2\rangle^{1-\kappa_n}_\nu\lesssim\langle w_1;z_1\rangle^{1-\kappa_{i+1}}_\nu\langle z_1;z_2\rangle_\nu\langle z_2;w_2\rangle^{1-\kappa_j}_\nu\,,
\end{equs}
and
\begin{equs}
    \langle s_2\rangle^{\kappa_j}\lesssim\langle z_2;z_1\rangle_\nu\langle z_1;w_1\rangle_\nu^{\kappa_j}\langle t_1\rangle^{\kappa_j}\,,
\end{equs}
thus concluding the proof.
\end{proof}

\section{Probabilistic analysis and renormalisation}\label{sec:stocest}

This section is devoted to the proof of the stochastic estimate stated in Proposition \ref{prop:stoestimate}. The framework and ideas of proof are now quite classical and we follow \cite{DGR23}. Their approach allows us to replace the so-called \textit{generalised force coefficients} by the use of weights. We adapt their approach to our context with choices of norms inspired by \cite{CF24a, CF24b}.

\subsection{Induction on cumulants}

In this section we carry out the induction to get bounds on the cumulants of the stationary force coefficients. We start by recalling the definition of the joint cumulant of random variables.
\begin{definition}
Let $m\in\N$ and $X_1,\dots,X_m$ real-valued random variables. Their joint cumulant is defined as 
\begin{equation}
\mcK(X_1,\dots,X_m)=(-\i)^m\D_{t_1}\dots\D_{t_m}\log\left.\E\big[\exp(\i (t_1X_1+\dots+t_mX_m))\big]\right|_{t_1=\dots=t_m=0}.
\end{equation}
\end{definition}
Note that the latter is symmetric in its arguments, so that in only depends on the collection $\{X_i\colon i\in[m]\}$. We can now introduce the main object of this section.

\begin{definition}
For $\mu\in (0,1]$, $\eps>0$, $m\in\N$, and $n_1,\dots,n_m\in\N$, we let the cumulant
\begin{equation}
\mcK_{\eps,\mu}^{\n^m}(\z_1,\dots,\z_m)=\mcK\left(\upxi_{\eps,\mu}^{n_1}(\z_1),\dots,\upxi_{\eps,\mu}^{n_m}(\z_m)\right),
\end{equation}
where we have denoted $\n^m=(n_1,\dots,n_m)$, and $\z_j=(z_1^j,z^j_2)$ for $1\leq j\leq m$.
\end{definition}
\begin{example}\label{examplecumulant}
    Again, we give more graphical examples. We take $\n^m=(1,1)$.
    \begin{equs}
        \mcK_{\eps,\mu}^{(1,1)}(\z_1,\z_2)=\mcK\Bigg(\begin{tikzpicture}[scale=0.4,baseline=0.15cm]
            \draw[line width=0.25mm] (0,0.25) node[dot] {} -- (0.5,1.25) node[dot] {};
            \draw (0,-0.25) node[scale=0.7] {$z_1^1$};
            \draw (0.5, 1.8) node[scale=0.7] {$z_2^1$};
\end{tikzpicture},\begin{tikzpicture}[scale=0.4,baseline=0.15cm]
            \draw[line width=0.25mm] (0,0.25) node[dot] {} -- (0.5,1.25) node[dot] {};
            \draw (0,-0.25) node[scale=0.7] {$z_1^2$};
            \draw (0.5, 1.8) node[scale=0.7] {$z_2^2$};
\end{tikzpicture}\Bigg)=\begin{tikzpicture}[scale=0.4,baseline=0.15cm]
            \draw[line width=0.25mm] (0,0.25) node[dot] {} -- (0,1.25) node[dot] {};
            \draw (0,-0.25) node[scale=0.7] {$z_1^1$};
            \draw (0,1.8) node[scale=0.7] {$z_2^1$};
            \draw[line width=0.25mm] (1.25,0.25) node[dot] {} -- (1.25,1.25) node[dot] {};
            \draw (1.25,-0.25) node[scale=0.7] {$z_1^2$};
            \draw (1.25,1.8) node[scale=0.7] {$z_2^2$};
            \draw[line width=0.25mm, dashed] (0,0.25) -- (1.25,1.25);
            \draw[line width=0.25mm, dashed] (0,1.25) -- (1.25,0.25);
\end{tikzpicture}+\begin{tikzpicture}[scale=0.4,baseline=0.15cm]
            \draw[line width=0.25mm] (0,0.25) node[dot] {} -- (0,1.25) node[dot] {};
            \draw (0,-0.25) node[scale=0.7] {$z_1^1$};
            \draw (0, 1.8) node[scale=0.7] {$z_2^1$};
            \draw[line width=0.25mm] (1.25,0.25) node[dot] {} -- (1.25,1.25) node[dot] {};
            \draw (1.25,-0.25) node[scale=0.7] {$z_1^2$};
            \draw (1.25,1.8) node[scale=0.7] {$z_2^2$};
            \draw[line width=0.25mm, dashed] (0,0.25) -- (1.25,0.25);
            \draw[line width=0.25mm, dashed] (0,1.25) -- (1.25,1.25);
\end{tikzpicture}
    \end{equs}
    The dashed lines represent the kernel given by the covariance of the noise. We have used Wick's formula to get the pairings. We can also see graphically that taking the cumulants allows to keep only the connected Feynman diagrams.
\end{example}
We set the notation
\begin{equation}
\big(K_\mu^{\otimes 2m}\mcK^{\n^m}\big)(\w_1,\dots,\w_m)=\int_{\SS^{2m}} \prod_{j=1}^mK_\mu(w_1^j-z_1^j)K_\mu(w_2^j-z^j_2) \mcK^{\n^m}(\z)\dint\z.
\end{equation}
Before giving a norm for such objects, let us introduce some weights. For $w_1,w_2\in\SS$, we set $\langle w_1;w_2\rangle_\mu=\langle w_1-w_2\rangle_\mu$. We recall that we defined $\langle\bigcdot\rangle_\mu$ in \eqref{mubracket}. Moreover, for $\w\in\SS^{2m}$, we set
\begin{equation}
\langle\w\rangle_{\mu,m}=\prod_{j=1}^m\langle w_1^j;w_2^j\rangle_\mu.
\end{equation}
\begin{definition}
For $U\in\CS'(\SS^{2m})$, we set a norm, that is $L^\infty_{w_1^1}$ in $w_1^1$, $L^{\infty,p}_{t_i,x_i}$ in the $w_i$'s, where $w_i=(t_i,x_i)$, and $L^1_{w^{1:m}_2}$ in all the others. We have let for the rest of Section \ref{sec:stocest}, $p=1\vee d/(2\alpha-2\beta)$.
\begin{equation}
\|U\|_{\mcV^m}=\sup_{w_1^1\in\SS}\sup_{t_2,\dots, t_m\in \R}\left(\int_{(\R^d)^{m-1}}\left(\int_{\SS^{m}}\big|U(\w)|\dint w^{1:m}_2\right)^{p}\dint x^{2:m}_1\right)^{\frac{1}{p}}\,.
\end{equation}
We further denote by $\mcV^m$ the vector space of distributions of $2m$ variables such that the above norm is finite. We consequently define the following triple norm.
\begin{equation}
\nnorm{\mcK_{\mu}^{\n^m}}_\mu=\Big\|\langle\bigcdot\rangle_{\mu,m}^{\alpha-\beta}\left(K_\mu^{\otimes 2m}\mcK^{\n^m}\right)\Big\|_{\mcV^m}\,,
\end{equation}
where the product with the weight is pointwise.
\begin{remark}
    The power $\alpha-\beta$ is arbitrary. One could have instead chosen any $0<\varsigma<\alpha$ with $\varsigma$ close enough to $\alpha$, but we decide not to add any supplementary notation to avoid to create any confusion with the deterministic analysis.
\end{remark}
\end{definition}
\begin{example}
We give another example for a graphical representation of this norm.
\begin{equs}
    \nnorm{\mcK_{\eps,\mu}^{(1,1)}}_\mu=\mcK\Bigg(\begin{tikzpicture}[scale=0.4,baseline=0.15cm]
            \draw[line width=0.3mm, color=indigo] (0.5,1.25) -- (0.5,2.25);
            \draw[line width=0.3mm, color=indigo] (0,0.25) -- (0,-0.75);
            \draw[line width=0.25mm] (0,0.25) node[dot] {} -- (0.5,1.25) node[dot] {};
            \draw (0,-1.3) node[scale=0.7] {$L^\infty_{w_1^1}$};
            \draw (0,2.75) node[scale=0.7] {$L^1_{w_2^1}$};
\end{tikzpicture},\begin{tikzpicture}[scale=0.4,baseline=0.15cm]
            \draw[line width=0.3mm, color=indigo] (0.5,1.25) -- (0.5,2.25);
            \draw[line width=0.3mm, color=indigo] (0,0.25) -- (0,-0.75);
            \draw[line width=0.25mm] (0,0.25) node[dot] {} -- (0.5,1.25) node[dot] {};
            \draw (0,-1.3) node[scale=0.7] {$L^{\infty,p}_{w_1^2}$};
            \draw (0,2.75) node[scale=0.7] {$L^1_{w_2^2}$};
\end{tikzpicture}\Bigg)\,.
\end{equs}
We have also omitted the weights in this representation.
\end{example}
We let
\begin{equation}
\deg(\n^m)=\sum_{j=1}^m\deg(n_j)+(m-1)\frac{d}{p}\,.
\end{equation}
We can now define two operations that will be used to graft cumulants (seen as Feynman diagrams) respectively on themselves and between each other.
\begin{definition}
We define $A_j:\mcS'(\SS)\times\mcV^{m+1}\to\mcV^m$ and $B_j:\mcS'(\SS)\times\mcV^{m_1+1}\times\mcV^{m_2+1}\to\mcV^m$, with $m_1+m_2=m+1$ by
\begin{equation}
A_j(G,U)(\w_1,\dots,\w_m)=\int_{\SS^2}U\big(\w_1,\dots,(w_1^j,v_1),(v_2,w_2^j),\dots,\w_m\big)G(v_1-v_2)\dint v_1\dint v_2\,.
\end{equation}
\begin{multline}
B_j(G,U,V)(\w_1,\dots,\w_m)=\int_{\SS^2}U\big(\w_1,\dots,(w_1^j,v_1),\dots,\w_{m_1}\big)G(v_1-v_2)\\\times V\big((v_2,w_2^j),\w_{m_1+1},\dots,\w_{m_1+m_2+1}\big)\dint v_1\dint v_2\,.
\end{multline}
\end{definition}
\begin{lemma}\label{estimateAB}
The following estimates hold.
\begin{equation}
    \|A_j(G,U)\|_{\mcV^m}\lesssim\|G\|_{L^{1,q}_{t,x}}\|U\|_{\mcV^{m+1}}\,.
\end{equation}
We have denoted by $q$ the Hölder conjugate of $p$, \ie the only $q>1$ such that $1/p+1/q=1$, with the convention that $q=\infty$ if $p=1$. Moreover, with $m_1+m_2=m+1$,
\begin{equation}
    \|B_j(G,U,V)\|_{\mcV^m}\lesssim \|U\|_{\mcV^{m_1+1}}\|G\|_{L_{t,x}^1}\|V\|_{\mcV^{m_2+1}}\,.
\end{equation}
\end{lemma}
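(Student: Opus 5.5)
These are two bounds of Young/Hölder type, and I will prove each by writing out the integral and estimating the variables in the order imposed by the mixed norm of $\mcV^m$. Recall that this norm takes a supremum over $w_1^1$ and over the times $t_2,\dots,t_m$, an $L^p$ norm in the spatial variables $x_1^{2:m}$, and an $L^1$ norm in all the second variables $w_2^{1:m}$. In both bounds I pull the absolute value inside the $v_1,v_2$ integrals and then use Fubini/Minkowski to move the $L^1$ integrations past the kernel.

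\emph{The estimate on $B_j$.} Take for definiteness the configuration in which the $U$-leg carries $w_1^1$; the other case is symmetric, with the roles of $U$ and $V$ exchanged. Fix $v_1$. Then $\int|G(v_1-v_2)|\,|V((v_2,w_2^j),\dots)|$, integrated over the $L^1$ variables of $V$ and then over $v_2$, is controlled as follows. The variable $v_2$ now sits in the first slot of $V$, which is an $L^\infty$ slot for $\|V\|_{\mcV^{m_2+1}}$. Taking the supremum over $v_2$ there, Young's inequality in $v_2$ gives
\begin{equation*}
\|G\|_{L^1_{t,x}}\,\|V\|_{\mcV^{m_2+1}}\,.
\end{equation*}
The $L^p$ spatial variables belonging to $V$'s other first legs stay as $L^p$ norms, uniformly in $v_2$. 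What remains is $\int |U(\dots,(w_1^j,v_1),\dots)|\,dv_1$, in which $v_1$ occupies a second slot of $U$, that is, an $L^1$ slot. This yields $\|U\|_{\mcV^{m_1+1}}$. The $L^p$ norms over the first legs of $U$ and of $V$ factorize, since they involve disjoint sets of variables, so no loss occurs.

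\emph{The estimate on $A_j$.} Here $U$ has one more leg than the target, and the gluing identifies the second variable of leg $j$ with $v_1$ and the first variable of the extra leg with $v_2$. Fix all external variables. The variable $v_1$ is in an $L^1$ slot of $U$. The variable $v_2=(s,y)$ sits in a first-leg slot that is not $w_1^1$, so $U$ is controlled in $L^\infty_s L^p_y$ there. I will apply Hölder's inequality in $y$ with exponents $(p,q)$ and take $L^1$ in time, which bounds the convolution against $G(v_1-v_2)$ by
\begin{equation*}
\sup_{v_1}\|G(v_1-\bigcdot)\|_{L^{1,q}_{t,x}}=\|G\|_{L^{1,q}_{t,x}}
\end{equation*}
times the $L^\infty_sL^p_y$ norm of $U$ in that slot. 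Integrating $v_1$ in $L^1$ and then reinserting the outer $L^p$ norms in $x_1^{2:m}$ and the supremum over the remaining times gives $\|U\|_{\mcV^{m+1}}$.

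The main obstacle is the ordering of the mixed norms. The $L^p$ norm in $x_1^{2:m}$ is taken outside the inner $L^1$ integral over the $w_2$'s, so the Hölder step in $y$ must be carried out before the $L^p$ norm in the other first variables. I expect this to go through by Minkowski's integral inequality, since $p\ge1$ allows the integral over $v_1$ to be moved outside the $L^p$ norm. A secondary point is the degenerate case in which the extra leg would carry $w_1^1$; this is excluded by the symmetry of cumulants, which lets us relabel the legs.
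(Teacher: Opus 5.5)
\textbf{The $B_j$ estimate is fine.} Your argument there is correct and is essentially the paper's Young-type argument. The glued variable $v_2$ sits in the root slot of $V$, whose supremum is taken outside all other norms in $\|V\|_{\mcV^{m_2+1}}$. So Minkowski's inequality moves the $v_2$-integral, and then the $v_1$-integral, outside the $L^p$ norms in the admissible direction.

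\textbf{The $A_j$ estimate has a genuine gap, exactly at the step you flag.} For fixed $v_1$, your H\"older--Young step in $v_2$ gives $\|G\|_{L^{1,q}_{t,x}}\sup_s\|U(\dots,(w_1^j,v_1),((s,\cdot),w_2^j),\dots)\|_{L^p_y}$. After integrating in $v_1$ (and in $w_2^{1:m}$), the $L^1_{v_1}$ integral sits \emph{outside} $L^\infty_sL^p_y$. In $\|U\|_{\mcV^{m+1}}$ the order is reversed: the $v_1$-integral is innermost, together with the $w_2^{1:m}$-integrals, and only afterwards come the $L^p$ norm in $y$ (jointly with $x_1^{2:m}$) and $\sup_s$. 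Minkowski gives $\sup_s\|\int f\,\mathrm{d}v_1\|_{L^p_y}\le\int\sup_s\|f\|_{L^p_y}\,\mathrm{d}v_1$, which is the opposite of what you need. So ``reinserting the outer norms gives $\|U\|_{\mcV^{m+1}}$'' is false.

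\textbf{This cannot be repaired: the first inequality is false for general $U\in\mcV^{m+1}$.} The $\mcV^{m+1}$ norm forgets how $v_1$ and $v_2$ are correlated. Take $m=j=1$ and
\begin{equation*}
U\big((w_1^1,w_2^1),(w_1^2,w_2^2)\big)=\mathbf{1}_{[-T,0]\times B_R}(w_1^2)\,\rho(w_2^1-w_1^2-a)\,h(w_2^2)\,.
\end{equation*}
Here $\rho,h\geq0$ have unit integral, $\rho$ is compactly supported, $a$ is chosen so that $c=\int\rho(z-a)G(z)\,\mathrm{d}z\neq0$, and the time window is shifted down if needed so that all arguments stay in the domain. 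Then $\|U\|_{\mcV^2}=|B_R|^{1/p}$. On the other hand $A_1(G,U)(w_1^1,w_2^1)=c\,T|B_R|\,h(w_2^1)$, so $\|A_1(G,U)\|_{\mcV^1}=|c|\,T|B_R|$. The ratio $|c|\,T|B_R|^{1-1/p}$ is unbounded as $T\to\infty$, for every $p$.

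\textbf{The paper's proof breaks at the same place.} Its $L^\infty$--$L^1$ step in $v_1$ produces $\sup_{v_1}|G(v_1-v_2)|=\|G\|_{L^\infty}$. This does not depend on $v_2$ and is not in $L^{1,q}_{t,x}$, yet the last line silently replaces it by $|G(v_2)|$.

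\textbf{What a correct bound would need.} Either a norm in which the integral over the glued second variable is taken outside the $L^\infty_tL^p_x$ norm of the glued first variable, or extra structure of the specific cumulant kernels to which Lemma~\ref{estimateABfancy} applies the estimate.
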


\begin{proof}
Let us begin with the estimate on $A_j$. We will keep in a generic way the notations $w=(t,x)$ and $v=(r,u)$. We have, by doing a $L^\infty-L^1$ Hölder inequality in $v_1$, then a $L^p-L^q$ Hölder inequality in $u_2$, and a $L^\infty-L^1$ Hölder inequality in $r_2$,
\begin{equs}
~&\|A_j(G,U)\|_{\mcV^m}\\&=\sup_{w_1}\sup_{t_{2:m}}\left(\int_{(\R^d)^{m-1}}\left(\int_{\SS^m}\left|\int_{\SS^2}U\big(\w_1,.,(w_1^j,v_1),(v_2,w_2^j),.,\w_m\big)G(v_1-v_2)\dint v_1\dint v_2\right|\dint w_2^{1:m}\right)^p\dint x_1^{2:m}\right)^\frac{1}{p}\\
&\leq\sup_{w_1}\sup_{t_{2:m}}\left(\int_{(\R^d)^{m-1}}\left(\int_{\SS}\int_{\SS^{m+1}}|U\big(\dots\big)|\dint v_1\dint w_2^{1:m}\times\sup_{v_1}|G(v_1-v_2)|\dint v_2\right)^p\dint x_1^{2:m}\right)^\frac{1}{p}\\
&\leq\sup_{w_1}\sup_{t_{2:m}}\Biggl(\int_{(\R^d)^{m-1}}\Biggl(\sup_{r_2}\Biggl(\int_{\R^d}\left(\int_{\SS^{m+1}}|U\big(\dots\big)|\dint v_1\dint w_2^{1:m}\right)^p\dint u_2\Biggl)^\frac{1}{p}\times\\
&~~~~~~~~\int_\R\left(\int_{\SS}\sup_{v_1}|G(v_1-v_2)|^q\dint u_2\right)^\frac{1}{q}\dint r_2\Biggl)^p\dint x_1^{2:m}\Biggl)^\frac{1}{p}\\
&\leq\int_\R\left(\int_{\SS}\sup_{v_1}|G(v_1-v_2)|^q\dint u_2\right)^\frac{1}{q}\dint r_2\sup_{w_1}\sup_{t_{2:m}}\sup_{r_2}\left(\int_{(\R^d)^{m}}\left(\int_{\SS^{m+1}}|U\big(\dots\big)|\dint v_1\dint w_2^{1:m}\right)^p\dint x_1^{2:m}\dint u_2\right)^\frac{1}{p}\\
&=\int_\R\left(\int_{\SS} |G(v_2)|^q\dint u_2\right)^\frac{1}{q}\dint r_2\times \|U\|_{\mcV^{m+1}},
\end{equs}
which concludes the proof of the first estimate. For the second one, the proof is slightly less involved, since we just have to take a supremum in $v_1$ in $G$, and a supremum in $v_2$ in $V$. We get
\begin{equs}
~&\|B_j(G,U,V)\|_{\mcV^m}\\
&=\sup_{w_1}\sup_{t_{2:m}}\Biggl(\int_{(\R^d)^{m-1}}\Biggl(\int_{\SS^m}\Biggl|\int_{\SS^2}U\big(\w_1,\dots,(w_1^j,v_1),\dots,\w_{m_1}\big)G(v_1-v_2)\times\\
&~~~~~~~~ V\big((v_2,w_2^j),\w_{m_1+1},\dots,\v_{m_1+m_2+1}\big)\dint v_1\dint v_2\Biggl|\dint w_2^{1:m}\Biggl)^p\dint x_1^{2:m}\Biggl)^\frac{1}{p}\\
&\leq\sup_{w_1}\sup_{t_{2:m}}\Biggl(\int_{(\R^d)^{m-1}}\Biggl(\int_{\SS^m}\int_{\SS^2}|U\big(\w_1,\dots,(w_1^j,v_1),\dots,\w_{m_1}\big)|\sup_{v_1}|G(v_1-v_2)|\times\\
&~~~~~~~~\sup_{v_2}|V\big((v_2,w_2^j),\w_{m_1+1},\dots,\v_{m_1+m_2+1}\big)|\dint v_1\dint v_2\dint w_2^{1:m}\Biggl)^p\dint x_1^{2:m}\Biggl)^\frac{1}{p}\\
&=\sup_{w_1}\sup_{t_{2:m_1}}\Biggl(\int_{(\R^d)^{m_1}}\Biggl(\int_{\SS^{m_1+1}}|U\big(\dots\big)|\dint v_1\dint w_2^{1:j-1}\dint w_2^{j+1:m_1}\Biggl)^p\dint x_1^{2:m_1}\Biggl)^\frac{1}{p}\times\sup_{v_1}\int_{\SS}|G(v_1-v_2)|\dint v_2\times\\
&~~~~\sup_{v_2}\sup_{t_{m_1+1:m_1+m_2+1}}\Biggl(\int_{(\R^d)^{m_2}}\Biggl(\int_{\SS^{m_2+1}}|V\big(\dots\big)|\dint w_2^j\dint w_2^{m_1+1:m_1+m_2+1}\Biggl)^p\dint x_1^{m_1+1:m_1+m_2+1}\Biggl)^\frac{1}{p}\\
&=\|U\|_{\mcV^{m_1+1}}\|G\|_{L_{t,x}^1}\|V\|_{\mcV^{m_2+1}}.
\end{equs}
\end{proof}
\begin{lemma}\label{estimateABfancy}
The following estimates hold.
\begin{equation}
\nnorm{A_j\big(\dot G_\mu,\mcK_\mu^{\n^m}\big)}_\mu\lesssim\mu^{-1+\alpha-d/p}\nnorm{\mcK_{\mu}^{\n^m}}_\mu.
\end{equation}
We have denoted by $q$ the Hölder conjugate of $p$, \ie the only $q>1$ such that $1/p+1/q=1$, with the convention that $q=\infty$ if $p=1$. Moreover, with $m_1+m_2=m+1$,
\begin{equation}
\nnorm{B_j\big(\dot G_\mu,\mcK_\mu^{\n^{m_1}},\mcK_\mu^{\n^{m_2}}\big)}_\mu\lesssim \mu^{-1+\alpha}\nnorm{\mcK_\mu^{\n^{m_1}}}_\mu\nnorm{\mcK_\mu^{\n^{m_2}}}_\mu.
\end{equation}
\end{lemma}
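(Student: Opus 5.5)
The plan is to deduce both bounds from Lemma~\ref{estimateAB}, once the smoothing kernels $K_\mu$ and the polynomial weights $\langle\bigcdot\rangle_{\mu,m}^{\alpha-\beta}$ have been pushed past the grafting operators. The starting point is the identity $\mcR_\mu K_\mu=\mathrm{Id}$, applied at the two internal points $v_1,v_2$. It gives
\begin{equs}
K_\mu^{\otimes 2m}A_j\big(\dot G_\mu,\mcK^{\n^m}_\mu\big)=A_j\big(\mcR_\mu^{\otimes 2}\dot G_\mu,K_\mu^{\otimes 2(m+1)}\mcK^{\n^m}_\mu\big)\,,
\end{equs}
where the grafting operator now acts on the smoothed cumulant, with the kernel $\mcR_\mu$ acting on each side of $\dot G_\mu$. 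The same identity holds for $B_j$, and it is legitimate because $\mcR_\mu$ is a differential operator that can be moved onto $\dot G_\mu$ by integration by parts. The cumulant here is indexed so that $m+1$ pairs appear before grafting and $m$ after. In this way the triple norm of the output is expressed through the smoothed input cumulants $K_\mu^{\otimes2(m+1)}\mcK$ together with the modified kernel $\mcR_\mu^{\otimes 2}\dot G_\mu$.

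The second step handles the weights. The output weight attached to the new pair $(w_1^j,w_2^j)$ is split using the triangle inequality \eqref{triangleineq}:
\begin{equs}
\langle w_1^j;w_2^j\rangle_\mu^{\alpha-\beta}\lesssim\langle w_1^j;v_1\rangle_\mu^{\alpha-\beta}\langle v_1;v_2\rangle_\mu^{\alpha-\beta}\langle v_2;w_2^j\rangle_\mu^{\alpha-\beta}\,.
\end{equs}
For $A_j$, the two outer factors are exactly the weights of the two pairs $(w_1^j,v_1)$ and $(v_2,w_2^j)$ of the input cumulant. For $B_j$, they are the weights of the relevant pairs in $U$ and in $V$ respectively. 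The middle factor is absorbed by the kernel. Then Lemma~\ref{estimateAB} yields
\begin{equs}
\nnorm{A_j(\dot G_\mu,\mcK)}_\mu\lesssim \big\|\langle\bigcdot\rangle_\mu^{\alpha-\beta}\mcR_\mu^{\otimes2}\dot G_\mu\big\|_{L^{1,q}_{t,x}}\nnorm{\mcK}_\mu\,,
\end{equs}
and similarly for $B_j$ with the $L^1_{t,x}$ norm in place of $L^{1,q}_{t,x}$.

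The third step estimates the kernel, and it is the main point to check. The weight costs nothing because $\langle\bigcdot\rangle_\mu^{\alpha-\beta}$ is bounded on the support of $\dot G_\mu$ up to the slow tail. I expect to use Lemma~\ref{estimdotGmu} here, together with the fact that $\mcR_\mu^{\otimes 2}\dot G_\mu$ is supported in a time window of length $\mu^\alpha$ and is bounded pointwise by $\mu^{-1}\mcS_\mu\hat G$. The profile $\hat G$ decays like $\langle x\rangle^{-d-\alpha}$ in the fractional case; in the Gaussian case it has Gaussian decay.

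In the fractional case the weight $\langle\bigcdot\rangle^{\alpha-\beta}$ has exponent $\alpha-\beta<\alpha$, so it remains integrable against this tail. This is where the choice of exponent below $\alpha$ matters. Scaling then gives
\begin{equs}
\big\|\langle\bigcdot\rangle_\mu^{\alpha-\beta}\mcR_\mu^{\otimes2}\dot G_\mu\big\|_{L^1_{t,x}}\lesssim\mu^{-1+\alpha}\,,
\end{equs}
which is the exponent claimed for $B_j$. For the $L^{1,q}_{t,x}$ norm, the spatial $L^q$ norm of $\mu^{-d}\hat g(\bigcdot/\mu)$ scales as $\mu^{-d(1-1/q)}=\mu^{-d/p}$. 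Integrating over the time window of length $\mu^\alpha$ and including the factor $\mu^{-1}$ gives $\mu^{-1+\alpha-d/p}$, which is the exponent claimed for $A_j$.

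The main obstacle is the $L^q$ integrability of the weighted fractional tail. When $p=d/(2\alpha-2\beta)>1$, we need $(d+\alpha-(\alpha-\beta))q>d$, which holds. We also need the kernel to be locally $L^q$ in space. This is guaranteed because $\mcR_\mu^{\otimes2}$ applied to the cut-off kernel $\dot G_\mu$, which lives at times $t\sim\mu^\alpha$, produces a smooth kernel.
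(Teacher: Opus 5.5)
Your proposal is correct and is essentially the paper's proof: you insert $\mcR_\mu K_\mu=\mathrm{Id}$ at the two grafted points so that the input cumulant appears smoothed and $\dot G_\mu$ is replaced by $\mcR_\mu^{2}\dot G_\mu$, split $\langle w_1^j;w_2^j\rangle_\mu^{\alpha-\beta}$ through $v_1,v_2$ using \eqref{triangleineq}, apply Lemma~\ref{estimateAB}, and conclude with the weighted $L^1_{t,x}$ and $L^{1,q}_{t,x}$ bounds of Lemma~\ref{estimdotGmu}, which indeed need the weight exponent $\alpha-\beta$ to be strictly below $\alpha$ in the fractional case. One harmless slip: with the paper's normalisation $\mcS_\mu\psi(t,x)=\mu^{-d-\alpha}\psi(t/\mu^\alpha,x/\mu)$, the pointwise bound should read $|\mcR_\mu^2\dot G_\mu|\lesssim\mu^{-1+\alpha}\mcS_\mu\hat G$ rather than $\mu^{-1}\mcS_\mu\hat G$, and this corrected form is exactly what your subsequent scaling computation uses.
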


\begin{proof}
We begin by recalling the estimate \eqref{triangleineq} that gives
\begin{equs}
\langle w_1^j;w_2^j\rangle_{\mu}^{\alpha-\beta}&\lesssim\langle w_1^j;v_1\rangle_{\mu}^{\alpha-\beta}\langle v_1;v_2\rangle_{\mu}^{\alpha-\beta}\langle v_2;w_2^j\rangle_{\mu}^{\alpha-\beta}.
\end{equs}
Using it and the fact that $\mcR^*_\mu K_\mu=\delta_0$, we get that for $U\in\mcV^{m+1}$,
\begin{equs}
\langle\bigcdot\rangle_{\mu,m}^{\alpha-\beta} \Big(K_\mu^{\otimes 2m}A_j(\dot G_\mu,U)\Big)=A_j\Big(\langle\bigcdot\rangle_\mu^{\alpha-\beta}\mcR_\mu^2 \dot G_\mu,\langle\bigcdot\rangle_{\mu,m+1}^{\alpha-\beta}\big(K_\mu^{\otimes2(m+1)}U\big)\Big).
\end{equs}
and for $U\in\mcV^{m_1+1}$ and $V\in\mcV^{m_2+1}$ with $m_1+m_2=m+1$,
\begin{equs}
\langle\bigcdot\rangle_{\mu,m}^{\alpha-\beta}\Big(K_\mu^{\otimes 2m}B_j(\dot G_\mu,U,V)\Big)=B_j\Big(\langle\bigcdot\rangle_\mu^{\alpha-\beta}\mcR_\mu^2\dot G_\mu,\langle\bigcdot\rangle_{\mu,m_1}^{\alpha-\beta}\big(K_\mu^{\otimes 2(m_1+1)}U\big),\langle\bigcdot\rangle_{\mu,m_2}^{\alpha-\beta}\big(K_\mu^{\otimes 2(m_2+1)}V\big)\Big).
\end{equs}
From that, using the fact that $\|\langle\bigcdot\rangle_\mu^{\alpha-\beta}\mcR_\mu^2\dot G_\mu\|_{L_{t,x}^1}\lesssim\mu^{-1+\alpha}$ and $\|\langle\bigcdot\rangle_\mu^{\alpha-\beta}\mcR_\mu^2\dot G_\mu\|_{L_{t,x}^{1,q}}\lesssim\mu^{-1+\alpha-d/p}$ (see Lemma \ref{estimdotGmu}), where we recall that $1/p+1/q=1$, we get, using Lemma \ref{estimateAB},
\begin{equs}
\Big\|\langle\bigcdot\rangle_{\mu,m}^{\alpha-\beta}\Big( K_\mu^{\otimes 2m}A_j(\dot G_\mu,U)\Big)\Big\|_{\mcV^{m}}\lesssim\mu^{-1+\alpha-d/p}\Big\|\langle\bigcdot\rangle_{\mu,m+1}^{\alpha-\beta}\Big(K_\mu^{\otimes 2(m+1)}U\Big)\Big\|_{\mcV^{m+1}},
\end{equs}
and
\begin{multline*}
    \Big\|\langle\bigcdot\rangle_{\mu,m}^{\alpha-\beta}\Big(K_\mu^{\otimes 2m}B_j(G,U,V)\Big)\Big\|_{\mcV^{m}}\lesssim\mu^{-1+\alpha}\Big\|\langle\bigcdot\rangle_{\mu,m_1}^{\alpha-\beta}\Big(K_\mu^{\otimes 2(m_1+1)}U\Big)\Big\|_{\mcV^{m_1+1}}\times\\ \Big\|\langle\bigcdot\rangle_{\mu,m_2}^{\alpha-\beta}\Big(K_\mu^{\otimes 2(m_2+1)}V\Big)\Big\|_{\mcV^{m_2+1}},
\end{multline*}
which allows to conclude.
\end{proof}
We define on give results of Taylor-type expansions. These statements are now classical and we give pointers to existing proofs.
\begin{definition}
Let $U\in\mcV^1$. We define, for $w_1\in\Lambda$,
\begin{equation}
    (\I U)(w_1)=\int_{\SS}U(w_1,w_2)\dint w_2\,,
\end{equation}
as well as, for $k\in\N^{d+1}$ and $w_1,w_2\in\Lambda$,
\begin{equation}
    (\L^k U)(w_1,w_2)=\frac{|k|}{k!}\int_0^1(1-\tau)^{|k|-1}\tau^{-d-\alpha}(X^kU)\big(w_1,w_1+(w_2-w_1)/\tau\big)\dint\tau\,.
\end{equation}
We have used the notation $X^k(w_1,w_2)=(w_1-w_2)^k$.
\end{definition}

\begin{lemma}\label{taylor}
From the integral form of the Taylor formula, we have, for $U\in\mcV^1$ and $w_1,w_2\in\Lambda$,
\begin{equation}
    U(w_1,w_2)=(\I U)(w_1)\delta(w_2-w_1)+\sum_{|k|_\mfs=2}\D^k_{w_2}(\L^kU)(w_1,w_2)\,.
\end{equation}
Moreover, we have the following estimate
\begin{equation}\label{eq:estimI}
    |\I U(w_1)|\leq\|U\|_{\mcV^1}\,.
\end{equation}
\end{lemma}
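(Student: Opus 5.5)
The plan is to test the identity against a smooth function $\varphi$ in the $w_2$ variable and use a Taylor expansion of $\varphi$ around $w_1$; the estimate on $\I U$ will follow directly from the definition of the $\mcV^1$ norm.

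\textbf{The identity.} Fix $w_1$ and set $h=w_2-w_1$. Testing the claimed identity against $\varphi$ means we must show
\[
\int U(w_1,w_2)\varphi(w_2)\dint w_2=(\I U)(w_1)\varphi(w_1)+\sum_{|k|_\mfs=2}(-1)^{|k|}\int(\L^kU)(w_1,w_2)\,\D^k\varphi(w_2)\dint w_2 .
\]
\begin{itemize}
\item Write the integral Taylor formula
\[
\varphi(w_1+h)=\varphi(w_1)+\sum_{k}\frac{|k|}{k!}\,h^k\int_0^1(1-\tau)^{|k|-1}\,\D^k\varphi(w_1+\tau h)\dint\tau ,
\]
with remainder multi-indices $k$ of parabolic order $|k|_\mfs=2$. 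For $\alpha<2$ the time direction enters only at order $0$; the multi-indices in the remainder are the purely spatial ones of Euclidean length $2$, together with the lower-order ones.
\item Integrate against $U(w_1,w_1+h)$ in $h$. The zeroth-order term gives $(\I U)(w_1)\varphi(w_1)$.
\item In each remainder term, change variables $h'=\tau h$. This produces the Jacobian $\tau^{-d-\alpha}$ and the factor $(X^kU)(w_1,w_1+h'/\tau)$, up to the power of $\tau$ coming from $h^k=\tau^{-|k|}h'^k$ and the sign convention in $X^k$. These powers of $\tau$ are absorbed in the definition of $\L^k$.
\item Finally, integrate by parts in $w_2$ to move $\D^k$ from $\varphi$ onto $\L^kU$. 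This gives the distributional identity.
\end{itemize}

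\textbf{The estimate \eqref{eq:estimI}.} For $m=1$ the $\mcV^1$ norm is simply $\sup_{w_1}\int|U(w_1,w_2)|\dint w_2$, so
\[
|\I U(w_1)|\leq\int|U(w_1,w_2)|\dint w_2\leq\|U\|_{\mcV^1}.
\]

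\textbf{Expected obstacle.} The only delicate point is bookkeeping. One has to match the powers of $\tau$ and the signs $(-1)^{|k|}$ with the stated normalisation of $\L^k$ and $X^k$. One also has to make sure the expansion stops exactly at $|k|_\mfs=2$ under the fractional scaling $\mfs=(\alpha,1,\dots,1)$. For this I would follow the standard references, namely the analogous lemma in \cite{DGR23}.
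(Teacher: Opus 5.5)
Your route (pair with a test function $\varphi$, Taylor-expand $\varphi$ about $w_1$, rescale, integrate by parts) is the one the paper has in mind; it gives nothing beyond ``from the integral form of the Taylor formula''. Your proof of \eqref{eq:estimI} is correct, since for $m=1$ the $\mcV^1$ norm is just $\sup_{w_1}\int|U(w_1,w_2)|\dint w_2$.

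The gap is the Taylor step, and it is not bookkeeping. For general $\varphi$ there is no integral Taylor formula of the form $\varphi(w_1+h)=\varphi(w_1)+\sum_{|k|_\mfs=2}(\cdots)$. A spatial remainder of order two always comes with the first-order terms $\sum_i h_i\,\D_{x_i}\varphi(w_1)$. After integration against $U(w_1,w_1+h)$ these produce $\sum_i(\I X^{e_i}U)(w_1)\,\D_{x_2^i}\delta(w_2-w_1)$, with $e_i$ the $i$-th spatial unit multi-index, and these terms are absent from the identity. Likewise, your remark that for $\alpha<2$ the time direction ``enters only at order $0$'' throws away the increment $\varphi(t_1+h_0,\cdot)-\varphi(t_1,\cdot)$ whenever no $k$ with $k_0\geq1$ has $|k|_\mfs=2$ (e.g.\ for irrational $\alpha$).

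No normalisation of $\L^k$ fixes this. Take $U(w_1,w_2)=\rho(t_2-t_1)\,u(x_2-x_1)$ with $\rho,u$ smooth and compactly supported and $\int\rho=1$. Test with $\varphi$ depending on $x$ only, cut off in time far from the supports. Read $(w_2-w_1)/\tau$ as the parabolic dilation, which is what produces the Jacobian $\tau^{-d-\alpha}$. Then all terms with $k_0\geq1$ vanish and the spatial ones reproduce the exact second-order remainder, so
\[
\int U(w_1,w_2)\varphi(w_2)\dint w_2-(\I U)(w_1)\varphi(w_1)-\sum_{|k|_\mfs=2}(-1)^{|k|}\!\int(\L^kU)(w_1,w_2)\,\D^k\varphi(w_2)\dint w_2=\sum_{i=1}^d\D_{x_i}\varphi(w_1)\int u(y)\,y_i\dint y\,.
\]
This is nonzero as soon as $u$ has a nonzero first moment. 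Testing with $\varphi$ depending on $t$ only exhibits the lost time increment in the same way. Hence the identity, as stated, fails for general $U\in\mcV^1$.

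There is a further inconsistency. Your straight-line substitution $h'=\tau h$ in $\R^{1+d}$ has Jacobian $\tau^{-(d+1)}$, not $\tau^{-d-\alpha}$. Getting the latter forces the parabolic path $\tau\mapsto w_1+(\tau^\alpha h_0,\tau h_x)$. Along it the time component carries a factor $\alpha\tau^{\alpha-1}$ and mixed terms appear at second order, so for $k_0\geq1$ the weights $\frac{|k|}{k!}(1-\tau)^{|k|-1}$ cannot be matched either.

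A statement that can actually be proved splits the increment as $[\varphi(t_1+h_0,x_1+h_x)-\varphi(t_1,x_1+h_x)]+[\varphi(t_1,x_1+h_x)-\varphi(t_1,x_1)]$. Expand the first bracket to first order in time, giving a $\D_{t_2}$-remainder of parabolic order $\alpha$. Expand the second to second order in space, keeping the terms $(\I X^{e_i}U)\,\D_{x_2^i}\delta$. The lemma is applied in the proof of Proposition~\ref{estimatemoment} to the kernels $\langle\bigcdot\rangle_\mu^{-2}K_\nu^{\otimes2}\D_\nu\E[\upxi^n_{\nu,\eps}]$. For these, the first spatial moments vanish, because the law of $\xi_\eps$ and all kernels involved are invariant under $x\mapsto-x$. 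The first time moment does not vanish, since everything is causal. The time remainder is nevertheless irrelevant there, because $\deg(n)+\alpha-\beta>0$, the condition already used in step (ii). Your argument has to be redone for such a corrected statement; for the literal one it cannot be completed.
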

Before turning to the main result of this section, we recall the crucial property of cumulants of products of random variables. For a proof, see \cite[Proposition 3.2.1]{PT11}.
\begin{lemma}\label{cumuofproduct}
Let $X_1,\dots,X_m$ as above, and $Y$ and $Z$ real random variables, we have
\begin{equation}
\mcK(X_1,\dots,X_m,YZ)=\mcK(X_1,\dots,X_m,Y,Z)+\sum_{I\sqcup J=[m]}\mcK(X_I,Y)\mcK(X_J,Z)\,,
\end{equation}
where we have denoted $X_I=\{X_i\colon i\in I\}$.
\end{lemma}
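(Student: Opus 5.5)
The statement is the classical Leonov--Shiryaev formula in its simplest instance, so I would prove it with the combinatorics of set partitions. This goes through the moment--cumulant relations rather than through the characteristic-function definition directly. Assume throughout that all the variables have finite moments of every order, as is the case for our polynomial functionals of the Gaussian noise.

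The plan has three steps.

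\emph{Step 1: moment--cumulant relations.} For any finite family $(V_a)_{a\in A}$ we have
\begin{equation*}
\E\Big[\prod_{a\in A}V_a\Big]=\sum_{\pi\in\mathcal{P}(A)}\prod_{B\in\pi}\mcK(V_B)\,,\qquad
\mcK(V_A)=\sum_{\pi\in\mathcal{P}(A)}(|\pi|-1)!\,(-1)^{|\pi|-1}\prod_{B\in\pi}\E\Big[\prod_{b\in B}V_b\Big]\,.
\end{equation*}
Here $\mathcal{P}(A)$ is the lattice of partitions of $A$. Both identities come from expanding $\log\E[\exp(\i\sum_a t_aV_a)]$ and its exponential as formal power series and extracting the multilinear coefficient. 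The second identity is the Möbius inversion of the first on $\mathcal{P}(A)$.

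\emph{Step 2: the general formula.} Let $A=[m]\sqcup\{y,z\}$ and let $\sigma$ be the partition whose blocks are the singletons $\{i\}$, $i\in[m]$, together with $\{y,z\}$. Write $W_i=X_i$ for $i\in[m]$, $W_y=Y$, $W_z=Z$. I would show
\begin{equation*}
\mcK(X_1,\dots,X_m,YZ)=\sum_{\pi\in\mathcal{P}(A)\,:\,\pi\vee\sigma=\hat 1}\ \prod_{B\in\pi}\mcK(W_B)\,.
\end{equation*}
First, apply the inversion formula of Step 1 to the $m+1$ variables $X_1,\dots,X_m,YZ$. Partitions of these $m+1$ variables are exactly the partitions $\rho\geq\sigma$ of $A$.

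Next, expand every moment appearing there by the first formula, over partitions $\pi\leq\rho$ of $A$.

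Then exchange the two sums. This produces the coefficient
\begin{equation*}
\sum_{\rho\geq\pi\vee\sigma}\mu(\rho,\hat 1)\,,
\end{equation*}
where $\mu$ is the Möbius function of the partition lattice. Since the interval $[\pi\vee\sigma,\hat 1]$ is itself a partition lattice, this sum equals $\delta_{\pi\vee\sigma,\hat 1}$. Recognising it requires identifying $(|\rho|-1)!(-1)^{|\rho|-1}$ as $\mu(\rho,\hat 1)$ on that interval, and this bookkeeping is the main obstacle I expect.

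\emph{Step 3: specialisation to our $\sigma$.} The condition $\pi\vee\sigma=\hat 1$ says that merging the block of $\pi$ containing $y$ with the block containing $z$ yields the whole of $A$. Hence $\pi$ has at most two blocks, and there are two cases.
\begin{itemize}
\item If $\pi=\hat 1$ is a single block, its contribution is $\mcK(X_1,\dots,X_m,Y,Z)$.
\item Otherwise $\pi=\{I\cup\{y\},\,J\cup\{z\}\}$ with $I\sqcup J=[m]$, and its contribution is $\mcK(X_I,Y)\mcK(X_J,Z)$.
\end{itemize}
Every ordered splitting $I\sqcup J=[m]$ occurs exactly once, since the blocks are distinguished by whether they contain $y$ or $z$. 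Summing the two cases gives exactly the claimed identity.

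As an alternative that avoids the general lattice argument in Step 2, I would argue by induction on $m$, using only the first formula of Step 1. Write
\begin{equation*}
\E\Big[\prod_{i\in[m]}X_i\cdot YZ\Big]
\end{equation*}
in two ways: first by treating $YZ$ as a single variable, and second by expanding over partitions of $A$. Each partition of $A$ is grouped according to the blocks $\rho\geq\sigma$ it generates. The induction hypothesis identifies all lower-order cumulant terms on both sides, and the remaining top-order terms give the claim.
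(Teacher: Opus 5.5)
Your proposal is correct and matches the paper's approach. The paper gives no argument of its own and simply cites \cite[Proposition 3.2.1]{PT11}, which is the Leonov--Shiryaev/Malyshev formula $\mcK(X_1,\dots,X_m,YZ)=\sum_{\pi\vee\sigma=\hat 1}\prod_{B\in\pi}\mcK(W_B)$ obtained by M\"obius inversion on the partition lattice; your Step~3 correctly specialises it to $\sigma$ with the single non-trivial block $\{y,z\}$, and your inductive alternative is also sound. The step you flag as the main obstacle is routine: $[\rho,\hat 1]\cong\mathcal{P}(|\rho|)$ gives $\mu(\rho,\hat 1)=(-1)^{|\rho|-1}(|\rho|-1)!$, and $\sum_{\tau\leq\rho\leq\hat 1}\mu(\rho,\hat 1)=\delta_{\tau,\hat 1}$ holds in any finite poset.
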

\begin{proposition}\label{estimatemoment}
There is a choice of constants $\mfc_{\eps}^{n}$ such that for every $m\in\N_{\geq1}$ and $\n^m$,
\begin{equation}
\nnorm{\mcK_{\eps,\mu}^{\n^m}}_\mu \lesssim \mu^{\deg(\n^m)}\,,
\end{equation}
uniformly in $\eps$.
\end{proposition}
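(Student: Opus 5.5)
The proof is a double induction in the style of \cite{Duc21,DGR23}: an outer induction on the total order $\sum_j n_j$ and, for fixed total order, an inner induction on the number $m$ of arguments in decreasing order. The base cases are immediate. For $\sum_j n_j=0$, the force coefficient $\upxi^0_{\eps,\mu}$ is just $\xi_\eps\delta$, so its cumulants vanish for $m\ge3$ (Gaussianity), vanish for $m=1$ (centred noise), and for $m=2$ reduce to the covariance $\mathrm{Cov}$. The covariance bound follows from $|\mathrm{Cov}(x)|\lesssim|x|^{-2(\alpha-\beta)}$ integrated in $L^p_x$ with $p=1\vee d/(2\alpha-2\beta)$, after smoothing with $K_\mu$. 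This gives exactly $\mu^{2\deg(0)+d/p}$. For fixed $n$, cumulants with $m$ large enough relative to $\sum_j n_j$ vanish, since each coefficient is a polynomial of degree $n_j+1$ in the noise. This provides the starting point of the downward induction in $m$.

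For the inductive step, I will differentiate $\mcK^{\n^m}_{\eps,\mu}$ in $\mu$ using the hierarchy \eqref{eq:hierarchyxi}. By multilinearity and Lemma~\ref{cumuofproduct}, $\D_\mu\mcK^{\n^m}_{\eps,\mu}$ splits into two kinds of terms. The first kind is $A_j(\dot G_\mu,\mcK^{\n'^{m+1}}_{\eps,\mu})$, where the $j$-th entry $n_j$ is split as $(i,n_j-1-i)$. The second kind is $B_j(\dot G_\mu,\mcK^{\n'^{m_1+1}}_{\eps,\mu},\mcK^{\n''^{m_2+1}}_{\eps,\mu})$, summed over splittings of the remaining arguments. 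All cumulants appearing on the right are of strictly lower order, or of the same order with more arguments, so they are controlled by the induction hypothesis. Lemma~\ref{estimateABfancy} then gives
\[
\nnorm{\D_\mu\mcK^{\n^m}_{\eps,\mu}}_\mu\lesssim \mu^{-1+\alpha-d/p}\mu^{\deg(\n'^{m+1})}+\mu^{-1+\alpha}\mu^{\deg(\n'^{m_1+1})+\deg(\n''^{m_2+1})}\,.
\]
The degrees are additive: $\deg(i)+\deg(n_j-1-i)+\alpha=\deg(n_j)$, and the extra $d/p$ from the additional argument in the $A_j$ term is exactly cancelled. Hence the right-hand side is $\lesssim\mu^{\deg(\n^m)-1}$. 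To pass from scale $\nu$ to scale $\mu$ inside the triple norm, I will use \eqref{eq:Kmunu}-type kernel comparisons, which show that $\nnorm{\cdot}_\mu\lesssim\nnorm{\cdot}_\nu$ for $\nu\le\mu$.

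When $\deg(\n^m)>0$, the irrelevant case, I integrate from $\mu=0$. The initial value vanishes, or is the deterministic Gaussian datum already handled. Integrating $\nu^{\deg(\n^m)-1}$ then gives the claim. The cases $m\ge2$ should all fall here: $\deg(\n^m)=\sum_j\deg(n_j)+(m-1)d/p$, and the choice of $p$ with $\beta>\alpha/2-d/4$ makes $2\deg(n)+d/p>0$ whenever that matters. If some $m=2$ low-order case were marginal, it would be treated like the $m=1$ case below using a Taylor expansion in both variables.

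The relevant case is $m=1$ with $n\le N_1$, where $\deg(n)\le0$. These expectations $\E\upxi^n_{\eps,\mu}$ are deterministic and translation invariant (in the non-HO setting), and they carry the counterterms. I will apply Lemma~\ref{taylor} to split $\mcK^{(n)}_{\eps,\mu}$ into a local part $(\I\mcK^{(n)}_{\eps,\mu})\delta$ and a remainder $\sum_{|k|_\mfs=2}\D^k\L^k\mcK^{(n)}_{\eps,\mu}$. The local part is a constant by translation invariance. I fix $\mfc^n_\eps$ by requiring that this constant vanish at $\mu=1$, i.e.\ by integrating the local part of the flow from $\mu=1$ downward. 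The bound \eqref{eq:estimI}, together with the derivative bound $\mu^{\deg(n)-1}$, then gives $|\I\mcK^{(n)}_{\eps,\mu}|\lesssim\mu^{\deg(n)}$, with a logarithm when $\deg(n)=0$ that is absorbed by an arbitrarily small loss. For the remainder, the derivatives $\D^k$ are absorbed into $K_\mu$ at cost $\mu^{-2}$, while $\L^k$ carries the weight $X^k$, which yields a gain $\mu^{|k|_\mfs}=\mu^{2}$ against the weight $\langle\cdot\rangle_\mu^{\alpha-\beta}$. The remainder thus has effective degree $\deg(n)+2$. I expect this gain to make it irrelevant, since $\deg(n)+2>-\alpha+\beta+2>0$ for $\alpha\le 2$. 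It can then be integrated from $0$, which proves the claim.

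The main obstacle is this remainder step in the fractional case. The weight exponent $\alpha-\beta<2$ is not strong enough to absorb $|X|^{2}$ directly, and the $\alpha$-dependent parabolic scaling makes $|k|_\mfs=2$ involve time derivatives only when $\alpha\le2$. I would first try Taylor expanding only to order $|k|_\mfs=\alpha$ (spatial orders below $\alpha$). This uses only a weight of order $\alpha-\beta+\epsilon$, reached by a slight adjustment, and exploits that odd spatial moments vanish by symmetry. The uniformity in $\eps$ follows since every bound above uses only the hierarchy and the base covariance estimate, both of which are uniform in $\eps$.
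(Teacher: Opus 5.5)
Your architecture matches the paper's in the following respects:
\begin{itemize}
\item the cumulant flow obtained from \eqref{eq:hierarchyxi} and Lemma~\ref{cumuofproduct};
\item the $A_j/B_j$ bounds of Lemma~\ref{estimateABfancy}, with exact additivity of degrees;
\item integration from $\mu=0$ when $\deg(\n^m)>0$;
\item for the expectations, a localisation via Lemma~\ref{taylor}, with $\mfc^n_\eps$ fixed so that the local part vanishes at $\mu=1$. This is the paper's choice $\mfc^n_\eps=-\int_0^1\I(K_\nu^{\otimes2}\D_\nu\E[\upxi^n_{\nu,\eps}])\dint\nu$.
\end{itemize}

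The gap is the Taylor remainder, which is the actual content of the renormalisation step. The induction hypothesis controls $\D_\nu\E[\upxi^n_{\nu,\eps}]$ only through $\|\langle\bigcdot\rangle_\nu^{\alpha-\beta}K_\nu^{\otimes2}\D_\nu\E[\upxi^n_{\nu,\eps}]\|_{\mcV^1}$. Since $\alpha-\beta<2$, the quantity $|X^k|\langle X\rangle_\nu^{-(\alpha-\beta)}$ is unbounded for $|k|_\mfs=2$. This holds for every $\alpha\le2$, the Laplacian included, so the gain of $2$ you claim from $\L^k$ is not available.

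You flag this yourself, but the fallback does not close. A remainder of parabolic order $\alpha$ would need a weight of order $\alpha$. That weight is excluded for $\alpha<2$: Lemma~\ref{estimdotGmu} requires $\varsigma<\alpha$ because of the $|x|^{-d-\alpha}$ tails of the fractional kernel. For non-integer $\alpha$ there is moreover no Taylor term of spatial order $\alpha$. Finally, you expand the unsmoothed $\mcK^{(n)}_{\eps,\mu}$, but $X^k$ does not commute with $K_\nu^{\otimes2}$. So your remainder is not expressed in terms of the quantity the induction actually controls.

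The missing idea, taken from \cite{DGR23}, is to insert the cutoff $\langle\bigcdot\rangle_\mu^{-2}$ and Taylor-expand the smoothed derivative $\langle\bigcdot\rangle_\mu^{-2}K_\nu^{\otimes2}\D_\nu\E[\upxi^n_{\nu,\eps}]$. The resulting pieces are handled as follows.
\begin{itemize}
\item On the cut-off piece, $|X^k|\langle X\rangle_\mu^{-2}\lesssim\mu^2$ is bounded. Lemma~\ref{estimateiii} then yields a positive power of $\nu/\mu$, even after paying $\mu^{-2}$ for $\D^kK_\mu$.
\item The complementary pieces $(\langle\bigcdot\rangle_\mu^{-2}-1)(\cdots)$, both inside $\I$ and outside it, vanish quadratically on the diagonal. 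Estimate \eqref{estimweight} gives them the factor $\mu^{\beta-\alpha}\nu^{\alpha-\beta}$.
\item The commutator $\D_\nu\E[\upxi^n_{\nu,\eps}]-K_\nu^{\otimes2}\D_\nu\E[\upxi^n_{\nu,\eps}]$ gains a positive power of $\nu/\mu$, by comparing $K_\mu$ with $K_\mu K_\nu$ (cf.\ \eqref{eq:comKmuKnu1}).
\end{itemize}
These gains suffice because one only needs $\deg(n)+\alpha-\beta>0$. This holds since $\deg(n)\ge-\alpha+2\beta$ for $n\ge1$, so a gain of $2$ is never required.

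Separately, you should not hedge on relevant cumulants with $m\ge2$. For entries $n_j\ge1$, the paper excludes them by a direct degree count, using $\beta>\alpha/2-d/4$: $\deg(\n^m)\ge(m-2)\alpha+2\beta>0$ if $p=d/(2\alpha-2\beta)$, and $\deg(\n^m)>md/2-d\ge0$ if $p=1$. No several-variable Taylor expansion is needed there.
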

The reader can convince him/herself that, from the proof of this proposition, one also gets the following.
\begin{corollary}\label{estimatemomentdmu}
    The estimate above holds true for cumulants of the type
    \begin{equation}
        \mcK\left(\upxi_{\eps,\mu}^{n_1}(\z_1),\dots,\D_\mu\upxi_{\eps,\mu}^{n_j}(\z_j),\dots,\upxi_{\eps,\mu}^{n_m}(\z_m)\right)\,,
    \end{equation}
    with the difference that the bound scales as $\mu^{\deg(\n^m)-1}$.
\end{corollary}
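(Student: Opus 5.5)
The plan is to bound the differentiated cumulant directly at fixed scale $\mu$, using the flow equation \eqref{eq:hierarchyxi} and the already established bounds of Proposition~\ref{estimatemoment}. No integration in $\mu$ and no new renormalisation is involved.

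\textbf{Step 1: reduce to grafted cumulants.} If $n_j=0$, or more generally for the purely counterterm parts, the force coefficient is constant in $\mu$ and the cumulant vanishes. So assume $n_j\geq1$. By multilinearity of cumulants and \eqref{eq:hierarchyxi},
\begin{equs}
\D_\mu\upxi^{n_j}_{\eps,\mu}(z^j_1,z^j_2)=-\sum_{i=0}^{n_j-1}\int_{\SS^2}\upxi^{i}_{\eps,\mu}(z^j_1,v_1)\dot G_\mu(v_1-v_2)\upxi^{n_j-1-i}_{\eps,\mu}(v_2,z^j_2)\dint v_1\dint v_2\,.
\end{equs}
Pull the integral out of the cumulant and apply Lemma~\ref{cumuofproduct} with $Y=\upxi^i_{\eps,\mu}(z^j_1,v_1)$ and $Z=\upxi^{n_j-1-i}_{\eps,\mu}(v_2,z^j_2)$. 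The differentiated cumulant becomes $-\sum_i$ of two kinds of terms:
\begin{itemize}
\item a connected term $A_j\big(\dot G_\mu,\mcK^{\n^{m+1}}_{\eps,\mu}\big)$, where $\n^{m+1}$ is obtained from $\n^m$ by replacing $n_j$ with the pair $(i,n_j-1-i)$;
\item a sum over splittings $I\sqcup J$ of the remaining indices of the disconnected products $B_j\big(\dot G_\mu,\mcK^{\n^{m_1}}_{\eps,\mu},\mcK^{\n^{m_2}}_{\eps,\mu}\big)$, with $m_1+m_2=m+1$.
\end{itemize}

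\textbf{Step 2: estimate each term.} Apply Lemma~\ref{estimateABfancy}; the weights $\langle\bigcdot\rangle^{\alpha-\beta}_{\mu,m}$ and the kernels $K_\mu$ are already handled there. Then insert the bounds $\nnorm{\mcK^{\n^{k}}_{\eps,\mu}}_\mu\lesssim\mu^{\deg(\n^k)}$ from Proposition~\ref{estimatemoment}. These are valid for every tuple, in particular for the split tuples, which have lower total order.

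\textbf{Step 3: power counting.} The only genuinely non-trivial point is that both kinds of term give exactly $\mu^{\deg(\n^m)-1}$. The key identity is
\begin{equs}
\deg(i)+\deg(n_j-1-i)=\deg(n_j)-\alpha\,.
\end{equs}
\begin{itemize}
\item For $A_j$, the prefactor is $\mu^{-1+\alpha-d/p}$ and the bound is $\mu^{\deg(\n^{m+1})}$. The tuple $\n^{m+1}$ has $m+1$ entries, so it carries $m\,d/p$, one $d/p$ more than $\n^m$. This extra $d/p$ cancels the $-d/p$ in the prefactor, and the $\alpha$ cancels against the identity above, leaving $\mu^{\deg(\n^m)-1}$.
\item For $B_j$, the prefactor is $\mu^{-1+\alpha}$ and the bound is $\mu^{\deg(\n^{m_1})+\deg(\n^{m_2})}$. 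The two tuples carry $(m_1-1)+(m_2-1)=m-1$ factors of $d/p$, the same as $\n^m$. Their $\deg(n)$ contributions sum to $\deg(\n^m)-\alpha$, so again the total is $\mu^{\deg(\n^m)-1}$.
\end{itemize}
The combinatorial sums over $i$ and over $I\sqcup J$ are finite and only affect constants, and all bounds are uniform in $\eps$.

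\textbf{Main obstacle.} I expect the only delicate point to be checking that the norm $\nnorm{\bigcdot}_\mu$ is applied with the same $\mu$ inside $K_\mu$ as in $\dot G_\mu$. This is needed so that Lemma~\ref{estimateABfancy} applies without a commutator term, and it holds since $\D_\mu$ acts only on the random field and not on the smoothing kernels in the norm.
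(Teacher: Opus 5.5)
Your proposal is correct and is essentially the paper's argument. The corollary is exactly the fixed-scale integrand bound in the induction step of the proof of Proposition~\ref{estimatemoment}: there $\D_\mu\upxi^{n_j}_{\eps,\mu}$ is expanded through \eqref{eq:hierarchyxi}, split by Lemma~\ref{cumuofproduct} into $A_j$ and $B_j$ terms, and each term is bounded via Lemma~\ref{estimateABfancy} together with the already established cumulant bounds. Your bookkeeping with the pair $(i,n_j-1-i)$ and the identity $\deg(i)+\deg(n_j-1-i)=\deg(n_j)-\alpha$ is cleaner than the index conventions in the paper's display of that step.
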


\begin{proof}[of Proposition \ref{estimatemoment}]
\textit{Base case.} We first tackle the term having $\n^2=(0,0)$ (the other cases being trivial as cumulants of order greater than $2$ of Gaussians vanish), that is, denoting $z_i=(s_i,y_i)$ and $w_i=(t_i,x_i)$,
\begin{equs}
\mcK\left(\upxi_{\eps,\mu}^{0}(z_1),\upxi_{\eps,\mu}^{0}(z_2)\right)=\langle\nabla\rangle^{2\alpha-2\beta-d}(y_1-y_2)\,,
\end{equs}
which behaves like $|y_1-y_2|^{-2\alpha+2\beta}$ on small scales. Also note that the latter is independent of $s_1$ and $s_2$. We first need to run some preliminary computations. We denote in a generic way $z=(s,y)$ and $w=(t,x)$. We have
\begin{equs}
\nnorm{\mcK_{\eps,\mu}^{\one^2}}_\mu=\sup_{w_1^1\in\SS}\sup_{t_2\in\R}\Biggl(\int_{\R^d}\Biggl(\int_{\SS^2}\langle w_1^1;w_2^1\rangle_\mu^{\alpha-\beta}\langle w_1^2;w_2^2\rangle_\mu^{\alpha-\beta}\left|\left(K_\mu^{\otimes4}\mcK_{\eps,\mu}^{\one^2}\right)(\w)\right|\dint w_2^1\dint w_2^2\Biggr)^p\dint x_1^2\Biggr)^\frac{1}{p}.
\end{equs}
With the changes of variables $w_2^1\to w_2^1+z_1$ and $w_2^2\to w_2^2+z_2$, the estimate $\langle w_1^i;w_2^i+z_i\rangle_\mu^{\alpha-\beta}\lesssim\langle  w_1^i;z_i\rangle_\mu^{\alpha-\beta}\langle w_2^i\rangle_\mu^{\alpha-\beta}$ for $i\in\{1,2\}$ (see \eqref{estimweight}), and the fact that $\|\langle\bigcdot\rangle_\mu^{\alpha-\beta} K_\mu\|_{L_{t,x}^1}\lesssim1$, we get that
\begin{equs}
~&\int_{\SS^2}\langle w_1^1;w_2^1\rangle_\mu^{\alpha-\beta}\langle w_1^2;w_2^2\rangle_\mu^{\alpha-\beta}\left(K_\mu^{\otimes4}\mcK_{\eps,\mu}^{\one^2}\right)(\w)\dint w_2^1\dint w_2^2\\
&\lesssim\int_{(\R^d)^2}\int_{\R^2}\langle w_1^1;z_1\rangle_\mu^{\alpha-\beta}\langle w_1^2;z_2\rangle_\mu^{\alpha-\beta} K_\mu(w_1^1-z_1)K_\mu(w_1^2-z_2)\dint s_1\dint s_2\mcK_{\eps,\mu}^{\one^2}(y_1,y_2)\dint y_1\dint y_2.
\end{equs}
With the help of Lemma \ref{intKmuweight}, we get that the quantity above is bounded by
\begin{equs}
~&\int_{(\R^d)^2} \langle x_1^1;y_1\rangle_\mu^{\alpha-\beta}\langle x_1^2;y_2\rangle_\mu^{\alpha-\beta}Q_\mu^{-2N}(x_1^1-y_1)Q_\mu^{-2N}(x_1^2-y_2)\mcK_{\eps,\mu}^{\one^2}(y_1,y_2)\dint y_1\dint y_2\\
&\lesssim\int_{\R^d}\langle x_1^1-x_1^2;y\rangle_\mu^{\alpha-\beta}Q_\mu^{-2N}(x_1^1-x_1^2-y)\langle\nabla\rangle^{2\alpha-2\beta-d}(y)\dint y.
\end{equs}
Note that we have used the invariance by translation of $\mcK_{\eps,\mu}^{\one^2}$, and again the fact that $\|\langle\bigcdot\rangle_\mu^{\alpha-\beta} K_\mu\|_{L_{t,x}^1}\lesssim1$. We finally get that 
\begin{equs}
\nnorm{\mcK_{\eps,\mu}^{\one^2}}_\mu&\lesssim\sup_{x_1^1}\left(\int_{\R^d}\left(\int_{\R^d}\langle x_1^1-x_1^2;y\rangle_\mu^{\alpha-\beta}Q_\mu^{-2N}(x_1^1-x_1^2-y)\langle\nabla\rangle^{2\alpha-2\beta-d}(y)\dint y\right)^p\dint x_1^2\right)^\frac{1}{p}\\
&\lesssim\mu^{d/p}\sup_{x_1^1}\left(\int_{\R^d}\left(\int_{\R^d}\langle x_1^1-x_1^2;y\rangle^{\alpha-\beta}\langle\nabla\rangle^{-2N}(x_1^1-x_1^2-y)\langle\nabla\rangle^{2\alpha-2\beta-d}(\mu y)\dint y\right)^p\dint x_1^2\right)^\frac{1}{p}\\
&=\mu^{d/p}\left(\int_{\R^d}\left(\int_{\R^d}\langle x;y\rangle^{\alpha-\beta}\langle\nabla\rangle^{-2N}(x-y)\langle\nabla\rangle^{2\alpha-2\beta-d}(\mu y)\dint y\right)^p\dint x\right)^\frac{1}{p}\\
\end{equs}
One can notice that the integrability of this latter quantity in large volume comes around naturally. To make sense of the convolution above, we require that $N>\beta$. We have chosen $p$ such that $\deg(\one^2)=-2\alpha+2\beta+d/p\leq0$, whether we are in the integrable case or not, which is crucial as we would otherwise get that the norm above is bounded by $1$ only. Finally, we have that
\begin{equs}
\nnorm{\mcK_{\eps,\mu}^{\one^2}}_\mu\lesssim\mu^{-2\alpha+2\beta+d/p},
\end{equs}
which concludes the base case.\\

\noindent\textit{Induction.} For the induction, we write the following flow equation on the cumulants using Lemma \ref{cumuofproduct}.
\begin{equs}\label{inductionmoment}
~&\D_\mu \mcK_{\eps,\mu}^{\n^m}(\z_1,\dots,\z_m)=\sum_{j=1}^m \mcK\left(\upxi_{\eps,\mu}^{n_1}(\z_1),\dots,\D_\mu \upxi_{\eps,\mu}^{n_j}(\z_j),\dots,\upxi_{\eps,\mu}^{n_m}(\z_m)\right)\\
&=-\sum_{j=1}^m\sum_{k=1}^{n_j-1}\int_{\SS^2} \mcK\left(\upxi_{\eps,\mu}^{n_1}(\z_1),\dots, \upxi_{\eps,\mu}^{k}(z_1^j,v_1) \upxi_{\eps,\mu}^{n_j-k}(v_2,z_2^j),\dots,\upxi_{\eps,\mu}^{n_m}(\z_m)\right)\dot G_\mu(v_1-v_2)\dint v_1\dint v_2\\
&=-\sum_{j=1}^m\sum_{k=1}^{n_j-1}\int_{\SS^2} \mcK_{\eps,\mu}^{(n_1,\dots, k,n_j-k,\dots,n_m)}(\z_1,\dots,(z_1^j,v_1),(v_2,z_2^j),\dots,\z_m)\dot G_\mu(v_1-v_2)\\
&~~~~-\sum_{I\sqcup J=\n^m\backslash\{j\}}\mcK_{\eps,\mu}^{(n_{I\cup\{k\}})}(\z_I,(z_1^j,v_1))\dot G_\mu(v_1-v_2)\mcK_{\eps,\mu}^{(n_{J\cup\{n_j-k\}})}((v_2,z_2^j),\z_J)\dint v_1\dint v_2\\
&=-\sum_{j=1}^m\sum_{k=1}^{n_j-1}A_j\bigg(\dot G_\mu,\mcK_{\eps,\mu}^{(n_1,\dots, k,n_j-k,\dots,n_m)}\bigg)-\sum_{I\sqcup J=\n^m\backslash\{j\}}B_j\bigg(\dot G_\mu,\mcK_{\eps,\mu}^{(n_{I\cup\{k\}})},\mcK_{\eps,\mu}^{(n_{J\cup\{n_j-k\}})}\bigg)
\end{equs}
We have used in the first inequality \eqref{eq:Kmunu} and the fact that, for $\nu\leq\mu$, $\langle\bigcdot\rangle_\mu\leq\langle\bigcdot\rangle_\nu$. Note that the cumulants do not depend on the order of their inputs, so that the notation in the penultimate line above is well-defined.

\begin{example}
    We continue Example \ref{examplecumulant} to show how this induction reads on the graphical representation. We have on the first diagram therein
    \begin{equs}
        \D_\mu\begin{tikzpicture}[scale=0.4,baseline=0.15cm]
            \draw[line width=0.25mm] (0,0.25) node[dot] {} -- (0,1.25) node[dot] {};
            \draw (0,-0.25) node[scale=0.7] {$z_1^1$};
            \draw (0, 1.8) node[scale=0.7] {$z_2^1$};
            \draw[line width=0.25mm] (1.25,0.25) node[dot] {} -- (1.25,1.25) node[dot] {};
            \draw (1.25,-0.25) node[scale=0.7] {$z_1^2$};
            \draw (1.25,1.8) node[scale=0.7] {$z_2^2$};
            \draw[line width=0.25mm, dashed] (0,0.25) -- (1.25,0.25);
            \draw[line width=0.25mm, dashed] (0,1.25) -- (1.25,1.25);
\end{tikzpicture}=-\begin{tikzpicture}[scale=0.4,baseline=0.15cm]
            \draw[color=red,line width=0.25mm] (0,0.25) node[dot] {} -- (0,1.25) node[dot] {};
            \draw (0,-0.25) node[scale=0.7] {$z_1^1$};
            \draw (0, 1.8) node[scale=0.7] {$z_2^1$};
            \draw[line width=0.25mm] (1.25,0.25) node[dot] {} -- (1.25,1.25) node[dot] {};
            \draw (1.25,-0.25) node[scale=0.7] {$z_1^2$};
            \draw (1.25,1.8) node[scale=0.7] {$z_2^2$};
            \draw[line width=0.25mm, dashed] (0,0.25) -- (1.25,0.25);
            \draw[line width=0.25mm, dashed] (0,1.25) -- (1.25,1.25);
\end{tikzpicture}-\begin{tikzpicture}[scale=0.4,baseline=0.15cm]
            \draw[line width=0.25mm] (0,0.25) node[dot] {} -- (0,1.25) node[dot] {};
            \draw (0,-0.25) node[scale=0.7] {$z_1^1$};
            \draw (0, 1.8) node[scale=0.7] {$z_2^1$};
            \draw[color=red, line width=0.25mm] (1.25,0.25) node[dot] {} -- (1.25,1.25) node[dot] {};
            \draw (1.25,-0.25) node[scale=0.7] {$z_1^2$};
            \draw (1.25,1.8) node[scale=0.7] {$z_2^2$};
            \draw[line width=0.25mm, dashed] (0,0.25) -- (1.25,0.25);
            \draw[line width=0.25mm, dashed] (0,1.25) -- (1.25,1.25);
\end{tikzpicture}\,.
    \end{equs}
    This one would be cut using a $A$-type operator in the following way
    \begin{equs}
        \begin{tikzpicture}[scale=0.4,baseline=0.15cm]
            \draw[color=red,line width=0.25mm] (0,0.25) node[dot] {} -- (0,1.25) node[dot] {};
            \draw (0,-0.25) node[scale=0.7] {$z_1^1$};
            \draw (0, 1.8) node[scale=0.7] {$z_2^1$};
            \draw[line width=0.25mm] (1.25,0.25) node[dot] {} -- (1.25,1.25) node[dot] {};
            \draw (1.25,-0.25) node[scale=0.7] {$z_1^2$};
            \draw (1.25,1.8) node[scale=0.7] {$z_2^2$};
            \draw[line width=0.25mm, dashed] (0,0.25) -- (1.25,0.25);
            \draw[line width=0.25mm, dashed] (0,1.25) -- (1.25,1.25);
\end{tikzpicture}\rightarrow \begin{tikzpicture}[scale=0.4,baseline=0.15cm]
            \draw[color=white,line width=0.25mm] (0,0.25) node[dot] {} -- (0,1.25) node[dot] {};
            \draw (0,-0.25) node[scale=0.7] {$z_1^1$};
            \draw (0, 1.8) node[scale=0.7] {$z_2^1$};
            \draw[line width=0.25mm] (1.25,0.25) node[dot] {} -- (1.25,1.25) node[dot] {};
            \draw (1.25,-0.25) node[scale=0.7] {$z_1^2$};
            \draw (1.25,1.8) node[scale=0.7] {$z_2^2$};
            \draw[line width=0.25mm, dashed] (0,0.25) -- (1.25,0.25);
            \draw[line width=0.25mm, dashed] (0,1.25) -- (1.25,1.25);
\end{tikzpicture}\,.
    \end{equs}
    This new diagram would represent for instance $\mcK_\epmu^{(0,0,1)}$. Pursuing the induction, this latter diagram would be split using a $B$-type operator, yielding two disconnected diagrams.
    \begin{equs}
        \begin{tikzpicture}[scale=0.4,baseline=0.15cm]
            \draw[color=white,line width=0.25mm] (0,0.25) node[dot] {} -- (0,1.25) node[dot] {};
            \draw (0,-0.25) node[scale=0.7] {$z_1^1$};
            \draw (0, 1.8) node[scale=0.7] {$z_2^1$};
            \draw[color=red, line width=0.25mm] (1.25,0.25) node[dot] {} -- (1.25,1.25) node[dot] {};
            \draw (1.25,-0.25) node[scale=0.7] {$z_1^2$};
            \draw (1.25,1.8) node[scale=0.7] {$z_2^2$};
            \draw[line width=0.25mm, dashed] (0,0.25) -- (1.25,0.25);
            \draw[line width=0.25mm, dashed] (0,1.25) -- (1.25,1.25);\,.
\end{tikzpicture}\rightarrow\begin{tikzpicture}[scale=0.4,baseline=0.15cm]
            \draw[color=white,line width=0.25mm] (0,0.25) node[dot] {} -- (0,1.25) node[dot] {};
            \draw (0,-0.25) node[scale=0.7] {$z_1^1$};
            \draw (0, 1.8) node[scale=0.7] {$z_2^1$};
            \draw[color=white, line width=0.25mm] (1.25,0.25) node[dot] {} -- (1.25,1.25) node[dot] {};
            \draw (1.25,-0.25) node[scale=0.7] {$z_1^2$};
            \draw (1.25,1.8) node[scale=0.7] {$z_2^2$};
            \draw[line width=0.25mm, dashed] (0,0.25) -- (1.25,0.25);
            \draw[line width=0.25mm, dashed] (0,1.25) -- (1.25,1.25);
\end{tikzpicture}\,.
    \end{equs}
    These two small diagrams are actually the base case of the induction since they represent the covariance of the noise $\mcK^{(0,0)}_\epmu$.
\end{example}

For irrelevant cumulants, \ie the ones having $\deg(\n^m)>0$, we have that $\mcK_{0,\eps}^{\n^m}=0$. Indeed, either $m=1$ and it is the expectation of a force term of positive regularity that vanishes when $\mu=0$, or $m\geq 2$ and at least one term in the cumulant is deterministic so it vanishes as well. We get, using Lemma \ref{estimateABfancy}, \eqref{eq:Kmunu} and the fact that $\langle\bigcdot\rangle_\mu\leq\langle\bigcdot\rangle_\nu$ for $\nu\leq\mu$,
\begin{equs}
~&\nnorm{\mcK_{\eps,\mu}^{\n^m}}_\mu\lesssim\int_0^\mu\nnorm{\D_\nu \mcK_{\nu,\eps}^{\n^m}}_\nu\dint\nu\\
&\lesssim\int_0^\mu\sum_{j=1}^m\sum_{k=1}^{n_j-1}\nu^{-1+\alpha-d/p}\nu^{\deg(n_1,\dots, k,n_j-k,\dots,n_m)}+\sum_{I\sqcup J=\n^m\backslash\{j\}}\nu^{-1+\alpha}\nu^{\deg(n_{I\cup\{k\}})}\nu^{\deg(n_{J\cup\{n_j-k\}})}\dint\nu\\
&\lesssim\int_0^\mu\nu^{-1+\alpha-d/p}\nu^{\deg(\n^m)+md/p-\alpha}+\nu^{-1+\alpha}\nu^{\deg(\n^m)-\alpha}\dint\nu\lesssim\int_0^\mu\nu^{\deg(\n^m)-1}\dint\nu\\
&\lesssim\mu^{\deg(\n^m)}.
\end{equs}

We now turn to the relevant cumulants, \ie the ones having $\deg(\n^m)\leq0$. It is paramount to see that this implies $m=1$. Indeed, if $d\geq2\alpha$, recalling the condition \eqref{condbeta}, then $p=d/(2\alpha-2\beta)$ and, if $m\geq2$,
\begin{equs}
\deg(\n^m)\geq m(-\alpha+2\beta)+(m-1)\frac{d}{p}=(m-2)\alpha+2\beta>0.
\end{equs}
If we are in the second case \ie $\alpha\leq2d$, either $p=d/(2\alpha-2\beta)$ and we proceed as above, or $p=1$ and
\begin{equs}
\deg(\n^m)\geq m(-\alpha+2\beta+d)-d> m\frac{d}{2}-d\geq0.
\end{equs}
Our goal is to use the Taylor formula \ref{taylor} to localize these terms, then renormalise the local part, and finally control the remainder. We have,
\begin{equs}
\E[\upxi_{\eps,\mu}^n]&=\mfc_\eps^n\delta+\int_0^\mu\D_\nu\E[\upxi_{\nu,\eps}^n]\dint\nu\\
&=\left(\mfc_\eps^n+\int_0^\mu\I\big(K_\nu^{\otimes2}\D_\nu\E[\upxi_{\nu,\eps}^n]\big)\dint\nu\right)\delta+\int_0^\mu\I\Big(\big(\langle\bigcdot\rangle_\mu^{-2}-1\big)\big(K_\nu^{\otimes2}\D_\nu\E[\upxi_{\nu,\eps}^n]\big)\Big)\dint\nu\times\delta\\
&~~~~+\sum_{|k|_\mfs=2}\int_0^\mu\D^k\L^k\big(\langle\bigcdot\rangle_\mu^{-2}K_\nu^{\otimes2}\D_\nu\E[\upxi_{\nu,\eps}^n]\big)\dint\nu+\int_0^\mu\left(\D_\nu\E[\upxi_{\nu,\eps}^n]-K_\nu^{\otimes2}\D_\nu\E[\upxi_{\nu,\eps}^n]\right)\dint\nu\\
&~~~~+\big(\langle\bigcdot\rangle_\mu^{-2}-1\big)\int_0^\mu K_\nu^{\otimes2}\D_\nu\E[\upxi_{\nu,\eps}^n]\dint\nu.
\end{equs}
Note that we have made a slight abuse of notation by denoting simply $\delta$ the distribution $(z_1,z_2)\mapsto\delta(z_2-z_1)$, and similarly for the weights. We have five terms to control.\\

\noindent\textit{(i)} For the first one, we need to make a choice of renormalising constant $\mfc_\eps^n$. Taking
\begin{equs}
\mfc_\eps^n=-\int_0^1\I(K_\nu^{\otimes2}\D_\nu\E[\upxi_{\nu,\eps}^n])\dint\nu,
\end{equs}
yields
\begin{equs}
\mfc_\eps^n+\int_0^\mu\I(K_\nu^{\otimes2}\D_\nu\E[\upxi_{\nu,\eps}^n])\dint\nu=-\int_\mu^1\I(K_\nu^{\otimes2}\D_\nu\E[\upxi_{\nu,\eps}^n])\dint\nu.
\end{equs}
By \eqref{eq:estimI}, we have the bound
\begin{equs}
\int_\mu^1\big|\I(K_\nu^{\otimes2}\D_\nu\E[\upxi_{\nu,\eps}^n]\big|\dint\nu&\leq\int_\mu^1\|K_\nu^{\otimes2}\D_\nu\E[\upxi_{\nu,\eps}^n]\|_{\mcV^1}\dint\nu\leq\int_\mu^1\big\|\langle\bigcdot\rangle_\mu^{\alpha-\beta}\big(K_\nu^{\otimes2}\D_\nu\E[\upxi_{\nu,\eps}^n]\big)\big\|_{\mcV^1}\dint\nu\\
&\leq\int_\mu^1\snnorm{\D_\nu\E[\upxi_{\nu,\eps}]}_\nu\dint\nu\lesssim\int_\mu^1\nu^{\deg(n)-1}\dint\nu\lesssim\mu^{\deg(n)}.
\end{equs}
Then, following the proof of Lemma 71 in \cite{DGR23},
\begin{equs}
\bnnorm{\left(\mfc_\eps^n+\int_0^\mu\I(K_\nu^{\otimes2}\D_\nu\E[\upxi_{\nu,\eps}^n])\dint\nu\right)\delta}_\mu\lesssim\big\|\langle \bigcdot\rangle_\mu^{\alpha-\beta}K_{2N,\mu}\big\|_{L_{t,x}^1}\int_\mu^1\big|\I(K_\nu^{\otimes2}\D_\nu\E[\upxi_{\nu,\eps}^n])\big|\dint\nu\lesssim\mu^{\deg(n)}.
\end{equs}
\textit{(ii)} Similarly, we get, for $\nu\leq\mu$,
\begin{equs}
~\nnorm{\I\Big(\big(\langle\bigcdot\rangle_\mu^{-2}-1\big)\big(K_\mu^{\otimes2}\D_\nu\E[\upxi_{\nu,\eps}^n]\big)\Big)\times\delta}_\mu&\lesssim\sup_{w_1}\int_{\SS^2}\langle w_1;w_2\rangle_\mu^{\alpha-\beta} K_\mu(w_1-w)K_\mu(w_2-w)\times\\
&~~~~\int_\SS\big(\langle w;v\rangle_\mu^{-2}-1\big)\big(K_\mu^{\otimes2}\D_\nu\E[\upxi_{\nu,\eps}^n]\big)(w,v)\dint v\dint w\dint w_2
\end{equs}
Now we use the estimate \eqref{triangleineq} twice to write
\begin{equs}
\langle w_1;w_2\rangle_\mu^{\alpha-\beta}\lesssim\langle w_1;w\rangle_\mu^{\alpha-\beta}\langle w;v\rangle_\mu^{\alpha-\beta}\langle v;w_2\rangle_\mu^{\alpha-\beta},
\end{equs}
so that the quantity above is bounded by
\begin{equs}
~&\big\|\langle\bigcdot\rangle_\mu^{\alpha-\beta}K_\mu\big\|_{L_{t,x}^1}^2\big\|\langle\bigcdot\rangle_\mu^{\alpha-\beta}\big(\langle \bigcdot\rangle_\mu^{-2}-1\big)\big(K_\mu^{\otimes2}\D_\nu\E[\upxi_{\nu,\eps}^n]\big)\big\|_{\mcV^1}\lesssim\big\|\langle\bigcdot\rangle_\mu^{\alpha-\beta}\big(\langle\bigcdot\rangle_\mu^{-2}-1\big)\big(K_\nu^{\otimes2}\D_\nu\E[\upxi_{\nu,\eps}^n]\big)\big\|_{\mcV^1}\\
&\lesssim\left\|\frac{\langle\bigcdot\rangle_\mu^{\alpha-\beta}}{\langle\bigcdot\rangle_\nu^{\alpha-\beta}}\big(\langle\bigcdot\rangle_\mu^{-2}-1\big)\right\|_{L_{t,x}^\infty}\snnorm{\D_\nu\E[\upxi_{\nu,\eps}^n]}_\nu\lesssim\mu^{\beta-\alpha}\nu^{\alpha-\beta}\nu^{\deg(n)-1}
\end{equs}
For the last inequality, we have used Lemma \ref{estimweight}. Eventually, notice that with our choice of parameters, $\deg(n)+\alpha-\beta>0$. Indeed, following Assumption \ref{condalphabeta}, we have for every $n\geq1$, $\deg(n)\geq-\alpha+2\beta>-\alpha+\beta$. We get the bound
\begin{equs}
\bnnorm{\int_0^\mu\I\Big(\big(\langle\bigcdot\rangle_\mu^{-2}-1\big)\big(K_\nu^{\otimes2}\D_\nu\E[\upxi_{\nu,\eps}^n]\big)\Big)\dint\nu\times\delta}_\mu\lesssim\mu^{\beta-\alpha}\int_0^\mu\nu^{\deg(n)+\alpha-\beta-1}\dint\nu\lesssim\mu^{\deg(n)}\,.
\end{equs}
\textit{(iii)} Fix $k$ such that $|k|_\mfs=2$. We use Lemma \ref{estimateiii} to get the delicate estimate,
\begin{equs}
\nnorm{\D^k\L^k\big(\langle\bigcdot\rangle_\mu^{-2}K_\nu^{\otimes2}\D_\nu\E[\upxi_{\nu,\eps}^n]\big)}_\mu\lesssim\mu^{-2}\nu^2\snnorm{\D_\nu\E[\upxi_{\nu,\eps}^n]}_\nu
\end{equs}
Then, by induction hypothesis, and proceeding as above, we get
\begin{equs}
\bnnorm{\int_0^\mu\D^k\L^k\big(\langle\bigcdot\rangle_\mu^{-2}K_\nu^{\otimes2}\D_\nu\E[\upxi_{\nu,\eps}^n]\big)\dint\nu}_\mu\lesssim\mu^{-2}\int_0^\mu\nu^{\deg(n)+1}\dint\nu\lesssim\mu^{\deg(n)}\,.
\end{equs}
\textit{(iv)} We have, for $\nu\leq\mu$,
\begin{equs}
    \nnorm{\D_\nu\E[\upxi_{\nu,\eps}^n]-K_\nu^{\otimes2}\D_\nu\E[\upxi_{\nu,\eps}^n]}_\mu&=\Big\|\langle\bigcdot\rangle_\mu^{\alpha-\beta}(K_\mu^{\otimes2}- K_\mu^{\otimes2}K_\nu^{\otimes2})\D_\nu\E[\upxi_{\nu,\eps}^n]\Big\|_{\mcV^1}
\end{equs}
Noticing that, for any $\mu,\nu\in(0,1]$, one has $K_\mu-K_\mu K_\nu=\mu^{-\alpha}\nu^\alpha(K_\nu-K_\nu K_\mu)$, we get
\begin{equs}
~&K_\mu^{\otimes2}-K_\mu^{\otimes2}K_\nu^{\otimes2}=K_\mu^{\otimes2}\big(\delta\otimes(\delta-K_\nu)\big)+K_\mu^{\otimes2}\big(K_\nu\otimes(\delta-K_\nu)\big)\\
&\leq\mu^{-\alpha}\nu^\alpha\Big(K_\mu\otimes\big(K_\nu(\delta-K_\mu)\big)\Big)+\mu^{-\alpha}\nu^\alpha\Big(K_\mu K_\nu\otimes\big(K_\nu(\delta-K_\mu)\big)\Big)
\end{equs}
To control these terms, we state that $K_\mu=\mcR_\nu K_\mu*K_\nu$ and recall that $\|\mcR_\nu K_\mu\|_{L_{t,x}^1}=1$ for $\nu\leq\mu$ (see for example \cite{Duc22}, Lemma 4.5). We also use the crude estimate $\|\delta-K_\mu\|_{L_{t,x}^1}\leq2$. Using Young's convolution inequality and the same trick on the weights as in \textit{(ii)}, we eventually get that
\begin{equs}
\nnorm{\D_\nu\E[\upxi_{\nu,\eps}^n]-K_\nu^{\otimes2}\D_\nu\E[\upxi_{\nu,\eps}^n]}_\mu\lesssim\mu^{-\alpha}\nu^\alpha\snnorm{\D_\nu\E[\upxi_{\nu,\eps}^n]}_\nu\lesssim\mu^{-\alpha}\nu^{\deg(n)+\alpha-1}\,,
\end{equs}
and we can thus conclude as in \textit{(ii)}, noticing that we have $\deg(n)+\alpha>0$ for $n\geq1$.\\

\noindent\textit{(v)} We proceed with the same weight decomposition as in \textit{(ii)} and with Lemma \ref{estimweight}. It yields, for $\nu\leq\mu$,
\begin{equs}
\nnorm{\big(\langle\bigcdot\rangle_\mu^{-2}-1\big) K_\nu^{\otimes2}\D_\nu\E[\upxi_{\nu,\eps}^n]}_\nu&\lesssim\big\|\langle\bigcdot\rangle_\mu^{\alpha-\beta} K_\mu\big\|_{L_{t,x}^1}^2\big\|\langle\bigcdot\rangle_\mu^{\alpha-\beta}\big(\langle\bigcdot\rangle_\mu^{-2}-1\big)K_\nu^{\otimes2}\D_\nu\E[\upxi_{\nu,\eps}^n]\big\|_{\mcV^1}\\
&\lesssim\left\|\frac{\langle\bigcdot\rangle_\mu^{\alpha-\beta}}{\langle\bigcdot\rangle_\nu^{\alpha-\beta}}\big(\langle\bigcdot\rangle_\mu^{-2}-1\big)\right\|_{L_{t,x}^\infty}\snnorm{\D_\nu\E[\upxi_{\nu,\eps}^n]}_\nu\lesssim\mu^{\beta-\alpha}\nu^{\deg(n)+\alpha-\beta-1}\,,
\end{equs}
and we conclude as in case \textit{(ii)}.\\
Putting everything together, we have the desired estimate
\begin{equs}
\snnorm{\E[\upxi_{\eps,\mu}^n]}_\mu\lesssim\mu^{\deg(n)}\,.
\end{equs}
This concludes the proof of Proposition \ref{estimatemoment}.
\end{proof}

We state and prove below the lemma used in \textit{(ii)} in the proof above.
\begin{lemma}\label{estimateiii}
Let $\nu\leq\mu$, and $k$ such that $|k|_\mfs=2$. The following estimate holds
\begin{equs}
\nnorm{\D^k\L^k\big(\langle\bigcdot\rangle_\mu^{-2}K_\nu^{\otimes2}\D_\nu\E[\upxi_{\nu,\eps}^n]\big)}_\mu\lesssim\mu^{-2}\nu^2\snnorm{\D_\nu\E[\upxi_{\nu,\eps}^n]}_\nu
\end{equs}
\end{lemma}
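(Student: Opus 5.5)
The plan is to reduce the estimate to a statement about a single kernel $U\eqdef\langle\bigcdot\rangle_\mu^{-2}K_\nu^{\otimes2}\D_\nu\E[\upxi_{\nu,\eps}^n]$. Here $U$ is a function of $(w_1,w_2)$, and the derivatives $\D^k$ fall on the second variable. Write
\begin{equation*}
K_\mu^{\otimes2}\D^k\L^kU=(\D^k)^*\text{-transferred } \big(K_\mu\otimes \D^kK_\mu\big)\L^kU ,
\end{equation*}
that is, integrate by parts so that $\D^k_{w_2}$ falls on the outer kernel $K_\mu(w_2-\bigcdot)$. Since $|k|_\mfs=2$, each such derivative costs exactly $\mu^{-2}$, and $\|\langle\bigcdot\rangle_\mu^{\alpha-\beta}\D^kK_\mu\|_{L^1_{t,x}}\lesssim\mu^{-2}$ by scaling ($K_\mu=\mcS_\mu K_{1,1}$, and $N$ is large enough that $\D^kK_{N,1}$ is integrable with exponential decay). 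Using \eqref{triangleineq} to split $\langle w_1;w_2\rangle_\mu^{\alpha-\beta}$ along the convolutions, together with $\|\langle\bigcdot\rangle_\mu^{\alpha-\beta}K_\mu\|_{L^1}\lesssim 1$, the left-hand side is then bounded by $\mu^{-2}\|\langle\bigcdot\rangle_\mu^{\alpha-\beta}\L^kU\|_{\mcV^1}$.

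The second step is to estimate $\L^kU$ in the weighted $\mcV^1$ norm. By the definition of $\L^k$ and the change of variables $w_2\mapsto w_1+(w_2-w_1)/\tau$ (whose Jacobian $\tau^{d+\alpha}$ cancels the factor $\tau^{-d-\alpha}$), we get
\begin{equation*}
\|\langle\bigcdot\rangle_\mu^{\alpha-\beta}\L^kU\|_{\mcV^1}\lesssim\int_0^1(1-\tau)^{|k|-1}\sup_{w_1}\int\big\langle \tau(w_2-w_1)\big\rangle_\mu^{\alpha-\beta}|X^kU|(w_1,w_2)\dint w_2\dint\tau .
\end{equation*}
Since $\langle\tau\bigcdot\rangle_\mu\leq\langle\bigcdot\rangle_\mu$, the weight can simply be bounded by $\langle w_1;w_2\rangle_\mu^{\alpha-\beta}$. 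The factor $|X^k(w_1,w_2)|$ has parabolic homogeneity $2$, so $|X^k|\lesssim\nu^2\langle w_1;w_2\rangle_\nu^{2}$.

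The key cancellation comes from the weight $\langle\bigcdot\rangle_\mu^{-2}$ built into $U$. Since $\nu\leq\mu$, we have $\langle\bigcdot\rangle_\nu\geq\langle\bigcdot\rangle_\mu$, and so
\begin{equation*}
\nu^2\langle\bigcdot\rangle_\nu^2\langle\bigcdot\rangle_\mu^{-2}\langle\bigcdot\rangle_\mu^{\alpha-\beta}\lesssim \nu^2\,\frac{\langle\bigcdot\rangle_\nu^{2}}{\langle\bigcdot\rangle_\mu^{2}}\langle\bigcdot\rangle_\nu^{\alpha-\beta}.
\end{equation*}
This ratio must not be allowed to grow. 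The cleaner route is to write $|X^k|\leq \mu^2\langle\bigcdot\rangle_\mu^2$ and cancel it exactly against $\langle\bigcdot\rangle_\mu^{-2}$. That leaves $\mu^{2}$ times $\|\langle\bigcdot\rangle_\mu^{\alpha-\beta}K_\nu^{\otimes2}\D_\nu\E[\upxi^n_{\nu,\eps}]\|_{\mcV^1}\leq\snnorm{\D_\nu\E[\upxi_{\nu,\eps}^n]}_\nu$, which combined with the $\mu^{-2}$ from the first step yields only $\mu^0$, not $\mu^{-2}\nu^2$.

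To gain the factor $\nu^2$, the derivatives have to be placed differently. The plan is to leave $\D^k$ on the inner scale by writing $K_\mu=\mcR_\nu K_\mu*K_\nu$ and moving $\D^k$ onto $K_\nu$; in other words, exploit that $\L^k$ intertwines with convolution by a kernel of scale $\nu$ up to Taylor-remainder terms. Concretely, I would move the convolution $K_\mu\otimes K_\mu$ inside $\L^k$, estimate $\D^k$ acting on the scale-$\nu$ object at cost $\nu^{-2}$, and let $X^k$ together with $\langle\bigcdot\rangle_\mu^{-2}$ give $\mu^{-2}\cdot(\text{scale}\ \mu)^2$ times a factor $\nu^2$ from differences at scale $\nu$. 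The bookkeeping is arranged so that the net powers are $\mu^{-2}\nu^2$, mirroring the proof of the corresponding lemma in \cite{DGR23} (their Lemma on the Taylor remainder), which I would follow closely while tracking the weight $\langle\bigcdot\rangle_\mu^{\alpha-\beta}$ via \eqref{triangleineq} and Lemma \ref{estimweight}.

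The main obstacle is exactly this power bookkeeping: obtaining $\mu^{-2}\nu^2$ rather than $\mu^0$ or $\mu^{-2}$. It requires choosing correctly which factors carry scale $\mu$ (the two outer kernels and the cutoff weight) and which carry scale $\nu$ (the moment $X^k$ measured against the $\nu$-localised kernel). I would first try the DGR23 argument verbatim and check that the parabolic scaling with $\mfs=(\alpha,1,\dots,1)$ does not break it.
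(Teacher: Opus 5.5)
Your first step is the paper's own reduction. You put $\D^k$ on the outer kernel $K_\mu$, at cost $\mu^{-2}$. You undo the dilation in the Taylor remainder: the Jacobian cancels $\tau^{-d-\alpha}$, and $\langle\tau\bigcdot\rangle_\mu\le\langle\bigcdot\rangle_\mu$. You split the weight with \eqref{triangleineq}. This leaves you with
$\mu^{-2}\sup_{w_1}\int\langle w_1;w_2\rangle_\mu^{\varsigma-2}\,|X^k(w_1,w_2)|\,|V(w_1,w_2)|\dint w_2$, where $V=K_\nu^{\otimes2}\D_\nu\E[\upxi^n_{\nu,\eps}]$ and $\varsigma$ is the weight exponent of the triple norm.

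At that point the proposal stops without a bound. You correctly see that $|X^k|\le\mu^2\langle\bigcdot\rangle_\mu^2$ only gives $\mu^0$, and that $|X^k|\le\nu^2\langle\bigcdot\rangle_\nu^2$ leaves an uncontrolled ratio $\langle\bigcdot\rangle_\nu/\langle\bigcdot\rangle_\mu$. The fallback you sketch goes in the wrong direction. Moving $\D^k$ onto a scale-$\nu$ kernel via $K_\mu=\mcR_\nu K_\mu*K_\nu$ costs $\nu^{-2}\ge\mu^{-2}$, so it can only worsen the power count. The derivative has to stay on $K_\mu$. The factor $\nu^2$ must come from $X^k$, measured against the scale-$\nu$ weight already present in $\snnorm{\bigcdot}_\nu$.

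The missing point is how the weights are matched. The paper runs this proof with the weight $\langle\bigcdot\rangle^2$ on both sides; it writes the left-hand side with $\langle w_1;w_2\rangle_\mu^2$. The $\mu$-weight then cancels the cutoff $\langle\bigcdot\rangle_\mu^{-2}$ exactly, and you are left with an unweighted quantity. That quantity is bounded by $\|X^k\langle\bigcdot\rangle_\nu^{-2}\|_{L^\infty}\sup_{w_1}\int\langle w_1;w_2\rangle_\nu^2|V|\dint w_2\le\nu^2\snnorm{\D_\nu\E[\upxi^n_{\nu,\eps}]}_\nu$. The ratio $\langle\bigcdot\rangle_\nu/\langle\bigcdot\rangle_\mu$ never appears, because no $\mu$-weight is ever traded for a $\nu$-weight.

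You may prefer to keep the exponent $\varsigma=\alpha-\beta<2$ from the definition. That caution is legitimate: for $\alpha<2$, Lemma~\ref{estimdotGmu} only allows weight exponents below $\alpha$. In that case, take the geometric mean of your two bounds, $|X^k|\le(\mu^2\langle\bigcdot\rangle_\mu^2)^{1-\varsigma/2}(\nu^2\langle\bigcdot\rangle_\nu^2)^{\varsigma/2}$. This gives $\langle\bigcdot\rangle_\mu^{\varsigma-2}|X^k|\le\mu^{2-\varsigma}\nu^{\varsigma}\langle\bigcdot\rangle_\nu^{\varsigma}$, so the left-hand side is bounded by $(\nu/\mu)^{\varsigma}\snnorm{\D_\nu\E[\upxi^n_{\nu,\eps}]}_\nu$. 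This is weaker than the stated $\mu^{-2}\nu^2$. It is still all that term (iii) in the proof of Proposition~\ref{estimatemoment} needs, since $\mu^{-\varsigma}\int_0^\mu\nu^{\deg(n)+\varsigma-1}\dint\nu\lesssim\mu^{\deg(n)}$ whenever $\deg(n)+\alpha-\beta>0$. That is the condition already used in (ii).
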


\begin{proof}
We first write the left-hand side as
\begin{equs}
\sup_{w_1\in\SS}\int_\SS\langle w_1;w_2\rangle_\mu^2 \left|\int_{\SS^2}K_\mu(w_1-v_1)K_\mu(w_2-v_2)\D^k_{v_2}\L^k\big(\langle\bigcdot\rangle_\mu^{-2}K_\nu^{\otimes2}\D_\nu\E[\upxi_{\nu,\eps}^n]\big)(v_1,v_2)\dint v_1\dint v_2\right|\dint w_2\,.
\end{equs}
Let us carry a change of variable on $\L^k$. We have, for fixed $w_2$ and $v_1$, and $U\in\mcV^1$,
\begin{equs}
~&\int_\SS K_\mu(w_2-v_2)\D^k_{v_2}(\L^kU)(v_1,v_2)\dint v_2\\
&=\int_\SS \D^k K_\mu(w_2-v_2)\frac{|k|}{k!}\int_0^1(1-\tau)^{|k|-1}\tau^{-d-\alpha}(X^kU)\big(v_1,v_1+(v_2-v_1)/\tau\big)\dint\tau\dint v_2\\
&=\frac{|k|}{k!}\int_0^1(1-\tau)^{|k|-1}\int_\SS \D^k K_\mu\big(w_2-v_1-\tau(v_2-v_1)\big)(X^kU)(v_1,v_2)\dint v_2\dint\tau\,.
\end{equs}
Using again the estimate \eqref{estimweight},
\begin{equs}
\langle w_1;w_2\rangle_\mu^2\lesssim\langle w_1;v_1\rangle_\mu^2\langle v_1;v_1+\tau(v_2-v_1)\rangle_\mu^2\langle v_1+\tau(v_2-v_1);w_2\rangle_\mu^2\,,
\end{equs}
and noticing that $\langle v_1;v_1+\tau(v_2-v_1)\rangle_\mu^2\lesssim\langle v_1;v_2\rangle_\mu^2$, we have, using Young's convolution inequality two times, that our quantity of interest is bounded by
\begin{equs}
~&\big\|\langle\bigcdot\rangle_\mu^2 K_\mu\big\|_{L^1_{t,x}}\big\|\langle\bigcdot\rangle_\mu^2 \D^k K_\mu\big\|_{L^1_{t,x}}\big\| X^k\big(K_\nu^{\otimes2}\D_\nu\E[\upxi_{\nu,\eps}^n]\big)\big\|_{\mcV^1}\\
&\lesssim\mu^{-2}\big\|X^k\langle\bigcdot\rangle_\nu^{-2}\big\|_{L_{t,x}^\infty}\snnorm{\D_\nu\E[\upxi_{\nu,\eps}^n]}_\nu.
\end{equs}
Since $\big\|X^k\langle\bigcdot\rangle_\nu^{-2}\big\|_{L_{t,x}^\infty}\leq\nu^2$, we get the desired result.
\end{proof}

\subsection{Kolmogorov argument}\label{Kolmogorov}

We are now ready to construct the stationary force coefficients via a probabilistic argument. We treat the hardest case, which is the infinite volume. 

\begin{lemma}\label{momentkolmogorov}
For every $\kappa>0$, $m\in2\N$, and $\ell\in\R$, it holds
    \begin{equation}
        \E\Big[\talloblong\upxi\talloblong^m_{\kappa,\ell,\varsigma}\Big]\lesssim \big(m(n+1)\big)^{m/2}\,,
    \end{equation}
    with $0<\varsigma<\alpha$ if $\alpha<2$ and $\varsigma>0$ if $\alpha=2$.
\end{lemma}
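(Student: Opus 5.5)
The plan is a Kolmogorov-type argument in the style of \cite[Section 9]{DGR23}, fed by the cumulant bounds of Proposition~\ref{estimatemoment} and Corollary~\ref{estimatemomentdmu}. Since $\talloblong\upxi\talloblong_{\kappa,\ell,\varsigma}$ is a maximum over $n\leq N_0$ of $(\sup_{\eps,\mu}\mu^{-\deg(n)+\kappa}\|\upxi^n_{\eps,\mu}\|_{\mu,\kappa,\ell,\varsigma})^{1/(n+1)}$, it suffices to bound, for each fixed $n$, the $m/(n+1)$-th moment of $\sup_{\eps,\mu}\mu^{-\deg(n)+\kappa}\|\upxi^n_\epmu\|_{\mu,\kappa,\ell,\varsigma}$ by $(m(n+1))^{m/2}$ up to a constant to the power $m$.

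\textbf{Removing the supremum in $\mu$ and $\eps$.} We write $\upxi^n_\epmu=\upxi^n_{\eps,1}-\int_\mu^1\D_\nu\upxi^n_{\eps,\nu}\dint\nu$. Using \eqref{eq:Kmunu} and the monotonicity $\langle\bigcdot\rangle_\mu\leq\langle\bigcdot\rangle_\nu$ for $\nu\leq\mu$, this reduces to bounding $\E\|\upxi^n_{\eps,\nu}\|^{m}_{\nu,\kappa,\ell,\varsigma}$ and $\E\|\D_\nu\upxi^n_{\eps,\nu}\|^m_{\nu,\kappa,\ell,\varsigma}$ by $\nu^{m(\deg(n)-\kappa/2)}$ and $\nu^{m(\deg(n)-1-\kappa/2)}$ respectively. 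The spare $\nu^{\kappa/2}$ makes the $\nu$-integral converge and absorbs the supremum. The supremum over $\eps$ is handled in the same way, using the difference bound with the $(\eps\vee\eps')^\eta$ gain.

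\textbf{Bounding the norm by moments of pointwise values.} For the weighted $L^\infty_{w_1}L^1_{w_2}$ norm, I would first bound the $L^\infty_{w_1}$ by an $L^m_{w_1}$ norm via a Sobolev/Bernstein embedding at scale $\nu$. This is possible because $K_\nu^{\otimes2}\upxi$ can be written as $K_{\nu}$ applied to a smoothed object, at the cost of $\nu^{-|\mfs|/m}$, which is harmless for $m$ large. The weight $\langle w_1\rangle^{-\kappa}$ makes $\int\langle w_1\rangle^{-\kappa m}\dint w_1$ finite once $\kappa m>|\mfs|$. For the exponential weight $\mcb e_\ell(x_1;x_2)$ and the polynomial weight $\langle w_1;w_2\rangle_\nu^{\varsigma-\kappa}$, I would use Minkowski's inequality to pass $\E[|\cdot|^m]^{1/m}$ inside the $w_2$-integral. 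This leaves
\[
\sup_{w_1}\int\langle w_1;w_2\rangle_\nu^{\varsigma-\kappa}\mcb e_\ell(x_1;x_2)\,\E\big[|K^{\otimes2}_\nu\upxi^n_{\eps,\nu}(w_1,w_2)|^m\big]^{1/m}\dint w_2 .
\]
Moments are then expanded into products of cumulants. By Gaussian hypercontractivity, since $\upxi^n$ lies in a finite Wiener chaos of order $n+1$, we have $\E[|X|^m]^{1/m}\leq (m-1)^{(n+1)/2}\E[|X|^2]^{1/2}$. This gives exactly the combinatorial factor $(m(n+1))^{m/2}$ after taking the $1/(n+1)$ root, and it reduces everything to second moments, that is, to $\mcK^{(n)}$ and $\mcK^{(n,n)}$.

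\textbf{Main obstacle.} The hard step is converting the $\mcV^m$ norms, which are $L^\infty$ in $w^1_1$, $L^p$ in the other $x_1^j$, and $L^1$ in the $w_2$'s, into the pointwise second moment $\E|K_\nu^{\otimes2}\upxi(w_1,w_2)|^2$. This must be done while keeping the decay in $w_2$ needed to absorb the weight $\langle w_1;w_2\rangle_\nu^{\varsigma}\mcb e_\ell$. For the variance I would use the cumulant bound on $\mcK^{(n,n)}$, placing the two copies at the same $w_1$ and integrating both $w_2$ variables against the weights in $L^1$. The $\mcV^2$ norm controls precisely $\sup_{w^1_1}$ with $L^p$ in $x_1^2$. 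To recover the pointwise-in-$x_1^2$ value, I would apply one more Bernstein step at scale $\nu$, costing $\nu^{-d/p}$. This exactly cancels the $+d/p$ in $\deg(n,n)=2\deg(n)+d/p$.

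The exponential weight $\mcb e_\ell$ is where $\alpha<2$ matters, since there the kernels only decay polynomially. I expect to need the weight $\langle\bigcdot\rangle^{\alpha-\beta}_\mu$ together with the restriction $\varsigma<\alpha$ to make the $w_2$-integral finite. In the massive case the mass $m>0$ gives exponential decay at scale $1/m$, which absorbs the sub-exponential weight $\mcb e_\ell$. The expectation part $\E\upxi^n$ is handled directly by Proposition~\ref{estimatemoment} with $m=1$.
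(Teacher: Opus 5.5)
There is a genuine gap in your second step. After the Minkowski inequality you need
\[
\sup_{w_1}\int_\Lambda\langle w_1;w_2\rangle_\nu^{\varsigma-\kappa}\,\mcb e_\ell(x_1;x_2)\,\E\big[|K_\nu^{\otimes2}\upxi^n_{\eps,\nu}(w_1,w_2)|^m\big]^{1/m}\dint w_2\lesssim\nu^{\deg(n)-\kappa/2},
\]
that is, pointwise second moments with decay in $w_1-w_2$ that is integrable against the weights. Because of the absolute value inside the $w_2$-integral, you cannot integrate $\mcK^{(n,n)}_{\eps,\nu}$ against the weights as you propose; Cauchy--Schwarz forces you onto its diagonal. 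Proposition~\ref{estimatemoment} cannot supply the required decay. The $\mcV$-norms carry only the weight $\langle\bigcdot\rangle_\mu^{\alpha-\beta}$, and by the remark after their definition at best a power below $\alpha$. After the smoothing of Lemma~\ref{LinfV}, which costs $\nu^{-2|\mfs|-d/p}$ rather than just $\nu^{-d/p}$, the best you get is
\[
\E\big[|K_\nu^{\otimes2}\upxi^n_{\eps,\nu}(w_1,w_2)|^2\big]^{1/2}\lesssim\nu^{\deg(n)-|\mfs|}\langle w_1;w_2\rangle_\nu^{-(\alpha-\beta)}.
\]
Integrated against the weights over $w_2\in\Lambda$, this diverges. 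For $\ell>0$ no polynomial decay beats $\mcb e_\ell$. Even for $\ell=0$ the exponent $\alpha-\beta-\varsigma+\kappa$ is far below the $d+\alpha$ needed for integrability over $\Lambda$. None of your proposed remedies help:
\begin{itemize}
\item the condition $\varsigma<\alpha$ only makes $\|\langle\bigcdot\rangle_\mu^{\varsigma}\mcR_\mu\dot G_\mu\|_{L^1_{t,x}}$ finite in Lemma~\ref{estimdotGmu};
\item the cumulant bounds carry no exponential weight, so the mass never enters them;
\item the case $(\alpha,m)=(2,0)$ has no mass at all.
\end{itemize}

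The missing idea is to localise before doing any probability. Lemma~\ref{lemma:incrmoment} (the analogue of \cite[Lemma 38]{DGR23}) uses the flow equation \eqref{eq:hierarchyxi} to bound the full weighted norm of $\upxi^n_{\eps,\mu}$ by moments of the localised coefficients $\mcb k_\mu\upxi^i_{\eps,\mu}$, $i\leq n$. Here $\mcb k_\mu$ is supported in the parabolic ball of radius $\mu$. The off-diagonal decay, including the weights $\langle\bigcdot\rangle_\mu^{\varsigma}$ and $\mcb e_\ell$, is then produced deterministically by $\dot G_\mu$ via Lemma~\ref{estimdotGmu}; this is where $\varsigma<\alpha$ is used. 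After Lemma~\ref{lemma:transfercompactweight}, all weights are $O(1)$ on the support and $\int\tilde{\mcb k}_\mu\dint w_2\lesssim\mu^{|\mfs|}$. What remains is $\E\big[(\sup_{w_1,w_2}\langle w_1\rangle^{-\kappa}|K_\mu^{\otimes2}\upxi^n_{\eps,\mu}(w_1,w_2)|)^m\big]$. Bounding this supremum by an $L^m_{w_1,w_2}$-norm (Lemma~\ref{LinfLm}) is what makes the weak decay usable, since $\langle w_1;w_2\rangle_\mu^{-(\alpha-\beta)m}$ is integrable in $w_2$ once $m$ is large. From there, hypercontractivity, the moment--cumulant formula, Lemma~\ref{LinfV} and Proposition~\ref{estimatemoment} conclude as you intended.

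A smaller point: your removal of $\sup_\mu$ uses the tools in the wrong direction. For $\nu\geq\mu$, \eqref{eq:Kmunu} costs a factor of order $(\nu/\mu)^{2N\alpha}$, and $\langle\bigcdot\rangle_\mu\geq\langle\bigcdot\rangle_\nu$. Argue instead on dyadic blocks of scales where the norms are comparable, as in Lemma~\ref{supinside}.
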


\begin{proof}
In regard of Lemma \ref{supinside}, it is sufficient to show that, for $s\in\{0,1\}$,
\begin{equs}
    \E\left[\|\D_\mu^s\upxi_{\eps,\mu}^n\|_{\mu,\kappa,\ell,\varsigma}^m\right]\lesssim\mu^{m\deg(n)-s-\kappa m}\,.
\end{equs}
It is easy to see that, thanks to Corollary \ref{estimatemomentdmu}, it suffices to show the above with $s=0$ and that the other case is dealt with similarly. Using Lemmas \ref{lemma:incrmoment} and \ref{lemma:transfercompactweight}, it is sufficient to show that (for some potentially bigger $m$)
\begin{equs}
    \E\left[\left(\sup_{w_1\in\SS}\langle w_1\rangle^{-\kappa}\int_\SS\langle w_1;w_2\rangle_\mu^{\varsigma-\kappa}\mcb e_{\ell}(x_1;x_2)\mcb k_\mu(w_1;w_2)\big|\big(K_\mu^{\otimes2}\upxi_{\eps,\mu}^n\big)(w_1,w_2)\big|\dint w_2\right)^m\right]\lesssim\mu^{m\deg(n)-\kappa}\,,
\end{equs}
where $\mcb k_\mu$ is a smooth function supported in the parabolic ball of radius $\mu$ and we denoted for compactness $\mcb k_\mu(w_1;w_2)=\mcb k_\mu(w_1-w_2)$. This quantity is bounded by
\begin{equs}
~&\left(\sup_{w_1\in\SS}\int_\SS\langle w_1;w_2\rangle_\mu^{\varsigma-\kappa}\mcb e_{\ell}(x_1;x_2)\mcb k_\mu(w_1;w_2)\dint w_2\right)^m\times\E\left[\left(\sup_{w_1,w_2\in\SS}\langle w_1\rangle^{-\kappa}\big|\big(K_\mu^{\otimes2}\upxi_{\eps,\mu}^n\big)(w_1,w_2)\big|\right)^m\right]\\
&\lesssim\mu^{m(d+\alpha)}\E\left[\left(\sup_{w_1,w_2\in\SS}\langle w_1\rangle^{-\kappa}\big|\big(K_\mu^{\otimes2}\upxi_{\eps,\mu}^n\big)(w_1,w_2)\big|\right)^m\right].
\end{equs}
Using Lemma \ref{LinfLm} below, it is bounded by
\begin{equs}
\mu^{(m-2)(d+\alpha)}\E\Big[\big\|\langle w_1\rangle^{-\kappa}\big(K_{N-1,\mu}^{\otimes2}\upxi_{\eps,\mu}^n\big)\big\|_{L^m_{w_1,w_2}}^m\Big].
\end{equs}
Let us take a closer look at the expectation appearing here. Using Fubini's theorem, it is equal to
\begin{equs}
~&\int_{\SS^2}\langle w_1\rangle^{-\kappa m}\E\left[\big|\big(K_{N-1,\mu}^{\otimes2}\upxi_{\eps,\mu}^n\big)(w_1,w_2)\big|^m\right]\dint w_1\dint w_2\\
&\lesssim\int_{\SS^2}\langle w_1\rangle^{-\kappa m}\langle w_1;w_2\rangle_\mu^{-(\alpha-\beta)m}\E\left[\Big|\langle w_1;w_2\rangle_\mu^{\alpha-\beta}\big(K_{N-1,\mu}^{\otimes2}\upxi_{\eps,\mu}^n\big)(w_1,w_2)\Big|^m\right]\dint w_1\dint w_2\\
&\lesssim\int_{\SS^2}\langle w_1\rangle^{-\kappa m}\langle w_1;w_2\rangle_\mu^{-(\alpha-\beta)m}\dint w_1\dint w_2\times\sup_{w_1,w_2\in\SS}\E\left[\Big|\langle w_1;w_2\rangle_\mu^{\alpha-\beta}\big(K_{N-1,\mu}^{\otimes2}\upxi_{\eps,\mu}^n\big)(w_1,w_2)\Big|^m\right]\\
&\lesssim \big(m(n+1)\big)^{m(n+1)/2}\mu^{d+\alpha}\sup_{w_1,w_2\in\SS}\E\left[\Big|\langle w_1;w_2\rangle_\mu^{\alpha-\beta}\big(K_{N-1,\mu}^{\otimes2}\upxi_{\eps,\mu}^n\big)(w_1,w_2)\Big|^2\right]^{m/2}
\end{equs}
We used hypercontractivity in the last inequality. Then we use the moment-cumulant relation \eqref{momcumu} to develop the expectation above in cumulants. Moreover, using Lemma \ref{LinfV} and Proposition \ref{estimatemoment}, we get, putting everything together,
\begin{equs}
    \E\left[\|\upxi_{\eps,\mu}^n\|_{\mu}^m\right]&\lesssim \big(m(n+1)\big)^{m(n+1)/2}\mu^{(m-1)(d+\alpha)}\mu^{-\frac{m}{2}\big(d/p+2(d+\alpha)\big)}\mu^{\frac{m}{2}(2\deg(n)+d/p)}\\
    &=\big(m(n+1)\big)^{m(n+1)/2}\mu^{m\deg(n)-m\kappa},
\end{equs}
where we have set $\kappa=(d+\alpha)/m$. It then yields the desired result.
\end{proof}

\begin{lemma}\label{lemma:incrmoment}
For any $n,m\in \N$ it holds
    \begin{equation}\label{eq:incrmoment}
        \sup_{\mu\in(0,1]}\mu^{-\deg(n)+\kappa_n}\E\big[\|\upxi_\epmu^n\|_{\mu,\kappa_n,\ell,\varsigma}^m\big]^\frac{1}{m}\lesssim1+\sup_{i\leq n}\sup_{\mu\in(0,1]}\Big(\mu^{-\deg(i)+\kappa_i}\E\big[\|\mcb k_\mu\upxi_\epmu^i\|_{\mu,\kappa_i,\ell,\varsigma}^{2^n m}\big]^\frac{1}{2^nm}\Big)^{2^n}\,,
    \end{equation}
    and
    \begin{equation}\label{eq:incrmomentderivative}
        \sup_{\mu\in(0,1]}\mu^{-\deg(n)+1+\kappa_n}\E\big[\|\D_\mu\upxi_\epmu^n\|_{\mu,\kappa_n,\ell,\varsigma}^m\big]^\frac{1}{m}\lesssim1+\sup_{i\leq n}\sup_{\mu\in(0,1]}\Big(\mu^{-\deg(i)+\kappa_i}\E\big[\|\mcb k_\mu\upxi_\epmu^i\|_{\mu,\kappa_i,\ell,\varsigma}^{2^n m}\big]^\frac{1}{2^nm}\Big)^{2^n}\,.
    \end{equation}
\end{lemma}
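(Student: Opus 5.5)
The plan is an induction on $n$, using the flow equation \eqref{eq:hierarchyxi} to move from a localised control (with the cutoff $\mcb k_\mu$) to the full norm $\|\cdot\|_{\mu,\kappa_n,\ell,\varsigma}$. The idea is to split $\upxi^n_\epmu$ into a part supported near the diagonal at scale $\mu$ and a far-from-diagonal part. For $n=0$ the coefficient is $\xi_\eps(z_1)\delta(z_1-z_2)$, which is exactly diagonal. Since $K_\mu^{\otimes 2}$ has exponentially decaying tails, the weights $\langle w_1;w_2\rangle_\mu^{\varsigma-\kappa}\mcb e_\ell$ can be absorbed using \eqref{triangleineq} and the integrability of $\langle\bigcdot\rangle_\mu^{c}\mcb e_\ell K_\mu$. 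Hence $\|\upxi^0_\epmu\|$ is controlled by $\|\mcb k_\mu\upxi^0_\epmu\|$ up to a constant, which gives \eqref{eq:incrmoment} with exponent $2^0$.

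For the induction step I would write
\begin{equation*}
\upxi^n_\epmu=\upxi^n_{\eps,0}-\int_0^\mu\sum_{i=0}^{n-1}\upxi^i_{\eps,\nu}[\dot G_\nu\upxi^{n-1-i}_{\eps,\nu}]\dint\nu\,.
\end{equation*}
The initial term is either zero or $\mfc^n_\eps\delta$, and the latter is diagonal. Each term of the integral would be decomposed as $\mcb k_\mu\upxi^n_\epmu+(1-\mcb k_\mu)\upxi^n_\epmu$. The first piece is the quantity on the right-hand side. For the second piece I would use the bilinear structure: apply \eqref{eq:Kmunu} to pass from $K_\mu$ to $K_\nu$, and use Lemma~\ref{estimdotGmu} for $\dot G_\nu$, which costs $\nu^{-1+\alpha}$. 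This bounds $\|\upxi^n_\epmu\|$ by $\int_0^\mu\nu^{-1+\alpha}\sum_i\|\upxi^i_{\eps,\nu}\|_{\nu,\ldots}\|\upxi^{n-1-i}_{\eps,\nu}\|_{\nu,\ldots}\dint\nu$, with the weights distributed exactly as in the proof of Lemma~\ref{Lem:constructionzeta}. The decreasing $\kappa_i$ hierarchy, with $\kappa_i+\kappa_{n-1-i}\le\kappa_n$, pays for splitting $\langle w_1\rangle^{-\kappa_n}$ and the diagonal weight between the two factors. Cauchy–Schwarz in probability then gives
\begin{equation*}
\E[\|\cdot\|^m]^{1/m}\le\E[\|\upxi^i\|^{2m}]^{1/2m}\,\E[\|\upxi^{n-1-i}\|^{2m}]^{1/2m}\,.
\end{equation*}
Inserting the induction hypothesis at moment $2m$, the product of the two brackets with exponents $2^{i}$ and $2^{n-1-i}$ is bounded by $1+(\sup\ldots)^{2^n}$, because $2^i+2^{n-1-i}\le 2^n$ and $x^a\le 1+x^b$ for $a\le b$. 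The moment index $2^{i}\cdot 2m\le 2^n m$ is handled by Jensen's inequality. The power counting works out since $\deg(i)+\deg(n-1-i)+\alpha=\deg(n)$, so the $\nu$-integral gives $\mu^{\deg(n)-\kappa_n}$.

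The derivative estimate \eqref{eq:incrmomentderivative} follows the same route without integrating in $\nu$. One reads $\D_\mu\upxi^n_\epmu$ directly from \eqref{eq:hierarchyxi}, obtaining the factor $\mu^{-1+\alpha}$ times the product of lower-order norms, and then applies the first estimate for $i<n$.

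The main obstacle is the relevant coefficients, those with $\deg(n)\le0$. There the $\nu$-integral from $0$ diverges, and the far-from-diagonal/diagonal split is needed to avoid integrating the renormalised, non-integrable part. I would handle this by not integrating the full norm. Instead, I would note that the constant $\mfc^n_\eps\delta$ and the diagonal contributions cancel inside $\mcb k_\mu$, and integrate only $(1-\mcb k_\mu)$ applied to the flow. On that piece the off-diagonal weight $\langle w_1;w_2\rangle_\mu^{-N}$ provides the extra decay needed to make the $\nu$-integral converge.
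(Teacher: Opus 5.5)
Your treatment of \eqref{eq:incrmomentderivative} is the paper's. You read $\D_\mu\upxi^n_\epmu$ off \eqref{eq:hierarchyxi}, split $\langle\cdot\rangle_\mu$, $\langle w_1\rangle^{-\kappa_n}$ and $\mcb e_\ell$ by \eqref{triangleineq} and Lemma~\ref{ITexp}, and pay $\mu^{-1+\alpha}$ for $\mcR_\mu^2\dot G_\mu$. Then you use Cauchy--Schwarz in probability, \eqref{eq:incrmoment} at lower orders, Jensen, and $2^i+2^{n-1-i}\le 2^n$. For \eqref{eq:incrmoment} the paper only cites \cite[Lemma~38]{DGR23}, and your architecture is the right one. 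Keep $\mcb k_\mu\upxi^n_\epmu$ untouched as the $i=n$ term of the right-hand side; nothing has to ``cancel'' there. Then write $(1-\mcb k_\mu)\upxi^n_\epmu=\int_0^\mu(1-\mcb k_\mu)\D_\nu\upxi^n_{\eps,\nu}\dint\nu$, which has no boundary term because $\upxi^n_{\eps,0}$ is supported on the diagonal. The gap is in the one step that carries the lemma: why this $\nu$-integral converges when $\deg(n)\le0$.

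The coefficients do not decay like $\langle w_1;w_2\rangle_\mu^{-N}$ off the diagonal. The only off-diagonal control is the weight $\langle w_1;w_2\rangle_\mu^{\varsigma-\kappa_n}$, and for $\alpha<2$ its exponent is capped by $\varsigma<\alpha$ because of the polynomial tails of $G$ (Lemma~\ref{estimdotGmu}). Also, the cutoff acts on $(z_1,z_2)$ before the smoothing $K_\mu^{\otimes2}$, so it says nothing directly about $(w_1,w_2)$. The actual mechanism is a gain in $\nu/\mu$. Express $\D_\nu\upxi^n_{\eps,\nu}$ through $K_\nu^{\otimes2}\D_\nu\upxi^n_{\eps,\nu}$ and move $\mcR_\nu^{\otimes2}$ onto $K_\mu\otimes K_\mu\,(1-\mcb k_\mu)$, as in Lemma~\ref{lemma:transfercompactweight}. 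Since $\nu\le\mu$ and $\mcb k_\mu$ varies on scale $\mu$, this produces only exponentially decaying kernels at scale $\mu$, times functions supported where $|z_1-z_2|_\mfs\gtrsim\mu$. On that region $\langle z_1;z_2\rangle_\mu\lesssim(\nu/\mu)\langle z_1;z_2\rangle_\nu$, hence
\[
\|(1-\mcb k_\mu)\D_\nu\upxi^n_{\eps,\nu}\|_{\mu,\kappa_n,\ell,\varsigma}\lesssim(\nu/\mu)^{\varsigma-\kappa_n}\|\D_\nu\upxi^n_{\eps,\nu}\|_{\nu,\kappa_n,\ell,\varsigma}\,.
\]
The exponential weight does not depend on the scale and passes through unchanged; this is the paper's remark. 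Combining with \eqref{eq:incrmomentderivative} leaves $\int_0^\mu(\nu/\mu)^{\varsigma-\kappa_n}\nu^{\deg(n)-1-\kappa_n}\dint\nu$. This is finite only if $\varsigma+\deg(n)>2\kappa_n$, that is, essentially $\varsigma>\alpha-2\beta$ at $n=1$. So you must run the argument with $\varsigma$ close to $\alpha$, for instance $\varsigma=\alpha-\beta$ as in the cumulant bounds of Section~\ref{sec:stocest}, and recover smaller $\varsigma$ by monotonicity of the norm. With small $\varsigma$ this $\nu$-integral diverges. Your sketch identifies neither this mechanism nor this constraint, and the claimed $\langle\cdot\rangle^{-N}$ decay would wrongly suggest convergence for every $\varsigma$.
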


\begin{proof}
    The proof follows quite closely the one of Lemma 38 in \cite{DGR23}, with the slight difference that we need to take into account the exponential weights. The proof works by induction on $n$. Let us first prove \eqref{eq:incrmomentderivative} for $n$, assuming that \eqref{eq:incrmoment} and \eqref{eq:incrmomentderivative} hold for all $i\leq n$. We have, using \eqref{triangleineq} and Lemma \ref{ITexp},
    \begin{equs}
        \langle w_1;w_2\rangle_\mu^{\varsigma-\kappa_n}\lesssim\langle w_1;z_1\rangle_\mu^{\varsigma-\kappa_n}\langle z_1;z_2\rangle_\mu^{\varsigma}\langle z_2;w_2\rangle_\mu^{\varsigma-\kappa_j}\,,
    \end{equs}
    and
    \begin{equs}
        \mcb e_\ell(x_1;x_2)\lesssim\mcb e_\ell(x_1;y_1)\mcb e_\ell(y_1;y_2)\mcb e_\ell(y_2;x_2)\,.
    \end{equs}
    Using the flow equation on coefficients \eqref{eq:hierarchyxi}, we get
    \begin{equs}
        ~&\sup_{w_1\in\Lambda}\int_\Lambda \langle w_1\rangle^{-2^n \kappa}\langle w_1;w_2\rangle_\mu^{\varsigma-\kappa_n}\mcb e_\ell(x_1;x_2)\big|\big(K_\mu^{\otimes2}\D_\mu\upxi_\epmu^n\big)(w_1,w_2)\big|\dint w_2\\
        &\lesssim\sum_{i+j=n-1}\sup_{w_1\in\Lambda}\int_\Lambda \langle w_1\rangle^{-\kappa_i}\langle w_1;z_1\rangle_\mu^{\varsigma-2^{i}\kappa}\mcb e_\ell(x_1;y_1)\big|\big(K_\mu^{\otimes2}\upxi_\epmu^i\big)(w_1,w_2)\big|\dint z_1\\
        &\quad\times\sup_{z_1\in\Lambda}\int_\Lambda \langle z_1;z_2\rangle^{\kappa_j} \mcb e_\ell(y_1;y_2) \big|\big(\mcR^2_\mu\dot G_\mu\big)(z_1,z_2)\big|\dint z_2\\
        &\quad\times \sup_{z_2\in\Lambda}\int_\Lambda \langle z_2\rangle^{-\kappa_j}\langle z_2;w_2\rangle_\mu^{\varsigma-\kappa_j}\mcb e_\ell(y_2;x_2)\big|\big(K_\mu^{\otimes2}\upxi_\epmu^j\big)(z_2,w_2)\big|\dint w_2\,.
    \end{equs}
    We have used the inequality $\langle z_2\rangle^{\kappa_j}\lesssim\langle z_2;z_1\rangle_\mu^{\kappa_j}\langle z_1;w_1\rangle_\mu^{\kappa_j} \langle w_1\rangle^{\kappa_j}$. It yields, using Lemma \ref{heatkernelestimate} and the Hölder inequality,
    \begin{multline*}
        \mu^{-\deg(n)+1+\kappa_n}\E\big[\|\D_\mu\upxi_\epmu^n\|_{\mu,\kappa_n,\ell,\varsigma}^m\big]^\frac{1}{m}\\\lesssim\sum_{i+j=n-1}\mu^{-\deg(i)+\kappa_i}\E\big[\|\upxi_\epmu^i\|_{\mu,\kappa_i,\ell,\varsigma}^{2m}\big]^\frac{1}{2m}\mu^{-\deg(j)+\kappa_j}\E\big[\|\upxi_\epmu^j\|_{\mu,\kappa_j,\ell,\varsigma}^{2m}\big]^\frac{1}{2m}\,,
    \end{multline*}
    which concludes the proof of \eqref{eq:incrmomentderivative}. The proof of \eqref{eq:incrmoment} follows as in \cite[Lemma 38]{DGR23}. The important point to see this is that the exponential weights do not depend on $\mu$, and this do not interact with the other weights in the proof.
\end{proof}

\begin{lemma}\label{lemma:transfercompactweight}
    There is a kernel $\Tilde {\mcb k}_\mu$ with compact support in the parabolic ball of radius $\mu$ such that
    \begin{equation}
        \|\mcb k_\mu\upxi_\epmu^n\|_{\mu,\kappa,\ell,\varsigma}\lesssim\sup_{w_1\in\Lambda}\int_\Lambda \langle w_1\rangle^{-\kappa_n}\langle w_1;w_2\rangle^{\varsigma-\kappa_n}_\mu\mcb e_\ell(x_1;x_2)\Tilde {\mcb k}_\mu(w_1;w_2)\big|\big(K_\mu^{\otimes2}\upxi_\epmu^n\big)(w_1,w_2)\big|\dint w_2\,.
    \end{equation}
\end{lemma}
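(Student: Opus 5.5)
The kernel $\Tilde{\mcb k}_\mu$ will be built from $\mcb k_\mu$ and the kernels $K_\mu$, $\mcR_\mu$, using that the weights vary by at most a constant factor on parabolic balls of radius $\mu$. The left-hand side is the norm $\|\bigcdot\|_{\mu,\kappa,\ell,\varsigma}$ of $\mcb k_\mu\upxi^n_\epmu$, i.e.\ we must control
\begin{equs}
\big(K_\mu^{\otimes2}(\mcb k_\mu\upxi^n_\epmu)\big)(w_1,w_2)=\int_{\Lambda^2}K_\mu(w_1-z_1)K_\mu(w_2-z_2)\mcb k_\mu(z_1-z_2)\upxi^n_\epmu(z_1,z_2)\dint z_1\dint z_2\,.
\end{equs}
First I insert $\mcR_\mu^{\otimes2}K_\mu^{\otimes2}=\mathrm{Id}$ on $\upxi^n_\epmu$, writing $\upxi^n_\epmu=\mcR_\mu^{\otimes2}(K_\mu^{\otimes2}\upxi^n_\epmu)$, and integrate the differential operators $\mcR_\mu$ by parts onto the product $K_\mu(w_1-z_1)K_\mu(w_2-z_2)\mcb k_\mu(z_1-z_2)$. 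By Leibniz, this gives a finite sum of terms of the form $(\mcR^{a}K_\mu)(w_1-z_1)(\mcR^{b}K_\mu)(w_2-z_2)(\D^c\mcb k_\mu)(z_1-z_2)$, with $\D^c\mcb k_\mu$ still supported in the parabolic ball of radius $\mu$. Each derivative is scaled by $\mu$, so by the scaling of $\mcb k_\mu$ the factor $\D^c\mcb k_\mu$ is bounded by a multiple of $\mu^{-d-\alpha}$ times the indicator of the ball $B_\mu$. The factors $\mcR^{a}K_\mu$ are integrable and, thanks to the exponential decay of $\hat K$, satisfy $|\mcR^aK_\mu|\lesssim\mcS_\mu\hat K$.

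Next comes the weight transfer. Using \eqref{triangleineq} and Lemma \ref{ITexp}, I bound $\langle w_1\rangle^{-\kappa}$ by $\langle w_1;z_1\rangle_\mu^{\kappa}\langle z_1\rangle^{-\kappa}$, $\langle w_1;w_2\rangle_\mu^{\varsigma-\kappa}$ by products over $w_1\to z_1\to z_2\to w_2$, and similarly for $\mcb e_\ell$. The factors on $(w_i,z_i)$ are absorbed by the exponential decay of $\mcS_\mu\hat K$, and the factor on $(z_1,z_2)$ is $O(1)$ on $B_\mu$. Integrating out $w_2$ and taking the supremum over $w_1$ then bounds the left-hand side by
\begin{equs}
\sup_{z_1}\int \langle z_1\rangle^{-\kappa}\langle z_1;z_2\rangle_\mu^{\varsigma-\kappa}\mcb e_\ell(y_1;y_2)\,\mu^{-d-\alpha}\one_{B_\mu}(z_1-z_2)\,\Big|\big(K_\mu^{\otimes2}\upxi^n_\epmu\big)(z_1,z_2)\Big|\dint z_2\,.
\end{equs}
The symmetric $L^\infty_{w_2}L^1_{w_1}$ part is treated in the same way.

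Finally I set $\Tilde{\mcb k}_\mu=C\mu^{-d-\alpha}\one_{B_\mu}$, or a smooth compactly supported majorant of it. Since the norms are $L^1$ in the second variable and only the absolute value of $K_\mu^{\otimes2}\upxi$ enters, no cancellation is needed.

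I expect the main obstacle to be the integration by parts of $\mcR_\mu$. The operator $\mcR_\mu$ involves time derivatives of order $N$ in $\mu^\alpha\D_t$ and spatial Laplacians, which hit the cutoff $\mcb k_\mu(z_1-z_2)$ and the kernels jointly. One has to check that no boundary terms arise; this is fine on $\Lambda$ after time localisation, or one can assume $\mcb k_\mu$ is compactly supported in time. One also has to check that each derivative costs exactly the right power of $\mu$, so that the normalisation $\mu^{-d-\alpha}$ of $\Tilde{\mcb k}_\mu$ is preserved uniformly in $\mu$.
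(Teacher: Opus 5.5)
Your proposal is correct and follows essentially the same route as the paper: express $\upxi^n_\epmu$ through $K_\mu^{\otimes2}\upxi^n_\epmu$, move $\mcR_\mu$ by Leibniz onto $K_\mu(w_1-z_1)K_\mu(w_2-z_2)\mcb k_\mu(z_1;z_2)$ to obtain rescaled exponentially decaying kernels times derivatives of the cutoff supported in the parabolic $\mu$-ball, then transfer the weights via \eqref{triangleineq} and Lemma~\ref{ITexp} and conclude with Young's inequality (the paper defers the Leibniz bookkeeping to \cite[Lemma 61]{DGR23}). Two harmless precisions: when all of $\mcR_\mu$ falls on $K_\mu$ in time and/or space, Dirac masses are produced, so read $|\mcR^aK_\mu|\lesssim\mcS_\mu\hat K$ with $\hat K$ a finite measure with exponential decay; and $\Tilde{\mcb k}_\mu$ should inherit the size of $\mcb k_\mu$ (order one in its use in Lemma~\ref{momentkolmogorov}) rather than carry a fixed factor $\mu^{-d-\alpha}$, which is exactly what your Leibniz argument gives.
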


\begin{proof}
    We have
    \begin{equs}
        \big(K_\mu^{\otimes2}\mcb k_\mu\upxi_\epmu^n\big)(w_1,w_2)&=\int_{\Lambda^2}K_\mu(w_1-z_1)K_\mu(w_2-z_2)\mcb k_\mu(z_1;z_2)\upxi_\epmu^n(z_1,z_2)\dint z_1\dint z_2\\
        &=\int_{\Lambda^2}\mcR_\mu^2\Big(K_\mu(w_1-z_1)K_\mu(w_2-z_2)\mcb k_\mu(z_1;z_2)\Big)\big(K_\mu^{\otimes2}\upxi_\epmu^n\big)(z_1,z_2)\dint z_1\dint z_2\,,
    \end{equs}
    where the operators $\mcR_\mu$ act on the $z$ variables. One can see that there is a discrete set $\mathfrak{P}$ such that
    \begin{multline*}
        \mcR_\mu^2\Big(K_\mu(w_1-z_1)K_\mu(w_2-z_2)\mcb k_\mu(z_1;z_2)\Big)\\=\sum_{\pi_1,\pi_2,\pi_3\in \mathfrak{P}}(\mcS_\mu K^{\pi_1})(w_1-z_1)(\mcS_\mu K^{\pi_2})(w_2-z_2)\mcb k_\mu^{\pi_3}(z_1;z_2)\,,
    \end{multline*}
    where $K^\pi$ is integrable with exponential decay, and $\mcb k_\mu^{\pi}$ has compact support in the parabolic ball of radius $\mu$. For more details, we refer the reader to the proof of Lemma 61 in \cite{DGR23}. Letting $\Tilde{\mcb k}_\mu=\sum_\pi \mcb k_\mu^{\pi}$, using the same triangle inequalities as in the proof of Lemma \ref{lemma:incrmoment}, and using Young's convolution inequality, one can conclude the proof.
\end{proof}

We recall the following moment-cumulant relation, whose proof can be found in \cite[Proposition 3.2.1]{PT11}.
\begin{lemma}
We have, for $m\in\N$,
\begin{equation}\label{momcumu}
\E[X_1\dots X_m]=\sum_{j=1}^m\sum_{I_1\sqcup\dots\sqcup I_j=[m]}\mcK(X_{I_1})\times\dots\times\mcK(X_{I_j}).
\end{equation}
\end{lemma}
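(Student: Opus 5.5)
The plan is to derive the moment-cumulant relation \eqref{momcumu} directly from the definition of the joint cumulant as a mixed derivative of the logarithm of the characteristic function, via a multivariate Faà di Bruno formula. Throughout, I assume that $X_1,\dots,X_m$ have finite moments of all orders up to $m$. This is the case in our application, since the random variables involved are polynomials in a Gaussian field.

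\emph{Setting up.} Put $\phi(t)=\E[\exp(\i(t_1X_1+\dots+t_mX_m))]$ for $t\in\R^m$.
\begin{itemize}
\item Since the moments up to order $m$ are finite, dominated convergence lets me differentiate under the expectation. Hence $\phi$ is $m$ times continuously differentiable, and for any $B\subset[m]$,
\begin{equation*}
\partial_{t_B}\phi(0)=\i^{|B|}\E\Big[\prod_{i\in B}X_i\Big],\qquad \partial_{t_B}\eqdef\prod_{i\in B}\partial_{t_i}\,.
\end{equation*}
\item Since $\phi(0)=1$ and $\phi$ is continuous, there is a neighbourhood of $0$ on which $\phi\neq0$ and $\psi=\log\phi$ is well defined (principal branch) and equally smooth.
\item By definition of $\mcK$, we then have $\partial_{t_B}\psi(0)=\i^{|B|}\mcK(X_B)$.
\end{itemize}

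\emph{Key step: a combinatorial Faà di Bruno identity.} I will show that, on that neighbourhood and for every $k\le m$,
\begin{equation*}
\partial_{t_1}\cdots\partial_{t_k}\,\e^{\psi}=\e^{\psi}\sum_{\pi\in\mathcal P([k])}\prod_{B\in\pi}\partial_{t_B}\psi\,,
\end{equation*}
where $\mathcal P([k])$ denotes the set of (unordered) partitions of $[k]$. The proof is by induction on $k$, and the case $k=1$ is the chain rule. For the inductive step, I apply $\partial_{t_{k+1}}$ to the right-hand side using the Leibniz rule. There are two ways the derivative can act.
\begin{itemize}
\item If it hits $\e^{\psi}$, it produces the extra factor $\partial_{t_{k+1}}\psi$. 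This corresponds to adjoining the singleton block $\{k+1\}$ to $\pi$.
\item If it hits one factor $\partial_{t_B}\psi$, it turns that factor into $\partial_{t_{B\cup\{k+1\}}}\psi$. This corresponds to inserting $k+1$ into the block $B$.
\end{itemize}
Every partition of $[k+1]$ arises exactly once in this way: remove $k+1$ from its block and look at the resulting partition of $[k]$. So the terms match bijectively with $\mathcal P([k+1])$, which closes the induction.

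\emph{Conclusion.} Take $k=m$ and evaluate at $t=0$, using $\e^{\psi(0)}=1$:
\begin{equation*}
\i^m\E[X_1\cdots X_m]=\sum_{\pi\in\mathcal P([m])}\prod_{B\in\pi}\i^{|B|}\mcK(X_B)\,.
\end{equation*}
Since $\sum_{B\in\pi}|B|=m$ for every partition, the powers of $\i$ cancel on both sides. Grouping the partitions by their number of blocks $j$ gives exactly the double sum over $j$ and over unordered decompositions $I_1\sqcup\dots\sqcup I_j=[m]$ in \eqref{momcumu}.

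The only real work is the bijective bookkeeping in the induction. The one point requiring care is that the sum in \eqref{momcumu} is over \emph{unordered} set partitions, so that no factor $j!$ appears. The analytic part (smoothness of $\phi$, and the local definition of $\log\phi$ near $0$) is routine once finite moments are assumed.
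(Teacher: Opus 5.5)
Your proof is correct: applying the set-partition form of Fa\`a di Bruno's formula to $\phi=\exp(\log\phi)$ and evaluating at $t=0$ is the standard derivation of the Leonov--Shiryaev moment--cumulant formula. You are also right to read the inner sum as running over unordered partitions of $[m]$ into nonempty blocks. The paper gives no argument of its own and simply cites \cite[Proposition 3.2.1]{PT11}, whose proof is the same (differentiate the characteristic function and its logarithm, then match terms), and your finite-moment assumption holds in the application because all variables there lie in a finite Wiener chaos.
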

\begin{lemma}\label{LinfLm}
Let $m\in2\N$ and $U\in L^\infty_{w_1,w_2}$. The following estimate holds
\begin{equs}
    \E\Big[\big\|K_{1,\mu}^{\otimes2}U\big\|_{L^\infty_{w_1,w_2}}^m\Big]\lesssim\mu^{-2(d+\alpha)}\E\Big[\|U\|_{L^m_{w_1,w_2}}^m\Big].
\end{equs}
\end{lemma}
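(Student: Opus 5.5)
The plan is to write $K_{1,\mu}^{\otimes2}U = \mcR_{1,\mu}^{\otimes2}$-type smoothing as a local average, and then trade the supremum for an $L^m$ norm through a Sobolev/Young-type inequality on the smoothing kernel. Concretely, since $K_{1,\mu}=\mcS_\mu K_{1,1}$ is the fundamental solution of $\mcP_\mu=(1+\mu^\alpha\D_t)\otimes(1-\mu^2\Delta)$, I will use Hölder's inequality in the convolution variables pointwise: for fixed $(w_1,w_2)$,
\begin{equs}
\big|(K_{1,\mu}^{\otimes2}U)(w_1,w_2)\big|\leq \|K_{1,\mu}\|_{L^{m'}_{t,x}}^2\,\|U\|_{L^m_{w_1,w_2}}\,,
\end{equs}
where $m'$ is the Hölder conjugate of $m$. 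By scaling, $\|\mcS_\mu K_{1,1}\|_{L^{m'}}=\mu^{-(d+\alpha)(1-1/m')}\|K_{1,1}\|_{L^{m'}}=\mu^{-(d+\alpha)/m}\|K_{1,1}\|_{L^{m'}}$, so the supremum over $(w_1,w_2)$ is bounded by $\mu^{-2(d+\alpha)/m}\|U\|_{L^m}$; raising to the power $m$ and taking expectations gives exactly $\mu^{-2(d+\alpha)}\E\|U\|^m_{L^m}$.

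The only genuine point to check is that $K_{1,1}\in L^{m'}_{t,x}$ for $m'\in(1,2]$, i.e.\ $m\geq2$. The time factor $\e^{-t}\one_{t>0}$ is bounded, so this reduces to $(1-\Delta)^{-1}\in L^{m'}(\R^d)$ (or $\T^d$), which holds only when $m'<d/(d-2)$; this is the main obstacle, since for large $d$ and small $m$ the Bessel kernel has a non-$L^{m'}$ singularity $|x|^{2-d}$. I would handle this by using instead of $K_{1,\mu}$ a higher power $K_{N',\mu}$: note the statement is applied with $K_{N-1,\mu}$ in place of $K_{1,\mu}$ in the proof of Lemma~\ref{momentkolmogorov}, and indeed the actual use passes from $K_\mu=K_{N,\mu}$ to $K_{N-1,\mu}$ by peeling one factor $K_{1,\mu}$. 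If the singularity is an issue, I would peel off enough factors (the Bessel potential $(1-\Delta)^{-k}$ is bounded and exponentially decaying for $2k>d$) so that the kernel used in Hölder is in $L^\infty\cap L^1$, hence in every $L^{m'}$, with the same scaling $\mu^{-(d+\alpha)/m}$; the distributional time factor is harmless as it is a bounded exponential.

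Finally, since $U$ is a random field, the pointwise inequality is deterministic and holds almost surely, so taking expectations is immediate; no stochastic input is needed. On the torus the same computation applies with $K_{1,1}$ periodised, which only improves integrability at large scales.
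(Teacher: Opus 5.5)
Your core argument is correct, and it is the same deterministic estimate the paper defers to (\cite[Lemma A.2]{Duc23}). Pointwise Hölder gives $\|K_{1,\mu}^{\otimes2}U\|_{L^\infty}\le\|K_{1,\mu}\|_{L^{m'}}^2\|U\|_{L^m}$, scaling gives $\|\mcS_\mu K_{1,1}\|_{L^{m'}}=\mu^{-(d+\alpha)/m}\|K_{1,1}\|_{L^{m'}}$, and the expectation plays no role. The integrability condition you isolate is also right. For $d\ge3$, $(1-\Delta)^{-1}\in L^{m'}$ if and only if $m'<d/(d-2)$, which is equivalent to $m>d/2$. So your proof is complete whenever $m>d/2$, in particular for all $m\ge2$ when $d\le3$.

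Where you go wrong is in treating $m\le d/2$ as an obstacle to be worked around. For a nonnegative kernel, the $L^m\to L^\infty$ norm of $U\mapsto K_{1,\mu}^{\otimes2}U$ is exactly $\|K_{1,\mu}\|_{L^{m'}}^2$. To see this, test against bounded, compactly supported truncations of $K_{1,\mu}(w-\cdot)^{m'-1}$, tensorised. Hence for $d\ge4$ and $m\le d/2$ (e.g.\ $d=4$, $m=2$, where $(1-\Delta)^{-1}\sim|x|^{-2}\notin L^2$), the stated inequality already fails for deterministic $U\in L^\infty\cap L^m$. No proof can close that case.

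Replacing $K_{1,\mu}$ by $K_{k,\mu}$ with $k>d/(2m)$ proves a different lemma. It also need not fit the application, since $K_\mu=K_{N,\mu}$ has only $N=\lceil\alpha-\beta\rceil+2\le4$ factors available to peel. The correct repair is to add the hypothesis $m>d/2$ to the statement. This costs nothing: in the proof of Lemma~\ref{momentkolmogorov} the moment $m$ is taken large anyway, since the loss there is $\kappa=(d+\alpha)/m$. For such $m$ your Hölder argument with $K_{1,1}$ applies verbatim.
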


\begin{proof}
    See for example Lemma A.2 in \cite{Duc23} for the same statement in an elliptic context.
\end{proof}

\begin{lemma}\label{LinfV}
Let $U\in \mcV^2$. The following estimate holds
\begin{equs}
\big\|\langle \bigcdot\rangle_{\mu,2}^{\alpha-\beta} K_{1,\mu}^{\otimes4}U\big\|_{L^\infty(\Lambda^4)}^2\lesssim\mu^{-2(d+\alpha)-d/p}\big\|\langle\bigcdot\rangle_{\mu,2}^{\alpha-\beta}U\big\|_{\mcV^2}.
\end{equs}
\end{lemma}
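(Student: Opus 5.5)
The plan is to estimate the left-hand side pointwise by a multilinear Hölder argument, in the spirit of Lemma~\ref{estimateAB}. First, though, the squared left-hand side has to be given a meaning, because the two sides are not homogeneous of the same degree in $U$. I will read $\big\|\langle \bigcdot\rangle_{\mu,2}^{\alpha-\beta} K_{1,\mu}^{\otimes4}U\big\|_{L^\infty(\Lambda^4)}^2$ as the supremum over $(w_1,w_2)\in\Lambda^2$ of the diagonal evaluation
\begin{equation*}
\langle w_1;w_2\rangle_\mu^{2(\alpha-\beta)}\big|\big(K_{1,\mu}^{\otimes4}U\big)\big((w_1,w_2),(w_1,w_2)\big)\big|\,.
\end{equation*}
This is how the lemma enters the proof of Lemma~\ref{momentkolmogorov}. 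There one takes $U=\mcK\big(\upxi^n_{\eps,\mu},\upxi^n_{\eps,\mu}\big)$, and the diagonal evaluation produces $\E\big[|\langle w_1;w_2\rangle_\mu^{\alpha-\beta}(K_{1,\mu}^{\otimes2}\upxi^n_{\eps,\mu})(w_1,w_2)|^2\big]$ via the moment--cumulant relation~\eqref{momcumu}; the product of two first-order cumulants is handled in the same way, since each factor lies in $\mcV^1$. With this reading, the claim is a bound of one evaluation of a four-variable kernel by its $\mcV^2$ norm, and both sides are linear in $U$.

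\emph{Step 1 (weights).} Write $\w=((w_1^1,w_2^1),(w_1^2,w_2^2))$, with $w^1_i=w^2_i=w_i$ on the diagonal. Applying \eqref{triangleineq} in each pair gives
\begin{equation*}
\langle w_1^j;w_2^j\rangle_\mu^{\alpha-\beta}\lesssim\langle w_1^j;z_1^j\rangle_\mu^{\alpha-\beta}\langle z_1^j;z_2^j\rangle_\mu^{\alpha-\beta}\langle z_2^j;w_2^j\rangle_\mu^{\alpha-\beta}\,.
\end{equation*}
The outer factors are absorbed by $K_{1,\mu}$, using the exponential decay of $\mcS_\mu K_{1,1}$, so that $\langle\bigcdot\rangle_\mu^{\alpha-\beta}K_{1,\mu}$ still satisfies the same $L^1$ and $L^\infty$ bounds as $K_{1,\mu}$ up to constants. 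The middle factors, $\langle\bigcdot\rangle_{\mu,2}^{\alpha-\beta}$ evaluated at $\z$, stay on $U$. It therefore suffices to bound $\big|\int\prod_{i,j}\tilde K_\mu(w_i^j-z_i^j)\,\tilde U(\z)\dint\z\big|$, where $\tilde K_\mu=\langle\bigcdot\rangle_\mu^{\alpha-\beta}K_{1,\mu}$ and $\tilde U=\langle\bigcdot\rangle_{\mu,2}^{\alpha-\beta}U$.

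\emph{Step 2 (Hölder in the order dictated by $\mcV^2$).} Recall that the $\mcV^2$ norm takes a supremum in $z_1^1$, a supremum in $s_1^2$ together with an $L^p$ norm in $y_1^2$, and an $L^1$ norm in $(z_2^1,z_2^2)$. I integrate against each variable using the matching dual norm of the corresponding kernel.
\begin{itemize}
\item For $z_2^1$ and $z_2^2$, I bound $\tilde K_\mu$ in $L^\infty$, each factor costing $\|\tilde K_\mu\|_{L^\infty}\lesssim\mu^{-(d+\alpha)}$. This produces $\mu^{-2(d+\alpha)}$.
\item For $z_1^1$, I pull out $\sup_{z_1^1}$ and pay $\|\tilde K_\mu\|_{L^1}\lesssim1$.
\item For $z_1^2=(s_1^2,y_1^2)$, I take the supremum in $s$ and apply Hölder $L^p$--$L^q$ in $y$, paying $\|\tilde K_\mu\|_{L^{1,q}_{t,x}}\lesssim\mu^{-d+d/q}=\mu^{-d/p}$ by scaling, exactly as in the bound used in Lemma~\ref{estimateABfancy}.
\end{itemize}
Since the bound is uniform in the outer points $w_1,w_2$, taking the supremum over $(w_1,w_2)$ gives the factor $\mu^{-2(d+\alpha)-d/p}\|\tilde U\|_{\mcV^2}$, which is the claimed right-hand side.

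\emph{Main obstacle.} The delicate point is the ordering in Step~2: the nested $\sup/L^p/L^1$ structure of $\mcV^2$ has to be respected. I will therefore integrate the $L^1$ variables $z_2^1,z_2^2$ innermost, with the kernels in $L^\infty$; then apply Minkowski and Hölder in $y_1^2$; and take the outer supremum last, mirroring the proof of Lemma~\ref{estimateAB}. The check that $\langle\bigcdot\rangle^{\alpha-\beta}_\mu$ times $\mcS_\mu K_{1,1}$ retains the stated $L^\infty$, $L^1$ and $L^{1,q}$ scalings is routine, by the exponential decay of $K_{1,1}$.
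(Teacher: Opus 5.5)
Your route is the paper's. You transfer the weights onto $U$ with \eqref{triangleineq}, then integrate against the four kernels using the norm dual to each slot of $\mcV^2$: $L^\infty$ for $z_2^1,z_2^2$, $L^1$ for $z_1^1$, and $L^{1,q}_{t,x}$ for $z_1^2$. This gives $\mu^{-2(d+\alpha)}\cdot 1\cdot\mu^{-d/p}$, and your exponent count and order of integration are right. Your diagonal reading of the square is harmless, since Step~2 never uses that $\w$ lies on the diagonal. You therefore prove the unsquared bound on all of $\Lambda^4$, which is also what the paper's proof actually establishes.

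The step that fails is the assertion, called routine, that $\langle\bigcdot\rangle_\mu^{\alpha-\beta}K_{1,\mu}$ has $L^\infty$ norm $\lesssim\mu^{-(d+\alpha)}$. Here $K_{1,1}(t,x)=\mathbf{1}_{t>0}\mathrm{e}^{-t}(1-\Delta)^{-1}(x)$, and the Bessel kernel $(1-\Delta)^{-1}$ behaves like $|x|^{2-d}$ near the origin (like $\log(1/|x|)$ if $d=2$). So $K_{1,1}\notin L^\infty$ for $d\geq 2$; exponential decay only controls large arguments, not this local singularity. For the same reason your $L^{1,q}_{t,x}$ bound needs $(1-\Delta)^{-1}\in L^q$, which fails for instance when $p=1$, $q=\infty$.

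With a single factor $K_{1,\mu}$ the inequality is in fact false for $d\geq 2$. Take $U$ to be a product of bounded compactly supported functions of $z_1^1,z_1^2$ and approximate identities in $z_2^1,z_2^2$. Then $\|\langle\bigcdot\rangle_{\mu,2}^{\alpha-\beta}U\|_{\mcV^2}$ stays bounded while the left-hand side blows up. The paper's proof has the same defect: it appeals to \eqref{eq:KmuLp}, whose hypothesis is violated for one convolution factor and $L^\infty$ exponents.

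The repair is to work with $K_{M,\mu}=K_{1,\mu}^{*M}$ with $2M>d$. Then $(1-\mu^2\Delta)^{-M}$ is bounded and lies in every $L^q$, so the weighted kernel has exactly the $L^\infty$, $L^1$ and $L^{1,q}_{t,x}$ scalings you use. Your Steps~1 and~2 then go through verbatim.
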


\begin{proof}
    Let us write, using the notations introduced at the beginning of Section~\ref{sec:stocest},
    \begin{equs}
        \big\|\langle \bigcdot\rangle_{\mu,2}^{\alpha-\beta} K_{1,\mu}^{\otimes4}U\big\|_{L^\infty(\Lambda^4)}&=\sup_{\w\in\Lambda^4}\langle w_1^1;w_2^1\rangle_\mu^{\alpha-\beta}\langle w_1^2;w_2^2\rangle_\mu^{\alpha-\beta}\big|\big(K_{1,\mu}^{\otimes4}U\big)(\w)\big|\\
        &\leq\sup_{\w\in\Lambda^4}\langle w_1^1;w_2^1\rangle_\mu^{\alpha-\beta}\langle w_1^2;w_2^2\rangle_\mu^{\alpha-\beta}\\
        &\quad\times\int_{\Lambda^4} \prod_{i=1}^2 K_{1,\mu}(w_1^i-z_1^i)K_{1,\mu}(w_2^i-z_2^i)|U(\z)|\dint\z\,.
    \end{equs}
    Using the triangle inequality \eqref{triangleineq} for $i\in\{1,2\}$,
    \begin{equs}
        \langle w_1^i;w_2^i\rangle^{\alpha-\beta}\lesssim\langle w_1^i;z_1^i\rangle^{\alpha-\beta}\langle z_1^i;z_2^i\rangle^{\alpha-\beta}\langle z_2^i;w_2^i\rangle^{\alpha-\beta}\,,
    \end{equs}
    and the estimate \eqref{eq:KmuLp} on $K_{1,\mu}$ coupled with Young's convolution inequality, we get the result.
\end{proof}

\begin{lemma}\label{supinside}
    Let $\uplambda_\mu$ a random field such that, for $s\in\{0,1\}$, and uniformly in $\eps$,
    \begin{equation}
        \sup_{\mu\in(0,1]}\mu^{-\eta+s}\E\big[\|\D_\mu^s\uplambda_\mu\|_{\mcV^1}^m\big]^\frac{1}{m}\lesssim1\,,
    \end{equation}
    then
    \begin{equation}
        \E\Big[\Big(\sup_{\mu\in(0,1]}\mu^{-\eta}\|\uplambda_\mu\|_{\mcV^1}\Big)^m\Big]^\frac{1}{m}\lesssim1\,.
    \end{equation}
\end{lemma}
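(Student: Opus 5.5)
The plan is a Kolmogorov/Sobolev-type argument in the single scale variable $\mu$, done on dyadic blocks. The conclusion has the same exponent $\eta$ as the hypothesis, so no $\mu^{\delta}$ may be spent on summing over scales. The intended mechanism, for which I do not yet have a complete argument, is to make every dyadic block satisfy a bound whose random constant is one fixed random variable of finite $m$-th moment. The recombination over blocks is the real difficulty, discussed below.

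\textbf{Step 1 (local control on a block).} For $k\geq0$ set $I_k=[2^{-k-1},2^{-k}]$. For $\mu\in I_k$, the fundamental theorem of calculus in $\mu$ and the triangle inequality for $\|\bigcdot\|_{\mcV^1}$ give
\begin{equation*}
\mu^{-\eta}\|\uplambda_\mu\|_{\mcV^1}\lesssim 2^{k\eta}\|\uplambda_{2^{-k}}\|_{\mcV^1}+2^{k\eta}\int_{I_k}\|\D_\nu\uplambda_\nu\|_{\mcV^1}\dint\nu=:X_k\,.
\end{equation*}
The implicit constant is uniform in $k$, since $\mu^{-\eta}\asymp 2^{k\eta}$ on $I_k$.

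\textbf{Step 2 (moments of a block).} By Minkowski's integral inequality and the hypothesis with $s=0$ and $s=1$,
\begin{equation*}
\E[X_k^m]^{1/m}\lesssim 1+2^{k\eta}\int_{I_k}\nu^{\eta-1}\dint\nu\lesssim1\,,
\end{equation*}
uniformly in $k$ and in $\eps$. Here the factor $\nu^{-1}$ in the derivative bound is exactly compensated by $|I_k|\asymp\nu$. This is the point where the derivative estimate is used at its exact scaling.

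\textbf{Step 3 (recombination, the main obstacle).} Since $\sup_\mu\mu^{-\eta}\|\uplambda_\mu\|_{\mcV^1}\lesssim\sup_kX_k$, it remains to bound $\E[\sup_kX_k^m]$. The naive bound $\E[\sup_kX_k^m]\leq\sum_k\E[X_k^m]$ is useless, because each summand is only $O(1)$. A Sobolev embedding in $\log\mu$ has the same defect, since $\int_0^1\dint\nu/\nu=\infty$.

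\emph{First attempt.} Show that the block quantities are dominated, uniformly in $k$, by one random variable of finite $m$-th moment. The route is to write $X_k$ through the kernels $K_{2^{-k}}$ and to use the semigroup-type identity $K_\mu=\mcR_\nu K_\mu K_\nu$ with $\|\mcR_\nu K_\mu\|_{L^1_{t,x}}\lesssim1$ for $\nu\leq\mu$, as in the proof of Lemma~\ref{Lemmarecurrence}. The aim is a recursion $X_{k+1}\leq X_k+Y_k$ in which the increments $Y_k$ are controlled by the derivative term with summable moments.

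\emph{Fallback.} In every application of the lemma (Lemma~\ref{momentkolmogorov}, through Lemma~\ref{lemma:incrmoment}), the input bound holds with a strictly better exponent, thanks to the hierarchy $\kappa_n=4^{n-N_0-1}\kappa$. Applying Steps 1--2 with $\eta$ replaced by $\eta+\delta$, for some $\delta>0$ taken from that slack, gives $\E[X_k^m]^{1/m}\lesssim2^{-k\delta}$. The sum over $k$ then converges and yields the stated conclusion with exponent $\eta$. I expect Step 3 to be the main point of the proof. If the first attempt fails, the statement should be read as asserting exactly this fallback, which is the form in which it is used.
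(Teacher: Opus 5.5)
\textbf{The lossless statement is false, so your first attempt cannot succeed.} Your Steps~1--2 are fine. In Step~3, however, the difficulty is not technical: with the same exponent $\eta$ in hypothesis and conclusion, the lemma fails for general random fields, and no recombination argument can rescue it.

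Here is a counterexample. Take $\eta=0$, a compactly supported $\phi\in L^1(\Lambda)$ with $\|\phi\|_{L^1}=1$, a smooth $\psi$ supported in $(-\tfrac12,\tfrac12)$ with $\psi(0)=1$, and i.i.d.\ standard Gaussians $(g_k)_{k\geq0}$. Set
\[
\uplambda_\mu(w_1,w_2)=\Big(\sum_{k\geq0}g_k\,\psi\big(\log_2(1/\mu)-k\big)\Big)\,\phi(w_1-w_2)\,.
\]
For each $\mu\in(0,1]$ at most one summand is nonzero. Hence $\E[\|\uplambda_\mu\|_{\mcV^1}^m]^{1/m}\lesssim 1$ and $\mu\,\E[\|\D_\mu\uplambda_\mu\|_{\mcV^1}^m]^{1/m}\lesssim 1$, uniformly in $\mu$ (and trivially in $\eps$). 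On the other hand $\|\uplambda_{2^{-k}}\|_{\mcV^1}=|g_k|$, so $\sup_{\mu}\|\uplambda_\mu\|_{\mcV^1}=\infty$ almost surely.

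In your notation, the block variables $X_k$ are here comparable to independent Gaussians, and no single random variable with finite moments dominates them. The lemma assumes nothing about $\uplambda_\mu$ beyond the two moment bounds, so kernel identities such as $K_\mu=\mcR_\nu K_\mu K_\nu$ are simply not available.

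\textbf{The correct statement is your fallback.} What is true is the version with an arbitrarily small loss: for every $\delta>0$,
\[
\E\Big[\Big(\sup_{\mu\in(0,1]}\mu^{-\eta+\delta}\|\uplambda_\mu\|_{\mcV^1}\Big)^m\Big]^{1/m}\lesssim_\delta 1\,.
\]
This needs no dyadic blocks. For every $\mu$,
\[
\mu^{-\eta+\delta}\|\uplambda_\mu\|_{\mcV^1}\leq\|\uplambda_1\|_{\mcV^1}+\int_0^1\Big(|\eta-\delta|\,\nu^{-\eta+\delta-1}\|\uplambda_\nu\|_{\mcV^1}+\nu^{-\eta+\delta}\|\D_\nu\uplambda_\nu\|_{\mcV^1}\Big)\dint\nu\,,
\]
and the right-hand side does not depend on $\mu$. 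Taking $L^m(\Omega)$ norms and using Minkowski's inequality with the two hypotheses gives a bound $\lesssim 1+\int_0^1\nu^{\delta-1}\dint\nu$. Your bound $\E[\sup_kX_k^m]\leq\sum_k\E[X_k^m]\lesssim\sum_k2^{-k\delta m}$ is an equivalent route.

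The paper's proof is only a pointer to an exercise in Duch's notes, so it does not establish the lossless version either; the argument above is the one that actually goes through. The loss is harmless in Lemma~\ref{momentkolmogorov}. However, the cleaner source of slack there is the free moment order rather than the hierarchy $\kappa_n$: the $\mu$-loss produced in that proof is of order $(d+\alpha)/m$, so taking $m$ larger yields the hypothesis with room to spare. The lemma should be stated with an explicit $\delta>0$.
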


\begin{proof}
    See for example Exercise A.2 in the lecture notes \cite{Duc23}.
\end{proof}


\section{Characterisation and properties of the operator}

We gather in this section the proofs of all the items of Theorem \ref{Th:ExistenceAndersonOp}, \ie the constructions of all the operators considered in this article.

\subsection{Recollection of functional analysis}

We start with some reminders of the functional analysis tools that we will use in the sequel.

\subsubsection{Sectorial operators}

For $\theta\in(0,\pi]$, we define the open sector $\Sigma_\theta$ as 
$$\Sigma_\theta = \left\{z\in\C^*,\; |\operatorname{Arg} z|<\theta \right\}=\left\{r \e^{\i \tau},\; r>0,\; |\tau|<\theta \right\}\,,$$
where $\operatorname{Arg} z\in(-\pi,\pi]$ is the principal argument of $z$.

\begin{definition}
    Let $A$ be an unbounded operator on a Banach space $E$. We say that $A$ is sectorial of angle $\theta\in(0,\pi)$ (or $\theta$-sectorial) if
    \begin{equation}\label{Def:SectorialOp1}
        \sigma(A)\subset\overline{\Sigma_\theta}
    \end{equation}
    and for all $\omega\in(\theta,\pi)$ it holds
    \begin{equation}\label{Def:SectorialOp2}
        \sup_{\lambda\in\C\backslash\overline{\Sigma_{\omega}}} |\lambda|\big\|(\lambda-A)^{-1}\big\|<\infty.
    \end{equation}
    We say an operator $A$ is quasi-$\theta$-sectorial if $A+\delta$ is $\theta$-sectorial for some $\delta>0$. We say an operator is quasi-sectorial if it is quasi-$\theta$-sectorial for some $\theta$. We denote by $$\theta_S(A)=\inf\left\{\theta\in(0,\pi),\; A \text{ is $\theta$-sectorial}\right\}\in[0,\pi)$$
    the infimum of angles for which $A$ is sectorial.
\end{definition}

It is easily verified that any quasi-sectorial operator is closed. Moreover, for $\theta\in(\theta_S(A),\pi)$ and $f$ holomorphic on a neighbourhood of $\overline{\Sigma_\theta}$ regular at $0$ and infinity, i.e. there exists $\alpha>0$ such that $f(z)=o(z^\alpha)$ near $0$ and $f(z)=o(z^{-\alpha})$ near infinity, one can define the unbounded operator $f(A)$ by the Dunford calculus, \ie using Cauchy's formula,
$$f(A)=\frac{1}{2\i\pi}\oint_\Gamma f(\lambda)(\lambda-A)^{-1}\dint\lambda,$$
where $\Gamma$ is an oriented curve in $\Sigma_\theta\backslash\overline{\Sigma_{\theta_S(A)}}$ with any $\lambda\in\sigma(A)$ having winding number one. If $A$ is invertible, this construction extends to any holomorphic $f$ with $f(z)=o(z^\alpha)$ near infinity ($\alpha\in\R$). Remark in this case no assumption is needed at the origin. See for example Chapter 2 of \cite{Ha06} and of \cite{Ya10} for more details. In particular, for any $z\in\C$ with $\operatorname{Re} z\neq 0$, one can define $A^z$ and it verifies $A^zA^{z'}=A^{z+z'}$ ($\operatorname{Re} z,\operatorname{Re} z',\operatorname{Re}(z+z')\neq 0$), see for example Proposition 3.2.1 in \cite{Ha06}. For a quasi-sectorial operator $A$, these results extends for $f(A+\delta)$ for any $\delta>0$ large enough.\\

For a quasi-sectorial operator on all $L^p_x$ spaces, it is then possible to define the associated Sobolev spaces\footnote{One can also define the associated Besov spaces, which turn out to be the real interpolation spaces between Sobolev spaces.} as 
$$W^{s,p}(A) = \left\{u\in L^p_x,\; (A+\delta)^{s}u\in L^p_x\right\}$$
for $s\geqslant$ and  $W^{-s,p}(A)=\left(W^{s,p'}(A')\right)'$, with $A'$ the adjoint of $A$ and $\frac{1}{p}+\frac{1}{p'}=1$. A criterion for these spaces to be stable by complex interpolation is for $A+\delta$ to have bounded imaginary powers in $L_x^p$ for $\delta$ large enough. We say a sectorial operator $H$ has bounded imaginary powers on $E$ (or shortly $H$ is BIP on $E$) if for any $t\in\R$, $H^{\i t}\in\mathcal{L}(E)$ and there exists $c,C>0$ such that
$$\forall t\in\R,\; \|H^{\i t}\|_{\mathcal{L}(E)}\leqslant C\e^{c|t|}.$$
For example, the fractional laplacian (with or without mass) is BIP on $L^p_x(\R^d)$ for any $1<p<\infty$ by the Hörmander-Mikhlin theorem and the transference principle implies it is also true on $L^p_x(\T^d)$.

\subsubsection{Semigroups}

\begin{definition}
    Let $E$ be a Banach space. A family $\big(S(t)\big)_{t\geqslant 0}$ of bounded linear maps is called a $\mathcal{C}_0$-semigroup if it verifies
    \begin{enumerate}
        \item $S(0)=1$,
        \item for every $t,s>0$, $S(t)S(s)=S(t+s)$ and
        \item for every $x\in E$, $S(t)x-x$ converges to $0$ as $t$ goes to $0$.
    \end{enumerate}
    A $\mathcal{C}_0$-semigroup is called exponentially bounded if 
    $$\omega_E(S) = \inf_{t>0}\frac{1}{t}\log\|S(t)\|_{\mathcal{L}(E)}\in\R,$$
    or equivalently, if there exists $M>0$ and $\omega\in\R$ such that
    $$\forall t>0,\; \|S(t)\|_{\mathcal{L}(E)}\leqslant M\mathrm{e}^{\omega t}.$$
    The real $\omega_E(S)$ is then called the growth bound of $S$ and is the infimum of the $\omega$ for which the previous bound holds for some $M$. 
\end{definition}

\begin{definition}
    For a $\mathcal{C}_0$-semigroup $S$, we define its generator $A$ as the unbounded operator of domain
    $$\mathrm{D}(A)=\left\{x\in E,\; \lim_{t\to0^+}\frac{S(t)x-x}{t} \text{ exists in } E\right\}$$
    and formula $Ax=\lim_{t\to0^+}\frac{S(t)x-x}{t}$. 
\end{definition}

The celebrated Hille-Yosida theorem implies that any exponentially bounded $\mathcal{C}_0$-semigroup is generated by a quasi-sectorial operator.

\begin{theorem}[Hille-Yosida]\label{Prop:Hille-Yosida}
    An unbounded operator $A\colon\mathrm{D}(A)\subset E\to E$ generates a $\mathcal{C}_0$-semigroup $S$ on $E$ verifying 
    $$\forall t>0,\; \|S(t)\|_{\mathcal{L}(E)}\leqslant M\mathrm{e}^{\omega t}\,,$$
    if and only if
    \begin{itemize}
        \item $A$ is closed and densely defined,
        \item for any $\Re\lambda>\omega$ and any $n\in\N_{\geq1}$,
        $$\big\|(\lambda-A)^{-n}\big\|_{\mathcal{L}(E)}\leqslant\frac{M}{(\Re\lambda-\omega)^n}\,.$$
    \end{itemize}
\end{theorem}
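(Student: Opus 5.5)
This is the classical theorem, and I would prove the two implications separately, reducing to the contractive-type normalisation by rescaling. Replacing $S(t)$ by $\e^{-\omega t}S(t)$ and $A$ by $A-\omega$, one may assume $\omega=0$, so $\|S(t)\|_{\mathcal{L}(E)}\leqslant M$ and the resolvent condition reads $\|(\lambda-A)^{-n}\|\leqslant M(\Re\lambda)^{-n}$ for $\Re\lambda>0$.

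\emph{Necessity.} Assume $A$ generates $S$ with $\|S(t)\|\leqslant M$.
\begin{enumerate}
\item Density of $\mathrm{D}(A)$: the averages $t^{-1}\int_0^tS(s)x\dint s$ lie in $\mathrm{D}(A)$ and converge to $x$ by strong continuity.
\item Closedness: pass to the limit in the identity $S(t)x-x=\int_0^tS(s)Ax\dint s$, valid for $x\in\mathrm{D}(A)$.
\item Resolvent as Laplace transform: for $\Re\lambda>0$ set $R(\lambda)x=\int_0^\infty\e^{-\lambda t}S(t)x\dint t$. This is absolutely convergent with $\|R(\lambda)\|\leqslant M/\Re\lambda$. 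A direct computation using the semigroup law shows that $R(\lambda)$ maps into $\mathrm{D}(A)$ and inverts $\lambda-A$ on both sides.
\item Powers of the resolvent: differentiating $n-1$ times in $\lambda$ gives
\[
(\lambda-A)^{-n}x=\frac{1}{(n-1)!}\int_0^\infty t^{n-1}\e^{-\lambda t}S(t)x\dint t .
\]
Bounding $\|S(t)\|\leqslant M$ and using $\int_0^\infty t^{n-1}\e^{-\Re\lambda\, t}\dint t=(n-1)!(\Re\lambda)^{-n}$ yields the claimed estimate.
\end{enumerate}

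\emph{Sufficiency.} Assume the resolvent bounds and introduce the Yosida approximations $A_k=kA(k-A)^{-1}=k^2(k-A)^{-1}-k$ for $k\in\N$ large. These are bounded operators.
\begin{enumerate}
\item Strong convergence of $A_k$: the bound $\|k(k-A)^{-1}\|\leqslant M$ and density of $\mathrm{D}(A)$ give $k(k-A)^{-1}x\to x$ for all $x$. Consequently $A_kx\to Ax$ for $x\in\mathrm{D}(A)$.
\item Uniform bound on the approximating semigroups: expanding the exponential,
\[
\e^{tA_k}=\e^{-kt}\sum_{n\geqslant0}\frac{(k^2t)^n}{n!}(k-A)^{-n}.
\]
The bounds on all powers of the resolvent then give $\|\e^{tA_k}\|\leqslant M$ uniformly in $k$. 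This is precisely where the hypothesis on every $n$, not just $n=1$, is needed, since $M$ may exceed $1$.
\item Cauchy property: because $A_k$ and $A_j$ commute,
\[
\e^{tA_k}x-\e^{tA_j}x=\int_0^t\e^{(t-s)A_k}\e^{sA_j}(A_k-A_j)x\dint s .
\]
Hence $\|\e^{tA_k}x-\e^{tA_j}x\|\leqslant M^2t\|(A_k-A_j)x\|$, so $\e^{tA_k}x$ converges locally uniformly in $t$ for $x\in\mathrm{D}(A)$, and then for all $x$ by density and the uniform bound.
\item Identifying the limit: the limit $S(t)$ is a $\mathcal{C}_0$-semigroup with $\|S(t)\|\leqslant M$. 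Passing to the limit in $\e^{tA_k}x-x=\int_0^t\e^{sA_k}A_kx\dint s$ shows that its generator $B$ extends $A$. Finally, $\lambda-A$ and $\lambda-B$ are both bijective for $\lambda>0$ (the former by hypothesis, the latter by the necessity part), so $B=A$.
\end{enumerate}

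I expect the only genuinely delicate step to be the uniform bound in step 2 of the sufficiency part. Everything else is routine bookkeeping: justifying the differentiation under the integral in the necessity part, and checking that the limit semigroup is strongly continuous.
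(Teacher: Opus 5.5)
The paper gives no proof of this statement. It only recalls it as the classical Hille--Yosida (Feller--Miyadera--Phillips) theorem, and your argument is the standard textbook proof (Laplace transform of the semigroup for necessity, Yosida approximations for sufficiency), which is correct.

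The one detail worth adding is that the $n=0$ term of the exponential series in your uniform bound has norm $1$, so bounding $\|\e^{tA_k}\|$ by $M$ needs $M\geqslant1$. This follows from your step~1, since $\|x\|=\lim_k\|k(k-A)^{-1}x\|\leqslant M\|x\|$.
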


For symmetric unbounded semigroups on Hilbert spaces, which are not covered by the previous theory, there exists an equivalent of the Hille-Yosida theorem due to Klein and Landau \cite{KL81}.

\begin{theorem}[Klein-Landau]\label{Prop:KleinLandau}
    Fix $T>0$. Let $\big(S(t)\big)_{t\in[0,T]}$ be a collection of unbounded operators on a Hilbert space $\mcH$ with inner product $\langle\bigcdot,\bigcdot\rangle_\mcH$ such that $S(0)=1$ and $S(t)$ has domain $D_t$. Assume that $\big(S(t)\big)_{t\in[0,T]}$ satisfies the following properties:
    \begin{enumerate}
        \item (Non-increasing domains with dense union) For $s\leq t$, $D_t\subset D_s$ and $\bigcup_{0\leq t\leq T}D_t$ is dense in $\mcH$.
        \item (Semigroup) For $0\leq s\leq t\leq T$, $S(s) D_t\subset D_{t-s}$ and $S(t-s)S(s)=S(t)$ on $D_t$.
        \item (Symmetry) For $f,g\in D_t$, $\langle f,S(t) g\rangle_\mcH=\langle S(t) f,g\rangle_\mcH$.
        \item (Weak continuity) For $0\leq t\leq T$ and $f\in D_t$, the function $s\mapsto\langle f,S(s)f\rangle_\mcH$ is continuous on the interval $[0,t]$.
    \end{enumerate}
    Then, there exists a unique self-adjoint operator $A$ on $\mcH$ such that for every $t\in[0,T]$, $D_t\subset D(\e^{-tA})$ and such that $S(t)$ is the restriction of $\e^{-tA}$ to $D_t$. Moreover, the operator $A$ is essentially self-adjoint on the core $\hat D\eqdef\bigcup_{0\leq t\leq T'}\bigcup_{0<s<t}S(s) D_t$, for any $T'\in(0,T]$.
\end{theorem}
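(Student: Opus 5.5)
The statement to prove is the Klein--Landau theorem. I would follow Klein and Landau's original strategy: build a pre-Hilbert structure on the union of domains, define a symmetric semigroup of contractions on an auxiliary core, and then extend it.

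\textbf{Step 1: positivity and a core.} Set $D\eqdef\bigcup_{0<t\leq T}D_t$ and $\hat D\eqdef\bigcup_{0<t\leq T}\bigcup_{0<s<t}S(s)D_t$. For $f\in D_t$ and $s\leq t/2$, the semigroup and symmetry properties give
\[
\langle f,S(2s)f\rangle_\mcH=\langle S(s)f,S(s)f\rangle_\mcH=\|S(s)f\|^2_\mcH\geq0 .
\]
This positivity is the basic input. It also gives the log-convexity of $s\mapsto\|S(s)f\|^2$ via Cauchy--Schwarz:
\[
\|S(s)f\|^4\leq\|f\|^2\,\|S(2s)f\|^2 .
\]
Density of $\hat D$ then follows from weak continuity. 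Indeed, $\|S(s)f-f\|^2=\langle f,S(2s)f\rangle-2\langle f,S(s)f\rangle+\|f\|^2\to0$ as $s\to0$, so $S(s)f\to f$ strongly and $\hat D$ is dense because $D$ is.

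\textbf{Step 2: an operator from the local semigroup.} Since $S$ is only local in time and unbounded, I would not use Hille--Yosida. Instead I would define a quadratic form, or equivalently the generator, on the core. For $g=S(s)f$ with $f\in D_t$ and $s<t$, the vector $S(\tau)g$ is defined for $\tau<t-s$. Log-convexity makes $\tau\mapsto\langle g,S(\tau)g\rangle$ real analytic on $(0,t-s)$, since it is positive definite on a small interval (a Bernstein-type argument). I would then define a symmetric operator $A$ on $\hat D$ by
\[
-Ag=\frac{\rmd}{\rmd\tau}S(\tau)g\Big|_{\tau=0},
\]
obtained as a strong limit. Its existence uses the semigroup property and
\[
\big\|\tau^{-1}(S(\tau)g-g)\big\|^2=\tau^{-2}\big(\langle g,S(2\tau)g\rangle-2\langle g,S(\tau)g\rangle+\|g\|^2\big),
\]
whose limit is controlled by second derivatives of a smooth function. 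One checks that $A$ is symmetric and semibounded on each piece: for a unit vector $g$, $\|S(\tau)g\|\geq e^{-c\tau}$ by log-convexity. However, a uniform lower bound is not available in general, so I would not rely on semiboundedness.

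\textbf{Step 3: essential self-adjointness.} This is the main obstacle. Following Klein--Landau, I would show $\ker(A^*\pm \i)=\{0\}$. Suppose $h\perp(A\pm\i)\hat D$. For fixed $f\in D_t$, set $\phi(s)=\langle h,S(s)f\rangle$ on $(0,t)$. Then $\phi$ satisfies $\phi'(s)=\mp\i\,\phi(s)$ up to sign conventions, since $S(s)f\in\hat D$ and $\frac{\rmd}{\rmd s}S(s)f=-AS(s)f$. Hence $\phi(s)=\phi(0^+)e^{\mp\i s}$. Next I would use analyticity in complex time, which comes from positivity: $\langle f,S(s)f\rangle$ extends analytically to a half-strip, so $S(s)f$ extends analytically in $s$ and $\phi$ extends as well. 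Combining the extension with the reality and positivity structure, I expect to obtain $\phi\equiv0$ via the Euclidean-time trick: a pure imaginary-frequency exponential would contradict the positive-definiteness of the kernel $\langle S(a)f,S(b)f\rangle=F(a+b)$. As $s\to0$ this gives $\langle h,f\rangle=0$, so $h=0$ by density.

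\textbf{Step 4: identification and uniqueness.} Let $\bar A$ be the closure of $A$. By the spectral theorem $e^{-t\bar A}$ is defined, and for $g\in\hat D$ both $e^{-\tau\bar A}g$ and $S(\tau)g$ solve $\partial_\tau u=-\bar Au$ on $\hat D$. Uniqueness follows by differentiating $\langle e^{-(\tau_0-\tau)\bar A}k,S(\tau)g\rangle$ for $k$ in a spectral subspace with bounded spectrum. So the two agree on $\hat D$, and by closedness they agree on $D_t$. This gives $D_t\subset D(e^{-t\bar A})$. Uniqueness of $A$ follows because any self-adjoint operator with this property has $\hat D$ as a core, and the core is independent of the choice $T'\leq T$.
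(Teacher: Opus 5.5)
The paper does not prove this statement; it is quoted from Klein--Landau \cite{KL81}, so your sketch has to stand on its own. (You are right to take the union over $0<t\le T$: as literally written, with $t=0$ included, the density hypothesis only says that $D_0$ is dense.)

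The genuine gap is Step 3. It is the heart of the theorem, it is left as an expectation, and the mechanism you propose does not work as stated.

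\textbf{Why your Step 3 mechanism fails.} The function $\phi(s)=\langle h,S(s)f\rangle$ pairs the deficiency vector $h$ with $S(s)f$. So positivity of the kernel $(a,b)\mapsto F(a+b)$ constrains $\phi$ only through $|\phi(s)|\le\|h\|\,\|S(s)f\|=\|h\|F(2s)^{1/2}$ on the real segment, and $c\,e^{\pm\mathrm{i}s}$ satisfies this happily. The local analyticity of Step 2 gives at best a continuation of $s\mapsto S(s)f$ to a bounded neighbourhood of $(0,t)$, where $e^{\pm\mathrm{i}s}$ is bounded too.

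\textbf{What is needed} is a bound along the whole imaginary direction:
\begin{itemize}
\item Use the Bernstein--Widder representation $F(\tau)=\int e^{-\tau\lambda}\,\mathrm{d}\mu_f(\lambda)$ on $[0,t]$. What gives it is exponential convexity, i.e. positivity of all Gram matrices $(F(a_i+a_j))_{ij}$, not log-convexity; this also corrects the attribution in your Step 2.
\item Identify $\overline{\mathrm{span}}\{S(a)f:0\le a\le t/2\}$ isometrically onto $L^2(\mu_f)$ via $S(a)f\mapsto e^{-a\lambda}$.
\item For $0<\mathrm{Re}\,z<t/2$, define $S(z)f$ as the preimage of $e^{-z\lambda}$. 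Then $\|S(\sigma+\mathrm{i}\theta)f\|^2=F(2\sigma)$ for every $\theta\in\mathbb{R}$.
\item The analytic function $\langle h,S(z)f\rangle$ agrees with $c\,e^{\pm\mathrm{i}z}$ on the real segment. It is bounded on vertical lines, while $|c\,e^{\pm\mathrm{i}(\sigma+\mathrm{i}\theta)}|=|c|e^{\mp\theta}$ is not, which forces $c=0$.
\end{itemize}

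\textbf{Alternative route via Nelson.} You can drop deficiency vectors. Cauchy estimates on $F$ give $\|\bar A^{\,n}g\|^2=F^{(2n)}(2s)\le C\,(2n)!\,r^{-2n}$ for $g=S(s)f$, $s<t/2$. So $\hat D$ consists of analytic vectors and Nelson's theorem applies. You must still check $g\in D(\bar A^{\,n})$ by closedness, since $A\hat D\subset\hat D$ is not evident.

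\textbf{Step 4 has a hole at the endpoint.} Your duality argument with $k$ of bounded spectral support works on $[0,\tau]$ for $\tau<t$. It gives $f\in D(e^{-\tau\bar A})$ and $e^{-\tau\bar A}f=S(\tau)f$ for $f\in D_t$. But ``by closedness'' does not reach $\tau=t$:
\begin{itemize}
\item the approximants $S(s)f\in\hat D$ do not lie in $D_t$;
\item nothing in the hypotheses gives continuity of $\tau\mapsto S(\tau)f$ at $\tau=t$, since weak continuity controls $\|S(\tau)f\|^2=F(2\tau)$ only for $\tau\le t/2$.
\end{itemize}

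One way to close it:
\begin{itemize}
\item Write $S(t)f=S(\epsilon)S(t-\epsilon)f$ with small $\epsilon$. Symmetry of $S(\epsilon)$ on $D_\epsilon$ plus the interior result give $\langle S(a)f,S(t)f\rangle=\int e^{-(a+t)\lambda}\,\mathrm{d}\mu_f$ for $a<t$, where $\mu_f$ is now the spectral measure of $f$ for $\bar A$.
\item Cauchy--Schwarz and $a\uparrow t$ then give $\int e^{-2t\lambda}\,\mathrm{d}\mu_f\le\|S(t)f\|^2<\infty$.
\item The same symmetry trick, tested against the dense set $\hat D$, shows $S(t)f=e^{-t\bar A}f$.
\end{itemize}
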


\subsection{The fractional Laplacian in finite volume}\label{sec:4-2}

We now prove we can define the renormalized solution of \eqref{mainSPDE} as an almost surely exponentially bounded $\mathcal{C}_0$-semigroup on Sobolev and Besov spaces of regularity $|s|<\beta$.

\begin{proposition}\label{Prop:ExistenceRenormalizedSemigroupFiniteVolume}
    There exists a deterministic sequence $\eps_k$ converging to $0$ and an event $\overline{\Omega}$ of probability one, on which the following holds. Let $E$ be $H^{s,p}$ or $\mcB^{s}_{p,q}$ ($p\in(1,\infty)$, $q\in[1,\infty]$ and $|s|<\beta$). For any $0<\eps<1$, let $\big(S_\eps(t)\big)_{t\geqslant 0}$ be the $\mathcal{C}_0$-semigroup solving \eqref{renormSPDE} on $E$. Then, 
    \begin{enumerate}
        \item there exists $M>0$ and $c\in\R$ such that,
        \begin{equation}
            \forall t>0,\; \|S_\eps(t)\|_{\mathcal{L}(E)}\leqslant M\mathrm{e}^{ct}\,,
        \end{equation}
        \item $\big(S_{\eps_k}(t)\big)_{t\geqslant 0}$ converges strongly to a $\mathcal{C}_0$-semigroup $\big(S(t)\big)_{t\geqslant 0}$ on E verifying the same bound,
        \item for any $f\in E$ and $g\in E'$, it holds
        \begin{equation}\label{Eq:SymS}
            \forall t>0,\; \langle S(t)f,g\rangle_{E,E'}=\langle f,S(t)g\rangle_{E,E'}.
        \end{equation}
    \end{enumerate}
\end{proposition}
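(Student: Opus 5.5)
The plan is to read everything off the pathwise estimate of Corollary~\ref{Cor:GlobalisationSeps} (with its $\eps$-uniform random constants) and the convergence statement in Proposition~\ref{prop:stoestimate}. First I would fix the event. By Proposition~\ref{prop:stoestimate}, $\E[\talloblong\upxi\talloblong^k]$ is bounded uniformly in $\eps$. The difference bound gives convergence in probability of the force coefficients as $\eps\to0$, at rate $\eps^\eta$. Choose a deterministic sequence $\eps_k\to0$, say $\eps_k=2^{-k}$, so that $\sum_k\P\big(\|\upxi^{n}_{\eps_{k+1},\bigcdot}-\upxi^{n}_{\eps_k,\bigcdot}\|>k^{-2}\big)<\infty$. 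Borel--Cantelli then yields a full-measure event $\overline\Omega$ on which the following hold simultaneously:
\begin{itemize}
\item $\sup_k\talloblong\upxi\talloblong_{\eps_k}<\infty$ (for the continuum quantity, the supremum over $\eps$ is already inside the norm);
\item the coefficients along $\eps_k$ are Cauchy in the topology of Proposition~\ref{prop:stoestimate}.
\end{itemize}
On $\overline\Omega$ the random time $T$ of Lemma~\ref{lemmafixedpointplanted} is bounded below uniformly in $k$.

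For item 1, take first $E=\mcB^{s}_{p,q}$. For $s<0$, Corollary~\ref{Cor:GlobalisationSeps} with $\sigma=-s$, $r=p$ and $\gamma=0$ gives
\[
\|S_\eps(t)u_0\|\le \|\e^{-t\rmD}u_0\|+\e^{ct}(t\wedge1)^{\dots}\|u_0\|_{\mcB^{s}_{p,\infty}}.
\]
Since the exponent is nonnegative, this yields the bound $M\e^{ct}$. The cases $0\le s<\beta$ and $q<\infty$ need one of two routes: either rerun Lemma~\ref{lemmafixedpointplanted} with a $\mcB^{s}_{p,q}$ datum (the arguments there never used $q=\infty$ essentially), or use duality together with the symmetry of item 3 at level $\eps$ to pass from $-s$ to $s$. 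Real interpolation in $s$ then closes the range, and complex interpolation handles $H^{s,p}$. The $\mcC_0$ property at level $\eps$ is classical, because $\xi_\eps$ is smooth.

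For item 2, write $S_{\eps_k}(t)-S_{\eps_j}(t)=P^0_{\eps_k}-P^0_{\eps_j}$. The fixed point of Lemma~\ref{lemmafixedpointplanted} depends continuously (in fact Lipschitz) on the data $(F_{\epmu})$, and these data are Cauchy on $\overline\Omega$. So for each $u_0$ the sequence $S_{\eps_k}(t)u_0$ is Cauchy on $[0,T]$. Using a slightly smaller regularity exponent leaves room to absorb the loss. By the semigroup property, the convergence extends to all $t$. The limit inherits the semigroup law and the bound $M\e^{ct}$. Strong continuity at $0$ follows from the estimate $\|S(t)u_0-\e^{-t\rmD}u_0\|\lesssim t^{\delta}\|u_0\|_{\mcB^{s-\delta'}}$ applied on a dense set, together with uniform boundedness.

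For item 3, at level $\eps$ the generator $\rmD^{\alpha,m}-\xi_\eps+\mfC_\eps$ is symmetric, since $\rmD$ is self-adjoint and $\xi_\eps$ is real. Hence $\langle S_\eps(t)f,g\rangle=\langle f,S_\eps(t)g\rangle$ for smooth $f,g$. Passing to the limit along $\eps_k$ and using density extends this to all of $E\times E'$.

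I expect the main obstacle to be the positive-regularity and $q<\infty$ cases in item 1, uniformly in $\eps$. The Schauder-type gain $\beta-\sigma$ only covers a loss of $\sigma<\beta$ and is stated for $\mcB_{\bigcdot,\infty}$. My first attempt would be duality plus symmetry, which reduces $s>0$ to $-s$. If that fails, I would rerun the fixed point in the target norm.
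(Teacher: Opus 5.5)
Your proof is correct, but it obtains the convergence differently from the paper.

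\textbf{Convergence.} You make the enhanced noise pathwise Cauchy along the fixed sequence $\eps_k=2^{-k}$. This uses Borel--Cantelli with the quantitative rate $(\eps\vee\eps')^\eta$ from Proposition~\ref{prop:stoestimate}. You then transfer this through the Lipschitz dependence of the planted fixed point of Lemma~\ref{lemmafixedpointplanted} on the data.

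The paper only uses convergence in probability of $(S_\eps(t)f_j)_{t\le T}$ for a countable dense family $(f_j)$ of rational trigonometric polynomials. It then:
\begin{enumerate}
\item extends to $(T/N,NT)$ by the semigroup property;
\item extracts a deterministic subsequence $\eps_k$ by a diagonal argument over $(N,j,p,l)$;
\item passes to all $f\in E$ by density and the uniform exponential bound;
\item reaches every Sobolev and Besov space by intersecting countably many events and interpolating. These events are indexed by $s_l=l\beta/(|l|+1)$ and by countably many $p$.
\end{enumerate}

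The paper's route needs no rate, only convergence in probability. Your route gives a single noise-level event that does not depend on $E$, and it needs no interpolation. It also gives more: all fixed-point bounds are linear in $\|u_0\|_{\mcB^{-\sigma}_{p,\infty}}$. So you in fact get convergence of $S_{\eps_k}(t)$ in $\mathcal{L}(E)$-norm, locally uniformly in $t$. This is stronger than the strong convergence claimed, and it is what the dominated-convergence step for norm-resolvent convergence in Theorem~\ref{Th:ExistenceAndersonOp}, item~1(a), actually requires.

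\textbf{Item 1.} The case $0\le s<\beta$ that you flag as the main obstacle is not one. Neither duality nor rerunning the fixed point is needed.
\begin{itemize}
\item For any $|s|<\beta$, choose $\sigma\in(0,\beta)$ and $\gamma\in(0,\beta-\sigma)$ with $-\sigma<s<\gamma$.
\item Then $\mcB^{\gamma}_{p,\infty}\subset E\subset\mcB^{-\sigma}_{p,\infty}$, both for $E=H^{s,p}$ and for $E=\mcB^{s}_{p,q}$ with any $q$.
\item Corollary~\ref{Cor:GlobalisationSeps} with $r=p$ bounds $S_\eps(t)-\e^{-t\rmD^{\alpha,m}}$ from $\mcB^{-\sigma}_{p,\infty}$ to $\mcB^{\gamma}_{p,\infty}$, with a nonnegative power of $t$. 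Hence it is bounded on $E$, and the free semigroup is bounded on $E$ as well.
\end{itemize}
This sandwich is exactly what the paper does, and your $s<0$ argument with $\gamma=0$ is a special case of it. Your duality-plus-symmetry fallback would also work, but it is an unnecessary detour.
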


\begin{proof}
    Let $E$ be $H^{s_l,p}$ with $p\in \{2,3,\dotsc \}\cup\{\frac{n}{n-1},\; n\in\N,\; n\geqslant 3\}$ and $s_l=\frac{l\beta}{|l|+1}$. Note that there exists $0<\sigma_l<\beta$ and $0<\gamma_l<\beta-\sigma_l$ such that $-\sigma_l<s_l<\gamma_l$ thus $\mcB^{\gamma_l}_{p,\infty}\subset E \subset \mcB^{-\sigma_l}_{p,\infty}$ densely and compactly. Fix $\kappa$ small enough such that $0<4\kappa<\beta-\gamma_l-\sigma_l$. Let $\Omega_{l,p}$ be the event on which 1. and 2. hold on $E$. We will prove it is of full probability. Then, by taking 
    $$\overline{\Omega} = \bigcap_{p=2}^\infty\bigcap_{l\in\Z}\Omega_{l,p}\cap\Omega_{l,\frac{p}{p-1}}\,,$$
    which is of probability $1$, 1. and 2. for general Sobolev and Besov spaces will follow by interpolation because $H^{s_\theta,p}=[H^{s_0,p},H^{s_1,p}]_\theta$, $H^{s,p_\theta}=[H^{s,p_0},H^{s,p_1}]_\theta$ and $\mcB^{s_\theta}_{p,q}=(H^{s_0,p},H^{s_1,p})_{\theta,q}$,  with $s_\theta=\theta s_1+(1-\theta)s_0$ ($s_0\neq s_1$), $\frac{1}{p_\theta}=\frac{\theta}{p_1}+\frac{1-\theta}{p_0}$ ($p_0\neq p_1$), $[\cdot,\cdot]_\theta$ the complex interpolation functor and $(\cdot,\cdot)_{\theta,q}$ the real interpolation functor. By Lemma \ref{Cor:GlobalisationSeps}, there exists $M>0$ and $c\in\R$, such that
    \begin{equation}\label{Eq:ExponentialBoundSeps}
        \forall t\geqslant 0,\; \|S_\eps(t)f\|_{E}\leqslant M\mathrm{e}^{ct}\|f\|_E.
    \end{equation}
    Let $(f_j)_{j\in\N_{\geq1}}$ be an enumeration of all trigonometric polynomials with rational coefficients. By continuity on the force $F_{\eps}$ of Lemma \ref{lemmafixedpointplanted} and the convergence of $F_{\eps}$ of Proposition \ref{prop:stoestimate}, for any $j$, $\big(S_\eps(t)f_j\big)_{0<t\leqslant T}$ converges in probability with respect to the Schauder norm
    $$\sup_{0<t\leqslant T}\left[t^{\frac{\sigma_l+\gamma_l}{\alpha}}\|u(t)\|_{\mcB^{\gamma_l}_{p,\infty}}+\|u(t)\|_{\mcB^{-\sigma_l}_{p,\infty}}\right].$$
    By a bootstrap argument, using $S_\eps$ is a semigroup, it follows for any $N\in\N_{\geq1}$, $\big(S_\eps(t)f_j\big)_{\frac{T}{N}<t<NT}$ converges in probability in $L^\infty\left(\left(\frac{T}{N},NT\right),\mcB^{\gamma_l}_{p,\infty}\right)$. By a diagonal argument on $\left(\big(S_{\eps}(t)f_j\big)_{\frac{T}{N}<t<NT}\right)_{N,p,j\in\N_{\geq1},l\in\Z}$, there exists a deterministic sequence $\eps_k$ going to $0$, independent of $N$, $j$, $p$ and $l$, such that for any $N\in\N_{\geq1}$ and any $j\in\N$, $\big(S_{\eps_k}(t)f_j\big)_{\frac{T}{N}<t<NT}$ converges almost surely in $L^\infty\left(\left(\frac{T}{N},NT\right),\mcB^{\gamma_l}_{p,\infty}\right)$ to some $\big(S(t)f_j\big)_{\frac{T}{N}<t<NT}$. Remark the limit $S(t)$ is well-defined for any $t>0$ as $S_\eps$ is a semigroup and by uniqueness of the limit in probability. Denote by $\Omega'_{l,p}$ the event of probability one on which this convergence holds for all $f_j$. Then, by density of $(f_j)_{j\in\N_{\geq1}}$ and (\ref{Eq:ExponentialBoundSeps}), it follows that for any $f\in E$ and any $t\in E$, $S_{\eps_k}(t)f$ converges to $S(t)f$ on $\Omega'_{l,p}$ and (\ref{Eq:ExponentialBoundSeps}) holds for $S$ with the same $M$ and $c$. It follows easily, by uniqueness of the limit, that $S$ is a semigroup on $E$. Moreover, by Lemma \ref{lemmafixedpointplanted} and the convergence in $\mcB^{\gamma_l}_{p,\infty}$, we get
    \begin{align*}
        \|S(t)f-f\|_E &\leqslant \|S(t)f-G(t)f\|_E+\|G(t)f-f\|_E\\
        &\leqslant C\|S(t)f-G(t)f\|_{\mcB^{\gamma_l}_{p,\infty}}+\|G(t)f-f\|_E\\
        &\leqslant C\lim_{k\to\infty}\|S_{\eps_k}(t)f-G(t)f\|_{\mcB^{\gamma_l}_{p,\infty}}+\|G(t)f-f\|_E\\
        &\leqslant Ct^\eta\|f\|_{\mcB^{-\sigma_l}_{p,\infty}}+\|G(t)f-f\|_E\\
        &\leqslant C't^\eta\|f\|_{E}+\|G(t)f-f\|_E\,,
    \end{align*}
    for some $\eta>0$, using $\mcB^{\gamma_l}_{p,\infty}\subset E \subset \mcB^{-\sigma_l}_{p,\infty}$. Taking $t$ to $0^+$ shows $S$ is in fact a $\mathcal{C}_0$-semigroup on $E$, on the event $\Omega'_{l,p}$. This achieve the proof that $\Omega'_{l,p}\subset\Omega_{l,p}$, which is thus of full probability. Now, on $\overline{\Omega}$, let $E$ be any Sobolev or Besov space of regularity $|s|<\beta$ and integrability $p\in(1,\infty)$ for any $f\in E$ and $g \in E'$, it holds
    $$\forall t,\eps>0,\; \langle S_\eps(t)f,g\rangle_{E,E'}=\langle f,S_\eps(t)g\rangle_{E,E'}.$$
    In fact, it holds for $f$ and $g$ trigonometric polynomials, which are dense in both $E$ and $E'$, by self adjointness and both bracket are continuous bilinear forms on $E\times E'$. Thus, taking the limit along $\eps_k$, 3. follows.  
\end{proof}

To conclude the proof of Theorem \ref{maintheoremspde}, item 1, it suffices to prove the estimate \eqref{Eq:EstimateS-G}, which follows from Lemma \ref{Cor:GlobalisationSeps} and the convergence along the deterministic sequence $\eps_k$ of Proposition \ref{Prop:ExistenceRenormalizedSemigroupFiniteVolume}. 

\begin{proof}[of Theorem~\ref{Th:ExistenceAndersonOp}.1]

    The main observation to prove item 1.(c) is that one has to see that the first and third items of Theorem~\ref{maintheoremspde} hold for the more general equation
    \begin{equation}\label{mainSPDErotated}
        (\D_t+\e^{\i\phi}\mathrm{D}^{\alpha,m})u^{\alpha,m}=\e^{\i\phi}(\xi-\infty) u^{\alpha,m}\,,\qquad u^{\alpha,m}\!\!\upharpoonright_{t=0}\,=u^{\alpha,m}_0\,
    \end{equation}
    for $|\phi|<\frac{\pi}{2}$, viewed as the limit of 
    \begin{equation}\label{Eq:RenomalizedParabolicRegularizedRotated}
        (\D_t+\e^{\i\phi}\mathrm D^{\alpha,m})u^{\alpha,m}_\eps=\e^{\i\phi}(\xi_\eps-\mfC^{\alpha,m}_\eps) u^{\alpha,m}_\eps\,,\qquad u^{\alpha,m}_\eps\!\!\upharpoonright_{t=0}\,=u^{\alpha,m}_{0,\eps}\,.
    \end{equation}
    This is a direct consequence of the semigroup associated with $\mathrm{D}^{\alpha,m}$ being bounded analytic on $\{\Re z>0\}$ on every ${L}^p_x$. In that case, \eqref{Eq:EstimateS-G} and \eqref{Eq:EstimateS-GOH} may hold with constants depending on $\phi$ (which typically blow up as $|\phi|$ goes to $\frac{\pi}{2}$). The only fact to prove is that the renormalisation of \eqref{mainSPDErotated} has the form given in \eqref{Eq:RenomalizedParabolicRegularizedRotated}. For that purpose, one can show by induction that 
    \begin{equs}
        \upxi_\epmu^{\phi,n}(t,x;s,y)= e^{2\i \phi}\upxi^n_\epmu (\e^{\i \phi}t,x;\e^{\i \phi}s,y)\,.
    \end{equs}
    We have denoted for this proof $\upxi_\epmu^{\phi,n}$ the force coefficients associated with the operator $\D_t+\e^{\i\phi}\mathrm D^{\alpha,m}$. The latter implies that $\mfC^\phi_\eps=e^{\i \phi}\mfC_\eps$. Hence, Proposition~\ref{Prop:ExistenceRenormalizedSemigroupFiniteVolume} holds for the semigroups $S^{\phi}$ and $(S^\phi_\eps)_{0<\eps<1}$ on a event $\overline{\Omega}_\phi$ of probability one. We now work on the full-probability event
    $$\overline{\overline{\Omega}} = \bigcap_{k=0}^{\infty}\overline{\Omega}_{\frac{k\pi}{2k+1}}\cap\overline{\Omega}_{-\frac{k\pi}{2k+1}}.$$

    \begin{itemize}
        \item[(a)] As a consequence of Theorem \ref{Prop:Hille-Yosida} and Proposition \ref{Prop:ExistenceRenormalizedSemigroupFiniteVolume}, one can now construct the operator $\mathrm{A}^{\!\alpha,m}_\xi$
        as the generator of the semigroup $S$ constructed in Proposition \ref{Prop:ExistenceRenormalizedSemigroupFiniteVolume} on $L_x^p(\T^d)$. Then, obviously, \eqref{mainSPDE} can be rewritten as
        $$\partial_t u + \mathrm{A}^{\!\alpha,m}_\xi u = 0.$$
        The quasi-sectoriality of $A$ follows directly from Theorem \ref{Prop:Hille-Yosida}. Let $A_\eps = \rmD^{\alpha,m}+\xi_{\eps}-\mfC_{\eps}$. There exists $c,C>0$ finite a.s. such that
        $$\forall \eps,t>0,\; \|S_\eps(t)\|_{\mathcal{L}(L_x^p)}\leqslant C\e^{ct},$$ thus by Lumer calculus, for $\Re z<-c$, it holds
        \begin{equation}\label{Eq:LumerFormulaResolventA-Aeps}
            (A+z)^{-1}-(A_\eps+z)^{-1} = \int_0^\infty \e^{-zt}\big(S(t)-S_{\eps}(t)\big)\dint t,
        \end{equation}
        thus we see that $(A_{\eps_k}+z)^{-1}$ converges to $(A+z)^{-1}$ in $\mathcal{L}(L_x^p)$ by dominated convergence in the Bochner integral in the right-hand side of \eqref{Eq:LumerFormulaResolventA-Aeps}. 
        
        Likewise, for $0<\gamma<\beta$, one can write
        \begin{equation}\label{Eq:LumerFormulaResolventA-D}
            (A+z)^{-\frac{\gamma}{\alpha}}-(\rmD^{\alpha,m}+z)^{-\frac{\gamma}{\alpha}} = \int_0^\infty t^{\frac{\gamma}{\alpha}} \e^{-zt}\big(S(t)-G^{\alpha,m}(t)\big)\,\frac{\mathrm{d} t}{t}.
        \end{equation}
        Let $\sigma>0$ small enough and $\gamma<\gamma'<\beta-\sigma$. Then, for $f\in L_x^p$, \eqref{Eq:EstimateS-G} implies
        \begin{align*}
            \left\|(A+z)^{-\frac{\gamma}{\alpha}}f-(\rmD^{\alpha,m}+z)^{-\frac{\gamma}{\alpha}}f\right\|_{H^{\gamma,p}} &\leqslant \int_0^\infty t^{\frac{\gamma}{\alpha}} \e^{-\Re zt}\|S(t)f-G^{\alpha,m}(t)f\|_{H^{\gamma,p}}\,\frac{\mathrm{d} t}{t}\\
            &\lesssim \int_0^\infty t^{\frac{\beta+\gamma-\gamma'-\sigma}{\alpha}}\e^{(c-\Re z)t}\|f\|_{\mcB^{-\frac{\sigma}{2}}_{p,\infty}}\,\frac{\mathrm{d} t}{t}\\
            &\lesssim \left(\int_0^\infty t^{\frac{\beta+\gamma-\gamma'-\sigma}{\alpha}}\e^{(c-\Re z)t}\,\frac{\mathrm{d} t}{t}\right)\|f\|_{L_x^p},
        \end{align*}
        and the integral is finite as $\beta+\gamma-\gamma'-\sigma>0$ and $c-\Re z<0$. Thus, as $(\rmD^{\alpha,m}+z)^{-\frac{\gamma}{\alpha}}\in\mathcal{L}(L_x^p,H^{\gamma,p})$, it follows $(A+z)^{-\frac{\gamma}{\alpha}}\in\mathcal{L}(L_x^p,H^{\gamma,p})$. This give a kind of elliptic regularity estimate for $A$.
        \item[(b)] Consider $p=2$. Let $f,g\in\mathrm{D}(A)$, then it holds
        \begin{align*}
            \langle Af,g\rangle_{L_x^2} &= \lim_{t\to 0} \left\langle \frac{S(t)f-f}{t},g\right\rangle_{L_x^2}\\
            &= \lim_{t\to 0} \left\langle f, \frac{S(t)g-g}{t}\right\rangle_{L_x^2}\\
            &= \langle f,Ag\rangle_{L_x^2},
        \end{align*}
        where in second line we used \eqref{Eq:SymS}. Thus, $A$ is symmetric and, by the previous point, $-c-1$ is in the resolvent set of $A$, implying the self-adjointness of $A$. Moreover, $(A+c+1)^{-1}$ is compact on $L_x^2$ by the previous point and the compact embedding of $H^{\gamma,2}$ in $L_x^2$. Thus, by \cite[Theorem 3.6.29.]{Ka95}, there exists a complete orthonormal basis $(\phi_n)_{n\in\N}$ and a non-decreasing sequence
        $$\lambda_0 \leqslant \lambda_1 \leqslant \cdots \leqslant \lambda_n \to\infty$$
        such that $A\phi_n=\lambda_n\phi_n$. By the previous point and Sobolev embedding, it is easily verified that $\phi_n\in H^{\beta-,p}$ for any $p\in(1,\infty)$. As a consequence, $\lambda_n$ are eigenvalues of $A$ on  $L_x^p$. Using Sobolev embeddings and the "elliptic regularity" estimate, it is easily seen that $A+\lambda$ is injective (respectively surjective) on some $L_x^p$ if and only if it is on all $L_x^p$-spaces. 
        
        It only remains to prove Weyl law \eqref{Eq:WeylLawAndersonTheorem} for $A$. By Lemma \ref{Cor:WeylLawEquivTrace}, it suffices to prove a small time asymptotic of $\operatorname{Tr}\big(S(t)\big)$. A direct computation gives
        $$0\leqslant \operatorname{Tr}\left(\e^{-t|\nabla|^\alpha}-G^{\alpha,m}(t)\right)\lesssim t \operatorname{Tr}\left(\e^{-\frac{t}{2}|\nabla|^\alpha}\right)\,,$$
        thus Lemma \ref{Lem:LooseAsymptoticFractionalHeatTrace} implies
        \begin{equation}\label{Eq:SmallTimeAsymptoticTraceMassiveFractionalHeatKernel}
            \operatorname{Tr}\big(G^{\alpha,m}(t)\big)\sim \operatorname{Tr}\left(\e^{-t|\nabla|^\alpha}\right)\,.
        \end{equation}
        Now, as in \cite{BDM22}, using the semigroup property and the self adjointness, one can see
        $$\operatorname{Tr}\big(S(t)\big) = \int_{\T^d} S(t,x,x) \dint x\,,$$
        where $S(t,x,y)\eqdef S(t)\big(\delta_y\big)(x)$. Let $0<\eps<\frac{\beta}{2}$, there exists $1<p<2$ such that $\delta_y \in \mcB^{-\eps}_{p,\infty}$. Using \eqref{Eq:EstimateS-G}, it follows
        $$\sup_{y\in\T^d}\|S(t)\delta_y-G^{\alpha,m}(t)\delta_y\|_{L^1_x} \lesssim \sup_{y\in\T^d}\|S(t)\delta_y-G^{\alpha,m}(t)\delta_y\|_{\mcB^{\eps}_{p,\infty}} \lesssim t^{\frac{\beta-d-2\eps}{\alpha}}\,.$$
        Thus, using Sobolev embeddings,
        \begin{align*}
            \operatorname{Tr}\big(S(t)\big) &= \int_{\T^d} S(t,x,x)\dint x\\
            &= \int_{\T^d} G^{\alpha,m}(t,x,x)\dint x + \int_{\T^d} S(t,x,x)-G^{\alpha,m}(t,x,x) \dint x\\
            &= \operatorname{Tr}\big(G^{\alpha,m}(t)\big) + o\left(t^{-\frac{d}{\alpha}}\right)\\
            &\sim \operatorname{Tr}\left(\e^{-t|\nabla|^\alpha}\right)\,,
        \end{align*}
        using \eqref{Eq:SmallTimeAsymptoticTraceMassiveFractionalHeatKernel} and Lemma \ref{Lem:LooseAsymptoticFractionalHeatTrace}. Then, \eqref{Eq:WeylLawAndersonTheorem} follows from Lemma \ref{Cor:WeylLawEquivTrace}.
       
        \item[(c)] Let $\lambda>-\lambda_0$. On $L^2_x$, self-adjointness and positiveness imply $\lambda+A$ generates a bounded analytic semigroup on $\{\Re z>0\}$. By the Hille-Yoshida theorem, for any $\phi\in\Phi=\{\pm\frac{k\pi}{2k+1},\;k\in\N\}$, $S^\phi$, the semigroup defined by the solution to \eqref{mainSPDErotated}, admits a quasi sectorial generator on any $L^p_x$, which is equal to $\e^{i \phi}A$ on $D_{p,2}=(\lambda+A)^{-1}(L^p_x\cap L^2_x)$, which is dense in both $D_p=(\lambda+A)^{-1}L^p_x$ and $D_2=(\lambda+A)^{-1}L^2_x$. Let 
        $$\omega(p,\phi) = \omega_{L_x^p}(S^\phi) = \lim_{t\to\infty}\frac{1}{t}\log\left\|S^\phi(t)\right\|_{\mathcal{L}(L_x^p)}$$
        the growth bound of $S^\phi$ on $L^p_x$. By the Riesz-Thorin theorem, $\omega(p,\phi)$ is a convex function of $\frac{1}{p}$. Let $1<q<p$ and $0<\eps<1$, it holds
        \begin{align*}
            \left\|S^\phi(t)u\right\|_{L^q_x} &\leqslant \left\|S^\phi(t)u\right\|_{L^p_x}\\
            &\lesssim \left\|S^\phi((1-\eps)t)\right\|_{\mathcal{L}(L_x^p)}\left\|S^\phi(\eps t)u\right\|_{\mcB^{d(\frac{1}{q}-\frac{1}{p})}_{q,\infty}}\\
            &\lesssim_\phi t^{\frac{\beta-2d(\frac{1}{q}-\frac{1}{p})}{\alpha}}\left\|S^\phi((1-\eps)t)\right\|_{\mathcal{L}(L_x^p)}\|u\|_{L^q_x}
        \end{align*}
        for $|\frac{1}{q}-\frac{1}{p}|$ small enough. Thus, $\omega(q,\phi) \leqslant \frac{1}{1-\eps}\omega(p,\phi)$. Sending $\eps$ to $0$, we see $p\mapsto \omega(p,\phi)$ is non-decreasing. Similarly, 
        \begin{align*}
            \left\|S^\phi(t)u\right\|_{L^p_x} &\lesssim \left\|S^\phi(t)u\right\|_{\mcB^{d(\frac{1}{q}-\frac{1}{p})}_{q,\infty}}\\
            &\lesssim_\phi t^{\frac{\beta-2d(\frac{1}{q}-\frac{1}{p})}{\alpha}}\left\|S^\phi((1-\eps)t)\right\|_{\mathcal{L}(L_x^q)}\|u\|_{L^p_x}\,,
        \end{align*}
        implying $p\mapsto \omega(p,\phi)$ is also non-increasing, thus constant. Hence $\omega(p,\phi) = \omega(2,\phi) = -\cos(\phi)\lambda_0$. In particular, $\e^{i\phi}(\lambda+A)$ generates a bounded $\mathcal{C}_0$-semigroup on any $L^p_x$. Hence, it is $\theta$-sectorial for any $\theta\in(\frac{\pi}{2},\pi)$, which implies that $\lambda+A$ is $\theta$-sectorial for any $\theta>0$ on any $L^p_x$.\\

        Now let $\theta \in(0,\pi)$ and $F$ be a bounded holomorphic function on $\Sigma_\theta$. We want to prove that there exists a constant $C$ depending only $\theta,p,\rmD^{\alpha,m}$, such that
        $$\left\|\int_\Gamma F(z) (z-\lambda-A)^{-1}\dint z\right\|_{\mathcal{L}(L_x^p)}\leqslant C\|F\|_{L^\infty(\Sigma_\theta)}.$$
        This result is true for $\rmD^{\alpha,m}$ by the Mikhlin-Hormänder theorem. Thus, we can focus on
        $$\left\|\int_\Gamma F(z) \left((z-\lambda-A)^{-1}-(z+\lambda-\rmD^{\alpha,m})^{-1}\right)\dint z\right\|_{\mathcal{L}(L_x^p)}\leqslant C\|F\|_{L^\infty(\Sigma_\theta)}.$$
        Let $\Gamma = \Gamma_+\cup \Gamma_0\cup \Gamma_-$, with $\Gamma_+ =  e^{i \theta/2}(\infty,\delta)$, $\Gamma_- = e^{-i\theta/2}(\delta,\infty)$ and $\Gamma_0 = \delta \e^{i[\theta/2,-\theta/2]}$, where $\delta>0$ is small enough so that $\left((z-\lambda-A)^{-1}\right)_{|z|=\delta}$ is uniformly bounded by the open mapping theorem. For $z\in\Gamma_\pm$,
        $$(z-\lambda-A)^{-1}-(z-\lambda-\rmD^{\alpha,m})^{-1} = \int_0^{\infty} \e^{\e^{\pm i\phi}t(z-\lambda)}\big(S^{\pm\phi}(t)-G^{\alpha,m}(\e^{\pm i\phi}t)\big)\dint t$$
        for $\phi\in\Phi$ such that $\frac{\pi}{2}<\phi+\frac{\theta}{2}<\pi$. Now, by \eqref{Eq:EstimateS-G} and Remark~\ref{Rem:RemarksTheorem1}, there exists $\gamma>0$ such that for any $\eps>0$ small enough,
        \begin{align*}
            \left\|(z+\lambda+A)^{-1}-(z+\lambda-\rmD^{\alpha,m})^{-1}\right\|_{\mathcal{L}(L_x^p)}&\lesssim_{\phi,\eps} \int_0^{1} t^\gamma \e^{t|z|\cos(\phi+\frac{\theta}{2})}\dint t\\
            &+ \int_1^{\infty}\e^{t\left(|z|\cos(\phi+\frac{\theta}{2})-\cos(\phi)\lambda\right)}\left(1+\e^{t\left(\cos(\phi)\lambda_0+\eps\right)}\right)\dint t\\
            &\lesssim_{\phi,\theta,\eps} |z|^{-1-\gamma}\,,
        \end{align*}
        as $\lambda>-\lambda_0$ and $\cos(\phi+\frac{\theta}{2})<0<\cos(\phi)$, which allows to conclude as $|z|^{-1-\gamma}$ is integrable as $|z|$ goes to infinity.
    \end{itemize}
\end{proof}

\begin{remark}
    This operator is defined up to a constant shift. A convenient choice may be to chose this constant shift in order to fix the lower eigenvalue of $A$. Another way to proceed can be to obtain a fine description of the blow-up of $\mathfrak{C}_\eps^{\alpha,m}$ and keep only the diverging part while discarding any bounded term.
\end{remark}

\subsection{The fractional Laplacian in infinite volume}\label{sec:4-3}

We now prove Theorem \ref{maintheoremspde}, item 2, by using Proposition \ref{Prop:KleinLandau}, as in \cite{HsL26}.

\begin{proof}[of Theorem~\ref{Th:ExistenceAndersonOp}.2]
    Let $T=1$, let $D_t \eqdef L^2_{\mcb e_{t}}$ (we recall that this space has been defined below \eqref{eq:defLp}) and let $S(t) f$ be the unique solution to \eqref{mainSPDE} at time $0\leqslant t\leqslant 1$, starting from $f\in D_t$. For $\eps>0$, let $S_{\eps}(t) = e^{t(\rmD^{\alpha,m}_\xi-\xi_\eps+\mfC^{\alpha,m}_\eps)}$. Then, acting like in the proof of Proposition \ref{Prop:ExistenceRenormalizedSemigroupFiniteVolume}, we can ensure the existence of an event $\Omega$ of probability one and a deterministic sequence $\eps_k$ going to $0$ such that for any $f\in V\eqdef \mathrm{Span}_{\Q}\left(\mathbf{1}_{B(a,r)},\; a\in\Q^d,\; r\in\Q_+\right)$, $\big(S_{\eps_k}(t)f\big)_{0<t\leqslant 1}$ converges to $\big(S(t)f\big)_{0<t\leqslant 1}$ in $\mathcal{E}_{2,2,l,1}^{\gamma,\gamma-\eta}$, for any $\gamma<\beta$ and $\eta>0$, on $\Omega$. We now work on $\Omega$. Let us verify each item of Theorem \ref{Prop:KleinLandau}.
    \begin{enumerate}
        \item Clearly, for $s\leqslant t$, $D_s\subset D_t$ and $\mathcal{C}^\infty_c(\R^d)\subset \bigcup_{0\leqslant t\leqslant T}D_t$, which is thus dense in $L^2_x$.
        \item Thanks to Theorem \ref{maintheoremspde}, and in particular \eqref{eq:estimateS-Ginf}, we are allowed to start the dynamic from $L^2_{\mcb e_{\ell}}$ for any $\ell\in\R$, for $0\leqslant s\leqslant t\leqslant T$, $S(s)D_{t-s}\subset D_s$ and $S(t-s)S(s)=S(t)$ on $D_t$.
        \item Let $f,g\in V$.  Then, $S_{\eps_k}(t)f$ and $S_{\eps_k}(t)g$ converges in $L^2_x$ to $S(t)f$ and $S(t)g$ respectively. Thus, $\langle f,S_{\eps_k}(t) g\rangle_{L^2_x}=\langle S_{\eps_k}(t) f,g\rangle_{L^2_x}$ converges to $\langle f,S(t) g\rangle_{L^2_x}=\langle S(t) f,g\rangle_{L^2_x}$. Now, remark that both $\langle f,S(t) g\rangle_{L^2_x}$ and $\langle S(t) f,g\rangle_{L^2_x}$ are bilinear continuous on $D_t$ and $V$ is dense, so $\langle f,S(t) g\rangle_{L^2_x}=\langle S(t) f,g\rangle_{L^2_x}$ holds for any $f,g\in D_t$.
        \item Let $g\in L^2_x$ and $f\in V$. For $0\leqslant s\leqslant t$, for $\eta>0$ small enough, \eqref{eq:estimateS-Ginf} implies
        \begin{align*}
            \left|\langle g,\big(S(s)-\e^{-s\mathrm{D}^{\alpha,m}}\big) f\rangle_{L^2_x}\right| 
            &\leqslant \|g\|_{L^2_x}\|\big(S(s)-G^{\alpha,m}(s)\big)f\|_{L^2_x}\\
            &\lesssim (s\wedge 1)^{(\beta-2\sigma-\eta)/\alpha}\|\mcb e_{t} f\|_{L^2_x}\|g\|_{L^2_x}\,.
        \end{align*}
        By density of $V$, this holds for any $f\in D_t$. As $\big(G^{\alpha,m}(s)\big)_{s\geq0}$ is a $\mathcal{C}_0$-semigroup, $S(s) f$ converges weakly to $f$ in $L^2_x(\R^d)$ as $s$ goes to $0$. Moreover, as $\|S(s) f\|_{L^2_x}^2=\langle f,S(2s)f\rangle_{L^2_x}$ converges to $\langle f,f\rangle_{L^2_x}=\|f\|_{L^2_x}^2$, $S(s) f$ converges strongly to $f$ in $L^2_x$. Now, for $0< s\leqslant s'\leqslant t$ using $S(s')f=S(s'-s)S(s) f$, we get 
        \begin{align*}
            \langle f,\big(S(s')-S(s)\big)f\rangle_{L^2_x} &= \langle S(s)f,\big(S(s'-s)-1\big)f\rangle_{L^2_x}\\
            &\leqslant \|S(s) f\|_{L^2_x}\|S(s'-s)f-f\|_{L^2_x}\\
            &\lesssim \|\mcb e_{t} f\|_{L^2_x}\|S(s'-s)f-f\|_{L^2_x}\,,
        \end{align*}
        and the continuity follows.
    \end{enumerate}
    Thus, by Theorem \ref{Prop:KleinLandau}, $\big(S(t)\big)_{t>0}$ is the restriction of a unique self-adjoint operator $\mathrm{A}^{\!\alpha,m}$ on $L^2_x$.
\end{proof}

\subsection{The harmonic oscillator}\label{sec:4-4}

In this section, we adapt the argument of the finite volume case to the harmonic oscillator. The proof of the following proposition follows exactly the one of Proposition \ref{Prop:ExistenceRenormalizedSemigroupFiniteVolume}, using Proposition \ref{Prop:HermiteSpacesInterpolation} and Lemma \ref{lemma:constructsolOH}.

\begin{proposition}\label{Prop:ExistenceRenormalizedSemigroupHO}
    There exists a deterministic sequence $\eps_k$ converging to $0$ and an event $\overline{\Omega}$ of probability one, on which the following holds. Let $E$ be $\msH^{s,p}$ or $\msB^{s}_{p,q}$ ($p\in(1,\infty)$, $q\in(1,\infty)$ and $|s|<\beta$). For any $0<\eps<1$, let $\big(S_\eps(t)\big)_{t\geqslant 0}$ be the $\mathcal{C}_0$-semigroup solving \eqref{renormSPDE} on $E$. Then, 
    \begin{enumerate}
        \item there exists $M>0$ and $c\in\R$ such that,
        \begin{equation}
            \forall t>0,\; \|S_\eps(t)\|_{\mathcal{L}(E)}\leqslant M\mathrm{e}^{ct}\,,
        \end{equation}
        \item $\big(S_{\eps_k}(t)\big)_{t\geqslant 0}$ converges strongly to a $\mathcal{C}_0$-semigroup $\big(S(t)\big)_{t\geqslant 0}$ on E verifying the same bound,
        \item for any $f\in E$ and $g\in E'$, it holds
        \begin{equation}\label{Eq:SymSOH}
            \forall t>0,\; \langle S(t)f,g\rangle_{E,E'}=\langle f,S(t)g\rangle_{E,E'}.
        \end{equation}
    \end{enumerate}
\end{proposition}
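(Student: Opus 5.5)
The proof transposes the argument of Proposition~\ref{Prop:ExistenceRenormalizedSemigroupFiniteVolume} to the Hermite scale: $\msH^{s,p}$, $\msB^s_{p,q}$ replace their Euclidean counterparts, and Lemma~\ref{lemma:constructsolOH} with Corollary~\ref{Cor:GlobalisationSepsOH} replaces Lemma~\ref{lemmafixedpointplanted} and Corollary~\ref{Cor:GlobalisationSeps}.

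\emph{Reduction to a countable family.} We first reduce to a countable family of spaces. Take $E=\msH^{s_l,p}$ with $s_l=l\beta/(|l|+1)$ and $p$ ranging over $\{2,3,\dots\}$ and its conjugates. For each such pair we fix $0<\sigma_l<\beta$ and $0<\gamma_l<\beta-\sigma_l$ with $-\sigma_l<s_l<\gamma_l$. This gives the embeddings $\msB^{\gamma_l}_{p,\infty}\subset E\subset\msB^{-\sigma_l}_{p,\infty}$. We expect these to be dense and compact, since $\mathrm H$ has compact resolvent. Corollary~\ref{Cor:GlobalisationSepsOH} with $r=p$ then yields $\|S_\eps(t)f\|_E\le M\e^{ct}\|f\|_E$, where the constant is uniform in $\eps$ because it depends only on $\talloblong\upxi\talloblong$, whose moments are uniformly controlled by Proposition~\ref{prop:stoestimate}.

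\emph{Convergence on a dense set.} Next we enumerate a countable dense set $(f_j)$: finite rational combinations of Hermite functions, which are dense in every $\msH^{s,p}$ and $\msB^s_{p,q}$ with $q<\infty$. The fixed point of Lemma~\ref{lemma:fixedpointOH} is continuous in the data. The force coefficients $\upomega^n$ are pathwise continuous functions of $\upxi^{2,0;n}$ (Proposition~\ref{prop:boundomega} and the remark after it), and the latter converge by Proposition~\ref{prop:stoestimate}. Together these give convergence in probability of $S_\eps(t)f_j$ in the Schauder-type norm on $(0,T]$. The semigroup property bootstraps this to $(T/N,NT)$. A diagonal extraction over $N,j,p,l$ produces a deterministic sequence $\eps_k$ along which convergence holds almost surely. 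Density and the uniform bound then extend the convergence to all of $E$. The semigroup property passes to the limit $S$, as does the exponential bound with the same $M$ and $c$.

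\emph{Strong continuity and the remaining spaces.} Strong continuity at $0$ follows from $\|S(t)f-\e^{-t\mathrm H}f\|_E\lesssim t^\eta\|f\|_E$, which comes from \eqref{Eq:EstimateS-GepsOH} in the limit, combined with strong continuity of $\e^{-t\mathrm H}$. The general spaces $\msH^{s,p}$ and $\msB^s_{p,q}$ are then obtained by interpolation using Proposition~\ref{Prop:HermiteSpacesInterpolation}: complex interpolation in $s$ and in $p$, and real interpolation for the Besov spaces. This restriction is also why $q<\infty$ appears in the statement. Finally, symmetry \eqref{Eq:SymSOH} holds for $S_\eps$ on Hermite polynomials because $\mathrm H-\xi_\eps+\mfC_\eps$ is self-adjoint. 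It extends by density and continuity of the duality pairing, and then passes to the limit along $\eps_k$.

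\emph{Main obstacle.} The delicate step is ensuring that the uniform-in-$\eps$ exponential bound and the continuity of the solution map hold in the Hermite Besov norms without weights. That requires checking that the constants in Lemma~\ref{lemma:constructsolOH} and Corollary~\ref{Cor:GlobalisationSepsOH} depend on the noise only through $\talloblong\upxi\talloblong$. It also requires that Proposition~\ref{Prop:HermiteSpacesInterpolation} supplies exactly the interpolation identities and the density of Hermite functions used above.
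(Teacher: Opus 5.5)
Your proposal is correct and takes the paper's route. The paper proves this proposition in one line, by transposing the proof of Proposition~\ref{Prop:ExistenceRenormalizedSemigroupFiniteVolume} with Lemma~\ref{lemma:constructsolOH} and Proposition~\ref{Prop:HermiteSpacesInterpolation} in place of their Euclidean counterparts; you do the same, keeping each step (countable family of spaces, $\eps$-uniform exponential bound, diagonal extraction of a deterministic $\eps_k$, strong continuity by comparison with $\e^{-t\mathrm H}$, interpolation, symmetry by density). One small slip: in the symmetry step you mean finite combinations of Hermite functions, your dense set, not Hermite polynomials, which are not even in $L^p_x$.
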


\begin{proof}[of Theorem~\ref{Th:ExistenceAndersonOp}.3] This implies Theorem \ref{maintheoremspde}, item 3, exactly like Proposition \ref{Prop:ExistenceRenormalizedSemigroupFiniteVolume} implies the first item of Theorem \ref{maintheoremspde}, using Lemma~\ref{Lem:LooseAsymptoticMehlerTrace}, Corollary~\ref{Cor:WeylLawEquivTraceOH} and Proposition~\ref{Prop:HermiteSpacesEmbeddings}.
\end{proof}

\begin{appendix}

\renewcommand*{\thetheorem}{\mbox{\textrm A.\arabic{theorem}}}

\section{Estimates on the kernels}

\subsection{Kernels in the translation-invariant setting}

\begin{lemma}\label{estimatesKmu}
Let $\varsigma>0$ and $p,q\in[1,\infty]$. We have
\begin{equs} \label{eq:normKmu}\big\|\langle\bigcdot\rangle_\mu^\varsigma K_\mu\big\|_{L^1_{t,x}}&\leqslant1   \,,\\
\big\|\langle\bigcdot\rangle_\mu^\varsigma\partial_t^{\mfl_0}\D_x^{\mfl}K_\mu\big\|_{L^1_{t,x}}&\lesssim\mu^{-|\mfl_0,\mfl|_\mfs}\;\mathrm{for}\;\mathrm{all} \;(\mfl_0,\mfl)\in\N^{d+1}\,,\;|\mfl_0,\mfl|_\mfs\leqslant N\,,
\label{eq:spacederivKmu}
\\
\label{eq:KmuLp} \big\|\langle\bigcdot\rangle_\mu^\varsigma K_\mu\|_{L_{t,x}^{p,q}}&\lesssim \mu^{-\alpha\left(1-\frac{1}{p}\right)-d\left(1-\frac{1}{q}\right)} \;\mathrm{if}\;\alpha\left(1-\frac{1}{p}\right)+d\left(1-\frac{1}{q}\right)\leqslant N\,,
\end{equs}
uniformly in $\mu\in(0,1]$. In addition, we have the estimate with exponential weights for any $\ell\in\R$
\begin{equs}\label{Kmuexp}
    \big\|\mcb e_\ell Q_\mu\big\|_{L^1_x}\lesssim1\,.
\end{equs}
and for $\psi\in \mcD(\R^d)$ and $\nu\leqslant\mu$,
\begin{equs}\label{eq:comKmuKnu1}
    \big\| K_\mu\big(1-K_\nu\big)\psi\big\|_{L_x^p}&\lesssim \big(\nu/\mu\big)^{2}\|K_\nu\psi\|_{L_x^p}\,,
\end{equs}
and
\begin{equs}\label{eq:comKmuKnu}
    \big\|Q_\mu\mcb e_{-\ell}\big(1-Q_\nu\big)\psi\big\|_{L_x^p}&\lesssim \big(\nu/\mu\big)^{2}\| \mcb e_{-\ell}K_\nu\psi\|_{L_x^p}\,,
\end{equs}
Moreover, for $\psi\in\mcD(\Lambda)$, we have
\begin{equs} \label{eq:Kmunu} 
    \|\mcb e_{-\ell}K_\mu\psi\|_{L_{t,x}^{p,q}}&\leqslant\bigg(1\vee\Big(2\big(\nu/\mu\big)^{\alpha}-1\Big)^{2N}\bigg)  \|\mcb e_{-\ell}K_\nu\psi\|_{L_{t,x}^{p,q}}\,,
\end{equs}
uniformly in $\mu,\nu\in(0,1]$.
\end{lemma}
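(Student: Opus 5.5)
The plan is to reduce everything to explicit properties of the one-scale kernel $K_{1,1}$ via the scaling identity $K_{N,\mu}=\mcS_\mu K_{N,1}$, and then read off each estimate from the Fourier/Laplace structure of $\mcP_\mu=(1+\mu^\alpha\D_t)\otimes(1-\mu^2\Delta)$.

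First, $K_{1,1}(t,x)=\e^{-t}\one_{t>0}(1-\Delta)^{-1}(x)$ is nonnegative with total integral $1$. The Bessel potential $(1-\Delta)^{-1}(x)$ has integrable singularity $|x|^{2-d}$ (or $\log$) and decays like $\e^{-|x|}$. Hence $K_{N,1}$ is nonnegative with mass one and exponential decay in both $t$ and $x$. For $N$ large it is also increasingly smooth: it is $C^{k}$ with derivatives in $L^1$ as long as the order $|\mfl_0,\mfl|_\mfs$ of the derivative stays below roughly $N$. Each convolution factor gains two spatial derivatives and one derivative in $t$, but the one-sided indicator in $t$ only allows $N-1$ time derivatives before hitting $\delta$'s.

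Scaling then gives the first three bounds. We have $\langle z\rangle_\mu^\varsigma K_\mu(z)=\mcS_\mu(\langle\bigcdot\rangle^\varsigma K_{N,1})(z)$, and the weighted $L^1$ norm is scale invariant, with derivatives producing $\mu^{-|\mfl_0,\mfl|_\mfs}$. The bound $\leqslant 1$ in the first estimate should be read with an implicit constant (for $\varsigma>0$ one only gets $\lesssim$), or with the weight absorbed into the definition. The $L^{p,q}$ bound follows from $\|\mcS_\mu f\|_{L^{p,q}}=\mu^{-\alpha(1-1/p)-d(1-1/q)}\|f\|_{L^{p,q}}$ together with the finiteness of $\|\langle\bigcdot\rangle^\varsigma K_{N,1}\|_{L^{p,q}}$. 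That finiteness holds once the local singularity is tamed by enough convolutions, which is exactly the stated condition on $N$. The bound \eqref{Kmuexp} follows because $Q_\mu(x)$ decays like $\e^{-|x|/\mu}$ and $\mcb e_\ell$ grows only like $\e^{|\ell|\langle x\rangle^{1/2}}$, so the integral is uniformly bounded for $\mu\leqslant1$.

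For \eqref{eq:Kmunu} I use the algebraic identity $K_{1,\mu}=\mcP_\nu K_{1,\mu}*K_{1,\nu}$ in the form
\[
\mcP_\nu K_{1,\mu}=\Big(\tfrac{\nu^\alpha}{\mu^\alpha}\delta_t+\big(1-\tfrac{\nu^\alpha}{\mu^\alpha}\big)K^{t}_{\mu}\Big)\otimes\Big(\tfrac{\nu^2}{\mu^2}\delta_x+\big(1-\tfrac{\nu^2}{\mu^2}\big)Q_\mu\Big).
\]
Taking total variation gives $\|\mcP_\nu K_{1,\mu}\|_{TV}\leqslant 1$ when $\nu\leqslant\mu$, and $\leqslant(2(\nu/\mu)^\alpha-1)(2(\nu/\mu)^2-1)$ otherwise. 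Raising this to the $N$-th power, and bounding crudely by the stated exponent $2N$, Young's inequality gives the result. To pass the exponential weight through I will use $\mcb e_{-\ell}(x)\leqslant\mcb e_{|\ell|}(x-y)\mcb e_{-\ell}(y)$, which is Lemma \ref{ITexp}-type submultiplicativity. The extra factor $\mcb e_{|\ell|}$ is absorbed in the exponential decay of $Q_\mu$. It is here that I expect constants $C$ to enter, so the precise constant $1\vee\cdots$ is really only exact for $\ell=0$; for $\ell\neq0$ I would accept a harmless constant.

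The commutator bounds \eqref{eq:comKmuKnu1}–\eqref{eq:comKmuKnu} are where the $(\nu/\mu)^2$ gain must be extracted, and I expect this to be the main obstacle. In Fourier, $1-\hat Q_\nu(k)=\nu^2|k|^2\hat Q_\nu(k)$, so
\[
Q_\mu(1-Q_\nu)\psi=\frac{\nu^2}{\mu^2}\,\big(\mu^2(-\Delta)Q_\mu\big)\,Q_\nu\psi .
\]
The operator $\mu^2(-\Delta)Q_\mu=1-Q_\mu$ has total variation at most $2$, and $Q_\nu\psi$ should be rewritten as $\mcR_\nu$-type factors acting on $K_\nu\psi$. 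These are controlled since $Q_\nu=Q_\nu^{1-N}\cdots$ after inserting $K_\nu$ together with its inverse and using the time factors, or more directly by noting that $K_\nu$ contains $Q_\nu^{N}$, so we can write $Q_\nu\psi=\mcP_\nu^{N-1}$-free expressions. Failing that direct route, I would instead bound $\|Q_\mu(1-Q_\nu)\mcR_\nu\|_{TV}\lesssim(\nu/\mu)^2$ by computing it as a polynomial in $\nu^2\Delta$ times $\mu^2\Delta Q_\mu$ applied against derivatives of $Q_\mu$, using \eqref{eq:spacederivKmu}. The weighted version is handled identically after commuting $\mcb e_{-\ell}$ through $Q_\mu$ with the submultiplicativity trick. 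The commutator $[\mcb e_{-\ell},Q_\mu]$ is of lower order because $\nabla\mcb e_{-\ell}$ is bounded relative to $\mcb e_{-\ell}$.
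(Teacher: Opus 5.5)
Your scaling argument for \eqref{eq:normKmu}--\eqref{Kmuexp} is equivalent to the paper's explicit computation. The paper writes $K_\mu(t,x)=\mu^{-N\alpha}\one_{t>0}\frac{t^{N-1}}{(N-1)!}\e^{-t/\mu^\alpha}Q_\mu^{*N}(x)$ and rescales inside a heat-kernel representation of $\D_x^{\mfl}(1-\mu^2\Delta)^{-N}$. You are also right that \eqref{eq:normKmu} only holds with an implicit constant once $\varsigma>0$. For \eqref{eq:Kmunu} and the commutator bounds the paper only cites \cite{Duc21,Duc22}. Your decomposition of $\mcP_\nu K_{1,\mu}$ into an atom plus a probability kernel in each variable is the right self-contained tool for them. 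However, two steps fail.

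\emph{The constant in \eqref{eq:Kmunu}.} Let $r=\nu/\mu>1$ and $\alpha<2$. Then $2r^2-1>2r^\alpha-1$, so $\big[(2r^\alpha-1)(2r^2-1)\big]^N>(2r^\alpha-1)^{2N}$, and your ``crude'' bound goes the wrong way. In fact the stated constant cannot hold for $\alpha<2$. The measure $(\mcP_\nu K_{1,\mu})^{*N}$ has an atom of mass $r^{(\alpha+2)N}$ at the origin. Its total variation is exactly the operator norm of $K_\nu\psi\mapsto K_\mu\psi$ on $L^1_{t,x}$, because $K_\nu\mcD\supset\mcD$. Finally, $r^{(\alpha+2)N}>4^Nr^{2\alpha N}>(2r^\alpha-1)^{2N}$ as soon as $r^{(2-\alpha)N}>4^N$. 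What you actually proved is the correct statement: constant $1$ for $r\le1$, and $\big[(2r^\alpha-1)(2r^2-1)\big]^N\le(2r^2-1)^{2N}$ for $r\ge1$, times the $\ell$-dependent factor you already flagged. That is all the paper later uses, namely $r\le1$ or $r$ close to $1$ as in Lemma~\ref{Lemmarecurrence}.

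\emph{The commutator step (the genuine gap).} Your identity $Q_\mu(1-Q_\nu)=(\nu/\mu)^2(1-Q_\mu)Q_\nu$ is correct. It leaves you needing $\|Q_\nu\psi\|\lesssim\|K_\nu\psi\|$, which is false. Since $K_\nu$ contains $Q_\nu^{*N}$, you have $Q_\nu\psi=(1-\nu^2\Delta)^{N-1}(1+\nu^\alpha\D_t)^NK_\nu\psi$, an unbounded differential operator applied to $K_\nu\psi$.

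Your fallback fails for the same reason. The operator $(1-Q_\nu)\mcR_\nu$ has spatial order $2N$, while $Q_\mu$ absorbs only two derivatives. So $Q_\mu(1-Q_\nu)\mcR_\nu$ is not a finite measure when $N\ge2$.

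No argument can close here, because \eqref{eq:comKmuKnu} as written cannot hold for $N\ge2$. Take $\ell=0$ and $p=2$. The ratio of the two Fourier multipliers at frequency $k$ is $\mu^2|k|^2(1+\nu^2|k|^2)^{N-1}/(1+\mu^2|k|^2)$, which is unbounded as $|k|\to\infty$.

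The fix is to keep the smoothing factors matched, and your atomic decomposition does the work. For \eqref{eq:comKmuKnu1}, write
\[
K_\mu(1-K_\nu)\psi=\big[(\mcP_\nu K_{1,\mu})^{*N}-K_\mu\big]*(K_\nu\psi).
\]
Set $a=(\nu/\mu)^\alpha$ and $b=(\nu/\mu)^2$, both at most $1$. In the binomial expansion, every term except $(1-a)^N(1-b)^NK_\mu$ carries a factor $a$ or $b$. Hence the bracket has total variation at most $2\big(1-(1-a)^N(1-b)^N\big)\le2N(a+b)$. This gives $(\nu/\mu)^2$ for the purely spatial kernels, and only $(\nu/\mu)^\alpha$ once the time factor is kept.

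For the $Q$-version, your identity directly gives $\|Q_\mu(1-Q_\nu)f\|_{L^p_x}\le2(\nu/\mu)^2\|Q_\nu f\|_{L^p_x}$. To insert the weight, commute $\mcb e_{-\ell}$ through $\nu^2\Delta$ rather than through $Q_\mu$. Use $|\nabla^j\mcb e_{-\ell}|\lesssim\mcb e_{-\ell}$ for $j\le2$, $\|Q_\mu\nabla\|_{\mcL(L^p_x)}\lesssim\mu^{-1}$, and $\mu\le1$. This yields
\[
\|Q_\mu\mcb e_{-\ell}(1-Q_\nu)\psi\|_{L^p_x}\lesssim(\nu/\mu)^2\|\mcb e_{-\ell}Q_\nu\psi\|_{L^p_x},
\]
which is the matched form of \eqref{eq:comKmuKnu}.
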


\begin{proof}
    Let us prove \eqref{eq:normKmu} and \eqref{eq:spacederivKmu} at once. We start by computing explicitly
    \begin{equs}
        K_{1,\mu}^{*N}(t,x)=\mu^{-N\alpha }\one_{(0,\infty)}(t)\frac{t^{N-1}}{(N-1)!}\e^{-t/\mu^\alpha}Q_\mu^{*N}(x)\,.
    \end{equs}
    We have the heat kernel representation
    \begin{equs}\label{heatkernelrep}
        \D_x^{\mfl}(1-\mu^2\Delta)^{-N}(x)=\frac{1}{(N-1)!}\int_0^\infty s^{N-1}\e^{-s}\D_x^\mfl h(\mu^2 s,x)\dint s\,,
    \end{equs}
    where $h$ is the euclidean heat kernel. We recall the standard estimate
    \begin{equs}\label{eq:classiqhk}
        |\D_x^\mfl h(t,x)|\lesssim t^{-(d+|\mfl|)/2}\left(1+\frac{|x|}{\sqrt{t}}\right)^{|\mfl|}\e^{-\frac{|x|^2}{4t}}\,.
    \end{equs}
    We first get the estimate in space
    \begin{equs}
        ~&\int_{\R^d}\langle x\rangle_\mu^\varsigma|\D_x^{\mfl}(1-\mu^2\Delta)^{-N}(x)|\dint x\\ &\lesssim\mu^{-(d+|\mfl|)}\int_0^\infty s^{-1+(-d+2N-|\mfl|)/2}\e^{-s}\int_{\R^d}\langle x\rangle_\mu^\varsigma\left(1+\frac{|x|}{\sqrt{\mu^2 s}}\right)^{2N-2+|\mfl|}\e^{-\frac{|x|^2}{4\mu^2 s}}\dint x\dint s
    \end{equs}
    We can perform the change of variables $x\to\mu\sqrt{s} x$ to get
    \begin{equs}
        ~&\int_{\R^d}\langle x\rangle_\mu^\varsigma|\D_x^{\mfl}(1-\mu^2\Delta)^{-N}(x)|\dint x\\ 
        &\lesssim\mu^{-|\mfl|}\int_0^\infty s^{-1+(2N-|\mfl|)/2}\e^{-s}\int_{\R^d}\langle x\sqrt{s}\rangle^\varsigma\left(1+|x|\right)^{|\mfl|}\e^{-\frac{|x|^2}{4}}\dint x\dint s
    \end{equs}
    We can see that
    \begin{equs}
        \int_0^\infty s^{-1+(2N-|\mfl|)/2}\e^{-s}\langle x\sqrt{s}\rangle^\varsigma\dint s\lesssim \langle x\rangle^\varsigma\,,
    \end{equs}
    provided that $|\mfl|\leq 2N$, so that the above is bounded by
    \begin{equs}
        \mu^{-|\mfl|}\int_{\R^d}\langle x\rangle^\varsigma\left(1+|x|\right)^{|\mfl|}\e^{-\frac{|x|^2}{4}}\dint x\lesssim\mu^{-|\mfl|}\,.
    \end{equs}
    Using that $\langle t,x\rangle^\varsigma_\mu\lesssim (1+t^{2/\alpha}/\mu^2)^{\varsigma/2}\langle x\rangle_\mu^\varsigma$, we have
    \begin{equs}
        \big\|\langle\bigcdot\rangle_\mu^\varsigma\partial_t^{\mfl_0}\D_x^{\mfl}K_\mu\big\|_{L^1_{t,x}}\lesssim\mu^{-N\alpha}\int_0^\infty t^{N-1-\mfl_0}\e^{-t/\mu^\alpha}\left(1+\frac{t}{\mu^\alpha}\right)^{\mfl_0+\frac{\varsigma}{\alpha}}\dint t\int_{\R^d}\langle x\rangle_\mu^\varsigma|\D_x^{\mfl}(1-\mu^2\Delta)^{-N}(x)|\dint x\,.
    \end{equs}
    We just have to estimate the integral in time. A simple change of variables by scaling in $\mu^\alpha$ yields
    \begin{equs}
        \int_0^\infty t^{N-1-\mfl_0}\e^{-t/\mu^\alpha}\left(1+\frac{t}{\mu^\alpha}\right)^{\mfl_0+\frac{\varsigma}{\alpha}}\dint t=\mu^{\alpha(N-\mfl_0)}\int_0^\infty t^{N-1-\mfl_0}\e^{-t}\left(1+t\right)^{\mfl_0+\frac{\varsigma}{\alpha}}\dint t\lesssim\mu^{\alpha(N-\mfl_0)}\,,
    \end{equs}
    provided that $\mfl_0\leq N$. Putting everything together, we get the result. The proof of \eqref{eq:KmuLp} being in every point similar, we take the freedom not to do it. We now prove \eqref{Kmuexp}. We use again the heat kernel representation \eqref{heatkernelrep} to get
    \begin{equs}
        \big\|\mcb e_\ell Q_\mu\big\|_{L^1_x}&\lesssim\mu^{-d}\int_0^\infty s^{-d/2}\e^{-s}\int_{\R^d}\mcb e_{\ell}(x)\e^{-\frac{|x|^2}{4\mu^2 s}}\dint x\dint s\\
        &=\int_0^\infty \e^{-s}\int_{\R^d}\e^{\ell\langle \mu\sqrt s x\rangle^{1/2}-\frac{|x|^2}{4}}\dint x\dint s
    \end{equs}
    A quick computation shows that
    \begin{equs}
        \e^{-s+\sqrt{\ell\mu|x|} s^{1/4}}\lesssim \e^{-s/2}\e^{c\ell^{4/3}\mu^{2/3}|x|^{2/3}}\,,
    \end{equs}
    where $c$ is a universal constant. So
    \begin{equs}
        \int_0^\infty \e^{-s+\sqrt{\ell\mu|x|} s^{1/4}}\dint s\lesssim \e^{c\ell^{4/3}\mu^{2/3}|x|^{2/3}}\,.
    \end{equs}
    Eventually, we have
    \begin{equs}
        \big\|\mcb e_\ell Q_\mu\big\|_{L^1_x}&\lesssim \int_{\R^d}\e^{c\ell^{4/3}\mu^{2/3}|x|^{2/3}-\frac{|x|^2}{4}}\dint x\lesssim 1\,.
    \end{equs}
    Finally, a proof of \eqref{eq:comKmuKnu} can be found in \cite{Duc22}, with an easy adaptation with the weight as above. The same holds for \eqref{eq:Kmunu} whose proof can be found in \cite[Lemma 4.10]{Duc21}.
\end{proof}

\begin{lemma}\label{fracheatkernel}
    For $\alpha<2$, we have the pointwise bound on the Green's function $G$ of $(m^2-\Delta)^{\alpha/2}$
    \begin{equs}\label{eq:estimate_heatkernel}
        \big|\D_t^{\mfl_0}\D_x^\mfl G(t,x)\big|\lesssim t^{-\frac{d+|\mfl_0,\mfl|_\mfs}{\alpha}} \big(1+{|x|}/{t^{1/\alpha}}\big)^{-(d+\alpha+|\mfl|)}\e^{-c m|x|}\,,
    \end{equs}
    where $c>0$ is a constant depending only on $\alpha$ and $d$. For $\alpha=2$, that is to say that $G$ is the Green's function of $m^2-\Delta$, \eqref{eq:estimate_heatkernel} is of course not optimal and we have
    \begin{equs}\label{eq:estimate_heatkernel_alphais2}
        \big|\D_t^{\mfl_0}\D_x^\mfl G(t,x)\big|\lesssim t^{-\frac{d+|\mfl_0,\mfl|_\mfs}{2}} \e^{-m^2 t}\e^{-c\frac{|x|^2}{4t}}\,,
    \end{equs}
    for any $c\in(0,1)$.
\end{lemma}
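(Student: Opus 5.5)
We treat the two cases separately, beginning with $\alpha=2$, where everything is explicit. There $G(t,x)=\one_{t>0}\e^{-m^2t}(4\pi t)^{-d/2}\e^{-|x|^2/(4t)}$. A spatial derivative $\D_x^\mfl$ produces a polynomial of degree $|\mfl|$ in $x/t$ and $t^{-1/2}$, hence a factor $t^{-|\mfl|/2}(1+|x|/\sqrt t)^{|\mfl|}$. A time derivative can be traded for $\Delta-m^2$ acting on $G$. For $t\le1$ the factor $m^2\le$ const is dominated by $t^{-1}$, so $\D_t^{\mfl_0}$ costs $t^{-\mfl_0}$ times a polynomial in $|x|/\sqrt t$. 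We then absorb $(1+|x|/\sqrt t)^k\e^{-|x|^2/(4t)}\lesssim_c \e^{-c|x|^2/(4t)}$ for any $c<1$, which gives \eqref{eq:estimate_heatkernel_alphais2}. In this paper $t$ is restricted to $[0,1]$ (as in the definition of $G_\mu$), so the large-time regime is harmless.

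For $\alpha<2$ we use subordination. Write
\begin{equation*}
\e^{-t(m^2-\Delta)^{\alpha/2}}=\int_0^\infty \eta_t^{(\alpha/2)}(s)\,\e^{-s(m^2-\Delta)}\dint s ,
\end{equation*}
where $\eta_t^{(\alpha/2)}$ is the density of the $\alpha/2$-stable subordinator at time $t$. This density is positive, satisfies the scaling $\eta_t(s)=t^{-2/\alpha}\eta_1(st^{-2/\alpha})$, and obeys $\eta_1(s)\lesssim s^{-1-\alpha/2}$ for large $s$, with super-exponential decay as $s\to0$. Inserting the Gaussian kernel $\e^{-m^2s}h(s,x)$ and its $x$-derivatives, we split the $s$-integral at $s\sim |x|^2$.

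For $m=0$ this recovers the classical bound $t^{-(d+|\mfl|)/\alpha}(1+|x|t^{-1/\alpha})^{-d-\alpha-|\mfl|}$. For large $|x|$ the dominant contribution comes from the region $s\approx|x|^2$ together with the polynomial tail $t s^{-1-\alpha/2}$ of $\eta_t$.

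For $m>0$ the same integral carries the extra factor $\e^{-m^2 s}$. Combined with $\e^{-|x|^2/(4s)}$, this gives $\e^{-m^2s-|x|^2/(4s)}\le \e^{-m|x|}$ by AM–GM, and reserving half of each exponent yields the factor $\e^{-cm|x|}$. The main obstacle is to carry this out while retaining the polynomial factor and handling the region $s\lesssim t^{2/\alpha}$ uniformly in $m$. We will do it by splitting $\e^{-m^2s-|x|^2/(4s)}=\e^{-(m^2s+|x|^2/(4s))/2}\cdot\e^{-(\cdots)/2}$: the first half gives $\e^{-m|x|/2}$, and the second half reproduces the $m=0$ computation.

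Time derivatives are handled by differentiating $\eta_t$, using $\D_t\eta_t=t^{-1}\times$ a kernel with the same bounds by scaling, or equivalently by the identity $\D_tG=-(m^2-\Delta)^{\alpha/2}G$ combined with scaling. Either way each time derivative costs $t^{-1}=t^{-\alpha/\alpha}$, which matches $|\mfl_0|_\mfs=\alpha\mfl_0$.
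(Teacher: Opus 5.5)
Your proposal is correct and follows the paper's route. You write $G$ as a subordinated Gaussian via the $\alpha/2$-stable subordinator density, use its scaling form and the $t\,s^{-1-\alpha/2}$ tail (with a factor $t^{-1}$ per time derivative), and extract the mass factor from $m^2s+|x|^2/(4s)\geq m|x|$. Your halving of the exponent just produces $\e^{-cm|x|}$ uniformly in both regimes rather than only in the large-$|x|$ one, and your restriction to $t\le 1$ in the $\alpha=2$ case is genuinely needed for the time derivatives when $m>0$.
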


\begin{proof}
    We only prove the first statement since the second one is a more standard estimate. We have the heat kernel decomposition
    \begin{equs}\label{heatrep}
        G(t,x)=\int_0^\infty\frac{1}{(4\pi s)^\frac{d}{2}}\e^{-\frac{|x|^2}{4s}}\e^{-m^2 s}\theta_\alpha(t,s)\dint s\,,
    \end{equs}
    where $\theta_\alpha$ is the transition density of an $\alpha/2$-stable subordinator, uniquely characterized by the relation
    \begin{equs}\label{charactheta}
        \int_0^\infty \e^{-\lambda s}\theta_\alpha(t,s)\dint s=\e^{-t\lambda^\frac{\alpha}{2}}\,.
    \end{equs}
    We have the estimate
    \begin{equs}
        |\D_t^{\mfl_0}\theta_\alpha(t,s)|\lesssim t^{1-\mfl_0}s^{-1-\frac{\alpha}{2}}\,.
    \end{equs}
    This yields, reusing the heat kernel estimate \eqref{eq:classiqhk},
    \begin{equs}\label{side2}
        \big|\D_t^{\mfl_0}\D_x^\mfl G(t,x)\big|&\lesssim \int_0^\infty s^{-\frac{d+|\mfl|}{2}}\left(1+\frac{|x|}{\sqrt{s}}\right)^{|\mfl|}\e^{-\frac{|x|^2}{4s}}\e^{-m^2 s}t^{1-\mfl_0}s^{-1-\frac{\alpha}{2}}\dint s\\
        &\lesssim t^{1-\mfl_0}\e^t|x|^{-(d+\alpha+|\mfl|)}\int_0^\infty s^{\frac{d+\alpha+|\mfl|}{2}-1}\big(1+\sqrt{s}\big)^{|\mfl|}\e^{-s/4-m^2|x|^2/s}\dint s\,.
    \end{equs}
    The last integral is bounded by a constant times $\e^{-cm|x|}$, where $c$ only depends on $d$ and $\alpha$. Moreover, the characterisation \eqref{charactheta} gives
    \begin{equs}
        \D_t^{\mfl_0}\theta_\alpha(t,s)=(-1)^{\mfl_0}t^{-\frac{2+\alpha\mfl_0}{\alpha}}\D_t^{\mfl_0}\theta_\alpha(1,st^{-{2/\alpha}})\,.
    \end{equs}
    Taking back \eqref{heatrep}, and performing the change of variables $s\to st^{\frac{2}{\alpha}}$, we get, for $|x|^\alpha\leq t$,
    \begin{equs}
        \big|\D_t^{\mfl_0}\D_x^\mfl G(t,x)\big|&\lesssim t^{-\frac{d+|\mfl|+\alpha\mfl_0}{\alpha}}\int_0^\infty s^{-\frac{d+|\mfl|+\alpha}{2}-1}\left(1+\frac{|x|}{s^{1/2}t^{1/\alpha}}\right)^{|\mfl|}\e^{-\frac{|x|^2}{4st^{2/\alpha}}}\e^{-m^2st^{2/\alpha}}\D_t^{\mfl_0}\theta_\alpha(1,s)\dint s\\
        &\lesssim t^{-\frac{d+|\mfl_0,\mfl|_\mfs}{\alpha}}\,.
    \end{equs}
    This bound, together with \eqref{side2}, gives the desired result.
\end{proof}



\begin{lemma}\label{heatkernelestimate}
    Let $u$ and $\ell\in\R$ such that $\mcb e_{-\ell}u\in\mcB_{r,q}^{-\sigma}$. We have
    \begin{equation}
        \|\mcb e_{-\ell}\mcR_\mu^2G_\mu u\|_{L^{r}_{x}}\lesssim\mu^{-\sigma}\|\mcb e_{-\ell}u\|_{\mcB_{r,q}^{-\sigma}}\,.
    \end{equation}
\end{lemma}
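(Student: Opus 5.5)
We need to prove $\|\mcb e_{-\ell}\mcR_\mu^2G_\mu u\|_{L^r_x}\lesssim\mu^{-\sigma}\|\mcb e_{-\ell}u\|_{\mcB^{-\sigma}_{r,q}}$. Here $G_\mu u$ is read as $G_\mu(\delta\otimes u)$, evaluated at a fixed time $t$ with the supremum over $t$ taken at the end. We also use that $G_\mu$ vanishes for $t<\mu^\alpha$.

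\textbf{Step 1: factorise through the Besov-defining kernel.} Set $k=\lceil\sigma\rceil$ (equal to $1$ for $\sigma\in(0,1)$). Since $Q_\mu^{*k}$ is invertible with inverse $(1-\mu^2\Delta)^k$, we can write
\[
\mcR^2_\mu G_\mu u=\big[(1-\mu^2\Delta)^{k}\mcR^2_\mu G_\mu(t)\big]\,Q_\mu^{*k}u .
\]
Here $\mcR_\mu^2G_\mu$ is a finite sum of terms of the form $\mu^{\alpha a}\D_t^a\mu^{2|b|}\D_x^{2b}G_\mu$. Time derivatives landing on the cutoff $\chi_\mu$ produce factors $\mu^{-\alpha}$ supported in $t\in[\mu^\alpha,2\mu^\alpha]$. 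Time derivatives on $G$ give $\rmD^{\alpha,m}$, which is bounded at scale $t\gtrsim\mu^\alpha$. By Lemma~\ref{fracheatkernel}, for $t\ge\mu^\alpha$ every term is a spatial kernel $k_{\mu,t}$ with
\[
|k_{\mu,t}(x)|\lesssim t^{-d/\alpha}\big(1+|x|/t^{1/\alpha}\big)^{-d-\alpha}\e^{-cm|x|}.
\]
In the case $\alpha=2$ the bound is Gaussian instead, via \eqref{eq:estimate_heatkernel_alphais2}. This uses that $\mu^{j}t^{-j/\alpha}\le1$. The case $t\le1$ is ensured by the time localisation.

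\textbf{Step 2: distribute the weight.} We use the submultiplicativity $\mcb e_{-\ell}(x)\le\mcb e_{|\ell|}(x-y)\,\mcb e_{-\ell}(y)$, which follows since $\langle x\rangle^{1/2}$ is subadditive up to constants. This gives
\[
\|\mcb e_{-\ell}\mcR^2_\mu G_\mu u(t)\|_{L^r}\le\|\mcb e_{|\ell|}k_{\mu,t}\|_{L^1}\,\|\mcb e_{-\ell}Q_\mu^{*k}u\|_{L^r}.
\]
The weight $\mcb e_{-\ell}$ does not commute with $Q^{*k}_\mu$. We handle this by moving the weight inside first: write $u=\mcb e_\ell(\mcb e_{-\ell}u)$ and commute $Q_\mu$ past $\mcb e_\ell$. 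The commutator is controlled by $\mcb e_\ell(x)/\mcb e_\ell(y)\le\mcb e_{|\ell|}(x-y)$ together with \eqref{Kmuexp}. Equivalently, it suffices to bound $\|Q_\mu^{*k}(\mcb e_{-\ell}u)\|_{L^r}$ and to absorb the weight in the kernel of $(1-\mu^2\Delta)^k\mcR_\mu^2G_\mu$ through the identity $\mcb e_{-\ell}(x)K(x-y)\mcb e_{\ell}(y)\le \mcb e_{|\ell|}(x-y)|K(x-y)|$.

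\textbf{Step 3: the weighted $L^1$ bound (main obstacle).} The crux is $\sup_{\mu^\alpha\le t\le1}\|\mcb e_{|\ell|}k_{\mu,t}\|_{L^1}\lesssim1$.
\begin{itemize}
\item For $m>0$, or for $\alpha=2$, the decay $\e^{-cm|x|}$ or the Gaussian beats the sub-exponential weight $\e^{|\ell|\langle x\rangle^{1/2}}$. The polynomial factor then integrates to $O(1)$ after rescaling $x\to t^{1/\alpha}x$.
\item For the massless case on the torus, $\ell=0$, so the weight is absent and the bound is direct.
\end{itemize}

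\textbf{Step 4: conclude.} By definition of the Besov norm,
\[
\|Q_\mu^{*k}(\mcb e_{-\ell}u)\|_{L^r}\le\mu^{-\sigma}\sup_\lambda\lambda^{\sigma}\|Q_\lambda^{*k}\mcb e_{-\ell}u\|_{L^r}\lesssim\mu^{-\sigma}\|\mcb e_{-\ell}u\|_{\mcB^{-\sigma}_{r,q}},
\]
using $\ell^q\subset\ell^\infty$ together with \eqref{eq:Kmunu}-type comparability of nearby scales to pass from the $L^q(\dint\mu/\mu)$ norm to a pointwise-in-$\mu$ bound.
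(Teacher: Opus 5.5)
Your argument follows the paper's route. The paper also distributes the weight via Lemma~\ref{ITexp}, expands $\mcR_\mu^2G_\mu$ into terms whose $\mu$-powers are absorbed by $t\geq\mu^\alpha$ through Lemma~\ref{fracheatkernel}, and lets the mass (or Gaussian) decay swallow the sub-exponential weight. It then simply appeals to ``standard heat kernel estimates'', whereas you spell out how the negative regularity is reached, by factoring through $Q_\mu^{*k}$ and $(1-\mu^2\Delta)^k$. That extra detail is worth having, because a pointwise bound on $|\mcR_\mu^2G_\mu|$ alone can only be paired with $\|\mcb e_{-\ell}u\|_{L^r}$, not with a Besov norm of negative regularity.

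The one step whose justification does not work as written is the weight commutation in Step~2. The inequality $\mcb e_\ell(x)/\mcb e_\ell(y)\le\mcb e_{|\ell|}(x-y)$ together with \eqref{Kmuexp} bounds $\mcb e_{-\ell}[Q_\mu,\mcb e_\ell]$ only as an operator on $L^r$, i.e.\ in terms of $\|\mcb e_{-\ell}u\|_{L^r}$. That quantity is infinite for a genuine distribution, and it does not relate $\mcb e_{-\ell}Q^{*k}_\mu(\mcb e_\ell v)$ to $Q^{*k}_\mu v$. Likewise, the sandwich identity in your ``equivalently'' sentence ignores that $(1-\mu^2\Delta)^k$ has to pass through $\mcb e_\ell$.

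The missing ingredient is smoothness of the weight: $|\partial^\gamma\mcb e_\ell|\lesssim_{\ell,\gamma}\mcb e_\ell$, which holds since $\langle x\rangle^{1/2}$ has bounded derivatives of all orders. With it the repair is short.
\begin{itemize}
\item Set $v=\mcb e_{-\ell}u$ and write $u=\mcb e_\ell(1-\mu^2\Delta)^kQ_\mu^{*k}v$.
\item The operator $\mcb e_{-\ell}\,\mcR_\mu^2G_\mu(t)\,\mcb e_\ell\,(1-\mu^2\Delta)^k$ has kernel $\mcb e_{-\ell}(x)\,(1-\mu^2\Delta_y)^k\big[(\mcR^2_\mu G_\mu)(t,x-y)\,\mcb e_\ell(y)\big]$.
\item By Leibniz and $\mu\le1$, this kernel is bounded by $\mcb e_{|\ell|}(x-y)\sum_{|\gamma|\le2k}\mu^{|\gamma|}\big|(\partial^\gamma\mcR^2_\mu G_\mu)(t,x-y)\big|$, to which your Steps~1 and~3 apply verbatim.
\item Young's inequality then gives $\|\mcb e_{-\ell}\mcR^2_\mu G_\mu u(t)\|_{L^r}\lesssim\|Q^{*k}_\mu(\mcb e_{-\ell}u)\|_{L^r}$ directly, so you never need to compare $\mcb e_{-\ell}Q^{*k}_\mu u$ with $Q^{*k}_\mu(\mcb e_{-\ell}u)$.
\end{itemize}
Your Step~4 then finishes the proof.
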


\begin{proof}
    We do the proof in the case $\alpha<2$ since it is the hardest one. We first write, using Lemma \ref{ITexp},
    \begin{equs}
        \mcb e_{-\ell}(x)\big(\mcR_\mu^2G_\mu u\big)(t,x)&=\mcb e_{-\ell}(x)\int_{\R^d}\big(\mcR_\mu^2 G_\mu)(t,x-y)u(y)\dint y\\
        &=\int_{\R^d}\mcb e_{-\ell}(x)\mcb e_\ell(y)\big(\mcR_\mu^2 G_\mu)(t,x-y)\mcb e_{-\ell}(y)u(y)\dint y\,.
    \end{equs}
    Let us take a closer look at the kernel appearing here. First, we can write that there is a finite set $\mathfrak{P}$ such that
    \begin{equs}
        \big(\mcR_\mu^2 G_\mu)(t,x-y)=\sum_{\pi\in\mathfrak{P}}G_\mu^\pi(t,x-y)\,,
    \end{equs}
    where each $G_\mu^\pi(t,x-y)$ is of the form $\chi^{(2N-n)}(t/\mu^\alpha)P_\pi(\mu^2/t) G(t,x-y)$ for some polynomial of degree bounded by $n$ for some $0\leq n\leq 2N$, following Lemma \ref{fracheatkernel}. We get, thanks to Lemma \ref{ITexp},
    \begin{equs}
        \mcb e_{-\ell}(x)\mcb e_\ell(y)|G_\mu^\pi(t,x-y)|&\lesssim\mcb e_\ell(x;y)\chi(t/\mu^\alpha)t^{-\frac{d}{\alpha}} \big(1+{|x-y|}/{t^{1/\alpha}}\big)^{-(d+\alpha)}\e^{-c m|x-y|}\\
        &\lesssim\chi(t/\mu^\alpha)t^{-\frac{d}{\alpha}} \big(1+{|x-y|}/{t^{1/\alpha}}\big)^{-(d+\alpha)}\,.
    \end{equs}
    We can conclude with standard heat kernel estimates and the fact that $\chi$ imposes $t\geq\mu^\alpha$.
\end{proof}

\begin{lemma}\label{estimdotGmu}
The following holds uniformly in $\mu\in(0,1]$, $\ell\geq0$, and $p\in[1,\infty]$,
    \begin{equation}
        \big\|\langle\bigcdot\rangle_\mu^{\varsigma}\mcb e_\ell\mcR_\mu\dot G_\mu\big\|_{L_{t,x}^{1,p}}\lesssim\mu^{-1+\alpha-d(1-1/p)}\,,
    \end{equation}
    where $0\leq\varsigma<\alpha$ if $\alpha<2$ and $\varsigma\geq0$ if $\alpha=2$.
    Moreover, we have the weighted estimate
    \begin{equs}\label{GmuL1weight}
        \left(\int_{\R^d}\left|\int_0^ts^{-\kappa}\big|\big(\mcR^2_\mu\dot G_\mu\big)(s,y)\big|\dint s\mcb e_{\ell}(y)\right|^p\dint y\right)^\frac{1}{p}\lesssim\mu^{-1+\alpha-d(1-1/p)-\kappa}\,.
    \end{equs}
\end{lemma}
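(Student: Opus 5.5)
We need to prove Lemma \ref{estimdotGmu}: the weighted $L^{1,p}_{t,x}$ bound on $\langle\bigcdot\rangle_\mu^{\varsigma}\mcb e_\ell\mcR_\mu\dot G_\mu$, together with the time-singular weighted estimate \eqref{GmuL1weight}. The plan is to reduce both to explicit pointwise bounds on a finite sum of rescaled kernels supported at times $t\sim\mu^\alpha$, and then integrate using Lemma \ref{fracheatkernel}.

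First, I expand $\dot G_\mu(t,x)=\mu^{-1}\bigl(-\tfrac{\alpha t}{\mu^{\alpha}}\bigr)\chi'(t/\mu^\alpha)G(t,x)$. Its support lies in $t\in[\mu^\alpha,2\mu^\alpha]$, and $t/\mu^\alpha$ is bounded there, so $|\dot G_\mu|\lesssim\mu^{-1}\one_{[\mu^\alpha,2\mu^\alpha]}(t)|G(t,x)|$. Applying $\mcR_\mu=(\mcP_\mu^N)^*$ (or $\mcR_\mu^2$) distributes, as in the proof of Lemma \ref{heatkernelestimate}, into a finite sum of terms of the form
\begin{equation*}
\mu^{-1}\mu^{\alpha j+|\mfl|}\,\chi^{(k)}(t/\mu^\alpha)\,\mu^{-\alpha k}\,P(t/\mu^\alpha)\,\D_t^{j'}\D_x^{\mfl}G(t,x),
\end{equation*}
with the total number of derivatives, counted with the scaling $\mfs$, balanced by the corresponding powers of $\mu$. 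On the support $t\simeq\mu^\alpha$, Lemma \ref{fracheatkernel} gives each term the bound
\begin{equation*}
\mu^{-1}\,\one_{t\simeq\mu^\alpha}\,\mu^{-d}\bigl(1+|x|/\mu\bigr)^{-(d+\alpha)}e^{-cm|x|}.
\end{equation*}
Here the factors $\mu^{|\mfl_0,\mfl|_\mfs}$ produced by $\mcR_\mu$ cancel the derivative losses, and the extra decay power $|\mfl|$ is simply discarded. For $\alpha=2$ the spatial factor is instead a Gaussian $e^{-c|x|^2/\mu^2}$.

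Next comes the integration. The weight satisfies $\langle t,x\rangle_\mu^\varsigma\lesssim(1+|x|/\mu)^\varsigma$ on this time support. For $\alpha<2$ the remaining spatial factor $(1+|x|/\mu)^{-(d+\alpha)+\varsigma}$ is integrable in $L^p_x$ precisely because $\varsigma<\alpha$ (for $p=1$, which is the worst case). This is exactly why the restriction on $\varsigma$ appears; for $\alpha=2$ the Gaussian absorbs any $\varsigma$. The exponential weight $\mcb e_\ell(x)=\exp(\ell\langle x\rangle^{1/2})$ has subexponential growth. In the massive case or for $\alpha=2$ it is absorbed by $e^{-cm|x|}$ or by the Gaussian, with a constant uniform in $\mu\le1$. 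In the massless torus case the space is compact and $\ell=0$, so no weight needs absorbing. Rescaling $x=\mu y$ gives the $L^p_x$ norm $\lesssim\mu^{-1}\mu^{-d}\mu^{d/p}$. Integrating in time over an interval of length $\mu^\alpha$ then gives $\mu^{-1+\alpha-d(1-1/p)}$, which is the first estimate.

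For \eqref{GmuL1weight} the same pointwise bound for $\mcR_\mu^2\dot G_\mu$ applies. The only additional observation is that $s^{-\kappa}\le\mu^{-\alpha\kappa}\le\mu^{-\kappa}$ up to the convention on $\kappa$, since $s\ge\mu^\alpha$ on the support. Taking the time integral inside the $L^p_y$ norm is harmless because it is bounded by $\mu^{\alpha}\sup_s$. The estimate then follows as before.

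The main obstacle is the bookkeeping of the $\mcR_\mu$ expansion: one must check that every term carries exactly compensating powers of $\mu$, and that the polynomial weight does not exceed the $(d+\alpha)$ decay for large $|x|/\mu$. I would handle this by citing the structure already used in Lemma \ref{heatkernelestimate} and in \cite{Duc21}.
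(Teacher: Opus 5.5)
Your proposal is correct and follows essentially the paper's route: pointwise bounds, from Lemma~\ref{fracheatkernel}, on the finitely many terms that $\mcR_\mu$ (or $\mcR_\mu^2$) produces when acting on $\dot G_\mu$, restricted to the support $t\in[\mu^\alpha,2\mu^\alpha]$, followed by the rescaling $t\to\mu^\alpha t$, $x\to\mu x$; here $\varsigma<\alpha$ is exactly what makes $(1+|x|/\mu)^{\varsigma-d-\alpha}$ integrable, and the exponential weight is absorbed by the mass or by the Gaussian decay.

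The only slip is in \eqref{GmuL1weight}: $s\geq\mu^\alpha$ gives $s^{-\kappa}\leq\mu^{-\alpha\kappa}$, but $\mu^{-\alpha\kappa}\leq\mu^{-\kappa}$ fails for $\alpha>1$. So what you (and the paper, after its change of variables) actually prove is a loss of $\mu^{-\alpha\kappa}$. This is harmless only because $\kappa$ is an arbitrary small parameter: in the applications the factor comes from Lemma~\ref{HaiLabestim} as $(t-s)^{-\kappa/2}$, which yields $\mu^{-\alpha\kappa/2}\leq\mu^{-\kappa}$.
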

\begin{proof}
    Let us prove the first estimate. We prove it for $\alpha<2$ and the other case is easier, as one can benefit from the Gaussian decay of the heat kernel in $x/\sqrt{t}$. Thanks to Lemma \ref{fracheatkernel}, we have the pointwise estimate
    \begin{multline*}
        |\mcR_\mu\dot G_\mu(t,x)|\lesssim \sum_{n=0}^N\mu^{-1-\alpha}\chi^{(N-n+1)}(t/\mu^\alpha)\\\times\sum_{k=0}^N t^{-\frac{d+n\alpha+2kd}{\alpha}+1}\mu^\frac{n\alpha+2kd}{\alpha} \big(1+{|x|}/{t^{1/\alpha}}\big)^{-(d+\alpha+2kd)}\e^{-cm|x|}\,.
    \end{multline*}
    This allows to get
    \begin{multline*}
        \big\|\langle\bigcdot\rangle_\mu^{\varsigma}\mcb e_\ell\mcR_\mu\dot G_\mu\big\|_{L_{t,x}^{1,p}}\lesssim \mu^{-1-\alpha}\sum_{n,k=0}^N\int_{\mu^\alpha}^{2\mu^\alpha}t^{-\frac{d+n\alpha+2kd}{\alpha}+1}\mu^\frac{n\alpha+2kd}{\alpha}\\\times\left(\int_{\R^d}\langle t,x\rangle_\mu^{p\varsigma}\big(1+{|x|}/{t^{1/\alpha}}\big)^{-p(d+\alpha+2kd)}\e^{p\big(\ell\langle x\rangle^\frac{1}{2}-cm|x|\big)}\dint x\right)^\frac{1}{p}\dint t\,.
    \end{multline*}
    We can now perform the changes of variables $t\to\mu^\alpha t$ and $x\to\mu x$. It yields
    \begin{equs}
        &\big\|\langle\bigcdot\rangle_\mu^{\varsigma}\mcR_\mu\dot G_\mu\big\|_{L_{t,x}^{1,p}}\\
        &\lesssim\mu^{-1+\alpha-d(1-1/p)}\sum_{n,k=0}^N\int_1^2 t^{-\frac{d+n\alpha+2kd}{\alpha}+1}\left(\int_{\R^d}\langle t,x\rangle^{p\varsigma}\big(1+{|x|}/{t^{1/\alpha}}\big)^{-p(d+\alpha+2kd)}\dint x\right)^\frac{1}{p}\dint t\\
        &\lesssim \mu^{-1+\alpha-d(1-1/p)}\sum_{k=0}^N\left(\int_{\R^d}\big(1+{|x|}\big)^{-p(d+\alpha-\varsigma+2Nd)}\dint x\right)^\frac{1}{p}\,.
    \end{equs}
    This last integral being finite, we get the result. Note that at this point, it is crucial that the exponent on $\langle\bigcdot\rangle_\mu$ is smaller than $\alpha$ for the integral to be convergent. We now turn to the second estimate. With the same arguments, it can be bounded by
    \begin{multline*}
        \mu^{-1-\alpha-d(1-1/p)-\kappa}\sum_{n,k=0}^N\bigg(\int_{\R^d}\left|\int_1^2 s^{-\frac{d+2n\alpha+2kd}{\alpha}+1-\kappa}\big(1+{|y|}/{s^{1/\alpha}}\big)^{-d+\alpha+4kd}\dint s\right|^p\\
        \times\e^{p\big(\ell|y|^{1/2}-cm\mu|y|\big)}\dint y\bigg)^\frac{1}{p}
    \end{multline*}
    Noticing that the exponential above is bounded, we get the result as above.
\end{proof}

\begin{lemma}\label{intKmuweight}
The following estimate holds uniformly in $\mu\in(0,1]$ and $x\in\R^d$,
\begin{equation}
\int_\R\langle t,x\rangle_\mu^{\alpha-\beta}K_{1,\mu}^{*N}(t,x)\dint t\lesssim \langle x\rangle_\mu^{\alpha-\beta}Q_\mu^{*N}(x).
\end{equation}
\end{lemma}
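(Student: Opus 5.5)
The estimate is a pointwise-after-time-integration bound. I would prove it from the explicit product structure of $K_{1,\mu}^{*N}$, as computed in the proof of Lemma~\ref{estimatesKmu}:
\begin{equs}
K_{1,\mu}^{*N}(t,x)=\mu^{-N\alpha}\one_{(0,\infty)}(t)\frac{t^{N-1}}{(N-1)!}\e^{-t/\mu^\alpha}Q_\mu^{*N}(x)\,.
\end{equs}
Since the spatial factor $Q_\mu^{*N}(x)\geq0$ does not depend on $t$, it comes out of the time integral. The claim therefore reduces to the scalar bound
\begin{equs}
\mu^{-N\alpha}\int_0^\infty\langle t,x\rangle_\mu^{\alpha-\beta}\frac{t^{N-1}}{(N-1)!}\e^{-t/\mu^\alpha}\dint t\lesssim\langle x\rangle_\mu^{\alpha-\beta}\,.
\end{equs}

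The first step is to split the weight. With $|(t,x)|_\mfs=|t|^{1/\alpha}+|x|_1$ and $\alpha-\beta\geq0$, the elementary inequality $(1+(a+b)^2)^{1/2}\leq(1+a^2)^{1/2}(1+b^2)^{1/2}$ gives
\begin{equs}
\langle t,x\rangle_\mu^{\alpha-\beta}\lesssim\big(1+t^{2/\alpha}/\mu^2\big)^{(\alpha-\beta)/2}\langle x\rangle_\mu^{\alpha-\beta}\,.
\end{equs}
This is the same splitting already used in the proof of Lemma~\ref{estimatesKmu}. Norm-equivalence constants between $|x|_1$ and $|x|$ are absorbed into $\lesssim$.

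The second step is the time integral. I substitute $t\to\mu^\alpha t$, which turns the remaining quantity into
\begin{equs}
\frac{1}{(N-1)!}\int_0^\infty t^{N-1}\e^{-t}(1+t^{2/\alpha})^{(\alpha-\beta)/2}\dint t\,.
\end{equs}
This is finite and independent of $\mu$ and $x$, because of the exponential decay and because $N\geq1$.

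Combining the two steps gives the lemma uniformly in $\mu\in(0,1]$ and $x\in\R^d$. There is no real obstacle here. The only points to check are the sign of the exponent $\alpha-\beta\geq0$, which holds under \eqref{condbeta}, and the convention for $|\bigcdot|_\mfs$ in the weight.
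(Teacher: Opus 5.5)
Your proposal is correct and follows essentially the same route as the paper's proof: you use the explicit product form of $K_{1,\mu}^{*N}$, split the weight as $\langle t,x\rangle_\mu^{\alpha-\beta}\lesssim(1+t^{2/\alpha}/\mu^2)^{(\alpha-\beta)/2}\langle x\rangle_\mu^{\alpha-\beta}$, and then substitute $t\to\mu^\alpha t$ to get a finite $\mu$-independent Gamma-type integral that cancels the $\mu^{-N\alpha}$ prefactor. You also spell out the elementary inequality $1+(a+b)^2\leq(1+a^2)(1+b^2)$ and the sign $\alpha-\beta>0$, which the paper leaves implicit.
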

\begin{proof}
We first compute $K_\mu$ explicitely:
\begin{equs}
    K_{1,\mu}^{*N}(t,x)=\mu^{-N\alpha }\one_{(0,\infty)}(t)\frac{t^{N-1}}{(N-1)!}\e^{-t/\mu^\alpha}Q_\mu^{*N}(x).
\end{equs}
Moreover, we have, for $(t,x)\in\SS$, $\langle t,x\rangle_\mu^{\alpha-\beta}\lesssim(1+t^{2/\alpha}/\mu^2)^\frac{\alpha-\beta}{2}\langle x\rangle_\mu^{\alpha-\beta}$. It yields
\begin{equs}
    \int_\R\langle t,x\rangle_\mu^{\alpha-\beta}K_{1,\mu}^{*N}(t,x)\dint t\lesssim\mu^{-N\alpha}\langle x\rangle_\mu^{\alpha-\beta}Q_\mu^{*N}(x)\int_0^\infty\left(1+\frac{t^{2/\alpha}}{\mu^2}\right)^\frac{\alpha-\beta}{2}t^{N-1}\e^{-t/\mu^\alpha}\dint t.
\end{equs}
Performing the change of variable $t\to\mu^\alpha t$ in the last integral, we get that it equals
\begin{equs}
\mu^{N\alpha}\int_0^\infty(1+t^{2/\alpha})^\frac{\alpha-\beta}{2}t^{N-1}\e^{-t}\dint t\lesssim\mu^{N\alpha}.
\end{equs}
From this, we get the result.
\end{proof}

\subsection{Kernels associated with the harmonic oscillator}

\begin{lemma}
    Let $a\in[0,3/2]$ and $s\leq t$. The following estimates hold for $t\in(0,1]$,
    \begin{equation}\label{eq:estMt}
        \Big|\D_t^{\mfl_0}\D_x^{\mfl_1}\D_y^{\mfl_2}\Big(\e^{as\frac{|x|^2+|y|^2}{8}}M(t,x,y)\Big)\Big|\lesssim t^{-\mfl_0-\frac{|\mfl_1|+|\mfl_2|}{2}}G(8t,x-y)\e^{-3t\frac{|x+y|^2}{32}}\,,
    \end{equation}
    and
    \begin{equs}\begin{aligned}
  \label{eq:estimatediffkernels}
\Big|\D_t^{\mfl_0}\D_x^{\mfl_1}\D_y^{\mfl_2}\Big(\e^{s\frac{|x|^2+|y|^2}{8}}\big(M(t,x,y)-&\e^{-t\frac{|x|^2+|y|^2}{4}}G(t,x-y)\big)\Big)\Big|\\&\lesssim t^{1-\mfl_0-\frac{|\mfl_1|+|\mfl_2|}{2}}\big(|x|^2+|y|^2\big)G(8t,x-y)\,.
           \end{aligned}
    \end{equs}
\end{lemma}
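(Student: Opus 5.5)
The plan is to work directly from the explicit Mehler formula \eqref{eq:Mehlerkernel} and compare its two Gaussian factors with those of the heat kernel $G(t,x-y)=(4\pi t)^{-d/2}\e^{-|x-y|^2/(4t)}$. For $t\in(0,1]$ we have $\tanh t = t + O(t^3)$ with $\tanh t\geq c_0 t$ for a constant $c_0$ close to $\tanh(1)$, $\sinh(2t)=2t(1+O(t^2))$, and $1/\tanh t = 1/t + t/3 + O(t^3)\geq 1/t$. Hence
\begin{equs}
M(t,x,y)=\frac{1}{(2\pi\sinh 2t)^{d/2}}\e^{-\frac{|x-y|^2}{4\tanh t}}\e^{-\tanh(t)\frac{|x+y|^2}{4}}\,,
\end{equs}
and the prefactor is comparable to $(4\pi t)^{-d/2}$. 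For \eqref{eq:estMt} with no derivatives, I will use the parallelogram identity $|x|^2+|y|^2=\tfrac12(|x-y|^2+|x+y|^2)$ to split the growing weight as $\e^{as\frac{|x-y|^2}{16}}\e^{as\frac{|x+y|^2}{16}}$. Since $s\leq t\leq1$ and $a\leq 3/2$, the first factor is absorbed by $\e^{-|x-y|^2/(4t)}$, leaving at least $\e^{-|x-y|^2/(32t)}$, which is $G(8t,x-y)$ up to constants. The second factor has exponent at most $\tfrac{3t}{32}|x+y|^2$. Against the decay $\e^{-\tanh(t)|x+y|^2/4}$, the net exponent is $-(\tfrac{\tanh t}{4}-\tfrac{3t}{32})|x+y|^2\leq -\tfrac{3t}{32}|x+y|^2$, using $\tanh t\geq \tfrac34 t$ on $(0,1]$ (indeed $\tanh 1\approx0.76$).

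For the derivatives, each $\D_x$ or $\D_y$ applied to the exponent $-\tfrac{|x-y|^2}{4\tanh t}-\tanh t\tfrac{|x+y|^2}{4}+as\tfrac{|x|^2+|y|^2}{8}$ produces a factor of the form $\tfrac{|x-y|}{t}+t|x+y|+t|x|+t|y|$. Each $\D_t$ produces $\tfrac{|x-y|^2}{t^2}+|x+y|^2+t^{-1}$. Note that $s$ is a fixed parameter, not differentiated. Bounding $|x\pm y|^k$ by $\e^{\delta|x-y|^2/t}t^{k/2}$ and $t^k|x+y|^{k}\lesssim t^{k/2}\e^{\delta t|x+y|^2}$ costs only a slight loss in the Gaussian constants, and the margins chosen above leave room for it. This gives the prefactor $t^{-\mfl_0-(|\mfl_1|+|\mfl_2|)/2}$.

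For \eqref{eq:estimatediffkernels}, I will write
\begin{equs}
M-\e^{-t\frac{|x|^2+|y|^2}{4}}G=(M-\tilde M)+\big(\tilde M-\e^{-t\frac{|x|^2+|y|^2}{4}}G\big)\,,
\end{equs}
where $\tilde M\eqdef(4\pi t)^{-d/2}\e^{-\frac{|x-y|^2}{4t}}\e^{-t\frac{|x+y|^2}{4}}$. The same parallelogram identity shows that $\e^{-t\frac{|x|^2+|y|^2}{4}}G=(4\pi t)^{-d/2}\e^{-\frac{|x-y|^2}{4t}-\frac{t|x-y|^2}{8}-\frac{t|x+y|^2}{8}}$. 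Each difference is then handled by the mean value theorem in a parameter interpolating between the two exponents or prefactors:
\begin{itemize}
\item the prefactor discrepancy is $O(t^2)$ relatively;
\item $\tfrac{1}{\tanh t}-\tfrac1t=O(t)$ multiplies $|x-y|^2$, giving $t|x-y|^2$;
\item $\tanh t - t=O(t^3)$ multiplies $|x+y|^2$;
\item the mismatch $\tfrac{t}{8}|x+y|^2$ versus $\tfrac t4|x+y|^2$, and the term $\tfrac t8|x-y|^2$, each give $t(|x|^2+|y|^2)$.
\end{itemize}
Using $|e^{-A}-e^{-B}|\leq|A-B|\e^{-\min(A,B)}$, every term is bounded by $t(|x|^2+|y|^2)$ times the worst of the Gaussians. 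Multiplying by the weight $\e^{s\frac{|x|^2+|y|^2}{8}}$, the $|x-y|$ part is absorbed as before. The $|x+y|$ part has decay rate at least $\tfrac{t}{8}$ against growth rate $\tfrac{s}{16}\leq\tfrac t{16}$, so it remains bounded. The statement then has no $|x+y|$ decay on the right, which is consistent with this. Derivatives are treated exactly as in the first estimate, with the chain rule distributing them on the difference factor and the Gaussians.

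I expect the main obstacle to be the bookkeeping of Gaussian constants: checking that the loss in the $|x+y|$-direction, where the weight with $a$ up to $3/2$ nearly saturates $\tanh t/4$, still leaves the stated $3t/32$ rate after polynomial factors from derivatives are absorbed. If the margin is too tight, I would absorb those polynomial factors into $G(8t,\cdot)$, which has ample room, together with the surplus $\tfrac{\tanh t}{4}-\tfrac{3t}{16}\geq \tfrac{3t}{64}$.
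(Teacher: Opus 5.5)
Your argument for \eqref{eq:estMt} is the same direct computation as the paper's: Leibniz rule, absorbing the polynomial factors produced by derivatives into the Gaussians, then comparing exponents. Yours is actually more careful. The paper bounds $\e^{-\tanh t\frac{|x+y|^2}{4}}$ by $\e^{-t\frac{|x+y|^2}{4}}$, which is not an upper bound since $\tanh t<t$. Your bound $\tanh t\geq\tanh(1)\,t>\tfrac34 t$ is exactly what the rate $\tfrac{3t}{32}$ requires when $a=\tfrac32$ and $s=t$. One slip: the claimed surplus $\tfrac{\tanh t}{4}-\tfrac{3t}{16}\geq\tfrac{3t}{64}$ fails for $t$ near $1$, where the true margin is only $(\tfrac{\tanh 1}{4}-\tfrac{3}{16})\,t\approx0.003\,t$. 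It is still a fixed positive multiple of $t$, so it still absorbs the factors $(\sqrt t\,|x+y|)^k$, and this part of your proof stands.

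The genuine gap is in \eqref{eq:estimatediffkernels}, and it cannot be patched because that inequality is false as written.
\begin{itemize}
\item You note that the prefactors $(2\pi\sinh 2t)^{-d/2}$ and $(4\pi t)^{-d/2}$ differ in relative size by $t^2$. You then assert that every term is bounded by $t(|x|^2+|y|^2)$ times a Gaussian. The prefactor term is not: it carries $t^2$ with no factor $|x|^2+|y|^2$.
\item At $x=y=0$ with no derivatives, the left-hand side equals $|(2\pi\sinh 2t)^{-d/2}-(4\pi t)^{-d/2}|=\tfrac d3\,t^2(4\pi t)^{-d/2}(1+O(t^2))\neq0$. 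The right-hand side vanishes there.
\item Derivatives give the same obstruction through the exponents. The exponent of your $\tilde M$ minus that of $\e^{-t\frac{|x|^2+|y|^2}{4}}G$ is exactly $-\tfrac t2\,x\cdot y$, whose mixed derivative $\D_{x_i}\D_{y_i}$ is the constant $-\tfrac t2$.
\item A short expansion then gives $\D_{x_i}\D_{y_i}\bigl(M-\e^{-t\frac{|x|^2+|y|^2}{4}}G\bigr)(t,0,0)=-\tfrac{d+2}{6}\,t\,G(t,0)\,(1+O(t^2))$, against a claimed bound of $0$.
\end{itemize}

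Your decomposition does prove the estimate with $(t+|x|^2+|y|^2)$ in place of $(|x|^2+|y|^2)$. To get the derivative cases, write $\e^{-A}-\e^{-B}=-(A-B)\int_0^1\e^{-\theta A-(1-\theta)B}\dint\theta$, so that derivatives may also fall on $A-B$, and use bounds such as $t|y|\leq t^{1/2}(t+|y|^2)$. The introduction states a version with $1+|x|^2+|y|^2$.

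This corrected form is all that is used later. In Lemma~\ref{lemma:diffMG} one has $t\sim\mu^2$ on the support of $\dot M_\mu$, and the conclusion there already reads $\mu^2(\mu^2+|x|^2+|y|^2)$. The paper's own proof asserts the same faulty intermediate bound for the derivatives of $M-\e^{-t\frac{|x|^2+|y|^2}{4}}G$, so it has the same defect. You should state and prove the corrected inequality instead.
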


\begin{proof}
    Let us start with the first estimate. We have
    \begin{equs}
         ~&\Big|\D_t^{\mfl_0}\D_x^{\mfl_1}\D_y^{\mfl_2}\Big(\e^{as\frac{|x|^2+|y|^2}{8}}M(t,x,y)\Big)\Big|\lesssim\sum_{i_1=0}^{\mfl_1}\sum_{i_2=0}^{\mfl_2}\big|\D_t^{\mfl_0}\D_x^{i_1}\D_y^{i_2}M(t,x,y)\big|\Big|\D_x^{\mfl_1-i_1}\D_y^{\mfl_2-i_2}\e^{as\frac{|x|^2+|y|^2}{8}}\Big|\\
         &\lesssim \sum_{i_1=0}^{\mfl_1}\sum_{i_2=0}^{\mfl_2}t^{-\mfl_0-\frac{|i_1|+|i_2|}{2}}\bigg(1+\frac{|x-y|}{\sqrt{t}}\bigg)^{|\mfl_0|}\big(1+\sqrt{t}|x|\big)^{|i_1|}\big(1+\sqrt{t}|y|\big)^{|i_2|}\e^{-\frac{|x-y|^2}{4t}}\e^{-t\frac{|x+y|^2}{4}}\\
         &\qquad\times s^{\frac{|\mfl_1-i_1|+|\mfl_2-i_2|}{2}}(1+\sqrt{s}|x|)^{|\mfl_1-i_1|}(1+\sqrt{s}|y|)^{|\mfl_2-i_2|}\e^{as\frac{|x|^2+|y|^2}{8}}\\
         &\lesssim \sum_{i_1=0}^{\mfl_1}\sum_{i_2=0}^{\mfl_2}t^{-\mfl_0-\frac{|i_1|+|i_2|}{2}}\e^{-7\frac{|x-y|^2}{32t}}\e^{-7t\frac{|x+y|^2}{32}}s^{\frac{|\mfl_1-i_1|+|\mfl_2-i_2|}{2}}\e^{3s\frac{|x|^2+|y|^2}{16}}\\
         &\lesssim t^{-\mfl_0-\frac{|\mfl_1|+|\mfl_2|}{2}}\e^{-\frac{|x-y|^2}{32t}}\e^{-3\frac{|x-y|^2}{16t}-7t\frac{|x+y|^2}{32}+3t\frac{|x|^2+|y|^2}{16}}
    \end{equs}
    We rework a bit the term in the second exponential.
    \begin{equs}
        \frac{3t}{16}(|x|^2+|y|^2)-\frac{7t}{32}|x+y|^2-\frac{3}{16t}|x-y|^2\leq \frac{t}{32}\Big(6(|x|^2+|y|^2)-7|x+y|^2-6|x-y|^2\Big)\,.
    \end{equs}
    We then get
    \begin{equs}
       6(|x|^2+|y|^2)-7|x+y|^2-6|x-y|^2&=-6(|x|^2+|y|^2)-|x+y|^2\\
       &\leq -|x+y|^2\,,
    \end{equs}
    which yields the result. The second estimate follows the same lines. One has to note that
    \begin{multline*}
        \big|\D_t^{\mfl_0}\D_x^{\mfl_1}\D_y^{\mfl_2}\big(M(t,x,y)-\e^{-t\frac{|x|^2+|y|^2}{4}}G(t,x-y)\big)\big|\\\lesssim t^{1-\mfl_0-\frac{|\mfl_1|+|\mfl_2|}{2}}\big(|x|^2+|y|^2\big)\bigg(1+\frac{|x-y|}{\sqrt{t}}\bigg)^{|\mfl_0|}\big(1+\sqrt{t}|x|\big)^{|\mfl_1|}\big(1+\sqrt{t}|y|\big)^{|\mfl_2|}\e^{-\frac{|x-y|^2}{4t}}\e^{-t\frac{|x|^2+|y|^2}{4}}\,,
    \end{multline*}
    and proceed in the same lines.
\end{proof}

\begin{lemma}\label{lemma:estimMmu}
    For every $n,m$, $\varsigma>0$, and $\mu\in(0,1]$,
    \begin{equation}
        \big\|\langle\bigcdot\rangle^\varsigma_\mu(\mcT_{\mu,n}\otimes \mcT_{\mu,m})\dot M_\mu\big\|_{\mcL(L_{t,x}^{\infty,p},L_{t,x}^{\infty,r})}\lesssim\mu^{1-d(1/r-1/p)}\,,
    \end{equation}
    and
    \begin{equation}\label{eq:PmuTmuMmu}
        \big\|(\msP_\mu\otimes \mcT_{\mu,m})\dot M_\mu\big\|_{\mcL(L_{t,x}^{\infty,p},L_{t,x}^{\infty,r})}\lesssim\mu^{1-d(1/r-1/p)}\,,
    \end{equation}
    $\mcT_{\mu,n}$ was defined in \eqref{eq:defTmu}.
\end{lemma}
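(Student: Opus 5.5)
The plan is to reduce the operator bound to an explicit pointwise kernel estimate. From that estimate I extract a product Gaussian localisation in $x$ and $y$ at spatial scale $\mu^{-1}$, and then close with Hölder and Young. The exponent $1-d(1/r-1/p)$ splits into two sources. The factor $\mu^{1}$ comes from the time structure: $\dot M_\mu$ carries $\partial_\mu\chi_\mu$, which costs $\mu^{-1}$, and it is supported on a window of length $\sim\mu^2$, which gains $\mu^{2}$. The factor $\mu^{-d(1/r-1/p)}$ is the Hölder cost of passing from $L^p_x$ to $L^r_x$ with $r\le p$ on a region of volume $\sim\mu^{-d}$. This is consistent with how the lemma is used, namely $p\geq r$.

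\textbf{Step 1: kernel bound.} Expand $\mcT_{\mu,n}\otimes\mcT_{\mu,m}$ using \eqref{eq:defTmu}. The operator $\mcR_\mu^{\otimes2}$ is a finite sum of terms $\mu^{|\mfl|_\mfs}\partial^{\mfl}$ with $|\mfl|_\mfs\le 2N$, acting on
\[
\e^{a_n\mu^2|x|^2/8}\,\e^{a_m\mu^2|y|^2/8}\,\dot M_\mu(t,x,y),
\qquad \dot M_\mu(t,x,y)=\mu^{-1}\,(\partial\chi)(t/\mu^2)\,(-2t/\mu^2)\,M(t,x,y).
\]
The time derivatives falling on $\chi$ cost $\mu^{-2}$ each and are compensated by the prefactors. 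For the remaining derivatives I apply \eqref{eq:estMt} with $a=\max(a_n,a_m)\le 3/2$ and $s=\mu^2\le t$ (recall $t\in[\mu^2,2\mu^2]$). Each derivative then costs $t^{-|\mfl|_\mfs/2}\sim\mu^{-|\mfl|_\mfs}$, which cancels the $\mu^{|\mfl|_\mfs}$ prefactor. This yields
\[
\big|(\mcT_{\mu,n}\otimes\mcT_{\mu,m})\dot M_\mu(t,x,y)\big|\lesssim \mu^{-1}\one_{[\mu^2,2\mu^2]}(t)\,G(8t,x-y)\,\e^{-3t|x+y|^2/32}.
\]
The weight $\langle\bigcdot\rangle_\mu^\varsigma$ is harmless here: the time component is bounded by $1$, and $(1+|x-y|/\mu)^\varsigma$ is absorbed by the Gaussian $G(8t,\bigcdot)$ at the cost of changing $8$ to a slightly larger constant.

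\textbf{Step 2: splitting the localisation.} Use the identity $|x|^2+|y|^2=\tfrac12(|x+y|^2+|x-y|^2)$. It gives
\[
\e^{-c t|x+y|^2}\le \e^{-2ct(|x|^2+|y|^2)}\,\e^{ct|x-y|^2}.
\]
Since $t\le 2\mu^2\le2$, the growing factor $\e^{ct|x-y|^2}$ is dominated by a small fraction of the decay $\e^{-|x-y|^2/(32t)}$ in $G(8t,\bigcdot)$, because $ct\ll 1/(32t)$. The kernel is therefore bounded by
\[
\mu^{-1}\one_{[\mu^2,2\mu^2]}(t)\,\tilde G(t,x-y)\,w_\mu(x)\,w_\mu(y),\qquad w_\mu(x)=\e^{-c'\mu^2|x|^2},
\]
where $\tilde G$ is a Gaussian with $\|\tilde G(t,\bigcdot)\|_{L^1_x}\lesssim1$.

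\textbf{Step 3: Hölder and Young.} Fix the output time and take $f\in L^{\infty,p}_{t,x}$. The time integration over a window of length $\mu^2$ against the prefactor $\mu^{-1}$ produces $\mu$. In space, let $1/r=1/s+1/p$, so $s\in(r,\infty]$ since $r\le p$. Then
\[
\|w_\mu\,\tilde G*(w_\mu f)\|_{L^r_x}\le \|w_\mu\|_{L^s_x}\,\|\tilde G\|_{L^1_x}\,\|f\|_{L^p_x},
\qquad \|w_\mu\|_{L^s_x}\lesssim \mu^{-d/s}=\mu^{-d(1/r-1/p)}.
\]
This gives the first estimate.

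\textbf{Step 4: the second estimate \eqref{eq:PmuTmuMmu}.} Here $\msP_\mu$ acts on the $x$ side and involves powers of $\mu^2\partial_t$, $\mu^2\Delta$ and $\mu^2|x|^2$. The derivative terms are handled exactly as in Step 1. The multiplicative factors $(\mu^2|x|^2)^k$ are the only new feature. I would control them with $|x|\le\tfrac12|x+y|+\tfrac12|x-y|$. The $|x+y|$ part is absorbed by $\e^{-3t|x+y|^2/32}$ with $t\sim\mu^2$. The $|x-y|$ part contributes $\mu^2|x-y|^2\lesssim t|x-y|^2/t$, which is bounded by a polynomial in $|x-y|/\sqrt t$ and hence absorbed by $G(8t,\bigcdot)$. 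After this, Steps 2 and 3 apply verbatim.

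I expect the main obstacle to be the bookkeeping in Step 1, specifically checking that the Gaussian growth $\e^{a\mu^2(|x|^2+|y|^2)/8}$ created by $\mcT$ is genuinely beaten by the Mehler decay. The danger is that the derivatives of $\mcR_\mu$ falling on these weights produce polynomial factors $\mu|x|$ and $\mu|y|$. This is exactly what \eqref{eq:estMt} is designed for, with the constraint $a\le 3/2<2$, so I would rely on it directly rather than recompute it.
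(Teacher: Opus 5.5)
Your Step~1 is the paper's first step: apply \eqref{eq:estMt} with $s=\mu^2$ and $a=a_n\vee a_m$. From there on, you and the paper bound the operator in opposite directions. The paper throws away the confining factor $\e^{-3t|x+y|^2/32}$ and dominates the kernel by a translation-invariant kernel of the type $|\dot G_\mu(8t,x-y)|$. It then applies Young's inequality with $1/q=1+1/p-1/r$. That is the exponent for an operator taking $L^{\infty,r}_{t,x}$ \emph{into} $L^{\infty,p}_{t,x}$ with $p\geq r$, and the loss $\mu^{-d(1/r-1/p)}$ comes from local smoothing at scale $\sqrt t\sim\mu$. This is also the direction in which the lemma is used. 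In Lemma~\ref{lemma:constructsolOH} the operator acts on $J_\nu(F_{\eps,T^{1/2}}[0]+R_{\eps,T^{1/2}})(t)\in L^r_x$, and the output is measured in $L^p_x$. There, \eqref{eq:PmuTmuMmu} is quoted as a bound in $\mcL(L^{\infty,r}_{t,x},L^{\infty,p}_{t,x})$. So the spaces in the displayed statement should be read as input $L^{\infty,r}$, output $L^{\infty,p}$.

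Your Steps~2--3 prove the literal mapping $L^p_x\to L^r_x$ with $r\leq p$, where the loss is instead the volume of the confinement region at scale $\mu^{-1}$. For that reading your argument is correct, with one adjustment: trade only a small fraction $c$ of the $|x+y|$-decay, because $t$ runs up to $2\mu^2$, which need not be small. It is also genuinely different from the paper's proof, since it exploits exactly the confinement the paper discards. The two exponents coincide only because the Mehler kernel at time $\mu^2$ smooths at scale $\mu$ and confines at scale $\mu^{-1}$.

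However, your remark that this reading matches how the lemma is used is wrong, and in the direction actually needed Step~3 fails. Multiplying by $w_\mu\leq 1$ never raises integrability, so no H\"older argument turns an $L^r$ input into an $L^p$ output. The repair is immediate: drop Step~2, and replace $\|\tilde G(t,\cdot)\|_{L^1_x}$ by Young's inequality with $\|\tilde G(t,\cdot)\|_{L^q_x}\lesssim t^{-d(1-1/q)/2}\sim\mu^{-d(1/r-1/p)}$. This is the paper's proof. Step~4 then goes through the same way. Alternatively, the identity $\mathrm H_xM=-\partial_tM$ turns the powers of $\mu^2\mathrm H_x$ in $\msP_\mu$ into time derivatives of $M$ controlled by \eqref{eq:estMt}, so no $\mu^2|x|^2$ factors appear at all.
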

\begin{proof}
    For this proof we let $a=a_n\vee a_m\in[1,2)$. We recall that $\dot M_\mu(t,x,y)=-2\mu^{-3}t\chi'(t/\mu^2)M(t,x,y)$. Then, applying \eqref{eq:estMt} with $s=\mu^2$ (we know that $\mu^2\leq t$ thanks to the support properties of $\chi$), we get
    \begin{equs}
        \big|\big((\mcT_{\mu,n}\otimes \mcT_{\mu,m})\dot M_\mu\big)(t,x,y)\big|&\lesssim\sum_{n=0}^N\mu^{-3}t\chi^{(N-n+1)}(t/\mu^2)t^{-N(1+2d)}\mu^{2N(1+2d)}G(8t/3,x-y)\\
        &\lesssim|\dot G_\mu(8t,x-y)|\,.
    \end{equs}
    We have brutally bounded the Gaussian weight by $1$. Then, by Young's convolution inequality,
    \begin{equs}
        ~&\big\|\langle\bigcdot\rangle^\varsigma_\mu(\mcT_{\mu,n}\otimes \mcT_{\mu,m})\dot M_\mu\big\|_{\mcL(L_{t,x}^{\infty,p},L_{t,x}^{\infty,r})}\lesssim\|\langle\bigcdot\rangle_\mu^\varsigma \dot G_\mu\|_{L_{t,x}^{1,q}}\lesssim\mu^{1-d(1/r-1/p)}\,,
    \end{equs}
    where $\frac{1}{q}=1+\frac{1}{p}-\frac{1}{r}$, and the last step follows Lemma \ref{estimdotGmu}, but with the classical Laplacian ($\alpha=2$). The treatment of the second estimate follows the same lines, observing that
    \begin{equs}
        |(\msP_\mu\otimes\mathrm{Id})M(t,x,y)|=|((1-\mu^{4}\mathrm{H}^2)^N\otimes\mathrm{Id})M(t,x,y)|\lesssim\bigg(1+\frac{\mu^{4}}{t^{2}}\bigg)^N| G(t,x-y)|\e^{-3t\frac{|x|^2+|y|^2}{16}}\,.
    \end{equs}
\end{proof}

\begin{lemma}\label{lemma:diffMG}
    The following holds for $\mu\in(0,1]$
    \begin{equation}
        \big|\mcT_{\mu,0}^{\otimes2}\big(\dot M_\mu-\mcb W_\mu^{-1}\dot G_\mu\big)(t,x,y)\big|\lesssim \mu^2\big(\mu^2+|x|^2+|y|^2\big)|\dot G_\mu(8t,x-y)|\,.
    \end{equation}
\end{lemma}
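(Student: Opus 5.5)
The plan is to reduce the statement to the pointwise difference estimate \eqref{eq:estimatediffkernels}, the time cut-off structure of $\dot M_\mu$ and $\dot G_\mu$, and the way $\mcT_{\mu,0}$ acts on the Gaussian weights. First I would recall that
\[
\dot M_\mu(t,x,y)=-\alpha\mu^{-3}t\,\chi'(t/\mu^2)M(t,x,y),
\]
and similarly
\[
\dot G_\mu(t,x-y)=-\alpha\mu^{-3}t\,\chi'(t/\mu^2)G(t,x-y),
\]
with $\alpha=2$. Hence
\[
\dot M_\mu-\mcb W_\mu^{-1}\dot G_\mu=-2\mu^{-3}t\,\chi'(t/\mu^2)\Big(M(t,x,y)-\e^{-\mu^2\frac{|x|^2+|y|^2}{4}}G(t,x-y)\Big).
\]
On the support of $\chi'(t/\mu^2)$ one has $t\in[\mu^2,2\mu^2]$, so $\mu^2\asymp t$. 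The mismatch between $\e^{-\mu^2(|x|^2+|y|^2)/4}$ and $\e^{-t(|x|^2+|y|^2)/4}$ has to be handled separately. I would split
\[
M-\mcb W_\mu^{-1}G=\big(M-\e^{-t\frac{|x|^2+|y|^2}{4}}G\big)+\big(\e^{-t\frac{|x|^2+|y|^2}{4}}-\e^{-\mu^2\frac{|x|^2+|y|^2}{4}}\big)G.
\]
For the second piece, the mean value theorem gives the bound $|t-\mu^2|(|x|^2+|y|^2)\e^{-\mu^2\frac{|x|^2+|y|^2}{4}}\lesssim\mu^2(|x|^2+|y|^2)\e^{-\mu^2\frac{|x|^2+|y|^2}{4}}$, which has the right shape.

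Next I would handle $\mcT_{\mu,0}^{\otimes2}=\mcR_\mu^{\otimes2}\circ(\e^{\mu^2|\cdot|^2/8}\otimes\e^{\mu^2|\cdot|^2/8})$, using $a_0=1$. Writing $\mcR_\mu=(\mcP_\mu^N)^*$ as a finite sum of space-time derivatives of order up to $2N$ (parabolic), each scaled by the matching power of $\mu$, the task reduces to bounding
\[
\mu^{|\mfl|_\mfs}\,\D^{\mfl}\Big(\e^{\mu^2\frac{|x|^2+|y|^2}{8}}\cdot(\text{difference})\Big).
\]
For the first piece, \eqref{eq:estimatediffkernels} with $s=\mu^2\le t$ bounds this by
\[
t^{1-\mfl_0-\frac{|\mfl_1|+|\mfl_2|}{2}}\big(|x|^2+|y|^2\big)G(8t,x-y).
\]
Each derivative cost $t^{-1/2}$ or $t^{-1}$ is compensated by the $\mu$ prefactors because $t\asymp\mu^2$. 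Time derivatives that fall on $\mu^{-3}t\chi'(t/\mu^2)$ only produce higher derivatives of $\chi$ with the same scaling. For the second piece I would redo the computation in the proof of \eqref{eq:estMt}, now with the extra factor $(t-\mu^2)(|x|^2+|y|^2)$. Multiplying by $\e^{\mu^2(|x|^2+|y|^2)/8}$ leaves a net $\e^{-\mu^2(|x|^2+|y|^2)/8}$ for absorbing polynomial factors. Derivatives of the Gaussian weights produce factors $(1+\mu|x|)^k$, which are absorbed in the same way. The leftover $\mu^2$ accounts for the additive $\mu^2$ in $(\mu^2+|x|^2+|y|^2)$, coming from derivatives landing on $|x|^2+|y|^2$.

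Finally I would assemble everything. The prefactor $\mu^{-3}t\,\chi^{(k)}(t/\mu^2)G(8t,x-y)$ is, up to constants and a sum over the derivatives of $\chi$, the same expression that dominates $|\dot G_\mu(8t,x-y)|$, exactly as in the proof of Lemma~\ref{lemma:estimMmu}. Combined with the factor $t\asymp\mu^2$ gained from the difference, this gives $\mu^2(\mu^2+|x|^2+|y|^2)|\dot G_\mu(8t,x-y)|$.

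The main obstacle is the bookkeeping when derivatives act jointly on the growing weight $\e^{\mu^2|x|^2/8}$, on the Mehler difference, and on the polynomial factor. One has to check that the exponent arithmetic still closes, as in $6(|x|^2+|y|^2)-7|x+y|^2-6|x-y|^2\le0$, and that only a quadratic growth $|x|^2+|y|^2$ survives rather than a higher power. If the polynomial from derivatives grows too fast, I would spend part of the residual Gaussian decay $\e^{-c\mu^2(|x|^2+|y|^2)}$ to reduce $(\mu|x|)^k$ to a constant.
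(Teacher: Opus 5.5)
\textbf{Gap in the second piece.} Your decomposition is the paper's own: after weighting, your two pieces are exactly $D^1_\mu$ and $D^2_\mu$ in the proof. The first piece is handled correctly by \eqref{eq:estimatediffkernels} with $s=\mu^2\le t$. The problem is the second piece. By \eqref{eq:wieghtW}, $\mcb W_\mu^{-1}=\e^{-\mu^2\frac{|x|^2+|y|^2}{8}}$, not $\e^{-\mu^2\frac{|x|^2+|y|^2}{4}}$. It is exactly the inverse of the weight that $\mcT_{\mu,0}$ applies (recall $a_0=1$). After multiplying by that weight, your second piece is
\[
\Big(\e^{-(2t-\mu^2)\frac{|x|^2+|y|^2}{8}}-1\Big)\,G(t,x-y)\,.
\]
So the relevant difference is governed by $2t-\mu^2\in[\mu^2,3\mu^2]$, not by $t-\mu^2$. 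The mean value bound $\lesssim\mu^2(|x|^2+|y|^2)$ then comes with \emph{no} residual Gaussian: the net factor $\e^{-\mu^2(|x|^2+|y|^2)/8}$ you invoke does not exist. Consequently, your step where the factors $(1+\mu|x|)^k$ produced by the derivatives in $\mcR_\mu^{\otimes2}$ are ``absorbed in the same way'' is unjustified as written. In any case, those derivatives must be applied to the exact expression, not to a mean-value bound.

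\textbf{The repair.} Keep the second piece in the form $\e^{-(2t-\mu^2)\frac{|x|^2+|y|^2}{8}}-1$, as the paper does. The derivatives in $\mcR_\mu^{\otimes2}$ annihilate the constant $-1$. Hence every Leibniz term in which at least one derivative falls on the weight factor carries $\e^{-(2t-\mu^2)\frac{|x|^2+|y|^2}{8}}\le\e^{-\mu^2\frac{|x|^2+|y|^2}{8}}$. Each such term is $\lesssim\mu^2(\mu^2+|x|^2+|y|^2)$ times a $\dot G_\mu$-type bound:
\begin{itemize}
\item $\mu^2\Delta_x$ acting on the Gaussian gives $O(\mu^4)$;
\item $\mu^2\D_t$ acting on it gives $O(\mu^2(|x|^2+|y|^2))$;
\item a cross term $\mu^2\nabla_x(\cdot)\cdot\nabla_x G$ gives $\mu^3|x|\lesssim\mu^2(\mu^2+|x|^2)$ by AM--GM.
\end{itemize}
For the cross term, your fallback of spending Gaussian decay to turn $\mu|x|$ into a constant would only give $\mu^2$, which is too lossy near the origin. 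The single term with no derivative on the weight has no decay. It is nevertheless bounded by $\big|\e^{-(2t-\mu^2)\frac{|x|^2+|y|^2}{8}}-1\big|\lesssim\mu^2(|x|^2+|y|^2)$, which is already of the required form. With this change, the rest of your assembly goes through.
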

\begin{proof}
    We first separate the following quantity in two terms
    \begin{equs}
        ~&\e^{\mu^2\frac{|x|^2+|y|^2}{8}}\Big(\dot M_\mu(t,x,y)-\e^{-\mu^2\frac{|x|^2+|y|^2}{8}}\dot G_\mu(t,x-y)\Big)\\
        &=\e^{\mu^2\frac{|x|^2+|y|^2}{8}}\Big(\dot M_\mu(t,x,y)-\e^{-t\frac{|x|^2+|y|^2}{4}}\dot G_\mu(t,x-y)\Big)+\Big(\e^{-(2t-\mu^2)\frac{|x|^2+|y|^2}{8}}-1\Big)\dot G_\mu(t,x-y)\\
        &\eqdef D_\mu^1(t,x,y)+D_\mu^2(t,x,y)\,.
    \end{equs}
    With this notation, we have
    \begin{equs}
        \mcT_{\mu,0}^{\otimes2}(\dot M_\mu-\mcb W_\mu^{-1}\dot G_\mu)=\mcR_\mu^{\otimes2}(D_\mu^1+D_\mu^2)\,.
    \end{equs}
    For the first term, we write
    \begin{multline*}
        \e^{\mu^2\frac{|x|^2+|y|^2}{8}}\Big(\dot M_\mu(t,x,y)-\e^{-t\frac{|x|^2+|y|^2}{4}}\dot G_\mu(t,x-y)\Big)\\
        =\mu^{-3}t\chi'(t/\mu^2)\e^{\mu^2\frac{|x|^2+|y|^2}{8}}\Big(M(t,x,y)-\e^{-t\frac{|x|^2+|y|^2}{4}}G(t,x-y)\Big)\,.
    \end{multline*}
    Then, thanks to \eqref{eq:estimatediffkernels}, we have
    \begin{equs}
        ~&\big|\big(\mcR_\mu^{\otimes2}D_\mu^1\big)(t,x,y)\big|\\
        &\lesssim\sum_{n=0}^N\mu^{-3}t\chi^{(N-n+1)}(t/\mu^2) t^{1-n(1+2d)}\mu^{n(1+2d)}\big(|x|^2+|y|^2\big)G(8t/3,x-y)\\
        &\lesssim\mu^2\big(|x|^2+|y|^2\big)\dot G_\mu(8t,x-y)\,.
    \end{equs}
    For the second term we can write, using that $\mu^2\leq2t-\mu^2\leq3\mu^2$ (again thanks to the support properties of $\chi$),
    \begin{equs}
        ~&|(\mcR_{\mu}^{\otimes2}D_\mu^2)(t,x,y)|\\
        &\lesssim \left(\Big(\e^{-(2t-\mu^2)\frac{|x|^2+|y|^2}{8}}-1\Big)+\sum_{n=1}^N\mu^{4n}\big(1+\mu(|x|+|y|)\big)^{2n}\e^{-(2t-\mu^2)\frac{|x|^2+|y|^2}{8}}\right)\\
        &\quad\times\mu^{-3}t\chi^{(N-n+1)}(t/\mu^2) t^{1-(1+2d)(N-n)}\mu^{(1+2d)(N-n)}G(t,x-y)\\
        &\lesssim \left(\mu^2(|x|^2+|y|^2)+\sum_{n=1}^N\mu^{4n}\big(1+\mu(|x|+|y|)\big)^{2n}\e^{-\mu^2\frac{|x|^2+|y|^2}{8}}\right)\dot G_\mu(t,x-y)\\
        &\lesssim \mu^2(\mu^2+|x|^2+|y|^2)\dot G_\mu(t,x-y)\,,
    \end{equs}
    which concludes the proof.
\end{proof}

\begin{lemma}\label{lemma:heatflowmehler}
    Let $\sigma$, $1<r<\infty$ and $1\leqslant q\leqslant\infty$. For any $u\in\msB_{r,q}^{-\sigma}$, it holds
    \begin{equation}
        \|\mcT_\mu M_\mu u\|_{L_x^r}\lesssim\mu^{-\sigma}\|u\|_{\msB_{r,q}^{-\sigma}}\,.
    \end{equation}
\end{lemma}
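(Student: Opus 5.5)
The plan is to reduce the bound to a uniform $L^r$ estimate on a regularised Mehler kernel, tested against the Hermite--Besov blocks $\msQ_\mu^{\lceil\sigma\rceil}u$. Recall that $\mcT_\mu f=\mcR_\mu(\e^{a\mu^2|\bigcdot|^2/8}f)$ with $a=a_{N_0+1}\le 3/2$, and that $M_\mu u(t)=\chi_\mu(t)\e^{-t\mathrm H}u$. Since $\mcR_\mu$ also acts in time, $\mcT_\mu M_\mu u$ involves time derivatives of $\chi_\mu(t)M(t,x,y)$. By the support property of $\chi_\mu$, all the kernels that appear live at times $t\ge\mu^2$. The strategy follows Lemma~\ref{heatkernelestimate}, with the Gaussian weight controlled by the Mehler decay.

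\textbf{Step 1: factor through $\msQ_\mu^{k}$.} Set $k=\lceil\sigma\rceil$. Because $\msQ_\mu=(1-\mu^2\mathrm H)^{-1}$ commutes with $\e^{-t\mathrm H}$, we can write
\[
\e^{-t\mathrm H}u=\e^{-t\mathrm H}(1-\mu^2\mathrm H)^{k}\,\msQ_\mu^{k}u .
\]
This gives
\[
\mcT_\mu M_\mu u(t)=\int \mathcal{M}_\mu(t,x,y)\,(\msQ_\mu^{k}u)(y)\dint y,
\]
where $\mathcal M_\mu$ is obtained by applying $\mcR_\mu$ and the weight $\e^{a\mu^2|x|^2/8}$ on the left to the kernel of $\chi_\mu(t)\e^{-t\mathrm H}(1-\mu^2\mathrm H)^k$. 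The spatial factor of $\mcR_\mu$ is $(1-\mu^2\Delta)^N$, and each power of $\mu^2\mathrm H$ or $\mu^2\Delta$ can be traded for $\mu^2\D_t$ or derivatives in $x$ applied to $M$.

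\textbf{Step 2: pointwise kernel bound.} Expanding, each term is $\mu^{2j}$ times a derivative of order $2j$ of $\e^{a\mu^2|x|^2/8}M(t,x,y)$, possibly multiplied by the polynomial factor $\mu^{2j}|y|^{2j}$ coming from $\mathrm H$. Using \eqref{eq:estMt} with $s=\mu^2\le t$, together with $t\sim\mu^2$ on the support of $\chi^{(\cdot)}_\mu$ (or $t\ge\mu^2$ for the undifferentiated $\chi_\mu$), each derivative costs $t^{-1/2}\lesssim\mu^{-1}$. The extra factors $\mu^{2j}|y|^{2j}$ are absorbed by the residual factor $\e^{-3t|x+y|^2/32}$ combined with the Gaussian in $|x-y|$. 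The result should be
\[
|\mathcal M_\mu(t,x,y)|\lesssim G(8t,x-y),
\]
uniformly in $t$ on $[\mu^2,1]$.

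\textbf{Step 3: conclude.} By Young's inequality in $y$, $\|\mcT_\mu M_\mu u(t)\|_{L^r_x}\lesssim\|\msQ_\mu^k u\|_{L^r_x}$. It remains to bound $\|\msQ_\mu^k u\|_{L^r_x}\lesssim\mu^{-\sigma}\|u\|_{\msB^{-\sigma}_{r,q}}$. The definition of the norm gives this with an $L^q(\mathrm d\mu/\mu)$ average. To pass to a pointwise-in-$\mu$ bound, use the monotonicity $\msQ_\mu=\msQ_\nu\cdot(\text{an operator uniformly bounded on }L^r)$ for $\nu\in[\mu,2\mu]$, as in \eqref{eq:msQmumsQnu}, and average over $\nu\in[\mu,2\mu]$. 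If $\sigma\le0$ one uses the positive-regularity norm instead, which is trivial.

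\textbf{Main obstacle.} The delicate point is Step~2: the unbounded multipliers $|y|^2$ coming from $\mathrm H$ and the growing weight $\e^{a\mu^2|x|^2/8}$. Both must be controlled by Mehler's $\tanh(t)|x+y|^2$ decay with $a<2$, exactly as in the exponent computation in the proof of \eqref{eq:estMt}. I would first try expanding $(1-\mu^2\mathrm H)^k$ via Mehler's formula (differentiating in $t$) to avoid handling polynomial weights by hand.
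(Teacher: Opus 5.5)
Your argument is correct, but it takes a different route from the paper's.

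\textbf{The paper's route.} The paper factorises $M_\mu(t)=M_\mu((1-\eps)t)M(\eps t)$. It uses \eqref{eq:estMt} and the Schur test only to show that $(\mcT_\mu\otimes\Id)M_\mu((1-\eps)\bigcdot)$ is bounded on $L^r_x$, with a kernel $\lesssim G(8t,x-y)\one_{t\geqslant\mu^2}$. It then invokes the abstract smoothing of the analytic semigroup, $\|\e^{-\eps t\mathrm H}u\|_{L^r_x}\lesssim t^{-\sigma/2}\|u\|_{\msH^{-\sigma,r}}$ \cite[Proposition 3.4.3]{Ha06}, with $t\geqslant\mu^2$. Finally it passes from $\msH^{-\sigma,r}$ to $\msB^{-\sigma}_{r,q}$ by real interpolation (Proposition~\ref{Prop:HermiteSpacesInterpolation}).

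\textbf{Your route.} You insert $(1+\mu^2\mathrm H)^k\msQ_\mu^k$ and push the powers of $\mathrm H$ into the kernel as $\D_t$-derivatives of $M$. These are harmless by \eqref{eq:estMt}, since each $\mu^2\D_t$ costs $\mu^2/t\leqslant1$. The Besov norm then enters only through its definition and the trivial embedding $\msB^{-\sigma}_{r,q}\subset\msB^{-\sigma}_{r,\infty}$.

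\textbf{Comparison.} Your route is self-contained. It needs no semigroup smoothing and no interpolation identification of the Hermite spaces, hence no appeal to Hebisch's multiplier theorem. It also covers $q=\infty$ directly; this is the case actually used in Lemma~\ref{lemma:boundtauOH}, whereas Proposition~\ref{Prop:HermiteSpacesInterpolation} is only stated for $q<\infty$. The paper's route outsources all $\sigma$-dependence to functional calculus. Its kernel work is limited to what $\mcR_\mu$ and the Gaussian weight force, and it yields the Sobolev bound $\msH^{-\sigma,r}\to L^r_x$ as a by-product.

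\textbf{Two slips to fix.}
\begin{itemize}
\item The resolvent is $\msQ_\mu=(1+\mu^2\mathrm H)^{-1}$, the analogue of $(1-\mu^2\Delta)^{-1}$, not $(1-\mu^2\mathrm H)^{-1}$, which is not even invertible for some $\mu$. With the right sign, $(1+\nu^2\mathrm H)\msQ_\mu=(\nu^2/\mu^2)\Id+(1-\nu^2/\mu^2)\msQ_\mu$ has $L^r_x$-norm at most $1$ for $\nu\leqslant\mu$, by the Schur test and $0\leqslant M(t,x,y)\leqslant G(t,x-y)$. So $\mu\mapsto\|\msQ_\mu^k u\|_{L^r_x}$ is non-increasing, and you can average over $\nu\in[\mu/2,\mu]$ and stay inside $(0,1]$.
\item Drop the remark that $\sigma\leqslant0$ is trivial. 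For $\sigma<0$ the estimate would claim a gain $\mu^{|\sigma|}$. This fails, since $\mcT_\mu M_\mu u(t)\to\e^{-t\mathrm H}u\neq0$ as $\mu\to0$ at fixed $t$. The lemma is meant, and only used, for $\sigma\geqslant0$ (in fact $\sigma\in(0,\beta)$).
\end{itemize}
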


\begin{proof}
    First, we factorize $M_\mu(t)=M_\mu((1-\eps)t)M(\eps t)$, for some small enough $\eps>0$ so that $a_n<2-2\eps$, using the semigroup property. Thanks to \eqref{eq:estMt}, we know that $(\mcT_\mu\otimes\mathrm{Id}) M_\mu((1-\eps)t)$ is a sum over a finite set $\mathfrak{P}$ of kernels
    \begin{equs}
        \big((\mcT_\mu\otimes\mathrm{Id}) M_\mu((1-\eps)\bigcdot)\big)(t,x,y)=\sum_{\pi\in \mathfrak{P}} M_\mu^\pi(t,x,y)\,,
    \end{equs}
    where all the kernels $M_\mu^\pi$ are bounded by a quantity of the form
    \begin{equs}
         \chi^{(N-n)}(t/\mu^2)t^{-N(1+d)}\mu^{2N(1+d)}G(8t/3,x-y)e^{-3t\frac{|x+y|^2}{32}}\lesssim G(8t,x-y)e^{-3t\frac{|x+y|^2}{32}}\one_{t\geqslant\mu^2}\,,
    \end{equs}
    for some $0\leq n\leq N$. We then have
    \begin{equs}
        \sup_{x\in\R^d}\int_{\R^d} |M_\mu^\pi(t,x,y)|\dint y + \sup_{y\in\R^d}\int_{\R^d} |M_\mu^\pi(t,x,y)|\dint x\lesssim \one_{t\geqslant\mu^2}\,,
    \end{equs}
    and the Schur test implies
    \begin{equs}
        \|\mcT_\mu M_\mu u\|_{L_x^r}\leq\sum_{\pi\in \mathfrak{P}} \left(\int_{\R^d}\left|\int_{\R^d}M_\mu^\pi(t,x,y)u(y)\dint y\right|^r\dint x\right)^\frac{1}{r} \lesssim \one_{t\geqslant\mu^2} \|M(\eps t)u\|_{L_x^r}\,.
    \end{equs}
    Thus, by Proposition 3.4.3. in \cite{Ha06},
    \begin{equs}\label{Eq:IntermediateSchauderMehlerExponentialWeight}
        \|\mcT_\mu M_\mu u\|_{L_x^r}\lesssim\mu^{-\sigma}\|u\|_{\msH^{-\sigma,r}}\,.
    \end{equs}
    The result follows by real interpolation, using Proposition \ref{Prop:HermiteSpacesInterpolation}.
\end{proof}

\begin{lemma}
    Let $p,r\in[1,\infty]$, we have
    \begin{equation}\label{eq:msQleq1}
        \|\msQ_\mu\|_{\mcL(L_{x}^{\infty})}\leq1\,,
    \end{equation}
    uniformly in $\mu$, and
    \begin{equation}\label{eq:msQmumsQnu}
        \|(\msQ_\mu-\mathrm{Id})\msQ_\nu\|_{\mcL(L_{x}^{\infty})}\lesssim\mu^2\nu^{-2}\,,
    \end{equation}
    uniformly in $\mu\leq\nu$.
\end{lemma}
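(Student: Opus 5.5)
The plan is to prove both bounds through the functional calculus of the harmonic oscillator. Recall that $\msQ_\mu=(1-\mu^2\Delta+\mu^2|x|^2)^{-1}=(1+\mu^2\mathrm H)^{-1}$.

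For \eqref{eq:msQleq1}, I will write the resolvent as a Laplace transform of the Mehler semigroup:
\begin{equs}
\msQ_\mu=\int_0^\infty \e^{-s}\,\e^{-s\mu^2\mathrm H}\dint s\,.
\end{equs}
The Mehler kernel \eqref{eq:Mehlerkernel} is nonnegative. Moreover $\e^{-t\mathrm H}1\leq 1$: this follows from the Feynman--Kac representation with the nonnegative potential $|x|^2$, or equivalently from the maximum principle, since $1$ is a supersolution of $\D_t+\mathrm H$. Hence each $\e^{-s\mu^2\mathrm H}$ is a positive contraction on $L^\infty_x$. Integrating against the probability density $\e^{-s}\dint s$ then gives $\|\msQ_\mu\|_{\mcL(L^\infty_x)}\leq 1$, and the kernel of $\msQ_\mu$ is positive with mass at most one.

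For \eqref{eq:msQmumsQnu}, I will use the resolvent identity
\begin{equs}
\msQ_\mu-\mathrm{Id}=-\mu^2\mathrm H\msQ_\mu\,,\qquad\text{so}\qquad(\msQ_\mu-\mathrm{Id})\msQ_\nu=-\mu^2\nu^{-2}\,\msQ_\mu\,(\nu^2\mathrm H\msQ_\nu)\,.
\end{equs}
The factors commute because all of them are functions of $\mathrm H$. Next, $\nu^2\mathrm H\msQ_\nu=\mathrm{Id}-\msQ_\nu$, and by the first part $\|\mathrm{Id}-\msQ_\nu\|_{\mcL(L^\infty_x)}\leq 2$. Together with $\|\msQ_\mu\|_{\mcL(L^\infty_x)}\leq 1$ this yields the bound $2\mu^2\nu^{-2}$. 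No restriction $\mu\leq\nu$ is actually needed, although that is the regime in which the bound is useful.

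If the bounds are wanted on $L^p_x$ for other $p$, as the preamble with $p,r$ suggests, I will use symmetry of the kernel together with positivity to get the same bound on $L^1_x$, and then interpolate with Riesz--Thorin.

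The only nontrivial point is the sub-Markov property of the Mehler semigroup, $\e^{-t\mathrm H}1\leq1$. If that needs checking, I will integrate \eqref{eq:Mehlerkernel} in $y$ explicitly: the result is a Gaussian integral equal to $(\cosh 2t)^{-d/2}\exp(-c_t|x|^2)$ with $c_t\geq 0$, which is at most $1$. Everything else is algebra in the functional calculus.
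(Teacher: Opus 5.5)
Your proof is correct and is essentially the paper's argument. The paper also writes $\msQ_\mu(x,y)=\int_0^\infty M(\mu^2 s,x,y)\e^{-s}\dint s$, controlling the mass through the pointwise bound $0\le M(t,x,y)\le G(t,x-y)$ by the Euclidean heat kernel rather than through the sub-Markov property, and then uses the same resolvent identity $(\msQ_\mu-\mathrm{Id})\msQ_\nu=\mu^2\nu^{-2}\msQ_\mu(\msQ_\nu-\mathrm{Id})$ together with the first bound; your remark that symmetry and positivity of the kernel extend both bounds to every $L^p_x$ is worthwhile, since the paper later invokes these estimates on $L^p_x$.
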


\begin{proof}
    For the first statement, we use the Mehler kernel representation
    \begin{equs}
        \msQ_\mu(x,y)=\int_0^\infty M(\mu^2 s,x,y)\e^{-s}\dint s\,.
    \end{equs}
    We then just have to use the bound $|M(\mu^2s,x,y)|\leq |G(\mu^2 s,x-y)|$, and conclude as in the proof of \eqref{eq:normKmu}. For the second one, we write
    \begin{equs}
        (1-(1+\mu^2\mathrm{H})^{-1})(1+\nu^2\mathrm{H})^{-1}&=\mu^2\mathrm{H}(1+\mu^2\mathrm{H})^{-1}(1+\nu^2\mathrm{H})^{-1}\\
        &=\mu^2\nu^{-2}(1-(1+\nu^2\mathrm{H})^{-1})(1+\mu^2\mathrm{H})^{-1}\,,
    \end{equs}
    and we conclude with the help of \eqref{eq:msQleq1}.
\end{proof}

\begin{lemma}
    Let $\psi\colon\Lambda\to\R$, $0<\nu\leq\mu\leq1$, $n\leq m$, and $p\in[1,\infty]$, then
    \begin{equs}\label{eq:lessJn}
        \|J_{\mu,m}\psi\|_{L_{t,x}^p}\lesssim\|J_{\mu,n}\psi\|_{L_{t,x}^p}\,,
    \end{equs}
    and
    \begin{equs}\label{eq:lessJmu}
        \|J_{\mu,n}\psi\|_{L_{t,x}^p}\lesssim\|J_{\nu,n}\psi\|_{L_{t,x}^p}\,.
    \end{equs}
\end{lemma}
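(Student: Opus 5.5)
The plan is to reduce both inequalities to pointwise comparisons of the kernels $J_{\mu,n}$, using that they factor as a translation-invariant kernel $K_\mu(t,x-y)$ times the Gaussian weight $\e^{-a_n\mu^2|y|^2/8}$ in the \emph{integration} variable $y$.

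For \eqref{eq:lessJn}, since $a_m\geq a_n$ when $m\geq n$, we write
\begin{equs}
J_{\mu,m}\psi=K_\mu\Big(\e^{-(a_m-a_n)\mu^2|\bigcdot|^2/8}\,\e^{-a_n\mu^2|\bigcdot|^2/8}\psi\Big)\,.
\end{equs}
We then insert $\mcR_\mu K_\mu=\mathrm{Id}$, i.e.\ $\e^{-a_n\mu^2|\bigcdot|^2/8}\psi=\mcR_\mu\big(K_\mu\,\e^{-a_n\mu^2|\bigcdot|^2/8}\psi\big)$, so that
\begin{equs}
J_{\mu,m}\psi=K_\mu\,\e^{-b\mu^2|\bigcdot|^2/8}\mcR_\mu\big(J_{\mu,n}\psi\big)\,,\qquad b=a_m-a_n\geq0\,.
\end{equs}
The operator $K_\mu\,\e^{-b\mu^2|\bigcdot|^2/8}\mcR_\mu$ has kernel $\mcR_\mu^*\big(K_\mu(\bigcdot-y)\e^{-b\mu^2|y|^2/8}\big)$. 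Expanding $\mcR_\mu$, a polynomial of degree $N$ in $\mu^\alpha\D_t$ and $\mu^2\Delta$, by Leibniz gives a finite sum of terms $(\mcS_\mu\hat K^\pi)(w-z)\,\mu^{k}\D^k_y\e^{-b\mu^2|y|^2/8}$. Since $\mu^{k}|\D^k_y\e^{-b\mu^2|y|^2/8}|\lesssim \mu^{k}(\mu^{k}\langle \mu y\rangle^{k})\e^{-b\mu^2|y|^2/8}\lesssim 1$ uniformly, each term is bounded by $|\mcS_\mu\hat K^\pi|(w-z)$, which is integrable with $\mu$-independent $L^1$ norm. This is the same mechanism as in the proof of Proposition~\ref{prop:boundomega}, cf.\ \eqref{eq:estimreloue}. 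Young's inequality then gives \eqref{eq:lessJn}.

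For \eqref{eq:lessJmu} we follow the same route, now using $K_\mu=K_\mu\mcR_\nu K_\nu$ together with the bound $\|\mcR_\nu K_\mu\|_{L^1}\lesssim1$ for $\nu\leq\mu$, as in \eqref{eq:Kmunu}. The weights, however, differ: we need $\e^{-a_n\mu^2|y|^2/8}\e^{a_n\nu^2|y|^2/8}$, which has exponent $\tfrac{a_n}{8}(\nu^2-\mu^2)|y|^2\leq0$ and is therefore bounded by one. We write
\begin{equs}
J_{\mu,n}\psi=K_\mu\,\e^{-a_n(\mu^2-\nu^2)|\bigcdot|^2/8}\mcR_\nu\big(J_{\nu,n}\psi\big)\,.
\end{equs}
The kernel is $\mcR_\nu^*$ applied to $K_\mu(\bigcdot-y)\e^{-c|y|^2}$ with $c=a_n(\mu^2-\nu^2)/8\leq a_n\mu^2/8$. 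When the derivatives in $\mcR_\nu$ fall on $K_\mu$, they produce $\nu$-scaled derivatives of $K_\mu$, which are controlled by $(\nu/\mu)^{|k|}\lesssim1$ times $\mu$-scaled integrable kernels. When they fall on the Gaussian, they produce factors $\nu^{k}\,\D^k\e^{-c|y|^2}$, bounded since $\nu^2c\lesssim\mu^2\cdot\mu^2\leq 1$ and $\nu\sqrt c\lesssim\nu\mu\leq1$.

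The main obstacle is this last bookkeeping: one must check that no negative powers of $\mu/\nu$ appear and that the time part of $\mcR_\nu$ acting on $K_\mu$ stays bounded. For the latter we use $\nu^\alpha\D_tK_{1,\mu}=(\nu/\mu)^\alpha(\cdots)$, exactly as in the proof of \eqref{eq:Kmunu} in \cite[Lemma 4.10]{Duc21}. If needed, we would instead insert the identity in the form $(1+\nu^2 q^2)$-free: we absorb the Gaussian first via \eqref{eq:lessJn}-type reasoning and then apply \eqref{eq:Kmunu} directly.
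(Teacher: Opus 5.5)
Your argument is correct and is essentially the paper's proof. In both estimates you insert the inverse $\mcR$ of the $K$-convolution, so that $J_{\mu,m}\psi$ (resp.\ $J_{\mu,n}\psi$) becomes an integral operator applied to $J_{\mu,n}\psi$ (resp.\ $J_{\nu,n}\psi$). You then control its kernel in $L^1$ through the Leibniz bookkeeping behind \eqref{eq:estimreloue} (resp.\ the bound on $\mcR_\nu K_\mu$ from \eqref{eq:Kmunu}), and conclude by Young's inequality. In the second estimate you also track the derivatives of $\mcR_\nu$ that fall on the Gaussian $\e^{-a_n(\mu^2-\nu^2)|y|^2/8}$, which are bounded since $c=a_n(\mu^2-\nu^2)/8$ gives $\nu\sqrt{c}\lesssim\nu\mu\leq1$; the paper glosses over this point by bounding only $\|\mcR_\nu K_\mu\|_{L^1}$.
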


\begin{proof}
    We start with the proof of the first statement. We have
    \begin{equs}
        \big(J_{\mu,m}\psi\big)(t,x)&=\int_0^\infty\int_{\R^d} K_\mu(t-s,x-y)\e^{-a_m\mu^2\frac{|y|^2}{8}}\psi(s,y)\dint y\dint s\\
        &=\int_0^\infty\int_{\R^d} K_\mu(t-s,x-y)\e^{-a_{n-m}\mu^2\frac{|y|^2}{8}}\e^{-a_n\mu^2\frac{|y|^2}{8}}\psi(s,y)\dint y\dint s\\
        &=\int_0^\infty\int_{\R^d} \mcR_\mu\Big( K_\mu(t-s,x-y)\e^{-a_{n-m}\mu^2\frac{|y|^2}{8}}\Big)\big(J_{\mu,n}\psi\big)(s,y)\dint y\dint s\,.
    \end{equs}
    $\mcR_\mu$ is here applied on $(s,y)$. Thus we get using Young's convolution inequality then \eqref{eq:estimreloue},
    \begin{equs}
        \|J_{\mu,m}\psi\|_{L_{t,x}^p}&\leq\sup_{t,x}\int_0^\infty\int_{\R^d} \Big|\mcR_\mu\Big( K_\mu(t-s,x-y)\e^{-a_{n-m}\mu^2\frac{|y|^2}{8}}\Big)\Big|\dint y\dint s\times\|J_{\mu,n}\psi\|_{L_{t,x}^p}\\
        &\lesssim\sup_{t,x}\int_0^\infty\int_{\R^d}\mcS_\mu \Tilde K(t-s,x-y)\big(1+\mu^2|y|^2\big)^N\e^{-a\mu^2\frac{|y|^2}{8}} \dint y\dint s\times\|J_{\mu,n}\psi\|_{L_{t,x}^p}\\
        &\lesssim\sup_{t,x}\int_0^\infty\int_{\R^d}\mcS_\mu \Tilde K(t-s,x-y) \dint y\dint s\times\|J_{\mu,n}\psi\|_{L_{t,x}^p}\\
        &=\|\Tilde K\|_{L^1_{t,x}}\|J_{\mu,n}\psi\|_{L_{t,x}^p}\lesssim\|J_{\mu,n}\psi\|_{L_{t,x}^p}\,.
    \end{equs}
    The proof of the second statement is somewhat similar. We have
    \begin{equs}
        \big(J_{\mu,n}\psi\big)(t,x)&=\int_0^\infty\int_{\R^d} K_\mu(t-s,x-y)\e^{-a_n\mu^2\frac{|y|^2}{8}}\psi(s,y)\dint y\dint s\\
        &=\int_0^\infty\int_{\R^d} \mcR_\nu K_\mu(t-s,x-y)\e^{-a_n(\mu^2-\nu^2)\frac{|y|^2}{8}}\big(J_{\nu,n}\psi\big)(s,y)\dint y\dint s\,.
    \end{equs}
    Using again Young's convolution inequality, we have
    \begin{equs}
        \|J_{\mu,n}\psi\|_{L_{t,x}^p}\leq\|\mcR_\nu K_\mu\|_{L^1_{t,x}}\|J_{\nu,n}\psi\|_{L_{t,x}^p}\lesssim\|J_{\nu,n}\psi\|_{L_{t,x}^p}\,,
    \end{equs}
    where the last bound follows from \eqref{eq:Kmunu}.
\end{proof}

\begin{lemma}
    Let $a\in[0,2)$, and $\mu\in(0,1]$. There is an integrable kernel $\Tilde K$ with exponential decay such that
    \begin{equation}\label{eq:estimreloue}
        \Big|\mcR_\mu\Big( K_\mu\e^{-a\mu^2\frac{|\bigcdot|^2}{8}}\Big)(t,x)\Big|\lesssim \mcS_\mu\Tilde K(t,x)\big(1+\mu^2|x|^2\big)^N\e^{-a\mu^2\frac{|x|^2}{8}}\,.
    \end{equation}
    There is an integrable kernel $\hat K$ with exponential decay such that
    \begin{equation}\label{eq:estimrelouex2}
        \Big|\mcR_\mu\Big( K_\mu|\bigcdot|^2\e^{-a\mu^2\frac{|\bigcdot|^2}{8}}\Big)(t,x)\Big|\lesssim \mcS_\mu\hat K(t,x)\big(1+|x|^2\big)\big(1+\mu^2|x|^2\big)^N\e^{-a\mu^2\frac{|x|^2}{8}}\,.
    \end{equation}
\end{lemma}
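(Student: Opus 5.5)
I will obtain both bounds by computing $\mcR_\mu$ explicitly on the product $K_\mu\cdot g_\mu$, where $g_\mu(x)=\e^{-a\mu^2|x|^2/8}$ in \eqref{eq:estimreloue} and $g_\mu(x)=|x|^2\e^{-a\mu^2|x|^2/8}$ in \eqref{eq:estimrelouex2}. Recall that $\mcR_\mu=(\mcP_\mu^N)^*$ with $\mcP_\mu=(1+\mu^\alpha\D_t)\otimes(1-\mu^2\Delta)$. Up to the sign of the time derivative, $\mcR_\mu$ is therefore a polynomial in $\mu^\alpha\D_t$ and $\mu\nabla_x$, with total order at most $N$ in time and $2N$ in space.

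The first step is the Leibniz rule. Since $g_\mu$ does not depend on $t$, the time derivatives fall only on $K_\mu$. Expanding $(1-\mu^2\Delta)^N(K_\mu g_\mu)$ gives a finite sum of terms of the form
\begin{equation*}
c_{\mfl,\mfl'}\,\bigl(\mu^{|\mfl|}\D_x^{\mfl}(1+\mu^\alpha\D_t)^N K_\mu\bigr)\,\bigl(\mu^{|\mfl'|}\D_x^{\mfl'}g_\mu\bigr),\qquad |\mfl|+|\mfl'|\le 2N.
\end{equation*}

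The second step bounds the factor containing $g_\mu$. For the Gaussian, each derivative brings down a factor $\mu^2 x$ times the Gaussian, or a factor $\mu^2$. Hence
\begin{equation*}
\bigl|\mu^{|\mfl'|}\D_x^{\mfl'}g_\mu(x)\bigr|\lesssim (1+\mu|x|)^{|\mfl'|}\mu^{2|\mfl'|}\e^{-a\mu^2|x|^2/8}\lesssim (1+\mu^2|x|^2)^N\e^{-a\mu^2|x|^2/8},
\end{equation*}
using $\mu\le1$ and $|\mfl'|\le2N$. For the quadratic factor in \eqref{eq:estimrelouex2}, derivatives of $|x|^2$ add terms of size $\mu|x|$ and $\mu^2$. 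These are bounded by $1+|x|^2$, which gives the prefactor $(1+|x|^2)(1+\mu^2|x|^2)^N$.

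The third step bounds the factor containing $K_\mu$. From the explicit formula $K_\mu(t,x)=\mu^{-N\alpha}\one_{t>0}\frac{t^{N-1}}{(N-1)!}\e^{-t/\mu^\alpha}Q_\mu^{*N}(x)$, the operator $(1+\mu^\alpha\D_t)^N$ acts only on the time profile, producing lower powers of $t$ (down to a delta at $t=0$). These are scaled kernels with exponential decay in $t/\mu^\alpha$. The spatial factors $\mu^{|\mfl|}\D_x^{\mfl}Q_\mu^{*N}$ are controlled by the heat kernel representation \eqref{heatkernelrep} and the estimate \eqref{eq:classiqhk}, exactly as in the proof of Lemma \ref{estimatesKmu}. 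This yields $\mcS_\mu$ applied to an integrable kernel with exponential decay, as long as $|\mfl|<2N$ so that the $s$-integral converges.

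The main obstacle is the top-order term $|\mfl|=2N$, where $Q_\mu^{*N}$ is differentiated $2N$ times and one obtains a delta mass. I would handle it in one of two ways. One option is to note that $(1-\mu^2\Delta)^N Q_\mu^{*N}=\delta$, so the top-order combination is absorbed into a delta contribution of $\mcS_\mu\Tilde K$, allowed as a measure. The other is to enlarge $N$ in the definition of $K_\mu$ relative to the order of $\mcR_\mu$ so that all derivatives stay integrable. Once this is settled, summing the finitely many terms defines $\Tilde K$ and $\hat K$.
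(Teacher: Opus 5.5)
Your proposal is correct and follows essentially the same route as the paper. It distributes $\mcR_\mu$ by the Leibniz rule, bounds $\mu^{|k|}|\D_x^k\e^{-a\mu^2|x|^2/8}|\lesssim\mu^{2|k|}(1+\mu|x|)^{|k|}\e^{-a\mu^2|x|^2/8}$, and uses that the operators landing on $K_\mu$ only lower its convolution power, up to a Dirac delta.

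Of your two fixes for the top-order term, keep the first. Make it precise by grouping all Leibniz terms with no derivative on $g_\mu$ into $g_\mu\,(1-\mu^2\Delta)^N Q_\mu^{*N}=g_\mu(0)\,\delta$, so that every remaining term has at most $2N-1$ derivatives on $Q_\mu^{*N}$. The second fix is not available: $\mcR_\mu$ is by construction the inverse of $K_\mu$ ($\mcR_\mu^*K_\mu=\delta$), and the applications of the lemma rely on exactly this identity.
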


\begin{proof}
    We only provide the main ingredients of the proof. Thanks to the Leibniz rule, the operators in $\mcR_\mu$ are distributed on the two (or three terms) of the expression. When they act on $K_\mu$, they only reduce the convolution exponent, up to creating a Dirac delta. For the other terms, we use that
    \begin{equs}
        \mu^{|k|}\Big|\D_x^k\e^{-a\mu^2\frac{|x|^2}{8}}\Big|\lesssim\mu^{2|k|}\big(1+\mu|x|\big)^{|k|}\e^{-a\mu^2\frac{|x|^2}{8}}\,,
    \end{equs}
    for every multi-index $k$.
\end{proof}

\section{Properties of the weights}

\begin{lemma}
    The following estimate holds for $z_1,z_2,z_3\in\SS$, $\mu\in(0,1]$,
    \begin{equs}\label{triangleineq}
        \langle z_1;z_3\rangle_{\mu}&\lesssim\langle z_1;z_2\rangle_{\mu}\langle z_2;z_3\rangle_{\mu}\,.
    \end{equs}
    Moreover, for $z\in\SS$, and $\nu\leq\mu$,
    \begin{equs}\label{estimweight}
    \frac{\langle z\rangle_{\mu}^{\alpha-\beta}}{\langle z\rangle_{\nu}^{\alpha-\beta}}\big(1-\langle z\rangle_{\mu}^{-2}\big)&\lesssim\mu^{\beta-\alpha}\nu^{\alpha-\beta}\,.
    \end{equs}
\end{lemma}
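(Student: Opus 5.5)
The two estimates are elementary and I would prove them separately, working throughout with the scalar quantity $r=|z|_\mfs$. This is legitimate because $\langle z\rangle_\mu$ depends on $z$ only through $|z|_\mfs$.

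\emph{Proof of \eqref{triangleineq}.}
\begin{itemize}
\item The first step is a quasi-triangle inequality for the parabolic quasi-norm: $|z_1-z_3|_\mfs\leq C(|z_1-z_2|_\mfs+|z_2-z_3|_\mfs)$ with $C$ depending only on $\alpha$.
\item The spatial part $\sum_i|z_i|$ is an honest norm, so it needs no work.
\item For the time part, when $\alpha\geq1$ the map $s\mapsto|s|^{1/\alpha}$ is subadditive. When $\alpha<1$ we use instead $|a+b|^{1/\alpha}\leq2^{1/\alpha-1}(|a|^{1/\alpha}+|b|^{1/\alpha})$, which follows from convexity of $t\mapsto t^{1/\alpha}$.
\item Write $a=|z_1-z_2|_\mfs/\mu$ and $b=|z_2-z_3|_\mfs/\mu$. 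Then
\[
1+C^2(a+b)^2\leq 1+2C^2(a^2+b^2)\leq 2C^2(1+a^2)(1+b^2),
\]
where the last step simply drops the nonnegative cross term $a^2b^2$.
\item Taking square roots gives the claim, uniformly in $\mu$.
\end{itemize}

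\emph{Proof of \eqref{estimweight}.}
\begin{itemize}
\item Introduce $x=r/\mu$, $\lambda=\nu/\mu\in(0,1]$ and $\gamma=(\alpha-\beta)/2$. Since $0<\beta\leq\alpha/2$ and $\alpha\leq2$, we have $\gamma\in(0,1]$.
\item In these variables the left-hand side becomes
\[
f(x)=\Big(\frac{\lambda^2(1+x^2)}{\lambda^2+x^2}\Big)^{\gamma}\frac{x^2}{1+x^2},
\]
and the goal is $f\lesssim\lambda^{2\gamma}=\mu^{\beta-\alpha}\nu^{\alpha-\beta}$.
\item \emph{Case $x\leq\lambda$.} The bracket is at most $1$, so $f\leq x^2\leq\lambda^2\leq\lambda^{2\gamma}$. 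The last inequality uses $2\gamma\leq2$ and $\lambda\leq1$.
\item \emph{Case $x\geq\lambda$.} The bracket is bounded by $\lambda^2(1+x^2)/x^2$. Hence
\[
f\leq\lambda^{2\gamma}x^{2-2\gamma}(1+x^2)^{\gamma-1}=\lambda^{2\gamma}\Big(\frac{x^2}{1+x^2}\Big)^{1-\gamma}\leq\lambda^{2\gamma}.
\]
\end{itemize}

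The only point requiring care is the constraint $\gamma\leq1$, i.e.\ $\alpha-\beta\leq2$. It is used in both cases: in the first to compare $\lambda^2$ with $\lambda^{2\gamma}$, and in the second to ensure the exponent $1-\gamma$ is nonnegative. The constraint holds under Assumption~\ref{condalphabeta}, because $\alpha\leq2$ and $\beta>0$. There is no genuine obstacle beyond this case split.
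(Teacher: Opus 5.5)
Your proof is correct. The paper does not actually prove either estimate. It treats \eqref{triangleineq} as standard, and for \eqref{estimweight} it only cites Lemma~57 of \cite{DGR23}.

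Your proposal replaces that citation with a self-contained computation in two steps:
\begin{itemize}
\item the quasi-triangle inequality for $|\cdot|_\mfs$, with constant $\max(1,2^{1/\alpha-1})$ (on the torus the spatial part is the torus distance rather than a norm, but it still satisfies the triangle inequality);
\item the reduction of \eqref{estimweight} to an explicit one-variable function $f(x)$, with $x=|z|_\mfs/\mu$ and $\lambda=\nu/\mu$.
\end{itemize}

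What your route buys over the citation is that the hypothesis actually needed becomes visible. The estimate holds for any weight exponent in $(0,2]$. It genuinely fails for exponents above $2$: at $x=\lambda\to0$ one has $f\approx 2^{-\gamma}\lambda^2\gg\lambda^{2\gamma}$ when $\gamma>1$. This matters because the paper remarks that $\alpha-\beta$ could be replaced by another exponent $\varsigma<\alpha$; your argument shows this is harmless precisely because $\alpha\leq2$.

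One small simplification: the case split is unnecessary. Since $\lambda^2+x^2\geq x^2$ always, your second-case bound $f\leq\lambda^{2\gamma}\bigl(x^2/(1+x^2)\bigr)^{1-\gamma}\leq\lambda^{2\gamma}$ holds for every $x>0$ as soon as $\gamma\leq1$.
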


\begin{proof}
    For the second statement, see Lemma 57 in \cite{DGR23}.
\end{proof}

\begin{lemma}\label{ITexp}
    We have, for $\ell>0$ and $x_1,x_2\in\R^d$,
    \begin{equation}
        \mcb e_{-\ell}(x_1)\mcb e_{\ell}(x_2)\lesssim \mcb e_{\ell}(x_1;x_2)\,.
    \end{equation}
\end{lemma}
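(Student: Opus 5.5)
The claim is that $\mcb e_{-\ell}(x_1)\mcb e_{\ell}(x_2)\lesssim\mcb e_\ell(x_1-x_2)$ for $\ell>0$. Since $\mcb e_\ell(x)=\exp(\ell\langle x\rangle^{1/2})$, after taking logarithms it suffices to prove the subadditivity-type inequality
\begin{equs}
\langle x_2\rangle^{1/2}\leq \langle x_1\rangle^{1/2}+\langle x_1-x_2\rangle^{1/2}\,,
\end{equs}
because then $\ell(\langle x_2\rangle^{1/2}-\langle x_1\rangle^{1/2})\leq \ell\langle x_1-x_2\rangle^{1/2}$ for $\ell>0$. Exponentiating gives the statement, in fact with implicit constant $1$.

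To prove the displayed inequality, I would use two elementary facts. First, the Japanese bracket satisfies $\langle a+b\rangle\leq\langle a\rangle+\langle b\rangle$. Indeed, $\langle\bigcdot\rangle$ is the Euclidean norm of $(1,a)\in\R^{d+1}$, and $(1,a+b)$ differs from $(1,a)+(0,b)$ only in the first coordinate. So $\langle a+b\rangle\leq |(1,a)|+|(0,b)|=\langle a\rangle+|b|\leq\langle a\rangle+\langle b\rangle$. Applied with $a=x_1$ and $b=x_2-x_1$, this gives $\langle x_2\rangle\leq\langle x_1\rangle+\langle x_1-x_2\rangle$. Second, the map $s\mapsto s^{1/2}$ is subadditive on $[0,\infty)$, since $(\sqrt u+\sqrt v)^2\geq u+v$. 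Combining the two facts yields the claim.

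There is no real obstacle here. The only point needing care is the sign of $\ell$. The argument uses $\ell>0$ to multiply the inequality without flipping it, and this matches the hypothesis. If $\ell<0$ were allowed, the roles of $x_1$ and $x_2$ would swap and the same argument would apply symmetrically.
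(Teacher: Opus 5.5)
Your proof is correct, and it gives the bound with implicit constant $1$. The paper states this lemma without proof. Your argument, the triangle inequality for $\langle\bigcdot\rangle$ combined with monotonicity and subadditivity of $s\mapsto s^{1/2}$, is the elementary argument the paper leaves implicit. A cosmetic point: $(1,a)+(0,b)$ is exactly equal to $(1,a+b)$, so $\langle a+b\rangle\leq\langle a\rangle+|b|$ is just the Euclidean triangle inequality in $\R^{d+1}$.

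Your closing remark about $\ell<0$ is not right as stated. For $\ell<0$ the inequality with $\mcb e_\ell(x_1;x_2)$ on the right-hand side is false. Take $x_2=0$: the left side equals $\exp(|\ell|(\langle x_1\rangle^{1/2}-1))$, which grows as $|x_1|\to\infty$, while the right side $\exp(-|\ell|\langle x_1\rangle^{1/2})$ decays. What swapping the roles of $x_1$ and $x_2$ in your argument actually gives, using that $\langle\bigcdot\rangle$ is even, is $\mcb e_{-\ell}(x_1)\mcb e_{\ell}(x_2)\leq\mcb e_{|\ell|}(x_1;x_2)$ for every $\ell\in\R$. That is the form one needs wherever the weight appears with $\ell$ of arbitrary sign.
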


\begin{lemma}\label{HaiLabestim}
    The following holds for $s<t$ and $\ell\in\R$.
    \begin{equation}
        \sup_{x\in\R^d}\frac{\langle x\rangle^\kappa\mcb e_{\ell+s}(x)}{\mcb e_{\ell+t}(x)}\lesssim (t-s)^{-\kappa/2}\,.
    \end{equation}
\end{lemma}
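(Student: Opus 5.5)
The plan is to reduce the claim to a one-variable optimisation. By definition \eqref{expweight},
\begin{equs}
\frac{\langle x\rangle^\kappa\mcb e_{\ell+s}(x)}{\mcb e_{\ell+t}(x)}=\langle x\rangle^{\kappa}\exp\big(-(t-s)\langle x\rangle^{1/2}\big)\,.
\end{equs}
The parameter $\ell$ cancels, so the bound is automatically uniform in $\ell\in\R$. Set $\delta\eqdef t-s>0$ and $r\eqdef\langle x\rangle^{1/2}\in[1,\infty)$. The quantity becomes $f(r)=r^{2\kappa}\e^{-\delta r}$, and the claim amounts to bounding $\sup_{r\geq1}f(r)$ by a power of $\delta$. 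Only the regime $\delta\in(0,1]$ matters, since in the paper $s<t\leq T\leq1$.

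The key step is the elementary calculus bound. We have $f'(r)=0$ exactly at $r_\ast=2\kappa/\delta$, so that
\begin{equs}
\sup_{r\geq1}f(r)\leq f(r_\ast)=\Big(\frac{2\kappa}{\e\,\delta}\Big)^{2\kappa}\,.
\end{equs}
This is also the supremum whenever $r_\ast\geq1$. The same value can be obtained without calculus from $y^{a}\e^{-y}\leq(a/\e)^a$, applied with $y=\delta r$ and $a=2\kappa$. This gives a bound $\lesssim_\kappa\delta^{-\lambda}$ with $\lambda=2\kappa$, and I expect this step to go through cleanly.

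The main obstacle is matching the exponent $-\kappa/2$ as worded in the statement, and here I have to flag a genuine problem. With the weight $\mcb e_\ell=\exp(\ell\langle x\rangle^{1/2})$, the computation above is sharp: at $r=1/\delta$ one gets $f\geq\e^{-1}\delta^{-2\kappa}$, which is not $O(\delta^{-\kappa/2})$ as $\delta\to0$. So the statement cannot be proved literally as written with the $\kappa$ on the left and \eqref{expweight} as defined. I would therefore first double-check the convention in \eqref{expweight}. If the exponent in the weight were $\langle x\rangle^{2}$ rather than $\langle x\rangle^{1/2}$, the identical argument with $r=\langle x\rangle^2$ and $f(r)=r^{\kappa/2}\e^{-\delta r}$ gives exactly $(\kappa/(2\e\delta))^{\kappa/2}\lesssim\delta^{-\kappa/2}$. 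Under the paper's actual definition, the honest output of the proof is $(t-s)^{-2\kappa}$.

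The downstream uses in Lemma~\ref{Lemmafixedpoint} and Lemma~\ref{lemma:constructsolinf} need only a small power $(t-s)^{-c\kappa}$, which is integrable against the scale integrals since $\kappa$ is arbitrary. So I would record the sharp form $(t-s)^{-2\kappa}$ and note that this suffices wherever the lemma is invoked.
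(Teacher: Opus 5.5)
Your computation is correct and is the same one-variable optimisation the paper uses. Your diagnosis is also right. The paper's proof asserts the bound $\big(\frac{\kappa}{2(t-s)}\big)^{\kappa/2}\e^{-\kappa/2}$, which equals $\sup_{u>0}u^{\kappa/2}\e^{-(t-s)u}$. That is the bound one gets with $\langle x\rangle^{2}$ in the exponent of the weight, i.e.\ your alternative convention. With \eqref{expweight} as defined, the sharp value is $\big(\frac{2\kappa}{\e\,(t-s)}\big)^{2\kappa}$. For instance, at $\kappa=1$ and $t-s=10^{-2}$ the supremum is about $5.4\cdot10^{3}$, whereas the paper's displayed bound is about $4.3$. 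So the lemma as stated fails as $t-s\to0$.

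Recording $(t-s)^{-2\kappa}$ is the right fix. The loss is only a fixed multiple of the arbitrarily small parameter $\kappa$, and the downstream estimates are in fact written with $(t-s)^{-\kappa}$ rather than $(t-s)^{-\kappa/2}$. The loss is therefore absorbed in the applications after shrinking $\kappa$.
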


\begin{proof}
    This is a simple consequence of the inequality
    \begin{equs}
        \sup_{x\in\R^d}\frac{\langle x\rangle^\kappa\mcb e_{\ell+s}(x)}{\mcb e_{\ell+t}(x)}\leq \left(\frac{\kappa}{2(t-s)}\right)^\frac{\kappa}{2}\e^{-\kappa/2}\,.
    \end{equs}
\end{proof}

\section{Spectral estimates}

\subsection{In finite volume}
We give here some spectral properties of the fractional heat kernel in finite volume and their applications to Weyl law of some perturbations of the fractional laplacian.

\begin{lemma}\label{Lem:LooseAsymptoticFractionalHeatTrace}
    When $t$ goes to $0$,
    $$\operatorname{Tr}\left(\e^{-t|\nabla|^\alpha}\right)\sim \frac{2\pi^{d/2}\Gamma(d/\alpha)}{\alpha\Gamma(d/2)} t^{-\frac{d}{\alpha}}.$$
\end{lemma}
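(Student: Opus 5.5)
We compute the trace explicitly on $\T^d$ via Fourier series and compare the resulting lattice sum with an integral. Since $|\nabla|^\alpha$ acts diagonally on the exponentials $\e^{\i k\cdot x}$, $k\in\Z^d$ (with the normalisation of $\T^d$ for which the eigenvalues are $|k|^\alpha$, the one used in \eqref{Eq:WeylLawAndersonTheorem}), we have
\begin{equation*}
\operatorname{Tr}\left(\e^{-t|\nabla|^\alpha}\right)=\sum_{k\in\Z^d}\e^{-t|k|^\alpha}\,.
\end{equation*}
The first step is to compare this sum with $\int_{\R^d}\e^{-t|\xi|^\alpha}\dint\xi$. After the change of variables $\xi\mapsto t^{1/\alpha}\xi$ the sum becomes a Riemann sum $t^{-d/\alpha}\cdot t^{d/\alpha}\sum_{k}f(t^{1/\alpha}k)$ with $f(\xi)=\e^{-|\xi|^\alpha}$, and mesh $h=t^{1/\alpha}\to0$. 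Since $f$ is continuous, integrable, radially non-increasing and decays fast, the standard Riemann-sum argument gives $h^d\sum_k f(hk)\to\int f$. Concretely, I would bound $f(hk)$ from above and below by the averages of $f$ over cubes of side $h$ shifted towards and away from the origin. The boundary discrepancy is controlled by $h^d$ times the number of lattice points near the axes, which is $o(1)$ because $f$ is bounded and integrable.

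The second step is to evaluate the integral in polar coordinates:
\begin{equation*}
\int_{\R^d}\e^{-|\xi|^\alpha}\dint\xi=\frac{2\pi^{d/2}}{\Gamma(d/2)}\int_0^\infty r^{d-1}\e^{-r^\alpha}\dint r=\frac{2\pi^{d/2}}{\Gamma(d/2)}\cdot\frac{1}{\alpha}\Gamma(d/\alpha)\,.
\end{equation*}
For the radial integral we substitute $s=r^\alpha$. Combining the two steps gives the claimed equivalent $\frac{2\pi^{d/2}\Gamma(d/\alpha)}{\alpha\Gamma(d/2)}t^{-d/\alpha}$.

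The only mildly delicate point is the justification of the Riemann-sum limit uniformly near the origin, where the massless symbol is not smooth. Monotonicity of $f$ in $|\xi|$ makes the cube-comparison argument work directly, and the term $k=0$ contributes $1=o(t^{-d/\alpha})$, so it can simply be discarded. If the torus were instead normalised with eigenvalues $|2\pi k|^\alpha$, the same argument applies with a factor $(2\pi)^{-d}$, so the constant stated corresponds to the normalisation above.
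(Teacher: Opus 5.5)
Your argument is correct, but it takes a different route from the paper's. The paper uses the Poisson summation formula, $\sum_{k\in\Z^d}\e^{-t|k|^\alpha}=t^{-d/\alpha}\sum_{k\in\Z^d}F(2\pi k/t^{1/\alpha})$, where $F$ is the Fourier transform of $\e^{-|x|^\alpha}$. The $k=0$ term is $F(0)t^{-d/\alpha}$, and $F(0)=\int_{\R^d}\e^{-|x|^\alpha}\dint x$ is exactly the integral you evaluate in polar coordinates. The terms with $k\neq0$ are controlled by the decay $|F(\eta)|\lesssim|\eta|^{-d-\alpha}$ of the symmetric $\alpha$-stable density, which makes them sum to $O(t)$. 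The paper therefore gets a quantitative expansion $F(0)t^{-d/\alpha}+O(t)$, at the price of a nontrivial (though classical) tail bound for stable densities. That is also where the non-smoothness of $|\xi|^\alpha$ at the origin actually matters, since it limits $F$ to polynomial decay. Your Riemann-sum comparison is more elementary: it uses only continuity, integrability and radial monotonicity of $\e^{-|\xi|^\alpha}$, and needs no Fourier analysis. The lack of smoothness at $0$, which you flag as the delicate point, plays no role in your route. Your method only gives an $o(t^{-d/\alpha})$ error, but that is all the Tauberian step in Corollary~\ref{Cor:WeylLawEquivTrace} needs. One small correction to your phrasing: the boundary term is not ``$h^d$ times the number of lattice points near the axes'', since there are infinitely many on the coordinate hyperplanes. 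It is the weighted sum $h^d\sum_{k:\,\exists i,\,k_i=0}f(hk)$. This is at most $d\,h$ times a $(d-1)$-dimensional Riemann sum of the same radial profile, which is bounded uniformly in $h\in(0,1]$ by the same cube argument (induction on dimension), so the boundary term is $O(h)$.
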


\begin{proof}
    The result follows from Poisson formula
    $$\operatorname{Tr}\left(\e^{-t|\nabla|^\alpha}\right) = \sum_{k\in\Z^d}\e^{-t|k|^\alpha} = t^{-\frac{d}{\alpha}}\sum_{k\in\Z^d}F(2\pi k/t^{1/\alpha})\,,$$
    with $F = \mathcal{F}(\e^{-|x|^\alpha})$, verifying $F(0) = \frac{2\pi^{d/2}\Gamma(d/\alpha)}{\alpha\Gamma(d/2)}$ and $F(\eta)\leqslant \frac{2\pi^{d/2}\Gamma(d/\alpha)}{\alpha\Gamma(d/2)} |\eta|^{-d-\alpha}$.
\end{proof}

Then, Karamata's tauberian theorem imply the Weyl law for the fractional Laplacian and their perturbations.

\begin{corollary}\label{Cor:WeylLawEquivTrace}
    Let $A$ be a selfadjoint operator on $L_x^2(\T^d)$ with discrete spectrum $(\lambda_n)_{n\in\N}$. Assume
    $$\operatorname{Tr}\left(\e^{-t|\nabla|^\alpha}\right)\sim \operatorname{Tr}\left(\e^{-tA}\right)$$
    as $t$ goes to 0. Then, 
    $$\mathcal{N}_A(\lambda) = \#\{n\in\N,\; \lambda_n\leqslant\lambda\}\sim \frac{\pi^{\frac{d}{2}}\lambda^{\frac{d}{\alpha}}}{\Gamma\left(\frac{d}{2}+1\right)}$$
    as $\lambda$ goes to $\infty$.
\end{corollary}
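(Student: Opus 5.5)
The plan is to deduce the counting asymptotics from the heat trace asymptotics by Karamata's Tauberian theorem, applied to the spectral counting measure of $A$. Write $\mathcal{N}_A(\lambda)=\#\{n,\ \lambda_n\leqslant\lambda\}$ and regard it as the distribution function of the positive measure $\mathrm{d}\mathcal{N}_A=\sum_n\delta_{\lambda_n}$. Since $A$ is self-adjoint with discrete spectrum and $\e^{-tA}$ is trace class for small $t>0$, we have
\begin{equation*}
\operatorname{Tr}\big(\e^{-tA}\big)=\sum_n \e^{-t\lambda_n}=\int_{\R}\e^{-t\lambda}\,\mathrm{d}\mathcal{N}_A(\lambda).
\end{equation*}
This identifies the heat trace as the Laplace--Stieltjes transform of $\mathcal{N}_A$. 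The trace class property follows from the assumed equivalence, which forces the trace to be finite. By Lemma~\ref{Lem:LooseAsymptoticFractionalHeatTrace} and the hypothesis, this transform behaves like $C_{d,\alpha}t^{-d/\alpha}$ as $t\to0$, where $C_{d,\alpha}=\frac{2\pi^{d/2}\Gamma(d/\alpha)}{\alpha\Gamma(d/2)}$.

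The one technical point before invoking Karamata is that the spectrum need not be nonnegative, whereas the theorem is stated for measures on $[0,\infty)$. Since $\lambda_n\to\infty$, there is a shift $c\geqslant0$ with $\lambda_n+c\geqslant0$ for all $n$. Replace $A$ by $A+c$. Then $\operatorname{Tr}(\e^{-t(A+c)})=\e^{-ct}\operatorname{Tr}(\e^{-tA})\sim C_{d,\alpha}t^{-d/\alpha}$, because $\e^{-ct}\to1$. Moreover $\mathcal{N}_{A+c}(\lambda)=\mathcal{N}_A(\lambda-c)$, and once we know the counting function is regularly varying of index $d/\alpha$, we get $\mathcal{N}_A(\lambda-c)\sim\mathcal{N}_A(\lambda)$. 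So it suffices to treat the nonnegative case.

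Now apply Karamata's Tauberian theorem. Suppose $\mu$ is a nonnegative measure on $[0,\infty)$ whose Laplace transform satisfies $\int\e^{-t\lambda}\mathrm{d}\mu\sim Ct^{-\rho}$ as $t\to0^+$, with $\rho\geqslant0$. Then $\mu([0,\lambda])\sim \frac{C}{\Gamma(\rho+1)}\lambda^{\rho}$ as $\lambda\to\infty$. With $\rho=d/\alpha$ this gives
\begin{equation*}
\mathcal{N}_A(\lambda)\sim \frac{2\pi^{d/2}\Gamma(d/\alpha)}{\alpha\Gamma(d/2)\Gamma(d/\alpha+1)}\lambda^{d/\alpha}.
\end{equation*}

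It remains to simplify the constant. Use $\Gamma(d/\alpha+1)=\frac{d}{\alpha}\Gamma(d/\alpha)$ and $\frac{d}{2}\Gamma(d/2)=\Gamma(d/2+1)$. The constant becomes $\frac{2\pi^{d/2}}{d\,\Gamma(d/2)}=\frac{\pi^{d/2}}{\Gamma(d/2+1)}$, which is the claimed value.

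The only delicate step is the reduction to nonnegative spectrum through the shift. The rest is the direct application of the Tauberian theorem followed by the Gamma-function identities.
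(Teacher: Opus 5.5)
Your proposal is correct and follows the paper's route. The paper simply invokes Karamata's Tauberian theorem together with the heat-trace asymptotics of Lemma~\ref{Lem:LooseAsymptoticFractionalHeatTrace}; you supply the details it leaves implicit, namely that finiteness of the trace forces $\lambda_n\to+\infty$, the harmless shift to nonnegative spectrum, and the Gamma-function simplification of the constant to $\pi^{d/2}/\Gamma(d/2+1)$.
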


\subsection{The harmonic oscillator}
We give here some spectral properties of the Mehler kernel and their applications to Weyl law of some perturbations of the harmonic oscillator.

\begin{lemma}\label{Lem:LooseAsymptoticMehlerTrace}
    When $t$ goes to $0$,
    $$\operatorname{Tr}\left(\e^{t(\Delta-|x|^2)}\right)\sim (2t)^{-d}.$$
\end{lemma}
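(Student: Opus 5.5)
The trace of $\e^{t(\Delta-|x|^2)}$ can be computed exactly, so the natural route is to evaluate it on the Hermite eigenbasis rather than integrating the Mehler kernel \eqref{eq:Mehlerkernel} on the diagonal.

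The harmonic oscillator $\mathrm H=-\Delta+|x|^2$ on $\R^d$ is diagonalised by tensor products of Hermite functions. Its eigenvalues are $2|k|+d$ for $k\in\N^d$, where $|k|=k_1+\dots+k_d$. Since the operator splits as a sum of one-dimensional oscillators, the heat trace factorises. The one-dimensional trace is a geometric series:
\begin{equs}
\operatorname{Tr}\big(\e^{-t\mathrm H}\big)=\Big(\sum_{k\in\N}\e^{-t(2k+1)}\Big)^d=\Big(\frac{\e^{-t}}{1-\e^{-2t}}\Big)^d=\big(2\sinh t\big)^{-d}\,.
\end{equs}
Since $2\sinh t\sim 2t$ as $t\to0$, we get $\operatorname{Tr}(\e^{-t\mathrm H})\sim(2t)^{-d}$, which is the claim.

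As a consistency check I would also integrate the Mehler kernel on the diagonal. Setting $y=x$ in \eqref{eq:Mehlerkernel} kills the first exponential and leaves $\exp(-\tanh(t)|x|^2)$. Integrating this Gaussian gives
\begin{equs}
(2\pi\sinh 2t)^{-d/2}\Big(\frac{\pi}{\tanh t}\Big)^{d/2}=\big(2\sinh^2 t\cdot 2\big)^{-d/2}\cdot\ldots\,.
\end{equs}
After simplifying with $\sinh 2t=2\sinh t\cosh t$, this should reproduce $(2\sinh t)^{-d}$.

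The only point needing care is the normalisation convention of the Mehler kernel in the paper. The spectral computation is convention-free, so I would rely on it and treat the kernel computation only as a check. There is no genuine obstacle here.
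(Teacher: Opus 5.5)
Your proof is correct, and it reaches the same closed form as the paper by a different route. The paper does exactly what you treat as a consistency check. It writes $\operatorname{Tr}(\e^{-t\mathrm H})=\int_{\R^d}M(t,x,x)\dint x$ and integrates the Gaussian $\exp(-\tanh(t)|x|^2)$ against the prefactor $(2\pi\sinh 2t)^{-d/2}$. This gives $(2\tanh(t)\sinh(2t))^{-d/2}$, which equals your $(2\sinh t)^{-d}$ because $2\tanh t\,\sinh 2t=4\sinh^2 t$. Your displayed check is therefore already complete at $(4\sinh^2 t)^{-d/2}$, and the trailing factor there should be dropped.

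Your spectral computation has two advantages. It does not depend on the normalisation of the Mehler kernel. It also avoids the identity $\operatorname{Tr}A=\int K_A(x,x)\dint x$, which the kernel route should justify, for instance via $\operatorname{Tr}(\e^{-t\mathrm H})=\|\e^{-t\mathrm H/2}\|_{\mathrm{HS}}^2$ together with the semigroup property and symmetry of $M$. In exchange, your route relies on completeness of the Hermite basis.

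The paper's kernel route matches how the lemma is used afterwards. The Weyl law for the perturbed operator is obtained by writing $\operatorname{Tr}(S(t))=\int S(t,x,x)\dint x$ and comparing it with $\int M(t,x,x)\dint x$. The diagonal representation of the unperturbed trace is therefore the one that plugs directly into that argument.
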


\begin{proof}
    The result follows from the direct computation
    $$\operatorname{Tr}\left(\e^{t(\Delta-|x|^2)}\right) = \int_{\R^d} M(t,x,x)\dint x = \left(2\tanh(t)\sinh(2t)\right)^{-\frac{d}{2}}\sim (2t)^{-d}.$$
\end{proof}

Then, Karamata's tauberian theorem imply the Weyl law for the harmonic oscillator and its perturbations.

\begin{corollary}[Weyl law for the harmonic oscillator]\label{Cor:WeylLawEquivTraceOH}
    Let $A$ be a self-adjoint operator on $L_x^2(\R^d)$ with discrete spectrum $(\lambda_n)_{n\in\N}$. Assume
    $$\operatorname{Tr}\left(\e^{t(\Delta-|x|^2)}\right)\sim \operatorname{Tr}\left(\e^{-tA}\right)\,,$$
    as $t$ goes to $0$. Then, 
    $$\mathcal{N}_A(\lambda) = \#\{n\in\N,\; \lambda_n\leqslant\lambda\}\sim \frac{\lambda^{d}}{2^{d}\Gamma\left(d+1\right)}\,,$$
    as $\lambda$ goes to $\infty$.
\end{corollary}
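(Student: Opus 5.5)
The plan is to reduce the statement to Karamata's Tauberian theorem applied to the counting measure $\mathrm{d}\mathcal{N}_A$, exactly as in Corollary~\ref{Cor:WeylLawEquivTrace}. Since $A$ is self-adjoint with discrete spectrum and $\operatorname{Tr}(\e^{-tA})$ is assumed finite for small $t$ (it is asymptotic to a finite quantity), we can write
\begin{equation*}
\operatorname{Tr}\big(\e^{-tA}\big)=\sum_{n\in\N}\e^{-t\lambda_n}=\int_{\R}\e^{-t\lambda}\,\mathrm{d}\mathcal{N}_A(\lambda)\,,
\end{equation*}
so the trace is the Laplace--Stieltjes transform of the non-decreasing function $\mathcal{N}_A$. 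The first step is therefore to combine the hypothesis with Lemma~\ref{Lem:LooseAsymptoticMehlerTrace}, which gives $\operatorname{Tr}(\e^{t(\Delta-|x|^2)})\sim(2t)^{-d}$. This yields $\int\e^{-t\lambda}\,\mathrm{d}\mathcal{N}_A(\lambda)\sim 2^{-d}t^{-d}$ as $t\to0^+$.

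The spectrum is bounded below, since $\lambda_0$ is finite and the sequence is non-decreasing. Replacing $A$ by $A-\lambda_0$ multiplies the trace by $\e^{t\lambda_0}\to1$ and shifts $\mathcal{N}_A$, so neither asymptotic changes. We may thus assume the measure is supported on $[0,\infty)$. Karamata's theorem, with index $\rho=d>0$, then states that $\int_0^\infty\e^{-t\lambda}\,\mathrm{d}\mathcal{N}(\lambda)\sim C t^{-\rho}$ as $t\to0$ is equivalent to $\mathcal{N}(\lambda)\sim C\lambda^{\rho}/\Gamma(\rho+1)$ as $\lambda\to\infty$. Taking $C=2^{-d}$ gives
\begin{equation*}
\mathcal{N}_A(\lambda)\sim\frac{\lambda^d}{2^d\Gamma(d+1)}\,.
\end{equation*}

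No step is a genuine obstacle. The one point to check is the legitimacy of applying the Tauberian theorem: we need a positive, non-decreasing measure with finite Laplace transform for all small $t>0$. Positivity and monotonicity are automatic for a counting function. Finiteness for small $t$ follows from the hypothesis: the Mehler trace is finite for every $t>0$, so the equivalence forces $\operatorname{Tr}(\e^{-tA})<\infty$ for $t$ small. Everything else is a direct citation.
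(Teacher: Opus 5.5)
Your proposal is correct and follows the paper's proof exactly. The paper likewise combines the small-time asymptotic $\operatorname{Tr}(\e^{t(\Delta-|x|^2)})\sim(2t)^{-d}$ from Lemma~\ref{Lem:LooseAsymptoticMehlerTrace} with Karamata's Tauberian theorem, applied to the counting measure of $A$ with index $d$ and constant $2^{-d}$; your checks on the lower bound of the spectrum, the harmless shift by $\lambda_0$, and finiteness of the trace for small $t$ fill in details the paper leaves implicit.
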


\section{Functional spaces associated with the harmonic oscillator}

The Sobolev and Besov spaces of $\mathrm H=-\Delta+|x|^2$ have been studied by several authors, see for example \cite{BT06} for Sobolev spaces and \cite{BDY12,FI25} for Besov spaces. The following result characterises these spaces as interpolation spaces.

\begin{proposition}\label{Prop:HermiteSpacesInterpolation}
    Let $1<p<\infty$, $1\leqslant q<\infty$ and $\eta\in\R$. Then, the following holds.
    \begin{enumerate}
        \item For any $\eta_0\neq\eta_1\in\R$ and $0<\theta<1$, such that $\eta=(1-\theta)\eta_0+\theta\eta_1$,
        $$\msH^{\eta,p} = \left[\msH^{\eta_0,p},\msH^{\eta_1,p}\right]_\theta\,,$$
        with equivalent norms.
        \item For any $1<p_0\neq p_1<\infty$ and $0<\theta<1$, such that $\frac{1}{p}=\frac{1-\theta}{p_0}+\frac{\theta}{p_1}$,
        $$\msH^{\eta,p} = \left[\msH^{\eta,p_0},\msH^{\eta,p_1}\right]_\theta\,,$$
        with equivalent norms.
        \item If $0<\eta<1$,
        $$\msB^{\eta}_{p,q} = \left(L^p_x,\msH^{1,p}\right)_{\eta,q}= \left(L^p_x,\msH^{2,p}\right)_{\frac{\eta}{2},q}\,,$$
        with equivalent norms.
        \item If $\eta<0$, for any $\eta_0\neq\eta_1<0$ and $0<\theta<1$, such that $\eta=(1-\theta)\eta_0+\theta\eta_1$,
        $$\msB^{\eta}_{p,q} = \left(\msH^{\eta_0,p},\msH^{\eta_1,p}\right)_{\theta,q}\,,$$
        with equivalent norms.
    \end{enumerate}
\end{proposition}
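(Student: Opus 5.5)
The plan is to transfer the classical interpolation theory of Sobolev and Besov spaces to the spectral calculus of $\mathrm H$. The key input is that $\mathrm H$ is a positive self-adjoint operator on $L^2_x$ with spectrum contained in $[d,\infty)$, hence invertible, and that its heat semigroup (the Mehler kernel) is dominated pointwise by the Euclidean heat kernel, $|M(t,x,y)|\leqslant G(t,x-y)$. Consequently $\e^{-t\mathrm H}$ is a positive contraction semigroup on every $L^p_x$. I would then invoke the Cowling (or Stein) theorem for symmetric Markovian semigroups to obtain that $\mathrm H$ has bounded imaginary powers on $L^p_x$ for $1<p<\infty$, with $\|\mathrm H^{\i t}\|_{\mathcal L(L^p_x)}\lesssim \e^{c|t|}$. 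Alternatively, one can cite the Hermite spectral multiplier theorems (Thangavelu), which even give a bounded $H^\infty$ calculus.

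\emph{Item 1.} Since $\mathrm H$ is BIP and invertible, the theorem of Triebel/Seeley (cf. \cite[Ch.~6]{Ha06}) gives $[\mathrm D(\mathrm H^{a}),\mathrm D(\mathrm H^{b})]_\theta=\mathrm D(\mathrm H^{(1-\theta)a+\theta b})$ for $a,b\geqslant0$. Writing $\msH^{\eta,p}=\mathrm D(\mathrm H^{\eta/2})$, with the norm $\|\mathrm H^{\eta/2}\cdot\|_{L^p_x}$, this settles nonnegative exponents. For negative exponents, note that $\mathrm H^{s}$ is an isometric isomorphism $\msH^{\eta,p}\to\msH^{\eta-2s,p}$ by construction. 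Applying this isomorphism shifts any pair $(\eta_0,\eta_1)$ to a nonnegative pair, and since interpolation commutes with isomorphisms, the general case follows.

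\emph{Item 2.} Again apply $\mathrm H^{\eta/2}$, which is an isometric isomorphism $\msH^{\eta,p_i}\to L^{p_i}_x$ simultaneously for both endpoints, being the same operator on the compatible couple restricted to $\mcS(\R^d)$. This reduces the claim to Riesz--Thorin: $[L^{p_0}_x,L^{p_1}_x]_\theta=L^p_x$. The only care needed is the compatibility of $\mathrm H^{\eta/2}$ across different $p$, which follows from consistency of the semigroup on $L^{p_0}\cap L^{p_1}$.

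\emph{Items 3 and 4.} I would identify the Besov norms with the standard semigroup or resolvent characterisation of real interpolation spaces. For a sectorial invertible $A=\mathrm H$, one has $(L^p,\mathrm D(A))_{\vartheta,q}$ normed by $\|\lambda^{\vartheta}\|A(\lambda+A)^{-1}f\|_{L^p}\|_{L^q(\dint\lambda/\lambda)}$ plus $\|f\|_{L^p}$ (Komatsu; \cite[Ch.~6]{Ha06}). With $\lambda=\mu^{-2}$, we get $A(\lambda+A)^{-1}=\Id-\msQ_\mu$, so the defining norm of $\msB^\eta_{p,q}$ for $0<\eta<1$ is exactly that of $(L^p_x,\msH^{2,p})_{\eta/2,q}$. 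The identity with $(L^p,\msH^{1,p})_{\eta,q}$ follows by reiteration, using item 1 to write $\msH^{1,p}=[L^p,\msH^{2,p}]_{1/2}$.

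For $\eta<0$, set $k=\lceil-\eta\rceil$. The norm $\|\mu^{-\eta}\|\msQ_\mu^{k}f\|_{L^p}\|$ is the analogous characterisation of $(\msH^{-2k,p},L^p)_{\theta,q}$, after applying the isomorphism $\mathrm H^{-k}$ and using $\msQ_\mu^k=\mu^{-2k}\mathrm H^{-k}\big(\mathrm H(\mu^{-2}+\mathrm H)^{-1}\big)^k$. This maps the problem to a positive-order interval where the Komatsu characterisation with $k$-th powers applies. Reiteration together with item 1 then gives the statement for arbitrary $\eta_0\neq\eta_1<0$.

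\emph{Main obstacle.} I expect the hardest step to be the bookkeeping in item 4, namely the higher-order resolvent characterisation (powers $\msQ_\mu^k$). One must check that the small-$\lambda$ part of the integral, i.e. $\mu$ near $1$ only, is harmless, using invertibility of $\mathrm H$ and the cutoff $\mu\in(0,1]$. The BIP input for items 1 and 2 is standard and can be cited rather than proved.
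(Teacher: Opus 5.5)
Your proposal is correct and follows essentially the paper's route: bounded imaginary powers of $\mathrm H$ for item 1 (the paper cites Hebisch's multiplier theorem where you cite Cowling/Stein or Thangavelu), transport to $L^p_x$ plus Riesz--Thorin for item 2, the functional-calculus (Komatsu-type) characterisation of real interpolation spaces from Haase's Chapter 6 for item 3, and a shift of regularity by powers of $\mathrm H$ for item 4. The only cosmetic difference is in item 3: the paper applies that characterisation directly to $\msQ_\mu-\Id=\psi(\mu\sqrt{\mathrm H})$ to reach $(L^p_x,\msH^{1,p})_{\eta,q}$, whereas you pass through $(L^p_x,\msH^{2,p})_{\eta/2,q}$ and then use reiteration.
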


\begin{proof}
    Item 1. is a classical consequence of the bounded imaginary powers of $H$ over $L^p_x(\R^d)$ (see Proposition 3.5.5. in \cite{Ha06} for the interpolation result and \cite{He90} showing $H$ is in fact BIP on $L^p_x(\R^d)$). Item 2. is a direct application of Riesz-Thorin theorem. Item 3. follows by Theorem 6.5.3. and Corollary 6.5.5. in \cite{Ha06} as $\msQ_\mu-\Id = \psi\left(\mu\sqrt{H}\right)$ with $\psi(z)=\left(1+z^2\right)^{-1}-1$ verifying $z^{-\eta}\psi(z)\sim z^{2-\eta}$ when $z$ goes to $0$ and $z^{-\eta}\psi(z)\sim z^{-\eta}$ when $z$ goes to $\infty$. Finally, Item 4. follows from Item 3. by a standard shift of regularity using powers of $H$ and the bounded $\mathcal{H}^\infty$-calculus of $H$ on $L_x^p(\R^d)$ (which is a direct consequence of \cite{He90}).
\end{proof}

Sobolev spaces associated with the harmonic oscillator enjoys continuous embeddings similar to usual Sobolev embeddings.

\begin{proposition}[Hermite-Sobolev embeddings]\label{Prop:HermiteSpacesEmbeddings}
    Let $1<p<q<\infty$ and $\eta,\sigma\in\R$. Then, the following holds.
    \begin{enumerate}
        \item If $\frac{1}{p}-\frac{\eta}{d}\leqslant\frac{1}{q}-\frac{\sigma}{d}$, then $\msH^{\eta,p}\subset\msH^{\sigma,q}$. Moreover, if $\frac{1}{p}-\frac{\eta}{d}<\frac{1}{q}-\frac{\sigma}{d}$, then the inclusion is compact.
        \item If $\eta>\frac{d}{p}$, then $\msH^{\eta,p}\subset L^\infty_x$ compactly.
        \item If $\frac{1}{p}+\frac{\eta}{d}<\frac{1}{q}+\frac{\sigma}{d}$, then $\msH^{\sigma,q}\subset\msH^{\eta,p}$ compactly.
        \item If $\eta>d\left(1-\frac{1}{p}\right)$, then $\msH^{\eta,p}\subset L^1_x$ compactly.
        \item If $\eta<1$, then $\msB^{\eta}_{p,1}\subset\msH^{\eta,p}\subset\msB^{\eta}_{p,\infty}\subset\msB^{\eta-\eps}_{p,1}$ for any $\eps>0$.
    \end{enumerate}
\end{proposition}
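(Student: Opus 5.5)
The plan is to reduce everything to pointwise bounds on the Mehler kernel \eqref{eq:Mehlerkernel} via the subordination formula
\begin{equs}
\mathrm H^{-s/2}=\frac{1}{\Gamma(s/2)}\int_0^\infty t^{s/2-1}\e^{-t\mathrm H}\dint t\,,\qquad s>0\,.
\end{equs}
Since $\mathrm H$ is invertible with $\sigma(\mathrm H)\subset[d,\infty)$ and commutes with its powers, item 1 reduces to showing that $\mathrm H^{-(\eta-\sigma)/2}$ maps $L^p_x$ to $L^q_x$ whenever $s=\eta-\sigma\geq d(1/p-1/q)$. For this I use two regimes of $t$. For $t\leq1$, $0\leq M(t,x,y)\lesssim G(t,x-y)$, so Young's inequality gives $\|\e^{-t\mathrm H}\|_{\mcL(L^p_x,L^q_x)}\lesssim t^{-\frac d2(\frac1p-\frac1q)}$. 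For $t\geq1$, I factor $\e^{-t\mathrm H}=\e^{-\mathrm H/2}\e^{-(t-1)\mathrm H}\e^{-\mathrm H/2}$ and use the exponential decay $\|\e^{-t\mathrm H}\|_{\mcL(L^r_x)}\lesssim \e^{-ct}$. This decay follows from the explicit factor $(\sinh 2t)^{-d/2}$ in the Mehler kernel and the Schur test; the $L^p\to L^q$ smoothing comes from the two $\e^{-\mathrm H/2}$ factors. In the strict case $s>d(1/p-1/q)$, the subordination integral converges absolutely in $\mcL(L^p_x,L^q_x)$. In the endpoint case, I bound the kernel of $\mathrm H^{-s/2}$ pointwise by $\int_0^\infty t^{s/2-1}G(t,x-y)\dint t\simeq|x-y|^{s-d}$ and conclude with Hardy--Littlewood--Sobolev. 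I expect this endpoint to be a routine step; the real work is the compactness below.

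For compactness in the strict case, I first prove that each $\e^{-t\mathrm H}$, $t>0$, is compact from $L^p_x$ to $L^q_x$. Its kernel is bounded by $G(8t,x-y)\e^{-3t|x+y|^2/32}$ (this is \eqref{eq:estMt}), so it decays in both variables. Truncating to $\{|x|,|y|\leq R\}$ therefore gives operators converging in norm as $R\to\infty$, and each truncation is compact because its kernel is smooth on a compact set (Arzel\`a--Ascoli). Next I write $\mathrm H^{-s/2}$ as the norm limit of $\int_\delta^{1/\delta}$ of the subordination integral. The tail near $t=0$ is small in norm precisely because $s>d(1/p-1/q)$ strictly, and the tail near $t=\infty$ is small by the exponential decay. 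Item 2 follows the same route with $q=\infty$. The strict condition $\eta>d/p$ makes the $t\to0$ integral converge in $\mcL(L^p_x,L^\infty_x)$. The truncation argument now maps into continuous functions vanishing at infinity, and Arzel\`a--Ascoli again gives compactness. I expect this compactness-into-$L^\infty_x$ step, and in particular uniform control of the truncation error in $L^\infty_x$, to be the main obstacle.

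Items 3 and 4 follow by duality. $\mathrm H$ is self-adjoint and real, so $(\msH^{\eta,p})'=\msH^{-\eta,p'}$ with respect to the $L^2_x$ pairing, and an operator is compact if and only if its adjoint is. Applying item 1 to the exponents $(q',-\sigma)\to(p',-\eta)$ turns the condition $\frac1{q'}+\frac{\sigma}{d}>\frac1{p'}+\frac{\eta}{d}$ into exactly the hypothesis of item 3. Likewise, dualising item 2 gives $L^1_x\subset(L^\infty_x)'\hookleftarrow\msH^{\eta,p}$, which yields item 4 once the exponents are renamed.

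Item 5 is pure interpolation theory. For $0<\eta<1$, Proposition~\ref{Prop:HermiteSpacesInterpolation} gives $\msH^{\eta,p}=[L^p_x,\msH^{1,p}]_\eta$ and $\msB^\eta_{p,q}=(L^p_x,\msH^{1,p})_{\eta,q}$. The standard inclusions $(X,Y)_{\theta,1}\subset[X,Y]_\theta\subset(X,Y)_{\theta,\infty}\subset(X,Y)_{\theta-\eps,1}$, the last valid when $Y\subset X$ continuously as here, then give the chain. For $\eta<0$ I run the same argument with the negative-regularity descriptions in items 1 and 4 of that proposition, choosing the endpoints $\eta_0<\eta<\eta_1<0$.
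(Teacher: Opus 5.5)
\textbf{The gap is in items 3 and 4.} Items 1, 2 and 5 are fine. For 1--2 your Mehler/subordination argument is a self-contained alternative to the paper, which simply cites \cite{Ma25a}; item 5 is the same interpolation argument as the paper.

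Duality cannot give items 3 and 4, because taking adjoints preserves the direction in integrability. If $p<q$, the adjoint of $\msH^{\eta,p}\hookrightarrow\msH^{\sigma,q}$ is $\msH^{-\sigma,q'}\hookrightarrow\msH^{-\eta,p'}$ with $q'<p'$, again from the smaller to the larger exponent. Its hypothesis is $\frac1{q'}+\frac\sigma d\leq\frac1{p'}+\frac\eta d$, which is equivalent to that of item 1, not the strict reverse inequality you wrote. So item 1 is self-dual. Applying it to $(q',-\sigma)\to(p',-\eta)$ and dualising returns $\msH^{\eta,p}\subset\msH^{\sigma,q}$ under $\eta-\sigma\geq d(\frac1p-\frac1q)$. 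Item 3 instead asserts $\msH^{\sigma,q}\subset\msH^{\eta,p}$ under the opposite condition $\sigma-\eta>d(\frac1p-\frac1q)$.

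Likewise, the adjoint of $\msH^{\eta,p}\hookrightarrow L^\infty_x$ is a map $(L^\infty_x)'\to\msH^{-\eta,p'}$. Dualising item 2 therefore yields $L^1_x\subset\msH^{-\eta,p'}$ for $\eta>d/p$, not $\msH^{\eta,p}\subset L^1_x$.

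Here is a quick check that no abstract argument from your items 1--2 can work. Your proofs of items 1--2 use only $0\leq M(t,x,y)\leq G(t,x-y)$ and the spectral gap, and both hold for $\e^{-t(1-\Delta)}$. Yet item 4 is false for Bessel potential spaces: a smooth $f$ equal to $|x|^{-d}$ for $|x|\geq1$ lies in every $H^{\eta,p}(\R^d)$ with $p>1$ and $\eta\geq0$, but not in $L^1_x$. Items 3 and 4 pass from a larger to a smaller Lebesgue exponent in infinite volume, and that gain can only come from the confining term $|x|^2$.

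\textbf{The missing ingredient} is a direct downward bound on the semigroup. For $t\leq1$ you have $\tanh t\geq ct$ and $|x+y|^2\geq2|x|^2-|x-y|^2$. Hence the factor $\e^{-\tanh(t)|x+y|^2/4}$ in \eqref{eq:Mehlerkernel} gives $M(t,x,y)\lesssim G(2t,x-y)\e^{-ct|x|^2}$ for $c$ small, the leftover $\e^{ct|x-y|^2}$ being absorbed by the heat kernel. Take $\frac1r=\frac1p-\frac1q$ and use $\|\e^{-ct|\bigcdot|^2}\|_{L^r_x}\simeq t^{-d/(2r)}$ together with H\"older and Young. This yields $\|\e^{-t\mathrm H}\|_{\mcL(L^q_x,L^p_x)}\lesssim t^{-\frac d2(\frac1p-\frac1q)}$ for $1\leq p<q$.

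Feed this into your subordination and truncation scheme, using the factorisation $\e^{-\mathrm H/2}\e^{-(t-1)\mathrm H}\e^{-\mathrm H/2}$ for $t\geq1$. For item 3, integrability of $t^{\frac{\sigma-\eta}{2}-1-\frac d2(\frac1p-\frac1q)}$ at $t=0$ is exactly the hypothesis $\sigma-\eta>d(\frac1p-\frac1q)$. For item 4, the pair $(1,p)$ replaces $(p,q)$, and the same integrability condition is exactly $\eta>d(1-\frac1p)$. This also explains why no endpoint case is claimed in items 3 and 4. Compactness then follows exactly as in your item 1.
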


\begin{proof}
    Items 1. and 2. follow from Proposition 2.6 in \cite{Ma25a}, 3. and 4. is proved as Proposition 2.8 in \cite{Ma25c} and 5. from Proposition~\ref{Prop:HermiteSpacesInterpolation} and usual properties of interpolation spaces.
\end{proof}

\end{appendix}


\begin{thebibliography}{99}  

\bibitem{AK15}
R.~Allez, K.~Chouk. \newblock{\em The continuous {Anderson} hamiltonian in dimension two}.
\newblock\burlalt{arXiv:1511.02718}{https://arxiv.org/abs/1511.02718}.

\bibitem{BDM22}
I.~{Bailleul}, N. V.~{Dang}, A.~{Mouzard}. 
\newblock {\em Analysis of the Anderson operator}. 
\newblock {Electron. J. Probab. \textbf{30}, Paper No. 144, (2025).}
\burlalt{doi: 10.1214/25-EJP1380}{https://doi.org/10.1214/25-EJP1380}.
  
\bibitem{BB21}
R.~{Bauerschmidt}, T.~{Bodineau}. \newblock {\em Log-Sobolev inequality for the continuum sine-Gordon model}. 
\newblock Comm. Pure Appl. Math., \textbf{74}, no.~10, (2021), 2064--2113.
\burlalt{doi:10.1002/cpa.21926}{https://www.doi.org/10.1002/cpa.21926}. 

\bibitem{BBD24}
R.~{Bauerschmidt}, T.~{Bodineau}, B.~{Dagallier}. \newblock {\em Stochastic dynamics and the Polchinski equation: An introduction}. 
\newblock Probab. Surveys, \textbf{21}, (2024), 200--290.
\burlalt{doi: 10.1214/24-PS27}{https://www.doi.org/10.1214/24-PS27}.

\bibitem{BP57}
N.~N. Bogoliubow, O.~S. Parasiuk.
\newblock { \em Über die Multiplikation der Kausalfunktionen in der
  {Q}uantentheorie der {F}elder.}
\newblock Acta Math. \textbf{97}, (1957), 227--266.
\newblock\burlalt{doi:10.1007/BF02392399}{http://dx.doi.org/10.1007/BF02392399}.

\bibitem{BT06}
B.~{Bongioanni}, J.L.~{Torrea}.\newblock {\em Sobolev spaces associated to the harmonic oscillator}. 
\newblock Proc. Indian Acad. Sci., Math. Sci., \textbf{116}, no.~3 (2006), 337-360.
\burlalt{doi: 10.1007/BF02829750}{https://www.doi.org/10.1007/BF02829750}.

\bibitem{BBF}
M.~Borji, N.~Barashkov, L. Ferdinand. \newblock{\em The full-subcritical dynamical $\Phi^4$ on domains with boundary}. In preparation.

\bibitem{Borji1}
M.~Borji, K.~Kopper. \newblock{\em Perturbative renormalisation of $\Phi_4^4$ theory on the half space $\R^+\times\R^3$ with flow equations}.
\newblock Journal of Mathematical Physics, AIP Publishing, \textbf{63}, (2022), ISSN: 1089-7658.
\burlalt{doi: 10.1063/5.0097164}{http://dx.doi.org/10.1063/5.0097164}

\bibitem{Borji2}
M.~Borji, K.~Kopper. \newblock{\em The Surface counter-terms of the $\Phi_4^4$ theory on the half space $\R^+ \times\R^3$}.
\newblock\burlalt{arXiv:2305.18862}{https://arxiv.org/abs/2305.18862}.

\bibitem{Borji3}
M.~Borji. \newblock{\em Position-Space renormalisation and Half-Space Truncations in $\phi^4_4$}.
\newblock\burlalt{arXiv:2606.23650}{https://arxiv.org/abs/2606.23650}.
  
\bibitem{BCCH}
 { \rm Y. Bruned, A. Chandra, I. Chevyrev,
  M. Hairer}.
\newblock {\em Renormalising SPDEs in regularity structures}.
\newblock J. Eur. Math. Soc. (JEMS), \textbf{23}, no.~3, (2021), 869-947.
\newblock\burlalt{doi:10.4171/JEMS/1025}{http://dx.doi.org/10.4171/JEMS/1025}.

\bibitem{BHZ}
{\rm Y. Bruned, M. Hairer, L. Zambotti}.
\newblock {\em Algebraic renormalisation of regularity structures.}
\newblock Invent. Math. \textbf{215}, no.~3, (2019), 1039--1156.
\newblock\burlalt{doi:10.1007/s00222-018-0841-x}{https://dx.doi.org/10.1007/s00222-018-0841-x}.

\bibitem{BM25}
Y.~Bruned, A.~Minguella. 
\newblock{\em{Renormalisation in the flow approach for singular SPDEs}}
\newblock\burlalt{arXiv:2504.04885}{https://arxiv.org/abs/2504.04885}

\bibitem{BDY12}
H.-Q.~Bui, X.T.~Duong, L.~Yan.\newblock {\em Calder{\'o}n reproducing formulas and new {Besov} spaces associated with operators}. 
\newblock Adv. Math., \textbf{229}, no.~4 (2012), 2449--2502.
\burlalt{doi: 10.1016/j.aim.2012.01.005}{https://www.doi.org/10.1016/j.aim.2012.01.005}.

\bibitem{CH16}
A.~Chandra, M.~Hairer.
\newblock {\em{An analytic {BPHZ} theorem for regularity structures.}}
\newblock \burlalt{arXiv:1612.08138}{http://arxiv.org/abs/1612.08138}.

\bibitem{CF24a}
A.~Chandra, L.~Ferdinand.
\newblock{\em{A flow approach to the generalized KPZ equation}}
\newblock\burlalt{arXiv:2402.03101}{https://arxiv.org/abs/2402.03101}.

\bibitem{CF24b}
A.~Chandra, L.~Ferdinand.
\newblock{\em{Rough differential equations in the flow approach}}
\newblock\burlalt{arXiv:2411.07157}{https://arxiv.org/abs/2411.07157}.


\bibitem{DJZ25}
F.C.~De Vecchi, X.~Ji, I.~Zachhuber.
\newblock {\em Stochastic Hartree NLS in 3d coming from a Many-Body Quantum System with White Noise Potential.}
\newblock \burlalt{arXiv:2505.01157}{http://arxiv.org/abs/2505.01157}.

\bibitem{DL20}
L.~Dumaz, C.~Labb{\'e}. \newblock{\em Localization of the continuous {Anderson} {Hamiltonian} in 1-d}. 
\newblock Probab. Theory Relat. Fields \textbf{176}, no.~1-2, (2024), 353--419
\newblock\burlalt{doi:10.1007/s00440-019-00920-6}{https://dx.doi.org/10.1007/s00440-019-00920-6}.

\bibitem{DL22}
L.~Dumaz, C.~Labb{\'e}. \newblock{\em The stochastic {Airy} operator at large temperature}. 
\newblock Ann. Appl. Probab. \textbf{32}, no.~6, (2022), 4481--4534
\newblock\burlalt{doi:10.1214/22-AAP1793}{https://dx.doi.org/10.1214/22-AAP1793}.

\bibitem{DL23}
L.~Dumaz, C.~Labb{\'e}. \newblock{\em The delocalized phase of the {Anderson} {Hamiltonian} in 1-{D}}. 
\newblock Ann. Probab. \textbf{51}, no.~3, (2023), 805--839
\newblock\burlalt{doi:10.1214/22-AOP1591}{https://dx.doi.org/10.1214/22-AOP1591}.

\bibitem{DL24a}
L.~Dumaz, C.~Labb{\'e}. \newblock{\em Anderson localization for the 1-d {Schr{\"o}dinger} operator with white noise potential}.  
\newblock  journal = {J. Funct. Anal.}, \textbf{286}, no.~1, (2024), 45
\newblock\burlalt{doi:10.1016/j.jfa.2023.110191}{https://dx.doi.org/10.1016/j.jfa.2023.110191}.

\bibitem{DL24c}
L.~Dumaz, C.~Labb{\'e}. \newblock{\em Localization crossover for the continuous {Anderson} {Hamiltonian} in 1-d}.  
\newblock Invent. Math. \textbf{235}, no.~2, (2024), 345--440
\newblock\burlalt{doi:10.1007/s00222-023-01225-1}{https://dx.doi.org/10.1007/s00222-023-01225-1}.


\bibitem{Duc21}
P.~Duch.
\newblock{\em{Flow equation approach to singular stochastic PDEs}}  Probab. and Math. Phys. \textbf{6}, no.~2, (2025), 327–437. T
\newblock
\burlalt{doi:10.2140/pmp.2025.6.327}{https://dx.doi.org/10.2140/pmp.2025.6.327}.

\bibitem{Duc22}
P.~Duch.
\newblock{\em renormalisation of singular elliptic stochastic PDEs using flow equation}. Probab. and Math. Phys. \textbf{6}, no.~1, (2025), 111--138.
\newblock
\burlalt{doi:10.2140/pmp.2025.6.111}{https://dx.doi.org/10.2140/pmp.2025.6.111}.

\bibitem{DGR23}
P.~Duch, M.~Gubinelli, P.~Rinaldi.
\newblock{\em Parabolic stochastic quantisation of the fractional $\Phi^4_3$ model in the full subcritical regime}
\newblock\burlalt{arXiv:2303.18112}{https://arxiv.org/abs/2303.18112}.

\bibitem{Duc23}
P.~Duch.
\newblock{\em Lecture notes on flow equation approach to singular
stochastic PDEs}
\newblock\burlalt{https://pawelduch.github.io/spde_flow_notes.pdf}.

\bibitem{Duc24}
P.~Duch.
\newblock{\em Construction of Gross-Neveu model using Polchinski flow equation}
\newblock\burlalt{arXiv:2403.18562}{https://arxiv.org/abs/2403.18562}.

\bibitem{FI25}
R.~Fukuizumi, T.~Iwabuchi.
\newblock{\em Besov spaces associated with the {Harmonic} oscillator}
\newblock\burlalt{arXiv:2508.20436}{https://arxiv.org/abs/2508.20436}.

\bibitem{FN77}
M.~Fukushima, S.~Nakao.
\newblock {\em On spectra of the Schrödinger operator with a white Gaussian noise potential}.
\newblock Z. Wahrscheinlichkeit-stheor. Verw. Geb \textbf{37}, (1977), 267–-274.
\newblock\burlalt{doi:10.1007/BF00537493}{http://dx.doi.org/10.1007/BF00537493}.


\bibitem{GIP15}
M.~Gubinelli, P.~Imkeller, N.~Perkowski.
\newblock {\em Paracontrolled distributions and singular PDEs}
\newblock Forum of mathematics, Pi, \textbf{3}, (2015), 3:e6.
\newblock\burlalt{doi:10.1017/fmp.2015.2}{http://dx.doi.org/10.1017/fmp.2015.2}.

\bibitem{GM24}
M.~Gubinelli, S-J.~Meyer.
\newblock{\em The FBSDE approach to sine-Gordon up to $6\pi$}
\newblock\burlalt{arXiv:2401.13648}{https://arxiv.org/abs/2401.13648}.

\bibitem{GUZ20}
M.~Gubinelli, B.~Ugurcan, I.~Zachhuber.
\newblock {\em Semilinear evolution equations for the Anderson Hamiltonian in two and three dimensions}
\newblock Stoch. Partial Differ. Equ., Anal. Comput., \textbf{8}, no.~1, (2020), 82--149.
\newblock\burlalt{doi:10.1007/s40072-019-00143-9}{http://dx.doi.org/10.1007/s40072-019-00143-9}.

\bibitem{Ha06}
M.~Haase.
\newblock{\em The functional calculus for sectorial operators}
\newblock {Oper. Theory: Adv. Appl. (2006). ISBN: 978-3-7643-7698-7.}
\newblock \burlalt{doi:10.1007/3-7643-7698-8}{http://dx.doi.org/10.1007/3-7643-7698-8}.

\bibitem{reg}
{\rm M. Hairer}.
\newblock {\em A theory of regularity structures.}
\newblock Invent. Math. \textbf{198}, no.~2, (2014), 269--504.
\newblock\burlalt{doi:10.1007/s00222-014-0505-4}{https://dx.doi.org/10.1007/s00222-014-0505-4}.

\bibitem{HL15}
M.~Hairer, C.~Labbé.
\newblock{\em A simple construction of the continuum parabolic Anderson model on $\R^2$.}
\newblock Electron. Commun. Probab. \textbf{20} 1 - 11, 2015. 
\newblock\burlalt{doi:10.1214/ECP.v20-4038}{https://doi.org/10.1214/ECP.v20-4038} 

\bibitem{HL18}
M.~Hairer, C.~Labbé.
\newblock{\em Multiplicative stochastic heat equations
on the whole space}
\newblock J. Eur. Math. Soc. (JEMS), \textbf{20}, no.~4, (2018), 1005-1054.
\newblock\burlalt{doi:10.4171/JEMS/781}{https://doi.org/10.4171/JEMS/781}. 

\bibitem{He90}
W.~Hebisch.
\newblock{\em A multiplier theorem for Schrödinger operators}
\newblock Colloq. Math. 60/61.2, (1990), 659–-664.
\newblock\burlalt{doi: 10.4064/60-61-2-659-664}{https://doi.org/10.4064/60-61-2-659-664}. 

\bibitem{KH69}
K. ~Hepp.
\newblock {\em On the equivalence of additive and analytic renormalisation}.
\newblock Comm. Math. Phys. \textbf{14}, (1969), 67--69.
\newblock \burlalt{doi:10.1007/
BF01645456}{http://dx.doi.org/10.1007/
BF01645456}.

\bibitem{HsL26}
Y.-S.~Hsu, C.~Labbé
\newblock{\em Construction and spectrum of the {Anderson} {Hamiltonian} with white noise potential on $\R^2$ and $\R^3$.}
\newblock Probab. Math. Phys. \textbf{7}, no.~1, (2026), 1--35.
\newblock\burlalt{doi: 10.2140/pmp.2026.7.1}{https://doi.org/10.2140/pmp.2026.7.1}. 

\bibitem{Ka95}
T.~Kato.
\newblock{\em Perturbation theory for linear operators.}
\newblock {Class. Math. (1995). ISBN: 978-3-540-58661-6}
\newblock \burlalt{doi:10.1007/978-3-642-66282-9}{https://doi.org/10.1007/978-3-642-66282-9}.

\bibitem{KL81}
A.~Klein, L.J.~Landau.
\newblock {\em Construction of a unique self-adjoint generator
for a symmetric local semigroup}. J. Funct. Anal., \textbf{44}, no.~2, (1981), 121--137.
\burlalt{doi:10.1016/0022-1236(81)90007-0}{http://dx.doi.org/10.1016/0022-1236(81)90007-0}.

\bibitem{Kup16}
A.~Kupiainen.
\newblock {\em renormalisation group and stochastic PDEs}. Ann. Henri.
Poincaré, \textbf{17}, no.~3, (2016), 497--535.
\burlalt{doi:10.1007/s00023-015-0408-y}{http://dx.doi.org/10.1007/s00023-015-0408-y}.

\bibitem{KM17}
A.~Kupiainen, M.~Marcozzi.
\newblock {\em renormalisation of generalized KPZ equation}. J. Stat. Phys. \textbf{166}, no.~3, (2017), 876--902.
\burlalt{doi:10.1007/s10955-016-1636-3}{http://dx.doi.org/10.1007/s10955-016-1636-3}.

\bibitem{Lab19}
C.~Labbé.
\newblock{\em The continuous {Anderson} {Hamiltonian} in $d \leq 3$}.
\newblock J. Funct. Anal. \textbf{277}, no.~6, (2019), 3187--3235. 
\newblock\burlalt{doi:10.1016/j.jfa.2019.05.027}{https://doi.org/10.1016/j.jfa.2019.05.027}.

\bibitem{MaPhD}
S.~Mackowiak. \newblock {\em Mathematical Analysis of the Gross-Pitaevskii equation in disordered media}. 
\newblock PhD thesis, Institut polytechnique de Paris, (2025)
\burlalt{doi:10.70675/3ec6de1az3d40z4451zba32z575574591eff}{https://dx.doi.org/10.70675/3ec6de1az3d40z4451zba32z575574591eff}. 

\bibitem{Ma25a}
S.~Mackowiak. \newblock {\em Wellposedness of the cubic Gross–Pitaevskii equation with spatial white noise on $\mathbb{R}^2$}. 
\newblock Nonlinearity \textbf{38}, no.~3, (2025)
\burlalt{doi:10.1088/1361-6544/adb237}{https://dx.doi.org/10.1088/1361-6544/adb237}. 

\bibitem{Ma25b}
S.~Mackowiak. \newblock{\em Local wellposedness of the 2d {Anderson}-{Gross}-{Pitaevskii} equation}.
\newblock\burlalt{arXiv:2511.17063}{https://arxiv.org/abs/2511.17063}.

\bibitem{Ma25c}
S.~Mackowiak.
\newblock{\em{Standing waves of the Anderson-Gross-Pitaevskii equation}}.
\newblock\burlalt{arXiv:2512.22960}{https://arxiv.org/abs/2512.22960}.

\bibitem{Ma26+}
S.~Mackowiak. \newblock{\em Anderson operators as lower order perturbations of sectorial operators}.
In preparation.

\bibitem{Mey26}
S-J.~Meyer.
\newblock{\em An FBSDE Construction of the Sine-Gordon EQFT for $\beta^2<\frac{6}{7}8\pi$ and Perturbative renormalisation in the Full Subcritical Regime}
\newblock\burlalt{arXiv:2607.20632}{https://arxiv.org/abs/2607.20632}.

\bibitem{Mi15}
N.~Minami. \newblock {\em Definition and self-adjointness of the stochastic {Airy} operator}. 
\newblock Markov Process. Relat. Fields \textbf{21}, no.~3, (2015), 695--711
\burlalt{arXiv:1401.0853}{https://arxiv.org/abs/1401.0853}. 

\bibitem{MM22}
L.~Morin, A.~Mouzard. \newblock {\em {2D} random magnetic {Laplacian} with white noise magnetic field}. 
\newblock Stochastic Processes Appl. \textbf{143} (2022) 160--184.
\burlalt{doi:10.1016/j.spa.2021.10.004}{https://www.doi.org/10.1016/j.spa.2021.10.004}.

\bibitem{Mo20}
A.~Mouzard.
\newblock{\em Weyl law for the {Anderson} {Hamiltonian} on a two-dimensional manifold}
\newblock\burlalt{arXiv:2009.03549}{https://arxiv.org/abs/2009.03549}.

\bibitem{MO25}
A.~Mouzard, E.M.~Ouhabaz. \newblock {\em A simple construction of the {Anderson} operator via its quadratic form in dimensions two and three}. 
\newblock C. R., Math., Acad. Sci. Paris \textbf{363} (2025) 183--197.
\burlalt{doi:10.5802/crmath.712}{https://www.doi.org/10.5802/crmath.712}.

\bibitem{MZ24}
A.~Mouzard, I.~Zachhuber. \newblock {\em Strichartz inequalities with white noise potential on compact surfaces}. 
\newblock Anal. PDE \textbf{17}, no.~2, (2024) 421--454.
\burlalt{doi:10.2140/apde.2024.17.421}{https://www.doi.org/10.2140/apde.2024.17.421}. 

\bibitem{MZ26}
A.~Mouzard, I.~Zachhuber. \newblock {\em Nonlinear {Schr{\"o}dinger} equations with spatial white noise potential on full space for {$d\le 3$}}. 
\burlalt{arXiv:2604.15226}{https://arxiv.org/abs/2604.15226}. 

\bibitem{M03}
V. F. Müller. \newblock {\em Perturbative renormalisation by Flow Equation}. 
\newblock Rev. Math. Phys. \textbf{15}, no.~5, (2003) 491--558.
\burlalt{doi:10.1142/S0129055X03001692}{https://www.doi.org/10.1142/S0129055X03001692}. 

\bibitem{PT11}
G.~Peccati, M.S.~Taqqu.
\newblock{\em Wiener Chaos: Moments, Cumulants and Diagrams}. 
\newblock{Bocconi and Springer series. (2011). ISBN: 978-88-470-1678-1.}
\burlalt{doi:10.1007/978-88-470-1679-8}{http://dx.doi.org/10.1007/978-88-470-1679-8}.

\bibitem{Pol84}
J. Polchinski. \newblock {\em renormalisation and effective lagrangians}. 
\newblock Nucl. Phys. B, \textbf{231}, no.~2, (1984) 269--295.
\burlalt{doi:10.1016/0550-3213(84)90287-6}{https://www.doi.org/10.1016/0550-3213(84)90287-6}. 

\bibitem{RRV11}
J.A.~Ramirez, B.~Rider, B.~Vir{\'a}g. \newblock {\em Beta ensembles, stochastic {Airy} spectrum, and a diffusion}. 
\newblock J. Am. Math. Soc., \textbf{24}, no.~4, (2011) 919--944.
\burlalt{doi:10.1090/S0894-0347-2011-00703-0}{https://www.doi.org/10.1090/S0894-0347-2011-00703-0}. 

\bibitem{Ue25}
N. Ueki. \newblock {\em A definition of self-adjoint operators derived from the {Schr{\"o}dinger} operator with the white noise potential on the plane}. 
\newblock Stochastic Processes Appl., \textbf{186}, (1984) 30.
\burlalt{doi:10.1016/j.spa.2025.104642}{https://www.doi.org/10.1016/j.spa.2025.104642}. 

\bibitem{Ya10}
A.~Yagi
\newblock{\em Abstract parabolic evolution equations and their applications}
\newblock {Springer Monogr. Math. (2010). ISBN: 978-3-642-04630-8.}
\newblock \burlalt{doi:10.1007/978-3-642-04631-5}{http://dx.doi.org/10.1007/978-3-642-04631-5}.

\bibitem{WZ69}
W. ~Zimmermann.
\newblock{\em Convergence of Bogoliubov’s method of renormalisation in momentum space.}
\newblock {Comm. Math. Phys. \textbf{15}, (1969), 208--234.}
\newblock \burlalt{doi:10.1007/BF01645676}{http://dx.doi.org/10.1007/BF01645676}.
 
\end{thebibliography}
\end{document}